\documentclass[10pt,a4paper]{article}
\usepackage{latexsym,amsthm,amssymb,amscd,amsmath}
\usepackage[small,nohug,heads=vee]{diagrams}
\diagramstyle[labelstyle=\scriptstyle]
\newcounter{alphthm}
\setcounter{alphthm}{0}
%------------------------------------------------
%
%            Symbols in "mathcal"
%
%------------------------------------------------

\newcommand{\im}{\mathrm Im}
%------------------------------------------------
%
%           Small letters in bold
%
%------------------------------------------------

\theoremstyle{plain}
\newtheorem{theorem}{Theorem}[section]
\newtheorem{lemma}[theorem]{Lemma}
\newtheorem{proposition}[theorem]{Proposition}
\newtheorem{cor}[theorem]{Corollary}
\theoremstyle{definition}
\newtheorem{defn}[theorem]{Definition}
\newtheorem{rem}[theorem]{Remark}

\newcommand{\be}{\begin{equation}}
\newcommand{\ee}{\end{equation}}
\newcommand{\ben}{\begin{enumerate}}
\newcommand{\een}{\end{enumerate}}

\begin{document}
\title{Models of Finsler Geometry on Lie algebroids}
\author{E. Peyghan}
\maketitle

\maketitle
\begin{abstract}
Horizontal endomorphisms, almost complex structures, vertical, horizontal and complete lifts on prolongation of a Lie algebroid are considered. Then using exact sequences, semisprays are constructed. Moreover, important geometrical objects such as classical distinguished connections, torsions and partial curvatures are studied on prolongation of Lie algebroids. Considering pullback bundle, covariant derivatives are scrutinized based on anchor map. Several Finsler geometry models on Lie algebroid structures, will organized via recent arguments. Finally, it will be overhaulled some special Finsler Lie algebroid spaces.
\end{abstract}
\tableofcontents
\newpage
\section{Introduction}
The notion of Lie algebroids was first introduced by Pradines \cite{pradines}. Researching on this field, is continuum by mathematicians with various purposes up to now. Lie algebroids are studeid pure or in relation with other subjects \cite{A, A1, Ar, Ar1, BP, CLMM, M1, V2, V3}. Precisely, a Lie algebroid is a vector bundle with the property that its sections involve a real Lie algebra. Each section is anchored on a vector field, by means of a linear bundle map named as anchor map, which is further supposed to induce a Lie algebra homomorphism. Specially  when the base manifold $M$ is a point, a Lie algebroid reduces to a Lie algebra. The most simple examples of  Lie algebroids are the zero bundle over $M$ which is denoted by $M$ and tangent bundle over $M$ with identity as anchor map which is denoted by $TM$. Then the tangent bundle is a special case of Lie algebroid structure. Therefore a Lie algebroid is a generalization of a Lie algebra and vector bundle.

The Lie algebroid is a good extension of tangent bundle, since the homomorphism property of the anchor map grants the basic notions of tangent bundle to the vector bundle ( for example the torsion notion that has not a good meaning in vector bundles). However, some theorems on tangent bundle do not work here as we shall see. Indeed, any generalization need to using anchor map to reconstruct the most useful objects to compare geometric structures at different points of a manifold on the vector bundle, namely; covariant derivation and connection concepts . In \cite{f}, one can see how they have generalized on Lie algebroids  by a worthwhile work as the first step. Another acute extension is curvature. In this way, a good conceptualization is \cite{bl}. In \cite{bou}, Riemannian Lie algebroids have introduced and basic facts like Levi-Civita connection, Riemannian metric and curvature, sectional curvature, geodesics and integrability
have studied.

 Recently, Lie algebroids are important issues in physics and mechanics since the extension of Lagrangian and Hamiltonian systems to their entity \cite{ar, LMM, M, mestdag, W} and catching the poisson  structure \cite{popescu0}. They are wrestled with nonsmooth optimization \cite{popescu} and studied on Banach vector bundles \cite{mi}. They have such a flexibility  that holonomy of orbit foliation carried on them \cite{f}. Thus Lie algebroids are strong assorted structures to assemble the Physics and mechanics notions on them. For a good details about penetration of Lie algebroids, see \cite{V}.

The aim of this paper is rebuild the Finsler geometry concepts on Lie algebroid structures. For instance, this matter is discussed in \cite{V1, popescu} of course. Finsler geometry is a generalization of Riemannian geometry such that interfering of direction and position duplicates the degree of freedom in view of configuration. Variety of tensors in Finsler geometry is more than Riemannian case. One can study with more details about Finsler geometry, say in \cite{bcs, B, V4}.

 In Finsler geometry there are two approaches. The first is global make up, and the second is localization. Indeed, non of them have any advantage to the other until one would like use them as gadgets to derive the conclusion or to receive the target. For example, when we want to see the anchor role precisely, it prefered the locally approach shall be applied and when we want to have a boxed and index-free formula to categorize the results, we choose the global viewpoint. Accordingly, we tried to designate the approaches in the sense of the case.

 The paper is organized as follows. In Section 2, we recall differential, contraction and Lie differential operators on Lie algebroids and we study the relation between these operators. We also mention the generalized Fr\"{o}licher-Nijenhuis bracket on Lie algebroids. Then vertical and complete lifts on a Lie algebroid is considered and some important properties of these objects is studied. In Section 3, building on the notion of prolongation $\pounds^\pi E\rightarrow E$ of the Lie algebroid $E\rightarrow M$, we present vertical and complete lifts of a section of $E$ to $\pounds^\pi E$. But the major concept of this section is to construct the vertical endomorphism on a special exact sequence. Then Liouville section and semispray on $\pounds^\pi E$ will be introduced. Moreover, some interesting results on them will be gathered. The aim of Section 4 is to make up pre-curvature concepts like torsion and tension by decomposing $\pounds^\pi E$ using horizontal endomorphisms. The almost complex structure on  $\pounds^\pi E$ and Berwald endomorphism will be introduced  in this section also. Finally, it will be shown that how sections on $E$ lift into the horizontal space of $\pounds^\pi E$ using the horizontal endomorphism. In Section 5, distinguished connections on $\pounds^\pi E$ are introduced and torsion and curvature tensor fields of these connections are considered. In particular, we introduce Berwald-type and Yano-type connections on $\pounds^\pi E$ as two important classes of distinguished connections. In Section 6, we construct $\rho_\pounds$-covariant derivatives
in $\pi^*\pi$ as generalization of covariant derivative in $\pi^*\pi$ to $\pounds^\pi E$. Moreover, Berwald and Yano derivatives as two important classes of $\rho_\pounds$-covariant derivatives in $\pi^*\pi$ are introduced in this section. Section 7 is busy with Finsler algebroids and related materials. Important endomorphisms like Conservative and Barthel, Cartan tensor and some distinguished connections like  Berwald, Cartan, Chern-Rund and Hashiguchi are studied by Szilasi and his collaborators from a special point view based on pullback bundle \cite{SS, SS1, S, SG, ST}. In this section we construct them on Finsler algebroids and obtain some results on these concepts. In section 8, $h$-basic distinguished connections are introduced on Finsler algeboids. Specially, Ichijy\={o} connection that is a special $h$-basic distinguished connection is more studied. Generalized Berwald Lie algebroids are presented next. The section will ended by Wagner-Ichijy\={o} connection that is a special case of Ichijy\={o}'s one.

%*************************************************************************************************************************************************
\section{Basic concepts on Lie algebroids }
Let $E$ be a vector bundle of rank $n$ over a manifold $M$ of dimension $m$ and $\pi:E\rightarrow M$ be
the vector bundle projection. Denote by $\Gamma(E)$ the $C^\infty(M)$-module of sections of $\pi:E\rightarrow M$.
A \textit{Lie algebroid structure} $([., .]_E, \rho)$ on $E$ is a Lie bracket $[., .]_E$ on the space $\Gamma(E)$ and a bundle
map $\rho:E\rightarrow TM$, called the anchor map, such that if we also denote by $\rho:\Gamma(E)\rightarrow\chi(M)$
the homomorphism of $C^\infty(M)$-modules induced by the anchor map then
\[
[X, fY]_E=f[X, Y]_E+\rho(X)(f)Y,\ \ \ \forall X, Y\in\Gamma(E),\ \forall f\in C^\infty(M).
\]
Moreover, we have the relations
\begin{equation}\label{revise}
[\rho(X), \rho(Y)]=\rho([X, Y]_E),
\end{equation}
and
\begin{equation}\label{revise1}
[X, [Y, Z]_E]_E+[Y, [Z, X]_E]_E+[Z, [X, Y]_E]_E=0.
\end{equation}
Then triple $(E, [., .]_E, \rho)$ is called a \textit{Lie algebroid} over $M$.

Trivial examples of Lie algebroids are real Lie algebras of finite dimension, the tangent
bundle $TM$ of an arbitrary manifold $M$ and an integrable distribution of $TM$.

If $(E, [., .]_E, \rho)$ is a Lie algebroid over $M$, then the anchor map $\rho:\Gamma(E)\rightarrow\chi(M)$ is a
homomorphism between the Lie algebras $(\Gamma(E), [., .]_E)$ and $(\chi(M), [·, ·])$.

On Lie algebroids $(E, [., .]_E, \rho)$, we define the differential of $E$, $d^E:\Gamma(\wedge^kE^*)\rightarrow\Gamma(\wedge^{k+1}E^*)$, as follows
\begin{align*}
d^E\mu(X_0,\ldots, X_k)&=\sum_{i=0}^k(-1)^i\rho(X_i)(\mu(X_0,\ldots, \hat{X_i}, \ldots, X_k))\\
&\ \ \ +\sum_{i<j}(-1)^{i+j}\mu([X_i, X_j]_E, X_0, \ldots, \hat{X_i}, \ldots, \hat{X_j}, \ldots, X_k),
\end{align*}
for $\mu\in\Gamma(\wedge^kE^*)$ and $X_0, \ldots, X_k\in\Gamma(E)$. In particular, if $f\in\Gamma(\wedge^0 E^*)=C^\infty(M)$ we have $d^Ef(X)=\rho(X)f$. Using the above equation it follows that $(d^E)^2=0$.

If we take local coordinates $(x^i)$ on $M$ and a local basis $\{e_\alpha\}$ of sections of $E$, then we
have the corresponding local coordinates $(\textbf{x}^i, \textbf{y}^\alpha)$ on $E$, where $\textbf{x}^i=x^i\circ\pi$ and $\textbf{y}^\alpha(u)$ is the $\alpha$-th coordinate of $u\in E$ in the given basis. Such coordinates determine local functions $\rho^i_\alpha$, $L^\gamma_{\alpha\beta}$ on $M$ which contain the local information of the Lie algebroid structure, and accordingly they are called the structure functions of the Lie algebroid. They are given by
\[
\rho(e_\alpha)=\rho^i_\alpha\frac{\partial}{\partial x^i},\ \ \ [e_\alpha, e_\beta]_E=L^\gamma_{\alpha\beta}e_\gamma.
\]
Using (\ref{revise}) and (\ref{revise1}), these functions should satisfy the following relations
\begin{equation}\label{2}
(i)\ \rho^j_\alpha\frac{\partial\rho^i_\beta}{\partial x^j}-\rho^j_\beta\frac{\partial\rho^i_\alpha}{\partial x^j}=\rho^i_\gamma L_{\alpha\beta}^\gamma,\ \ \ (ii)\ \sum_{(\alpha, \beta, \gamma)}[\rho^i_\alpha\frac{\partial L^\nu_{\beta\gamma}}{\partial x^i}+L^\nu_{\alpha\mu}L^\mu_{\beta\gamma}]=0,
\end{equation}
which are usually called the structure equations. We have also,
\begin{equation}\label{1}
d^Ef=\frac{\partial f}{\partial x^i}\rho^i_\alpha e^\alpha, \ \ \ \forall f\in C^\infty(M),
\end{equation}
where $\{e^\alpha\}$ is the dual basis of $\{e_\alpha\}$. On the other hand, if $\omega\in\Gamma(E^*)$ and $\omega=\omega_\gamma e^\gamma$ it follows
\[
d^E\omega=(\frac{\partial\omega_\gamma}{\partial x^i}\rho^i_\beta-\frac{1}{2}\omega_\alpha L^\alpha_{\beta\gamma})e^\beta\wedge e^\gamma.
\]
In particular,
\[
d^Ex^i=\rho^i_\alpha e^\alpha,\ \ \ \ d^Ee^\alpha=-\frac{1}{2}L^\alpha_{\beta\gamma}e^\beta\wedge e^\gamma.
\]
A section $\omega$ of $E^*$ also defines a function $\hat{\omega}$ on $E$ by means of
\[
\hat{\omega}(u)=<\omega_m, u>,\ \ \ \forall u\in E_m.
\]
If $\omega=\omega_\alpha e^\alpha$, then the linear function $\hat{\omega}$ is
\[
\hat{\omega}(x, y)=\omega_\alpha \textbf{y}^\alpha.
\]
In particular, using (\ref{1}) we have
\[
\widehat{d^Ef}=\frac{\partial f}{\partial x^i}\rho^i_\alpha \textbf{y}^\alpha.
\]
%--------------------------------------------------------------------------------------------------------
\subsection{Generalized Fr\"{o}licher-Nijenhuis bracket}
%--------------------------------------------------------------------------------------------------------
For $X\in\Gamma(\wedge^kE)$, the contraction $i_X:\Gamma(\wedge^pE^*)\rightarrow\Gamma(\wedge^{p-k}E^*)$ is defined in standard way and the Lie differential operator $\pounds_X^E:\Gamma(\wedge^pE^*)\rightarrow\Gamma(\wedge^{p-k+1}E^*)$ is defined by
\[
\pounds_X^E=i_X\circ d^E-(-1)^kd^E\circ i_X.
\]
Note that if $E=TM$ and $X\in\Gamma(E)=\chi(M)$, then $d^{TM}$ and $\pounds_X^{TM}$ are the usual differential
and the usual Lie derivative with respect to $X$, respectively.

Let $K\in\Gamma(\wedge^kE^*\otimes E)$, then the contraction
\[
i_K:\Gamma(\wedge^n E^*)\rightarrow\Gamma(\wedge^{n+k-1} E^*),
\]
is defined in the natural way. In particular, for simple tensor $K=\mu\otimes X$, where $\mu\in\Gamma(\wedge^k E^*)$, $X\in\Gamma(E)$, we set
\[
i_K\nu=\mu\wedge i_X\nu.
\]
The corresponding Lie differential is defined by the formula
\[
\pounds^E_K=i_K\circ d^E+(-1)^kd^E\circ i_K,
\]
and, in particular,
\[
\pounds^E_{\mu\otimes X}=\mu\wedge\pounds^E_X+(-1)^kd^E\mu\wedge i_X.
\]
The contraction $i_K$ can be extended to an operator
\[
i_K:\Gamma(\wedge^n E^*\otimes E)\rightarrow\Gamma(\wedge^{n+k-1} E^*\otimes E),
\]
by the formula $i_K(\mu\otimes X)=i_K(\mu)\otimes X$. The following theorem contains a list of well-known formulas \cite{GU}:
\begin{theorem}\label{Best}
Let $\mu\in\Gamma(\wedge^kE^*)$, $\nu\in\Gamma(E^*)$ and $X, Y\in\Gamma(E)$. Then we have

(1)\ $d^E\circ d^E=0$,

(2)\ $d^E(\mu\wedge\nu)=d^E\mu\wedge\nu+(-1)^k\mu\wedge d^E\nu$,

(3)\ $i_X(\mu\wedge\nu)=i_X\mu\wedge\nu+(-1)^k\mu\wedge i_X\nu$,

(4)\ $\pounds_X^E(\mu\wedge\nu)=\pounds_X^E\mu\wedge\nu+(-1)^k\mu\wedge \pounds_X^E\nu$,

(5)\ $\pounds^E_X\circ\pounds^E_Y-\pounds^E_Y\circ\pounds^E_X=\pounds^E_{[X, Y]_E}$,

(6)\ $\pounds^E_X\circ i_Y-i_Y\circ\pounds^E_X=i_{[X, Y]_E}$.
\end{theorem}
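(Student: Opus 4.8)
The plan is to read all six identities as statements in the graded Cartan calculus of the Lie algebroid $(E,[\cdot,\cdot]_E,\rho)$ and to prove them in the order $(3),(2),(4)$, then $(1)$, and finally $(5),(6)$, exploiting at each stage that the difference of the two sides is a graded (anti)derivation of the exterior algebra $\Gamma(\wedge^\bullet E^*)$. Since this algebra is generated by $\Gamma(\wedge^0E^*)=C^\infty(M)$ and $\Gamma(E^*)$, any graded (anti)derivation of fixed degree is determined by its values on functions and on $1$-forms, and this reduces every identity to a check in those two generating degrees.

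First I would dispose of $(3)$, which is purely algebraic: expanding $i_X(\mu\wedge\nu)$ from the definition of the contraction and bookkeeping the Koszul signs gives the antiderivation rule directly, with no appeal to $\rho$ or $[\cdot,\cdot]_E$. Identity $(2)$ I would verify by evaluating both sides on $k+l+1$ sections of $E$ and matching the resulting terms of the defining Koszul formula for $d^E$; the anchor terms and the bracket terms pair off exactly as in the classical de Rham case. With $(2)$ and $(3)$ in hand, $(4)$ is then formal: for $X\in\Gamma(E)$ one has $\pounds_X^E=i_X\circ d^E+d^E\circ i_X$, i.e. $\pounds_X^E$ is the graded commutator of the two odd antiderivations $d^E$ (degree $+1$) and $i_X$ (degree $-1$), and the graded commutator of two graded antiderivations is again a graded derivation, here of even degree $0$, which is exactly the Leibniz rule $(4)$.

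For the remaining three I would use the extension principle. By $(2)$--$(4)$ the operators $(d^E)^2$, the commutator $\pounds_X^E\circ\pounds_Y^E-\pounds_Y^E\circ\pounds_X^E-\pounds_{[X,Y]_E}^E$ and $\pounds_X^E\circ i_Y-i_Y\circ\pounds_X^E-i_{[X,Y]_E}$ are graded (anti)derivations of degree $2$, $0$ and $-1$ respectively, so each is determined by its action on $C^\infty(M)$ and on $\Gamma(E^*)$. On functions the substantive computations all collapse via the anchor homomorphism \eqref{revise}: for instance $(d^E)^2f(X,Y)=\rho(X)(\rho(Y)f)-\rho(Y)(\rho(X)f)-\rho([X,Y]_E)f=0$, and likewise $(\pounds_X^E\pounds_Y^E-\pounds_Y^E\pounds_X^E)f=[\rho(X),\rho(Y)]f=\rho([X,Y]_E)f=\pounds_{[X,Y]_E}^Ef$. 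Identity $(6)$ is trivial on functions, its first nontrivial instance being the pairing with an exact section $d^Ef$, where $i_Y d^Ef=\rho(Y)f$ and the commutation $\pounds_X^E\circ d^E=d^E\circ\pounds_X^E$ again reduce everything to \eqref{revise}.

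The main obstacle is the $1$-form level, and here the Lie-algebroid setting departs genuinely from the tangent-bundle case. On $TM$ one finishes instantly because $\Omega^1$ is generated by exact forms $df$, so agreeing on functions and commuting with $d$ suffices. For a general $E$ this shortcut fails: by \eqref{1} the exact sections $d^Ef=\frac{\partial f}{\partial x^i}\rho^i_\alpha e^\alpha$ span only the image of the transposed anchor, so when $\rho$ is not fibrewise injective they do not exhaust $\Gamma(E^*)$ (in the extreme case $\rho=0$, a bundle of Lie algebras, one has $d^Ef=0$ for every $f$). Consequently I must verify $(1),(5),(6)$ directly on the local coframe $\{e^\alpha\}$, using $d^Ee^\alpha=-\tfrac12 L^\alpha_{\beta\gamma}e^\beta\wedge e^\gamma$ together with the structure functions $\rho^i_\alpha,L^\gamma_{\alpha\beta}$. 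This is where the structure equations \eqref{2} do the real work: the vanishing $(d^E)^2e^\alpha=0$ is precisely equivalent to the Jacobi-type equation \eqref{2}(ii), and the corresponding checks for $(5)$ and $(6)$ on $e^\alpha$ rest on \eqref{2}(i) and \eqref{2}(ii), equivalently on \eqref{revise} and \eqref{revise1}. I expect the careful sign and index bookkeeping in these coframe computations, rather than any conceptual difficulty, to be the bulk of the proof.
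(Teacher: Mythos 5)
Your proposal cannot be matched against a proof in the paper, because the paper offers none: Theorem \ref{Best} is introduced there as ``a list of well-known formulas'' with a bare citation to \cite{GU}, so the comparison is between your argument and a reference to Grabowski--Urba\'{n}ski. Judged on its own merits, your strategy is correct and essentially the standard one: (3) is pure Koszul-sign bookkeeping; (2) follows by evaluating the defining formula of $d^E$ on sections; (4) is the observation that $\pounds^E_X=i_X\circ d^E+d^E\circ i_X$ is the graded commutator of two odd antiderivations, hence a degree-zero derivation; and (1), (5), (6) reduce, by locality and the Leibniz rule, to checks on $C^\infty(M)$ and $\Gamma(E^*)$, where the function-level identities collapse to the anchor property (\ref{revise}). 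The most valuable part of your write-up is the warning that the tangent-bundle shortcut fails in this setting: since $d^Ef=\frac{\partial f}{\partial x^i}\rho^i_\alpha e^\alpha$, exact sections span only the image of the transposed anchor (and vanish identically when $\rho=0$), so agreement on functions plus commutation with $d^E$ does not settle the one-form level, and one must compute on the coframe $\{e^\alpha\}$, where $(d^E)^2e^\alpha=0$ is exactly the structure equation (ii) of (\ref{2}). One simplification you could make: for (5) and (6) the coframe computation is avoidable, since the Cartan formula gives the intrinsic identity $(\pounds^E_X\nu)(Z)=\rho(X)(\nu(Z))-\nu([X,Z]_E)$ for $\nu\in\Gamma(E^*)$, from which (6) is immediate and (5) follows from (\ref{revise}) together with the Jacobi identity (\ref{revise1}); only (1) genuinely requires either the coframe check or a direct Koszul-formula computation.
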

The \textit{generalized Fr\"{o}licher-Nijenhuis bracket} is defined for simple tensors  $\mu\otimes X\in\Gamma(\wedge^k E^*\otimes E)$ and $\nu\otimes Y\in\Gamma(\wedge^l E^*\otimes E)$ by the formula
\begin{equation}\label{F1}
[\mu\otimes X, \nu\otimes Y]^{F-N}=(\pounds_{\mu\otimes X}\nu)\otimes Y-(-1)^{kl}(\pounds_{\nu\otimes Y}\mu)\otimes X+\mu\wedge\nu\otimes [X, Y]_E.
\end{equation}
Moreover, for $K\in\Gamma(\wedge^k E^*\otimes E)$, $L\in\Gamma(\wedge^l E^*\otimes E)$ and $N\in\Gamma(\wedge^n E^*\otimes E)$ we have
\begin{equation}\label{F2}
\pounds^E_{[K, L]^{F-N}}=\pounds^E_K\circ\pounds^E_L-(-1)^{kl}\pounds_L^E\circ\pounds^E_K,
\end{equation}
\begin{align}\label{F20}
(-1)^{kn}[K, [L, N]^{F-N}]^{F-N}&+(-1)^{lk}[L, [N, K]^{F-N}]^{F-N}\nonumber\\
&+(-1)^{nl}[N, [K, L]^{F-N}]^{F-N}=0.
\end{align}
From (\ref{F1}) and (\ref{F2}) we get
\begin{align}
[K, Y]^{F-N}(X)&=[K(X), Y]_E-K[X, Y]_E,\label{F3}\\
[K, L]^{F-N}(X, Y)&=[K(X), L(Y)]_E+[L(X), K(Y)]_E+(K\circ L+L\circ K)[X, Y]_E\nonumber\\
&\ \ -K[X, L(Y)]_E-K[L(X), Y]_E-L[X, K(Y)]_E\nonumber\\
&\ \ -L[K(X), Y]_E,\label{F4}
\end{align}
where $K\in\Gamma(E^*\otimes E)$, $L\in\Gamma(E^*\otimes E)$ and $X, Y\in\Gamma(E)$. (see \cite{GU}).
%-----------------------------------------------------------------------------------------
%-----------------------------------------------------------------------------------------
\subsection{Vertical and complete lifts on Lie algebroids}
For a function $f$ on $M$ one defines its vertical lift $f^\vee$ on $E$ by $f^\vee(u)=f(\pi(u))$ for $u\in E$.
Now, let $X$ be a section of $E$. Then, we can consider the vertical lift of $X$ as the vector field
on $E$ given by $X^\vee(u)=X(\pi(u))^\vee_u$, $u\in E$, where ${}^\vee_u: E_{\pi(u)}\rightarrow T_u (E_{\pi(u)})$ is the canonical
isomorphism between the vector spaces $E_{\pi(u)}$ and $T_u (E_{\pi(u)})$.
\begin{lemma}\label{alan}
Let $\{e_\alpha\}$ be a basis of sections of $E$. Then we have
\[
e_\alpha^\vee=\frac{\partial}{\partial\textbf{y}^\alpha}.
\]
\end{lemma}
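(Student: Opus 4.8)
The plan is to verify the claimed identity of vector fields on $E$ pointwise, by unwinding the two definitions involved---the canonical isomorphism ${}^\vee_u$ and the fibre coordinates $\textbf{y}^\alpha$---and matching them at an arbitrary point $u\in E$. First I would make the canonical isomorphism explicit: for a finite-dimensional vector space $V$ and a point $u\in V$, the map ${}^\vee_u:V\rightarrow T_u V$ sends $w\in V$ to the velocity at $t=0$ of the straight line $t\mapsto u+tw$, that is $w^\vee_u=\frac{d}{dt}\big|_{t=0}(u+tw)$. Fixing $u\in E$ and writing $m=\pi(u)$, the definition of the vertical lift gives $e_\alpha^\vee(u)=e_\alpha(m)^\vee_u$, so it suffices to compute the velocity of the curve $c(t)=u+t\,e_\alpha(m)$ inside the fibre $E_m$.

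Next I would express $c(t)$ in the coordinates $(\textbf{x}^i,\textbf{y}^\beta)$. Since $c(t)$ stays in the single fibre $E_m$, the base coordinates are constant, $\textbf{x}^i(c(t))=x^i(m)$. For the fibre coordinates, I expand $u=\textbf{y}^\beta(u)\,e_\beta(m)$ in the given basis, so that
\[
c(t)=\textbf{y}^\beta(u)\,e_\beta(m)+t\,e_\alpha(m),
\]
and reading off the $\beta$-th coordinate in the basis $\{e_\beta(m)\}$ yields $\textbf{y}^\beta(c(t))=\textbf{y}^\beta(u)+t\,\delta^\beta_\alpha$. Differentiating at $t=0$ gives $\frac{d}{dt}\big|_{0}\textbf{x}^i(c(t))=0$ and $\frac{d}{dt}\big|_{0}\textbf{y}^\beta(c(t))=\delta^\beta_\alpha$, so by the chain rule the velocity vector is $\dot c(0)=\frac{\partial}{\partial\textbf{y}^\alpha}\big|_{u}$. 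Hence $e_\alpha^\vee(u)=\frac{\partial}{\partial\textbf{y}^\alpha}\big|_{u}$, and since $u$ was arbitrary the identity of vector fields follows.

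There is no genuine obstacle here; the only point requiring care is the compatibility between the definition of $\textbf{y}^\alpha$ as the $\alpha$-th coordinate of a vector in the frame $\{e_\alpha\}$ and the linear structure of the fibre used in the canonical isomorphism. Once one observes that adding $t\,e_\alpha(m)$ shifts precisely the $\alpha$-th fibre coordinate by $t$ and leaves all other coordinates fixed---a consequence of the linearity of the coordinate functions on each fibre---the computation is immediate.
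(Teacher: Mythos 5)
Your proof is correct and follows essentially the same route as the paper: both unwind the canonical isomorphism as the velocity at $t=0$ of the straight line $t\mapsto u+te_\alpha(\pi(u))$ and then read off that the fibre coordinates change linearly, $\textbf{y}^\beta(u+te_\alpha)=\textbf{y}^\beta(u)+t\delta^\beta_\alpha$, so the velocity is $\frac{\partial}{\partial\textbf{y}^\alpha}\big|_u$. The only cosmetic difference is that the paper phrases the computation as evaluating $d\textbf{y}^\alpha$ on $e_\beta^\vee(u)$ while you compute the coordinate expression of the velocity directly (also checking explicitly that the $\textbf{x}^i$-components vanish, which the paper leaves implicit via verticality).
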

\begin{proof}
We have
\begin{align*}
d\textbf{y}^\alpha(e_\beta^\vee(u))&=d\textbf{y}^\alpha(\frac{d}{dt}|_{t=0}(u+te_\beta))=\frac{d}{dt}|_{t=0}(\textbf{y}^\alpha (u+te_\beta))\\
&=\frac{d}{dt}|_{t=0}(\textbf{y}^\alpha(u)+t\delta^\alpha_\beta)=\delta^\alpha_\beta.
\end{align*}
\end{proof}
From the above lemma we result that if $X=X^\alpha e_\alpha\in\Gamma(E)$, then the vertical lift $X^\vee$ has the locally expression
\[
X^\vee=(X^\alpha\circ\pi)\frac{\partial}{\partial\textbf{y}^\alpha}.
\]
Using the locally expression of $X^\vee$ we can deduce
\begin{lemma}
If $X$, $Y$ are sections of $E$ and $f\in C^\infty(M)$, then
\[
(X+Y)^\vee=X^\vee+Y^\vee,\ \ \ (fX)^\vee=f^\vee X^\vee,\ \ \ X^\vee f^\vee=0.
\]
\end{lemma}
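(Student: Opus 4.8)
The plan is to reduce all three identities to the local coordinate description of the vertical lift that was just obtained, namely the fact that for $X=X^\alpha e_\alpha\in\Gamma(E)$ one has $X^\vee=(X^\alpha\circ\pi)\frac{\partial}{\partial\textbf{y}^\alpha}$, together with the definition $f^\vee=f\circ\pi$ for $f\in C^\infty(M)$. Since the vertical lift is, by this formula, an $\RR$-linear assignment into the $\frac{\partial}{\partial\textbf{y}^\alpha}$-components whose coefficients are pullbacks of the component functions, each of the three claims becomes an elementary computation in a fixed local basis $\{e_\alpha\}$, and the identities are coordinate-independent, so it suffices to check them in such a basis.

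First I would write $X=X^\alpha e_\alpha$ and $Y=Y^\alpha e_\alpha$. For additivity, I observe that $X+Y=(X^\alpha+Y^\alpha)e_\alpha$, whence $(X+Y)^\vee=((X^\alpha+Y^\alpha)\circ\pi)\frac{\partial}{\partial\textbf{y}^\alpha}=(X^\alpha\circ\pi)\frac{\partial}{\partial\textbf{y}^\alpha}+(Y^\alpha\circ\pi)\frac{\partial}{\partial\textbf{y}^\alpha}=X^\vee+Y^\vee$, using that $\pi^*$ distributes over sums of functions. For the second identity, I note that $fX=(fX^\alpha)e_\alpha$, so $(fX)^\vee=((fX^\alpha)\circ\pi)\frac{\partial}{\partial\textbf{y}^\alpha}=(f\circ\pi)(X^\alpha\circ\pi)\frac{\partial}{\partial\textbf{y}^\alpha}=f^\vee X^\vee$, where the middle step uses that $\pi^*$ is a ring homomorphism and the last step is just the definition $f^\vee=f\circ\pi$.

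The third identity is the only one requiring a genuine observation rather than pure bookkeeping: one must see that $f^\vee=f\circ\pi$ depends only on the base coordinates $\textbf{x}^i=x^i\circ\pi$ and is independent of the fibre coordinates $\textbf{y}^\alpha$. Granting this, $\frac{\partial f^\vee}{\partial\textbf{y}^\alpha}=0$ for every $\alpha$, and therefore $X^\vee f^\vee=(X^\alpha\circ\pi)\frac{\partial}{\partial\textbf{y}^\alpha}(f\circ\pi)=0$. I do not expect any real obstacle here; the one point to state carefully is precisely this fibre-independence of pulled-back functions, which is immediate from $f^\vee(u)=f(\pi(u))$, since moving along a fibre does not change $\pi(u)$. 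With that remark in place all three equalities follow at once from the local formula, completing the proof.
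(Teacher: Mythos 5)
Your proposal is correct and follows exactly the route the paper intends: the paper itself states the lemma as an immediate consequence of the local expression $X^\vee=(X^\alpha\circ\pi)\frac{\partial}{\partial\textbf{y}^\alpha}$ together with $f^\vee=f\circ\pi$, which is precisely the computation you carry out. Your explicit remark that $f\circ\pi$ is independent of the fibre coordinates, which gives $X^\vee f^\vee=0$, is the only substantive point, and you have handled it correctly.
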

The complete lift of a smooth function $f\in C^{\infty}(M)$ into $C^{\infty}(E)$ is the smooth function
\[
f^c:E\longrightarrow\mathbb{R},\ \ \ \ f^c(u)=d^Ef(u)=\rho(u)f.
\]
In the local basis we have
\begin{align*}
f^c(u)&=f^c(u^\alpha e_\alpha)=\rho(u^\alpha e_\alpha)(f)=u^\alpha\rho(e_\alpha)(f)=u^\alpha\rho^i_\alpha\frac{\partial f}{\partial x^i}\\
&=(\textbf{y}^\alpha((\rho^i_\alpha\frac{\partial f}{\partial x^i})\circ\pi))(u),
\end{align*}
i.e.,
\begin{equation}\label{cf}
f^c|_{\pi^{-1}(U)}=\textbf{y}^\alpha((\rho^i_\alpha\frac{\partial f}{\partial x^i})\circ\pi).
\end{equation}
\begin{lemma}
If $X$ is a section on $E$ and $f, g\in C^\infty(M)$, then
\[
(i)\ (f+g)^c=f^c+g^c,\ \ \ (ii)\ (fg)^c=f^cg^\vee+f^\vee g^c,\ \ \ (iii)\ X^\vee f^c=(\rho(X)f)^\vee.
\]
\end{lemma}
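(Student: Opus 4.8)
The plan is to verify (i) and (ii) pointwise, directly from the defining formula $f^{c}(u)=\rho(u)f$, and to reduce (iii) to a short computation in a local basis, using Lemma \ref{alan} and the coordinate expression (\ref{cf}).

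For (i), I would fix $u\in E$. Since $\rho(u)\in T_{\pi(u)}M$ is a tangent vector, it acts $\mathbb{R}$-linearly on $C^{\infty}(M)$, so $(f+g)^{c}(u)=\rho(u)(f+g)=\rho(u)f+\rho(u)g=f^{c}(u)+g^{c}(u)$; as $u$ was arbitrary this gives the claim. For (ii), I would again fix $u$ and use that $\rho(u)$, being a tangent vector at $\pi(u)$, is a derivation: the Leibniz rule yields $\rho(u)(fg)=(\rho(u)f)\,g(\pi(u))+f(\pi(u))\,(\rho(u)g)$. Reading off $\rho(u)f=f^{c}(u)$, $\rho(u)g=g^{c}(u)$ and $f(\pi(u))=f^{\vee}(u)$, $g(\pi(u))=g^{\vee}(u)$ from the definitions of the two lifts, the right-hand side is exactly $f^{c}(u)g^{\vee}(u)+f^{\vee}(u)g^{c}(u)$, which is the asserted identity.

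Only (iii) requires a genuine calculation. Writing $X=X^{\alpha}e_{\alpha}$, Lemma \ref{alan} gives $X^{\vee}=(X^{\alpha}\circ\pi)\,\partial/\partial\mathbf{y}^{\alpha}$, while (\ref{cf}) reads $f^{c}=\mathbf{y}^{\beta}\big((\rho^{i}_{\beta}\tfrac{\partial f}{\partial x^{i}})\circ\pi\big)$. Applying $X^{\vee}$ and using both $\partial\mathbf{y}^{\beta}/\partial\mathbf{y}^{\alpha}=\delta^{\beta}_{\alpha}$ and the fact that $\partial/\partial\mathbf{y}^{\alpha}$ annihilates every function pulled back from $M$, all contributions collapse to $\big(X^{\alpha}\rho^{i}_{\alpha}\tfrac{\partial f}{\partial x^{i}}\big)\circ\pi$, which is precisely $(\rho(X)f)^{\vee}$. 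The argument is essentially mechanical; the single point demanding attention is in (iii), where one must remember that $\partial/\partial\mathbf{y}^{\alpha}$ kills the base-pulled-back coefficient in $f^{c}$, so that the product rule leaves only one surviving term rather than two.
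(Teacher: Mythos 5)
Your proposal is correct and follows essentially the same route as the paper: the pointwise derivation (Leibniz) argument for (ii) using $f^{c}(u)=\rho(u)f$, and the local-coordinate computation for (iii) via $X^{\vee}=(X^{\alpha}\circ\pi)\,\partial/\partial\mathbf{y}^{\alpha}$ and (\ref{cf}), where only the $\delta^{\beta}_{\alpha}$ term survives. The only difference is cosmetic: you spell out (i), which the paper dismisses as obvious, and you make explicit the key observation that $\partial/\partial\mathbf{y}^{\alpha}$ annihilates functions pulled back from $M$.
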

\begin{proof}
The proof of (i) is obvious. Thus we only prove (ii) and (iii). Using the definition of $f^c$ we get
\begin{align*}
(fg)^c(u)&=\rho(u)(fg)=(\rho(u)f)(g\circ\pi)(u)+(f\circ\pi)(u)(\rho(u)g)\\
&=f^c(u)g^\vee(u)+f^\vee(u)g^c(u).
\end{align*}
So we have (ii). Using the locally expressions, we obtain
\begin{align*}
(X^\vee f^c)(u)&=[(X^\alpha\circ\pi)\frac{\partial}{\partial\textbf{y}^\alpha}(\textbf{y}^\beta((\rho^i_\beta\frac{\partial f}{\partial x^i})\circ\pi))](u)=[(X^\alpha\rho^i_\alpha\frac{\partial f}{\partial x^i})\circ\pi](u)\\
&=((\rho(X)f)\circ\pi)(u)=(\rho(X)f)^\vee(u).
\end{align*}
\end{proof}
%888888888888888888888888888888888888888888888888888888888888888888888888888888888888888888888
Let $X$ be a section on $E$. Then there exist a unique vector field $X^c$ on $E$, the complete lift of $X$, satisfying the following conditions:

\textbf{i}) $X^c$ is $\pi$-projectable on $\rho(X)$,

\textbf{ii}) $X^c(\hat{\alpha})=\widehat{\pounds_{X}^{E}\alpha}$,\\
where $\alpha \in \Gamma(E^*)$. It is known that $X^c$ has the following coordinate expression\cite{GU}, \cite{GU1}:
\begin{equation}\label{esa}
X^c=\{(X^\alpha \rho_{\alpha}^{i})\circ \pi\} \frac{\partial}{\partial \textbf{x}^i}+\textbf{y}^\beta \{(\rho_{\beta}^{j}\frac{\partial X^\alpha}{\partial x^j}-X^\gamma L^\alpha_{\gamma\beta})\circ\pi\} \frac{\partial}{\partial \textbf{y}^\alpha}.
\end{equation}
%888888888888888888888888888888888888888888888888888888888888888888888888888888888888888888888
\begin{lemma}
Let $X$ be a section of $E$. Then
\[
X^cf^c=(\rho(X)f)^c,\ \ \ \forall f\in C^\infty(M).
\]
\end{lemma}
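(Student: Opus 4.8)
The plan is to avoid a direct expansion in coordinates by first observing that the linear function $f^c$ coincides with $\widehat{d^Ef}$. Indeed, comparing the coordinate formula (\ref{cf}), namely $f^c=\textbf{y}^\alpha((\rho^i_\alpha\frac{\partial f}{\partial x^i})\circ\pi)$, with the expression $\widehat{d^Ef}=\frac{\partial f}{\partial x^i}\rho^i_\alpha\textbf{y}^\alpha$ recorded just after (\ref{1}), we see that $f^c=\widehat{d^Ef}$; equivalently, $f^c(u)=\rho(u)f=\langle d^Ef,u\rangle$ straight from the definition of $f^c$. This reduces the claim to understanding how $X^c$ acts on a hat-function, which is exactly the content of the defining property \textbf{ii}) of the complete lift.

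Next I would apply property \textbf{ii}) with $\alpha=d^Ef\in\Gamma(E^*)$, which gives $X^c(\widehat{d^Ef})=\widehat{\pounds_X^E(d^Ef)}$. It then remains to simplify $\pounds_X^E(d^Ef)$. Since $X\in\Gamma(E)$ has degree $k=1$, the Lie differential reads $\pounds_X^E=i_X\circ d^E+d^E\circ i_X$. Applying this to $d^Ef$ and using $d^E\circ d^E=0$ from Theorem \ref{Best}(1), the first term drops out and we are left with $\pounds_X^E(d^Ef)=d^E(i_Xd^Ef)=d^E(d^Ef(X))=d^E(\rho(X)f)$, where the last equality uses $d^Ef(X)=\rho(X)f$. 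Hence $X^cf^c=X^c(\widehat{d^Ef})=\widehat{d^E(\rho(X)f)}$, and invoking $f^c=\widehat{d^Ef}$ once more with $f$ replaced by the function $\rho(X)f\in C^\infty(M)$ yields $\widehat{d^E(\rho(X)f)}=(\rho(X)f)^c$, which is the desired identity.

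As a cross-check, and as the route one would take if property \textbf{ii}) were to be re-derived rather than assumed, one can instead expand everything via the coordinate formulas (\ref{esa}) and (\ref{cf}), writing $\rho(X)f=X^\alpha\rho^i_\alpha\frac{\partial f}{\partial x^i}$ and applying (\ref{cf}) to it. Acting with $X^c$ on $f^c$ and differentiating the coordinate function $\textbf{y}^\alpha$ and the pullbacks separately produces four terms; three of them match the three terms of $(\rho(X)f)^c$ immediately, using only the symmetry $\frac{\partial^2 f}{\partial x^i\partial x^j}=\frac{\partial^2 f}{\partial x^j\partial x^i}$. The one genuinely nontrivial step, and the main obstacle in this second approach, is reconciling the remaining two terms, which forces the identity $\rho^j_\alpha\frac{\partial\rho^k_\beta}{\partial x^j}-\rho^j_\beta\frac{\partial\rho^k_\alpha}{\partial x^j}=\rho^k_\gamma L^\gamma_{\alpha\beta}$; this is precisely the first structure equation (\ref{2})(i), so the anchor-compatibility of the Lie algebroid is exactly what makes the two sides agree. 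The conceptual proof above is preferable, since it packages this structure equation invisibly inside $(d^E)^2=0$.
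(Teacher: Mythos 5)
Your proposal is correct, and your primary argument takes a genuinely different route from the paper's. The paper proves the lemma by direct coordinate computation: it expands $X^cf^c$ using (\ref{cf}) and (\ref{esa}), pushes derivatives through the pullbacks via $\frac{\partial}{\partial \textbf{x}^i}(f\circ\pi)=\frac{\partial f}{\partial x^i}\circ\pi$, and then invokes the structure equation (i) of (\ref{2}) to match the result with $(\rho(X)f)^c$ --- which is exactly the computation you sketch as your ``cross-check'', including your identification of (\ref{2})(i) as the one nontrivial ingredient. Your main proof instead works invariantly: the identification $f^c=\widehat{d^Ef}$, the defining property \textbf{ii}) of the complete lift applied to $\alpha=d^Ef\in\Gamma(E^*)$, the Cartan formula $\pounds^E_X=i_X\circ d^E+d^E\circ i_X$ (the case $k=1$ of the paper's definition of $\pounds^E_X$), and $(d^E)^2=0$ from Theorem \ref{Best}(1) combine to give $X^cf^c=\widehat{\pounds^E_X(d^Ef)}=\widehat{d^E(\rho(X)f)}=(\rho(X)f)^c$. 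This is legitimate and non-circular: the paper defines $X^c$ by properties \textbf{i}) and \textbf{ii}) and quotes the coordinate expression (\ref{esa}) from the literature, so property \textbf{ii}) is available to you as an axiom rather than something derived from the lemma itself. What your route buys is economy and transparency --- the anchor-compatibility (\ref{revise}) enters only through $(d^E)^2=0$, so the identity is revealed as a formal consequence of the exterior calculus on Lie algebroids, with no indices; what the paper's route buys is independence from property \textbf{ii}), since it needs only the explicit formula (\ref{esa}), and it shows concretely where the structure equations do their work.
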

\begin{proof}
Using (\ref{cf}) we get
\begin{equation}\label{esa1}
(\rho(X)f)^c=(X^\alpha\rho^i_\alpha\frac{\partial f}{\partial x^i})^c=\textbf{y}^\beta\{(\rho^j_\beta\frac{\partial}{\partial x^j}(X^\alpha\rho^i_\alpha\frac{\partial f}{\partial x^i}))\circ\pi\}.
\end{equation}
Again (\ref{cf}) and (\ref{esa}) give us
\begin{align*}
X^cf^c&=\{ (X^\alpha \rho_{\alpha}^{i})\circ \pi\}\frac{\partial}{\partial \textbf{x}^i}\{ \textbf{y}^\gamma((\rho _{\gamma}^{j}\frac{\partial f}{\partial x^j})\circ \pi) \}\\
&\ \ \ + \textbf{y}^\beta \{(\rho_{\beta}^{j}\frac{\partial X^\alpha}{\partial x^j}- X^{\gamma}L_{\gamma \beta}^{\alpha})\circ \pi \}((\rho_{\alpha}^{i}\frac{\partial f}{\partial x^i})\circ \pi).
\end{align*}
It is easy to see that $\pi_*(\frac{\partial}{\partial \textbf{x}^i})=\frac{\partial}{\partial x^i}$ which gives us $\frac{\partial}{\partial \textbf{x}^i}(f \circ \pi)=\frac{\partial f}{\partial x^i}\circ \pi$ for all $f \in C^\infty (M)$. Thus from the above equation one can deduce the following
\begin{align*}
X^cf^c&=\textbf{y}^\beta\{ \big(X^\alpha \rho_{\alpha}^{i}\rho_{\beta}^{j}\frac{\partial^2f}{\partial x^i \partial x^j}+X^\alpha \frac{\partial f}{\partial x^i}(\rho_{\alpha}^{j}\frac{\partial \rho_{\beta}^{i}}{\partial x^j}-\rho_{\gamma}^{i}L_{\alpha\beta}^{\gamma})\\
&\ \ \ +\rho_{\alpha}^{i}\rho_{\beta}^{j}\frac{\partial f}{\partial x^i}\frac{\partial X^\alpha}{\partial x^j}\big)\circ \pi \}.
\end{align*}
Using $(i)$ of (\ref{2}), the above relation and (\ref{esa1}) yields
\begin{equation}
X^cf^c=\textbf{y}^\beta\{ \big(\rho_{\beta}^{j}\frac{\partial}{\partial x^j}(X^\alpha \rho_{\alpha}^{i}\frac{\partial f}{\partial x^i}) \big)\circ\pi \}=(\rho(X)f)^c.
\end{equation}
\end{proof}
\begin{cor}
Let $X$ be a section of $E$. Then
\[
X^cf^\vee=(\rho(X)f)^\vee,\ \ \ \forall f\in C^\infty(M).
\]
\end{cor}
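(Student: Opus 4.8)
The plan is to exploit the defining property \textbf{i}) of the complete lift, namely that $X^c$ is $\pi$-projectable on $\rho(X)$. Recall that a vector field $\xi$ on $E$ is $\pi$-projectable on a vector field $Z$ on $M$ precisely when $\xi(g\circ\pi)=(Zg)\circ\pi$ for every $g\in C^\infty(M)$. Since by construction $f^\vee=f\circ\pi$ and likewise $(\rho(X)f)^\vee=(\rho(X)f)\circ\pi$, applying projectability with $g=f$ and $Z=\rho(X)$ would yield $X^cf^\vee=X^c(f\circ\pi)=(\rho(X)f)\circ\pi=(\rho(X)f)^\vee$ in one stroke, so no further computation is required.

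Alternatively, if one prefers a self-contained coordinate argument that does not invoke projectability directly, the plan is to feed $f^\vee=f\circ\pi$ into the local expression (\ref{esa}) for $X^c$. The crucial observation is that $f\circ\pi$ is constant along the fibres of $E$, so that $\frac{\partial}{\partial\textbf{y}^\alpha}(f\circ\pi)=0$; consequently the entire vertical component of $X^c$ annihilates $f^\vee$, regardless of the structure functions $\rho^j_\beta$, $\frac{\partial X^\alpha}{\partial x^j}$ and $L^\alpha_{\gamma\beta}$ appearing there. For the horizontal component one uses the identity $\frac{\partial}{\partial\textbf{x}^i}(f\circ\pi)=\frac{\partial f}{\partial x^i}\circ\pi$, already recorded in the proof of the preceding lemma. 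Combining these two facts collapses $X^cf^\vee$ to $\{(X^\alpha\rho^i_\alpha)\circ\pi\}(\frac{\partial f}{\partial x^i}\circ\pi)=(X^\alpha\rho^i_\alpha\frac{\partial f}{\partial x^i})\circ\pi$, which is exactly $(\rho(X)f)\circ\pi=(\rho(X)f)^\vee$ by the local formula $\rho(X)f=X^\alpha\rho^i_\alpha\frac{\partial f}{\partial x^i}$.

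There is essentially no genuine obstacle here: the statement is the vertical-lift analogue of the previously established identity $X^cf^c=(\rho(X)f)^c$, and it is considerably easier because the second-derivative and structure-function terms that had to be reconciled there (via the structure equation $(i)$ of (\ref{2})) simply do not arise. The only point to keep in mind is that the vertical part of $X^c$ drops out entirely, which is immediate once one notes that $f^\vee$ has no dependence on the fibre coordinates $\textbf{y}^\alpha$.
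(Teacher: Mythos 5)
Your proposal is correct, but it proves the corollary by a genuinely different route than the paper does. The paper treats this as a formal consequence of the preceding lemma $X^cf^c=(\rho(X)f)^c$: it applies $X^c$ to $(f^2)^c$ and expands in two ways, once via $(f^2)^c=2f^cf^\vee$ together with the lemma, and once via $\frac{1}{2}(\rho(X)f^2)^c=(f\rho(X)f)^c=f^c(\rho(X)f)^\vee+f^\vee(\rho(X)f)^c$, then cancels the common term $f^\vee(\rho(X)f)^c=(\rho(X)f)^cf^\vee$ to isolate $X^cf^\vee=(\rho(X)f)^\vee$ --- a polarization trick that stays entirely inside the algebra of vertical and complete lifts and never touches coordinates or the definition of $X^c$. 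Your first argument instead reads the identity off from the defining property \textbf{i}) of the complete lift: $\pi$-projectability of $X^c$ onto $\rho(X)$ is exactly the statement $X^c(g\circ\pi)=(\rho(X)g)\circ\pi$ for all $g\in C^\infty(M)$, so the corollary is immediate and needs neither the lemma nor the local formula (\ref{esa}). Your second argument is a direct coordinate check using (\ref{esa}), where the vertical component of $X^c$ kills $f^\vee$ because $f\circ\pi$ is fibrewise constant. Both of your arguments are valid; what they buy is economy and independence --- the statement is seen to be a tautology of the definition rather than a consequence of the harder lemma --- while the paper's proof buys a demonstration that the lift calculus $(fg)^c=f^cg^\vee+f^\vee g^c$ together with the lemma already forces the result, which is presumably why the authors labelled it a corollary. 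The one thing to keep in mind with your projectability argument is that the paper never spells out the functional characterization of projectability you invoke, so in the paper's own logical organization you would need to record that equivalence (or verify it against the coordinate expression (\ref{esa}), which is your second argument) before the one-line proof is complete.
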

\begin{proof}
Using the above lemma, we obtain
\[
\frac{1}{2}X^c(f^2)^c=X^c(f^cf^\vee)=(X^cf^c)f^\vee+f^c(X^cf^\vee)=(\rho(X)f)^cf^\vee+f^c(X^cf^\vee).
\]
In other hand, we deduce
\[
\frac{1}{2}X^c(f^2)^c=\frac{1}{2}(\rho(X)f^2)^c=(f\rho(X)f)^c=f^c(\rho(X)f)^\vee+f^\vee(\rho(X)f)^c.
\]
The above equations give us $X^cf^\vee=(\rho(X)f)^\vee$.
\end{proof}
Using the locally expressions of vertical and complete lifts we have
\begin{lemma}
If $X$ and $Y$ are sections of $E$, then
\[
[X^c, Y^c]=[X, Y]^c_E,\ \ \ [X^c, Y^\vee]=[X, Y]^\vee_E,\ \ \ [X^\vee, Y^\vee]=0.
\]
\end{lemma}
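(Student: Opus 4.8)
The plan is to establish the three identities separately, working throughout in the local coordinates $(\textbf{x}^i,\textbf{y}^\alpha)$ and using $X^\vee=(X^\alpha\circ\pi)\frac{\partial}{\partial\textbf{y}^\alpha}$ together with the expression (\ref{esa}) for $X^c$. The first identity is immediate: writing $X^\vee=(X^\alpha\circ\pi)\frac{\partial}{\partial\textbf{y}^\alpha}$ and $Y^\vee=(Y^\beta\circ\pi)\frac{\partial}{\partial\textbf{y}^\beta}$, both coefficients are constant along the fibres, so each operator annihilates the other's coefficient and $[X^\vee,Y^\vee]=0$.

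For $[X^c,Y^\vee]$ I would first record the local components of the algebroid bracket. From $[X,Y]_E=[X^\alpha e_\alpha,Y^\beta e_\beta]_E$, the Leibniz rule $[X,fY]_E=f[X,Y]_E+\rho(X)(f)Y$ together with $[e_\alpha,e_\beta]_E=L^\gamma_{\alpha\beta}e_\gamma$ and $\rho(e_\alpha)=\rho^i_\alpha\frac{\partial}{\partial x^i}$ gives $[X,Y]_E^\gamma=X^\alpha\rho^i_\alpha\frac{\partial Y^\gamma}{\partial x^i}-Y^\alpha\rho^i_\alpha\frac{\partial X^\gamma}{\partial x^i}+X^\alpha Y^\beta L^\gamma_{\alpha\beta}$. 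Computing $[X^c,Y^\vee]$, only two kinds of terms survive: the $\frac{\partial}{\partial\textbf{x}^i}$-part of $X^c$ differentiates the fibre-constant coefficient of $Y^\vee$, producing the $\rho$-derivative terms, while the $\frac{\partial}{\partial\textbf{y}}$-part of $Y^\vee$ differentiates the $\textbf{y}$-linear coefficient of $X^c$, producing the $L$-term; collecting these reproduces exactly $([X,Y]_E^\gamma\circ\pi)\frac{\partial}{\partial\textbf{y}^\gamma}=[X,Y]^\vee_E$.

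The identity $[X^c,Y^c]=[X,Y]^c_E$ is the substantial one, and I expect its bookkeeping to be the main obstacle. Expanding $[X^c,Y^c]$ from (\ref{esa}) produces, in the $\frac{\partial}{\partial\textbf{x}^i}$-component, second-order terms in $X,Y$ that cancel by symmetry of the mixed partials, and terms carrying $\frac{\partial\rho}{\partial x}$ that must be reorganized by the first structure equation (\ref{2})(i); in the $\frac{\partial}{\partial\textbf{y}^\alpha}$-component one similarly gets second-order cancellations together with terms carrying $\frac{\partial L}{\partial x}$ and products $L\cdot L$, which collapse precisely by the Jacobi-type structure equation (\ref{2})(ii). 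After these reductions the result should match (\ref{esa}) applied to the section $[X,Y]_E$, whose components were found above. The delicate point is organizing the index contractions so that (\ref{2})(i)--(ii) apply cleanly.

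Finally, it is worth noting a coordinate-free shortcut for the hardest identity that avoids the structure equations entirely: by the uniqueness of the complete lift it suffices to check the two defining properties for $[X^c,Y^c]$ with $X$ replaced by $[X,Y]_E$. Property $\mathbf{i})$ follows since $X^c,Y^c$ project to $\rho(X),\rho(Y)$, hence $[X^c,Y^c]$ projects to $[\rho(X),\rho(Y)]=\rho([X,Y]_E)$ by (\ref{revise}); property $\mathbf{ii})$ follows from $[X^c,Y^c](\hat{\alpha})=X^c(\widehat{\pounds_Y^E\alpha})-Y^c(\widehat{\pounds_X^E\alpha})=\widehat{(\pounds_X^E\pounds_Y^E-\pounds_Y^E\pounds_X^E)\alpha}=\widehat{\pounds_{[X,Y]_E}^E\alpha}$, using Theorem \ref{Best}(5). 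The same characterization, combined with the Corollary $X^cf^\vee=(\rho(X)f)^\vee$ and the Cartan-type formula $(\pounds_X^E\alpha)(Y)=\rho(X)(\alpha(Y))-\alpha([X,Y]_E)$, also recovers $[X^c,Y^\vee]=[X,Y]^\vee_E$ without local coordinates.
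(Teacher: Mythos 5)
Your proof is correct, but its decisive step is not the one the paper (implicitly) uses, so a comparison is in order. The paper states this lemma with no more justification than ``using the locally expressions of vertical and complete lifts,'' i.e.\ the intended argument is exactly the brute-force expansion you sketch: expand $[X^c,Y^c]$ from (\ref{esa}), cancel the mixed second-order derivatives, and absorb the $\partial\rho/\partial x$, $\partial L/\partial x$ and $L\cdot L$ terms via the structure equations (\ref{2}). Your computations for $[X^\vee,Y^\vee]=0$ and $[X^c,Y^\vee]=[X,Y]_E^\vee$ follow that same route and are complete (one small slip of prose: the $\partial/\partial\textbf{y}$-part of $Y^\vee$ acting on the $\textbf{y}$-linear coefficient of $X^c$ produces both the term $-Y^\beta\rho^j_\beta\frac{\partial X^\gamma}{\partial x^j}$ and the $L$-term, not the $L$-term alone). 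For $[X^c,Y^c]$, however, your coordinate sketch ends with ``should match'' and is only a plan; what actually completes your proposal is the final, coordinate-free argument: $[X^c,Y^c]$ is $\pi$-projectable onto $[\rho(X),\rho(Y)]=\rho([X,Y]_E)$ by (\ref{revise}), and it acts on linear functions by $[X^c,Y^c](\hat\alpha)=\widehat{(\pounds^E_X\pounds^E_Y-\pounds^E_Y\pounds^E_X)\alpha}=\widehat{\pounds^E_{[X,Y]_E}\alpha}$ by Theorem \ref{Best}(5), so the uniqueness clause in the definition of the complete lift forces $[X^c,Y^c]=[X,Y]^c_E$. This is genuinely different from the paper's computation: it trades all index bookkeeping and explicit use of the structure equations (which are silently encoded in (\ref{revise}) and in Theorem \ref{Best}(5)) for the uniqueness statement, whose content is simply that a vertical vector field annihilating every $\hat\alpha$ vanishes; the price is that it only works because the paper has already supplied the axiomatic characterization of $X^c$. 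The same mechanism also recovers the mixed identity as you indicate, provided you record the one-line fact $Y^\vee(\hat\alpha)=(\alpha(Y))^\vee$, which is what makes your Cartan-type formula $(\pounds_X^E\alpha)(Y)=\rho(X)(\alpha(Y))-\alpha([X,Y]_E)$ applicable there.
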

%********************************************************************************
%********************************************************************************
\section{The Prolongation of a Lie algebroid}
In this section we will recall the notion of the prolongation of a Lie algebroid and we will consider a Lie algebroid structure on it. We also study the vertical and complete lifts on the prolongation of a Lie algebroid.

Let $\pounds^\pi E$ be the subset of $E\times TE$ defined by $\pounds^\pi E=\{(u, z)\in E\times TE|\rho(u)=\pi_*(z)\}$ and
denote by $\pi_\pounds: \pounds^\pi E\rightarrow E$ the mapping given by $\pi_\pounds(u, z)=\pi_E(z)$, where $\pi_E: TE\rightarrow E$ is the natural projection. Then $(\pounds^\pi E, \pi_\pounds, E)$ is a vector bundle over $E$ of rank $2n$. Indeed, the total space of the prolongation is the total space of the pull-back of $\pi_*:TE\rightarrow TM$ by the anchor map $\rho$.

We introduce the vertical subbundle
\[
v\pounds^\pi E=\ker\tau_\pounds=\{(u, z)\in \pounds^\pi E|\tau_\pounds(u, z)=0\},
\]
where $\tau_\pounds:\pounds^\pi E\rightarrow E$ is the projection onto the first factor, i.e., $\tau_\pounds(u, z)=u$. Therefore an element of $v\pounds^\pi E$ is of the form $(0,z)\in E\times TE$ such that $\pi_*(z)=0$ which is called vertical. Since $\pi_*(z)=0$ and $\ker\pi_*=vE$ ($\pi_*:TE\rightarrow TM$), then we deduce that if $(0, z)$ is vertical then $z$ is a vertical vector on $E$.

For local basis $\{e_\alpha\}$ of sections of $E$ and coordinates $(\textbf{x}^i, \textbf{y}^\alpha)$ on $E$, we have local  coordinates $(\textbf{x}^i, \textbf{y}^\alpha, k^\alpha, z^\alpha)$ on $\pounds^\pi E$ given as follows. If $(u, z)$ is an element of $\pounds^\pi E$, then by using $\rho(u)=\pi_*(z)$, $z$ has the form
\[
z=((\rho^i_\alpha u^\alpha)\circ\pi){\frac{\partial}{\partial\textbf{x}^i}}|_v+z^\alpha{\frac{\partial}{\partial\textbf{y}^\alpha}}|_v,\ \ \ z\in T_vE.
\]
The local basis $\{\mathcal{X}_\alpha, \mathcal{V}_\alpha\}$ of sections of $\pounds^\pi E$ associated to the coordinate system is given by
\begin{equation}\label{revise2}
\mathcal{X}_\alpha(v)=(e_\alpha(\pi(v)), (\rho^i_\alpha\circ\pi)\frac{\partial}{\partial\textbf{x}^i}|_v),\ \ \ \mathcal{V}_\alpha(v)=(0, \frac{\partial}{\partial\textbf{y}^\alpha}|v).
\end{equation}
If $V$ is a section of $\pounds^\pi E$ wchich in coordinates writes
\[
V(x, y)=(\textbf{x}^i, \textbf{y}^\alpha, Z^\alpha(x, y), V^\alpha(x, y)),
\]
then the expression of $V$ in terms of base $\{\mathcal{X}_\alpha, \mathcal{V}_\alpha\}$ is \cite{M}
\[
V=Z^\alpha\mathcal{X}_\alpha+V^\alpha\mathcal{V}_\alpha.
\]
We may introduce the vertical lift $X^V$ and the complete lift $X^C$ of a section $X\in\Gamma(E)$ as the sections of $\pounds^\pi E\rightarrow E$ given by
\[
X^V(u)=(0, X^\vee(u)),\ \ \ X^C(u)=(X(\pi(u)), X^c(u)),\ \ \ u\in E.
\]
Using the coordinate expressions of $X^\vee$ and $X^c$, the coordinate expressions of $X^V$ and $X^C$ as follows:
\begin{equation}\label{esi}
X^V=(X^\alpha\circ\pi)\mathcal{V}_\alpha,\ \ \ X^C=(X^\alpha\circ\pi)\mathcal{X}_\alpha+\textbf{y}^\beta[(\rho^j_\beta\frac{\partial X^\alpha}{\partial x^j}-X^\gamma L^\alpha_{\gamma\beta})\circ\pi]\mathcal{V}_\alpha,
\end{equation}
where $X=X^\alpha e_\alpha\in\Gamma(E)$. In particular we have
\begin{equation}\label{2base}
e_\alpha^V=\mathcal{V}_\alpha.
\end{equation}
Here, we consider the anchor map $\rho_\pounds:\pounds^\pi E\rightarrow TE$ defined by $\rho_\pounds(u, z)=z$ and the bracket $[., .]_\pounds$ satisfying the relations
\begin{equation}\label{emroz}
[X^V, Y^V]_\pounds=0,\ \ \ [X^V, Y^C]_\pounds=[X, Y]_E^V,\ \ \ [X^C, Y^C]_\pounds=[X, Y]^C_E,
\end{equation}
for $X, Y\in\Gamma(E)$. Then  this vector bundle $(\pounds^\pi E, \pi_\pounds, E)$ is a Lie algebroid with structure $([., .]_\pounds, \rho_\pounds)$.

Using (\ref{emroz}) we can deduce the following
\begin{lemma}
The Lie brackets of basis $\{\mathcal{X}_\alpha, \mathcal{V}_\alpha\}$ are
\[
[\mathcal{X}_\alpha, \mathcal{X}_\beta]_\pounds=(L^\gamma_{\alpha\beta}\circ\pi)\mathcal{X}_\gamma,\ \ \ [\mathcal{X}_\alpha, \mathcal{V}_\beta]_\pounds=0,\ \ \ [\mathcal{V}_\alpha, \mathcal{V}_\beta]_\pounds=0.
\]
\end{lemma}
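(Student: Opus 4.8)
The plan is to reduce everything to the defining relations (\ref{emroz}) by re-expressing the coordinate basis $\{\mathcal{X}_\alpha, \mathcal{V}_\alpha\}$ through complete and vertical lifts. From (\ref{2base}) we already have $\mathcal{V}_\alpha=e_\alpha^V$, and specializing (\ref{esi}) to $X=e_\alpha$ (so the components of $X$ are constant) gives
\[
e_\alpha^C=\mathcal{X}_\alpha-\textbf{y}^\beta(L^\gamma_{\alpha\beta}\circ\pi)\mathcal{V}_\gamma,\qquad\text{i.e.}\qquad \mathcal{X}_\alpha=e_\alpha^C+\textbf{y}^\beta(L^\gamma_{\alpha\beta}\circ\pi)\mathcal{V}_\gamma.
\]
Throughout I use that $(\pounds^\pi E,[.,.]_\pounds,\rho_\pounds)$ is a Lie algebroid, so $[.,.]_\pounds$ is antisymmetric and obeys the Leibniz rule $[V,fW]_\pounds=f[V,W]_\pounds+\rho_\pounds(V)(f)W$ for $f\in C^\infty(E)$, together with the anchor values $\rho_\pounds(\mathcal{V}_\alpha)=\partial/\partial\textbf{y}^\alpha$ and $\rho_\pounds(\mathcal{X}_\alpha)=(\rho^i_\alpha\circ\pi)\partial/\partial\textbf{x}^i$ read off from (\ref{revise2}) and the definition $\rho_\pounds(u,z)=z$.

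The vertical--vertical bracket is immediate: $[\mathcal{V}_\alpha,\mathcal{V}_\beta]_\pounds=[e_\alpha^V,e_\beta^V]_\pounds=0$ by the first relation in (\ref{emroz}). For the mixed bracket I substitute $\mathcal{X}_\alpha$ and expand by the Leibniz rule. The lift part gives $[e_\alpha^C,e_\beta^V]_\pounds=[e_\alpha,e_\beta]_E^V=(L^\gamma_{\alpha\beta}\circ\pi)\mathcal{V}_\gamma$, using antisymmetry together with the middle relation of (\ref{emroz}) and $[e_\alpha,e_\beta]_E=L^\gamma_{\alpha\beta}e_\gamma$; meanwhile differentiating the coefficient $\textbf{y}^\sigma(L^\gamma_{\alpha\sigma}\circ\pi)$ along $\rho_\pounds(\mathcal{V}_\beta)=\partial/\partial\textbf{y}^\beta$ produces exactly $-(L^\gamma_{\alpha\beta}\circ\pi)\mathcal{V}_\gamma$. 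The two contributions cancel, so $[\mathcal{X}_\alpha,\mathcal{V}_\beta]_\pounds=0$.

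The horizontal--horizontal bracket is the crux. Writing $\mathcal{X}_\alpha=e_\alpha^C+h_\alpha$ and $\mathcal{X}_\beta=e_\beta^C+h_\beta$ with $h_\alpha:=\textbf{y}^\sigma(L^\gamma_{\alpha\sigma}\circ\pi)\mathcal{V}_\gamma$, the bracket splits into four pieces. The leading piece $[e_\alpha^C,e_\beta^C]_\pounds=[e_\alpha,e_\beta]_E^C=(L^\gamma_{\alpha\beta}e_\gamma)^C$, expanded once more through (\ref{esi}), supplies the wanted term $(L^\gamma_{\alpha\beta}\circ\pi)\mathcal{X}_\gamma$ plus a vertical remainder involving $\rho^j_\tau\,\partial L^\delta_{\alpha\beta}/\partial x^j$ and $L^\mu_{\alpha\beta}L^\delta_{\mu\tau}$. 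The three remaining pieces---two of the form $[e^C,h]_\pounds$ and one of the form $[h_\alpha,h_\beta]_\pounds$---are computed by the Leibniz rule using $[e_\alpha^C,\mathcal{V}_\gamma]_\pounds=(L^\epsilon_{\alpha\gamma}\circ\pi)\mathcal{V}_\epsilon$, $[\mathcal{V}_\gamma,\mathcal{V}_\delta]_\pounds=0$, and the anchor derivatives of the coefficient functions, where one needs $\rho_\pounds(e_\alpha^C)=e_\alpha^c=(\rho^i_\alpha\circ\pi)\partial/\partial\textbf{x}^i-\textbf{y}^\nu(L^\mu_{\alpha\nu}\circ\pi)\partial/\partial\textbf{y}^\mu$.

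I expect the main obstacle to be precisely the cancellation of all the resulting vertical terms. After collecting them on $\textbf{y}^\tau(\,\cdot\,)\mathcal{V}_\delta$ and using the antisymmetry $L^\mu_{\tau\alpha}=-L^\mu_{\alpha\tau}$, the total coefficient becomes
\[
\rho^i_\alpha\tfrac{\partial L^\delta_{\beta\tau}}{\partial x^i}-\rho^i_\beta\tfrac{\partial L^\delta_{\alpha\tau}}{\partial x^i}+\rho^i_\tau\tfrac{\partial L^\delta_{\alpha\beta}}{\partial x^i}+L^\delta_{\alpha\mu}L^\mu_{\beta\tau}-L^\delta_{\beta\mu}L^\mu_{\alpha\tau}-L^\delta_{\mu\tau}L^\mu_{\alpha\beta},
\]
which is exactly the left-hand side of the cyclic structure equation (\ref{2})(ii) for the triple $(\alpha,\beta,\tau)$ and top index $\delta$, hence vanishes. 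The delicacy is purely bookkeeping: keeping the summation index $\tau$ of $\textbf{y}^\tau$ distinct from the free indices $\alpha,\beta$ and the contracted index $\mu$, and correctly tracking the signs coming from antisymmetry. Once the verticals cancel, only $(L^\gamma_{\alpha\beta}\circ\pi)\mathcal{X}_\gamma$ survives, as claimed. As an independent check on this horizontal coefficient, applying $\rho_\pounds$ to both sides and using (\ref{2})(i) reproduces $[\rho_\pounds(\mathcal{X}_\alpha),\rho_\pounds(\mathcal{X}_\beta)]=(L^\gamma_{\alpha\beta}\circ\pi)\rho_\pounds(\mathcal{X}_\gamma)$.
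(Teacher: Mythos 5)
Your proof is correct and follows exactly the route the paper intends: the paper states the lemma with only the remark ``Using (\ref{emroz}) we can deduce the following,'' and your argument is precisely that deduction, writing $\mathcal{V}_\alpha=e_\alpha^V$, $\mathcal{X}_\alpha=e_\alpha^C+\textbf{y}^\beta(L^\gamma_{\alpha\beta}\circ\pi)\mathcal{V}_\gamma$ and expanding via (\ref{emroz}), the Leibniz rule, and the structure equation (\ref{2})(ii). All three bracket computations, including the cancellation of the vertical remainder against the cyclic Jacobi identity, check out.
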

%----------------------------------------------------------------------------------------------
\subsection{A setting for semispray on $\pounds^\pi E$}
%----------------------------------------------------------------------------------------------
A section of $\pi$ along smooth map $f:N\rightarrow M$ is a smooth map $\sigma:N\rightarrow E$ such that $\pi\circ\sigma=f$. The set of sections of $\pi$ along $f$ will be denoted by $\Gamma_f(\pi)$. Then there is a canonical isomorphism between $\Gamma(f^*\pi)$ and $\Gamma_f(\pi)$ (see \cite{S}). Now we consider pullback bundle $\pi^*\pi=(\pi^*E, pr_1, E)$ of vector bundle $(E, \pi, M)$, where
\[
\pi^*E:=E\times_ME:=\{(u, v)\in E\times E|\pi(u)=\pi(v)\},
\]
and $pr_1$ is the projection map onto the first component. The fibres of $\pi^*\pi$ are the $n$-dimensional real vector spaces
$$\lbrace u\rbrace\times E_{\pi(u)}\cong E_{\pi(u)}.$$
Therefore any section in $\Gamma(\pi^*\pi)$ is of the form
$$\bar{X}: u\in E\rightarrow \bar{X}(u)=(u,\b{X}(u)),$$
where $\b{X}:E\rightarrow E$ is a smooth map such that $\pi\circ\b{X}=\pi$. In these terms, the map
\[
\bar{X}\in\Gamma(\pi^*\pi)\rightarrow\b{X}\in\Gamma_\pi(\pi),
\]
is an isomorphism of $C^\infty(E)$-modules. Therefore we have
\[
\Gamma(\pi^*\pi)\cong\Gamma_\pi(\pi).
\]
In $\Gamma(\pi^*\pi)$, there is a distinguished section
\begin{equation}\label{delta}
\delta:u\in E\rightarrow\delta(u)=(u, u)\in \pi^*E,
\end{equation}
that called \textit{the canonical section along $\pi$}. This section corresponds to the
identity map $1_E$ under the isomorphism $\Gamma_\pi(\pi)\cong\Gamma(\pi^*\pi)$.

For any section $X$ on $E$, the map
\[
\widehat{X}:E\rightarrow\pi^*E,
\]
defined by $\widehat{X}(u)=(u, X\circ\pi(u))$ is a section of $\pi^*\pi$ , called the lift of X into $\Gamma(\pi^*\pi)$. $\widehat{X}$ may be identified with the map $X\circ\pi:E\rightarrow E$. It is easy to see that, $\{\widehat{X}|X\in\Gamma(E)\}$ generates locally the $C^\infty(E)$-module $\Gamma(\pi^*\pi)$.

We consider the following sequence
\begin{equation}\label{exact se}
0 \longrightarrow \pi^*(E)\stackrel{i}{\rightarrow}\pounds^\pi E\stackrel{j}{\rightarrow}\pi^*(E)\longrightarrow0,
\end{equation}
with $j(u, z)=(\pi_E(z), Id(u))=(v, u)$, $z\in T_vE$, and $i(u, v)=(0, v^\vee_u)$ where $v^\vee_u:C^\infty(E)\rightarrow\mathbb{R}$ is defined by $v^\vee_u(F)=\frac{d}{dt}|_{t=0}F(u+tv)$. Indeed we have $v^\vee_u=\frac{d}{dt}|_{t=0}(u+tv)$. Function $J=i\circ j:\pounds^\pi E\rightarrow\pounds^\pi E$ is called the {\it {vertical
endomorphism}} ({\it{almost tangent structure}}) of $\pounds^\pi E$.

From the definitions of $i$, $j$ and $J$ we get
\[
\im J=\im i=v\pounds^\pi E,\ \ \ \ker J=\ker j=v\pounds^\pi E,\ \ \ J\circ J=0.
\]
Moreover, $i$ is injective and $j$ is surjective. Therefore the sequence given by (\ref{exact se}) is exact sequence.
\begin{lemma}
Let $J$ be the vertical endomorphism of $\pounds^\pi E$. Then
\begin{equation}\label{vertical}
J(\mathcal{X}_\alpha)=\mathcal{V}_\alpha,\ \ \ J(\mathcal{V}_\alpha)=0.
\end{equation}
\end{lemma}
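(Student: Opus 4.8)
The plan is to evaluate the composite $J=i\circ j$ directly on each of the two basis sections at an arbitrary point $v\in E$, reading off the answer from the explicit formulas (\ref{revise2}) for $\mathcal{X}_\alpha,\mathcal{V}_\alpha$ together with the definitions of $i$ and $j$ attached to the exact sequence (\ref{exact se}). The whole computation is essentially careful bookkeeping of base points, so I would set it up so that the base point never moves under $j$.

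First I would treat $\mathcal{X}_\alpha$. By (\ref{revise2}) we have $\mathcal{X}_\alpha(v)=(e_\alpha(\pi(v)),z)$ with $z=(\rho^i_\alpha\circ\pi)\frac{\partial}{\partial\textbf{x}^i}|_v\in T_vE$. The decisive observation is that this tangent vector is based at $v$, so $\pi_E(z)=v$; hence applying $j(u,z)=(\pi_E(z),u)$ gives $j(\mathcal{X}_\alpha(v))=(v,e_\alpha(\pi(v)))$, which is a legitimate element of $\pi^*E$ since $\pi(e_\alpha(\pi(v)))=\pi(v)$. Next I apply $i$, which by definition sends $(v,e_\alpha(\pi(v)))$ to $(0,(e_\alpha(\pi(v)))^\vee_v)$. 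The second slot is exactly the fiberwise vertical lift of $e_\alpha(\pi(v))$ at $v$, i.e.\ $e_\alpha^\vee(v)$, which by Lemma \ref{alan} equals $\frac{\partial}{\partial\textbf{y}^\alpha}|_v$. Comparing with (\ref{revise2}) this is precisely $\mathcal{V}_\alpha(v)$, so $J(\mathcal{X}_\alpha)=\mathcal{V}_\alpha$.

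For $\mathcal{V}_\alpha$ the computation is shorter. By (\ref{revise2}), $\mathcal{V}_\alpha(v)=(0,\frac{\partial}{\partial\textbf{y}^\alpha}|_v)$ has first factor $0$ and a tangent part again based at $v$, so $j(\mathcal{V}_\alpha(v))=(v,0)$. Applying $i$ produces $(0,0^\vee_v)$, and since $0^\vee_v(F)=\frac{d}{dt}|_{t=0}F(v+t\cdot 0)=0$ for every $F\in C^\infty(E)$, this vanishes, giving $J(\mathcal{V}_\alpha)=0$. Alternatively, one may simply note that $\mathcal{V}_\alpha$ has zero first component and therefore lies in $v\pounds^\pi E=\ker J$, which yields $J(\mathcal{V}_\alpha)=0$ immediately.

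The main (and only, rather minor) obstacle is the identification of the abstract fiberwise lift $(e_\alpha(\pi(v)))^\vee_v$ appearing in the definition of $i$ with the coordinate vector field $\frac{\partial}{\partial\textbf{y}^\alpha}|_v$; this is exactly what Lemma \ref{alan} supplies, and once it is invoked both asserted equalities drop out. Everything else is a matter of tracking which factor of $E\times TE$ each map acts on and observing that $\pi_E$ returns the base point $v$ at which the tangent parts live.
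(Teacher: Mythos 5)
Your proposal is correct and follows essentially the same route as the paper's own proof: evaluate $J=i\circ j$ pointwise on the basis sections using the explicit formulas (\ref{revise2}), noting that $j$ returns the base point $v$ paired with the first factor, and that $i$ then produces the fiberwise vertical lift, identified with $\frac{\partial}{\partial\textbf{y}^\alpha}|_v$ via Lemma \ref{alan}. Your explicit invocation of Lemma \ref{alan} and the remark that $\mathcal{V}_\alpha\in v\pounds^\pi E=\ker J$ are slightly more detailed than the paper's computation, but the argument is the same.
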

\begin{proof}
The definition of $J$ implies
\begin{align*}
J(\mathcal{X}_\alpha(v))&=i\circ j(e_\alpha(\pi(v)), (\rho^i_\alpha\circ\pi)\frac{\partial}{\partial\textbf{x}^i}|_v)=i(v, e_\alpha(\pi(v)))=(0, e_\alpha(\pi(v))^\vee_v)\\
&=(0, \frac{\partial}{\partial\textbf{y}^\alpha}|_v)=\mathcal{V}_\alpha(v).
\end{align*}
We also deduce
\[
J(\mathcal{V}_\alpha(v))=i\circ j(0, \frac{\partial}{\partial\textbf{y}^\alpha}|_v)=i(v, 0)=(0, 0).
\]
\end{proof}
\begin{cor}
Let $\{\mathcal{X}^\alpha, \mathcal{V}^\alpha\}$ be the corresponding dual basis of $\{\mathcal{X}_\alpha, \mathcal{V}_\alpha\}$. Then
\begin{equation}\label{vertical00}
J=\mathcal{V}_\alpha\otimes\mathcal{X}^\alpha.
\end{equation}
\end{cor}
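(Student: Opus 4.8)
The plan is to read off the Corollary directly from the preceding Lemma, viewing $J$ as a section of the endomorphism bundle $\mathrm{End}(\pounds^\pi E)\cong\pounds^\pi E\otimes(\pounds^\pi E)^*$ and expanding it in the adapted tensor basis built from $\{\mathcal{X}_\alpha,\mathcal{V}_\alpha\}$ and its dual $\{\mathcal{X}^\alpha,\mathcal{V}^\alpha\}$. First I would write the most general such endomorphism as
\[
J=A^\beta_\alpha\,\mathcal{X}_\beta\otimes\mathcal{X}^\alpha+B^\beta_\alpha\,\mathcal{V}_\beta\otimes\mathcal{X}^\alpha+C^\beta_\alpha\,\mathcal{X}_\beta\otimes\mathcal{V}^\alpha+D^\beta_\alpha\,\mathcal{V}_\beta\otimes\mathcal{V}^\alpha,
\]
where $A^\beta_\alpha,B^\beta_\alpha,C^\beta_\alpha,D^\beta_\alpha$ are functions on $E$ to be determined, and the coefficients are well defined because $\{\mathcal{X}_\beta\otimes\mathcal{X}^\alpha,\mathcal{V}_\beta\otimes\mathcal{X}^\alpha,\mathcal{X}_\beta\otimes\mathcal{V}^\alpha,\mathcal{V}_\beta\otimes\mathcal{V}^\alpha\}$ is a local basis for the $C^\infty(E)$-module of sections of $\mathrm{End}(\pounds^\pi E)$.

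Next I would recall the duality relations $\mathcal{X}^\alpha(\mathcal{X}_\gamma)=\delta^\alpha_\gamma$, $\mathcal{X}^\alpha(\mathcal{V}_\gamma)=0$, $\mathcal{V}^\alpha(\mathcal{X}_\gamma)=0$, $\mathcal{V}^\alpha(\mathcal{V}_\gamma)=\delta^\alpha_\gamma$, together with the contraction convention $(\xi\otimes\eta)(w)=\eta(w)\,\xi$, and evaluate the displayed expansion on each basis section. Applying $J$ to $\mathcal{X}_\gamma$ gives $J(\mathcal{X}_\gamma)=A^\beta_\gamma\mathcal{X}_\beta+B^\beta_\gamma\mathcal{V}_\beta$, and comparing with the Lemma's value $J(\mathcal{X}_\gamma)=\mathcal{V}_\gamma$ forces $A^\beta_\gamma=0$ and $B^\beta_\gamma=\delta^\beta_\gamma$. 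Likewise $J(\mathcal{V}_\gamma)=C^\beta_\gamma\mathcal{X}_\beta+D^\beta_\gamma\mathcal{V}_\beta$, and comparing with $J(\mathcal{V}_\gamma)=0$ forces $C^\beta_\gamma=D^\beta_\gamma=0$. Substituting back yields $J=\delta^\beta_\alpha\,\mathcal{V}_\beta\otimes\mathcal{X}^\alpha=\mathcal{V}_\alpha\otimes\mathcal{X}^\alpha$, which is the claim.

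There is no genuine obstacle here: the Corollary is a direct bookkeeping consequence of the Lemma and the linear independence of the tensor basis. The only point deserving the slightest care is keeping the contraction and index conventions consistent, namely that in $(\xi\otimes\eta)(w)=\eta(w)\,\xi$ the covector $\mathcal{X}^\alpha$ carries the upper index and the vector $\mathcal{V}_\alpha$ the lower one, so that the two free indices are correctly collapsed by $\delta^\beta_\alpha$ to produce the single summed expression $\mathcal{V}_\alpha\otimes\mathcal{X}^\alpha$.
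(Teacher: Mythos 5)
Your proof is correct and matches the paper's intent exactly: the paper states this corollary without proof as an immediate consequence of the preceding lemma, and your argument is precisely the bookkeeping (expand $J$ in the tensor basis of $\mathrm{End}(\pounds^\pi E)$, evaluate on $\mathcal{X}_\gamma$ and $\mathcal{V}_\gamma$, compare with $J(\mathcal{X}_\gamma)=\mathcal{V}_\gamma$ and $J(\mathcal{V}_\gamma)=0$) that the paper leaves implicit. No gaps; the only care needed is the contraction convention $(\xi\otimes\eta)(w)=\eta(w)\,\xi$, which you handle correctly.
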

Using the above corollary and (\ref{esi}) we obtain
\[
J(X^V)=0,\ \ \ J(X^C)=(X^\alpha\circ\pi)\mathcal{V}_\alpha=X^V.
\]
\begin{defn}
Let $\delta$ be the canonical section along $\pi$ given by (\ref{delta}). Then section $C$ given by
\[
C:=i\circ\delta,
\]
is called Liouville or Euler section.
\end{defn}
Using the definition of Liouville section we have
\begin{align*}
C(u)&=(i\circ\delta)(u)=i(u, u)=(0, u^\vee_u)=(0, (u^\alpha\circ\pi)\frac{\partial}{\partial\textbf{y}^\alpha})\\
&=(0, \textbf{y}^\alpha(u)\frac{\partial}{\partial\textbf{y}^\alpha})=(0, \textbf{y}^\alpha\frac{\partial}{\partial\textbf{y}^\alpha})(u),
\end{align*}
where $u=u^\alpha e_\alpha\in\Gamma(E)$. Therefore, the Liouville section $C$ has the coordinate expression
\begin{equation}\label{Liouville}
C=\textbf{y}^\alpha\mathcal{V}_\alpha,
\end{equation}
with respect to $\{\mathcal{X}_\alpha, \mathcal{V}_\alpha\}$. It is easy to prove the following
\begin{lemma}
Let $X$ be  asection of $E$. Then we have
\begin{equation}\label{Liover}
(i)\ [J, C]^{F-N}_\pounds=J,\ \ \ (ii)\ [X^V, C]_\pounds=X^V,\ \ \ (iii)\ JC=0.
\end{equation}
\end{lemma}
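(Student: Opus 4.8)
The plan is to verify all three identities by working in the local frame $\{\mathcal{X}_\alpha,\mathcal{V}_\alpha\}$, using the coordinate expressions $J=\mathcal{V}_\alpha\otimes\mathcal{X}^\alpha$ and $C=\mathbf{y}^\alpha\mathcal{V}_\alpha$ together with the structural data already recorded for $\pounds^\pi E$: the frame brackets $[\mathcal{X}_\alpha,\mathcal{X}_\beta]_\pounds=(L^\gamma_{\alpha\beta}\circ\pi)\mathcal{X}_\gamma$ and $[\mathcal{X}_\alpha,\mathcal{V}_\beta]_\pounds=[\mathcal{V}_\alpha,\mathcal{V}_\beta]_\pounds=0$, and the anchor action $\rho_\pounds(\mathcal{X}_\alpha)=(\rho^i_\alpha\circ\pi)\tfrac{\partial}{\partial\mathbf{x}^i}$, $\rho_\pounds(\mathcal{V}_\alpha)=\tfrac{\partial}{\partial\mathbf{y}^\alpha}$. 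The only additional ingredients needed are the Leibniz rule for $[.,.]_\pounds$ and formula (\ref{F3}), which rewrites the Frölicher--Nijenhuis bracket of a vector-valued $1$-form with a section as ordinary algebroid brackets.

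I would dispatch (iii) and (ii) first. For (iii), applying $J$ to $C=\mathbf{y}^\alpha\mathcal{V}_\alpha$ and using $J(\mathcal{V}_\alpha)=0$ from (\ref{vertical}) gives $JC=\mathbf{y}^\alpha J(\mathcal{V}_\alpha)=0$ at once. For (ii), I write $X^V=(X^\alpha\circ\pi)\mathcal{V}_\alpha$ and $C=\mathbf{y}^\beta\mathcal{V}_\beta$ and expand $[X^V,C]_\pounds$ by the Leibniz rule into three terms: the term carrying $[\mathcal{V}_\alpha,\mathcal{V}_\beta]_\pounds$ vanishes; the term $\mathbf{y}^\beta\rho_\pounds(\mathcal{V}_\beta)(X^\alpha\circ\pi)\mathcal{V}_\alpha$ vanishes because $X^\alpha\circ\pi$ depends only on base coordinates while $\rho_\pounds(\mathcal{V}_\beta)=\tfrac{\partial}{\partial\mathbf{y}^\beta}$ differentiates only in the fibre; and the surviving term is $(X^\alpha\circ\pi)\,\rho_\pounds(\mathcal{V}_\alpha)(\mathbf{y}^\beta)\mathcal{V}_\beta=(X^\alpha\circ\pi)\delta^\beta_\alpha\mathcal{V}_\beta=X^V$. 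In particular, specializing to $X=e_\beta$ yields $[\mathcal{V}_\beta,C]_\pounds=\mathcal{V}_\beta$, which I record for use in part (i).

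For (i) I would apply (\ref{F3}) with $K=J$ and $Y=C$, so that $[J,C]^{F-N}_\pounds(W)=[J(W),C]_\pounds-J[W,C]_\pounds$ for every section $W$ of $\pounds^\pi E$. Since both sides are $C^\infty(E)$-linear (tensorial) in $W$, it suffices to check equality on the frame. For $W=\mathcal{V}_\alpha$: $J(\mathcal{V}_\alpha)=0$ kills the first term, and $[\mathcal{V}_\alpha,C]_\pounds=\mathcal{V}_\alpha$ makes the second term $J(\mathcal{V}_\alpha)=0$, so the bracket equals $0=J(\mathcal{V}_\alpha)$. For $W=\mathcal{X}_\alpha$: the first term is $[J(\mathcal{X}_\alpha),C]_\pounds=[\mathcal{V}_\alpha,C]_\pounds=\mathcal{V}_\alpha$, while $[\mathcal{X}_\alpha,C]_\pounds=0$ because both $[\mathcal{X}_\alpha,\mathcal{V}_\beta]_\pounds=0$ and $\rho_\pounds(\mathcal{X}_\alpha)(\mathbf{y}^\beta)=0$; hence the second term vanishes and the bracket equals $\mathcal{V}_\alpha=J(\mathcal{X}_\alpha)$. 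Both frame evaluations reproduce $J(W)$, giving $[J,C]^{F-N}_\pounds=J$.

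I do not anticipate a genuine obstacle: once (\ref{F3}) is invoked the computation is routine. The two points that warrant care are, first, justifying the tensoriality of both sides of (i) so that testing on the frame is legitimate, and second, keeping straight which coordinate functions are annihilated by which anchor image, namely that $\rho_\pounds(\mathcal{X}_\alpha)$ kills $\mathbf{y}^\beta$ and $\rho_\pounds(\mathcal{V}_\alpha)$ kills $X^\alpha\circ\pi$ while reproducing the Kronecker delta $\rho_\pounds(\mathcal{V}_\alpha)(\mathbf{y}^\beta)=\delta^\beta_\alpha$.
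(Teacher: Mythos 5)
Your proof is correct, and it is precisely the routine coordinate verification the paper has in mind: the paper states this lemma without proof (prefacing it only with ``It is easy to prove the following''), and your argument fills that gap using exactly the ingredients the paper has already established, namely $J(\mathcal{X}_\alpha)=\mathcal{V}_\alpha$, $J(\mathcal{V}_\alpha)=0$ from (\ref{vertical}), $C=\textbf{y}^\alpha\mathcal{V}_\alpha$ from (\ref{Liouville}), the frame brackets, the anchor actions, and formula (\ref{F3}). Your two points of care check out as well: the right-hand side of (\ref{F3}) is indeed $C^\infty(E)$-linear in $W$ (the $\rho_\pounds(C)(f)$ terms cancel), so testing on the frame is legitimate, and the bookkeeping $\rho_\pounds(\mathcal{X}_\alpha)(\textbf{y}^\beta)=0$, $\rho_\pounds(\mathcal{V}_\alpha)(X^\beta\circ\pi)=0$, $\rho_\pounds(\mathcal{V}_\alpha)(\textbf{y}^\beta)=\delta^\beta_\alpha$ is consistent with the paper's own computations (e.g.\ of $[C,\widetilde{X}]_\pounds$ and of the tension $H$).
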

\begin{defn}
Section $\widetilde{X}$ of  vector bundle $(\pounds^\pi E, \pi_\pounds, E)$ is said to be homogenous of degree $r$, where $r$ is an integer, if $[C, \widetilde{X}]_\pounds=(r-1)\widetilde{X}$. Moreover, $\widetilde{f}\in C^\infty(E)$ is said to be homogenous of degree $r$ if $\pounds^\pounds_C\widetilde{f}=\rho_\pounds(C)(\widetilde{f})=r\widetilde{f}$.
\end{defn}
Now, let $\widetilde{X}=\widetilde{X}^\alpha\mathcal{X}_\alpha+\widetilde{Y}^{\alpha}\mathcal{V}_\alpha$. Then we obtain
\[
[C, \widetilde{X}]_\pounds=\textbf{y}^\alpha\frac{\partial \widetilde{X}^\beta}{\partial \textbf{y}^\alpha}\mathcal{X}_\beta+(\textbf{y}^\alpha\frac{\partial \widetilde{Y}^{\beta}}{\partial \textbf{y}^\alpha}-\widetilde{Y}^{\beta})\mathcal{V}_\beta.
\]
Thus $[C, \widetilde{X}]_\pounds=(r-1)\widetilde{X}$ if and only if
\begin{equation}\label{lh}
\textbf{y}^\alpha\frac{\partial \widetilde{X}^\beta}{\partial \textbf{y}^\alpha}=(r-1)\widetilde{X}^\beta,\ \ \ \textbf{y}^\alpha\frac{\partial \widetilde{Y}^{\beta}}{\partial \textbf{y}^\alpha}=r\widetilde{Y}^{\beta}.
\end{equation}
Therefore we have
\begin{lemma}\label{lemma0}
Section $\widetilde{X}=\widetilde{X}^\alpha\mathcal{X}_\alpha+\widetilde{Y}^{\alpha}\mathcal{V}_\alpha$ of $\pounds^\pi E$ is homogenous of degree $r$ if and only if (\ref{lh}) holds.
\end{lemma}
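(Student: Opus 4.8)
The plan is to reduce the statement to a coefficient comparison by computing the bracket $[C,\widetilde{X}]_\pounds$ explicitly in the adapted frame $\{\mathcal{X}_\alpha,\mathcal{V}_\alpha\}$. By definition $\widetilde{X}$ is homogeneous of degree $r$ precisely when $[C,\widetilde{X}]_\pounds=(r-1)\widetilde{X}$, and since $\{\mathcal{X}_\alpha,\mathcal{V}_\alpha\}$ is a local basis of $\Gamma(\pounds^\pi E)$, this is an equality of sections that holds if and only if the corresponding $\mathcal{X}_\beta$- and $\mathcal{V}_\beta$-components agree. So everything comes down to writing $[C,\widetilde{X}]_\pounds$ in this frame and matching coefficients.

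First I would insert $C=\textbf{y}^\alpha\mathcal{V}_\alpha$ from (\ref{Liouville}) and $\widetilde{X}=\widetilde{X}^\beta\mathcal{X}_\beta+\widetilde{Y}^\beta\mathcal{V}_\beta$, then expand the bracket by bilinearity together with the Leibniz rule $[A,fB]_\pounds=f[A,B]_\pounds+\rho_\pounds(A)(f)B$ and its left-scalar (antisymmetric) counterpart $[fA,B]_\pounds=f[A,B]_\pounds-\rho_\pounds(B)(f)A$. The structure brackets of the adapted frame, namely $[\mathcal{X}_\alpha,\mathcal{X}_\beta]_\pounds=(L^\gamma_{\alpha\beta}\circ\pi)\mathcal{X}_\gamma$ and $[\mathcal{X}_\alpha,\mathcal{V}_\beta]_\pounds=[\mathcal{V}_\alpha,\mathcal{V}_\beta]_\pounds=0$, make this cheap: because $C$ is built only from the $\mathcal{V}_\alpha$, every bracket of basis sections that actually occurs is of the form $[\mathcal{V}_\alpha,\mathcal{X}_\beta]_\pounds$ or $[\mathcal{V}_\alpha,\mathcal{V}_\beta]_\pounds$, all of which vanish. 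Hence only the anchor (derivation) terms survive.

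For those I would use the anchor actions $\rho_\pounds(\mathcal{V}_\alpha)=\partial/\partial\textbf{y}^\alpha$ and $\rho_\pounds(\mathcal{X}_\alpha)=(\rho^i_\alpha\circ\pi)\partial/\partial\textbf{x}^i$ read off from (\ref{revise2}). Two bookkeeping points carry the whole computation: the Euler-type operator $\textbf{y}^\alpha\,\partial/\partial\textbf{y}^\alpha$ acts on each coefficient $\widetilde{X}^\beta,\widetilde{Y}^\beta$, producing the terms $\textbf{y}^\alpha\frac{\partial\widetilde{X}^\beta}{\partial\textbf{y}^\alpha}$ and $\textbf{y}^\alpha\frac{\partial\widetilde{Y}^\beta}{\partial\textbf{y}^\alpha}$; and the anchor of the $\mathcal{V}$-part of $\widetilde{X}$ applied to the coordinate $\textbf{y}^\alpha$ gives $\rho_\pounds(\widetilde{Y}^\beta\mathcal{V}_\beta)(\textbf{y}^\alpha)=\widetilde{Y}^\alpha$, which contributes the extra $-\widetilde{Y}^\beta\mathcal{V}_\beta$. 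The mixed contribution $\rho_\pounds(\widetilde{X}^\beta\mathcal{X}_\beta)(\textbf{y}^\alpha)$ drops out since $\partial\textbf{y}^\alpha/\partial\textbf{x}^i=0$. Collecting terms yields exactly the displayed formula $[C,\widetilde{X}]_\pounds=\textbf{y}^\alpha\frac{\partial\widetilde{X}^\beta}{\partial\textbf{y}^\alpha}\mathcal{X}_\beta+(\textbf{y}^\alpha\frac{\partial\widetilde{Y}^\beta}{\partial\textbf{y}^\alpha}-\widetilde{Y}^\beta)\mathcal{V}_\beta$.

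Finally I would equate this with $(r-1)\widetilde{X}=(r-1)\widetilde{X}^\beta\mathcal{X}_\beta+(r-1)\widetilde{Y}^\beta\mathcal{V}_\beta$ and read off the two components: the $\mathcal{X}_\beta$-component gives $\textbf{y}^\alpha\frac{\partial\widetilde{X}^\beta}{\partial\textbf{y}^\alpha}=(r-1)\widetilde{X}^\beta$, and the $\mathcal{V}_\beta$-component, after cancelling the $-\widetilde{Y}^\beta$ against $(r-1)\widetilde{Y}^\beta$, gives $\textbf{y}^\alpha\frac{\partial\widetilde{Y}^\beta}{\partial\textbf{y}^\alpha}=r\widetilde{Y}^\beta$, which is precisely (\ref{lh}). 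There is no real obstacle here; the only things to watch are the direction of the Leibniz/antisymmetry rule (i.e.\ which factor the anchor differentiates) and the clean separation of the surviving derivation terms from the vanishing structure-bracket terms. The linear independence of $\{\mathcal{X}_\alpha,\mathcal{V}_\alpha\}$ is what upgrades the equality of sections to the two scalar identities, and it makes the equivalence (the ``if and only if'') automatic in both directions.
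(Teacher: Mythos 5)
Your proposal is correct and follows exactly the paper's own route: the paper likewise computes $[C,\widetilde{X}]_\pounds=\textbf{y}^\alpha\frac{\partial \widetilde{X}^\beta}{\partial \textbf{y}^\alpha}\mathcal{X}_\beta+(\textbf{y}^\alpha\frac{\partial \widetilde{Y}^{\beta}}{\partial \textbf{y}^\alpha}-\widetilde{Y}^{\beta})\mathcal{V}_\beta$ in the frame $\{\mathcal{X}_\alpha,\mathcal{V}_\alpha\}$ and reads off (\ref{lh}) by coefficient comparison against $(r-1)\widetilde{X}$. Your expansion via the Leibniz rules, the vanishing brackets $[\mathcal{V}_\alpha,\mathcal{X}_\beta]_\pounds=[\mathcal{V}_\alpha,\mathcal{V}_\beta]_\pounds=0$, and the anchor actions is precisely the computation the paper leaves implicit, and it is carried out correctly.
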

Now, let $\widetilde{f}\in C^\infty(E)$ be homogenous of degree 1. Then we have
\[
\pounds^\pounds_C\widetilde{f}=\rho_\pounds(C)\widetilde{f}=r\widetilde{f}.
\]
The above equation gives us
\[
\textbf{y}^\alpha\rho_\pounds(\mathcal{V}_\alpha)\widetilde{f}=\textbf{y}^\alpha\frac{\partial\widetilde{f}}{\partial\textbf{y}^\alpha}=r\widetilde{f}.
\]
Therefore we have
\begin{lemma}\label{lemmahom}
Real valued smooth function $\widetilde{f}$ on $E$ is homogenous of degree $r$  if and only if $\textbf{y}^\alpha\frac{\partial\widetilde{f}}{\partial\textbf{y}^\alpha}=r\widetilde{f}$.
\end{lemma}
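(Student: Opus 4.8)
The plan is to reduce the homogeneity condition directly to the stated coordinate identity by unwinding the definition of the Lie differential operator on a $0$-form and then evaluating the anchor map on the Liouville section. The crucial simplification is that, for a function $\widetilde{f}\in C^\infty(E)$, the Lie differential $\pounds^\pounds_C\widetilde{f}$ is nothing but $\rho_\pounds(C)(\widetilde{f})$: applying the formula $\pounds^\pounds_X=i_X\circ d^\pounds-(-1)^k d^\pounds\circ i_X$ with $k=1$ to a $0$-form and using $i_C\widetilde{f}=0$ leaves only $i_C d^\pounds\widetilde{f}=d^\pounds\widetilde{f}(C)=\rho_\pounds(C)(\widetilde{f})$, which is exactly the expression appearing in the definition of homogeneity. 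Hence the entire statement collapses to computing the vector field $\rho_\pounds(C)$ on $E$.

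First I would compute $\rho_\pounds(\mathcal{V}_\alpha)$. From the defining formula $\rho_\pounds(u,z)=z$ together with the coordinate expression $\mathcal{V}_\alpha(v)=(0,\frac{\partial}{\partial\textbf{y}^\alpha}|_v)$ from (\ref{revise2}), it is immediate that $\rho_\pounds(\mathcal{V}_\alpha)=\frac{\partial}{\partial\textbf{y}^\alpha}$. Next, using the coordinate expression (\ref{Liouville}) of the Liouville section $C=\textbf{y}^\alpha\mathcal{V}_\alpha$ together with the $C^\infty(E)$-linearity of the anchor, I obtain
\[
\rho_\pounds(C)=\textbf{y}^\alpha\rho_\pounds(\mathcal{V}_\alpha)=\textbf{y}^\alpha\frac{\partial}{\partial\textbf{y}^\alpha}.
\]

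With this in hand the equivalence is purely formal. Applying $\rho_\pounds(C)$ to $\widetilde{f}$ gives $\pounds^\pounds_C\widetilde{f}=\rho_\pounds(C)(\widetilde{f})=\textbf{y}^\alpha\frac{\partial\widetilde{f}}{\partial\textbf{y}^\alpha}$, so the homogeneity condition $\pounds^\pounds_C\widetilde{f}=r\widetilde{f}$ is literally the same as $\textbf{y}^\alpha\frac{\partial\widetilde{f}}{\partial\textbf{y}^\alpha}=r\widetilde{f}$; reading this equality in the reverse direction establishes the converse implication, completing both halves at once.

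I do not expect any genuine obstacle here: once the anchor is evaluated on $C$, the argument is a one-line substitution. The only point requiring a moment's care is the identification $\rho_\pounds(\mathcal{V}_\alpha)=\partial/\partial\textbf{y}^\alpha$, which rests on correctly reading off the second component of $\mathcal{V}_\alpha$ from (\ref{revise2}) and on the fact that the Lie differential of a function along a section is just the anchor applied to it; everything else is bookkeeping.
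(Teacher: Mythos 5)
Your proof is correct and follows essentially the same route as the paper: both reduce homogeneity to the identity $\pounds^\pounds_C\widetilde{f}=\rho_\pounds(C)(\widetilde{f})$ (which is already built into the paper's definition) and then compute $\rho_\pounds(C)=\textbf{y}^\alpha\rho_\pounds(\mathcal{V}_\alpha)=\textbf{y}^\alpha\frac{\partial}{\partial\textbf{y}^\alpha}$ to obtain the stated coordinate condition. The only difference is that you spell out the justification of $\pounds^\pounds_C\widetilde{f}=i_Cd^\pounds\widetilde{f}$ and the reading of $\rho_\pounds(\mathcal{V}_\alpha)$ from (\ref{revise2}), which the paper leaves implicit.
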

\begin{defn}
A section $S$ of the vector bundle $(\pounds^\pi E, \pi_\pounds, E)$ is said to be a semispray if it satisfies
the condition $J(S)=C$. Moreover if $S$ is homogenous of degree 2, i.e., $[C, S]_\pounds=S$, then we call it spray.
\end{defn}
Let $S=A^\alpha\mathcal{X}_\alpha+S^\alpha\mathcal{V}_\alpha$ be a semispray on $\pounds^\pi E$. Then by using (\ref{vertical}) and (\ref{Liouville}) we deduce $A^\alpha=\textbf{y}^\alpha$. Therefore semispray $S$ has the following coordinate expression:
\begin{equation}\label{semispray}
S=\textbf{y}^\alpha\mathcal{X}_\alpha+S^\alpha\mathcal{V}_\alpha.
\end{equation}
Moreover, from the above lemma we deduce that  $S$ is a spray if and only if
\begin{equation}
2S^\beta=\textbf{y}^\alpha\frac{\partial S^\beta}{\partial\textbf{y}^\alpha}.
\end{equation}
Using (\ref{cf}) and (\ref{semispray}), it is easy to see that
\begin{equation}\label{Best2}
\rho_\pounds(S)(f^v)=f^c.
\end{equation}
\begin{lemma}
Let $S_1$ be a spray on $\pounds^\pi E$ and $\widetilde{f}:E\rightarrow\mathbb{R}$ be a smooth function on $E-\{0\}$. Then $S_2=S_1+\widetilde{f}C$ is a spray on $\pounds^\pi E$ if and only if $\widetilde{f}$ is homogenous of degree 1.
\end{lemma}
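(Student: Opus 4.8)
The plan is to check the two defining conditions of a spray separately for $S_2=S_1+\widetilde fC$. By definition $S_2$ is a spray if and only if (a) it is a semispray, i.e.\ $J(S_2)=C$, and (b) it is homogeneous of degree $2$, i.e.\ $[C,S_2]_\pounds=S_2$. I would dispose of (a) first, since it holds unconditionally: by linearity of the vertical endomorphism and part (iii) of \eqref{Liover} we have $JC=0$, so
\[
J(S_2)=J(S_1)+\widetilde f\,J(C)=J(S_1)=C,
\]
the last equality because $S_1$ is a (semi)spray. Thus the whole content of the statement is forced into condition (b).

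For (b) I would expand the bracket using the Leibniz property of the Lie algebroid bracket $[.,.]_\pounds$ on $\pounds^\pi E$ (the same rule $[X,fY]_\pounds=f[X,Y]_\pounds+\rho_\pounds(X)(f)Y$ that defines the algebroid structure), together with the antisymmetry identity $[C,C]_\pounds=0$. This gives
\[
[C,S_2]_\pounds=[C,S_1]_\pounds+[C,\widetilde fC]_\pounds
=[C,S_1]_\pounds+\widetilde f[C,C]_\pounds+\rho_\pounds(C)(\widetilde f)\,C
=S_1+\rho_\pounds(C)(\widetilde f)\,C,
\]
where in the last step I use that $S_1$ is a spray, so $[C,S_1]_\pounds=S_1$. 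Comparing this with $S_2=S_1+\widetilde fC$, the equation $[C,S_2]_\pounds=S_2$ is equivalent to $\rho_\pounds(C)(\widetilde f)\,C=\widetilde f\,C$.

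The final step is to cancel the factor $C$ and read off the homogeneity condition. Since $C=\textbf y^\alpha\mathcal V_\alpha$ by \eqref{Liouville} is nowhere vanishing away from the zero section, and $\widetilde f$ is defined precisely on $E-\{0\}$, the identity $\rho_\pounds(C)(\widetilde f)\,C=\widetilde f\,C$ reduces to the scalar equation $\rho_\pounds(C)(\widetilde f)=\widetilde f$, which is exactly the statement that $\widetilde f$ is homogeneous of degree $1$ (equivalently, by Lemma~\ref{lemmahom}, that $\textbf y^\alpha\frac{\partial\widetilde f}{\partial\textbf y^\alpha}=\widetilde f$). I expect the only point requiring care is this cancellation: it is legitimate only because $C$ does not vanish on $E-\{0\}$, which is what makes the restriction of $\widetilde f$ to the slit bundle the natural hypothesis. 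Everything else is a direct application of the Leibniz rule and the homogeneity definition, so no substantial obstacle arises.
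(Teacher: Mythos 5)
Your proof is correct and follows essentially the same route as the paper's: both verify $J(S_2)=C$ via $JC=0$, expand $[C,S_2]_\pounds$ with the Leibniz rule together with $[C,C]_\pounds=0$ and $[C,S_1]_\pounds=S_1$, and reduce the spray condition to the scalar identity $\rho_\pounds(C)(\widetilde f)=\widetilde f$. The only difference is cosmetic: you run both directions at once as a single equivalence and explicitly justify cancelling the factor $C$ using its nonvanishing on $E-\{0\}$, a step the paper's converse direction leaves implicit.
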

\begin{proof}
Let $\widetilde{f}$ be a homogenous function of of degree 1. Then we have $\rho_\pounds(C)\widetilde{f}=\widetilde{f}$. In other hand, since $S_1$ is a spray on $\pounds^\pi E$ then we have $JS_1=C$ and $[C, S_1]_\pounds=S_1$. Therefore
\[
JS_2=JS_1+\widetilde{f}JC=C,
\]
and
\[
[C, S_2]_\pounds=[C, S_1+\widetilde{f}C]_\pounds=[C, S_1]_\pounds+[C, \widetilde{f}C]_\pounds=S_1+(\rho_\pounds(C)\widetilde{f})C=S_1+\widetilde{f}C=S_2.
\]
Thus $S_2$ is a spray on $\pounds^\pi E$. Conversely, let $S_2$ be a spray on $\pounds^\pi E$. Then we have
\[
S_1+\widetilde{f}C=S_2=[C, S_2]_\pounds=S_1+(\rho_\pounds(C)\widetilde{f})C.
\]
Thus we get $\rho_\pounds(C)\widetilde{f}=\widetilde{f}$, i.e., $C$ is homogenous of degree 1.
\end{proof}
The spray $S_2$ given in the above lemma is said to be {\it projective change of $S_1$ by $\widetilde{f}$}.
\begin{defn}
A Lie symmetry of semispray $S$ is a section $X$ of $E$ such that $[S, X^C]_\pounds=0$. Moreover a dynamical symmetry of semispray $S$ is a section $\widetilde{X}$ of $\pounds^\pi E$ such that $[S, \widetilde{X}]_\pounds=0$.
\end{defn}
\begin{proposition}
A section $X=X^\alpha e_\alpha$ of $E$ is a Lie symmetry of $S$ if and only if
\[
\textbf{y}^\beta \textbf{y}^\lambda(\rho^i_\lambda\circ\pi)\frac{\partial(X^\alpha_{|_\beta}\circ\pi)}{\partial\textbf{x}^i}-((X^\lambda\rho^i_\lambda)\circ\pi)\frac{\partial S^\alpha}{\partial\textbf{x}^i}+S^\lambda(X^\alpha_{|_\lambda}\circ\pi)-y^\beta(X^\lambda_{|_\beta}\circ\pi)\frac{\partial S^\alpha}{\partial\textbf{y}^\lambda}=0,
\]
where $X^\alpha_{|_\beta}:=\rho^j_\beta\frac{\partial X^\alpha}{\partial x^j}-X^\gamma L^\alpha_{\gamma\beta}$.
\end{proposition}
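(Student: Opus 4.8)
The plan is to carry out a direct computation in the local frame $\{\mathcal{X}_\alpha, \mathcal{V}_\alpha\}$. First I would record the two ingredients in coordinates: by (\ref{semispray}) the semispray is $S = \textbf{y}^\alpha\mathcal{X}_\alpha + S^\alpha\mathcal{V}_\alpha$, while (\ref{esi}) together with the abbreviation $X^\alpha_{|_\beta} = \rho^j_\beta\frac{\partial X^\alpha}{\partial x^j} - X^\gamma L^\alpha_{\gamma\beta}$ writes the complete lift as $X^C = (X^\alpha\circ\pi)\mathcal{X}_\alpha + \textbf{y}^\beta(X^\alpha_{|_\beta}\circ\pi)\mathcal{V}_\alpha$. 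I would also read off from (\ref{revise2}) and the definition $\rho_\pounds(u,z)=z$ the anchor action $\rho_\pounds(\mathcal{X}_\alpha) = (\rho^i_\alpha\circ\pi)\frac{\partial}{\partial\textbf{x}^i}$ and $\rho_\pounds(\mathcal{V}_\alpha) = \frac{\partial}{\partial\textbf{y}^\alpha}$.

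Next I would expand $[S, X^C]_\pounds$ by bilinearity, using the Leibniz rule $[fU, gW]_\pounds = fg[U,W]_\pounds + f\rho_\pounds(U)(g)W - g\rho_\pounds(W)(f)U$ and the basis brackets $[\mathcal{X}_\alpha,\mathcal{X}_\beta]_\pounds = (L^\gamma_{\alpha\beta}\circ\pi)\mathcal{X}_\gamma$ and $[\mathcal{X}_\alpha,\mathcal{V}_\beta]_\pounds = 0 = [\mathcal{V}_\alpha,\mathcal{V}_\beta]_\pounds$ from the preceding lemma. The computation splits into the four brackets among $\textbf{y}^\alpha\mathcal{X}_\alpha$, $S^\alpha\mathcal{V}_\alpha$, $(X^\alpha\circ\pi)\mathcal{X}_\alpha$ and $\textbf{y}^\beta(X^\alpha_{|_\beta}\circ\pi)\mathcal{V}_\alpha$; the facts to keep on hand are that $\textbf{y}^\alpha$ is annihilated by $\frac{\partial}{\partial\textbf{x}^i}$, that $X^\alpha\circ\pi$ is annihilated by $\frac{\partial}{\partial\textbf{y}^\beta}$, and that $\frac{\partial}{\partial\textbf{x}^i}(g\circ\pi) = (\frac{\partial g}{\partial x^i})\circ\pi$.

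I would then collect the outcome into its $\mathcal{X}_\gamma$- and $\mathcal{V}_\alpha$-components separately. The key point of the argument is that the $\mathcal{X}_\gamma$-component vanishes identically: the derivative term $\textbf{y}^\lambda((\rho^j_\lambda\frac{\partial X^\gamma}{\partial x^j})\circ\pi)$ produced by $\rho_\pounds(\textbf{y}^\alpha\mathcal{X}_\alpha)$ acting on $X^\gamma\circ\pi$ exactly cancels the corresponding piece contained inside $-\textbf{y}^\lambda(X^\gamma_{|_\lambda}\circ\pi)$ (this latter term arising from the Leibniz factor $\rho_\pounds(\mathcal{V}_\gamma)\textbf{y}^\alpha=\delta^\alpha_\gamma$), while the two surviving $L$-terms cancel by the antisymmetry $L^\gamma_{\lambda\mu} = -L^\gamma_{\mu\lambda}$ of the structure functions. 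Hence $[S, X^C]_\pounds$ is purely vertical, and $X$ is a Lie symmetry precisely when its vertical part vanishes.

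Finally I would read off the $\mathcal{V}_\alpha$-component. After renaming the summed indices it is exactly the left-hand side of the displayed identity, so the condition $[S, X^C]_\pounds = 0$ reduces to the stated equation, giving both implications at once. The only genuine obstacle is the index bookkeeping in the expansion and verifying the horizontal cancellation; once that is organized the vertical component, and hence the claimed formula, falls out directly.
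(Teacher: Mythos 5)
Your proposal is correct and follows essentially the same route as the paper: expand $[S, X^C]_\pounds$ in the frame $\{\mathcal{X}_\alpha, \mathcal{V}_\alpha\}$, observe that the $\mathcal{X}_\alpha$-component vanishes identically, and identify the $\mathcal{V}_\alpha$-component with the stated equation. In fact your explanation of the horizontal cancellation (the $\rho^i\partial X/\partial x$ terms cancelling against the corresponding piece of $-\textbf{y}^\beta(X^\alpha_{|_\beta}\circ\pi)$, and the two $L$-terms cancelling by antisymmetry) supplies precisely the detail the paper dismisses as ``direct calculation.''
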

\begin{proof}
Using (\ref{esi}) and (\ref{semispray}) we obtain
\begin{align*}
[S, X^C]_\pounds&=[\textbf{y}^\alpha\mathcal{X}_\alpha+S^\alpha\mathcal{V}_\alpha, (X^\lambda\circ\pi)\mathcal{X}_\lambda+\textbf{y}^\beta(X^\lambda_{|_\beta}\circ\pi)\mathcal{V}_\lambda]_\pounds\\
&=\{\textbf{y}^\lambda\rho^i_\lambda\frac{\partial(X^\alpha\circ\pi)}{\partial\textbf{x}^i}+\textbf{y}^\sigma((X^\lambda L^\alpha_{\sigma\lambda})\circ\pi)-\textbf{y}^\beta(X^\alpha_{|_\beta}\circ\pi)\}\mathcal{X}_\alpha\\
&\ \ +\{\textbf{y}^\beta \textbf{y}^\lambda(\rho^i_\lambda\circ\pi)\frac{\partial(X^\alpha_{|_\beta}\circ\pi)}{\partial\textbf{x}^i}-((X^\lambda\rho^i_\lambda)\circ\pi)\frac{\partial S^\alpha}{\partial\textbf{x}^i}+S^\lambda(X^\alpha_{|_\lambda}\circ\pi)\\
&\ \ -y^\beta(X^\lambda_{|_\beta}\circ\pi)\frac{\partial S^\alpha}{\partial\textbf{y}^\lambda}\}\mathcal{V}_\alpha.
\end{align*}
Using direct calculation we deduce that the coefficient of $\mathcal{X}_\alpha$ vanishes. Therefore $[S, X^C]_\pounds=0$ if and only if the coefficient of $\mathcal{V}_\alpha$ is zero.
\end{proof}
\begin{proposition}
A section $\widetilde{X}=\widetilde{X}^\alpha\mathcal{X}_\alpha+\widetilde{Y}^\alpha\mathcal{V}_\alpha$ of $\pounds^\pi E$ is dynamical symmetry of $S$ if and only if
\[
\widetilde{X}^\alpha_{||}=\widetilde{Y}^\alpha,\ \ \ \textbf{y}^\beta\rho^i_\beta\frac{\partial \widetilde{Y}^\alpha}{\partial\textbf{x}^i}-\widetilde{X}^\beta\rho^i_\beta\frac{\partial S^\alpha}{\partial\textbf{x}^i}+S^\beta\frac{\partial \widetilde{Y}^\alpha}{\partial\textbf{y}^\beta}-\widetilde{Y}^\beta\frac{\partial S^\alpha}{\partial\textbf{y}^\beta}=0,
\]
where $\widetilde{X}^\alpha_{||}:=\rho_\pounds(S)(\widetilde{X}^\alpha)+\widetilde{X}^\beta\textbf{y}^\gamma L^\alpha_{\gamma\beta}=\textbf{y}^\beta\rho^i_\beta\frac{\partial \widetilde{X}^\alpha}{\partial\textbf{x}^i}+S^\beta\frac{\partial \widetilde{X}^\alpha}{\partial\textbf{y}^\beta}+\widetilde{X}^\beta\textbf{y}^\gamma L^\alpha_{\gamma\beta}$.
\end{proposition}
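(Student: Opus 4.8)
**The plan is to compute the Lie bracket $[S,\widetilde{X}]_\pounds$ explicitly in the local frame $\{\mathcal{X}_\alpha,\mathcal{V}_\alpha\}$ and read off the condition for it to vanish.**

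The dynamical symmetry condition is $[S,\widetilde{X}]_\pounds=0$, where $S=\textbf{y}^\alpha\mathcal{X}_\alpha+S^\alpha\mathcal{V}_\alpha$ by \eqref{semispray} and $\widetilde{X}=\widetilde{X}^\alpha\mathcal{X}_\alpha+\widetilde{Y}^\alpha\mathcal{V}_\alpha$ is the given section. First I would expand the bracket bilinearly, using the Leibniz rule $[f A,gB]_\pounds = fg[A,B]_\pounds + f(\rho_\pounds(A)g)B - g(\rho_\pounds(B)f)A$ for the Lie algebroid structure $([.,.]_\pounds,\rho_\pounds)$. For this I need the bracket relations of the base sections, which are exactly those recorded in the lemma preceding the subsection, namely $[\mathcal{X}_\alpha,\mathcal{X}_\beta]_\pounds=(L^\gamma_{\alpha\beta}\circ\pi)\mathcal{X}_\gamma$, $[\mathcal{X}_\alpha,\mathcal{V}_\beta]_\pounds=0$, and $[\mathcal{V}_\alpha,\mathcal{V}_\beta]_\pounds=0$. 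I also need the action of the anchor: $\rho_\pounds(\mathcal{X}_\alpha)=(\rho^i_\alpha\circ\pi)\frac{\partial}{\partial\textbf{x}^i}$ and $\rho_\pounds(\mathcal{V}_\alpha)=\frac{\partial}{\partial\textbf{y}^\alpha}$, which follow from \eqref{revise2} and the definition $\rho_\pounds(u,z)=z$.

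Expanding, the $\mathcal{X}_\alpha$-component of $[S,\widetilde{X}]_\pounds$ collects the terms in which the structure functions $L^\alpha_{\gamma\beta}$ appear together with the derivatives of $\widetilde{X}^\alpha$ along $\rho_\pounds(S)$. Grouping these gives precisely the combination
\[
\rho_\pounds(S)(\widetilde{X}^\alpha)+\widetilde{X}^\beta\textbf{y}^\gamma L^\alpha_{\gamma\beta}-\widetilde{Y}^\alpha,
\]
where $\rho_\pounds(S)=\textbf{y}^\beta\rho^i_\beta\frac{\partial}{\partial\textbf{x}^i}+S^\beta\frac{\partial}{\partial\textbf{y}^\beta}$; setting this to zero yields the first stated equation $\widetilde{X}^\alpha_{||}=\widetilde{Y}^\alpha$, once $\widetilde{X}^\alpha_{||}$ is defined as in the statement. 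The $\mathcal{V}_\alpha$-component collects the remaining derivative terms: the $\textbf{x}$-derivatives of $\widetilde{Y}^\alpha$ along $\textbf{y}^\beta\rho^i_\beta$, the $\textbf{x}$-derivatives of $S^\alpha$ along $\widetilde{X}^\beta\rho^i_\beta$ with the opposite sign, and the $\textbf{y}$-derivative cross-terms $S^\beta\frac{\partial\widetilde{Y}^\alpha}{\partial\textbf{y}^\beta}-\widetilde{Y}^\beta\frac{\partial S^\alpha}{\partial\textbf{y}^\beta}$, giving exactly the second displayed equation.

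The main obstacle is purely bookkeeping: keeping the index contractions and the antisymmetric structure-function terms straight, and in particular verifying that the $L^\alpha_{\gamma\beta}$-terms arising from $[\mathcal{X}_\beta,\mathcal{X}_\gamma]_\pounds$ reassemble cleanly into the covariant-type expression $\widetilde{X}^\alpha_{||}$ rather than leaving a residual term. I would double-check this by confirming that the definition of $\widetilde{X}^\alpha_{||}$ supplied in the statement is consistent with its two given expressions (the intrinsic one $\rho_\pounds(S)(\widetilde{X}^\alpha)+\widetilde{X}^\beta\textbf{y}^\gamma L^\alpha_{\gamma\beta}$ and the fully expanded one), which is an identity once $\rho_\pounds(S)$ is written out. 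Since $\{\mathcal{X}_\alpha,\mathcal{V}_\alpha\}$ is a frame, $[S,\widetilde{X}]_\pounds=0$ holds if and only if both components vanish, and this gives the claimed equivalence.
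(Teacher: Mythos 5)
Your proposal is correct and follows the same route as the paper: the paper's proof is exactly this direct local computation of $[S,\widetilde{X}]_\pounds$ in the frame $\{\mathcal{X}_\alpha,\mathcal{V}_\alpha\}$, using the Leibniz rule, the bracket relations $[\mathcal{X}_\alpha, \mathcal{X}_\beta]_\pounds=(L^\gamma_{\alpha\beta}\circ\pi)\mathcal{X}_\gamma$, $[\mathcal{X}_\alpha, \mathcal{V}_\beta]_\pounds=[\mathcal{V}_\alpha, \mathcal{V}_\beta]_\pounds=0$, and the anchors $\rho_\pounds(\mathcal{X}_\alpha)=(\rho^i_\alpha\circ\pi)\frac{\partial}{\partial\textbf{x}^i}$, $\rho_\pounds(\mathcal{V}_\alpha)=\frac{\partial}{\partial\textbf{y}^\alpha}$, and then setting both frame components to zero. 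Your grouping of the $\mathcal{X}_\alpha$-coefficient as $\rho_\pounds(S)(\widetilde{X}^\alpha)+\widetilde{X}^\beta\textbf{y}^\gamma L^\alpha_{\gamma\beta}-\widetilde{Y}^\alpha$ and of the $\mathcal{V}_\alpha$-coefficient as the stated four-term expression matches the paper's displayed computation exactly.
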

\begin{proof}
Using (\ref{semispray}) we obtain
\begin{align*}
[S, \widetilde{X}]_\pounds&=[\textbf{y}^\alpha\mathcal{X}_\alpha+S^\alpha\mathcal{V}_\alpha, \widetilde{X}^\beta\mathcal{X}_\beta+\widetilde{Y}^\beta\mathcal{V}_\beta]_\pounds\\
&=\{\textbf{y}^\beta\rho^i_\beta\frac{\partial \widetilde{X}^\alpha}{\partial\textbf{x}^i}+S^\beta\frac{\partial \widetilde{X}^\alpha}{\partial\textbf{y}^\beta}+\widetilde{X}^\beta\textbf{y}^\gamma L^\alpha_{\gamma\beta}-\widetilde{Y}_\alpha\}\mathcal{X}_\alpha\\
&\ \ +\{\textbf{y}^\beta\rho^i_\beta\frac{\partial \widetilde{Y}^\alpha}{\partial\textbf{x}^i}-\widetilde{X}^\beta\rho^i_\beta\frac{\partial S^\alpha}{\partial\textbf{x}^i}+S^\beta\frac{\partial \widetilde{Y}^\alpha}{\partial\textbf{y}^\beta}-\widetilde{Y}^\beta\frac{\partial S^\alpha}{\partial\textbf{y}^\beta}\}\mathcal{V}_\alpha.
\end{align*}
Therefore $[S, \widetilde{X}]_\pounds=0$ if and only if the coefficients of $\mathcal{X}_\alpha$ and $\mathcal{V}_\alpha$ are zero.
\end{proof}
%*****************************************************************************
\section{Horizontal lift on $\pounds^\pi E$}
%******************************************************************************
In this section we introduce horizontal endomorphisms to decompose  $\pounds^\pi E$ to horizontal and vertical subbundles. Then we consider torsions, tension and curvature of a horizontal endomorphism. Moreover, some new results are obtained on horizontal endomorphisms and using a horizontal endomorphism, the horizontal lift of a section of $E$ to $\pounds^\pi E$ is constructed.
%------------------------------------------------------------------------------
\subsection{Horizontal endomorphism}
%------------------------------------------------------------------------------
\begin{defn}
A function $h:\pounds^\pi E\rightarrow \pounds^\pi E$ is called a horizontal endomorphism if $h\circ h=h$, $\ker h=v\pounds^\pi E$ and $h$ is smooth on $\stackrel{\circ}{\pounds^\pi E}=\pounds^\pi E-\{0\}$. Also, $v:=Id-h$ is called vertical projector associated to $h$.
\end{defn}
Setting $h\pounds^\pi E:=\im h$ we have the following splitting for $\pounds^\pi E$:
\begin{equation}\label{good}
\pounds^\pi E=v\pounds^\pi E\oplus h\pounds^\pi E.
\end{equation}
From the above relation we deduce $\im v=v\pounds^\pi E$. Thus, using $\ker J=v\pounds^\pi E$, we obtain
\[
0=Jv\widetilde{X}=J(\widetilde{X}-h\widetilde{X})=J\widetilde{X}-Jh\widetilde{X},\ \ \ \widetilde{X}\in\pounds^\pi E.
\]
Also, from the definition of the horizontal endomorphism we have
\[
\ker h=\im J=\ker J=\im v=v\pounds^\pi E.
\]
\begin{equation}\label{Jh}
(i)\ hJ=hv=Jv=0,\ \ (ii)\ v\circ v=v,\ \ (iii)\ vh=0,\ \ (iv)\ Jh=J=vJ.
\end{equation}
{\textbf{Coordinate expression of $h$.}} Let
\[
h=(\mathcal{A}^\alpha_\beta\mathcal{X}_\alpha+\mathcal{B}^\alpha_\beta\mathcal{V}_\alpha)\otimes\mathcal{X}^\beta
+(\mathcal{C}^\alpha_\beta\mathcal{X}_\alpha+\mathcal{D}^\alpha_\beta\mathcal{V}_\alpha)\otimes\mathcal{V}^\beta.
\]
Then using $\ker h=v\pounds^\pi E$, we have
\[
0=h(\mathcal{V}_\gamma)=(\mathcal{C}^\alpha_\gamma\mathcal{X}_\alpha+\mathcal{D}^\alpha_\gamma\mathcal{V}_\alpha).
\]
Therefore $\mathcal{C}^\alpha_\gamma=\mathcal{D}^\alpha_\gamma=0$. Also (iv) of (\ref{Jh}) yields
\[
\mathcal{V}_\gamma=J(\mathcal{X}_\gamma)=Jh(\mathcal{X}_\gamma)
=J(\mathcal{A}^\alpha_\gamma\mathcal{X}_\alpha+\mathcal{B}^\alpha_\gamma\mathcal{V}_\alpha)=\mathcal{A}^\alpha_\gamma\mathcal{V}_\alpha.
\]
Therefore we have $\mathcal{A}^\alpha_\gamma=\delta^\alpha_\gamma$. Hence $h$ has the following locally expression:
\begin{equation}\label{horizontal end}
h=(\mathcal{X}_\beta+\mathcal{B}^\alpha_\beta\mathcal{V}_\alpha)\otimes\mathcal{X}^\beta.
\end{equation}
\begin{defn}
For $k\in\mathbb{N}$, $K\in\Gamma(\wedge^kE^*\otimes E)$ is called semibasic if
\[
JoK=0,\ \ \ \ i_{JX}K=0,\ \ \forall X\in\Gamma(E).
\]
\end{defn}
\begin{defn}
Let $h$ be a horizontal endomorphism on $\pounds^\pi E$. Then $H=[h, C]^{F-N}_\pounds:\pounds^\pi E\rightarrow\pounds^\pi E$ is called the tension of $h$, where $[h, C]^{F-N}_\pounds$ is the generalized Fr\"{o}licher-Nijenhuis bracket on $\pounds^\pi E$. If $H=0$, then $h$ is called homogeneous.
\end{defn}
Using (\ref{F3}), (\ref{Liouville}) and (\ref{horizontal end}), we obtain
\begin{align*}
H(\mathcal{X}_\lambda)&=[h, C]^{F-N}_\pounds(\mathcal{X}_\lambda)=[h(\mathcal{X}_\lambda), C]_\pounds-h[\mathcal{X}_\lambda, C]_\pounds\\
&=\mathcal{B}^\alpha_\lambda\rho_\pounds(\mathcal{V}_\alpha)(\textbf{y}^\gamma)\mathcal{V}_\gamma-\textbf{y}^\gamma\rho_\pounds(\mathcal{V}_\gamma)(\mathcal{B}^\alpha_\lambda)\mathcal{V}_\alpha\\
&=(\mathcal{B}^\alpha_\lambda-\textbf{y}^\gamma\frac{\partial\mathcal{B}^\alpha_\lambda}{\partial\textbf{y}^\gamma})\mathcal{V}_\alpha,\\
H(\mathcal{V}_\lambda)&=[h, C]^{F-N}_\pounds(\mathcal{V}_\lambda)=[h(\mathcal{V}_\lambda), C]_\pounds-h[\mathcal{V}_\lambda, C]_\pounds=0.
\end{align*}
Using the above equations $H$ has the coordinate expression
\begin{equation}\label{tension}
H=(\mathcal{B}^\alpha_\beta-\textbf{y}^\gamma\frac{\partial\mathcal{B}^\alpha_\beta}{\partial\textbf{y}^\gamma})\mathcal{V}_\alpha\otimes\mathcal{X}^\beta.
\end{equation}
Since $J(\mathcal{V}_\alpha)=0$, then we obtain $J\circ H=0$ and $i_{J\widetilde{X}}H=0$, where $\widetilde{X}\in\Gamma(\pounds^\pi E)$. Therefore $H$ is semibasic.

From (\ref{tension}) we have
\begin{lemma}
The horizontal endomorphism $h$ is homogeneous if and if
\[
\mathcal{B}^\alpha_\beta=\textbf{y}^\gamma\frac{\partial\mathcal{B}^\alpha_\beta}{\partial\textbf{y}^\gamma}.
\]
\end{lemma}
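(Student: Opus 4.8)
The plan is to read the condition $H=0$ directly off the coordinate expression of the tension already obtained in (\ref{tension}). By definition, $h$ is homogeneous precisely when its tension $H=[h,C]^{F-N}_\pounds$ vanishes identically as a section of the endomorphism bundle of $\pounds^\pi E$, so the entire matter reduces to deciding exactly when the tensor $H$ is zero.

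First I would recall the formula (\ref{tension}),
\[
H=(\mathcal{B}^\alpha_\beta-\textbf{y}^\gamma\frac{\partial\mathcal{B}^\alpha_\beta}{\partial\textbf{y}^\gamma})\mathcal{V}_\alpha\otimes\mathcal{X}^\beta,
\]
which was derived just above from (\ref{F3}), (\ref{Liouville}) and the local form (\ref{horizontal end}) of $h$. Since $\{\mathcal{X}_\alpha,\mathcal{V}_\alpha\}$ is a local basis of sections of $\pounds^\pi E$ and $\{\mathcal{X}^\alpha,\mathcal{V}^\alpha\}$ is its dual basis, the simple tensors $\{\mathcal{V}_\alpha\otimes\mathcal{X}^\beta\}$ are linearly independent over $C^\infty(E)$. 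Hence $H$ vanishes if and only if each of its components vanishes.

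Therefore $H=0$ holds if and only if $\mathcal{B}^\alpha_\beta-\textbf{y}^\gamma\frac{\partial\mathcal{B}^\alpha_\beta}{\partial\textbf{y}^\gamma}=0$ for all $\alpha,\beta$, that is, $\mathcal{B}^\alpha_\beta=\textbf{y}^\gamma\frac{\partial\mathcal{B}^\alpha_\beta}{\partial\textbf{y}^\gamma}$, which is exactly the claimed condition. There is essentially no obstacle here once (\ref{tension}) is in hand: the equivalence is just the principle that a tensor field is zero iff its components in a basis all vanish. The only point requiring a little care is that the identity is to be understood as holding at every point of $\pounds^\pi E-\{0\}$, the domain on which $h$, and hence the components $\mathcal{B}^\alpha_\beta$, are assumed smooth; this is automatic since the computation of $H$ is pointwise.
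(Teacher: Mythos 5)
Your proof is correct and follows exactly the paper's route: the paper likewise deduces the lemma immediately from the coordinate expression (\ref{tension}) of the tension, noting that $H=0$ if and only if its components $\mathcal{B}^\alpha_\beta-\textbf{y}^\gamma\frac{\partial\mathcal{B}^\alpha_\beta}{\partial\textbf{y}^\gamma}$ all vanish. Your added remark about the linear independence of the tensors $\mathcal{V}_\alpha\otimes\mathcal{X}^\beta$ just makes explicit what the paper leaves implicit.
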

\begin{defn}
Let $h$ be a horizontal endomorphism on $\pounds^\pi E$. Then $t=[J, h]^{F-N}_\pounds\in\Gamma(\pounds^\pi E)$ is called the weak torsion of $h$.
\end{defn}
Using (\ref{F4}), (\ref{vertical}), (\ref{horizontal end}) and (i), (iv) of (\ref{Jh}) we obtain
\begin{align*}
t(\mathcal{X}_\alpha, \mathcal{X}_\beta)&=[J(\mathcal{X}_\alpha), h(\mathcal{X}_\beta)]_\pounds+[h(\mathcal{X}_\alpha), J(\mathcal{X}_\beta)]_\pounds+J[\mathcal{X}_\alpha, \mathcal{X}_\beta]_\pounds-J[\mathcal{X}_\alpha, h(\mathcal{X}_\beta)]_\pounds\\
&\ \ -J[h(\mathcal{X}_\alpha), \mathcal{X}_\beta]_\pounds-h[\mathcal{X}_\alpha, J(\mathcal{X}_\beta)]_\pounds-h[J(\mathcal{X}_\alpha), \mathcal{X}_\beta]_\pounds\\
&=\rho_\pounds(\mathcal{V}_\alpha)(\mathcal{B}^\gamma_\beta)\mathcal{V}_\gamma-\rho_\pounds(\mathcal{V}_\beta)(\mathcal{B}^\gamma_\alpha)\mathcal{V}_\gamma
-(L^\gamma_{\alpha\beta}\circ\pi)\mathcal{V}_\gamma\\
&=[\frac{\partial\mathcal{B}^\gamma_\beta}{\partial\textbf{y}^\alpha}-\frac{\partial\mathcal{B}^\gamma_\alpha}{\partial\textbf{y}^\beta}
-(L^\gamma_{\alpha\beta}\circ\pi)]\mathcal{V}_\gamma,
\end{align*}
and
\[
t(\mathcal{X}_\alpha, \mathcal{V}_\beta)=t(\mathcal{V}_\alpha, \mathcal{V}_\beta)=0.
\]
Therefore we have
\begin{equation}\label{wt}
t=\frac{1}{2}t^\gamma_{\alpha\beta}\mathcal{X}^\alpha\wedge\mathcal{X}^\beta\otimes\mathcal{V}_\gamma,
\end{equation}
where
\begin{equation}\label{wt1}
t^\gamma_{\alpha\beta}:=\frac{\partial\mathcal{B}^\gamma_\beta}{\partial\textbf{y}^\alpha}-\frac{\partial\mathcal{B}^\gamma_\alpha}{\partial\textbf{y}^\beta}
-(L^\gamma_{\alpha\beta}\circ\pi).
\end{equation}
\begin{lemma}\label{torsion lemma}
The weak torsion $t$ is semibasic.
\end{lemma}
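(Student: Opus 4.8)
The plan is to verify directly the two conditions in the definition of semibasic, applied to the Lie algebroid $\pounds^\pi E$ with $J$ its vertical endomorphism: namely that $J\circ t=0$ and that $i_{J\widetilde{X}}t=0$ for every $\widetilde{X}\in\Gamma(\pounds^\pi E)$. Both will be read off from the coordinate expression \eqref{wt}, exactly as was done a few lines earlier to show that the tension $H$ is semibasic.

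First I would treat the condition $J\circ t=0$. By \eqref{wt} the weak torsion takes its values in the span of $\{\mathcal{V}_\gamma\}$, i.e.\ in $v\pounds^\pi E=\im J$. Since $J(\mathcal{V}_\gamma)=0$ by \eqref{vertical}, postcomposing $t$ with $J$ annihilates every output value, so $J\circ t=0$.

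Next I would treat the condition $i_{J\widetilde{X}}t=0$. Writing an arbitrary section as $\widetilde{X}=\widetilde{X}^\mu\mathcal{X}_\mu+\widetilde{Y}^\mu\mathcal{V}_\mu$ and using \eqref{vertical}, one has $J\widetilde{X}=\widetilde{X}^\mu\mathcal{V}_\mu$, which is a vertical section. On the other hand the covector part of $t$ in \eqref{wt} is the wedge $\mathcal{X}^\alpha\wedge\mathcal{X}^\beta$, built only from the $\mathcal{X}^\alpha$; since $\mathcal{X}^\alpha(\mathcal{V}_\mu)=0$, contracting the first slot of $t$ with $J\widetilde{X}$ gives $\mathcal{X}^\alpha(J\widetilde{X})=0$, whence $i_{J\widetilde{X}}t=0$.

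Combining the two displays shows $t$ satisfies both defining conditions and is therefore semibasic. I do not expect any genuine obstacle here: the content is simply the observation that $t$ is vertical-valued while its form part is spanned by the horizontal dual basis $\{\mathcal{X}^\alpha\}$, so the argument is entirely parallel to the semibasicness of $H$ noted after \eqref{tension}.
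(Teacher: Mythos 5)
Your proof is correct and follows essentially the same route as the paper: both verify the two defining conditions of semibasicness from the coordinate expression \eqref{wt}, using $J(\mathcal{V}_\gamma)=0$ to get $J\circ t=0$ and the fact that $J\widetilde{X}$ is vertical while the form part of $t$ is built from the $\mathcal{X}^\alpha$ (so $i_{J\widetilde{X}}\mathcal{X}^\alpha=0$) to get $i_{J\widetilde{X}}t=0$. The only cosmetic difference is that the paper expands $i_{J\widetilde{X}}(\mathcal{X}^\alpha\wedge\mathcal{X}^\beta)$ explicitly by the Leibniz rule, whereas you phrase it via $\mathcal{X}^\alpha(J\widetilde{X})=0$; the content is identical.
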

\begin{proof}
Since $J(\mathcal{V}_\alpha)=0$, then we deduce $J\circ t=0$. Also we have $i_{J\widetilde{X}}(\mathcal{X}^\alpha)=0$, for each $\widetilde{X}\in\Gamma(\pounds^\pi E)$. Therefore we obtain
\[
i_{J\widetilde{X}}t=\dfrac{1}{2}t_{\alpha\beta}^{\gamma}i_{J\widetilde{X}}(\mathcal{X}^\alpha\wedge\mathcal{X}^\beta)\otimes\mathcal{V}_\gamma
=\dfrac{1}{2}t_{\alpha\beta}^{\gamma}(i_{J\widetilde{X}}(\mathcal{X}^\alpha)\wedge\mathcal{X}^\beta-\mathcal{X}^\alpha\wedge i_{J\widetilde{X}}(\mathcal{X}^\beta))\otimes\mathcal{V}_\gamma=0.
\]
Therefore $t$ is semibasic.
\end{proof}
\begin{defn}
The strong torsion of $h$ is defined by $T=i_St+H$.
\end{defn}
\begin{lemma}
The strong torsion $T$ has the following coordinate expression:
\begin{equation}\label{st}
T=(\mathcal{B}^\alpha_\beta-\textbf{y}^\gamma\frac{\partial\mathcal{B}^\alpha_\gamma}{\partial\textbf{y}^\beta}
-\textbf{y}^\gamma(L^\alpha_{\gamma\beta}\circ\pi))\mathcal{V}_\alpha\otimes\mathcal{X}^\beta.
\end{equation}
\end{lemma}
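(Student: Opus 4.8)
The plan is to establish (\ref{st}) by a direct computation in the local basis $\{\mathcal{X}_\alpha, \mathcal{V}_\alpha\}$, feeding in the coordinate expressions already derived: the weak torsion $t$ from (\ref{wt})--(\ref{wt1}), the tension $H$ from (\ref{tension}), and the semispray $S=\textbf{y}^\alpha\mathcal{X}_\alpha+S^\alpha\mathcal{V}_\alpha$ from (\ref{semispray}). Since $T=i_St+H$ by definition, the whole argument reduces to computing the contraction $i_St$ and adding $H$; no new structural input is needed.

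First I would contract the vector-valued $2$-form $t=\frac{1}{2}t^\gamma_{\alpha\beta}\mathcal{X}^\alpha\wedge\mathcal{X}^\beta\otimes\mathcal{V}_\gamma$ with $S$. Because the dual basis satisfies $\mathcal{X}^\alpha(S)=\textbf{y}^\alpha$ and $\mathcal{X}^\alpha$ annihilates every $\mathcal{V}_\mu$, the graded-derivation rule (3) of Theorem~\ref{Best} gives
\[
i_S(\mathcal{X}^\alpha\wedge\mathcal{X}^\beta)=\textbf{y}^\alpha\mathcal{X}^\beta-\textbf{y}^\beta\mathcal{X}^\alpha,
\]
so that
\[
i_St=\tfrac{1}{2}t^\gamma_{\alpha\beta}(\textbf{y}^\alpha\mathcal{X}^\beta-\textbf{y}^\beta\mathcal{X}^\alpha)\otimes\mathcal{V}_\gamma.
\]
Now I would use the antisymmetry $t^\gamma_{\alpha\beta}=-t^\gamma_{\beta\alpha}$, which is immediate from (\ref{wt1}): relabelling the summation indices $\alpha\leftrightarrow\beta$ in the second summand turns it into the first, absorbing the factor $\tfrac12$, and hence $i_St=\textbf{y}^\alpha t^\gamma_{\alpha\beta}\,\mathcal{V}_\gamma\otimes\mathcal{X}^\beta$. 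Renaming indices to the target form ($\gamma\to\alpha$ and the dummy $\alpha\to\gamma$) and substituting (\ref{wt1}) gives
\[
i_St=\textbf{y}^\gamma\Big(\tfrac{\partial\mathcal{B}^\alpha_\beta}{\partial\textbf{y}^\gamma}-\tfrac{\partial\mathcal{B}^\alpha_\gamma}{\partial\textbf{y}^\beta}-(L^\alpha_{\gamma\beta}\circ\pi)\Big)\mathcal{V}_\alpha\otimes\mathcal{X}^\beta.
\]

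Finally I would add the tension $H=\big(\mathcal{B}^\alpha_\beta-\textbf{y}^\gamma\tfrac{\partial\mathcal{B}^\alpha_\beta}{\partial\textbf{y}^\gamma}\big)\mathcal{V}_\alpha\otimes\mathcal{X}^\beta$ from (\ref{tension}). The decisive observation is that the term $\textbf{y}^\gamma\tfrac{\partial\mathcal{B}^\alpha_\beta}{\partial\textbf{y}^\gamma}$ appearing in $i_St$ is exactly the one subtracted in $H$, so these two contributions cancel, leaving precisely
\[
T=\Big(\mathcal{B}^\alpha_\beta-\textbf{y}^\gamma\tfrac{\partial\mathcal{B}^\alpha_\gamma}{\partial\textbf{y}^\beta}-\textbf{y}^\gamma(L^\alpha_{\gamma\beta}\circ\pi)\Big)\mathcal{V}_\alpha\otimes\mathcal{X}^\beta,
\]
which is (\ref{st}). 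The computation is entirely routine; the only point requiring care is the index bookkeeping in the contraction, specifically the cancellation of the factor $\tfrac12$ via the antisymmetry of $t^\gamma_{\alpha\beta}$ and the matching of the $\textbf{y}^\gamma\partial_{\textbf{y}^\gamma}\mathcal{B}^\alpha_\beta$ terms between $i_St$ and $H$. There is no genuine conceptual obstacle.
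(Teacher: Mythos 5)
Your proposal is correct and follows essentially the same route as the paper: the paper likewise computes $i_St=\frac{1}{2}t^\gamma_{\alpha\beta}(\textbf{y}^\alpha\mathcal{X}^\beta-\textbf{y}^\beta\mathcal{X}^\alpha)\otimes\mathcal{V}_\gamma$, evaluates it on $\mathcal{X}_\lambda$ (using the antisymmetry of $t^\gamma_{\alpha\beta}$ implicitly to get $\textbf{y}^\alpha t^\gamma_{\alpha\lambda}\mathcal{V}_\gamma$), substitutes (\ref{wt1}), and adds the tension (\ref{tension}) so that the $\textbf{y}^\gamma\partial_{\textbf{y}^\gamma}\mathcal{B}^\alpha_\beta$ terms cancel. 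The only difference is that you manipulate the tensor expression globally while the paper evaluates on basis sections, which is purely cosmetic.
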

\begin{proof}
Using (\ref{tension}) we get
\begin{equation}\label{st1}
T(\mathcal{X}_\lambda)=(i_St)(\mathcal{X}_\lambda)+H(\mathcal{X}_\lambda)=(i_St)(\mathcal{X}_\lambda)+(\mathcal{B}^\gamma_\lambda
-\textbf{y}^\alpha\frac{\partial\mathcal{B}^\gamma_\lambda}{\partial\textbf{y}^\alpha})\mathcal{V}_\gamma.
\end{equation}
But using (\ref{semispray}) and (\ref{wt}) we obtain
\begin{equation}\label{4.5}
i_St=\frac{1}{2}t^\gamma_{\alpha\beta}(\textbf{y}^\alpha\mathcal{X}^\beta-\textbf{y}^\beta\mathcal{X}^\alpha)\otimes\mathcal{V}_\gamma.
\end{equation}
Thus
\[
(i_St)(\mathcal{X}_\lambda)=\textbf{y}^\alpha t^\gamma_{\alpha\lambda}\mathcal{V}_\gamma=
\textbf{y}^\alpha(\frac{\partial\mathcal{B}^\gamma_\lambda}{\partial\textbf{y}^\alpha}-\frac{\partial\mathcal{B}^\gamma_\alpha}{\partial\textbf{y}^\lambda}
-(L^\gamma_{\alpha\lambda}\circ\pi))\mathcal{V}_\gamma.
\]
Setting the above equation in (\ref{st1}) we obtain (\ref{st}).
\end{proof}
It is easy to see that $J\circ T=0$ and $i_{J\widetilde{X}}T=0$, for each $\widetilde{X}\in\Gamma(\pounds^\pi E)$. Thus $T$ is semibasic.
\begin{defn}
The curvature of a horizontal endomorphism $h$ is defined by $\Omega=-N_h$, where $N_h$ is the Nijenhuis tensor of $h$ given by
\[
N_h(\widetilde{X}, \widetilde{Y})=[h\widetilde{X}, h\widetilde{Y}]-h[h\widetilde{X}, \widetilde{Y}]-h[\widetilde{X}, h\widetilde{Y}]+h[\widetilde{X}, \widetilde{Y}],\ \ \ \forall \widetilde{X}, \widetilde{Y}\in\Gamma(\pounds^\pi E).
\]
\end{defn}
\begin{lemma}
For sections $\widetilde{X}$ and $\widetilde{Y}$ of $\pounds^\pi E$ we have
\begin{equation}\label{cur1}
\Omega(\widetilde{X}, \widetilde{Y})=\Omega(h\widetilde{X}, h\widetilde{Y})=-v[h\widetilde{X}, h\widetilde{Y}]_\pounds.
\end{equation}
\end{lemma}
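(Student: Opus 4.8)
The plan is to treat the two equalities in (\ref{cur1}) separately, both being formal consequences of the idempotency of $h$ together with the involutivity of the vertical subbundle. Throughout I would use the projector identities recorded in (\ref{Jh}), in particular $h\circ h=h$ and $h\circ v=0$ (equivalently $\im v=\ker h=v\pounds^\pi E$), and the fact that $N_h$ is a genuine tensor, i.e.\ $C^\infty(E)$-bilinear and antisymmetric; this last point is standard, being nothing but $N_h=\tfrac12[h,h]^{F-N}_\pounds$ when $h\circ h=h$, as the analogue of (\ref{F4}) for the bracket $[\cdot,\cdot]_\pounds$ shows.

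For the right-hand identity I would substitute $h\widetilde{X},h\widetilde{Y}$ into the defining formula for $N_h$ and use $h(h\widetilde{X})=h\widetilde{X}$, $h(h\widetilde{Y})=h\widetilde{Y}$. The first term becomes $[h\widetilde{X},h\widetilde{Y}]_\pounds$ and the three remaining terms all reduce to copies of $h[h\widetilde{X},h\widetilde{Y}]_\pounds$, two with a minus sign and one with a plus sign, so that
\[
N_h(h\widetilde{X},h\widetilde{Y})=[h\widetilde{X},h\widetilde{Y}]_\pounds-h[h\widetilde{X},h\widetilde{Y}]_\pounds=(\mathrm{Id}-h)[h\widetilde{X},h\widetilde{Y}]_\pounds=v[h\widetilde{X},h\widetilde{Y}]_\pounds .
\]
Since $\Omega=-N_h$ this yields $\Omega(h\widetilde{X},h\widetilde{Y})=-v[h\widetilde{X},h\widetilde{Y}]_\pounds$, the second equality.

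For the left-hand identity the idea is to show that $N_h$ annihilates vertical arguments. Writing $V=v\widetilde{X}$, so that $hV=0$, the first two terms of $N_h(V,\widetilde{Y})$ vanish and the remaining two collapse to $h[V,\widetilde{Y}]_\pounds-h[V,h\widetilde{Y}]_\pounds=h[V,v\widetilde{Y}]_\pounds=h[v\widetilde{X},v\widetilde{Y}]_\pounds$. Now $v\widetilde{X}$ and $v\widetilde{Y}$ are sections of $v\pounds^\pi E$, which is spanned by $\{\mathcal{V}_\alpha\}$, and since $[\mathcal{V}_\alpha,\mathcal{V}_\beta]_\pounds=0$ the bracket of two vertical sections is again vertical; hence $[v\widetilde{X},v\widetilde{Y}]_\pounds\in\ker h$ and $N_h(v\widetilde{X},\widetilde{Y})=h[v\widetilde{X},v\widetilde{Y}]_\pounds=0$. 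By antisymmetry $N_h(\widetilde{X},v\widetilde{Y})=0$ as well. Finally I would decompose $\widetilde{X}=h\widetilde{X}+v\widetilde{X}$ and $\widetilde{Y}=h\widetilde{Y}+v\widetilde{Y}$ and expand $N_h$ by bilinearity: the three terms involving a vertical slot drop out, leaving $N_h(\widetilde{X},\widetilde{Y})=N_h(h\widetilde{X},h\widetilde{Y})$, i.e.\ $\Omega(\widetilde{X},\widetilde{Y})=\Omega(h\widetilde{X},h\widetilde{Y})$.

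The genuinely substantive step is the vanishing $h[v\widetilde{X},v\widetilde{Y}]_\pounds=0$: everything else is bookkeeping with the projector relations (\ref{Jh}), but this equality is exactly where the Lie algebroid structure enters, through the involutivity of $v\pounds^\pi E$ encoded in $[\mathcal{V}_\alpha,\mathcal{V}_\beta]_\pounds=0$. I would also double-check once that the formula for $N_h$ is indeed tensorial before expanding by bilinearity, since the argument in the last step relies on that.
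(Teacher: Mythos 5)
Your proof is correct and follows essentially the same route as the paper's: both reduce to $\Omega(h\widetilde{X},h\widetilde{Y})$ by showing $N_h$ vanishes whenever a vertical argument appears (the key input being that $[v\widetilde{X},v\widetilde{Y}]_\pounds$ is vertical, hence in $\ker h$), and then compute $N_h(h\widetilde{X},h\widetilde{Y})=(\mathrm{Id}-h)[h\widetilde{X},h\widetilde{Y}]_\pounds=v[h\widetilde{X},h\widetilde{Y}]_\pounds$ from idempotency. The only cosmetic difference is that you kill $N_h(v\widetilde{X},\widetilde{Y})$ for an arbitrary second slot and invoke antisymmetry, whereas the paper treats the cases $(v,v)$ and $(h,v)$ separately; your explicit frame argument for the verticality of $[v\widetilde{X},v\widetilde{Y}]_\pounds$ fills in a step the paper labels as easy to check.
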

\begin{proof}
At first it is easy to check that $[v\widetilde{X}, v\widetilde{Y}]_\pounds\in v\pounds^\pi E$. Thus using $\ker h=v\pounds^\pi E$ and $hv=0$ we get
\[
\Omega(v\widetilde{X}, v\widetilde{Y})=-N_h(v\widetilde{X}, v\widetilde{Y})=-h[v\widetilde{X}, v\widetilde{Y}]_\pounds=0.
\]
Also it is easy to that $N_h(h\widetilde{X}, v\widetilde{Y})=0$ and consequently $\Omega(h\widetilde{X}, v\widetilde{Y})=0$. Therefore we obtain
\begin{align*}
\Omega(\widetilde{X}, \widetilde{Y})&=\Omega(h\widetilde{X}+v\widetilde{X}, h\widetilde{Y}+v\widetilde{Y})=\Omega(h\widetilde{X}, h\widetilde{Y})\\
&=-[h\widetilde{X}, h\widetilde{Y}]_\pounds+h[h\widetilde{X}, h\widetilde{Y}]_\pounds+h[h\widetilde{X}, h\widetilde{Y}]_\pounds-h[h\widetilde{X}, h\widetilde{Y}]_\pounds\\
&=-v[h\widetilde{X}, h\widetilde{Y}]_\pounds.
\end{align*}
\end{proof}
\begin{proposition}
The curvature $\Omega$ has the following coordinate expression:
\begin{equation}\label{curv000}
\Omega=-\frac{1}{2}R^\gamma_{\alpha\beta}\mathcal{X}^\alpha\wedge\mathcal{X}^\beta\otimes\mathcal{V}_\gamma,
\end{equation}
where
\begin{equation}\label{curv0}
R^\gamma_{\alpha\beta}=(\rho_\alpha^i\circ\pi)\frac{\partial{\mathcal{B}^\gamma_\beta}}{\partial{\textbf{x}^i}}
-(\rho_\beta^i\circ\pi)\frac{\partial{\mathcal{B}^\gamma_\alpha}}
{\partial{\textbf{x}^i}}+\mathcal{B}^\lambda_{\alpha}\frac{\partial{B^\gamma_\beta}}
{\partial{\textbf{y}^\lambda}}
-\mathcal{B}^\lambda_{\beta}\frac{\partial{\mathcal{B}^\gamma_\alpha}}{\partial{\textbf{y}^\lambda}}
+(L^\lambda_{\beta\alpha}\circ\pi)\mathcal{B}^\gamma_\lambda.
\end{equation}
\end{proposition}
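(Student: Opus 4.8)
The plan is to lean on the preceding lemma, which reduces everything to the single identity $\Omega(\widetilde X,\widetilde Y)=\Omega(h\widetilde X,h\widetilde Y)=-v[h\widetilde X,h\widetilde Y]_\pounds$ from (\ref{cur1}). First I would note that since $\Omega(\widetilde X,\widetilde Y)=\Omega(h\widetilde X,h\widetilde Y)$ and $h(\mathcal{V}_\alpha)=0$, the tensor $\Omega$ annihilates any argument that is vertical; consequently $\Omega$ is determined by its values on the pairs of horizontal basis sections, and it suffices to compute $\Omega(\mathcal{X}_\alpha,\mathcal{X}_\beta)$. Inserting $h(\mathcal{X}_\alpha)=\mathcal{X}_\alpha+\mathcal{B}^\lambda_\alpha\mathcal{V}_\lambda$ from (\ref{horizontal end}), the task collapses to expanding the bracket $[\mathcal{X}_\alpha+\mathcal{B}^\lambda_\alpha\mathcal{V}_\lambda,\,\mathcal{X}_\beta+\mathcal{B}^\mu_\beta\mathcal{V}_\mu]_\pounds$ and then applying the vertical projector $v=Id-h$.

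Next I would split this bracket by bilinearity into four pieces and evaluate each using the Leibniz rule for $[., .]_\pounds$, the structure relations $[\mathcal{X}_\alpha,\mathcal{X}_\beta]_\pounds=(L^\gamma_{\alpha\beta}\circ\pi)\mathcal{X}_\gamma$, $[\mathcal{X}_\alpha,\mathcal{V}_\beta]_\pounds=0$, $[\mathcal{V}_\alpha,\mathcal{V}_\beta]_\pounds=0$ proved earlier, together with the anchor values $\rho_\pounds(\mathcal{X}_\alpha)=(\rho^i_\alpha\circ\pi)\frac{\partial}{\partial\textbf{x}^i}$ and $\rho_\pounds(\mathcal{V}_\alpha)=\frac{\partial}{\partial\textbf{y}^\alpha}$. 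The purely horizontal piece contributes $(L^\gamma_{\alpha\beta}\circ\pi)\mathcal{X}_\gamma$; the two mixed pieces contribute $(\rho^i_\alpha\circ\pi)\frac{\partial\mathcal{B}^\gamma_\beta}{\partial\textbf{x}^i}\mathcal{V}_\gamma$ and $-(\rho^i_\beta\circ\pi)\frac{\partial\mathcal{B}^\gamma_\alpha}{\partial\textbf{x}^i}\mathcal{V}_\gamma$, coming from $\rho_\pounds(\mathcal{X})$ differentiating the coefficients $\mathcal{B}$; and the vertical piece $[\mathcal{B}^\lambda_\alpha\mathcal{V}_\lambda,\mathcal{B}^\mu_\beta\mathcal{V}_\mu]_\pounds$ yields the quadratic terms $\big(\mathcal{B}^\lambda_\alpha\frac{\partial\mathcal{B}^\gamma_\beta}{\partial\textbf{y}^\lambda}-\mathcal{B}^\lambda_\beta\frac{\partial\mathcal{B}^\gamma_\alpha}{\partial\textbf{y}^\lambda}\big)\mathcal{V}_\gamma$.

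Finally I would apply $v=Id-h$, using that $v(\mathcal{V}_\gamma)=\mathcal{V}_\gamma$ while $v(\mathcal{X}_\gamma)=-\mathcal{B}^\delta_\gamma\mathcal{V}_\delta$. The vertical coefficients pass through unchanged, and the one horizontal summand $(L^\gamma_{\alpha\beta}\circ\pi)\mathcal{X}_\gamma$ is sent to $-(L^\lambda_{\alpha\beta}\circ\pi)\mathcal{B}^\gamma_\lambda\mathcal{V}_\gamma$ after relabeling; by the antisymmetry $L^\lambda_{\alpha\beta}=-L^\lambda_{\beta\alpha}$ this is precisely the $+(L^\lambda_{\beta\alpha}\circ\pi)\mathcal{B}^\gamma_\lambda$ term of (\ref{curv0}). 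Collecting the $\mathcal{V}_\gamma$-coefficients gives $\Omega(\mathcal{X}_\alpha,\mathcal{X}_\beta)=-R^\gamma_{\alpha\beta}\mathcal{V}_\gamma$ with $R^\gamma_{\alpha\beta}$ as stated; since $\Omega$ is a skew, semibasic $2$-form vanishing on verticals, writing it in the dual basis yields $\Omega=-\tfrac{1}{2}R^\gamma_{\alpha\beta}\mathcal{X}^\alpha\wedge\mathcal{X}^\beta\otimes\mathcal{V}_\gamma$.

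The computation is conceptually routine, so the main obstacle is bookkeeping: applying the Leibniz rule with the correct sign in the \emph{first} slot (obtained from skew-symmetry of $[., .]_\pounds$) and tracking the index relabelings so the structure-constant term emerges with the sign of (\ref{curv0}). The one genuinely non-obvious point, which a careless "read off the vertical coefficients" argument would miss, is that the horizontal summand $(L^\gamma_{\alpha\beta}\circ\pi)\mathcal{X}_\gamma$ of the bracket is \emph{not} killed by $v$ but instead feeds the $L$-term of $R^\gamma_{\alpha\beta}$ through $v(\mathcal{X}_\gamma)=-\mathcal{B}^\delta_\gamma\mathcal{V}_\delta$; keeping this term is essential to recovering the stated formula.
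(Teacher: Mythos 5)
Your proof is correct and takes essentially the same route as the paper's: both reduce to $\Omega(\mathcal{X}_\alpha,\mathcal{X}_\beta)=-v[h\mathcal{X}_\alpha,h\mathcal{X}_\beta]_\pounds$ via (\ref{cur1}), expand the bracket of $h\mathcal{X}_\alpha=\mathcal{X}_\alpha+\mathcal{B}^\lambda_\alpha\mathcal{V}_\lambda$ using the structure relations and the Leibniz rule, and then apply $v$ with $v\mathcal{V}_\gamma=\mathcal{V}_\gamma$ and $v\mathcal{X}_\gamma=-\mathcal{B}^\beta_\gamma\mathcal{V}_\beta$. In particular, the point you flag as non-obvious, namely that the horizontal summand $(L^\gamma_{\alpha\beta}\circ\pi)\mathcal{X}_\gamma$ is not annihilated by $v$ but feeds the $(L^\lambda_{\beta\alpha}\circ\pi)\mathcal{B}^\gamma_\lambda$ term of (\ref{curv0}), is exactly the step the paper carries out.
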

\begin{proof}
Using (\ref{cur1}) we have
\begin{align}\label{cur}
\Omega(\mathcal{X}_{\alpha}, \mathcal{X}_{\beta})&=-v[h\mathcal{X}_\alpha, h\mathcal{X}_\beta]_\pounds
=-v[\mathcal{X}_{\alpha}+\mathcal{B}^{\lambda}_{\alpha}\mathcal{V}_\lambda,\mathcal{X}_{\beta}+\mathcal{B}^{\gamma}_{\beta}\mathcal{V}_\gamma]_\pounds\nonumber\\
&=-v\Big((L^\gamma_{\alpha\beta}\circ\pi)\mathcal{X}_\gamma+\rho_\pounds(\mathcal{X}_\alpha)(\mathcal{B}^\gamma_\beta)\mathcal{V}_\gamma
-\rho_\pounds(\mathcal{X}_\beta) (\mathcal{B}^\lambda_\alpha)\mathcal{V}_\lambda\nonumber\\
&\ \ \ +\mathcal{B}^\lambda_\alpha\rho_\pounds(\mathcal{V}_\lambda)(\mathcal{B}^\gamma_\beta)\mathcal{V}_\gamma
-\mathcal{B}^\gamma_\beta\rho_\pounds(\mathcal{V}_\gamma)(\mathcal{B}^\lambda_\alpha)\mathcal{V}_\lambda\Big)\nonumber\\
&=-\Big((\rho_\alpha^i\circ\pi)\frac{\partial{\mathcal{B}^\gamma_\beta}}
{\partial{\textbf{x}^i}}
-(\rho_\beta^i\circ\pi)\frac{\partial{\mathcal{B}^\gamma_\alpha}}{\partial{\textbf{x}^i}}+\mathcal{B}^\lambda_{\alpha}\frac{\partial{\mathcal{B}^\gamma_\beta}}{\partial{\textbf{y}^\lambda}}
-\mathcal{B}^\lambda_{\beta}\frac{\partial{\mathcal{B}^\gamma_\alpha}}
{\partial{\textbf{y}^\lambda}}\Big)v\mathcal{V}_\gamma\nonumber\\
&\ \ \ \ -(L^\gamma_{\alpha\beta}\circ\pi)v\mathcal{X}_\gamma.
\end{align}
Using $v=Id-h$ and (\ref{horizontal end}) we deduce that
\[
v\mathcal{V}_\alpha=\mathcal{V}_\alpha,\ \ \ v\mathcal{X}_\alpha=-\mathcal{B}^\beta_\alpha\mathcal{V}_\beta.
\]
Plugging the above equation into (\ref{cur}) yields
\begin{align*}
\Omega(\mathcal{X}_{\alpha}, \mathcal{X}_{\beta})&= \Big((L^\lambda_{\alpha\beta}\circ\pi)\mathcal{B}^\gamma_\lambda-(\rho_\alpha^i\circ\pi)\frac{\partial{\mathcal{B}^\gamma_\beta}}
{\partial{\textbf{x}^i}}
+(\rho_\beta^i\circ\pi)\frac{\partial{\mathcal{B}^\gamma_\alpha}}{\partial{\textbf{x}^i}}\\
&\ \ \ -\mathcal{B}^\lambda_{\alpha}\frac{\partial{\mathcal{B}^\gamma_\beta}}{\partial{\textbf{y}^\lambda}}
+\mathcal{B}^\lambda_{\beta}\frac{\partial{\mathcal{B}^\gamma_\alpha}}
{\partial{\textbf{y}^\lambda}}\Big)\mathcal{V}_\gamma=-R^\gamma_{\alpha\beta}\mathcal{V}_\gamma.
\end{align*}
Similarly, we have
\[
\Omega(\mathcal{X}_{\alpha}, \mathcal{V}_{\beta})=\Omega(\mathcal{V}_{\alpha}, \mathcal{V}_{\beta})=0.
\]
\end{proof}
Similar to the proof of Lemma \ref{torsion lemma}, we can prove the following
\begin{lemma}
The curvature $\Omega$ of horizontal endomorphism $h$ is semibasic.
\end{lemma}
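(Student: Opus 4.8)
The final statement to prove is that the curvature $\Omega$ of a horizontal endomorphism $h$ is semibasic, meaning $J\circ\Omega=0$ and $i_{J\widetilde{X}}\Omega=0$ for all $\widetilde{X}\in\Gamma(\pounds^\pi E)$. The hint "similar to the proof of Lemma \ref{torsion lemma}" tells me exactly which template to follow: the weak torsion $t$ was shown semibasic by exploiting its explicit coordinate form as a $\mathcal{X}^\alpha\wedge\mathcal{X}^\beta\otimes\mathcal{V}_\gamma$ tensor, and $\Omega$ from (\ref{curv000}) has an identical structural shape.

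My plan is to read off the two defining conditions directly from the coordinate expression (\ref{curv000}), namely $\Omega=-\frac{1}{2}R^\gamma_{\alpha\beta}\mathcal{X}^\alpha\wedge\mathcal{X}^\beta\otimes\mathcal{V}_\gamma$. First I would verify $J\circ\Omega=0$: since $\Omega$ takes values in the $\mathcal{V}_\gamma$ directions, and by the lemma giving (\ref{vertical}) we have $J(\mathcal{V}_\gamma)=0$, composing $J$ with $\Omega$ annihilates every output vector, so $J\circ\Omega=0$ immediately. This is the value-side condition.

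Second I would verify the contraction condition $i_{J\widetilde{X}}\Omega=0$. Here the key observation is that $\Omega$ is built from the covector factors $\mathcal{X}^\alpha\wedge\mathcal{X}^\beta$, and for any $\widetilde{X}\in\Gamma(\pounds^\pi E)$ the vector $J\widetilde{X}$ lies in $\im J=v\pounds^\pi E$, so it is a combination of the $\mathcal{V}_\alpha$; consequently $i_{J\widetilde{X}}(\mathcal{X}^\alpha)=\mathcal{X}^\alpha(J\widetilde{X})=0$. Using the contraction rule (3) of Theorem \ref{Best} to distribute $i_{J\widetilde{X}}$ across the wedge $\mathcal{X}^\alpha\wedge\mathcal{X}^\beta$, each resulting term carries a factor $i_{J\widetilde{X}}(\mathcal{X}^\alpha)$ or $i_{J\widetilde{X}}(\mathcal{X}^\beta)$, hence vanishes, giving $i_{J\widetilde{X}}\Omega=0$. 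This is mechanically the same computation displayed in the proof of Lemma \ref{torsion lemma}, with $R^\gamma_{\alpha\beta}$ playing the role of $t^\gamma_{\alpha\beta}$.

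There is essentially no obstacle here; the result is a formal consequence of the tensorial shape of $\Omega$ in (\ref{curv000}) together with (\ref{vertical}), and the only care needed is to phrase both conditions cleanly. The one point worth stating explicitly is why $J\widetilde{X}\in v\pounds^\pi E$ for arbitrary $\widetilde{X}$: this follows from the earlier identity $\im J=v\pounds^\pi E$, which guarantees the covector factors annihilate $J\widetilde{X}$ regardless of the horizontal component of $\widetilde{X}$. With these two pieces in hand the proof closes, and one simply concludes that $\Omega$ is semibasic by Definition.
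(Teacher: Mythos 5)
Your proof is correct and follows exactly the route the paper intends: the paper proves this lemma by remarking it is ``similar to the proof of Lemma on the weak torsion,'' which is precisely your argument — read off $J\circ\Omega=0$ from the $\mathcal{V}_\gamma$-valued output of the coordinate expression of $\Omega$ together with $J(\mathcal{V}_\gamma)=0$, and get $i_{J\widetilde{X}}\Omega=0$ by distributing the contraction over $\mathcal{X}^\alpha\wedge\mathcal{X}^\beta$ and using $i_{J\widetilde{X}}\mathcal{X}^\alpha=0$ since $J\widetilde{X}$ is vertical. Nothing further is needed.
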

\begin{proposition}
Let the horizontal endomorphism $h$ be given on $\pounds^\pi E$. If $S$ is an
arbitrary semispray of $\pounds^\pi E$, then $\bar{S}=hS$ is also a semispray of $\pounds^\pi E$ which does not
depend on the choice of $S$ . $\bar{S}$ is called the semispray associated to $h$.
\end{proposition}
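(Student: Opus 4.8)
The plan is to verify the two assertions separately: first that $\bar{S}=hS$ is a semispray, i.e.\ that $J(\bar{S})=C$, and second that $\bar{S}$ is independent of the choice of semispray $S$. For the first part, I would begin from the defining property of $S$, namely $J(S)=C$, and exploit the algebraic relation $Jh=J$ from (iv) of (\ref{Jh}). Applying $J$ to $\bar{S}=hS$ immediately gives $J(\bar{S})=J(hS)=(Jh)(S)=J(S)=C$, so $\bar{S}$ satisfies the semispray condition. This is the quick step and the relation $Jh=J$ does all the work.

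For the independence claim, let $S$ and $S'$ be two semisprays on $\pounds^\pi E$. Since both satisfy $J(S)=J(S')=C$, their difference lies in $\ker J=v\pounds^\pi E$. I would then use $\ker h=v\pounds^\pi E$ to conclude that $h(S-S')=0$, whence $hS=hS'$; thus $\bar{S}$ depends only on $h$ and not on the particular semispray chosen.

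Alternatively, and perhaps more transparently, I would argue in coordinates. Writing an arbitrary semispray via (\ref{semispray}) as $S=\textbf{y}^\alpha\mathcal{X}_\alpha+S^\alpha\mathcal{V}_\alpha$ and applying the coordinate form (\ref{horizontal end}) of $h$, namely $h=(\mathcal{X}_\beta+\mathcal{B}^\alpha_\beta\mathcal{V}_\alpha)\otimes\mathcal{X}^\beta$, one computes that $h$ kills the $\mathcal{V}_\alpha$-part and sends $\mathcal{X}_\beta$ to $\mathcal{X}_\beta+\mathcal{B}^\alpha_\beta\mathcal{V}_\alpha$. This yields
\[
\bar{S}=hS=\textbf{y}^\beta(\mathcal{X}_\beta+\mathcal{B}^\alpha_\beta\mathcal{V}_\alpha)=\textbf{y}^\beta\mathcal{X}_\beta+\textbf{y}^\beta\mathcal{B}^\alpha_\beta\mathcal{V}_\alpha.
\]
The expression depends only on the horizontal coefficients $\mathcal{B}^\alpha_\beta$ and on the canonical coordinate $\textbf{y}^\beta$, with no trace of the semispray coefficients $S^\alpha$; this makes the independence manifest. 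Comparing with (\ref{semispray}), the coefficient of $\mathcal{X}_\beta$ is exactly $\textbf{y}^\beta$, reconfirming that $\bar{S}$ is a semispray.

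I do not anticipate a genuine obstacle here: the statement is essentially a formal consequence of the two identities $Jh=J$ and $\ker h=v\pounds^\pi E$. The only point requiring a little care is the independence assertion, where one must remember that semisprays are not unique (they differ by vertical sections) and invoke $\ker h=v\pounds^\pi E$ to absorb that ambiguity; the coordinate computation above makes this completely explicit.
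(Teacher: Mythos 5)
Your proof is correct and follows essentially the same route as the paper: you establish $J(\bar S)=C$ from the identity $Jh=J$, and you obtain independence by noting that the difference of two semisprays lies in $\ker J=v\pounds^\pi E=\ker h$, exactly as the paper does. The supplementary coordinate computation $\bar S=\textbf{y}^\beta\mathcal{X}_\beta+\textbf{y}^\beta\mathcal{B}^\alpha_\beta\mathcal{V}_\alpha$ is a nice explicit confirmation but is not needed beyond the invariant argument.
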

\begin{proof}
Since $Jh=J$ then we have
\[
J\bar{S}=J(hS)=Jh(S)=JS=C.
\]
Thus $S'$ is a semispray. Now let $S'$ be an another semispray of $\pounds^\pi E$. Then we have
\[
J(S-S')=JS-JS'=C-C=0.
\]
Thus $S-S'\in\ker J=v\pounds^\pi E$, which gives us $0=h(S-S')=hS-hS'$, i.e., $hS=hS'$.
\end{proof}
\begin{proposition}
If the horizontal endomorphism $h$ is homogeneous, then the semispray associated to $h$ is spray.
\end{proposition}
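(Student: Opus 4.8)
The plan is to invoke the preceding proposition to reduce the statement to a single homogeneity condition, and then to read that condition off from the vanishing of the tension $H$. By the previous proposition $\bar S=hS$ is a semispray that does not depend on the chosen $S$, so the defining identity $J\bar S=C$ already holds. Hence it only remains to verify that $\bar S$ is homogeneous of degree $2$, i.e. that $[C,\bar S]_\pounds=\bar S$, which by the earlier characterization of sprays is exactly what is needed.

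First I would express $[\bar S,C]_\pounds$ through the tension. Applying the Fr\"olicher--Nijenhuis identity (\ref{F3}) with $K=h$, $Y=C$ and $X=S$ gives
\[
H(S)=[h,C]^{F-N}_\pounds(S)=[hS,C]_\pounds-h[S,C]_\pounds=[\bar S,C]_\pounds-h[S,C]_\pounds,
\]
so that $[\bar S,C]_\pounds=H(S)+h[S,C]_\pounds$. Since $h$ is homogeneous, $H=0$ by the definition of the tension, and the first term drops out, leaving $[\bar S,C]_\pounds=h[S,C]_\pounds$.

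Next I would compute $h[S,C]_\pounds$ for the canonical semispray $S=\textbf{y}^\alpha\mathcal{X}_\alpha+S^\alpha\mathcal{V}_\alpha$. Using the coordinate formula for $[C,\widetilde X]_\pounds$ already derived (with $\widetilde X=S$), the bracket $[S,C]_\pounds=-[C,S]_\pounds$ has $\mathcal{X}$-component equal to $-\textbf{y}^\beta\mathcal{X}_\beta$ together with some $\mathcal{V}$-component. Because $h$ annihilates $v\pounds^\pi E=\ker h$ (the span of the $\mathcal{V}_\alpha$) and $h\mathcal{X}_\beta=\mathcal{X}_\beta+\mathcal{B}^\gamma_\beta\mathcal{V}_\gamma$ by (\ref{horizontal end}), only the horizontal part survives:
\[
h[S,C]_\pounds=-\textbf{y}^\beta h(\mathcal{X}_\beta)=-\textbf{y}^\beta(\mathcal{X}_\beta+\mathcal{B}^\gamma_\beta\mathcal{V}_\gamma)=-\bar S,
\]
the last equality being the coordinate expression $\bar S=\textbf{y}^\beta\mathcal{X}_\beta+\textbf{y}^\beta\mathcal{B}^\gamma_\beta\mathcal{V}_\gamma$ obtained from $\bar S=hS$. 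Combining with the previous step yields $[\bar S,C]_\pounds=-\bar S$, i.e. $[C,\bar S]_\pounds=\bar S$, which is the spray condition.

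I expect the only delicate point to be the bookkeeping in $h[S,C]_\pounds$: one must be sure the (possibly complicated) $\mathcal{V}$-component of $[S,C]_\pounds$ is irrelevant, which is guaranteed precisely because $\ker h=v\pounds^\pi E$. As a transparent local alternative, I could instead set $\bar S^\beta=\textbf{y}^\alpha\mathcal{B}^\beta_\alpha$ and verify the spray condition $2\bar S^\beta=\textbf{y}^\gamma\frac{\partial\bar S^\beta}{\partial\textbf{y}^\gamma}$ directly: differentiating gives $\textbf{y}^\gamma\frac{\partial}{\partial\textbf{y}^\gamma}(\textbf{y}^\alpha\mathcal{B}^\beta_\alpha)=\textbf{y}^\alpha\mathcal{B}^\beta_\alpha+\textbf{y}^\alpha\big(\textbf{y}^\gamma\frac{\partial\mathcal{B}^\beta_\alpha}{\partial\textbf{y}^\gamma}\big)$, and the homogeneity Lemma ($\mathcal{B}^\beta_\alpha=\textbf{y}^\gamma\frac{\partial\mathcal{B}^\beta_\alpha}{\partial\textbf{y}^\gamma}$) turns the second summand into $\textbf{y}^\alpha\mathcal{B}^\beta_\alpha$, yielding $2\bar S^\beta$ as required. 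This makes explicit where the homogeneity of $h$ is used.
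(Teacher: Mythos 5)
Your proof is correct and follows essentially the same route as the paper: both start from $H(S)=0$, expand the tension via the Fr\"olicher--Nijenhuis identity (\ref{F3}) as $[hS,C]_\pounds-h[S,C]_\pounds$, and then use the fact that the horizontal projection of $[S,C]_\pounds$ equals $-hS$ (the paper phrases this as $[S,C]_\pounds+S$ being vertical, you compute $h[S,C]_\pounds=-\bar S$ directly, which is the same observation) to conclude $[C,hS]_\pounds=hS$. Your closing coordinate check of $2\bar S^\beta=\textbf{y}^\gamma\frac{\partial\bar S^\beta}{\partial\textbf{y}^\gamma}$ is a valid, more elementary confirmation, but the main argument matches the paper's.
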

\begin{proof}
Let $S$ be a semispray of $\pounds^\pi E$. Since $h$ is homogeneous, then we have
\begin{align}\label{tension0}
0=H(S)&=[h, C]^{F-N}_\pounds(S)=[hS, C]_\pounds -h[S, C]_\pounds\nonumber\\
&=[hS, C]_\pounds -h([S, C]_\pounds +S)+hS.
\end{align}
But we can obtain
\[
[S, C]_\pounds +S=(2S^\alpha-\textbf{y}^\beta\frac{\partial S^\alpha}{\partial\textbf{y}^\beta})\mathcal{V}_\alpha,
\]
and consequently $h([S, C]_\pounds +S)=0$. Plugging this equation into (\ref{tension0}) implies $[C, hS]_\pounds=hS$, i.e., $hS$ is a spray of $\pounds^\pi E$.
\end{proof}
\begin{lemma}
If $h_1$ and $h_2$ are horizontal endomorphisms on $\pounds^\pi E$, then $h_1-h_2\in v\pounds^\pi E$. Moreover
\begin{equation}\label{tension2}
J[(h_1-h_2)(\widetilde{X}), S]_\pounds=(h_1-h_2)(\widetilde{X}),\ \ \ \forall\widetilde{X}\in\Gamma(\pounds^\pi E).
\end{equation}
\end{lemma}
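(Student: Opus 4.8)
The plan is to handle the two assertions in turn, reducing everything to the structural identities for the vertical endomorphism $J$ recorded in (\ref{Jh}) together with the bracket relations of the frame $\{\mathcal{X}_\alpha,\mathcal{V}_\alpha\}$. I read the first claim $h_1-h_2\in v\pounds^\pi E$ as the statement that the endomorphism $h_1-h_2$ takes its values in the vertical subbundle, and this is immediate from part (iv) of (\ref{Jh}), which gives $Jh=J$ for any horizontal endomorphism. Applying it to both $h_1$ and $h_2$ yields $J(h_1-h_2)\widetilde{X}=J\widetilde{X}-J\widetilde{X}=0$, so $(h_1-h_2)(\widetilde{X})\in\ker J=v\pounds^\pi E$ for every $\widetilde{X}\in\Gamma(\pounds^\pi E)$. (One may instead subtract two copies of the coordinate form (\ref{horizontal end}); the $\mathcal{X}_\beta$-parts cancel and one is left with a tensor of the shape $(\cdots)\,\mathcal{V}_\alpha\otimes\mathcal{X}^\beta$, whose image is spanned by the $\mathcal{V}_\alpha$.)

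For the second assertion put $W:=(h_1-h_2)(\widetilde{X})$, so that $W$ is vertical and $JW=0$ by the first part. The heart of the argument is to convert the Lie bracket $[W,S]_\pounds$ into the Fr\"{o}licher--Nijenhuis bracket $[J,S]^{F-N}_\pounds$. Applying (\ref{F3}) with $K=J$ and $Y=S$ gives
\[
[J,S]^{F-N}_\pounds(W)=[JW,S]_\pounds-J[W,S]_\pounds=-J[W,S]_\pounds,
\]
since $JW=0$. Hence it suffices to show that $[J,S]^{F-N}_\pounds$ restricts to $-\mathrm{Id}$ on $v\pounds^\pi E$. Evaluating on a frame vertical section and using $J\mathcal{V}_\lambda=0$, the same identity (\ref{F3}) reduces this to computing $J[\mathcal{V}_\lambda,S]_\pounds$; a short computation with (\ref{semispray}), the frame brackets, and $\rho_\pounds(\mathcal{V}_\lambda)(\textbf{y}^\alpha)=\delta^\alpha_\lambda$ yields $[\mathcal{V}_\lambda,S]_\pounds=\mathcal{X}_\lambda+\frac{\partial S^\alpha}{\partial\textbf{y}^\lambda}\mathcal{V}_\alpha$, so that $J[\mathcal{V}_\lambda,S]_\pounds=\mathcal{V}_\lambda$ by (\ref{vertical}) and therefore $[J,S]^{F-N}_\pounds(\mathcal{V}_\lambda)=-\mathcal{V}_\lambda$. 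Since $[J,S]^{F-N}_\pounds$ is tensorial in its argument, this identifies it with $-\mathrm{Id}$ on the whole vertical subbundle, giving $[J,S]^{F-N}_\pounds(W)=-W$ and hence $J[W,S]_\pounds=W$.

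The computation is light, and the only points needing care are the following. First, the coefficient $\textbf{y}^\alpha$ of $\mathcal{X}_\alpha$ in $S$, which is forced by $JS=C$, is precisely what makes $[J,S]^{F-N}_\pounds=-\mathrm{Id}$ on verticals; this is why the final identity does not depend on the particular semispray $S$, consistently with $S$ being arbitrary. Second, one must invoke the tensorial character of $[J,S]^{F-N}_\pounds$ so that its values on the frame $\{\mathcal{V}_\alpha\}$ determine it on the section $W$, whose components are not constant. If one prefers to bypass this last point, the second assertion can be obtained by a direct coordinate computation: writing $W=W^\alpha\mathcal{V}_\alpha$ and expanding $[W,S]_\pounds$ shows that its $\mathcal{X}$-component equals $W^\alpha\mathcal{X}_\alpha$, so applying $J$ (which sends $\mathcal{X}_\alpha\mapsto\mathcal{V}_\alpha$ and annihilates the $\mathcal{V}$-part) returns $W$ at once.
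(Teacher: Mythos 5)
Your proof is correct, but it is organized differently from the paper's. The paper argues entirely in coordinates: it subtracts the two coordinate expressions (\ref{horizontal end}) to get $h_1-h_2=({}^{h_1}\mathcal{B}^\alpha_\beta-{}^{h_2}\mathcal{B}^\alpha_\beta)\,\mathcal{V}_\alpha\otimes\mathcal{X}^\beta$, which gives verticality at once, and then expands $[(h_1-h_2)(\widetilde{X}),S]_\pounds$ against $S=\textbf{y}^\gamma\mathcal{X}_\gamma+S^\gamma\mathcal{V}_\gamma$ directly, noting that after applying $J$ only the term $\widetilde{X}^\beta({}^{h_1}\mathcal{B}^\alpha_\beta-{}^{h_2}\mathcal{B}^\alpha_\beta)J(\mathcal{X}_\alpha)$ survives --- which is precisely the coordinate shortcut you mention in your last sentence. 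Your main route is more structural: you obtain verticality from $Jh_1=Jh_2=J$ (part (iv) of (\ref{Jh})) together with $\ker J=v\pounds^\pi E$, with no coordinates at all; and for the bracket identity you reduce, via (\ref{F3}) and $JW=0$, to the statement that $[J,S]^{F-N}_\pounds$ acts as $-\mathrm{Id}$ on the vertical subbundle, which you verify on the frame $\{\mathcal{V}_\lambda\}$ and then propagate by tensoriality (legitimate, since $[J,S]^{F-N}_\pounds$ is a $(1,1)$-tensor on $\pounds^\pi E$, hence $C^\infty(E)$-linear in its argument, as one also checks directly using $JW=0$). What your organization buys is a reusable identity: $[J,S]^{F-N}_\pounds=-\mathrm{Id}$ on $v\pounds^\pi E$ is exactly the statement $h_S\circ J=0$ for the endomorphism $h_S=\tfrac{1}{2}\bigl(1_{\pounds^\pi E}+[J,S]^{F-N}_\pounds\bigr)$ generated by $S$, a fact the paper needs again in its Berwald-endomorphism subsection; the cost is that your argument is somewhat longer than the paper's three-line coordinate expansion, and it makes explicit use of the semispray normalization $JS=C$ (through the coefficient $\textbf{y}^\alpha$ of $\mathcal{X}_\alpha$ in $S$), exactly as the paper does implicitly.
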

\begin{proof}
From (\ref{horizontal end}) we have
\[
h_1=(\mathcal{X}_\beta+{}^{h_1}\mathcal{B}^\alpha_\beta\mathcal{V}_\alpha)\otimes\mathcal{X}^\beta, \ \ h_2=(\mathcal{X}_\beta+{}^{h_2}\mathcal{B}^\alpha_\beta\mathcal{V}_\alpha)\otimes\mathcal{X}^\beta.
\]
Thus
\[
h_1-h_2=({}^{h_1}\mathcal{B}^\alpha_\beta-{}^{h_2}\mathcal{B}^\alpha_\beta)\mathcal{V}_\alpha\otimes\mathcal{X}^\beta.
\]
Now let $\widetilde{X}=\widetilde{X}^\beta\mathcal{X}_\beta+\widetilde{Y}^\beta\mathcal{V}_\beta\in\Gamma(\pounds^\pi E)$. Then we obtain
\[
(h_1-h_2)(\widetilde{X})=\widetilde{X}^\beta({}^{h_1}\mathcal{B}^\alpha_\beta
-{}^{h_2}\mathcal{B}^\alpha_\beta)\mathcal{V}_\alpha\in v\pounds^\pi E.
\]
Now, we prove the second part of the lemma. The above equation implies that
\begin{align*}
J[(h_1-h_2)(\widetilde{X}), S]_\pounds&=J[\widetilde{X}^\beta({}^{h_1}\mathcal{B}^\alpha_\beta
-{}^{h_2}\mathcal{B}^\alpha_\beta)\mathcal{V}_\alpha, \textbf{y}^\gamma\mathcal{X}_\gamma+S^\gamma\mathcal{V}_\gamma]_\pounds\\
&=\widetilde{X}^\beta({}^{h_1}\mathcal{B}^\alpha_\beta
-{}^{h_2}\mathcal{B}^\alpha_\beta)J(\mathcal{X}_\alpha)=\widetilde{X}^\beta({}^{h_1}\mathcal{B}^\alpha_\beta
-{}^{h_2}\mathcal{B}^\alpha_\beta)\mathcal{V}_\alpha\\
&=(h_1-h_2)(\widetilde{X}).
\end{align*}
\end{proof}
\begin{theorem}\label{mainth00}
If $h_1$ and $h_2$ are horizontal endomorphisms with same associated semispray and strong torsion, then $h_1=h_2$.
\end{theorem}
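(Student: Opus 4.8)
The plan is to show that the two horizontal endomorphisms agree by comparing them through the preceding lemma. By the previous lemma, the difference $h_1 - h_2$ lies in $v\pounds^\pi E$, and more precisely, if I write
\[
h_1 - h_2 = ({}^{h_1}\mathcal{B}^\alpha_\beta - {}^{h_2}\mathcal{B}^\alpha_\beta)\mathcal{V}_\alpha\otimes\mathcal{X}^\beta,
\]
then for every $\widetilde{X}\in\Gamma(\pounds^\pi E)$ we have the identity
\[
J[(h_1-h_2)(\widetilde{X}), S]_\pounds = (h_1-h_2)(\widetilde{X}).
\]
So it suffices to prove that $(h_1 - h_2)(\widetilde{X}) = 0$ for all $\widetilde{X}$, and by the displayed identity this will follow once I show $[(h_1-h_2)(\widetilde{X}), S]_\pounds$ is killed by $J$ for the correct reason — namely, that the bracket term itself vanishes under the hypotheses.

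First I would exploit the hypothesis that $h_1$ and $h_2$ have the \emph{same associated semispray}. Writing $\bar S = h_1 S = h_2 S$ for a common semispray $S$, this gives $(h_1 - h_2)(S) = 0$, i.e. $\textbf{y}^\beta({}^{h_1}\mathcal{B}^\alpha_\beta - {}^{h_2}\mathcal{B}^\alpha_\beta) = 0$. Next I would bring in the \emph{same strong torsion} hypothesis. Since $T = i_S t + H$ and the strong torsion has the coordinate form
\[
T = (\mathcal{B}^\alpha_\beta - \textbf{y}^\gamma\tfrac{\partial\mathcal{B}^\alpha_\gamma}{\partial\textbf{y}^\beta} - \textbf{y}^\gamma(L^\alpha_{\gamma\beta}\circ\pi))\mathcal{V}_\alpha\otimes\mathcal{X}^\beta,
\]
equating the strong torsions of $h_1$ and $h_2$ and subtracting cancels the structure-function term $\textbf{y}^\gamma(L^\alpha_{\gamma\beta}\circ\pi)$, leaving a relation purely in the difference $\Delta^\alpha_\beta := {}^{h_1}\mathcal{B}^\alpha_\beta - {}^{h_2}\mathcal{B}^\alpha_\beta$, namely
\[
\Delta^\alpha_\beta - \textbf{y}^\gamma\frac{\partial\Delta^\alpha_\gamma}{\partial\textbf{y}^\beta} = 0.
\]

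The plan is then to combine the two relations $\textbf{y}^\beta\Delta^\alpha_\beta = 0$ and $\Delta^\alpha_\beta = \textbf{y}^\gamma\frac{\partial\Delta^\alpha_\gamma}{\partial\textbf{y}^\beta}$ to force $\Delta^\alpha_\beta \equiv 0$. Differentiating the first relation $\textbf{y}^\beta\Delta^\alpha_\beta = 0$ with respect to $\textbf{y}^\gamma$ gives $\Delta^\alpha_\gamma + \textbf{y}^\beta\frac{\partial\Delta^\alpha_\beta}{\partial\textbf{y}^\gamma} = 0$, so that $\textbf{y}^\beta\frac{\partial\Delta^\alpha_\beta}{\partial\textbf{y}^\gamma} = -\Delta^\alpha_\gamma$. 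The strong-torsion relation reads $\Delta^\alpha_\gamma = \textbf{y}^\beta\frac{\partial\Delta^\alpha_\beta}{\partial\textbf{y}^\gamma}$ (after relabeling indices), and substituting the previous identity yields $\Delta^\alpha_\gamma = -\Delta^\alpha_\gamma$, hence $2\Delta^\alpha_\gamma = 0$ and $\Delta^\alpha_\gamma = 0$. Therefore $h_1 = h_2$.

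\textbf{The main obstacle} I anticipate is the careful index bookkeeping when subtracting the two strong-torsion expressions and matching symmetrized versus unsymmetrized $\textbf{y}^\gamma$-derivative indices, since the strong torsion mixes a $\frac{\partial}{\partial\textbf{y}^\beta}$ acting on a $\mathcal{B}^\alpha_\gamma$ whose lower index is contracted against $\textbf{y}^\gamma$; I would need to be attentive that differentiating the Euler-type relation $\textbf{y}^\beta\Delta^\alpha_\beta = 0$ produces exactly the contraction appearing in the torsion identity, with indices placed so the two equations genuinely cancel rather than merely resemble each other. A cleaner, coordinate-free alternative would be to argue directly from the operator identity $J[(h_1-h_2)(\widetilde{X}), S]_\pounds = (h_1-h_2)(\widetilde{X})$ of the previous lemma together with the fact that equality of strong torsions means $i_S t_1 + H_1 = i_S t_2 + H_2$, combined with $(h_1-h_2)(S)=0$, to conclude the difference is both homogeneous of the wrong degree and vertical along $S$; but I expect the explicit computation above to be the most transparent route.
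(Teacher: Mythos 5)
Your proof is correct, but it takes a genuinely different route from the paper's. The paper argues operator-theoretically: setting $K=h_1-h_2$, it shows $K$ is semibasic with $KS=0$, expresses the differences of the weak torsions and tensions through Fr\"{o}licher--Nijenhuis brackets as $t_2=t_1+[J,K]^{F-N}_\pounds$ and $H_2=H_1+[K,C]^{F-N}_\pounds$, so that equality of strong torsions yields $i_S[J,K]^{F-N}_\pounds=-[K,C]^{F-N}_\pounds$; expanding both sides via the bracket identities and invoking the identity $J[(h_1-h_2)(\widetilde{X}),S]_\pounds=(h_1-h_2)(\widetilde{X})$ of the preceding lemma, it concludes $K\widetilde{X}=0$. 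You instead work entirely in coordinates: equality of associated semisprays gives the contraction $\textbf{y}^\beta\Delta^\alpha_\beta=0$, equality of strong torsions gives $\Delta^\alpha_\beta=\textbf{y}^\gamma\frac{\partial\Delta^\alpha_\gamma}{\partial\textbf{y}^\beta}$ (the structure-function terms cancel, as you note), and differentiating the first relation and relabeling indices in the second forces $\Delta^\alpha_\gamma=-\Delta^\alpha_\gamma$, hence $\Delta\equiv 0$. Your index bookkeeping is in fact sound: differentiating $\textbf{y}^\beta\Delta^\alpha_\beta=0$ produces exactly the contraction $\textbf{y}^\beta\frac{\partial\Delta^\alpha_\beta}{\partial\textbf{y}^\gamma}=-\Delta^\alpha_\gamma$ that appears in the relabeled torsion identity, so the two relations genuinely cancel. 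What your approach buys is brevity and elementarity: it needs only the coordinate expressions (\ref{horizontal end}) and (\ref{st}), and avoids the Fr\"{o}licher--Nijenhuis calculus altogether. What the paper's approach buys is a coordinate-free argument that displays the structural roles of $[J,K]^{F-N}_\pounds$ and $[K,C]^{F-N}_\pounds$, which is the form most likely to generalize. One cosmetic remark: your opening paragraph frames the proof as hinging on the identity $J[(h_1-h_2)(\widetilde{X}),S]_\pounds=(h_1-h_2)(\widetilde{X})$, but your actual computation never uses it --- only the local form of $h_1-h_2$ from that lemma is needed --- so that framing could simply be dropped.
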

\begin{proof}
Let $K=h_1-h_2$. Since $Jh_1=Jh_2=J$ and $h_1J=h_2J=0$, then we obtain $J\circ K=0$ and $i_{J\widetilde{X}}K=K(J\widetilde{X})=0$, for each $\widetilde{X}\in\Gamma(\pounds^\pi E)$. Thus $K$ is a semibasic. since $h_1$ and $h_2$ have the same associated semisprays, then $h_1S=h_2S$, and consequently $KS=0$. But we have
\[
t_2=[J, h_2]^{F-N}_\pounds=[J, h_1]^{F-N}_\pounds+[J, K]^{F-N}_\pounds=t_1+[J, K]^{F-N}_\pounds.
\]
Similarly we obtain
\[
H_2=H_1+[K, C]^{F-N}_\pounds.
\]
The above equations give us
\[
T_2=i_St_2+H_2=T_1+i_S[J, K]^{F-N}_\pounds+[K, C]^{F-N}_\pounds.
\]
Since $T_1=T_2$, then from the above equation we deduce
\begin{equation}\label{tension1}
i_S[J, K]^{F-N}_\pounds(\widetilde{X})=-[K, C]^{F-N}_\pounds(\widetilde{X}),\ \ \ \forall\widetilde{X}\in\Gamma(\pounds^\pi E).
\end{equation}
Since $J\circ K=K\circ J=KS=0$ and $JS=C$, then using (\ref{F4}) we get
\begin{align*}
i_S[J, K]^{F-N}_\pounds(\widetilde{X})&=[J, K]^{F-N}_\pounds(S, \widetilde{X})\\
&=[C, K\widetilde{X}]_\pounds-J[S, K\widetilde{X}]_\pounds\\
&\ \ -K[S, J\widetilde{X}]_\pounds-K[JS, \widetilde{X}]_\pounds.
\end{align*}
Setting the above equation in (\ref{tension1}) and using (\ref{F3}) imply that
\[
J[S, K\widetilde{X}]_\pounds=K[J\widetilde{X}, S]_\pounds.
\]
Using (\ref{tension2}) and the above equation we obtain
\[
-K\widetilde{X}=J[S, K\widetilde{X}]_\pounds=K[J\widetilde{X}, S]_\pounds=K([J\widetilde{X}, S]_\pounds-\widetilde{X})+K\widetilde{X}.
\]
It is easy to see that $[J\widetilde{X}, S]_\pounds-\widetilde{X}\in v\pounds^\pi E$ and $v\pounds^\pi E\subset\ker K$. Thus $K([J\widetilde{X}, S]_\pounds-\widetilde{X})=0$. Therefore the above equation gives us $K\widetilde{X}=0$ and consequently $h_1=h_2$.
\end{proof}
%---------------------------------------------------------------------------------------
\subsection{Almost complex structure on $\pounds^\pi E$}
%---------------------------------------------------------------------------------------
Let $S$ be the semispray associated to $h$. We consider the map $F:\pounds^\pi E\rightarrow\pounds^\pi E$ given by $F:=h[S, h]^{F-N}_\pounds-J$. Since $J^2=0$ and $Jh=J$, then we have
\begin{equation}\label{al}
F^2=(h[S, h]^{F-N}_\pounds-J)^2=(h[S, h]^{F-N}_\pounds)^2-J[S, h]^{F-N}_\pounds-h[S, h]^{F-N}_\pounds J.
\end{equation}
But we have
\[
h[S, h]^{F-N}_\pounds\widetilde{X}=h(h[\widetilde{X}, S]_\pounds-[h\widetilde{X}, S]_\pounds)=h[\widetilde{X}-h\widetilde{X}, S]_\pounds=h[v\widetilde{X}, S]_\pounds.
\]
Therefore
\begin{equation}\label{al1}
(h[S, h]^{F-N}_\pounds)^2\widetilde{X}=h[v(h[v\widetilde{X}, S]_\pounds), S]_\pounds=0,
\end{equation}
because $vh=0$. In other hand, by a direct computation, we get
\begin{equation}\label{al2}
J[S, h]^{F-N}_\pounds\widetilde{X}+h[S, h]^{F-N}_\pounds J\widetilde{X}-\widetilde{X}=(J[v\widetilde{X}, S]_\pounds-v\widetilde{X})+(h[J\widetilde{X}, S]_\pounds-h\widetilde{X}).
\end{equation}
But we have
\begin{align}\label{al3}
J[v\widetilde{X}, S]_\pounds&=J[(\widetilde{Y}^\alpha-\widetilde{X}^\gamma\mathcal{B}^\alpha_\gamma)\mathcal{V}_\alpha, \textbf{y}^\beta\mathcal{X}_\beta+S^\beta\mathcal{V}_\beta]_\pounds\nonumber\\
&=(\widetilde{Y}^\alpha-\widetilde{X}^\gamma\mathcal{B}^\alpha_\gamma)\rho_\pounds(\mathcal{V}_\alpha)(\textbf{y}^\beta)J(\mathcal{X}_\beta)\nonumber\\
&=(\widetilde{Y}^\alpha-\widetilde{X}^\gamma\mathcal{B}^\alpha_\gamma)\mathcal{V}_\alpha=v\widetilde{X},
\end{align}
where $\widetilde{X}=\widetilde{X}^\alpha\mathcal{X}_\alpha+\widetilde{Y}^\alpha\mathcal{V}_\alpha$. Also, we can obtain $[J\widetilde{X}, S]_\pounds-\widetilde{X}\in v\pounds^\pi E$. Thus
\begin{equation}\label{al4}
h[J\widetilde{X}, S]_\pounds-h\widetilde{X}=0.
\end{equation}
Setting (\ref{al3}) and (\ref{al4}) in (\ref{al2}) give us
\begin{equation}\label{al5}
J[S, h]^{F-N}_\pounds\widetilde{X}+h[S, h]^{F-N}_\pounds J\widetilde{X}=\widetilde{X}.
\end{equation}
Plugging (\ref{al1}) and (\ref{al5}) into (\ref{al}) yield $F^2=-1_{\pounds^\pi E}$. Thus $F$ is an almost complex structure on $\pounds^\pi E$ which is called {\it the almost complex structure induced by $h$}.
\begin{lemma}\label{IM}
The following relations hold
\begin{equation}\label{IM0}
(i)\ F\circ J=h, \ (ii)\ F\circ h=-J,\ (iii)\ J\circ F=v,\ (iv)\ F\circ v=h\circ F.
\end{equation}
\end{lemma}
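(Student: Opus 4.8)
The plan is to work directly from the definition $F=h[S,h]^{F-N}_\pounds-J$, exploiting the simplification already recorded in the derivation leading to (\ref{al1}), namely that
\[
h[S,h]^{F-N}_\pounds\widetilde{X}=h[v\widetilde{X},S]_\pounds
\]
for every $\widetilde{X}\in\Gamma(\pounds^\pi E)$. Writing $\phi:=h[S,h]^{F-N}_\pounds$ so that $F=\phi-J$, each of the four identities reduces to composing $\phi$ with one of $J$, $h$, $v$ and then invoking the purely algebraic relations $J^2=0$, $Jh=J=vJ$, $hJ=0$, $vh=hv=0$ and $h^2=h$ from (\ref{Jh}), together with the two facts $J[v\widetilde{X},S]_\pounds=v\widetilde{X}$ and $h[J\widetilde{X},S]_\pounds=h\widetilde{X}$ that were established in (\ref{al3}) and (\ref{al4}).

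For (i) I would compute $F\circ J=\phi J-J^2=\phi J$; since $vJ=J$, this gives $\phi J\widetilde{X}=h[J\widetilde{X},S]_\pounds$, which equals $h\widetilde{X}$ by (\ref{al4}), so $F\circ J=h$. For (ii), $F\circ h=\phi h-Jh=\phi h-J$, and because $vh=0$ the term $\phi h\widetilde{X}=h[vh\widetilde{X},S]_\pounds$ vanishes, leaving $F\circ h=-J$. For (iii), $J\circ F=J\phi-J^2=J\phi$, and using $Jh=J$ one gets $J\phi\widetilde{X}=J[v\widetilde{X},S]_\pounds=v\widetilde{X}$ by (\ref{al3}), so $J\circ F=v$.

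Finally (iv) follows formally once (ii) is in hand: substituting $v=Id-h$ gives $F\circ v=F-F\circ h=F+J=\phi$, while $h\circ F=h\phi-hJ=h\phi$, and $h\phi\widetilde{X}=h^2[v\widetilde{X},S]_\pounds=h[v\widetilde{X},S]_\pounds=\phi\widetilde{X}$ since $h^2=h$; hence both sides equal $\phi$ and $F\circ v=h\circ F$. I do not anticipate a genuine obstacle: every step is a one-line composition identity. The only point requiring care is the bookkeeping with the projector relations—specifically, remembering to push $\phi$ past $J$ using $vJ=J$ in (i) but past $h$ using $vh=0$ in (ii)—and reusing the already-proved (\ref{al3}) and (\ref{al4}) instead of re-expanding the Frölicher--Nijenhuis bracket in coordinates.
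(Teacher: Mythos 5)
Your proof is correct and takes essentially the same route as the paper: both argue directly from $F=h[S,h]^{F-N}_\pounds-J$, the projector identities (\ref{Jh}), and the facts (\ref{al3}) and (\ref{al4}), with your $\phi\widetilde{X}=h[v\widetilde{X},S]_\pounds$ being precisely the simplification the paper records just before (\ref{al1}). The only difference is organizational—you funnel every case through $\phi$ and deduce (iv) from (ii), whereas the paper re-expands the bracket case by case (its equation (\ref{Fardin}))—but the underlying computations are identical.
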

\begin{proof}
Since $J^2=0$, then we have
\[
F\circ J=(h[S, h]^{F-N}_\pounds-J)\circ J=h[S, h]^{F-N}_\pounds J.
\]
But we have
\[
h[S, h]^{F-N}_\pounds J\widetilde{X}-h\widetilde{X}=h(h[J\widetilde{X}, S]_\pounds-[hJ\widetilde{X}, S]_\pounds)-h\widetilde{X}=h([J\widetilde{X}, S]_\pounds-\widetilde{X})=0,
\]
because $[J\widetilde{X}, S]_\pounds-\widetilde{X}\in v\pounds^\pi E$. Therefore $F\circ J=h$. Now we prove the secon equation. Since $Jh=J$, then we have
\[
F\circ h=(h[S, h]^{F-N}_\pounds-J)\circ h=h[S, h]^{F-N}_\pounds h-J.
\]
But we have
\begin{equation}\label{Fardin}
h[S, h]^{F-N}_\pounds h\widetilde{X}=h(h[h\widetilde{X}, S]_\pounds-[h^2\widetilde{X}, S]_\pounds)=h^2[h\widetilde{X}, S]_\pounds-h[h\widetilde{X}, S]_\pounds=0.
\end{equation}
Therefore $F\circ h=-J$. Using the definition of $F$ we deduce
\[
J\circ F=J(h[S, h]^{F-N}_\pounds-J)=Jh[S, h]^{F-N}_\pounds=J[S, h]^{F-N}_\pounds.
\]
But using (\ref{al3}) we get
\begin{align*}
J[S, h]^{F-N}_\pounds\widetilde{X}&=J(h[\widetilde{X}, S]_\pounds-[h\widetilde{X}, S]_\pounds)=J[\widetilde{X}, S]_\pounds-J[h\widetilde{X}, S]_\pounds\\
&=J[v\widetilde{X}, S]_\pounds=v\widetilde{X}.
\end{align*}
Therefore $J\circ F=v$. To prove the last equation we have
\begin{align*}
(h\circ F)\widetilde{X}&=h(h[S, h]^{F-N}_\pounds-J)\widetilde{X}=h[S, h]^{F-N}_\pounds\widetilde{X}\\
&=h[S, h]^{F-N}_\pounds h\widetilde{X}+h[S, h]^{F-N}_\pounds v\widetilde{X},
\end{align*}
where $\widetilde{X}\in\Gamma(\pounds^\pi E)$. Setting (\ref{Fardin}) in the above equation implies that
\[
(h\circ F)\widetilde{X}=h[S, h]^{F-N}_\pounds v\widetilde{X}=(h[S, h]^{F-N}_\pounds-J)v\widetilde{X}=(F\circ v)\widetilde{X}.
\]
\end{proof}
Let $F=h[S, h]^{F-N}_\pounds-J$ be the almost complex structure induced by $h$. Since $S$ is the semispray associated to $h$, then we have $S=hS'$, where $S'$ is a semispray of $\pounds^\pi E$. Using (\ref{vertical}), (\ref{semispray}) and (\ref{horizontal end}) we obtain
\[
F(\mathcal{X}_\alpha)=-\mathcal{B}^\gamma_\alpha(\mathcal{X}_\gamma+\mathcal{B}^\beta_\gamma\mathcal{V}_\beta)-\mathcal{V}
_\alpha,\ \ \ F(\mathcal{V}_\alpha)=\mathcal{X}_\alpha+\mathcal{B}^\beta_\alpha\mathcal{V}_\beta.
\]
Therefore $F$ has the following coordinate expression
\begin{equation}\label{complex}
F=-(\mathcal{B}^\gamma_\alpha(\mathcal{X}_\gamma+\mathcal{B}^\beta_\gamma\mathcal{V}_\beta)+\mathcal{V}
_\alpha)\otimes\mathcal{X}^\alpha+(\mathcal{X}_\alpha+\mathcal{B}^\beta_\alpha\mathcal{V}_\beta)\otimes\mathcal{V}^\alpha.
\end{equation}
\begin{proposition}
Let $h$ be a horizontal endomorphism on $\pounds^\pi E$ and $j:\pounds^\pi E\rightarrow E\times_ME$ be the map introduced in (\ref{exact se}). Then we have
\begin{equation}\label{split}
j\circ h=j.
\end{equation}
\end{proposition}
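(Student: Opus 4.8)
The plan is to reduce everything to two facts already in hand: from the discussion following the exact sequence (\ref{exact se}) we know $\ker j = v\pounds^\pi E$, and from the splitting (\ref{good}) together with the properties (\ref{Jh}) of the horizontal endomorphism we know that the vertical projector $v = Id - h$ satisfies $\im v = v\pounds^\pi E$. The whole statement is then a matter of combining these identifications, since both $j$ and $h$ are bundle morphisms and it suffices to argue pointwise on sections.

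First I would rewrite $h$ through its associated vertical projector. From $v = Id - h$ we get $h = Id - v$, so that for every $\widetilde{X}\in\Gamma(\pounds^\pi E)$,
\[
(j\circ h)(\widetilde{X}) = j(\widetilde{X}) - j(v\widetilde{X}).
\]
Then I would observe that $v\widetilde{X}\in\im v = v\pounds^\pi E = \ker j$, which forces $j(v\widetilde{X})=0$. This yields $j\circ h = j$ at once, with no computation beyond the bookkeeping of kernels and images.

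As a consistency check, and as the route one would take if a coordinate argument were preferred, I would verify the identity on the local basis $\{\mathcal{X}_\alpha,\mathcal{V}_\alpha\}$. Using the definition $j(u,z)=(\pi_E(z),u)$ together with (\ref{revise2}) one finds $j(\mathcal{X}_\alpha)=\widehat{e_\alpha}$ and $j(\mathcal{V}_\alpha)=0$. Substituting the coordinate form (\ref{horizontal end}), namely $h(\mathcal{X}_\beta)=\mathcal{X}_\beta+\mathcal{B}^\alpha_\beta\mathcal{V}_\alpha$ and $h(\mathcal{V}_\beta)=0$, gives $j(h(\mathcal{X}_\beta))=j(\mathcal{X}_\beta)+\mathcal{B}^\alpha_\beta\, j(\mathcal{V}_\alpha)=j(\mathcal{X}_\beta)$ and $j(h(\mathcal{V}_\beta))=0=j(\mathcal{V}_\beta)$, confirming $j\circ h=j$ on generators and hence everywhere.

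There is essentially no genuine obstacle here; the only point requiring care is to ensure the conventions are matched so that $\ker j$ and $\im v$ are literally the same subbundle $v\pounds^\pi E$, which is exactly what the exact sequence (\ref{exact se}) and the relations (\ref{Jh}) guarantee. Everything else is immediate from $h = Id - v$.
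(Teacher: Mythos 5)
Your main argument is correct and is essentially the paper's own proof: the paper likewise writes $j\circ h=j\circ(Id-v)=j-j\circ v=j$, using exactly the identification $\im v=\ker j=v\pounds^\pi E$. The coordinate verification on $\{\mathcal{X}_\alpha,\mathcal{V}_\alpha\}$ is a harmless (and accurate) extra check, but adds nothing beyond the one-line kernel/image argument.
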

\begin{proof}
Since $\im v=\ker j=v\pounds^\pi E$, then
\[
j\circ h=j\circ(Id-v)=j-jv=j.
\]
\end{proof}
Let $\mathcal{H}:=F\circ i:E\times_ME\rightarrow \pounds^\pi E$. Then using $(i)$ of Lemma \ref{IM} and (\ref{split})
\[
j\circ \mathcal{H}\circ j=j\circ F\circ i\circ j= j\circ F\circ J= j\circ h=j.
\]
Since $j$ is surjective, then the above equation gives us $j\circ \mathcal{H}=1_{E\times_ME}$. Therefore $\mathcal{H}$ is a right splitting of (exacts). We call $\mathcal{H}$ the horizontal map for
$\pounds^\pi E$ associated to $h$. \\
Now we consider
\[
\mathcal{V}:=j\circ F:\pounds^\pi E\rightarrow E\times_ME.
\]
Then we have
\[
\mathcal{V}\circ i=j\circ F \circ i=j\circ \mathcal{H}= 1_{E\times_ME}.
\]
Therefore $\mathcal{V}$ is a left splitting of (exacts), which is called the vertical map for $\pounds^\pi E$ associated to $h$.
\begin{cor}
The following sequence is a double short exact sequence
\begin{diagram}[heads=LaTeX]
0 &\pile{\rTo\\  \lTo} &\pi^*E&\pile{\rTo^i\\  \lTo_{\mathcal{V}}} &\pounds^\pi E&\pile{\rTo^j\\  \lTo_{\mathcal{H}}} &\pi^*E&\pile{\rTo\\  \lTo} &0
\end{diagram}
\end{cor}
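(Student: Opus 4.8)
The plan is to recognize that proving the displayed diagram is a double short exact sequence amounts to verifying the standard splitting identities for the pair of maps $(\mathcal{V},\mathcal{H})$ against the exact sequence (\ref{exact se}). Exactness of the forward row $0\to\pi^*E\stackrel{i}{\to}\pounds^\pi E\stackrel{j}{\to}\pi^*E\to0$ is already in hand, and the two one-sided splitting relations $\mathcal{V}\circ i=1_{E\times_ME}$ and $j\circ\mathcal{H}=1_{E\times_ME}$ were established immediately above the statement. What remains is the compatibility identity asserting that $i$ and $\mathcal{H}$ realize the direct sum decomposition (\ref{good}), namely
\[
i\circ\mathcal{V}+\mathcal{H}\circ j=\mathrm{Id}_{\pounds^\pi E},
\]
together with $\mathcal{V}\circ\mathcal{H}=0$, which makes the backward row exact as well.

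First I would compute $i\circ\mathcal{V}$. Since $\mathcal{V}=j\circ F$ and $J=i\circ j$, we get $i\circ\mathcal{V}=i\circ j\circ F=J\circ F$, which equals $v$ by part $(iii)$ of Lemma \ref{IM}. Next, since $\mathcal{H}=F\circ i$, we get $\mathcal{H}\circ j=F\circ i\circ j=F\circ J$, which equals $h$ by part $(i)$ of Lemma \ref{IM}. Adding these and using $v=\mathrm{Id}-h$ yields $i\circ\mathcal{V}+\mathcal{H}\circ j=v+h=\mathrm{Id}_{\pounds^\pi E}$. For the remaining relation I would invoke the almost complex structure directly: $\mathcal{V}\circ\mathcal{H}=j\circ F\circ F\circ i=j\circ F^2\circ i=-\,j\circ i$, which vanishes because $\im i=v\pounds^\pi E=\ker j$ forces $j\circ i=0$. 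Thus the four identities $\mathcal{V}\circ i=1$, $j\circ\mathcal{H}=1$, $\mathcal{V}\circ\mathcal{H}=0$, and $i\circ\mathcal{V}+\mathcal{H}\circ j=\mathrm{Id}$ hold, which is precisely the assertion that the diagram is a double short exact sequence.

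I do not anticipate a genuine obstacle here: the entire content is packaged into Lemma \ref{IM} together with $F^2=-1_{\pounds^\pi E}$, so the argument reduces to short composition bookkeeping. The only point requiring a moment's care is conceptual rather than computational, namely identifying which splitting relations constitute a double short exact sequence, and observing that $j\circ i=0$ is already encoded in the exactness of (\ref{exact se}) via $\im i=\ker j=v\pounds^\pi E$.
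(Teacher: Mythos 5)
Your proof is correct, and its central computation $\mathcal{V}\circ\mathcal{H}=j\circ F^{2}\circ i=-j\circ i=0$ is exactly the one the paper performs. Where you diverge is in how exactness of the backward row is completed. The paper finishes qualitatively: $\mathcal{V}$ is surjective because $j$ is, $\mathcal{H}$ is injective because $i$ is, and then it asserts $\im\mathcal{H}=\ker\mathcal{V}$ --- even though $\mathcal{V}\circ\mathcal{H}=0$ only yields the inclusion $\im\mathcal{H}\subseteq\ker\mathcal{V}$, the reverse inclusion being left implicit. Your route supplies precisely that missing piece: from Lemma \ref{IM} you get $i\circ\mathcal{V}=J\circ F=v$ and $\mathcal{H}\circ j=F\circ J=h$, hence $i\circ\mathcal{V}+\mathcal{H}\circ j=v+h=\mathrm{Id}_{\pounds^\pi E}$, so any $\widetilde{X}\in\ker\mathcal{V}$ satisfies $\widetilde{X}=\mathcal{H}(j\widetilde{X})\in\im\mathcal{H}$. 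Together with the one-sided splittings $\mathcal{V}\circ i=1_{E\times_ME}$ and $j\circ\mathcal{H}=1_{E\times_ME}$ (which also give surjectivity of $\mathcal{V}$ and injectivity of $\mathcal{H}$, so you lose nothing by not restating those), your four identities characterize the split double exact sequence completely. In short, the paper's argument is shorter but leaves a small gap at the middle-exactness step, while yours is the algebraically self-contained version, at the modest cost of invoking two additional parts of Lemma \ref{IM}.
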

\begin{proof}
We obtain
\[
\mathcal{V}\circ\mathcal{H}=(j\circ F)\circ(F\circ i)=j\circ(-1_{\pounds^\pi E})\circ i=-j\circ i=0.
\]
Thus $\im \mathcal{H}=\ker\mathcal{V}$. Moreover $\mathcal{V}$ is surjective, because $j$ is surjective. Similarly, since $i$ is injective, then $\mathcal{H}$ is injective. These complete the proof.
\end{proof}
Using (i) and (iii) of Lemma \ref{IM} we can get
\begin{equation}
(i)\ h=\mathcal{H}\circ j,\ \ \ \ \ \ (ii)\ v=i\circ\mathcal{V}.
\end{equation}
%*--------------------------------------------------------------------------
\subsection{Berwald endomorphism}
%----------------------------------------------------------------------------
Let $S$ be a semispray on $\pounds^\pi E$. We consider the map $h_S:\pounds^\pi E\rightarrow\pounds^\pi E$ given by $h_S:=\frac{1}{2}(1_{\pounds^\pi E}+[J, S]^{F-N}_\pounds)$. Using (\ref{vertical}) and (\ref{semispray}) we can obtain
\[
h_S(\mathcal{X}_\alpha)=\mathcal{X}_\alpha+\frac{1}{2}(\frac{\partial S^\gamma}{\partial \textbf{y}^\alpha}-\textbf{y}^\beta (L^\gamma_{\alpha\beta}\circ\pi))\mathcal{V}_\gamma,\ \ \ \ h_S(\mathcal{V}_\alpha)=0.
\]
Therefore $h_S$ has the coordinate expression
\begin{equation}\label{Ber000}
h_S=(\mathcal{X}_\alpha+\mathcal{B}^\gamma_\alpha\mathcal{V}_\gamma)\otimes\mathcal{X}^\alpha,
\end{equation}
where
\begin{equation}\label{Ber}
\mathcal{B}^\gamma_\alpha=\frac{1}{2}(\frac{\partial S^\gamma}{\partial \textbf{y}^\alpha}-\textbf{y}^\beta (L^\gamma_{\alpha\beta}\circ\pi)).
\end{equation}
Now one can easily check that $h_S\circ h_S=h_S$, $J\circ h_S=J$, $h_S\circ J=0$ and consequently $\ker h_S =\ker J=v\pounds^\pi E$. Therefore $h_S$ is a horizontal endomorphism on $\pounds^\pi E$ called {\it horizontal endomorphism generated by semispray $S$}.
\begin{theorem}\label{4.1}
The horizontal endomorphism generated by semispray $S$ is torsion free. Moreover, we have $H_S=\frac{1}{2}[[C, S]_\pounds-S, J]^{F-N}_\pounds$, where $H_S$ is the tension of $h_S$.
\end{theorem}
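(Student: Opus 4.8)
The plan is to treat the two assertions separately: the torsion-free claim by a direct substitution into the coordinate formula for the weak torsion, and the tension formula by an invariant argument driven by the graded Jacobi identity. Recall that ``torsion free'' here means that the weak torsion $t=[J,h_S]^{F-N}_\pounds$ vanishes, and that the tension is $H_S=[h_S,C]^{F-N}_\pounds$.

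For the first assertion I would read off the components $\mathcal{B}^\gamma_\alpha$ of $h_S$ from (\ref{Ber}) and substitute them into the coordinate formula (\ref{wt1}) for $t^\gamma_{\alpha\beta}$. Writing $\mathcal{B}^\gamma_\beta=\frac{1}{2}(\frac{\partial S^\gamma}{\partial\textbf{y}^\beta}-\textbf{y}^\mu(L^\gamma_{\beta\mu}\circ\pi))$ and differentiating with respect to $\textbf{y}^\alpha$, and likewise differentiating $\mathcal{B}^\gamma_\alpha$ with respect to $\textbf{y}^\beta$, the second-order derivatives $\frac{\partial^2 S^\gamma}{\partial\textbf{y}^\alpha\partial\textbf{y}^\beta}$ cancel by symmetry of mixed partials, leaving only structure-function terms. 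Using the antisymmetry $L^\gamma_{\beta\alpha}=-L^\gamma_{\alpha\beta}$ forced by $[e_\alpha,e_\beta]_E=L^\gamma_{\alpha\beta}e_\gamma$, these combine to $t^\gamma_{\alpha\beta}=0$, so $t=0$ by (\ref{wt}) and $h_S$ is torsion free.

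For the tension formula I would work invariantly. First, using the definition $h_S=\frac{1}{2}(1_{\pounds^\pi E}+[J,S]^{F-N}_\pounds)$, bilinearity of the Fr\"olicher--Nijenhuis bracket, and the identity $[1_{\pounds^\pi E},C]^{F-N}_\pounds=0$ (immediate from (\ref{F3}), since $[1_{\pounds^\pi E},C]^{F-N}_\pounds(\widetilde{X})=[\widetilde{X},C]_\pounds-[\widetilde{X},C]_\pounds=0$), I obtain $H_S=\frac{1}{2}[[J,S]^{F-N}_\pounds,C]^{F-N}_\pounds$. Next I apply the graded Jacobi identity (\ref{F20}) to the triple $K=J$ (degree $1$), $L=S$, $N=C$ (both degree $0$); here all three prefactors $(-1)^{kn},(-1)^{lk},(-1)^{nl}$ equal $+1$, producing $[J,[S,C]_\pounds]^{F-N}_\pounds+[S,[C,J]^{F-N}_\pounds]^{F-N}_\pounds+[C,[J,S]^{F-N}_\pounds]^{F-N}_\pounds=0$. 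I then substitute $[J,C]^{F-N}_\pounds=J$ from (\ref{Liover}) (so the middle term becomes $-[S,J]^{F-N}_\pounds$) and use graded antisymmetry to identify the last term with $-2H_S$, arriving at $2H_S=[J,[S,C]_\pounds]^{F-N}_\pounds-[S,J]^{F-N}_\pounds$. A final rearrangement, again via graded antisymmetry together with $[C,S]_\pounds=-[S,C]_\pounds$, rewrites $[[C,S]_\pounds,J]^{F-N}_\pounds=[J,[S,C]_\pounds]^{F-N}_\pounds$, hence $[[C,S]_\pounds-S,J]^{F-N}_\pounds=2H_S$, which is the claimed identity.

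The torsion computation is routine; the delicate part is the sign bookkeeping in the second half. The main obstacle I anticipate is correctly tracking the grading-induced signs throughout: one must apply $[K,L]^{F-N}_\pounds=-(-1)^{kl}[L,K]^{F-N}_\pounds$ each time an argument of degree $1$ (namely $J$ or $[J,S]^{F-N}_\pounds$) is commuted past one of degree $0$, and check that the Jacobi prefactors are indeed all $+1$ for these degrees. Should the invariant sign-chase prove error-prone, a coordinate cross-check of $H_S$ via (\ref{tension}) with $\mathcal{B}^\gamma_\alpha$ from (\ref{Ber}) is available to confirm the final formula.
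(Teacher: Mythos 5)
Your proof is correct. For the tension formula you use exactly the same ingredients as the paper --- the graded Jacobi identity (\ref{F20}) applied to the triple $(J,S,C)$, the identity $[J,C]^{F-N}_\pounds=J$ from (\ref{Liover}), graded antisymmetry, and $[1_{\pounds^\pi E},C]^{F-N}_\pounds=0$ --- only run in the opposite direction: the paper starts from $\frac{1}{2}[[C,S]_\pounds-S,J]^{F-N}_\pounds$ and contracts it down to $[h_S,C]^{F-N}_\pounds=H_S$, while you expand $H_S=\frac{1}{2}[[J,S]^{F-N}_\pounds,C]^{F-N}_\pounds$ and rearrange; the two computations are the same identity chain read in reverse. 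Where you genuinely diverge is the torsion-free claim. The paper argues invariantly: $t_S=\frac{1}{2}[J,[J,S]^{F-N}_\pounds]^{F-N}_\pounds$, and the Jacobi identity for $(J,J,S)$ together with $[J,J]^{F-N}_\pounds=0$ (i.e.\ $N_J=0$) forces $t_S=-t_S$, hence $t_S=0$. You instead substitute (\ref{Ber}) into (\ref{wt1}): the mixed second derivatives $\partial^2 S^\gamma/\partial\textbf{y}^\alpha\partial\textbf{y}^\beta$ cancel by symmetry, and the antisymmetry $L^\gamma_{\beta\alpha}=-L^\gamma_{\alpha\beta}$ makes the structure-function terms sum to $(L^\gamma_{\alpha\beta}\circ\pi)$, which exactly cancels the last term of (\ref{wt1}); this is a complete and correct verification. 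Your route is more elementary --- it needs neither the graded Jacobi identity for operators of mixed degree nor the fact $N_J=0$ (which the paper has not established at that point and only uses implicitly) --- at the cost of being coordinate-bound. The paper's route is coordinate-free and exposes the structural reason for the vanishing: the vertical endomorphism has zero Nijenhuis torsion, so any horizontal endomorphism of the form $\frac{1}{2}(1_{\pounds^\pi E}+[J,S]^{F-N}_\pounds)$ is automatically torsion free, independently of any local expression for $S$.
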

\begin{proof}
Let $t_S$ be the weak torsion of $h_S$. Then using (\ref{F20}) we have
\begin{align*}
t_S&=[J, h_S]^{F-N}_\pounds=\frac{1}{2}[J, [J, S]^{F-N}_\pounds]^{F-N}_\pounds=\frac{1}{2}[J, [S, J]^{F-N}_\pounds]^{F-N}_\pounds\\
&\ \ \ -\frac{1}{2}[S, [J, J]^{F-N}_\pounds]^{F-N}_\pounds=-\frac{1}{2}[J, [J, S]^{F-N}_\pounds]^{F-N}_\pounds=-t_S.
\end{align*}
Therefore $t_B=0$. (\ref{F20}) and (i) of (\ref{Liover}) give us
\begin{align*}
&\frac{1}{2}[[C, S]_\pounds-S, J]^{F-N}_\pounds=\frac{1}{2}([[C, S]_\pounds, J]^{F-N}_\pounds-[S, J]^{F-N}_\pounds)\\
&=\frac{1}{2}([[J, S]^{F-N}_\pounds, C]^{F-N}_\pounds-[[J, C]^{F-N}_\pounds, S]^{F-N}_\pounds-[S, J]^{F-N}_\pounds)\\
&=\frac{1}{2}([[J, S]^{F-N}_\pounds, C]^{F-N}_\pounds-[J, S]^{F-N}_\pounds-[S, J]^{F-N}_\pounds)\\
&=\frac{1}{2}[[J, S]^{F-N}_\pounds, C]^{F-N}_\pounds=[h_S, C]^{F-N}_\pounds=H_S.
\end{align*}
\end{proof}
Using (\ref{tension}) and (\ref{Ber}) we deduce that $H_B$ has the following coordinate expression:
\[
H_S=\frac{1}{2}(\frac{\partial S^\alpha}{\partial \textbf{y}^\beta}-\textbf{y}^\gamma\frac{\partial^2 S^\alpha}{\partial \textbf{y}^\gamma\partial \textbf{y}^\beta})\mathcal{V}_\alpha\otimes\mathcal{X}^\beta.
\]
\begin{lemma}\label{4.2}
Let $h_S$ be the horizontal endomorphism generated by semispray $S$. Then the semispray associated by $h_S$ is $\frac{1}{2}(S+[C, S]_\pounds)$.
\end{lemma}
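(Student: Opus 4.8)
The plan is to evaluate the defining recipe for the associated semispray directly on $S$ itself. By the proposition guaranteeing that $\bar S = hS'$ is independent of the choice of semispray $S'$, I may take $S'=S$, so that the semispray associated to $h_S$ equals $h_S S$. It then suffices to compute $h_S S$ and to recognise the result as $\frac{1}{2}(S+[C,S]_\pounds)$; no appeal to coordinates is needed.

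Next I would insert the defining formula $h_S=\frac{1}{2}\big(1_{\pounds^\pi E}+[J,S]^{F-N}_\pounds\big)$, which gives
\[
h_S S=\tfrac{1}{2}\big(S+[J,S]^{F-N}_\pounds(S)\big).
\]
Everything therefore reduces to the single term $[J,S]^{F-N}_\pounds(S)$. Since $J$ is a vector-valued $1$-form and $S$ a section, I apply the evaluation formula (\ref{F3}), namely $[J,S]^{F-N}_\pounds(\widetilde X)=[J\widetilde X,S]_\pounds-J[\widetilde X,S]_\pounds$, with $\widetilde X=S$.

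The computation then collapses using two facts already available: $S$ is a semispray, so $JS=C$, and the Lie bracket is alternating, so $[S,S]_\pounds=0$. Hence
\[
[J,S]^{F-N}_\pounds(S)=[JS,S]_\pounds-J[S,S]_\pounds=[C,S]_\pounds,
\]
and substituting back yields $h_S S=\frac{1}{2}(S+[C,S]_\pounds)$, which is exactly the assertion.

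The step I expect to require the most care is the bookkeeping in the Fr\"{o}licher-Nijenhuis evaluation (\ref{F3}): I must be sure to use the version valid for a vector $1$-form paired with a section, and to keep the sign exactly as recorded in the excerpt, so that it is the unwanted term $J[S,S]_\pounds$ that drops out rather than a sign-flipped survivor. A purely local cross-check is available if a verification is wanted, by comparing the coordinate form of $h_S S$ obtained from (\ref{semispray}), (\ref{Ber000}) and (\ref{Ber}) with that of $\frac{1}{2}(S+[C,S]_\pounds)$; but the invariant argument above reduces the whole statement to a one-line substitution.
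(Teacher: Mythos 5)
Your proposal is correct and follows essentially the same route as the paper: the paper's proof likewise evaluates $h_S S=\frac{1}{2}\big(S+[J,S]^{F-N}_\pounds S\big)$ and uses the evaluation formula (\ref{F3}) together with $JS=C$ and $[S,S]_\pounds=0$ to obtain $\frac{1}{2}(S+[C,S]_\pounds)$. Your only addition is the explicit remark that the associated semispray may be computed as $h_S S$ by the independence-of-choice proposition, which the paper leaves implicit; the sign bookkeeping you flag is handled exactly as in (\ref{F3}).
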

\begin{proof}
We have
\begin{align*}
h_SS&=\frac{1}{2}(1_{\pounds^\pi E}+[J, S]^{F-N}_\pounds)S=\frac{1}{2}(S+[J, S]^{F-N}_\pounds S)\\
&=\frac{1}{2}(S+[JS, S]_\pounds-J[S, S]_\pounds)=\frac{1}{2}(S+[C, S]_\pounds).
\end{align*}
\end{proof}
If $h_S$ is the horizontal endomorphism generated by spray $S$, then from Theorem \ref{4.1} and Lemma \ref{4.2} it is easy to see that $h_SS=S$ and $H_S=0$. Thus we have
\begin{cor}\label{4.3}
Let $h_S$ be the horizontal endomorphism generated by spray $S$. Then the spray associated by $h_S$ is $S$. Moreover $h_S$ is homogenous.
\end{cor}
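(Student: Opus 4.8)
The plan is to obtain both assertions as direct specializations of Theorem \ref{4.1} and Lemma \ref{4.2}, the only new input being the defining property of a spray, namely that $S$ is homogeneous of degree $2$, i.e. $[C, S]_\pounds = S$.

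First I would identify the associated spray. By Lemma \ref{4.2}, the semispray associated to $h_S$ is $\frac{1}{2}(S + [C, S]_\pounds)$. Since $S$ is a spray, $[C, S]_\pounds = S$, and substituting this gives $\frac{1}{2}(S + S) = S$. Hence the semispray associated to $h_S$ is $S$ itself; as $S$ is a spray by hypothesis, the spray associated to $h_S$ is $S$.

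Next I would verify homogeneity. By definition $h_S$ is homogeneous exactly when its tension $H_S$ vanishes, and Theorem \ref{4.1} supplies the closed expression $H_S = \frac{1}{2}[[C, S]_\pounds - S, J]^{F-N}_\pounds$. Using once more that $[C, S]_\pounds = S$, the first entry of the generalized Fr\"{o}licher-Nijenhuis bracket is the zero section; since that bracket is linear in each argument by its defining formula (\ref{F1}), it follows that $H_S = 0$, so $h_S$ is homogeneous.

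I do not anticipate any real obstacle: the corollary is purely a matter of feeding the spray condition $[C, S]_\pounds = S$ into the two preceding results and noting that a vanishing first slot annihilates the bilinear bracket. The only point worth stating explicitly is this bilinearity, which guarantees the tension collapses to zero rather than merely simplifying.
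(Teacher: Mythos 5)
Your proof is correct and follows essentially the same route as the paper, which likewise derives the corollary by substituting the spray condition $[C, S]_\pounds = S$ into Theorem \ref{4.1} and Lemma \ref{4.2} to get $h_S S = S$ and $H_S = 0$. Your explicit remark that the vanishing first slot kills the generalized Fr\"{o}licher--Nijenhuis bracket by bilinearity is a welcome clarification of what the paper leaves as "easy to see."
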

\begin{defn}
The horizontal endomorphism generated by an spray is called Berwald endomorphism.
\end{defn}
\begin{theorem}\label{mainth}
Let $h$ be a homogenous horizontal endomorphism on $\pounds^\pi E$ and $S$ be the semispray associated to $h$. Then we have
\[
h_S=h-\frac{1}{2}i_St,
\]
where $t$ is the weak torsion of $h$ and $h_S$ is the horizontal endomorphism generated by $S$.
\end{theorem}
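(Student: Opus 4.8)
The plan is to prove the identity as an equality of $(1,1)$-tensors by evaluating both sides on the local basis $\{\mathcal{X}_\beta,\mathcal{V}_\beta\}$. Since $h$ and $h_S$ annihilate the vertical sections $\mathcal{V}_\beta$ (both have the form (\ref{horizontal end}), resp.\ (\ref{Ber000})) and $i_St$ does too — its coordinate form (\ref{4.5}) contracts only the covectors $\mathcal{X}^\alpha$, which vanish on vertical sections — both sides automatically agree on $\mathcal{V}_\beta$, so everything reduces to comparing the $\mathcal{V}_\gamma$-coefficients of the images of $\mathcal{X}_\beta$. The first step is to record the coordinate form of the semispray $S$ associated to $h$. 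Applying $h$ as in (\ref{horizontal end}) to an arbitrary semispray $\textbf{y}^\beta\mathcal{X}_\beta+(\cdots)\mathcal{V}_\beta$ kills every vertical term, giving $hS'=\textbf{y}^\alpha\mathcal{X}_\alpha+\textbf{y}^\alpha\mathcal{B}^\gamma_\alpha\mathcal{V}_\gamma$; comparing with (\ref{semispray}) yields $S^\gamma=\textbf{y}^\alpha\mathcal{B}^\gamma_\alpha$ for the coefficients of the associated semispray.

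Next I would compute the two coefficient tensors separately. For $h_S$, denote its coefficients by $\widetilde{\mathcal{B}}^\gamma_\beta$ as in (\ref{Ber}); substituting $S^\gamma=\textbf{y}^\alpha\mathcal{B}^\gamma_\alpha$ and using the product rule gives $\partial S^\gamma/\partial\textbf{y}^\beta=\mathcal{B}^\gamma_\beta+\textbf{y}^\alpha\,\partial\mathcal{B}^\gamma_\alpha/\partial\textbf{y}^\beta$, so that $2\widetilde{\mathcal{B}}^\gamma_\beta=\mathcal{B}^\gamma_\beta+\textbf{y}^\alpha\,\partial\mathcal{B}^\gamma_\alpha/\partial\textbf{y}^\beta-\textbf{y}^\alpha(L^\gamma_{\beta\alpha}\circ\pi)$. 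For the correction term, evaluating (\ref{4.5}) on $\mathcal{X}_\beta$ gives $(i_St)(\mathcal{X}_\beta)=\textbf{y}^\alpha t^\gamma_{\alpha\beta}\mathcal{V}_\gamma$, and then (\ref{wt1}) expands $\textbf{y}^\alpha t^\gamma_{\alpha\beta}$ as $\textbf{y}^\alpha\,\partial\mathcal{B}^\gamma_\beta/\partial\textbf{y}^\alpha-\textbf{y}^\alpha\,\partial\mathcal{B}^\gamma_\alpha/\partial\textbf{y}^\beta-\textbf{y}^\alpha(L^\gamma_{\alpha\beta}\circ\pi)$.

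The crux is now to feed in the homogeneity of $h$. By the homogeneity criterion $\mathcal{B}^\gamma_\beta=\textbf{y}^\alpha\,\partial\mathcal{B}^\gamma_\beta/\partial\textbf{y}^\alpha$ (Euler's identity for the degree-one functions $\mathcal{B}^\gamma_\beta$), the first term of $\textbf{y}^\alpha t^\gamma_{\alpha\beta}$ collapses to $\mathcal{B}^\gamma_\beta$. Hence the $\mathcal{V}_\gamma$-coefficient of $(h-\tfrac12 i_St)(\mathcal{X}_\beta)$ equals $\mathcal{B}^\gamma_\beta-\tfrac12\bigl(\mathcal{B}^\gamma_\beta-\textbf{y}^\alpha\,\partial\mathcal{B}^\gamma_\alpha/\partial\textbf{y}^\beta-\textbf{y}^\alpha(L^\gamma_{\alpha\beta}\circ\pi)\bigr)=\tfrac12\mathcal{B}^\gamma_\beta+\tfrac12\textbf{y}^\alpha\,\partial\mathcal{B}^\gamma_\alpha/\partial\textbf{y}^\beta+\tfrac12\textbf{y}^\alpha(L^\gamma_{\alpha\beta}\circ\pi)$. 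Using the skew-symmetry $L^\gamma_{\beta\alpha}=-L^\gamma_{\alpha\beta}$ of the structure functions, this is exactly $\widetilde{\mathcal{B}}^\gamma_\beta$ from the previous step. Since the two endomorphisms agree on $\mathcal{X}_\beta$ and both vanish on $\mathcal{V}_\beta$, they coincide, which is the claimed $h_S=h-\tfrac12 i_St$.

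I expect the only delicate point to be the correct and complete use of homogeneity: without it the term $\textbf{y}^\alpha\,\partial\mathcal{B}^\gamma_\beta/\partial\textbf{y}^\alpha$ would not reduce to $\mathcal{B}^\gamma_\beta$, and the two coefficient tensors would differ by precisely the tension (compare (\ref{tension})), which is why the hypothesis $H=0$ is essential. Everything else is bookkeeping with the product rule and the skew-symmetry of $L^\gamma_{\alpha\beta}$. Alternatively, one could avoid coordinates altogether: check that $h':=h-\tfrac12 i_St$ is again a horizontal endomorphism (it differs from $h$ by the semibasic tensor $\tfrac12 i_St$, so it still satisfies $h'\circ h'=h'$ and $\ker h'=v\pounds^\pi E$), verify $h'S=S=h_SS$ using $(i_St)(S)=0$ together with the fact that a homogeneous $h$ has a spray as associated semispray, and finally match strong torsions so that Theorem \ref{mainth00} forces $h'=h_S$; but the homogeneity input reappears at the torsion step, so this route is not obviously shorter.
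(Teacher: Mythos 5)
Your proof is correct and follows essentially the same route as the paper's: both establish $S^\gamma=\textbf{y}^\alpha\mathcal{B}^\gamma_\alpha$ from the definition of the associated semispray, evaluate $h-\tfrac12 i_St$ and $h_S$ on the basis $\{\mathcal{X}_\beta,\mathcal{V}_\beta\}$ using (\ref{4.5}), (\ref{wt1}) and (\ref{Ber}), and close the gap with the Euler homogeneity identity $\mathcal{B}^\gamma_\beta=\textbf{y}^\alpha\partial\mathcal{B}^\gamma_\beta/\partial\textbf{y}^\alpha$ together with the skew-symmetry of $L^\gamma_{\alpha\beta}$. The only cosmetic difference is that you expand $\partial S^\gamma/\partial\textbf{y}^\beta$ into $\mathcal{B}$-terms and compare coefficients directly, while the paper substitutes in the opposite direction via its equation (\ref{4.6}); the computation is identical.
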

\begin{proof}
Since $h$ is homogenous, then $S$ is spray. Therefore $h_S$ is the Berwald endomorphism and consequently from Lemma \ref{4.2} and Corollary \ref{4.3} we deduce $h_S$ is homogenous and $h_SS=S$. Also since $h=(\mathcal{X}_\beta+\mathcal{B}^\alpha_\beta\mathcal{V}_\alpha)\otimes\mathcal{X}^\beta$ is homogenous and $hS=S$, we obtain
\begin{equation}
(i)\ \mathcal{B}^\alpha_\beta=\textbf{y}^\gamma\frac{\partial \mathcal{B}^\alpha_\beta}{\partial \textbf{y}^\gamma},\ \ \ \    (ii)\ S^\alpha=\textbf{y}^\beta \mathcal{B}^\alpha_\beta.\label{4.4}
\end{equation}
From $(ii)$ of (\ref{4.4}) we get
\begin{equation}\label{4.6}
\textbf{y}^\alpha \frac{\partial \mathcal{B}^\gamma_\alpha}{\partial \textbf{y}^\beta}=\frac{\partial S^\alpha}{\partial \textbf{y}^\beta}-\mathcal{B}^\gamma_\beta.
\end{equation}
Using (\ref{4.5}) and (\ref{4.6}) we obtain
\[
h(\mathcal{X_\beta})-\frac{1}{2}(i_St)(\mathcal{X_\beta})=\mathcal{X_\beta}+\{\mathcal{B}^\gamma_\beta-\frac{1}{2}\textbf{y}^\alpha\frac{\partial \mathcal{B}^\gamma_\beta}{\partial \textbf{y}^\alpha}+\frac{1}{2}\textbf{y}^\alpha\frac{\partial \mathcal{B}^\gamma_\alpha}{\partial \textbf{y}^\beta}+\frac{1}{2}\textbf{y}^\alpha (L^\gamma_{\alpha\beta}\circ \pi)\}\mathcal{V_\gamma}.
\]
Setting $(i)$ of (\ref{4.4}) in the above equation gives us
\[
h(\mathcal{X_\beta})-\frac{1}{2}(i_St)(\mathcal{X_\beta})=\mathcal{X_\beta}+\{\frac{1}{2}\mathcal{B}^\gamma_\beta+\frac{1}{2}\textbf{y}^\alpha\frac{\partial \mathcal{B}^\gamma_\alpha}{\partial \textbf{y}^\beta}+\frac{1}{2}\textbf{y}^\alpha (L^\gamma_{\alpha\beta}\circ \pi)\}\mathcal{V_\gamma}.
\]
Plugging (\ref{4.6}) into the above equation implies that
\[
h(\mathcal{X_\beta})-\frac{1}{2}(i_St)(\mathcal{X_\beta})=\mathcal{X_\beta}+\frac{1}{2}\{\frac{\partial S^\gamma}{\partial \textbf{y}^\beta}+\textbf{y}^\alpha (L^\gamma_{\alpha\beta}\circ \pi)\}\mathcal{V_\gamma}=h_S(\mathcal{X_B}).
\]
Similarly, we obtain
\[
h(\mathcal{V_\beta})-\frac{1}{2}(i_St)(\mathcal{V_\beta})=h_S(\mathcal{V_B}).
\]
\end{proof}
%---------------------------------------------------------------------------
\subsection{Horizontal lift}
%---------------------------------------------------------------------------
Let $h$ be a horizontal endomorphism on $\pounds^\pi E$. We consider the map
\[
X\in\Gamma(E)\rightarrow X^h:=hX^C\in h\pounds^\pi E,
\]
and we call it \textit{horizontal lift by $h$}. If $X=X^\alpha e_\alpha$, then we have
\begin{equation}
X^h=(X^\alpha\circ\pi)(\mathcal{X}_\alpha+\mathcal{B}^\beta_\alpha\mathcal{V}_\beta).
\end{equation}
\begin{lemma}
Let $h$ be a horizontal endomorphism on $\pounds^\pi E$ and $X, Y\in\Gamma(E)$. Then
\begin{equation}\label{three}
(i)\ JX^h=X^V,\ \ (ii)\ h[X^h, Y^h]_\pounds=[X, Y]_E^h,\ \ (iii)\ [X, Y]^V_E=J[X^h, Y^h]_\pounds.
\end{equation}
\end{lemma}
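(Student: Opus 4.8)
My plan is to prove the three identities in the order (i), (ii), (iii), deriving (iii) from the first two.

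For (i) I would simply unwind the definition $X^h=hX^C$ and use the two facts already available: $J\circ h=J$ (item (iv) of (\ref{Jh})) and the relation $J(X^C)=X^V$ established above. Together these give $JX^h=J(hX^C)=(J\circ h)(X^C)=J(X^C)=X^V$ at once, so (i) requires no work beyond quoting these.

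The real content is (ii). The idea is to compare $[X^h,Y^h]_\pounds$ with $[X^C,Y^C]_\pounds$, which I can control through the bracket relation $[X^C,Y^C]_\pounds=[X,Y]^C_E$ of (\ref{emroz}). Writing $P:=vX^C=X^C-X^h$ and $Q:=vY^C=Y^C-Y^h$, both of which are vertical (they lie in $\im v=v\pounds^\pi E$), bilinearity of $[.,.]_\pounds$ yields
\[
[X^h,Y^h]_\pounds=[X^C,Y^C]_\pounds-[X^C,Q]_\pounds-[P,Y^C]_\pounds+[P,Q]_\pounds.
\]
I would then show that each of the three correction brackets lies in $v\pounds^\pi E=\ker h$, so that applying $h$ leaves only $h[X^C,Y^C]_\pounds=h[X,Y]^C_E=[X,Y]^h_E$, the desired identity (the last equality being the definition of the horizontal lift of $[X,Y]_E$). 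For $[P,Q]_\pounds$ this is clear: a bracket of two sections of the form $V^\alpha\mathcal{V}_\alpha$ stays vertical, since $[\mathcal{V}_\alpha,\mathcal{V}_\beta]_\pounds=0$ and the remaining Leibniz terms are again multiples of the $\mathcal{V}_\gamma$. For the mixed brackets I would expand $[X^C,V]_\pounds$ for a vertical $V=V^\gamma\mathcal{V}_\gamma$ using $[\mathcal{X}_\alpha,\mathcal{V}_\gamma]_\pounds=0$, $[\mathcal{V}_\alpha,\mathcal{V}_\gamma]_\pounds=0$ and the coordinate form (\ref{esi}) of $X^C$.

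This last point is where I expect the only genuine obstacle. In the expansion of $[X^C,V]_\pounds$ the single term that could produce a horizontal ($\mathcal{X}_\alpha$) component is $-V^\gamma\rho_\pounds(\mathcal{V}_\gamma)(X^\alpha\circ\pi)\mathcal{X}_\alpha$, and it vanishes precisely because the $\mathcal{X}$-coefficients $X^\alpha\circ\pi$ of $X^C$ are pulled back from the base and hence are annihilated by $\rho_\pounds(\mathcal{V}_\gamma)=\partial/\partial\textbf{y}^\gamma$. Thus $[X^C,V]_\pounds$ is vertical, and by antisymmetry so are $[X^C,Q]_\pounds$ and $[P,Y^C]_\pounds$; verifying this $\textbf{y}$-independence is the crux of (ii). Finally, (iii) follows by feeding (ii) into $J$: splitting $[X^h,Y^h]_\pounds=h[X^h,Y^h]_\pounds+v[X^h,Y^h]_\pounds$ and noting that $J$ kills the vertical part (as $\ker J=v\pounds^\pi E$), I get $J[X^h,Y^h]_\pounds=J(h[X^h,Y^h]_\pounds)=J([X,Y]^h_E)=[X,Y]^V_E$, the last equality being (i) applied to the section $[X,Y]_E$ of $E$.
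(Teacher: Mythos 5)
Your proposal is correct, but it takes a genuinely different route from the paper's proof for parts (ii) and (iii). Part (i) is identical in both. For (ii), the paper proceeds by brute force: it computes the full coordinate expression of $[X,Y]_E$ and of $[X^h,Y^h]_\pounds$ (its equations (\ref{50}) and (\ref{51})), applies $h$ to the latter, and compares the two formulas term by term; (iii) is then read off from those same two coordinate formulas. You instead exploit the structure already available: writing $X^h=X^C-vX^C$ and $Y^h=Y^C-vY^C$, you reduce (ii) to the bracket relation $[X^C,Y^C]_\pounds=[X,Y]^C_E$ of (\ref{emroz}) plus the single verticality check that $[X^C,V]_\pounds$ is vertical whenever $V$ is vertical — and you correctly isolate the crux there, namely that the $\mathcal{X}_\alpha$-coefficients $X^\alpha\circ\pi$ of $X^C$ in (\ref{esi}) are fibrewise constant, so $\rho_\pounds(\mathcal{V}_\gamma)$ annihilates them. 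Your (iii) then follows formally from (i), (ii) and $\ker J=v\pounds^\pi E$ (equivalently $Jh=J$), with no further computation. What your approach buys is economy and transparency: one small coordinate verification replaces the computation of the full bracket $[X^h,Y^h]_\pounds$, and the logical dependence of (iii) on (i) and (ii) is made explicit. What the paper's approach buys is the explicit coordinate expression (\ref{51}) of $[X^h,Y^h]_\pounds$ itself, which is of independent use (for instance, it exhibits exactly how the curvature-type terms in the $\mathcal{V}_\gamma$-components arise), whereas your argument only determines that bracket modulo the vertical subbundle.
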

\begin{proof}
We have
\[
JX^h=JhX^C=JX^C=X^V.
\]
Thus (i) is proved. Now let $X=X^\alpha e_\alpha$ and $Y=Y^\beta e_\beta$. Then by a direct calculation we get
\begin{equation}\label{50}
[X, Y]_E=(X^\alpha\rho^i_\alpha \frac{\partial Y^\gamma}{\partial x^i}-Y^\beta\rho^i_\beta\frac{\partial X^\gamma}{\partial x^i}+X^\alpha Y^\beta L^\gamma_{\alpha\beta})e_\gamma,
\end{equation}
and
\begin{align}\label{51}
[X^h, Y^h]_\pounds&=\Big((X^\alpha\rho^i_\alpha \frac{\partial Y^\gamma}{\partial x^i}-Y^\beta\rho^i_\beta\frac{\partial X^\gamma}{\partial x^i}+X^\alpha Y^\beta L^\gamma_{\alpha\beta})\circ \pi\Big)\mathcal{X_\gamma}\nonumber\\
&\ \ +\Big(((X^\alpha\rho^i_\alpha)\circ\pi)\frac{\partial}{\partial\textbf{x}^i}((Y^\beta\circ\pi)\mathcal{B}^\gamma_\beta)
-((Y^\alpha\rho^i_\alpha)\circ\pi)\frac{\partial}{\partial\textbf{x}^i}((X^\beta\circ\pi)\mathcal{B}^\gamma_\beta)\nonumber\\
&\ \ +((X^\alpha Y^\beta)\circ\pi)\mathcal{B}^\lambda_\alpha\frac{\partial \mathcal{B}^\gamma_\beta}{\partial\textbf{y}^\lambda}-((X^\alpha Y^\beta)\circ\pi)\mathcal{B}^\lambda_\beta\frac{\partial \mathcal{B}^\gamma_\alpha}{\partial\textbf{y}^\lambda}\Big)\mathcal{V}_\gamma.
\end{align}
Therefore
\begin{align*}
[X, Y]^h_E&=\Big((X^\alpha\rho^i_\alpha \frac{\partial Y^\gamma}{\partial x^i}-Y^\beta\rho^i_\beta\frac{\partial X^\gamma}{\partial x^i}+X^\alpha Y^\beta L^\gamma_{\alpha\beta})\circ \pi\Big) (\mathcal{X_\gamma}+B_\gamma^\lambda \mathcal{V_\lambda})\\
&=h[X^h, Y^h]_\pounds.
\end{align*}
Thus we have (ii). Also, using (\ref{50}) and (\ref{51}) we obtain (iii) as follows
\[
[X, Y]^v_E=\Big((X^\alpha\rho^i_\alpha \frac{\partial Y^\gamma}{\partial x^i}-Y^\beta\rho^i_\beta\frac{\partial X^\gamma}{\partial x^i}+X^\alpha Y^\beta L^\gamma_{\alpha\beta})\circ\pi\Big)\mathcal{V}_\gamma=J[X^h, Y^h]_\pounds.
\]
\end{proof}
\begin{lemma}
Let $h$ be a horizontal endomorphism on $\pounds^\pi E$ and $X, Y\in\Gamma(E)$. Then
\begin{equation}\label{leila1}
t(X^h, Y^h)=[X^h, Y^V]_\pounds-[Y^h, X^V]_\pounds-[X, Y]_E^V.
\end{equation}
\end{lemma}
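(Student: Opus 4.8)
The plan is to expand the weak torsion straight from its definition $t=[J,h]^{F-N}_\pounds$ using the evaluation formula (\ref{F4}). Taking $K=J$ and $L=h$ there and feeding in the pair $(X^h,Y^h)$, the first step is simply to record
\begin{align*}
t(X^h,Y^h)&=[JX^h,hY^h]_\pounds+[hX^h,JY^h]_\pounds+(J\circ h+h\circ J)[X^h,Y^h]_\pounds\\
&\ \ -J[X^h,hY^h]_\pounds-J[hX^h,Y^h]_\pounds-h[X^h,JY^h]_\pounds-h[JX^h,Y^h]_\pounds.
\end{align*}

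Next I would substitute the reduction identities that are already available. By (i) of (\ref{three}) we have $JX^h=X^V$ and $JY^h=Y^V$; since $h\circ h=h$ and $X^h=hX^C$, idempotency gives $hX^h=X^h$ and $hY^h=Y^h$; and by (\ref{Jh}) we have $J\circ h=J$, $h\circ J=0$, so $J\circ h+h\circ J=J$. Combining the three $J[X^h,Y^h]_\pounds$ contributions (one with a plus sign and two with a minus sign, leaving a single $-J[X^h,Y^h]_\pounds$) collapses the expansion to
\[
t(X^h,Y^h)=(Id-h)[X^V,Y^h]_\pounds+(Id-h)[X^h,Y^V]_\pounds-J[X^h,Y^h]_\pounds.
\]
Recognising $Id-h=v$ and invoking (iii) of (\ref{three}) to rewrite $J[X^h,Y^h]_\pounds=[X,Y]^V_E$, this becomes
\[
t(X^h,Y^h)=v[X^V,Y^h]_\pounds+v[X^h,Y^V]_\pounds-[X,Y]^V_E.
\]

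The remaining and genuinely crucial point is to discharge the two vertical projectors, i.e. to show $[X^h,Y^V]_\pounds$ and $[X^V,Y^h]_\pounds$ are already vertical, so that $v$ acts as the identity on them. This is the step I expect to be the main obstacle, and I would settle it by a short computation in the frame $\{\mathcal{X}_\alpha,\mathcal{V}_\alpha\}$. Writing $X^h=(X^\alpha\circ\pi)(\mathcal{X}_\alpha+\mathcal{B}^\beta_\alpha\mathcal{V}_\beta)$ from (\ref{horizontal end}) and $Y^V=(Y^\beta\circ\pi)\mathcal{V}_\beta$ from (\ref{esi}), an $\mathcal{X}_\gamma$-term in the bracket could only arise through $[\mathcal{X}_\alpha,\mathcal{V}_\beta]_\pounds$, which vanishes by the bracket lemma for the basis, or through $\rho_\pounds(\mathcal{V}_\beta)=\partial/\partial\textbf{y}^\beta$ differentiating the $\mathcal{X}_\alpha$-coefficient $X^\alpha\circ\pi$ of $X^h$, which vanishes since $X^\alpha\circ\pi$ is a pullback and hence independent of the fibre coordinates. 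Thus $[X^h,Y^V]_\pounds$ has no horizontal component, so $v[X^h,Y^V]_\pounds=[X^h,Y^V]_\pounds$, and the identical argument applies to $[X^V,Y^h]_\pounds$. Finally, using antisymmetry $[X^V,Y^h]_\pounds=-[Y^h,X^V]_\pounds$ to rewrite the first term produces exactly (\ref{leila1}).
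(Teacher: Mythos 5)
Your proof is correct and follows essentially the same route as the paper's: expanding $t=[J,h]^{F-N}_\pounds$ via (\ref{F4}), reducing with $JX^h=X^V$, $hX^h=X^h$, $Jh=J$, $hJ=0$, then killing the horizontal parts of the mixed brackets and invoking (iii) of (\ref{three}). The only cosmetic difference is that the paper establishes $h[X^h,Y^V]_\pounds=0$ by writing out the full coordinate expression of $[X^h,Y^V]_\pounds$, whereas you argue its verticality by noting that the potential $\mathcal{X}_\gamma$-contributions vanish (via $[\mathcal{X}_\alpha,\mathcal{V}_\beta]_\pounds=0$ and fibre-independence of $X^\alpha\circ\pi$) — the same computation, slightly abbreviated.
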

\begin{proof}
Using the definition of the weak torsion we have
\begin{align*}
t(X^h, Y^h)&=[J, h]^{F-N}_\pounds(X^h, Y^h)=[JX^h, hY^h]_\pounds+[hX^h, JY^h]_\pounds\\
&\ \ \ +Jh[X^h, Y^h]_\pounds+hJ[X^h, Y^h]_\pounds-J[X^h, hY^h]_\pounds\\
&\ \ \ -J[hX^h, Y^h]_\pounds-h[X^h, JY^h]_\pounds-h[JX^h, Y^h]_\pounds.
\end{align*}
Using $JX^h=X^h$, (i) of (\ref{three}) and (i), (iv) of (\ref{Jh}) in the above equation, we get
\begin{align}\label{leila}
t(X^h, Y^h)&=[X^V, Y^h]_\pounds+[X^h, Y^V]_\pounds-J[X^h, Y^h]_\pounds\nonumber\\
&\ \ \ -h[X^h, Y^V]_\pounds-h[X^V, Y^h]_\pounds.
\end{align}
But we can obtain
\[
[X^h, Y^V]_\pounds=\Big((X^\alpha\rho^i_\alpha\frac{\partial Y^\beta}{\partial x^i})\circ\pi-((X^\alpha Y^\gamma)\circ\pi)\frac{\partial \mathcal{B}^\beta_\alpha}{\partial\textbf{y}^\gamma}\Big)\mathcal{V}_\beta.
\]
Therefore $h[X^h, Y^V]_\pounds=0$. Similarly we have $h[X^V, Y^h]_\pounds=0$. Setting these equation and (iii) of (\ref{three}) in (\ref{leila}) we obtain (\ref{leila1}).
\end{proof}
\begin{proposition}
If $h$ and $\bar{h}$ are homogenous horizontal endomorphisms on $\pounds^\pi E$ such that
\begin{equation}\label{lili}
[X^h, Y^V]_\pounds=[X^{\bar{h}}, Y^V]_\pounds,\ \ \ \forall X, Y\in\Gamma(E),
\end{equation}
then $h=\bar{h}$.
\end{proposition}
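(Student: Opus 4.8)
The plan is to reduce the statement to an identity between the local coefficient functions $\mathcal{B}^\beta_\alpha$ and $\bar{\mathcal{B}}^\beta_\alpha$ of $h$ and $\bar h$, and then to use homogeneity to promote an identity between their vertical derivatives to an identity between the coefficients themselves. First I would invoke (\ref{horizontal end}) to write $h=(\mathcal{X}_\beta+\mathcal{B}^\alpha_\beta\mathcal{V}_\alpha)\otimes\mathcal{X}^\beta$ and $\bar h=(\mathcal{X}_\beta+\bar{\mathcal{B}}^\alpha_\beta\mathcal{V}_\alpha)\otimes\mathcal{X}^\beta$, so that proving $h=\bar h$ amounts to proving $\mathcal{B}^\alpha_\beta=\bar{\mathcal{B}}^\alpha_\beta$ for all indices.

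Next I would recall the coordinate formula established just above this proposition,
\[
[X^h, Y^V]_\pounds=\Big((X^\alpha\rho^i_\alpha\frac{\partial Y^\beta}{\partial x^i})\circ\pi-((X^\alpha Y^\gamma)\circ\pi)\frac{\partial \mathcal{B}^\beta_\alpha}{\partial\textbf{y}^\gamma}\Big)\mathcal{V}_\beta,
\]
together with its analogue for $\bar h$. The first summand is independent of the horizontal structure, so subtracting the two expressions and applying the hypothesis (\ref{lili}) yields
\[
((X^\alpha Y^\gamma)\circ\pi)\Big(\frac{\partial \mathcal{B}^\beta_\alpha}{\partial\textbf{y}^\gamma}-\frac{\partial \bar{\mathcal{B}}^\beta_\alpha}{\partial\textbf{y}^\gamma}\Big)=0,
\]
for all $X, Y\in\Gamma(E)$. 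Choosing $X=e_\alpha$ and $Y=e_\gamma$ over the local basis (so that the coefficients $X^\alpha$, $Y^\gamma$ are the constant functions $1$), I would extract the pointwise identity $\partial \mathcal{B}^\beta_\alpha/\partial\textbf{y}^\gamma = \partial \bar{\mathcal{B}}^\beta_\alpha/\partial\textbf{y}^\gamma$ for every $\alpha, \beta, \gamma$.

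Finally, since both $h$ and $\bar h$ are homogeneous, the lemma characterizing homogeneity supplies the Euler-type relations $\mathcal{B}^\beta_\alpha = \textbf{y}^\gamma \partial \mathcal{B}^\beta_\alpha/\partial\textbf{y}^\gamma$ and $\bar{\mathcal{B}}^\beta_\alpha = \textbf{y}^\gamma \partial \bar{\mathcal{B}}^\beta_\alpha/\partial\textbf{y}^\gamma$. Combining these with the derivative identity just obtained gives
\[
\mathcal{B}^\beta_\alpha = \textbf{y}^\gamma \frac{\partial \mathcal{B}^\beta_\alpha}{\partial\textbf{y}^\gamma} = \textbf{y}^\gamma \frac{\partial \bar{\mathcal{B}}^\beta_\alpha}{\partial\textbf{y}^\gamma} = \bar{\mathcal{B}}^\beta_\alpha,
\]
so the coefficients coincide and hence $h=\bar h$.

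The point that deserves care (rather than a genuine obstacle) is that the hypothesis only constrains the \emph{vertical derivatives} of the coefficients; equality of derivatives by itself would leave undetermined any term that is constant along the fibres. Homogeneity is precisely what eliminates this ambiguity, since a function homogeneous of degree one is recovered from its vertical derivatives through the Euler relation. I would therefore stress that both hypotheses are essential: the bracket condition (\ref{lili}) pins down the $\textbf{y}$-derivatives of the $\mathcal{B}$'s, and homogeneity of both endomorphisms converts this into equality of the $\mathcal{B}$'s themselves; without homogeneity one could only conclude that $h$ and $\bar h$ differ by a semibasic term whose coefficients are fibrewise constant.
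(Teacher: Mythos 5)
Your proof is correct and follows essentially the same route as the paper: both arguments reduce to the coordinate expression of $[X^h, Y^V]_\pounds$, evaluate the hypothesis on basis sections $e_\alpha$, $e_\beta$ to obtain $\partial \mathcal{B}^\beta_\alpha/\partial\textbf{y}^\gamma = \partial \bar{\mathcal{B}}^\beta_\alpha/\partial\textbf{y}^\gamma$, and then contract with $\textbf{y}^\gamma$ using the Euler relations supplied by homogeneity to conclude $\mathcal{B}^\beta_\alpha=\bar{\mathcal{B}}^\beta_\alpha$. Your closing remark on why homogeneity is indispensable (the bracket condition only determines the fibre derivatives) is a useful clarification that the paper leaves implicit.
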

\begin{proof}
Let $h=(\mathcal{X}_\alpha+\mathcal{B}^\beta_\alpha\mathcal{V}_\beta)\otimes\mathcal{X}^\beta$ and $\bar{h}=(\mathcal{X}_\alpha+\mathcal{B}^\beta_\alpha\mathcal{V}_\beta)\otimes\mathcal{X}^\beta$. Since $h$ and $\bar{h}$ are homogenous, then we have
\begin{equation}\label{lili1}
\mathcal{B}^\beta_\alpha=\textbf{y}^\gamma\frac{\partial \mathcal{B}_\alpha^\beta}{\partial \textbf{y}^\gamma},\ \ \ \mathcal{B}^\beta_\alpha=\textbf{y}^\gamma\frac{\partial \bar{\mathcal{B}}_\alpha^\beta}{\partial \textbf{y}^\gamma}.
\end{equation}
Setting $X=e_\alpha$ and $Y=e_\beta$ in (\ref{lili}), we have $[e_\alpha^h, e_\beta^v]_\pounds=[e_\alpha^{\bar{h}}, e_\beta^v]_\pounds$. This equation gives us
\[
\frac{\partial \mathcal{B}_\alpha^\beta}{\partial \textbf{y}^\gamma}=\frac{\partial \bar{\mathcal{B}}_\alpha^\beta}{\partial \textbf{y}^\gamma}.
\]
Contracting the above equation by $\textbf{y}^\gamma$ and using (\ref{lili1}) we deduce $\mathcal{B}^\beta_\alpha=\mathcal{B}^\beta_\alpha$ and consequently $h=\bar{h}$.
\end{proof}
We set $\delta_\alpha=e_\alpha^h$. Then we have $\delta_\alpha=\mathcal{X}_\alpha+\mathcal{B}^\beta_\alpha\mathcal{V}_\beta=h(\mathcal{X}_\alpha)$. It is easy to see that $h\delta_\alpha=\delta_\alpha$, $v\delta_\alpha=0$ and
\begin{equation}\label{rho}
\rho_\pounds(\delta_\alpha)=(\rho^i_\alpha\circ\pi)\frac{\partial }{\partial \textbf{x}^i}+\mathcal{B}^\gamma_\alpha\frac{\partial}{\partial \textbf{y}^\gamma}.
\end{equation}
Moreover,  $\{\delta_\alpha\}$ generate a basis of $h\pounds^\pi E$ and the frame $\{\delta_\alpha, \mathcal{V}_\alpha\}$ is a local basis of $\pounds^\pi E$ adapted to splitting (\ref{good}) which is called adapted basis. The dual adapted basis is $\{\mathcal{X}^\alpha, \delta\mathcal{V}^\alpha\}$, where
\[
\delta\mathcal{V}^\alpha=\mathcal{V}^\alpha-\mathcal{B}^\alpha_\beta\mathcal{X}^\beta.
\]
\begin{proposition}
The Lie brakhets of the adapted basis $\{\delta_\alpha, \mathcal{V}_\alpha\}$  are
\begin{equation}\label{LieB}
[\delta_\alpha, \delta_\beta]_\pounds=(L^\gamma_{\alpha\beta}\circ\pi)\delta_\gamma+R^\gamma_{\alpha\beta}\mathcal{V}_\gamma,\ \ \ [\delta_\alpha, \mathcal{V}_\beta]_\pounds=-\frac{\partial \mathcal{B}^\gamma_\alpha}{\partial\textbf{y}^\beta}\mathcal{V}_\gamma,\ \ \ [\mathcal{V}_\alpha, \mathcal{V}_\beta]_\pounds=0,
\end{equation}
where $R^\gamma_{\alpha\beta}$ is geven by (\ref{curv0}).
\end{proposition}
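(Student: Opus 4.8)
The plan is to substitute $\delta_\alpha=\mathcal{X}_\alpha+\mathcal{B}^\beta_\alpha\mathcal{V}_\beta=h(\mathcal{X}_\alpha)$ into each of the three brackets and reduce everything to the already-established brackets of the frame $\{\mathcal{X}_\alpha,\mathcal{V}_\alpha\}$, namely $[\mathcal{X}_\alpha,\mathcal{X}_\beta]_\pounds=(L^\gamma_{\alpha\beta}\circ\pi)\mathcal{X}_\gamma$, $[\mathcal{X}_\alpha,\mathcal{V}_\beta]_\pounds=0$, $[\mathcal{V}_\alpha,\mathcal{V}_\beta]_\pounds=0$, combined with the Leibniz rule $[\widetilde{X},f\widetilde{Y}]_\pounds=f[\widetilde{X},\widetilde{Y}]_\pounds+\rho_\pounds(\widetilde{X})(f)\widetilde{Y}$ and the anchor values $\rho_\pounds(\mathcal{X}_\alpha)=(\rho^i_\alpha\circ\pi)\frac{\partial}{\partial\textbf{x}^i}$, $\rho_\pounds(\mathcal{V}_\alpha)=\frac{\partial}{\partial\textbf{y}^\alpha}$.

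The last two relations are essentially immediate. The identity $[\mathcal{V}_\alpha,\mathcal{V}_\beta]_\pounds=0$ is already recorded for the frame. For $[\delta_\alpha,\mathcal{V}_\beta]_\pounds$ I expand $[\mathcal{X}_\alpha+\mathcal{B}^\gamma_\alpha\mathcal{V}_\gamma,\mathcal{V}_\beta]_\pounds$: the $\mathcal{X}$-term vanishes by $[\mathcal{X}_\alpha,\mathcal{V}_\beta]_\pounds=0$, and applying the Leibniz rule to the coefficient $\mathcal{B}^\gamma_\alpha$ leaves only $-\rho_\pounds(\mathcal{V}_\beta)(\mathcal{B}^\gamma_\alpha)\mathcal{V}_\gamma=-\frac{\partial\mathcal{B}^\gamma_\alpha}{\partial\textbf{y}^\beta}\mathcal{V}_\gamma$, the claimed value.

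The substantive step is $[\delta_\alpha,\delta_\beta]_\pounds$. I expand $[\mathcal{X}_\alpha+\mathcal{B}^\lambda_\alpha\mathcal{V}_\lambda,\mathcal{X}_\beta+\mathcal{B}^\gamma_\beta\mathcal{V}_\gamma]_\pounds$ bilinearly into four pieces. The first reproduces $(L^\gamma_{\alpha\beta}\circ\pi)\mathcal{X}_\gamma$; the two mixed pieces, after the $[\mathcal{X},\mathcal{V}]$-brackets drop out and only the anchor derivatives survive, contribute $\big[(\rho^i_\alpha\circ\pi)\frac{\partial\mathcal{B}^\gamma_\beta}{\partial\textbf{x}^i}-(\rho^i_\beta\circ\pi)\frac{\partial\mathcal{B}^\gamma_\alpha}{\partial\textbf{x}^i}\big]\mathcal{V}_\gamma$; and the last piece, using $[\mathcal{V}_\lambda,\mathcal{V}_\gamma]_\pounds=0$ together with $\rho_\pounds(\mathcal{V}_\lambda)=\frac{\partial}{\partial\textbf{y}^\lambda}$, gives $\big(\mathcal{B}^\lambda_\alpha\frac{\partial\mathcal{B}^\gamma_\beta}{\partial\textbf{y}^\lambda}-\mathcal{B}^\lambda_\beta\frac{\partial\mathcal{B}^\gamma_\alpha}{\partial\textbf{y}^\lambda}\big)\mathcal{V}_\gamma$. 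To read the answer off in the adapted frame I then rewrite $\mathcal{X}_\gamma=\delta_\gamma-\mathcal{B}^\mu_\gamma\mathcal{V}_\mu$, which peels off the clean term $(L^\gamma_{\alpha\beta}\circ\pi)\delta_\gamma$ and adds a further $-(L^\lambda_{\alpha\beta}\circ\pi)\mathcal{B}^\gamma_\lambda\mathcal{V}_\gamma$ to the vertical part.

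Finally I collect every coefficient of $\mathcal{V}_\gamma$ and identify the sum with $R^\gamma_{\alpha\beta}$ of (\ref{curv0}). The one point requiring care is the structure-constant term: the computation produces $L^\lambda_{\alpha\beta}$, whereas (\ref{curv0}) is written with $L^\lambda_{\beta\alpha}$, and these agree once the antisymmetry $L^\lambda_{\beta\alpha}=-L^\lambda_{\alpha\beta}$ is used. I expect the only real obstacle to be the index bookkeeping — consistently renaming dummy indices so that every vertical contribution carries the single free index $\gamma$ — since no idea beyond the Leibniz rule and the frame brackets enters. As a sanity check, the vertical coefficient obtained here is precisely the expression computed inside $-v[h\mathcal{X}_\alpha,h\mathcal{X}_\beta]_\pounds$ in the derivation of (\ref{curv0}), which is exactly the identification $\Omega(\mathcal{X}_\alpha,\mathcal{X}_\beta)=-R^\gamma_{\alpha\beta}\mathcal{V}_\gamma$, confirming consistency.
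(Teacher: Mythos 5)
Your proof is correct and follows essentially the same route the paper takes: the paper states this proposition without a separate proof, but the identical bracket expansion of $[\mathcal{X}_\alpha+\mathcal{B}^\lambda_\alpha\mathcal{V}_\lambda,\ \mathcal{X}_\beta+\mathcal{B}^\gamma_\beta\mathcal{V}_\gamma]_\pounds$ via the Leibniz rule and the frame brackets already appears in the derivation of (\ref{curv0}), which your computation reproduces and then reads off in the adapted frame. Your handling of the structure-constant term, using $L^\lambda_{\beta\alpha}=-L^\lambda_{\alpha\beta}$ to reconcile your $-(L^\lambda_{\alpha\beta}\circ\pi)\mathcal{B}^\gamma_\lambda$ with the form of (\ref{curv0}), is exactly the right reconciliation.
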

Using (\ref{horizontal end}) and (\ref{complex}), $h$ and $F$ have the following coordinate expressions with respect to adapted basis
\begin{equation}\label{hF}
(i)\ h=\delta_\alpha\otimes\mathcal{X}^\alpha,\ \ \ \ F=-\mathcal{V}_\alpha\otimes\mathcal{X}^\alpha+\delta_\alpha\otimes\delta\mathcal{V}^\alpha.
\end{equation}
%******************************************************************************************************************
 \section{Distinguished connections on Lie algebroids}
 This section is appertained to constructing distinguished connections on Lie algebroids. Intrinsic $v$-connections and Berwald-type and Yano-type connections are also studied. Ultimately, The Douglas tensor of a Berwald endomorphism based on Yano connection is introduced.

%*******************************************************************************************************************
A linear connection on a Lie algebroid $(E, [, ]_E, ó\rho)$ is a map
\[
D:\Gamma(E)\times\Gamma(E)\rightarrow\Gamma(E)
\]
which satisfies the rules
\begin{align*}
D_{fX+Y}Z&=fD_XY+D_YZ,\\
D_X(fY+Z)&=(\rho(X)f)Y+fD_XY+D_XZ,
\end{align*}
for any function $f\in C^{\infty}(M)$ and $X, Y, Z\in \Gamma(E)$.
\begin{defn}
Let $D$ be a linear connection on $\pounds^\pi E$ and $h$ be a horizontal endomorphism on $\pounds^\pi E$. Then $(D, h)$ is called a distinguished connection (or d-connection) on $\pounds^\pi E$, if

i) $D$ is reducible, i.e., $Dh=0$,

ii) $D$ is almost complex, i.e., $DF=0$,\\
where $F$ is the almost complex structure associated by $h$.
\end{defn}
\begin{lemma}\label{6.2}
If $D$ is reducible respect to $h$, then we have
\begin{equation}
(i)\ D_{\widetilde{X}}h\widetilde{Y}=hD_{\widetilde{X}}\widetilde{Y}\in h\pounds^\pi E,\ \ \ (ii)\ D_{\widetilde{X}}v\widetilde{Y}=vD_{\widetilde{X}}\widetilde{Y}\in v\pounds^\pi E,
\end{equation}
where $\widetilde{X}$ and $\widetilde{Y}$ are sections of $\pounds^\pi E$.
\end{lemma}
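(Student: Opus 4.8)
The plan is to unpack the reducibility hypothesis $Dh=0$ through the usual product rule governing the covariant derivative of a type $(1,1)$-tensor field along a linear connection on the Lie algebroid $\pounds^\pi E$. Recall that for any endomorphism field $A$ on $\pounds^\pi E$ and sections $\widetilde{X},\widetilde{Y}$ one has
\[
(D_{\widetilde{X}}A)(\widetilde{Y})=D_{\widetilde{X}}(A\widetilde{Y})-A(D_{\widetilde{X}}\widetilde{Y}).
\]
Applying this with $A=h$, the condition $Dh=0$ reads $D_{\widetilde{X}}(h\widetilde{Y})=h(D_{\widetilde{X}}\widetilde{Y})$ for all $\widetilde{X},\widetilde{Y}\in\Gamma(\pounds^\pi E)$, which is precisely the identity asserted in part (i). The membership $h(D_{\widetilde{X}}\widetilde{Y})\in h\pounds^\pi E$ is then immediate from the definition $h\pounds^\pi E=\im h$.

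For part (ii) I would pass from $h$ to its associated vertical projector $v=Id-h$. First I would observe that the identity endomorphism $1_{\pounds^\pi E}$ is parallel for every linear connection, since $D_{\widetilde{X}}(1_{\pounds^\pi E}\widetilde{Y})=D_{\widetilde{X}}\widetilde{Y}=1_{\pounds^\pi E}(D_{\widetilde{X}}\widetilde{Y})$, so that $D(1_{\pounds^\pi E})=0$. Combining this with the hypothesis $Dh=0$ and the linearity of the covariant derivative in its tensor argument yields $Dv=D(1_{\pounds^\pi E})-Dh=0$. Applying the product rule once more, now to $A=v$, gives $D_{\widetilde{X}}(v\widetilde{Y})=v(D_{\widetilde{X}}\widetilde{Y})\in v\pounds^\pi E=\im v$, which is the claim in part (ii).

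The argument is essentially formal, so I expect no serious obstacle; the only point deserving care is the justification of the product rule for the covariant derivative of a $(1,1)$-tensor in the Lie algebroid setting, together with the resulting parallelism of $1_{\pounds^\pi E}$. These are the Lie algebroid analogues of the familiar tensorial derivation formulas, obtained by extending $D$ to tensor fields as a derivation that commutes with contractions exactly as on $TM$; once this is granted, both identities and the accompanying image statements drop out immediately.
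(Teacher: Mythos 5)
Your proof is correct and takes essentially the same route as the paper: the paper also unpacks the hypothesis via the product rule, writing $0=Dh(\widetilde{X},\widetilde{Y})=D_{\widetilde{X}}h\widetilde{Y}-hD_{\widetilde{X}}\widetilde{Y}$ to obtain (i), and settles (ii) with a ``similarly''. Your explicit treatment of (ii) through $v=Id-h$ and the parallelism of the identity $D(1_{\pounds^\pi E})=0$ is exactly the natural way to fill in that ``similarly'', so there is no substantive difference between the two arguments.
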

\begin{proof}
Since $Dh=0$, then we have
\[
0=Dh(\widetilde{X}, \widetilde{Y})=D_{\widetilde{X}}h\widetilde{Y}-hD_{\widetilde{X}}\widetilde{Y},
\]
which gives us (i). Similarly we can prove (ii).
\end{proof}
Since $\im h=h\pounds^\pi E$ and $\im v=v\pounds^\pi E$, then we have
\begin{cor}
If $\widetilde{Y}$ and $\widetilde{Z}$ are sections of $v\pounds^\pi E$ and $h\pounds^\pi E$, respectively, then we have $D_{\widetilde{X}}\widetilde{Y}\in v\pounds^\pi E$ and $D_{\widetilde{X}}\widetilde{Z}\in h\pounds^\pi E$.
\end{cor}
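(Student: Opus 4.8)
The plan is to derive this directly from Lemma \ref{6.2}, using only the idempotency of the two projectors. The key observation I would start from is that a section $\widetilde{Y}$ lying in $v\pounds^\pi E=\im v$ is fixed by the vertical projector: since $v\circ v=v$ (this is (ii) of (\ref{Jh})), every element of $\im v$ satisfies $v\widetilde{Y}=\widetilde{Y}$. Symmetrically, a section $\widetilde{Z}$ of $h\pounds^\pi E=\im h$ satisfies $h\widetilde{Z}=\widetilde{Z}$, because $h\circ h=h$ by the very definition of a horizontal endomorphism.

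With these two identities in hand, I would simply substitute into the reducibility conclusions. For the vertical case, replacing $\widetilde{Y}$ by $v\widetilde{Y}$ and applying (ii) of Lemma \ref{6.2} gives $D_{\widetilde{X}}\widetilde{Y}=D_{\widetilde{X}}(v\widetilde{Y})=vD_{\widetilde{X}}\widetilde{Y}$, and the right-hand side lies in $\im v=v\pounds^\pi E$. For the horizontal case, the identical manoeuvre with $\widetilde{Z}=h\widetilde{Z}$ and (i) of Lemma \ref{6.2} yields $D_{\widetilde{X}}\widetilde{Z}=D_{\widetilde{X}}(h\widetilde{Z})=hD_{\widetilde{X}}\widetilde{Z}\in\im h=h\pounds^\pi E$.

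I do not expect any genuine obstacle here: the statement is an immediate corollary of the preceding lemma, and the only point that needs checking is the passage from ``belongs to the image of a projector'' to ``is fixed by that projector,'' which is just idempotency. The sentence preceding the corollary, recording $\im h=h\pounds^\pi E$ and $\im v=v\pounds^\pi E$, already supplies the identification of the images with the two summands of the splitting (\ref{good}), so the membership conclusions can simply be read off once the projectors have been inserted.
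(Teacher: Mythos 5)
Your proposal is correct and follows essentially the same route as the paper: the paper derives the corollary directly from Lemma \ref{6.2} together with the identifications $\im h=h\pounds^\pi E$ and $\im v=v\pounds^\pi E$, which is exactly your argument, with the fixed-point observation $v\widetilde{Y}=\widetilde{Y}$, $h\widetilde{Z}=\widetilde{Z}$ (from idempotency) making the substitution explicit.
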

\begin{lemma}
If the linear connection $D$ is almost complex on $\pounds^\pi E$, then $D$ is determined on $\pounds^\pi E\times v\pounds^\pi E$, completely.
\end{lemma}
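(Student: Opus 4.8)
The plan is to exploit the almost-complex condition $DF=0$ together with the fact that the induced almost complex structure $F$ interchanges the vertical and horizontal subbundles, so that differentiating any section can be reduced to differentiating a vertical one. The point of the statement is that the values $D_{\widetilde X}\widetilde Y$ with $\widetilde Y\in\Gamma(v\pounds^\pi E)$ already determine $D_{\widetilde X}\widetilde Z$ for every $\widetilde Z\in\Gamma(\pounds^\pi E)$.

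First I would record the consequence of $DF=0$: for all $\widetilde X,\widetilde Y\in\Gamma(\pounds^\pi E)$ we have $D_{\widetilde X}(F\widetilde Y)=F(D_{\widetilde X}\widetilde Y)$. Next I would use Lemma \ref{IM} to see that $F$ swaps $v\pounds^\pi E$ and $h\pounds^\pi E$: the relation $F\circ h=-J$ shows that $F$ carries a horizontal section into $\im J=v\pounds^\pi E$, while $F\circ v=h\circ F$ shows that $F$ carries a vertical section into $\im h=h\pounds^\pi E$. The key algebraic identity I need is that every horizontal section is $F$ applied to a vertical one; applying $F$ to $F\circ h=-J$ and using $F^2=-1_{\pounds^\pi E}$ gives $h\widetilde Z=F(J\widetilde Z)$, with $J\widetilde Z\in\im J=v\pounds^\pi E$.

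Then, for an arbitrary $\widetilde Z\in\Gamma(\pounds^\pi E)$, I would split it as $\widetilde Z=h\widetilde Z+v\widetilde Z$ via the decomposition (\ref{good}) and compute
\begin{align*}
D_{\widetilde X}\widetilde Z &=D_{\widetilde X}(h\widetilde Z)+D_{\widetilde X}(v\widetilde Z)\\
&=D_{\widetilde X}\big(F(J\widetilde Z)\big)+D_{\widetilde X}(v\widetilde Z)\\
&=F\big(D_{\widetilde X}(J\widetilde Z)\big)+D_{\widetilde X}(v\widetilde Z).
\end{align*}
Since both $J\widetilde Z$ and $v\widetilde Z$ lie in $v\pounds^\pi E$, the right-hand side involves only the values of $D$ on pairs in $\pounds^\pi E\times v\pounds^\pi E$, together with the fixed bundle map $F$. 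Hence the action of $D$ on every section is recovered from its restriction to $\pounds^\pi E\times v\pounds^\pi E$, which is the claim. Because $F$, $J$ and $v$ are $C^\infty(E)$-linear endomorphisms, the identity $h\widetilde Z=F(J\widetilde Z)$ is an equality of genuine sections, so the computation is valid at the level of sections and not merely pointwise.

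The computation is short, and the only place requiring attention is the bookkeeping of Lemma \ref{IM}: one must invoke the correct relation ($F\circ h=-J$, so that the horizontal part $h\widetilde Z$ is genuinely expressed as $F$ of a section lying in the vertical bundle) and use $F^2=-1_{\pounds^\pi E}$ when inverting $F$. Beyond this, no real obstacle arises; additivity of $D$ in its second argument and the tensoriality of $F$ ensure the reduction goes through.
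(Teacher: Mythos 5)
Your proof is correct and follows essentially the same route as the paper: both rest on the identity $h=F\circ J$ (which you re-derive from $F\circ h=-J$ and $F^2=-1_{\pounds^\pi E}$, while the paper cites it directly from Lemma \ref{IM}) together with $DF=0$, so that $D_{\widetilde X}(h\widetilde Z)=F\,D_{\widetilde X}(J\widetilde Z)$ with $J\widetilde Z$ vertical. The paper merely writes this identity twice, once for $v\widetilde X$ and once for $h\widetilde X$ as the direction, a split that your version shows is unnecessary.
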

\begin{proof}
From $DF=0$, we deduce $D_{\widetilde{X}}F\widetilde{Y}=FD_{\widetilde{X}}\widetilde{Y}$, for all $\widetilde{X}, \widetilde{Y}\in\Gamma(\pounds^\pi E)$. Thus we have
\begin{align}
D_{v\widetilde{X}}h\widetilde{Y}&=D_{v\widetilde{X}}FJ\widetilde{Y}=FD_{v\widetilde{X}}J\widetilde{Y},\label{conn0}\\
D_{h\widetilde{X}}h\widetilde{Y}&=D_{h\widetilde{X}}FJ\widetilde{Y}=FD_{h\widetilde{X}}J\widetilde{Y}.\label{conn00}
\end{align}
\end{proof}
\begin{lemma}
If $(D, h)$ is a d-connection, then $DJ=0$.
\end{lemma}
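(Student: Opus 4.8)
The plan is to express $J$ through the two tensor fields that $D$ is assumed to preserve, namely $h$ and $F$, and then differentiate using the Leibniz rule. First I would record the product rule for the covariant derivative of composite $(1,1)$-tensor fields (fibrewise endomorphisms) on $\pounds^\pi E$: for any such $K$ and $L$,
\[
D_{\widetilde{X}}(K\circ L)=(D_{\widetilde{X}}K)\circ L+K\circ(D_{\widetilde{X}}L),\qquad\forall\widetilde{X}\in\Gamma(\pounds^\pi E).
\]
This follows at once from the definition $(D_{\widetilde{X}}K)\widetilde{Y}=D_{\widetilde{X}}(K\widetilde{Y})-K(D_{\widetilde{X}}\widetilde{Y})$, applied to $K$, to $L$, and to $K\circ L$: writing out $D_{\widetilde{X}}\big(K(L\widetilde{Y})\big)$ and cancelling the common term $K(L(D_{\widetilde{X}}\widetilde{Y}))$ gives the identity. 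It needs no hypothesis beyond $D$ being a linear connection.

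Next I would invoke relation $(ii)$ of Lemma \ref{IM}, which states $F\circ h=-J$, that is, $J=-F\circ h$. This is the crux of the argument: it writes the vertical endomorphism entirely in terms of $F$ and $h$, and these are precisely the two objects whose covariant derivatives vanish under the d-connection hypotheses, since $DF=0$ ($D$ almost complex) and $Dh=0$ ($D$ reducible).

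Finally, applying the product rule to $J=-F\circ h$ and substituting $DF=0$ and $Dh=0$, I would obtain
\[
D_{\widetilde{X}}J=-(D_{\widetilde{X}}F)\circ h-F\circ(D_{\widetilde{X}}h)=0
\]
for every $\widetilde{X}\in\Gamma(\pounds^\pi E)$, whence $DJ=0$, as claimed.

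I do not expect a substantive obstacle here. The only points meriting care are the clean justification of the Leibniz rule for composite endomorphisms, which is a standard tensorial fact, and the choice of the correct identity from Lemma \ref{IM}: one must use $(ii)$, namely $J=-F\circ h$, rather than $(i)$ (which reads $F\circ J=h$ and still carries $J$ on the left), so that $J$ is expressed solely through objects with vanishing covariant derivative. Once this is done, the conclusion is purely formal.
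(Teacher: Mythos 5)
Your proof is correct, and it takes a genuinely different route from the paper's. The paper argues pointwise on the decomposition: for $\widetilde{Y}\in\Gamma(v\pounds^\pi E)$ it uses the consequence of reducibility that $D_{\widetilde{X}}\widetilde{Y}$ stays vertical, together with $\ker J=v\pounds^\pi E$, so that both terms of $DJ(\widetilde{Y},\widetilde{X})=D_{\widetilde{X}}J\widetilde{Y}-JD_{\widetilde{X}}\widetilde{Y}$ vanish separately; note that this displayed argument only treats vertical $\widetilde{Y}$, and the remaining case of horizontal arguments is exactly where the hypothesis $DF=0$ is indispensable (reducibility alone does not suffice: a connection with $D_{\delta_\alpha}\delta_\beta=A^\gamma_{\alpha\beta}\delta_\gamma$ and $D_{\delta_\alpha}\mathcal{V}_\beta=B^\gamma_{\alpha\beta}\mathcal{V}_\gamma$, $A\neq B$, is reducible yet has $DJ\neq 0$). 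Your argument instead writes $J=-F\circ h$ using identity (ii) of Lemma \ref{IM} and differentiates with the Leibniz rule for composed $(1,1)$-tensors, which indeed needs nothing beyond $D$ being linear. What each approach buys: the paper's is a quick term-by-term vanishing on vertical sections using only $Dh=0$, but as written it covers only part of the claim; yours treats all sections of $\pounds^\pi E$ uniformly in one line, makes visible that both defining conditions $Dh=0$ and $DF=0$ enter, and is not circular, since Lemma \ref{IM} is established in the section on the almost complex structure, independently of any connection.
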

\begin{proof}
Let $\widetilde{X}$ and $\widetilde{Y}$ be sections of $\pounds^\pi E$ and $v\pounds^\pi E$, respectively. Then from the above lemma we have $D_{\widetilde{X}}\widetilde{Y}\in\Gamma(v\pounds^\pi E)$. Thus, since $\im J=v\pounds^\pi E$, then we have $J\widetilde{Y}=0$ and $JD_{\widetilde{X}}\widetilde{Y}=0$. Therefore we obtain
\[
DJ(\widetilde{Y}, \widetilde{X})=D_{\widetilde{X}}J\widetilde{Y}-JD_{\widetilde{X}}\widetilde{Y}=0.
\]
\end{proof}
Using (ii) of Lemma \ref{6.2}, we have $D_{\delta_\alpha}\mathcal{V}_\beta\in v\pounds^\pi E$ and $D_{\mathcal{V}_\alpha}\mathcal{V}_\beta\in v\pounds^\pi E$. Thus these have the following coordinate expressions
\begin{equation}\label{conn}
D_{\delta_\alpha}\mathcal{V}_\beta=F^\gamma_{\alpha\beta}\mathcal{V}_\gamma,\ \ \
D_{\mathcal{V}_\alpha}\mathcal{V}_\beta=C^\gamma_{\alpha\beta}\mathcal{V}_\gamma.
\end{equation}
From (\ref{hF}), (\ref{conn00}) and the above equation we obtain
\begin{equation}\label{conn1}
D_{\delta_\alpha}\delta_\beta=D_{\delta_\alpha}h\delta_\beta=FD_{\delta_\alpha}J\delta_\beta=FD_{\delta_\alpha}
\mathcal{V}_\beta=F^\gamma_{\alpha\beta}\delta_\gamma.
\end{equation}
Similarly (\ref{hF}), (\ref{conn0}) and (\ref{conn}) imply that
\begin{equation}\label{conn2}
D_{\mathcal{V}_\alpha}\delta_\beta=C^\gamma_{\alpha\beta}\delta_\gamma.
\end{equation}
\begin{defn}
Let $(D, h)$ be a d-connection. Then
\begin{equation*}
\left\{
\begin{array}{cc}
D^h:\Gamma(\pounds^\pi E)\times\Gamma(\pounds^\pi E)\rightarrow\Gamma(\pounds^\pi E)\\
\hspace{1.5cm}(\widetilde{X}, \widetilde{Y})\mapsto D^h_{\widetilde{X}}\widetilde{Y}:=D_{h\widetilde{X}}\widetilde{Y}
\end{array}
\right.
\end{equation*}
and
\begin{equation*}
\left\{
\begin{array}{cc}
D^v:\Gamma(\pounds^\pi E)\times\Gamma(\pounds^\pi E)\rightarrow\Gamma(\pounds^\pi E)\\
\hspace{1.5cm}(\widetilde{X}, \widetilde{Y})\mapsto D^v_{\widetilde{X}}\widetilde{Y}:=D_{v\widetilde{X}}\widetilde{Y}
\end{array}
\right.
\end{equation*}
are called $h$-covariant derivative and $v$-covariant derivative, respectively. Moreover,
\begin{equation}\label{h*}
\left\{
\begin{array}{cc}
h^*(DC):\Gamma(\pounds^\pi E)\rightarrow\Gamma(\pounds^\pi E)\\
\widetilde{X}\mapsto DC(h\widetilde{X}):=D_{h\widetilde{X}}C
\end{array}
\right.
\end{equation}
and
\begin{equation*}
\left\{
\begin{array}{cc}
v^*(DC):\Gamma(\pounds^\pi E)\rightarrow\Gamma(\pounds^\pi E)\\
\widetilde{X}\mapsto DC(v\widetilde{X}):=D_{v\widetilde{X}}C
\end{array}
\right.
\end{equation*}
are called $h$-deflection and $v$-deflection of $(D, h)$, respectively.
\end{defn}
Using (\ref{conn}), (\ref{conn1}) and (\ref{conn2}) we get
\begin{equation}\label{h-cov}
D^h_{\delta_\alpha}\delta_\beta=F^\gamma_{\alpha\beta}\delta_\gamma,\ \ D^h_{\delta_\alpha}\mathcal{V}_\beta=F^\gamma_{\alpha\beta}\mathcal{V}_\gamma,\ \ D^h_{\mathcal{V}_\alpha}\delta_\beta=D^h_{\mathcal{V}_\alpha}\mathcal{V}_\beta=0.
\end{equation}
Similarly we obtain
\begin{equation}\label{v-cov}
D^v_{\mathcal{V}_\alpha}\delta_\beta=C^\gamma_{\alpha\beta}\delta_\gamma,\ \ D^v_{\mathcal{V}_\alpha}\mathcal{V}_\beta=C^\gamma_{\alpha\beta}\mathcal{V}_\gamma,\ \ D^v_{\delta_\alpha}\delta_\beta=D^v_{\delta_\alpha}\mathcal{V}_\beta=0.
\end{equation}
Using (\ref{h-cov}) and (\ref{v-cov}) we deduce $D=D^h+D^v$. (\ref{conn}), (\ref{conn1}) and (\ref{conn2}) give us
\begin{align*}
h^*(DC)(\delta_\alpha)&=D_{h\delta_\alpha}C=D_{\delta_\alpha}(\textbf{y}^\beta\mathcal{V}_\beta)=\rho(\delta_\alpha)(\textbf{y}^\beta)\mathcal{V}_\beta+\textbf{y}^\beta D_{\delta_\alpha}\mathcal{V}_\beta\\
&=(\mathcal{B}^\gamma_\alpha+\textbf{y}^\beta F^\gamma_{\alpha\beta})\mathcal{V}_\gamma,
\end{align*}
and $h^*(DC)(\mathcal{V}_\alpha)=0$. Therefore $h^*(DC)$ has the following coordinate expression:
\begin{equation}\label{h-def}
h^*(DC)=(\mathcal{B}^\gamma_\alpha+\textbf{y}^\beta F^\gamma_{\alpha\beta})\mathcal{V}_\gamma\otimes\mathcal{X}^\alpha.
\end{equation}
Similarly, we can see that $v^*(DC)$ has the following coordinate expression:
\begin{equation}\label{h-def}
v^*(DC)=(\delta^\gamma_\alpha+\textbf{y}^\beta C^\gamma_{\alpha\beta})\mathcal{V}_\gamma\otimes\delta\mathcal{V}^\alpha,
\end{equation}
where $\delta^\gamma_\alpha$ is the Kronicher symble.
\begin{theorem}
Let $(D, h)$ be a d-connection on $\pounds^\pi E$. Then the torsion tensor field $T$ of $D$ determined by the following, completely:
\begin{align}
A(\widetilde{X}, \widetilde{Y}):&=hT(h\widetilde{X}, h\widetilde{Y})=D_{h\widetilde{X}}h\widetilde{Y}-D_{h\widetilde{Y}}h\widetilde{X}-h[h\widetilde{X}, h\widetilde{Y}]_\pounds,\label{TD1}\\
B(\widetilde{X}, \widetilde{Y}):&=hT(h\widetilde{X}, J\widetilde{Y})=-D_{J\widetilde{Y}}h\widetilde{X}-h[h\widetilde{X}, J\widetilde{Y}]_\pounds,\label{TD2}\\
R^1(\widetilde{X}, \widetilde{Y}):&=vT(h\widetilde{X}, h\widetilde{Y})=-v[h\widetilde{X}, h\widetilde{Y}]_\pounds,\label{TD3}\\
P^1(\widetilde{X}, \widetilde{Y}):&=vT(h\widetilde{X}, J\widetilde{Y})=D_{h\widetilde{X}}J\widetilde{Y}-v[h\widetilde{X}, J\widetilde{Y}]_\pounds,\label{TD4}\\
S^1(\widetilde{X}, \widetilde{Y}):&=vT(J\widetilde{X}, J\widetilde{Y})=D_{J\widetilde{X}}J\widetilde{Y}-D_{J\widetilde{Y}}J\widetilde{X}-v[J\widetilde{X}, J\widetilde{Y}]_\pounds,\label{TD5}
\end{align}
where $A$, $B$, $R^1$, $P^1$ and $R^1$ are called $h$- horizontal, $h$- mixed, $v$- horizontal, $v$- mixed and $v$- vertical torsion, respectively.
\end{theorem}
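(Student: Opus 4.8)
The plan is to exploit that the torsion
\[
T(\widetilde{X}, \widetilde{Y}) = D_{\widetilde{X}}\widetilde{Y} - D_{\widetilde{Y}}\widetilde{X} - [\widetilde{X}, \widetilde{Y}]_\pounds
\]
is a skew-symmetric $C^\infty(E)$-bilinear tensor field on $\pounds^\pi E$, together with two structural facts. First, the splitting $\pounds^\pi E = v\pounds^\pi E \oplus h\pounds^\pi E$ of (\ref{good}) with its complementary idempotent projectors $h$ and $v$. Second, the consequence of reducibility recorded in Lemma \ref{6.2} and its corollary: $D$ sends horizontal sections to horizontal sections and vertical sections to vertical sections. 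Since $J\widetilde{Y}$ always lies in $v\pounds^\pi E=\im J$, these are exactly the inputs that make the projected torsion expressions collapse.

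First I would verify the five identities (\ref{TD1})--(\ref{TD5}) one at a time, in each case substituting the prescribed arguments into the torsion formula and applying the indicated projector, using idempotence and that each projector annihilates the opposite subbundle. In (\ref{TD1}) both $D_{h\widetilde{X}}h\widetilde{Y}$ and $D_{h\widetilde{Y}}h\widetilde{X}$ are already horizontal, so $h$ fixes them and only the bracket is projected; in (\ref{TD3}) those same two terms are horizontal, hence killed by $v$, leaving $-v[h\widetilde{X}, h\widetilde{Y}]_\pounds$ (which reproduces the curvature of (\ref{cur1})). In (\ref{TD2}) and (\ref{TD4}) one argument is $J\widetilde{Y}\in v\pounds^\pi E$, so $D_{h\widetilde{X}}J\widetilde{Y}$ is vertical while $D_{J\widetilde{Y}}h\widetilde{X}$ is horizontal; applying $h$ retains only the horizontal covariant term and applying $v$ only the vertical one, giving the stated right-hand sides. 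In (\ref{TD5}) both $D_{J\widetilde{X}}J\widetilde{Y}$ and $D_{J\widetilde{Y}}J\widetilde{X}$ are vertical, so $v$ fixes them.

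For completeness I would show that $A, B, R^1, P^1, S^1$ reconstruct $T$. Write $\widetilde{X} = h\widetilde{X} + v\widetilde{X}$ and $\widetilde{Y} = h\widetilde{Y} + v\widetilde{Y}$, expand $T(\widetilde{X}, \widetilde{Y})$ by bilinearity into the four pieces with arguments $(h,h)$, $(h,v)$, $(v,h)$, $(v,v)$, and decompose each output through $Id=h+v$. The vertical inputs are converted into the $J$-form occurring in the definitions via $J\circ F = v$, which is (iii) of Lemma \ref{IM}: thus $v\widetilde{X} = J(F\widetilde{X})$ and $v\widetilde{Y} = J(F\widetilde{Y})$. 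Using skew-symmetry to fold $T(v\widetilde{X}, h\widetilde{Y})$ onto $T(h\widetilde{Y}, v\widetilde{X})$, every surviving piece matches one of the five tensors evaluated on $\widetilde{X}, \widetilde{Y}, F\widetilde{X}, F\widetilde{Y}$, yielding the explicit reconstruction
\begin{align*}
T(\widetilde{X}, \widetilde{Y}) &= A(\widetilde{X}, \widetilde{Y}) + R^1(\widetilde{X}, \widetilde{Y}) + B(\widetilde{X}, F\widetilde{Y}) - B(\widetilde{Y}, F\widetilde{X})\\
&\quad + P^1(\widetilde{X}, F\widetilde{Y}) - P^1(\widetilde{Y}, F\widetilde{X}) + S^1(F\widetilde{X}, F\widetilde{Y}).
\end{align*}

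The main obstacle is the single combination not appearing on the list, namely the horizontal part $hT(v\widetilde{X}, v\widetilde{Y})$, which must be shown to vanish so that no sixth tensor is required. Writing $v\widetilde{X} = J(F\widetilde{X})$ and $v\widetilde{Y} = J(F\widetilde{Y})$, I would argue that $T(J\widetilde{X}', J\widetilde{Y}')$ is purely vertical for all arguments: the two covariant terms are vertical because $D$ preserves $v\pounds^\pi E$, and the bracket $[J\widetilde{X}', J\widetilde{Y}']_\pounds$ is vertical because the vertical subbundle is involutive (in the adapted frame $[\mathcal{V}_\alpha, \mathcal{V}_\beta]_\pounds = 0$, so any two vertical sections bracket to a vertical section). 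Hence $hT(J\widetilde{X}', J\widetilde{Y}') = 0$, the missing piece drops out, and the five tensors suffice. Tensoriality of $A, B, R^1, P^1, S^1$ is then inherited from that of $T$ together with the $C^\infty(E)$-linearity of the bundle maps $h$, $v$, $J$ and $F$.
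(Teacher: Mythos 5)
Your proof is correct and takes essentially the same route as the paper's: decompose $hT(\widetilde{X},\widetilde{Y})$ and $vT(\widetilde{X},\widetilde{Y})$ by bilinearity through the splitting $Id=h+v$, match each resulting piece with one of the five tensors (via $v=J\circ F$ and skew-symmetry of $T$), and dispose of the only unlisted piece $hT(v\widetilde{X},v\widetilde{Y})$ by showing that $T$ on two vertical arguments is vertical. If anything, you fill in the steps the paper leaves implicit — the verification of the identities (\ref{TD1})--(\ref{TD5}) from reducibility, the justification (reducibility plus involutivity of $v\pounds^\pi E$) behind the paper's ``it is easy to check that $T(v\widetilde{X}, v\widetilde{Y})\in v\pounds^\pi E$,'' and the correct insertion of $F$ in the reconstruction formula where the paper writes $B(\widetilde{X},\widetilde{Y})$ as shorthand for $B(\widetilde{X},F\widetilde{Y})$.
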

\begin{proof}
We have
\begin{align*}
hT(\widetilde{X}, \widetilde{Y})&=hT(h\widetilde{X}, h\widetilde{Y})+hT(h\widetilde{X}, v\widetilde{Y})+hT(v\widetilde{X}, h\widetilde{Y})+hT(v\widetilde{X}, v\widetilde{Y})\\
&=A(\widetilde{X}, \widetilde{Y})+B(\widetilde{X}, \widetilde{Y})-B(\widetilde{Y}, \widetilde{X})+hT(v\widetilde{X}, v\widetilde{Y}).
\end{align*}
It is easy to check that $T(v\widetilde{X}, v\widetilde{Y})\in v\pounds^\pi E$ and consequently $hT(v\widetilde{X}, v\widetilde{Y})=0$. Therefore we obtain
\begin{equation}\label{TD6}
hT(\widetilde{X}, \widetilde{Y})=A(\widetilde{X}, \widetilde{Y})+B(\widetilde{X}, \widetilde{Y})-B(\widetilde{Y}, \widetilde{X}).
\end{equation}
Similarly we get
\begin{equation}\label{TD7}
vT(\widetilde{X}, \widetilde{Y})=P^1(\widetilde{X}, \widetilde{Y})+R^1(\widetilde{X}, \widetilde{Y})+S^1(\widetilde{Y}, \widetilde{X})-P^1(\widetilde{Y}, \widetilde{X}).
\end{equation}
Summing (\ref{TD6}) and (\ref{TD7}) we conclude that the torsion $T$ of $D$ completely determined by (\ref{TD1})-(\ref{TD5}).
\end{proof}
It is easy to check that the components of the torsion tensor field have the following coordinate expressions:
\begin{equation}\label{very im2}
\left\{
\begin{array}{cc}
A=T^\gamma_{\alpha\beta}\delta_\gamma\otimes\mathcal{X}^\alpha\otimes\mathcal{X}^\beta,\ \ B=-C^\gamma_{\alpha\beta}\delta_\gamma\otimes\mathcal{X}^\alpha\otimes\mathcal{X}^\beta,\\
R^1=-R^\gamma_{\alpha\beta}\mathcal{V}_\gamma\otimes\mathcal{X}^\alpha\otimes\mathcal{X}^\beta,\ \
P^1=P^\gamma_{\alpha\beta}\mathcal{V}_\gamma\otimes\mathcal{X}^\alpha\otimes\mathcal{X}^\beta,\\ \hspace{-4.4cm}Q^1=S^\gamma_{\alpha\beta}\mathcal{V}_\gamma\otimes\mathcal{X}^\alpha\otimes\mathcal{X}^\beta,
\end{array}
\right.
\end{equation}
where
\begin{equation}\label{very im3}
(i)\ T^\gamma_{\alpha\beta}=F^\gamma_{\alpha\beta}-F^\gamma_{\beta\alpha}-(L^\gamma_{\alpha\beta}\circ\pi),\ (ii)\ P^\gamma_{\alpha\beta}=F^\gamma_{\alpha\beta}+\frac{\partial \mathcal{B}^\gamma_\alpha}{\partial\textbf{y}^\beta},\ (iii)\ S^\gamma_{\alpha\beta}=C^\gamma_{\alpha\beta}-C^\gamma_{\beta\alpha}.
\end{equation}
\begin{theorem}
Let $(D, h)$ be a d-connection on $\pounds^\pi E$. Then the curvature tensor field $K$ of $D$ completely determined by the following
\begin{align*}
(i)\ \ R(\widetilde{X}, \widetilde{Y})\widetilde{Z}:&=K(h\widetilde{X}, h\widetilde{Y})J\widetilde{Z},\\
(ii)\ \ P(\widetilde{X}, \widetilde{Y})\widetilde{Z}:&=K(h\widetilde{X}, J\widetilde{Y})J\widetilde{Z},\\
(iii)\ \  Q(\widetilde{X}, \widetilde{Y})\widetilde{Z}:&=K(J\widetilde{X}, J\widetilde{Y})J\widetilde{Z}.
\end{align*}
$R$, $P$ and $Q$ are called horizontal, mixed and vertical curvature, respectively.
\end{theorem}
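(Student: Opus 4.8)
The plan is to mirror the proof of the preceding theorem on torsion: decompose each slot of $K$ through the splitting $\pounds^\pi E=v\pounds^\pi E\oplus h\pounds^\pi E$ and exploit that $(D,h)$ is a d-connection. Recall that the curvature tensor field is
\[
K(\widetilde{X}, \widetilde{Y})\widetilde{Z}=D_{\widetilde{X}}D_{\widetilde{Y}}\widetilde{Z}-D_{\widetilde{Y}}D_{\widetilde{X}}\widetilde{Z}-D_{[\widetilde{X}, \widetilde{Y}]_\pounds}\widetilde{Z},
\]
so that $K(\widetilde{X}, \widetilde{Y})$ is skew in $\widetilde{X}, \widetilde{Y}$ and $C^\infty(E)$-linear in all three arguments. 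The single structural fact I would establish first is that for any endomorphism $\Phi$ of $\pounds^\pi E$ with $D\Phi=0$ the curvature commutes with $\Phi$, i.e. $K(\widetilde{X}, \widetilde{Y})\Phi=\Phi\, K(\widetilde{X}, \widetilde{Y})$; this is immediate from the definition of $K$ once one writes $D_{\widetilde{X}}(\Phi\widetilde{Z})=\Phi(D_{\widetilde{X}}\widetilde{Z})$ and cancels. Since $(D,h)$ is a d-connection we have $DF=0$, $Dh=0$ and, by the lemma showing $DJ=0$, also $DJ=0$; hence $K(\widetilde{X}, \widetilde{Y})$ commutes with each of $F$, $h$, $v$ and $J$.

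Next I would reduce the slot that is acted on. Using $F\circ J=h$ and $J\circ F=v$ from Lemma \ref{IM}, every section decomposes as $\widetilde{Z}=h\widetilde{Z}+v\widetilde{Z}=FJ\widetilde{Z}+JF\widetilde{Z}$. Applying $K(\widetilde{X}, \widetilde{Y})$ and commuting $F$ through yields
\[
K(\widetilde{X}, \widetilde{Y})\widetilde{Z}=F\,K(\widetilde{X}, \widetilde{Y})J\widetilde{Z}+K(\widetilde{X}, \widetilde{Y})J(F\widetilde{Z}),
\]
so it suffices to understand $K(\widetilde{X}, \widetilde{Y})$ evaluated on vertical sections of the form $J\widetilde{W}$. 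This is precisely why the three components in the statement all carry $J\widetilde{Z}$ in the last slot.

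Then I would split the first two slots. Writing $\widetilde{X}=h\widetilde{X}+v\widetilde{X}$, $\widetilde{Y}=h\widetilde{Y}+v\widetilde{Y}$ and using $v=J\circ F$ to present the vertical parts as $v\widetilde{X}=J(F\widetilde{X})$, $v\widetilde{Y}=J(F\widetilde{Y})$, bilinearity together with the skew-symmetry of $K$ gives
\begin{align*}
K(\widetilde{X}, \widetilde{Y})J\widetilde{W}&=K(h\widetilde{X}, h\widetilde{Y})J\widetilde{W}+K(h\widetilde{X}, J(F\widetilde{Y}))J\widetilde{W}\\
&\quad -K(h\widetilde{Y}, J(F\widetilde{X}))J\widetilde{W}+K(J(F\widetilde{X}), J(F\widetilde{Y}))J\widetilde{W}\\
&=R(\widetilde{X}, \widetilde{Y})\widetilde{W}+P(\widetilde{X}, F\widetilde{Y})\widetilde{W}-P(\widetilde{Y}, F\widetilde{X})\widetilde{W}+Q(F\widetilde{X}, F\widetilde{Y})\widetilde{W},
\end{align*}
since the four terms match the definitions of $R$, $P$ (with its arguments permuted) and $Q$ verbatim. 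Feeding this identity, once with $\widetilde{W}=\widetilde{Z}$ and once with $\widetilde{W}=F\widetilde{Z}$, back into the preceding display expresses $K(\widetilde{X}, \widetilde{Y})\widetilde{Z}$ entirely through $R$, $P$ and $Q$, which is the asserted complete determination.

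The argument is essentially bookkeeping; the only genuine content is the commutation $K\Phi=\Phi K$ for a parallel $\Phi$ and the Lemma \ref{IM} identities that let me rewrite horizontal vectors as $F\circ J$ and vertical ones as images of $J$. The step I would watch most carefully is the reduction of the acted-on slot, because it is there that the hypothesis $DF=0$ (rather than merely $Dh=0$) is indispensable: without it one could not convert the horizontal output $FJ\widetilde{Z}$ into terms of type $R$, $P$, $Q$, and the three listed components would fail to recover $K$ on all of $\pounds^\pi E$.
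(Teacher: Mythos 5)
Your proposal is correct and takes essentially the same approach as the paper: both arguments rest on the commutation $K(\widetilde{X},\widetilde{Y})\circ\Phi=\Phi\circ K(\widetilde{X},\widetilde{Y})$ for the parallel structures $J$ and $F$, use the identities $h=F\circ J$ and $v=J\circ F$ to reduce the acted-on slot to sections of the form $J\widetilde{W}$, and then split the first two slots into horizontal and vertical parts (writing vertical parts as $J(F\cdot)$) to identify the four resulting terms with $R$, $P$ and $Q$. The only cosmetic difference is that the paper decomposes the output, $K\widetilde{Z}=hK\widetilde{Z}+vK\widetilde{Z}$, whereas you decompose the input, $\widetilde{Z}=FJ\widetilde{Z}+JF\widetilde{Z}$; after commuting, these yield identical expressions.
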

\begin{proof}
Since $D$ is a d-connection, then we have
\[
D_{\widetilde{X}}J\widetilde{Y}=JD_{\widetilde{X}}\widetilde{Y},\ \ D_{\widetilde{X}}F\widetilde{Y}=FD_{\widetilde{X}}\widetilde{Y}.
\]
From the above relation we get
\begin{align}\label{*}
JK(\widetilde{X},\widetilde{Y})\widetilde{Z}&=K(\widetilde{X},\widetilde{Y})J\widetilde{Z},\nonumber\\ FK(\widetilde{X},\widetilde{Y})\widetilde{Z}&=K(\widetilde{X},\widetilde{Y})F\widetilde{Z},\\ JFK(\widetilde{X},\widetilde{Y})\widetilde{Z}&=K(\widetilde{X},\widetilde{Y})JF\widetilde{Z}\nonumber.
\end{align}
Therefore using $(i)$, $(iii)$ of (\ref{IM0}) we obtain
\begin{align*}
hK(\widetilde{X},\widetilde{Y})\widetilde{Z}&=FJK(\widetilde{X},\widetilde{Y})\widetilde{Z}=FK(\widetilde{X},\widetilde{Y})J\widetilde{Z}=FK(h\widetilde{X},h\widetilde{Y})J\widetilde{Z}\\
&\ +FK(h\widetilde{X},v\widetilde{Y})\widetilde{Z}+FK(v\widetilde{X},v\widetilde{Y})J\widetilde{Z}+FK(v\widetilde{X},h\widetilde{Y})J\widetilde{Z}\\
&=FR(\widetilde{X}, \widetilde{Y})\widetilde{Z}+FP(\widetilde{X}, F\widetilde{Y})\widetilde{Z}+FQ(F\widetilde{X}, F\widetilde{Y})\widetilde{Z}-FP(\widetilde{Y}, F\widetilde{X})\widetilde{Z}.
\end{align*}
Similarly, using (\ref{*}) we deduce
\[
vK(\widetilde{X},\widetilde{Y})\widetilde{Z}=R(\widetilde{X},\widetilde{Y})F\widetilde{Z}+P(\widetilde{X},F\widetilde{Y})F\widetilde{Z}+Q(F\widetilde{X},F\widetilde{Y})F\widetilde{Z}-P(\widetilde{Y},F\widetilde{X})F\widetilde{Z}.
\]
Summing two above equation we derive that $K$ completely determined by $R$, $P$ and $Q$.
\end{proof}
By a direct calculation, we can see that the horizontal, mixed and vertical curvature, have the following coordinate expressions:
\begin{align*}
R&=R_{\alpha\beta\gamma}^{\ \ \ \ \lambda}\ \mathcal{V}_\lambda\otimes\mathcal{X}^\alpha\otimes\mathcal{X}^\beta\otimes\mathcal{X}^\gamma,\\
P&=P_{\alpha\beta\gamma}^{\ \ \ \ \lambda}\ \mathcal{V}_\lambda\otimes\mathcal{X}^\alpha\otimes\mathcal{X}^\beta\otimes\mathcal{X}^\gamma,\\
Q&=S_{\alpha\beta\gamma}^{\ \ \ \ \lambda}\ \mathcal{V}_\lambda\otimes\mathcal{X}^\alpha\otimes\mathcal{X}^\beta\otimes\mathcal{X}^\gamma.
\end{align*}
where
\begin{align}
R^{\ \ \ \ \lambda}_{\alpha\beta\gamma}&=(\rho^i_\alpha\circ \pi)\frac{\partial F^\lambda_{\beta\gamma}}{\partial \textbf{x}^i}+\mathcal{B}^\mu_\alpha\frac{\partial F^\lambda_{\beta\gamma}}{\partial \textbf{y}^\mu}-(\rho^i_\beta\circ \pi)\frac{\partial F^\lambda_{\alpha\gamma}}{\partial \textbf{x}^i}-\mathcal{B}^\mu_\beta\frac{\partial F^\lambda_{\alpha\gamma}}{\partial \textbf{y}^\mu}+F^\mu_{\beta\gamma}F^\lambda_{\alpha\mu}\nonumber\\
&\ -F^\mu_{\alpha\gamma}F^\lambda_{\beta\mu}-(L^\mu_{\alpha\beta}\circ \pi)F^\lambda_{\mu\gamma}-R^{\ \ \mu}_{\alpha\beta}C^\lambda_{\mu\gamma},\label{hh}\\
P^{\ \ \ \ \lambda}_{\alpha\beta\gamma}&=(\rho^i_\alpha\circ \pi)\frac{\partial C^\lambda_{\beta\gamma}}{\partial \textbf{x}^i}+\mathcal{B}^\mu_\alpha\frac{\partial C^\lambda_{\beta\gamma}}{\partial \textbf{y}^\mu}+C^\mu_{\beta\gamma}F^\lambda_{\alpha\mu}-\frac{\partial F^\lambda_{\alpha\gamma}}{\partial \textbf{y}^\beta}-F^\mu_{\alpha\gamma}C^\lambda_{\beta\mu}+\frac{\partial \mathcal{B}^\mu_{\alpha}}{\partial \textbf{y}^\beta}C^\lambda_{\mu\gamma},\label{hv}\\
S^{\ \ \ \ \lambda}_{\alpha\beta\gamma}&=\frac{\partial C^\lambda_{\beta\gamma}}{\partial \textbf{y}^\alpha}+C^\mu_{\beta\gamma}C^\lambda_{\alpha\mu}-\frac{\partial C^\lambda_{\alpha\gamma}}{\partial \textbf{y}^\beta}-C^\mu_{\alpha\gamma}C^\lambda_{\beta\mu}.\label{vv}
\end{align}
\begin{defn}
Let $(D, h)$ be a d-connection on $\pounds^\pi E$. Then the tensor field
\begin{equation*}
\left\{
\begin{array}{cc}
P_{ric}:\Gamma(\pounds^\pi E)\times\Gamma(\pounds^\pi E) \rightarrow C^\infty(E),\\
(\widetilde{X},\widetilde{Y}) \rightarrow tr[F\circ(\widetilde{Z}\rightarrow P(\widetilde{Y},\widetilde{Z})\widetilde{X})],
\end{array}
\right.
\end{equation*}
is called mixed Ricci tensor of d-connection $(D, h)$, where $F$ is the almost complex structure associated to $h$.
\end{defn}
By a direct calculation we can see that the mixed Ricci tensor of $(D, h)$ has the following coordinate expression
\[
P_{ric}=P_{\alpha\beta}\mathcal{X}^\alpha\otimes\mathcal{X}^\beta,
\]
where $P_{\alpha\beta}=P_{\alpha\beta\gamma}^{\ \ \ \ \beta}$.
%-------------------------------------------------------------------------------
 \subsection{Intrinsic v-connections}
%-------------------------------------------------------------------------------
\begin{defn}
The canonical map
\begin{equation*}
\left\{
\begin{array}{cc}
\stackrel{i}{D}:\Gamma(\pounds^\pi E)\times\Gamma(\pounds^\pi E) \rightarrow\Gamma(\pounds^\pi E),\\
(J\widetilde{X},J\widetilde{Y}) \rightarrow D^i_{J\widetilde{X}}J\widetilde{Y}:=[J, J\widetilde{Y}]^{F-N}_{\pounds}\widetilde{X},
\end{array}
\right.
\end{equation*}
is called intrinsic or the flat $v$-connection in $v\pounds^\pi E$.
\end{defn}
\begin{lemma}
Let $\widetilde{X}$ and $\widetilde{Y}$ be two section of $\pounds^\pi E$. Then we have
\[
\stackrel{i}{D}_{J\widetilde{X}}J\widetilde{Y}:=J[J\widetilde{X}, \widetilde{Y}]_{\pounds},\ \ \stackrel{i}{D}_{v\widetilde{X}}J\widetilde{Y}:=J[v\widetilde{X}, \widetilde{Y}]_{\pounds}.
\]
\end{lemma}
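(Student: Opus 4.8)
The plan is to unfold the definition of the flat $v$-connection with the help of the Frölicher--Nijenhuis calculus and then reduce everything to the integrability of $J$. Since $J\in\Gamma(E^*\otimes E)$ has degree one and $J\widetilde{Y}\in\Gamma(\pounds^\pi E)$ is a section, formula (\ref{F3}) applied with $K=J$ and $Y=J\widetilde{Y}$, evaluated at $\widetilde{X}$, gives
\[
\stackrel{i}{D}_{J\widetilde{X}}J\widetilde{Y}=[J,J\widetilde{Y}]^{F-N}_\pounds\widetilde{X}=[J\widetilde{X},J\widetilde{Y}]_\pounds-J[\widetilde{X},J\widetilde{Y}]_\pounds .
\]
Hence the first identity is equivalent to the claim
\[
[J\widetilde{X},J\widetilde{Y}]_\pounds=J[J\widetilde{X},\widetilde{Y}]_\pounds+J[\widetilde{X},J\widetilde{Y}]_\pounds ,
\]
which, because $J\circ J=0$, is precisely the vanishing of the Nijenhuis tensor $N_J$ of the vertical endomorphism. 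I expect this vanishing to be the only real content of the lemma.

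To establish $N_J=0$ I would use that $N_J=\frac{1}{2}[J,J]^{F-N}_\pounds$ is $C^\infty(E)$-bilinear and skew by its construction (\ref{F4}), so it suffices to evaluate it on the local basis $\{\mathcal{X}_\alpha,\mathcal{V}_\alpha\}$. Feeding in (\ref{vertical}), namely $J\mathcal{X}_\alpha=\mathcal{V}_\alpha$ and $J\mathcal{V}_\alpha=0$, together with the bracket table $[\mathcal{X}_\alpha,\mathcal{X}_\beta]_\pounds=(L^\gamma_{\alpha\beta}\circ\pi)\mathcal{X}_\gamma$, $[\mathcal{X}_\alpha,\mathcal{V}_\beta]_\pounds=0$, $[\mathcal{V}_\alpha,\mathcal{V}_\beta]_\pounds=0$, each of the three cases collapses: on $(\mathcal{X}_\alpha,\mathcal{X}_\beta)$ one gets $[\mathcal{V}_\alpha,\mathcal{V}_\beta]_\pounds-J[\mathcal{V}_\alpha,\mathcal{X}_\beta]_\pounds-J[\mathcal{X}_\alpha,\mathcal{V}_\beta]_\pounds=0$, while the mixed and purely vertical cases vanish even more directly. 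Thus $N_J=0$ on the basis and therefore identically, yielding the first formula $\stackrel{i}{D}_{J\widetilde{X}}J\widetilde{Y}=J[J\widetilde{X},\widetilde{Y}]_\pounds$. (A direct coordinate computation with $\widetilde{X}=\widetilde{X}^\alpha\mathcal{X}_\alpha+\widetilde{U}^\alpha\mathcal{V}_\alpha$ is an equally viable route and gives both sides equal to $\widetilde{X}^\alpha\frac{\partial\widetilde{Y}^\beta}{\partial\textbf{y}^\alpha}\mathcal{V}_\beta$.)

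For the second identity I would observe that the right-hand side $J[J\widetilde{X},\widetilde{Y}]_\pounds$ manifestly depends on $\widetilde{X}$ only through the vertical vector $J\widetilde{X}$, so $\stackrel{i}{D}$ is well defined as an operator on $v\pounds^\pi E$. Now $v\widetilde{X}\in\im v=v\pounds^\pi E=\im J$, so there is $\widetilde{Z}\in\Gamma(\pounds^\pi E)$ with $J\widetilde{Z}=v\widetilde{X}$ (for instance $\widetilde{Z}=F\widetilde{X}$, using $J\circ F=v$ from (iii) of (\ref{IM0})). Substituting $\widetilde{Z}$ into the first formula gives
\[
\stackrel{i}{D}_{v\widetilde{X}}J\widetilde{Y}=\stackrel{i}{D}_{J\widetilde{Z}}J\widetilde{Y}=J[J\widetilde{Z},\widetilde{Y}]_\pounds=J[v\widetilde{X},\widetilde{Y}]_\pounds ,
\]
the last step because $J\widetilde{Z}=v\widetilde{X}$, and independence of the choice of $\widetilde{Z}$ is automatic since the final expression no longer references $\widetilde{Z}$. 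Beyond the bracket bookkeeping, the main obstacle is the verification of $N_J=0$; once that is in hand, both equalities follow from (\ref{F3}) and the kernel/image identities $\ker J=\im J=\im v=v\pounds^\pi E$.
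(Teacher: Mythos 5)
Your proof is correct and follows essentially the same route as the paper: unfold $\stackrel{i}{D}_{J\widetilde{X}}J\widetilde{Y}=[J,J\widetilde{Y}]^{F-N}_{\pounds}\widetilde{X}=[J\widetilde{X},J\widetilde{Y}]_{\pounds}-J[\widetilde{X},J\widetilde{Y}]_{\pounds}$ via (\ref{F3}), reduce the first identity to the vanishing of $N_J$, and deduce the second identity by writing $v\widetilde{X}=JF\widetilde{X}$ using $J\circ F=v$ from (\ref{IM0}). The only difference is that the paper simply invokes $N_J=0$ without justification, whereas you actually verify it by tensoriality and a basis computation with (\ref{vertical}) and the bracket table --- a worthwhile addition, since the paper relies on this fact here and again later but never proves it.
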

\begin{proof}
From $N_J=0$, we obtain
\[
[J\widetilde{X},J\widetilde{Y}]_{\pounds}-J[\widetilde{X},J\widetilde{Y}]_{\pounds}-J[J\widetilde{X}, \widetilde{Y}]_{\pounds}=0.
\]
Therefore we get
\[
\stackrel{i}{D}_{J\widetilde{X}}J\widetilde{Y}:=[J, J\widetilde{Y}]^{F-N}_{\pounds}\widetilde{X}=[J\widetilde{X},J\widetilde{Y}]_{\pounds}-J[\widetilde{X},J\widetilde{Y}]_{\pounds}=J[J\widetilde{X}, \widetilde{Y}]_{\pounds}.
\]
Also since $v=J\circ F$, then the above equation gives us
\[
\stackrel{i}{D}_{v\widetilde{X}}J\widetilde{Y}=\stackrel{i}{D}_{JF\widetilde{X}}J\widetilde{Y}=J[JF\widetilde{X}, \widetilde{Y}]_{\pounds}=J[v\widetilde{X}, \widetilde{Y}]_{\pounds}.
\]
\end{proof}
Let $D^i$ be the intrinsic $v$-connection. We consider the map
\[
\stackrel{pi}{D}:\Gamma(v\pounds^\pi E)\times\Gamma(\pounds^\pi E) \rightarrow\Gamma(v\pounds^\pi E)
\]
defined by
\[
\stackrel{pi}{D}_{J\widetilde{X}}J\widetilde{Y}=\stackrel{i}{D}_{J\widetilde{X}}J\widetilde{Y},\ \ \stackrel{pi}{D}_{J\widetilde{X}}h\widetilde{Y}=F\stackrel{i}{D}_{J\widetilde{X}}J\widetilde{Y}.
\]
It is easy to see that
\begin{equation}\label{m1}
\stackrel{pi}{D}_{J\widetilde{X}}J\widetilde{Y}=J[J\widetilde{X}, \widetilde{Y}]_{\pounds},\ \ \stackrel{pi}{D}_{J\widetilde{X}}h\widetilde{Y}=h[J\widetilde{X}, \widetilde{Y}]_{\pounds}.
\end{equation}
\begin{theorem}
Let $(D,h)$ be a d-connection on $\pounds^\pi E$ and $\stackrel{pi}{D}$ be given by (\ref{m1}). If $\widetilde{D}$ is the map
\begin{equation}\label{m2}
\left\{
\begin{array}{cc}
\widetilde{D}:\Gamma(\pounds^\pi E)\times\Gamma(\pounds^\pi E) \rightarrow\Gamma(\pounds^\pi E),\\
(\widetilde{X},\widetilde{Y}) \rightarrow \widetilde{D}_{\widetilde{X}}\widetilde{Y}:=D_{h\widetilde{X}}\widetilde{Y}+\stackrel{pi}{D}_{v\widetilde{X}}\widetilde{Y},
\end{array}
\right.
\end{equation}
then $(\widetilde{D},h)$ is a d-connection on $\pounds^\pi E$, which is called the d-connection associated to $(D,h)$.
\end{theorem}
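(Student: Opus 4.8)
The plan is to verify the three defining requirements of a d-connection for the pair $(\widetilde{D},h)$: that $\widetilde{D}$ is a linear connection on $\pounds^\pi E$, that it is reducible ($\widetilde{D}h=0$), and that it is almost complex ($\widetilde{D}F=0$). The guiding observation is that $\widetilde{D}$ splits cleanly according to $h$ and $v$: since $hh=h$, $vh=0$, $hv=0$, $vv=v$, one reads off from (\ref{m2}) that $\widetilde{D}_{h\widetilde{X}}\widetilde{Y}=D_{h\widetilde{X}}\widetilde{Y}$ and $\widetilde{D}_{v\widetilde{X}}\widetilde{Y}=\stackrel{pi}{D}_{v\widetilde{X}}\widetilde{Y}$. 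Thus the $h$-part of $\widetilde{D}$ is exactly $D^h$ (whose good behaviour is inherited from the d-connection $(D,h)$), while its $v$-part is the projected intrinsic connection $\stackrel{pi}{D}$ precomposed with $v$ (whose behaviour is read off from (\ref{m1})). Every property below is obtained by matching the corresponding property of these two building blocks.

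First I would establish that $\widetilde{D}$ is a linear connection. Additivity in both slots is immediate. For $C^\infty(E)$-tensoriality in the first slot I use that $h,v$ are tensorial, that $D$ is $C^\infty(E)$-linear in its first argument, and that $\stackrel{pi}{D}$ is tensorial in its first argument; the latter follows from (\ref{m1}) together with the anchored bracket rule $[fJ\widetilde{X},\widetilde{Y}]_\pounds=f[J\widetilde{X},\widetilde{Y}]_\pounds-\rho_\pounds(\widetilde{Y})(f)J\widetilde{X}$, since applying $J$ (resp. $h$) annihilates the correction term by $J^2=0$ (resp. $hJ=0$ from (i) of (\ref{Jh})). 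For the Leibniz rule in the second slot I compute $\widetilde{D}_{\widetilde{X}}(f\widetilde{Y})=D_{h\widetilde{X}}(f\widetilde{Y})+\stackrel{pi}{D}_{v\widetilde{X}}(f\widetilde{Y})$; the two summands produce the anchor derivatives $\rho_\pounds(h\widetilde{X})(f)\widetilde{Y}$ and $\rho_\pounds(v\widetilde{X})(f)\widetilde{Y}$, and since $h+v=\mathrm{Id}$ these add to $\rho_\pounds(\widetilde{X})(f)\widetilde{Y}$, which is the required rule for the anchor $\rho_\pounds$. Well-definedness of $\stackrel{pi}{D}$ itself is the one place where $N_J=0$ enters, exactly as in the lemma preceding (\ref{m1}).

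Next I would check reducibility. It suffices to show $\widetilde{D}_{\widetilde{X}}h\widetilde{Y}\in h\pounds^\pi E$ and that it equals $h\widetilde{D}_{\widetilde{X}}\widetilde{Y}$. The first term $D_{h\widetilde{X}}h\widetilde{Y}=hD_{h\widetilde{X}}\widetilde{Y}$ lies in $h\pounds^\pi E$ because $D$ is reducible (Lemma \ref{6.2}), and the second term $\stackrel{pi}{D}_{v\widetilde{X}}h\widetilde{Y}=h[v\widetilde{X},\widetilde{Y}]_\pounds$ lies in $h\pounds^\pi E$ by (\ref{m1}). Projecting $\widetilde{D}_{\widetilde{X}}\widetilde{Y}$ onto $h\pounds^\pi E$ and using that $D_{h\widetilde{X}}v\widetilde{Y}$ is vertical (so $hD_{h\widetilde{X}}v\widetilde{Y}=0$) and that $\stackrel{pi}{D}_{v\widetilde{X}}v\widetilde{Y}=J[v\widetilde{X},F\widetilde{Y}]_\pounds$ is vertical, I obtain $h\widetilde{D}_{\widetilde{X}}\widetilde{Y}=D_{h\widetilde{X}}h\widetilde{Y}+\stackrel{pi}{D}_{v\widetilde{X}}h\widetilde{Y}=\widetilde{D}_{\widetilde{X}}h\widetilde{Y}$, i.e. $\widetilde{D}h=0$.

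The main obstacle is the almost-complex condition $\widetilde{D}F=0$, which I would settle by treating the $h$- and $v$-parts separately. The $h$-part is free: since $(D,h)$ is a d-connection, $DF=0$ yields $D_{h\widetilde{X}}F\widetilde{Y}=FD_{h\widetilde{X}}\widetilde{Y}$. For the $v$-part I must show $\stackrel{pi}{D}_{v\widetilde{X}}$ commutes with $F$. On vertical sections this is built into the definition of $\stackrel{pi}{D}$: the relation $\stackrel{pi}{D}_{J\widetilde{X}}h\widetilde{Y}=F\stackrel{pi}{D}_{J\widetilde{X}}J\widetilde{Y}$ together with $FJ=h$ (Lemma \ref{IM}) says precisely $\stackrel{pi}{D}_{V}(F(J\widetilde{Y}))=F\stackrel{pi}{D}_{V}(J\widetilde{Y})$ for vertical $V$. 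To extend this to a horizontal section $Z=h\widetilde{Y}$, I use $Fh=-J$ and $Jh=J$ to get $FZ=-JZ$, and $FJ=h$ to get $F(JZ)=hZ=Z$; applying the vertical case to $JZ$ gives $\stackrel{pi}{D}_V Z=F\stackrel{pi}{D}_V(JZ)$, hence $\stackrel{pi}{D}_V(JZ)=-F\stackrel{pi}{D}_V Z$ by $F^2=-1_{\pounds^\pi E}$, and therefore $\stackrel{pi}{D}_V(FZ)=-\stackrel{pi}{D}_V(JZ)=F\stackrel{pi}{D}_V Z$. Since every section is a sum of a horizontal and a vertical one, $\stackrel{pi}{D}_{v\widetilde{X}}F=F\stackrel{pi}{D}_{v\widetilde{X}}$. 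Adding the two parts gives $\widetilde{D}_{\widetilde{X}}F\widetilde{Y}=D_{h\widetilde{X}}F\widetilde{Y}+\stackrel{pi}{D}_{v\widetilde{X}}F\widetilde{Y}=F\widetilde{D}_{\widetilde{X}}\widetilde{Y}$, so $\widetilde{D}F=0$ and $(\widetilde{D},h)$ is a d-connection.
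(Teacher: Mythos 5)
Your proof is correct and follows essentially the same route as the paper: verify the linear-connection axioms using the Leibniz rule of the bracket together with $h+v=\mathrm{Id}$, then reducibility via $vh=0$ and $hJ=0$, then the almost-complex condition. In fact you go further than the paper at the last step: where the paper merely asserts ``similarly we can show $\widetilde{D}F=0$'', you supply the actual argument, using $F\circ J=h$, $F\circ h=-J$ and $F^2=-1_{\pounds^\pi E}$ to propagate commutation of $\stackrel{pi}{D}_{v\widetilde{X}}$ with $F$ from vertical sections to horizontal ones.
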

\begin{proof}
At first we show that $\widetilde{D}$ is a linear connection on $\pounds^\pi E$. Let $f\in C^\infty(M)$. Then we have $D_{h\widetilde{X}}f\widetilde{Y}=\rho_\pounds(h\widetilde{X})(f)\widetilde{Y}+fD_{h\widetilde{X}}\widetilde{Y}$, because $D$ is a linear connection. Direct calculations give us
\begin{align*}
\stackrel{pi}{D}_{v\widetilde{X}}fY&=\stackrel{pi}{D}_{v\widetilde{X}}hf\widetilde{Y}+\stackrel{pi}{D}_{v\widetilde{X}}vf\widetilde{Y}
=\stackrel{pi}{D}_{v\widetilde{X}}hf\widetilde{Y}
+\stackrel{pi}{D}_{v\widetilde{X}}fJF\widetilde{Y}\\
&=h[v\widetilde{X},f\widetilde{Y}]_{\pounds}+J[v\widetilde{X},fF\widetilde{Y}]_{\pounds}
=h\{\rho_{\pounds}(v\widetilde{X})(f)\widetilde{Y}+f[v\widetilde{X},\widetilde{Y}]_{\pounds}\}\\
&\ \ \ \ +J\{\rho_{\pounds}(v\widetilde{X})(f)F\widetilde{Y}+f[v\widetilde{X},F\widetilde{Y}]_{\pounds}\}=\rho_{\pounds}(v\widetilde{X})(f)h\widetilde{Y}
+fh[v\widetilde{X},\widetilde{Y}]_{\pounds}\\
&\ \ \ \ +\rho_{\pounds}(v\widetilde{X})(f)v\widetilde{Y}+fJ[v\widetilde{X},F\widetilde{Y}]_{\pounds}=\rho_{\pounds}(v\widetilde{X})(f)(\widetilde{Y})
+f\stackrel{pi}{D}_{v\widetilde{X}}\widetilde{Y}.
\end{align*}
Therefore we have
\[
\widetilde{D}_{\widetilde{X}}f\widetilde{Y}=\rho_{\pounds}(h\widetilde{X})(f)Y+\rho_{\pounds}(v\widetilde{X})(f)Y+fD_{h\widetilde{X}}\widetilde{Y}
+f\stackrel{pi}{D}_{v\widetilde{X}}\widetilde{Y}=\rho_{\pounds}(\widetilde{X})(f)\widetilde{Y}+f\widetilde{D}_{\widetilde{X}}\widetilde{Y}.
\]
Similarly we can prove
\[
\widetilde{D}_{\widetilde{X}}(\widetilde{Y}+\widetilde{Z})=\widetilde{D}_{\widetilde{X}}\widetilde{Y}+\widetilde{D}_{\widetilde{Y}}\widetilde{Z},\ \ \ \widetilde{D}_{f\widetilde{X}+\widetilde{Y}}\widetilde{Z}=f\widetilde{D}_{\widetilde{X}}\widetilde{Z}+\widetilde{D}_{\widetilde{Y}}\widetilde{Z}.
\]
Thus $\widetilde{D}$ is a linear connection on $\pounds^\pi E$. Now, we show that $\widetilde{D}$ is reducible. Since $D$ is reducible, then we have $Dh=0$. So,
\begin{align*}
(\widetilde{D}_{\widetilde{X}}h)(\widetilde{Y})&=\widetilde{D}_{\widetilde{X}}h\widetilde{Y}-h\widetilde{D}_{\widetilde{X}}\widetilde{Y}
={D}_{h\widetilde{X}}h\widetilde{Y}+\stackrel{pi}{D}_{v\widetilde{X}}h\widetilde{Y}
-h{D}_{h\widetilde{X}}\widetilde{Y}-h\stackrel{pi}{D}_{v\widetilde{X}}\widetilde{Y}\\
&=({D}_{h\widetilde{X}}h)(\widetilde{Y})+\stackrel{pi}{D}_{v\widetilde{X}}h\widetilde{Y}-h{D}^{pi}_{v\widetilde{X}}h\widetilde{Y}
-h\stackrel{pi}{D}_{v\widetilde{X}}v\widetilde{Y}\\
&=v\stackrel{pi}{D}_{v\widetilde{X}}h\widetilde{Y}-h\stackrel{pi}{D}_{v\widetilde{X}}v\widetilde{Y}\\
&=vh[v\widetilde{X}, \widetilde{Y}]_\pounds+hJ[v\widetilde{X}, F\widetilde{Y}]_\pounds=0.
\end{align*}
Similarly, we can show that $\widetilde{D}F=0$, i.e., $\widetilde{D}$ is an almost complex connection. Therefore $(\widetilde{D}, h)$ is a d-connection on $\pounds^\pi E$.
\end{proof}
Let $\widetilde{X}=\widetilde{X}^\alpha\delta_\alpha+\widetilde{X}^{\bar\alpha}\mathcal{V}_\alpha$ and $Y=\widetilde{Y}^\beta\delta_\beta+\widetilde{Y}^{\bar\beta}\mathcal{V}_\beta$ are sections of $\pounds^\pi E$ and $(F_{\alpha\beta}^\gamma, C_{\alpha\beta}^\gamma)$ are the local coefficients of d-connection $D$. Using (\ref{rho}), (\ref{m1}) and (\ref{m2}) we deduce the following coordinate expression for $\widetilde{D}$:
\begin{align}
\widetilde{D}_{\widetilde{X}}\widetilde{Y}&=\Big(\widetilde{X}^\alpha(\rho^i_\alpha\circ\pi)\frac{\partial \widetilde{Y}^\beta}{\partial\textbf{x}^i}+\widetilde{X}^\alpha \mathcal{B}^\gamma_\alpha\frac{\partial \widetilde{Y}^\beta}{\partial\textbf{y}^\gamma}+\widetilde{X}^\alpha \widetilde{Y}^\gamma F_{\alpha\gamma}^\beta+{\widetilde{X}}^{\bar\alpha}\frac{\partial \widetilde{Y}^\beta}{\partial\textbf{y}^{\alpha}}\Big)\delta_\beta\nonumber\\
&\ \ \ +\Big(\widetilde{X}^\alpha(\rho^i_\alpha\circ\pi)\frac{\partial {\widetilde{Y}}^{\bar\beta}}{\partial\textbf{x}^i}+\widetilde{X}^\alpha \mathcal{B}^\gamma_\alpha\frac{\partial \widetilde{Y}^{\bar\beta}}{\partial\textbf{y}^\gamma}+\widetilde{X}^\alpha {\widetilde{Y}}^{\bar\gamma } F_{\alpha\gamma}^\beta+{\widetilde{X}}^{\bar\alpha}\frac{\partial {\widetilde{Y}}^{\bar\beta}}{\partial\textbf{y}^\alpha}\Big)\mathcal{V}_\beta.
\end{align}
If we denote the local coefficients of d-connection $\widetilde{D}$ by $(\widetilde{F}_{\alpha\beta}^\gamma, \widetilde{C}_{\alpha\beta}^\gamma)$, then from the above equation we conclude $\widetilde{F}_{\alpha\beta}^\gamma=F_{\alpha\beta}^\gamma$ and $\widetilde{C}_{\alpha\beta}^\gamma=0$. Therefore using (\ref{hh}), (\ref{hv}) and (\ref{vv}) we derive that
\begin{align}
\widetilde{R}^{\ \ \ \ \lambda}_{\alpha\beta\gamma}&=(\rho^i_\alpha\circ \pi)\frac{\partial F^\lambda_{\beta\gamma}}{\partial \textbf{x}^i}+\mathcal{B}^\mu_\alpha\frac{\partial F^\lambda_{\beta\gamma}}{\partial \textbf{y}^\mu}-(\rho^i_\beta\circ \pi)\frac{\partial F^\lambda_{\alpha\gamma}}{\partial \textbf{x}^i}-\mathcal{B}^\mu_\beta\frac{\partial F^\lambda_{\alpha\gamma}}{\partial \textbf{y}^\mu}+F^\mu_{\beta\gamma}F^\lambda_{\alpha\mu}\nonumber\\
&\ -F^\mu_{\alpha\gamma}F^\lambda_{\beta\mu}-(L^\mu_{\alpha\beta}\circ \pi)F^\lambda_{\mu\gamma},\nonumber\\
\widetilde{P}^{\ \ \ \ \lambda}_{\alpha\beta\gamma}&=-\frac{\partial F^\lambda_{\alpha\gamma}}{\partial \textbf{y}^\beta},\ \ \ \ \
\widetilde{S}^{\ \ \ \ \lambda}_{\alpha\beta\gamma}=0,\label{niaz}
\end{align}
where $\widetilde{R}^{\ \ \ \ \lambda}_{\alpha\beta\gamma}$, $\widetilde{P}^{\ \ \ \ \lambda}_{\alpha\beta\gamma}$ and $\widetilde{S}^{\ \ \ \ \lambda}_{\alpha\beta\gamma}$ are the coefficients of the horizontal, mixed and vertical curvatures of d-connection $(\widetilde{D}, h)$, respectively. Therefore the vertical curvature of d-connection $\widetilde{D}$ is vanished. Also, it is easy to see that
\begin{proposition}
The mixed curvature $\widetilde{P}$ of $\widetilde{D}$ satisfies
\[
\widetilde{P}(X^C, Y^C)Z^C=-[J, D_{X^h} Z^V]^{F-N}_\pounds Y^C.
\]
\end{proposition}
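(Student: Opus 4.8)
The plan is to read the left-hand side through the definition of the mixed curvature and then exploit the fact, recorded just before (\ref{niaz}), that the local coefficients of $\widetilde{D}$ are $(\widetilde{F}^\gamma_{\alpha\beta},\widetilde{C}^\gamma_{\alpha\beta})=(F^\gamma_{\alpha\beta},0)$. Since the horizontal lift obeys $hX^C=X^h$ and $JX^C=X^V$, the defining formula $P(\widetilde{X},\widetilde{Y})\widetilde{Z}:=K(h\widetilde{X},J\widetilde{Y})J\widetilde{Z}$ immediately gives
\[
\widetilde{P}(X^C,Y^C)Z^C=\widetilde{K}(X^h,Y^V)Z^V,
\]
where $\widetilde{K}$ is the curvature tensor of $\widetilde{D}$. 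Thus everything reduces to comparing $\widetilde{K}(X^h,Y^V)Z^V$ with $-[J,D_{X^h}Z^V]^{F-N}_\pounds Y^C$.

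Next I would simplify the curvature. Writing
\[
\widetilde{K}(X^h,Y^V)Z^V=\widetilde{D}_{X^h}\widetilde{D}_{Y^V}Z^V-\widetilde{D}_{Y^V}\widetilde{D}_{X^h}Z^V-\widetilde{D}_{[X^h,Y^V]_\pounds}Z^V,
\]
I expect two of the three terms to disappear because $\widetilde{C}=0$. Indeed $\widetilde{D}_{Y^V}Z^V=\stackrel{pi}{D}_{Y^V}Z^V=J[Y^V,Z^C]_\pounds=J[Y,Z]^V_E=0$ by (\ref{m1}), (\ref{emroz}) and verticality of $[Y,Z]^V_E$; and since $[X^h,Y^V]_\pounds$ is vertical (its coordinate form is computed in the proof establishing (\ref{leila1})), the term $\widetilde{D}_{[X^h,Y^V]_\pounds}Z^V$ is a $v$-covariant derivative of the vertical section $Z^V$, hence $0$ again because $\widetilde{C}=0$. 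This leaves
\[
\widetilde{K}(X^h,Y^V)Z^V=-\widetilde{D}_{Y^V}W,\qquad W:=D_{X^h}Z^V,
\]
where I use $\widetilde{D}_{X^h}Z^V=D_{X^h}Z^V$ since $\widetilde{D}$ and $D$ share the horizontal part $\widetilde{F}=F$; note $W$ is vertical.

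For the right-hand side, I would apply the Fr\"olicher--Nijenhuis formula (\ref{F3}) with $K=J$ and the section $W$, obtaining
\[
[J,W]^{F-N}_\pounds Y^C=[JY^C,W]_\pounds-J[Y^C,W]_\pounds=[Y^V,W]_\pounds-J[Y^C,W]_\pounds.
\]
The key observation is that $[Y^C,W]_\pounds$ is vertical: expanding $W=W^\lambda\mathcal{V}_\lambda$ and using the Leibniz rule together with $[Y^C,\mathcal{V}_\lambda]_\pounds=[Y^C,e_\lambda^V]_\pounds=[Y,e_\lambda]^V_E$ from (\ref{emroz}), every summand lies in $v\pounds^\pi E$, so $J[Y^C,W]_\pounds=0$. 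For the remaining bracket, with $Y^V=(Y^\beta\circ\pi)\mathcal{V}_\beta$ and $W=W^\lambda\mathcal{V}_\lambda$ both vertical, a one-line computation using $[\mathcal{V}_\alpha,\mathcal{V}_\beta]_\pounds=0$ yields $[Y^V,W]_\pounds=(Y^\beta\circ\pi)\frac{\partial W^\lambda}{\partial\textbf{y}^\beta}\mathcal{V}_\lambda$, which is exactly $\widetilde{D}_{Y^V}W$ because $\widetilde{C}=0$. Combining the two computations gives
\[
-[J,D_{X^h}Z^V]^{F-N}_\pounds Y^C=-\widetilde{D}_{Y^V}W=\widetilde{K}(X^h,Y^V)Z^V=\widetilde{P}(X^C,Y^C)Z^C,
\]
as required.

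The main obstacle is the careful bookkeeping of which terms vanish: confirming the verticality of $[X^h,Y^V]_\pounds$ and of $[Y^C,W]_\pounds$, and keeping straight that $D$ and $\widetilde{D}$ agree on horizontal derivatives while $\widetilde{D}$ has zero vertical coefficients. It is precisely this vanishing of $\widetilde{C}$ that collapses the curvature to a single term and makes it coincide with the bracket expression; nothing deeper than the Leibniz rule and the structure brackets (\ref{emroz}) is needed once that simplification is in place.
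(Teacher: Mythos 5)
Your proof is correct. The paper itself offers no argument for this proposition --- it is prefaced by ``it is easy to see that,'' the implicit justification being the coordinate formula (\ref{niaz}), $\widetilde{P}^{\ \ \ \ \lambda}_{\alpha\beta\gamma}=-\frac{\partial F^\lambda_{\alpha\gamma}}{\partial \textbf{y}^\beta}$: one evaluates both sides in the adapted basis and checks that each equals $-((X^\alpha Y^\beta Z^\gamma)\circ\pi)\frac{\partial F^\lambda_{\alpha\gamma}}{\partial \textbf{y}^\beta}\mathcal{V}_\lambda$. You instead argue structurally: you expand $\widetilde{K}(X^h,Y^V)Z^V$, kill two of the three curvature terms using $\widetilde{C}=0$ (via $\stackrel{pi}{D}$, (\ref{m1}), (\ref{emroz}), and the verticality of $[X^h,Y^V]_\pounds$), reduce the left side to $-\widetilde{D}_{Y^V}D_{X^h}Z^V$, and then match it with the right side through (\ref{F3}) together with the identity $[Y^V,W]_\pounds=\widetilde{D}_{Y^V}W$ for vertical $W$. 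Both routes rest on the same two facts, $\widetilde{F}^\gamma_{\alpha\beta}=F^\gamma_{\alpha\beta}$ and $\widetilde{C}^\gamma_{\alpha\beta}=0$, and your verticality claims check out against the paper's computations (the bracket $[X^h,Y^V]_\pounds$ is computed to be vertical in the proof of (\ref{leila1}), and $[Y^C,\mathcal{V}_\lambda]_\pounds$ is vertical by (\ref{emroz})). What your version buys is transparency: it shows why the identity holds --- with vanishing vertical coefficients the curvature collapses to a single vertical derivative, which is itself a Lie bracket, and that is exactly what the Fr\"{o}licher--Nijenhuis expression on the right computes; moreover you never need the explicit coefficient formula (\ref{niaz}). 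What the paper's coordinate route buys is brevity, since once (\ref{niaz}) is recorded the verification is a two-line index computation.
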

%----------------------------------------------------------------------
\subsection{Berwald-type connection}
%-----------------------------------------------------------------------
Let $h$ be a horizontal endomorphism on $\pounds^\pi E$. Then the map
\begin{equation*}
\left\{
\begin{array}{cc}
\stackrel{\text{\begin{tiny}B\end{tiny}}}{D}:\Gamma(\pounds^\pi E)\times\Gamma(\pounds^\pi E) \rightarrow\Gamma(\pounds^\pi E),\\
(\widetilde{X},\widetilde{Y}) \rightarrow \stackrel{\text{\begin{tiny}B\end{tiny}}}{D}_{\widetilde{X}}\widetilde{Y},
\end{array}
\right.
\end{equation*}
defined by
\begin{equation*}
\stackrel{\text{\begin{tiny}B\end{tiny}}}{D}_{\widetilde{X}}\widetilde{Y}:=hF[h\widetilde{X},J\widetilde{Y}]_\pounds+v[h\widetilde{X},v\widetilde{Y}]_\pounds
+h[v\widetilde{X},\widetilde{Y}]_\pounds+J[v\widetilde{X},F\widetilde{Y}]_\pounds,
\end{equation*}
is a linear connection on $\pounds^\pi E$. Similar to $\widetilde{D}$, we can prove that $\stackrel{\text{\begin{tiny}B\end{tiny}}}{D}\!\!h=\stackrel{\text{\begin{tiny}B\end{tiny}}}{D}\!\!F=0$. Therefore  $(\stackrel{\text{\begin{tiny}B\end{tiny}}}{D}, h)$ is a d-connection, which is called the \textit{Berwald-type} connection. If, in particular, $h$ is a Berwald endomorphism, then we call $(\stackrel{\text{\begin{tiny}B\end{tiny}}}{D}, h)$ a \textit{Berwald connection}.

It is easy to see that
\begin{equation}\label{4ta}
\left\{
\begin{array}{cc}
\stackrel{\text{\begin{tiny}B\end{tiny}}}{D}_{\delta_\alpha}\delta_\beta=-\frac{\partial \mathcal{B}^\gamma_\alpha}{\partial y^\beta}\delta_\gamma,\ \ \ \ \ \stackrel{\text{\begin{tiny}B\end{tiny}}}{D}_{v_\alpha}v_\beta=0,\\
\stackrel{\text{\begin{tiny}B\end{tiny}}}{D}_{\delta_\alpha}v_\beta=-\frac{\partial \mathcal{B}^\gamma_\alpha}{\partial y^\beta}v_\gamma,\ \ \ \ \ \stackrel{\text{\begin{tiny}B\end{tiny}}}{D}_{v_\alpha}\delta_\beta=0.
\end{array}
\right.
\end{equation}
If we denote the local coefficients of Berwald connection ${\stackrel{\text{\begin{tiny}B\end{tiny}}}{D}}$ by $({\stackrel{\text{\begin{tiny}B\end{tiny}}}{F}}_{\alpha\beta}^\gamma, {\stackrel{\text{\begin{tiny}B\end{tiny}}}{C}}_{\alpha\beta}^\gamma)$, then from the above equation we conclude ${\stackrel{\text{\begin{tiny}B\end{tiny}}}{F}}_{\alpha\beta}^\gamma=-\frac{\partial \mathcal{B}^\gamma_\alpha}{\partial y^\beta}$ and ${\stackrel{\text{\begin{tiny}B\end{tiny}}}{C}}_{\alpha\beta}^\gamma=0$.
Therefore using (\ref{hh}), (\ref{hv}) and (\ref{vv}) we derive that
\begin{align}
{\stackrel{\text{\begin{tiny}B\end{tiny}}}{R}}^{\ \ \ \ \lambda}_{\alpha\beta\gamma}&=-(\rho^i_\alpha\circ \pi)\frac{\partial^2 \mathcal{B}^\lambda_{\beta}}{\partial \textbf{x}^i\partial \textbf{y}^\gamma}-\mathcal{B}^\mu_\alpha\frac{\partial^2 \mathcal{B}^\lambda_{\beta}}{\partial \textbf{y}^\mu\partial \textbf{y}^\gamma}+(\rho^i_\beta\circ \pi)\frac{\partial^2 \mathcal{B}^\lambda_{\alpha}}{\partial \textbf{x}^i\partial \textbf{y}^\gamma}+\mathcal{B}^\mu_\beta\frac{\partial^2 \mathcal{B}^\lambda_{\alpha}}{\partial \textbf{y}^\mu\partial \textbf{y}^\gamma}\nonumber\\
&\ \ \ +\frac{\partial \mathcal{B}^\mu_{\beta}}{\partial \textbf{y}^\gamma}\frac{\partial \mathcal{B}^\lambda_{\alpha}}{\partial \textbf{y}^\mu}-\frac{\partial \mathcal{B}^\mu_{\alpha}}{\partial \textbf{y}^\gamma}\frac{\partial \mathcal{B}^\lambda_{\beta}}{\partial \textbf{y}^\mu}+(L^\mu_{\alpha\beta}\circ \pi)\frac{\partial \mathcal{B}^\lambda_{\mu}}{\partial \textbf{y}^\gamma},\label{RB}\\
{\stackrel{\text{\begin{tiny}B\end{tiny}}}{P}}^{\ \ \ \ \lambda}_{\alpha\beta\gamma}&=\frac{\partial^2 \mathcal{B}^\lambda_{\alpha}}{\partial \textbf{y}^\beta\partial \textbf{y}^\gamma},\label{mixed}\\
{\stackrel{\text{\begin{tiny}B\end{tiny}}}{S}}^{\ \ \ \ \lambda}_{\alpha\beta\gamma}&=0,\label{QB}
\end{align}
where ${\stackrel{\text{\begin{tiny}B\end{tiny}}}{R}}^{\ \ \ \ \lambda}_{\alpha\beta\gamma}$, ${\stackrel{\text{\begin{tiny}B\end{tiny}}}{P}}^{\ \ \ \ \lambda}_{\alpha\beta\gamma}$ and ${\stackrel{\text{\begin{tiny}B\end{tiny}}}{S}}^{\ \ \ \ \lambda}_{\alpha\beta\gamma}$ are the coefficients of the horizontal, mixed and vertical curvatures of d-connection $(\stackrel{\text{\begin{tiny}B\end{tiny}}}{D}, h)$, respectively. Therefore the vertical curvature of d-connection $\stackrel{\text{\begin{tiny}B\end{tiny}}}{D}$ vanishes.
\begin{proposition}
Let $(\stackrel{\text{\begin{tiny}B\end{tiny}}}{D}, h)$ be the Berwald-type connection. Then

(i)\ The h-deflection of $(\stackrel{\text{\begin{tiny}B\end{tiny}}}{D}, h)$ coincides with the tension of $h$.

(ii)\ The torsion tensor field $\stackrel{\text{\begin{tiny}B\end{tiny}}}{T}$ of $\stackrel{\text{\begin{tiny}B\end{tiny}}}{D}$ can be represented in the form
\begin{equation}
\stackrel{\text{\begin{tiny}B\end{tiny}}}{T}=F\circ t+\Omega,
\end{equation}
where $t$ and $\Omega$ are the weak torsion and the curvature of $h$.
\end{proposition}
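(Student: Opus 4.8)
The plan is to handle the two parts with the general $d$-connection machinery already in place, rather than expanding the defining formula for $\stackrel{\text{\begin{tiny}B\end{tiny}}}{D}$ term by term.

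For part (i), I would compute the $h$-deflection $\widetilde{X}\mapsto\stackrel{\text{\begin{tiny}B\end{tiny}}}{D}_{h\widetilde{X}}C$ intrinsically. Feeding $h\widetilde{X}$ and $C$ into the definition of $\stackrel{\text{\begin{tiny}B\end{tiny}}}{D}$ and using $h\circ h=h$, $v\circ h=0$, together with $JC=0$ from (iii) of (\ref{Liover}) and $vC=C$ (as $C\in v\pounds^\pi E$), three of the four terms drop out and I expect $\stackrel{\text{\begin{tiny}B\end{tiny}}}{D}_{h\widetilde{X}}C=v[h\widetilde{X},C]_\pounds$. On the other side, (\ref{F3}) expresses the tension as $H(\widetilde{X})=[h,C]^{F-N}_\pounds(\widetilde{X})=[h\widetilde{X},C]_\pounds-h[\widetilde{X},C]_\pounds$. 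Writing $\widetilde{X}=h\widetilde{X}+v\widetilde{X}$ and noting that $[v\widetilde{X},C]_\pounds$ is a bracket of two vertical sections, hence vertical and killed by $h$, gives $h[\widetilde{X},C]_\pounds=h[h\widetilde{X},C]_\pounds$, so $H(\widetilde{X})=[h\widetilde{X},C]_\pounds-h[h\widetilde{X},C]_\pounds=v[h\widetilde{X},C]_\pounds$. The two sides coincide, which is exactly the claim. As a cross-check I would also read off the coordinate coefficient $\mathcal{B}^\gamma_\alpha+\textbf{y}^\beta F^\gamma_{\alpha\beta}$ of the $h$-deflection, substitute the Berwald value ${\stackrel{\text{\begin{tiny}B\end{tiny}}}{F}}^\gamma_{\alpha\beta}=-\partial\mathcal{B}^\gamma_\alpha/\partial\textbf{y}^\beta$, and compare with the tension coefficient in (\ref{tension}).

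For part (ii), I would rely on the theorem stating that the torsion of a $d$-connection is completely determined by its five components $A,B,R^1,P^1,S^1$ through (\ref{TD6}) and (\ref{TD7}), so that $\stackrel{\text{\begin{tiny}B\end{tiny}}}{T}=hT+vT$ reduces to these pieces. I would then insert the Berwald coefficients ${\stackrel{\text{\begin{tiny}B\end{tiny}}}{F}}^\gamma_{\alpha\beta}=-\partial\mathcal{B}^\gamma_\alpha/\partial\textbf{y}^\beta$, ${\stackrel{\text{\begin{tiny}B\end{tiny}}}{C}}^\gamma_{\alpha\beta}=0$ into (\ref{very im3}). Since the vertical coefficients vanish, $B=0$ and $S^1=0$ at once; the mixed piece also dies because $P^\gamma_{\alpha\beta}=F^\gamma_{\alpha\beta}+\partial\mathcal{B}^\gamma_\alpha/\partial\textbf{y}^\beta=0$ for the Berwald choice, so $P^1=0$. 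The horizontal coefficient becomes $T^\gamma_{\alpha\beta}=-\partial\mathcal{B}^\gamma_\alpha/\partial\textbf{y}^\beta+\partial\mathcal{B}^\gamma_\beta/\partial\textbf{y}^\alpha-(L^\gamma_{\alpha\beta}\circ\pi)$, which is precisely the weak-torsion coefficient $t^\gamma_{\alpha\beta}$ of (\ref{wt1}). Hence only $A$ and $R^1$ survive and $\stackrel{\text{\begin{tiny}B\end{tiny}}}{T}=A+R^1$.

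It then remains to recognize $A=F\circ t$ and $R^1=\Omega$. The second is independent of the connection: comparing (\ref{TD3}) with (\ref{cur1}) gives $R^1(\widetilde{X},\widetilde{Y})=-v[h\widetilde{X},h\widetilde{Y}]_\pounds=\Omega(\widetilde{X},\widetilde{Y})$. For the first I would use the adapted-basis form of $F$ in (\ref{hF}), which yields $F(\mathcal{V}_\gamma)=\delta_\gamma$; applying $F$ to $t=\tfrac12 t^\gamma_{\alpha\beta}\mathcal{X}^\alpha\wedge\mathcal{X}^\beta\otimes\mathcal{V}_\gamma$ from (\ref{wt}) produces $\tfrac12 t^\gamma_{\alpha\beta}\mathcal{X}^\alpha\wedge\mathcal{X}^\beta\otimes\delta_\gamma$, which returns $t^\gamma_{\alpha\beta}\delta_\gamma$ on $(\delta_\alpha,\delta_\beta)$ and $0$ on vertical inputs, matching $A=T^\gamma_{\alpha\beta}\delta_\gamma\otimes\mathcal{X}^\alpha\otimes\mathcal{X}^\beta=t^\gamma_{\alpha\beta}\delta_\gamma\otimes\mathcal{X}^\alpha\otimes\mathcal{X}^\beta$ from (\ref{very im2}). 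The step I expect to be most delicate is exactly this last identification: I must keep the factor $\tfrac12$, the wedge antisymmetrization, and the semibasic nature of both $t$ and $A$ in step, so that $F\circ t$ and $A$ agree as tensors and not merely on one pair of basis sections. Granting that, $\stackrel{\text{\begin{tiny}B\end{tiny}}}{T}=A+R^1=F\circ t+\Omega$ follows.
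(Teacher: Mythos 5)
Your proof is correct, and it takes a genuinely different route from the paper's on both parts. For part (i) the paper works entirely in the adapted basis: it expands $\stackrel{\text{\begin{tiny}B\end{tiny}}}{D}_{h\widetilde{X}}C=\stackrel{\text{\begin{tiny}B\end{tiny}}}{D}_{\widetilde{X}^\alpha\delta_\alpha}(\textbf{y}^\beta\mathcal{V}_\beta)$ via (\ref{4ta}) and (\ref{rho}) and recognizes the coefficient $\widetilde{X}^\alpha(\mathcal{B}^\gamma_\alpha-\textbf{y}^\beta\partial\mathcal{B}^\gamma_\alpha/\partial\textbf{y}^\beta)$ of the tension (\ref{tension}); your argument is basis-free, showing both sides equal $v[h\widetilde{X},C]_\pounds$ --- the deflection because $JC=0$, $vC=C$ and $vh=0$ kill three of the four terms defining $\stackrel{\text{\begin{tiny}B\end{tiny}}}{D}$, and the tension via (\ref{F3}) together with the fact that $[v\widetilde{X},C]_\pounds$ is vertical and hence in $\ker h$. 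This is cleaner and exposes the mechanism the coordinate computation hides. For part (ii) the paper again computes directly, evaluating $\stackrel{\text{\begin{tiny}B\end{tiny}}}{T}$ on the pairs $(\delta_\alpha,\delta_\beta)$, $(\delta_\alpha,\mathcal{V}_\beta)$, $(\mathcal{V}_\alpha,\mathcal{V}_\beta)$ with (\ref{4ta}) and the brackets (\ref{LieB}), and matching against $Ft+\Omega$ term by term; you instead route through the decomposition (\ref{TD6})--(\ref{TD7}), kill $B$, $P^1$, $S^1$ by inserting ${\stackrel{\text{\begin{tiny}B\end{tiny}}}{F}}{}^\gamma_{\alpha\beta}=-\partial\mathcal{B}^\gamma_\alpha/\partial\textbf{y}^\beta$, ${\stackrel{\text{\begin{tiny}B\end{tiny}}}{C}}{}^\gamma_{\alpha\beta}=0$ into (\ref{very im3}), and then identify the two survivors: $R^1=\Omega$ by comparing (\ref{TD3}) with (\ref{cur1}) (a fact valid for \emph{any} d-connection, which the paper's computation never makes explicit), and $A=F\circ t$ in the adapted basis. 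The computational substance is the same, but your organization makes visible \emph{why} only the $h$-horizontal and $v$-horizontal pieces survive. The delicate step you flagged is indeed fine: with the paper's wedge convention one has $t(\mathcal{X}_\alpha,\mathcal{X}_\beta)=t^\gamma_{\alpha\beta}\mathcal{V}_\gamma$, and since $t$ is semibasic, $t(\delta_\alpha,\delta_\beta)=t(\mathcal{X}_\alpha,\mathcal{X}_\beta)$, so $(F\circ t)(\delta_\alpha,\delta_\beta)=t^\gamma_{\alpha\beta}\delta_\gamma=A(\delta_\alpha,\delta_\beta)$ using $F(\mathcal{V}_\gamma)=\delta_\gamma$ from (\ref{hF}), while both tensors vanish on vertical arguments; so the identification holds as tensors, not just on one pair of basis sections.
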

\begin{proof}
(i)\ Let $\widetilde{X}=\widetilde{X}^\alpha\delta_\alpha+\widetilde{X}^{\bar\alpha}\mathcal{V}_\alpha$ be a section of $\pounds^\pi E$. Then using (\ref{Liouville}), (\ref{tension}), (\ref{rho}) and (\ref{h*}) we have
\[
h^*(DC)(\widetilde{X})=D_{h\widetilde{X}}C=D_{\widetilde{X}^\alpha\delta_\alpha}(\textbf{y}^\beta\mathcal{V}_\beta)=\widetilde{X}^\alpha(\mathcal{B}^\gamma_\alpha
-\textbf{y}^\beta\frac{\partial \mathcal{B}^\gamma_\alpha}{\partial \textbf{y}^\beta})\mathcal{V}_\gamma=H(\widetilde{X}).
\]
(ii) Using (\ref{wt}), (\ref{wt1}), (\ref{curv000}), (\ref{curv0}) and (\ref{4ta}) we obtain
\begin{align*}
\stackrel{\text{\begin{tiny}B\end{tiny}}}{T}(\delta_\alpha, \delta_\beta)&=(\frac{\partial\mathcal{B}^\gamma_\beta}{\partial\textbf{y}^\alpha}-\frac{\partial\mathcal{B}^\gamma_\alpha}{\partial\textbf{y}^\beta}
-(L^\gamma_{\alpha\beta}\circ\pi))\delta_\gamma-R_{\alpha\beta}^{\gamma}\mathcal{V_\gamma}\\
&=Ft(\delta_\alpha, \delta_\beta)+\Omega(\delta_\alpha, \delta_\beta),\\
\stackrel{\text{\begin{tiny}B\end{tiny}}}{T}(\delta_\alpha, \mathcal{V}_\beta)&=0=Ft(\delta_\alpha, \mathcal{V}_\beta)+\Omega(\delta_\alpha, \mathcal{V}_\beta),\\
\stackrel{\text{\begin{tiny}B\end{tiny}}}{T}(\mathcal{V}_\alpha, \mathcal{V}_\beta)&=0=Ft(\mathcal{V}_\alpha, \mathcal{V}_\beta)+\Omega(\mathcal{V}_\alpha, \mathcal{V}_\beta).
\end{align*}
\end{proof}
Similar to Lemma \ref{lemma0} we can prove
\begin{lemma}\label{important}
A section $\widetilde{X}=\widetilde{X}^\alpha\delta_\alpha+\widetilde{X}^{\bar\alpha}\mathcal{V}_\alpha$ is homogenous of degree $r$ if and only if
\[
\textbf{y}^\alpha\frac{\partial \widetilde{X}^\beta}{\partial \textbf{y}^\alpha}=(r-1)\widetilde{X}^\beta,\ \ \ \textbf{y}^\alpha\frac{\partial \widetilde{X}^{\bar\beta}}{\partial \textbf{y}^\alpha}+\widetilde{X}^\gamma(\textbf{y}^\alpha\frac{\partial \mathcal{B}^\beta_\gamma}{\partial \textbf{y}^\alpha}-\mathcal{B}^\beta_\gamma)=r\widetilde{X}^{\bar\beta}.
\]
\end{lemma}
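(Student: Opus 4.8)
The plan is to follow the template of the proof of Lemma \ref{lemma0}, but now to expand the bracket $[C,\widetilde{X}]_\pounds$ in the adapted basis $\{\delta_\alpha,\mathcal{V}_\alpha\}$ rather than in $\{\mathcal{X}_\alpha,\mathcal{V}_\alpha\}$. Writing $C=\textbf{y}^\alpha\mathcal{V}_\alpha$ and $\widetilde{X}=\widetilde{X}^\beta\delta_\beta+\widetilde{X}^{\bar\beta}\mathcal{V}_\beta$, I would apply the Leibniz rule for the Lie algebroid bracket, namely $[fU,gV]_\pounds=fg[U,V]_\pounds+f\rho_\pounds(U)(g)V-g\rho_\pounds(V)(f)U$, together with three ingredients already available in the excerpt: the structural brackets (\ref{LieB}), the anchor of the horizontal frame (\ref{rho}), i.e. $\rho_\pounds(\delta_\alpha)=(\rho^i_\alpha\circ\pi)\partial/\partial\textbf{x}^i+\mathcal{B}^\gamma_\alpha\partial/\partial\textbf{y}^\gamma$, and the immediate identity $\rho_\pounds(\mathcal{V}_\alpha)=\partial/\partial\textbf{y}^\alpha$.

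Carrying out the expansion, the two pieces $[\textbf{y}^\alpha\mathcal{V}_\alpha,\widetilde{X}^\beta\delta_\beta]_\pounds$ and $[\textbf{y}^\alpha\mathcal{V}_\alpha,\widetilde{X}^{\bar\beta}\mathcal{V}_\beta]_\pounds$ produce, after relabelling the summation indices, the single expression
\begin{align*}
[C,\widetilde{X}]_\pounds &= \textbf{y}^\alpha\frac{\partial\widetilde{X}^\beta}{\partial\textbf{y}^\alpha}\delta_\beta\\
&\ \ +\Big[\textbf{y}^\alpha\frac{\partial\widetilde{X}^{\bar\beta}}{\partial\textbf{y}^\alpha}-\widetilde{X}^{\bar\beta}+\widetilde{X}^\gamma\big(\textbf{y}^\alpha\frac{\partial\mathcal{B}^\beta_\gamma}{\partial\textbf{y}^\alpha}-\mathcal{B}^\beta_\gamma\big)\Big]\mathcal{V}_\beta.
\end{align*}
Here the three contributions to the $\mathcal{V}_\beta$-coefficient that carry $\mathcal{B}$ arise from $[\mathcal{V}_\alpha,\delta_\beta]_\pounds=\frac{\partial\mathcal{B}^\gamma_\beta}{\partial\textbf{y}^\alpha}\mathcal{V}_\gamma$ and from $\rho_\pounds(\delta_\beta)(\textbf{y}^\alpha)=\mathcal{B}^\alpha_\beta$, whereas the $\delta_\beta$-coefficient stays clean because $[\mathcal{V}_\alpha,\mathcal{V}_\beta]_\pounds=0$.

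Finally I would impose $[C,\widetilde{X}]_\pounds=(r-1)\widetilde{X}$ and read off coefficients in the pointwise linearly independent adapted basis. Matching the $\delta_\beta$ part gives $\textbf{y}^\alpha\partial\widetilde{X}^\beta/\partial\textbf{y}^\alpha=(r-1)\widetilde{X}^\beta$; matching the $\mathcal{V}_\beta$ part and transposing the single $-\widetilde{X}^{\bar\beta}$ term to the right converts $(r-1)\widetilde{X}^{\bar\beta}+\widetilde{X}^{\bar\beta}$ into $r\widetilde{X}^{\bar\beta}$, which is exactly the stated second condition, and the equivalence is immediate in both directions. The only delicate point, and the main place to take care, is the bookkeeping of the $\mathcal{B}$-terms: unlike in Lemma \ref{lemma0}, the horizontal frame $\delta_\alpha$ has nonzero anchor in the $\partial/\partial\textbf{y}$ direction and does not commute with $\mathcal{V}_\beta$, so the vertical component of the homogeneity condition acquires the correction $\widetilde{X}^\gamma(\textbf{y}^\alpha\partial\mathcal{B}^\beta_\gamma/\partial\textbf{y}^\alpha-\mathcal{B}^\beta_\gamma)$; once this term is tracked correctly, the claim follows.
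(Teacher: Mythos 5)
Your proof is correct: the expansion of $[C,\widetilde{X}]_\pounds$ in the adapted basis via (\ref{LieB}) and (\ref{rho}) yields exactly the displayed coefficients, and comparing with $(r-1)\widetilde{X}$ gives the two stated conditions. This is precisely the argument the paper intends — it omits the details, saying only ``Similar to Lemma \ref{lemma0} we can prove'' — so your proposal coincides with the paper's approach, with the $\mathcal{B}$-correction terms (from $[\mathcal{V}_\alpha,\delta_\beta]_\pounds$ and $\rho_\pounds(\delta_\beta)(\textbf{y}^\alpha)=\mathcal{B}^\alpha_\beta$) tracked correctly.
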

\begin{proposition}
The mixed curvature $\stackrel{\text{\begin{tiny}B\end{tiny}}}{P}$ of Berwald-type connection $\stackrel{\text{\begin{tiny}B\end{tiny}}}{D}$ is symmetric with respect to last two variables. Moreover, if $h$ is torsion free, then $\stackrel{\text{\begin{tiny}B\end{tiny}}}{P}$ is symmetric with respect to all variables.
\end{proposition}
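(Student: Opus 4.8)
The plan is to argue entirely at the level of the coordinate coefficients, using the expression for the mixed curvature of the Berwald-type connection recorded in (\ref{mixed}), namely ${\stackrel{\text{\begin{tiny}B\end{tiny}}}{P}}^{\ \ \ \ \lambda}_{\alpha\beta\gamma}=\frac{\partial^2 \mathcal{B}^\lambda_{\alpha}}{\partial \textbf{y}^\beta\partial \textbf{y}^\gamma}$. Because $\stackrel{\text{\begin{tiny}B\end{tiny}}}{P}$ is written in the basis $\mathcal{V}_\lambda\otimes\mathcal{X}^\alpha\otimes\mathcal{X}^\beta\otimes\mathcal{X}^\gamma$, the ``last two variables'' correspond to the index slots $\beta$ and $\gamma$. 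For the first assertion I would simply invoke the equality of mixed second-order partial derivatives: the right-hand side of (\ref{mixed}) is unchanged under $\beta\leftrightarrow\gamma$, so ${\stackrel{\text{\begin{tiny}B\end{tiny}}}{P}}^{\ \ \ \ \lambda}_{\alpha\beta\gamma}={\stackrel{\text{\begin{tiny}B\end{tiny}}}{P}}^{\ \ \ \ \lambda}_{\alpha\gamma\beta}$. This step needs no further input beyond the coordinate formula already established.

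For the second assertion I would interpret \emph{torsion free} as the vanishing of the weak torsion $t$ of $h$, in the same sense used in Theorem \ref{4.1}. Setting $t^\lambda_{\alpha\beta}=0$ in (\ref{wt1}) yields the identity $\frac{\partial\mathcal{B}^\lambda_\beta}{\partial\textbf{y}^\alpha}-\frac{\partial\mathcal{B}^\lambda_\alpha}{\partial\textbf{y}^\beta}=L^\lambda_{\alpha\beta}\circ\pi$. The crucial observation is that the structure functions $L^\lambda_{\alpha\beta}$ are functions on the base $M$, so $L^\lambda_{\alpha\beta}\circ\pi$ is constant along the fibres of $\pi$ and is therefore annihilated by $\partial/\partial\textbf{y}^\gamma$. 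Differentiating the identity with respect to $\textbf{y}^\gamma$ then gives $\frac{\partial^2\mathcal{B}^\lambda_\beta}{\partial\textbf{y}^\gamma\partial\textbf{y}^\alpha}=\frac{\partial^2\mathcal{B}^\lambda_\alpha}{\partial\textbf{y}^\gamma\partial\textbf{y}^\beta}$, which by (\ref{mixed}) is precisely ${\stackrel{\text{\begin{tiny}B\end{tiny}}}{P}}^{\ \ \ \ \lambda}_{\beta\alpha\gamma}={\stackrel{\text{\begin{tiny}B\end{tiny}}}{P}}^{\ \ \ \ \lambda}_{\alpha\beta\gamma}$, i.e. symmetry in the first two index slots.

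Finally I would combine the two symmetries. The transposition $(\beta\gamma)$ obtained in the first part together with the transposition $(\alpha\beta)$ obtained in the second generate the full symmetric group on the three lower slots, so $\stackrel{\text{\begin{tiny}B\end{tiny}}}{P}$ is symmetric in all of $\alpha,\beta,\gamma$. I do not anticipate a genuine obstacle: both parts reduce to short computations with the coordinate coefficients. The only points demanding care are fixing the meaning of ``torsion free'' as $t=0$ and justifying that $L^\lambda_{\alpha\beta}\circ\pi$ is fibrewise constant; granting these, the remainder is nothing more than the commutativity of mixed partials applied to (\ref{mixed}) and (\ref{wt1}).
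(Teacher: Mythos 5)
Your proposal is correct and follows essentially the same route as the paper: both parts reduce to the coordinate formula (\ref{mixed}) (commutativity of mixed partials for symmetry in the last two slots) and to differentiating the torsion-free identity from (\ref{wt1}), with the term $L^\lambda_{\alpha\beta}\circ\pi$ killed by $\partial/\partial\textbf{y}^\gamma$ since it is fibrewise constant. The only cosmetic difference is that you finish by noting the transpositions $(\beta\gamma)$ and $(\alpha\beta)$ generate the full symmetric group, whereas the paper verifies the remaining transposition ``similarly''; your formulation is the cleaner of the two.
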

\begin{proof}
Equation (\ref{mixed}) told us that ${\stackrel{\text{\begin{tiny}B\end{tiny}}}{P}}^{\ \ \ \ \lambda}_{\alpha\beta\gamma}$ is symmetric with respect to last two indices. Therefore $\stackrel{\text{\begin{tiny}B\end{tiny}}}{P}$ is symmetric with respect to last two variables. Now, let $h$ be torsion free. Then using (\ref{wt1}) we obtain
\[
{\stackrel{\text{\begin{tiny}B\end{tiny}}}{P}}^{\ \ \ \ \lambda}_{\alpha\beta\gamma}=\frac{\partial^2 \mathcal{B}^\lambda_{\alpha}}{\partial \textbf{y}^\beta\partial \textbf{y}^\gamma}=\frac{\partial}{\partial \textbf{y}^\gamma}(\frac{\partial \mathcal{B}^\lambda_{\alpha}}{\partial \textbf{y}^\beta})=\frac{\partial}{\partial \textbf{y}^\gamma}(\frac{\partial \mathcal{B}^\lambda_{\beta}}{\partial \textbf{y}^\alpha}+(L^\lambda_{\beta\alpha}\circ\pi))=\frac{\partial^2 \mathcal{B}^\lambda_{\beta}}{\partial \textbf{y}^\alpha\partial \textbf{y}^\gamma}={\stackrel{\text{\begin{tiny}B\end{tiny}}}{P}}^{\ \ \ \ \lambda}_{\beta\alpha\gamma}.
\]
Similarly, we can prove ${\stackrel{\text{\begin{tiny}B\end{tiny}}}{P}}^{\ \ \ \ \lambda}_{\alpha\beta\gamma}={\stackrel{\text{\begin{tiny}B\end{tiny}}}{P}}^{\ \ \ \ \lambda}_{\gamma\beta\alpha}$.
\end{proof}
\begin{proposition}\label{Berwald}
Let $h$ be a homogenous horizontal endomorphism on $\pounds^\pi E$. Then the mixed curvature $\stackrel{\text{\begin{tiny}B\end{tiny}}}{P}$ of $(\stackrel{\text{\begin{tiny}B\end{tiny}}}{D},h)$ is homogenous of degree $-1$. Moreover if the weak torsion of $h$ is zero, then for any semispray $S$ we have $i_S\!\!\stackrel{\text{\begin{tiny}B\end{tiny}}}{P}=0$.
\end{proposition}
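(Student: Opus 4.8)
The plan is to argue entirely at the level of the coordinate coefficients, exploiting the fact that the mixed curvature of the Berwald-type connection already has the transparent expression ${\stackrel{\text{\begin{tiny}B\end{tiny}}}{P}}^{\ \ \ \ \lambda}_{\alpha\beta\gamma}=\frac{\partial^2 \mathcal{B}^\lambda_{\alpha}}{\partial \textbf{y}^\beta\partial \textbf{y}^\gamma}$ recorded in (\ref{mixed}). The single structural input that drives everything is that, since $h$ is homogeneous, the coefficients $\mathcal{B}^\lambda_\alpha$ are positively homogeneous of degree one in the fibre variables, i.e. $\textbf{y}^\mu\frac{\partial \mathcal{B}^\lambda_\alpha}{\partial\textbf{y}^\mu}=\mathcal{B}^\lambda_\alpha$, which is precisely the homogeneity criterion for $h$ established just before the tension lemma. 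So the whole proposition reduces to elementary Euler-relation calculus applied to (\ref{mixed}).

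For the homogeneity claim I would differentiate the Euler identity $\textbf{y}^\mu\frac{\partial \mathcal{B}^\lambda_\alpha}{\partial\textbf{y}^\mu}=\mathcal{B}^\lambda_\alpha$ successively with respect to $\textbf{y}^\beta$ and then $\textbf{y}^\gamma$. Each differentiation produces a Kronecker term that cancels one copy of the differentiated function, so that $\frac{\partial \mathcal{B}^\lambda_\alpha}{\partial\textbf{y}^\beta}$ becomes homogeneous of degree $0$ and $\frac{\partial^2 \mathcal{B}^\lambda_\alpha}{\partial\textbf{y}^\beta\partial\textbf{y}^\gamma}$ homogeneous of degree $-1$; explicitly one gets $\textbf{y}^\mu\frac{\partial}{\partial\textbf{y}^\mu}\big(\frac{\partial^2 \mathcal{B}^\lambda_\alpha}{\partial\textbf{y}^\beta\partial\textbf{y}^\gamma}\big)=-\frac{\partial^2 \mathcal{B}^\lambda_\alpha}{\partial\textbf{y}^\beta\partial\textbf{y}^\gamma}$. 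By Lemma \ref{lemmahom} this is exactly the assertion that each coefficient ${\stackrel{\text{\begin{tiny}B\end{tiny}}}{P}}^{\ \ \ \ \lambda}_{\alpha\beta\gamma}$ is homogeneous of degree $-1$, whence $\stackrel{\text{\begin{tiny}B\end{tiny}}}{P}$ is homogeneous of degree $-1$.

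For the second claim I would first invoke the preceding proposition: when the weak torsion of $h$ vanishes, ${\stackrel{\text{\begin{tiny}B\end{tiny}}}{P}}^{\ \ \ \ \lambda}_{\alpha\beta\gamma}$ is \emph{totally} symmetric in its three lower indices. Writing a semispray as $S=\textbf{y}^\alpha\mathcal{X}_\alpha+S^\alpha\mathcal{V}_\alpha$ and using that $\stackrel{\text{\begin{tiny}B\end{tiny}}}{P}$ is semibasic, the contraction $i_S\stackrel{\text{\begin{tiny}B\end{tiny}}}{P}$ has components $\textbf{y}^\alpha{\stackrel{\text{\begin{tiny}B\end{tiny}}}{P}}^{\ \ \ \ \lambda}_{\alpha\beta\gamma}$, because $\mathcal{X}^\alpha(S)=\textbf{y}^\alpha$. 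Using total symmetry to move the contracted index into a differentiation slot I would compute
\[
\textbf{y}^\alpha{\stackrel{\text{\begin{tiny}B\end{tiny}}}{P}}^{\ \ \ \ \lambda}_{\alpha\beta\gamma}=\textbf{y}^\alpha\frac{\partial^2 \mathcal{B}^\lambda_\gamma}{\partial\textbf{y}^\beta\partial\textbf{y}^\alpha}=\frac{\partial}{\partial\textbf{y}^\beta}\Big(\textbf{y}^\alpha\frac{\partial \mathcal{B}^\lambda_\gamma}{\partial\textbf{y}^\alpha}\Big)-\frac{\partial \mathcal{B}^\lambda_\gamma}{\partial\textbf{y}^\beta},
\]
and then apply the degree-one Euler identity $\textbf{y}^\alpha\frac{\partial \mathcal{B}^\lambda_\gamma}{\partial\textbf{y}^\alpha}=\mathcal{B}^\lambda_\gamma$ to see that the right-hand side equals $\frac{\partial \mathcal{B}^\lambda_\gamma}{\partial\textbf{y}^\beta}-\frac{\partial \mathcal{B}^\lambda_\gamma}{\partial\textbf{y}^\beta}=0$. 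Hence $i_S\stackrel{\text{\begin{tiny}B\end{tiny}}}{P}=0$ for every $S$.

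The only genuinely delicate points are bookkeeping rather than conceptual. I must make sure the interior product $i_S$ is taken in the first (semibasic) covariant slot so that it really yields the factor $\textbf{y}^\alpha$, and I must use the \emph{total} symmetry of the coefficients — not merely the last-two-index symmetry of (\ref{mixed}) which holds unconditionally — so that I am genuinely invoking the torsion-free hypothesis when swapping a ``base'' index with a ``derivative'' index. Everything else is the standard Euler calculus for positively homogeneous functions.
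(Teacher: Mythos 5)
Your proposal is correct and takes essentially the same route as the paper's own proof: the first part differentiates the Euler identity $\textbf{y}^\mu\frac{\partial\mathcal{B}^\lambda_\alpha}{\partial\textbf{y}^\mu}=\mathcal{B}^\lambda_\alpha$ twice to obtain the degree $-1$ relation for $\frac{\partial^2\mathcal{B}^\lambda_\alpha}{\partial\textbf{y}^\beta\partial\textbf{y}^\gamma}$, and the second part uses the total symmetry of the coefficients of $\stackrel{\text{\begin{tiny}B\end{tiny}}}{P}$ (supplied by the torsion-free hypothesis via the preceding proposition) to move the contracted index into a derivative slot, where the homogeneity identity $\textbf{y}^\mu\frac{\partial^2\mathcal{B}^\lambda_\alpha}{\partial\textbf{y}^\mu\partial\textbf{y}^\gamma}=0$ annihilates it. The only difference is cosmetic: the paper cites that identity (its equation (\ref{0})) directly after placing $S$ in a derivative slot, whereas you rederive it inline with the product rule.
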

\begin{proof}
To proof the first part of proposition, using the above lemma, we must show $y^\mu\frac{\partial \stackrel{\text{\begin{tiny}B\end{tiny}}}{P}^{\ \ \ \lambda}_{\alpha\beta\gamma}}{\partial y^\mu}=-\stackrel{\text{\begin{tiny}B\end{tiny}}}{P}^{\ \ \ \lambda}_{\alpha\beta\gamma}$. Since $h$ is homogenous, then we have $y^\beta\frac{\partial \mathcal{B}^\lambda_\alpha}{\partial y^\beta}=\mathcal{B}^\lambda_\alpha$. Differentiating with respect to $y^\gamma$ we obtain
\begin{equation}\label{0}
y^\beta\frac{\partial^2 \mathcal{B}^\lambda_\alpha}{\partial y^\beta\partial y^\gamma}=0.
\end{equation}
Differentiating (\ref{0}) with respect to $y^\mu$ gives us
\begin{equation}\label{00}
y^\beta\frac{\partial^3 \mathcal{B}^\lambda_\alpha}{\partial y^\mu\partial y^\beta\partial y^\gamma}=-\frac{\partial^2 \mathcal{B}^\lambda_\alpha}{\partial y^\mu\partial y^\gamma}.
\end{equation}
Therefore we have
\[
y^\mu\frac{\partial \stackrel{\text{\begin{tiny}B\end{tiny}}}{P}^{\ \ \ \lambda}_{\alpha\beta\gamma}}{\partial y^\mu}=y^\beta\frac{\partial^3 \mathcal{B}^\lambda_\alpha}{\partial y^\mu\partial y^\beta\partial y^\gamma}=-\frac{\partial^2 \mathcal{B}^\lambda_\alpha}{\partial y^\mu\partial y^\gamma}=-\stackrel{\text{\begin{tiny}B\end{tiny}}}{P}^{\ \ \ \lambda}_{\alpha\beta\gamma}.
\]
Now, we proof the second part of assertion. From the above Proposition, we deduce that $\stackrel{\text{\begin{tiny}B\end{tiny}}}{P}$ is symmetric with respect to all variables. Thus we have $(i_S\!\!\stackrel{\text{\begin{tiny}B\end{tiny}}}{P})(\widetilde{X},\widetilde{Y})=\stackrel{\text{\begin{tiny}B\end{tiny}}}{P}(\widetilde{X},S)\widetilde{Y}$. Thus using (\ref{mixed}) and (\ref{0}) we get
\begin{equation}\label{*1}
(i_S\stackrel{\text{\begin{tiny}B\end{tiny}}}{P})(\widetilde{X},\widetilde{Y})=\widetilde{X}^\alpha y^\beta \widetilde{Y}^\gamma \stackrel{\text{\begin{tiny}B\end{tiny}}}{P}^{\ \ \ \lambda}_{\alpha\beta\gamma}\mathcal{V_\lambda}=\widetilde{X}^\alpha y^\beta \widetilde{Y}^\gamma\frac{\partial^2 \mathcal{B}^\lambda_\alpha}{\partial \textbf{y}^\beta \textbf{y}^\gamma}=0,
\end{equation}
where $\widetilde{X}=\widetilde{X}^\alpha\delta_\alpha+\widetilde{X}^{\bar{\alpha}}\mathcal{V_\alpha}$, $\widetilde{Y}=\widetilde{Y}^\beta\delta_\beta+\widetilde{Y}^{\bar{\beta}}\mathcal{V_\beta}$.
\end{proof}
\begin{proposition}
The mixed curvature $\stackrel{\text{\begin{tiny}B\end{tiny}}}{P}$ of Berwald-type connection $D^0$ satisfies
\[
\stackrel{\text{\begin{tiny}B\end{tiny}}}{P}(X^C, Y^C)Z^C=[[X^h, Y^V]_\pounds, Z^V]_\pounds.
\]
\end{proposition}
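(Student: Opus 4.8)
The plan is to unwind the definition of the mixed curvature and reduce everything to brackets. Throughout write $D$ for the Berwald-type connection $\stackrel{\text{\begin{tiny}B\end{tiny}}}{D}$, write $K$ for its curvature tensor field, and write $P=\stackrel{\text{\begin{tiny}B\end{tiny}}}{P}$ for its mixed curvature, so that $P(\widetilde{X},\widetilde{Y})\widetilde{Z}=K(h\widetilde{X},J\widetilde{Y})J\widetilde{Z}$. Since $hX^C=X^h$ by the definition of the horizontal lift, and $JX^C=X^V$ (as recorded after (\ref{vertical00})), the first step is the reduction
\[
P(X^C,Y^C)Z^C=K(X^h,Y^V)Z^V=D_{X^h}D_{Y^V}Z^V-D_{Y^V}D_{X^h}Z^V-D_{[X^h,Y^V]_\pounds}Z^V .
\]
Thus it remains to evaluate these three terms.

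Second, I would read off two differentiation rules for $D$ from its defining formula. Feeding a vertical section $W$ into the second slot (so $JW=0$, $vW=W$) collapses the formula to $D_{X^h}W=v[X^h,W]_\pounds$; in particular, since $[X^h,Z^V]_\pounds$ is vertical (this is exactly the formula for $[X^h,Y^V]_\pounds$ computed in the proof of (\ref{leila1})), we get $D_{X^h}Z^V=[X^h,Z^V]_\pounds$. For a vertical differentiation direction I would use instead the coefficients $\stackrel{\text{\begin{tiny}B\end{tiny}}}{C}^\gamma_{\alpha\beta}=0$ from (\ref{4ta}), i.e. $D_{\mathcal{V}_\alpha}\mathcal{V}_\beta=D_{\mathcal{V}_\alpha}\delta_\beta=0$, so that differentiating in a vertical direction is pure fibrewise differentiation of components: for vertical $V=(V^\alpha\circ\pi)\mathcal{V}_\alpha$ and $W=W^\beta\mathcal{V}_\beta$ one has $D_{V}W=(V^\alpha\circ\pi)\frac{\partial W^\beta}{\partial\textbf{y}^\alpha}\mathcal{V}_\beta=[V,W]_\pounds$. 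Because a vertical lift $Z^V$ has $\textbf{y}$-independent components, this yields at once $D_{Y^V}Z^V=0$ and $D_{U}Z^V=0$ for every vertical $U$.

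Third, I assemble the three terms, using that $[X^h,Y^V]_\pounds$ and $[X^h,Z^V]_\pounds$ are vertical (proof of (\ref{leila1})) and $[Y^V,Z^V]_\pounds=0$ (bracket relations (\ref{emroz})). The first term vanishes since $D_{Y^V}Z^V=0$; the third vanishes since $[X^h,Y^V]_\pounds$ is vertical and differentiating $Z^V$ in a vertical direction gives $0$; in the middle term, $D_{X^h}Z^V=[X^h,Z^V]_\pounds$ is vertical, whence $D_{Y^V}\big([X^h,Z^V]_\pounds\big)=[Y^V,[X^h,Z^V]_\pounds]_\pounds$ by the fibrewise rule. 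This gives
\[
K(X^h,Y^V)Z^V=-[Y^V,[X^h,Z^V]_\pounds]_\pounds .
\]

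Finally, I would convert this into the stated form by the Jacobi identity for the Lie algebroid bracket $[.,.]_\pounds$: expanding
\[
[X^h,[Y^V,Z^V]_\pounds]_\pounds+[Y^V,[Z^V,X^h]_\pounds]_\pounds+[Z^V,[X^h,Y^V]_\pounds]_\pounds=0
\]
and using $[Y^V,Z^V]_\pounds=0$ and antisymmetry gives $-[Y^V,[X^h,Z^V]_\pounds]_\pounds=[[X^h,Y^V]_\pounds,Z^V]_\pounds$, which is precisely $P(X^C,Y^C)Z^C$. I expect the only delicate points to be the bookkeeping of which brackets are vertical (so that $v$ acts as the identity and $D_{(\cdot)}Z^V$ vanishes in the right places) and the final Jacobi rearrangement; the substantive content is just the two differentiation rules for $D$ read directly off its definition and the vanishing of $\stackrel{\text{\begin{tiny}B\end{tiny}}}{C}^\gamma_{\alpha\beta}$.
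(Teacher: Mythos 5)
Your proof is correct, and it takes a genuinely different route from the paper's. The paper's own proof is a single coordinate computation: writing $X=X^\alpha e_\alpha$, $Y=Y^\beta e_\beta$, $Z=Z^\gamma e_\gamma$, it evaluates the left side through the coefficient formula ${\stackrel{\text{\begin{tiny}B\end{tiny}}}{P}}{}^{\ \ \ \ \lambda}_{\alpha\beta\gamma}=\frac{\partial^2 \mathcal{B}^\lambda_{\alpha}}{\partial \textbf{y}^\beta\partial \textbf{y}^\gamma}$ of (\ref{mixed}) and the right side through the local expression of $[X^h,Y^V]_\pounds$ (computed in the proof of (\ref{leila1})), observing that both equal $((X^\alpha Y^\beta Z^\gamma)\circ\pi)\frac{\partial^2 \mathcal{B}^\lambda_\alpha}{\partial\textbf{y}^\beta\partial\textbf{y}^\gamma}\mathcal{V}_\lambda$. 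You instead argue structurally: you reduce to $K(X^h,Y^V)Z^V$ via $hX^C=X^h$ and $JX^C=X^V$; you extract two differentiation rules, namely that $\stackrel{\text{\begin{tiny}B\end{tiny}}}{D}$ along $X^h$ applied to a vertical section is $v[X^h,\cdot]_\pounds$, and that along vertical directions it is fibrewise differentiation of components (so it annihilates sections with basic components); you deduce $K(X^h,Y^V)Z^V=-[Y^V,[X^h,Z^V]_\pounds]_\pounds$; and you finish with the Jacobi identity for $[.,.]_\pounds$ together with $[Y^V,Z^V]_\pounds=0$. Every auxiliary fact you invoke is available in the paper: (\ref{4ta}) with the Leibniz rule, the verticality of $[X^h,Y^V]_\pounds$, the bracket relations (\ref{emroz}), and the standard curvature formula for $K$, which is the one implicit in the coordinate expressions (\ref{hh})--(\ref{vv}); so there is no gap. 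The trade-off is clear: the paper's argument is shorter once (\ref{mixed}) has been established, but it hides the mechanism, whereas yours is coordinate-free in substance, explains why the identity holds (vanishing of vertical covariant derivatives on basic sections plus the Jacobi identity), and yields the equivalent form $\stackrel{\text{\begin{tiny}B\end{tiny}}}{P}(X^C,Y^C)Z^C=-[Y^V,[X^h,Z^V]_\pounds]_\pounds$ as a useful byproduct.
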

\begin{proof}

Let $X=X^\alpha e_\alpha$, $Y=Y^\beta e_\beta$ and $Z=Z^\gamma e_\gamma$ are sections of $E$. Then we can obtain
\[
\stackrel{\text{\begin{tiny}B\end{tiny}}}{P}(X^C, Y^C)Z^C=((X^\alpha Y^\beta Z^\gamma)\circ\pi)\frac{\partial^2 \mathcal{B}^\lambda_\alpha}{\partial\textbf{y}^\beta\partial\textbf{y}^\gamma}\mathcal{V}_\lambda=[[X^h, Y^V]_\pounds, Z^V]_\pounds.
\]
\end{proof}
\begin{proposition}
Let $h$ be a homogenous horizontal endomorphism on $\pounds^\pi E$. The mixed Ricci tensor $\stackrel{\text{\begin{tiny}B\end{tiny}}}{P}_{ric}$ of Berwald-type connection $(\stackrel{\text{\begin{tiny}B\end{tiny}}}{D},h)$ is homogenous of degree $-1$. Moreover, we have
\[
\pounds_C^\pounds \stackrel{\text{\begin{tiny}B\end{tiny}}}{P}_{ric}=\stackrel{\text{\begin{tiny}B\end{tiny}}}{D}_C\stackrel{\text{\begin{tiny}B\end{tiny}}}{P}_{ric}
=-\stackrel{\text{\begin{tiny}B\end{tiny}}}{P}_{ric}.
\]
\end{proposition}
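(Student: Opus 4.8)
The plan is to reduce both assertions to a single principle: on the adapted frame $\{\delta_\alpha,\mathcal{V}_\alpha\}$ the Liouville section $C=\textbf{y}^\alpha\mathcal{V}_\alpha$ (see (\ref{Liouville})) acts only through the radial vertical field $\rho_\pounds(C)=\textbf{y}^\mu\frac{\partial}{\partial\textbf{y}^\mu}$, while the component functions of $\stackrel{\text{\begin{tiny}B\end{tiny}}}{P}_{ric}$ are homogeneous of degree $-1$ by Proposition \ref{Berwald}.

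First I would make the coordinate content explicit. Evaluating the trace in the definition of the mixed Ricci tensor on the adapted frame, and using that $F$ sends $\mathcal{V}_\lambda$ to $\delta_\lambda$ (cf. (\ref{hF})) while $P(\widetilde{Y},\cdot)$ kills vertical arguments, one finds that the coefficients $P_{\alpha\beta}$ of $\stackrel{\text{\begin{tiny}B\end{tiny}}}{P}_{ric}=P_{\alpha\beta}\mathcal{X}^\alpha\otimes\mathcal{X}^\beta$ are a genuine contraction of $\stackrel{\text{\begin{tiny}B\end{tiny}}}{P}^{\ \ \ \lambda}_{\alpha\beta\gamma}$, with no extra $\mathcal{B}$-factors entering (the $F$ merely implements the relabelling $\mathcal{V}_\lambda\mapsto\delta_\lambda$). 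Since Proposition \ref{Berwald} gives $\textbf{y}^\mu\frac{\partial}{\partial\textbf{y}^\mu}\stackrel{\text{\begin{tiny}B\end{tiny}}}{P}^{\ \ \ \lambda}_{\alpha\beta\gamma}=-\stackrel{\text{\begin{tiny}B\end{tiny}}}{P}^{\ \ \ \lambda}_{\alpha\beta\gamma}$ and the Euler operator is linear, contraction leaves the homogeneity degree unchanged, so $\textbf{y}^\mu\frac{\partial P_{\alpha\beta}}{\partial\textbf{y}^\mu}=-P_{\alpha\beta}$; applying Lemma \ref{lemmahom} to each component $P_{\alpha\beta}$ gives the asserted homogeneity of degree $-1$. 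Equivalently, one may read directly off (\ref{mixed}) that $P_{\alpha\beta}$ is a contraction of second $\textbf{y}$-derivatives of the degree-$1$ homogeneous functions $\mathcal{B}^\lambda_\alpha$.

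For the two displayed equalities I would evaluate each operator on $\{\delta_\alpha,\mathcal{V}_\alpha\}$, where $\stackrel{\text{\begin{tiny}B\end{tiny}}}{P}_{ric}(\delta_\alpha,\delta_\beta)=P_{\alpha\beta}$ and every pairing involving a $\mathcal{V}$ vanishes. The crucial point is that $C$ annihilates this frame under both operations: (\ref{rho}) together with homogeneity of $h$ gives $[C,\delta_\alpha]_\pounds=\big(\textbf{y}^\beta\frac{\partial\mathcal{B}^\gamma_\alpha}{\partial\textbf{y}^\beta}-\mathcal{B}^\gamma_\alpha\big)\mathcal{V}_\gamma=0$, while the Berwald relations (\ref{4ta}) give $\stackrel{\text{\begin{tiny}B\end{tiny}}}{D}_C\delta_\alpha=\textbf{y}^\gamma\stackrel{\text{\begin{tiny}B\end{tiny}}}{D}_{\mathcal{V}_\gamma}\delta_\alpha=0$ and likewise $\stackrel{\text{\begin{tiny}B\end{tiny}}}{D}_C\mathcal{V}_\alpha=0$. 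Hence in the Leibniz expansions of $\pounds^\pounds_C\stackrel{\text{\begin{tiny}B\end{tiny}}}{P}_{ric}$ and of $\stackrel{\text{\begin{tiny}B\end{tiny}}}{D}_C\stackrel{\text{\begin{tiny}B\end{tiny}}}{P}_{ric}$ the two terms built from the frame arguments drop out, leaving only the action on the coefficient:
\[
(\pounds^\pounds_C\stackrel{\text{\begin{tiny}B\end{tiny}}}{P}_{ric})(\delta_\alpha,\delta_\beta)=\rho_\pounds(C)(P_{\alpha\beta})=\textbf{y}^\mu\frac{\partial P_{\alpha\beta}}{\partial\textbf{y}^\mu}=-P_{\alpha\beta},
\]
and the identical computation holds for $\stackrel{\text{\begin{tiny}B\end{tiny}}}{D}_C$. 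This yields $\pounds^\pounds_C\stackrel{\text{\begin{tiny}B\end{tiny}}}{P}_{ric}=\stackrel{\text{\begin{tiny}B\end{tiny}}}{D}_C\stackrel{\text{\begin{tiny}B\end{tiny}}}{P}_{ric}=-\stackrel{\text{\begin{tiny}B\end{tiny}}}{P}_{ric}$.

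I expect the only genuine obstacle to be the two verifications that make this collapse possible: confirming that the $F$-trace defining $\stackrel{\text{\begin{tiny}B\end{tiny}}}{P}_{ric}$ is an honest contraction of $\stackrel{\text{\begin{tiny}B\end{tiny}}}{P}$ (so that Proposition \ref{Berwald} transfers to $P_{\alpha\beta}$), and checking that the adapted frame annihilates $C$ under both the Lie bracket and $\stackrel{\text{\begin{tiny}B\end{tiny}}}{D}$. Both rest on the homogeneity of $h$; once they are in place, what remains is the routine observation that $\rho_\pounds(C)$ is the Euler operator acting on functions of homogeneity degree $-1$.
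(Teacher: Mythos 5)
Your proposal is correct and follows essentially the same route as the paper's proof: both reduce everything to the adapted frame, establish that the coefficients $\stackrel{\text{\begin{tiny}B\end{tiny}}}{P}_{\alpha\beta}$ are homogeneous of degree $-1$ (the paper by the direct computation $\textbf{y}^\lambda\partial^3\mathcal{B}^\beta_\alpha/\partial\textbf{y}^\lambda\partial\textbf{y}^\beta\partial\textbf{y}^\gamma=-\partial^2\mathcal{B}^\beta_\alpha/\partial\textbf{y}^\beta\partial\textbf{y}^\gamma$ via (\ref{00}), you by contracting Proposition \ref{Berwald}, with the same computation noted as an alternative), and both collapse the Leibniz expansions of $\pounds^\pounds_C$ and $\stackrel{\text{\begin{tiny}B\end{tiny}}}{D}_C$ to the Euler operator acting on the coefficients. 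Your only addition is to spell out the vanishings $[C,\delta_\alpha]_\pounds=0$ and $\stackrel{\text{\begin{tiny}B\end{tiny}}}{D}_C\delta_\alpha=\stackrel{\text{\begin{tiny}B\end{tiny}}}{D}_C\mathcal{V}_\alpha=0$ that the paper uses implicitly when it writes $(\stackrel{\text{\begin{tiny}B\end{tiny}}}{D}_C\stackrel{\text{\begin{tiny}B\end{tiny}}}{P}_{ric})(\delta_\alpha,\delta_\beta)=\textbf{y}^\gamma\partial\stackrel{\text{\begin{tiny}B\end{tiny}}}{P}_{\alpha\beta}/\partial\textbf{y}^\gamma$.
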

\begin{proof}
Using (\ref{mixed}) and (\ref{00}) we have
\[
\textbf{y}^\lambda\frac{\partial \stackrel{\text{\begin{tiny}B\end{tiny}}}{P}_{\alpha\gamma}}{\partial \textbf{y}^\lambda}=\textbf{y}^\lambda\frac{\partial {\stackrel{\text{\begin{tiny}B\end{tiny}}}{P}}_{\alpha\beta\gamma}^{\ \ \ \ \beta}}{\partial \textbf{y}^\lambda}=\textbf{y}^\lambda\frac{\partial^3 \mathcal{B}^\beta_\alpha}{\partial \textbf{y}^\lambda\partial \textbf{y}^\beta\partial \textbf{y}^\gamma}=-\frac{\partial^2 \mathcal{B}^\beta_\alpha}{\partial \textbf{y}^\beta\partial \textbf{y}^\gamma}=-\stackrel{\text{\begin{tiny}B\end{tiny}}}{P}_{\alpha\gamma}.
\]
Thus from Lemma \ref{important}, we deduce $-\stackrel{\text{\begin{tiny}B\end{tiny}}}{P}_{ric}$ is homogenous of degree $-1$. Also, using (\ref{4ta}) we get $\stackrel{\text{\begin{tiny}B\end{tiny}}}{D}_C\stackrel{\text{\begin{tiny}B\end{tiny}}}{P}_{ric}(\delta_\alpha,\mathcal{V_\beta})
=\stackrel{\text{\begin{tiny}B\end{tiny}}}{D}_C\stackrel{\text{\begin{tiny}B\end{tiny}}}{P}_{ric}(\mathcal{V_\alpha},\mathcal{V_\beta})=0$ and
\[
(\stackrel{\text{\begin{tiny}B\end{tiny}}}{D}_C\stackrel{\text{\begin{tiny}B\end{tiny}}}{P}_{ric})(\delta_\alpha,\delta_\beta)
=\stackrel{\text{\begin{tiny}B\end{tiny}}}{D}_C\stackrel{\text{\begin{tiny}B\end{tiny}}}{P}_{ric}(\delta_\alpha,\delta_\beta)=\textbf{y}^\gamma\frac{\partial \stackrel{\text{\begin{tiny}B\end{tiny}}}{P}_{\alpha\beta}}{\partial \textbf{y}^\gamma}=-\stackrel{\text{\begin{tiny}B\end{tiny}}}{P}_{\alpha\beta}.
\]
Therefore we deduce $\stackrel{\text{\begin{tiny}B\end{tiny}}}{D}_C\stackrel{\text{\begin{tiny}B\end{tiny}}}{P}_{ric}=-\stackrel{\text{\begin{tiny}B\end{tiny}}}{P}_{ric}$. Similarly we have $(\pounds_C^\pounds \stackrel{\text{\begin{tiny}B\end{tiny}}}{P}_{ric})(\delta_\alpha,\mathcal{V_\beta})=(\pounds_C^\pounds \stackrel{\text{\begin{tiny}B\end{tiny}}}{P}_{ric})(\mathcal{V_\alpha},\mathcal{V_\beta})=0$ and
\[
(\pounds_C^\pounds \stackrel{\text{\begin{tiny}B\end{tiny}}}{P}_{ric})(\delta_\alpha,\delta_\beta)=C(\stackrel{\text{\begin{tiny}B\end{tiny}}}{P}_{ric}(\delta_\alpha,\delta_\beta))
=\textbf{y}^\gamma\frac{\partial \stackrel{\text{\begin{tiny}B\end{tiny}}}{P}_{\alpha\beta}}{\partial \textbf{y}^\gamma}=-\stackrel{\text{\begin{tiny}B\end{tiny}}}{P}_{ric}.
\]
Therefore $\pounds_C^\pounds \stackrel{\text{\begin{tiny}B\end{tiny}}}{P}_{ric}=-\stackrel{\text{\begin{tiny}B\end{tiny}}}{P}_{ric}$.

\end{proof}
\begin{proposition}
Let $(\stackrel{\text{\begin{tiny}B\end{tiny}}}{D}, h)$ be a Berwald-type d-connection and $K\in\Gamma(\wedge^kE^*\otimes E)$ be a semibasic. Then
\[
\stackrel{\text{\begin{tiny}B\end{tiny}}}{D}_{X^V}K=\pounds^\pounds_{X^V}K,\ \ \ \ \forall X\in\Gamma(E).
\]
\end{proposition}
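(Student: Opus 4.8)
Throughout this proof write $D$ for the Berwald-type connection, and regard $K$ as a vector-valued $k$-form on $\pounds^\pi E$. The plan is to realise $D_{X^V}-\pounds^\pounds_{X^V}$ as a purely algebraic (zeroth-order) operator and to show it annihilates semibasic tensors. Both $D_{X^V}$ and $\pounds^\pounds_{X^V}$ are degree-zero derivations of the full tensor algebra of $\pounds^\pi E$, and on $C^\infty(E)$ each of them reduces to $\rho_\pounds(X^V)$. Hence their difference is $C^\infty(E)$-linear, i.e. algebraic, and is completely governed by the vector $1$-form
\[
\Phi(\widetilde Y):=D_{X^V}\widetilde Y-[X^V,\widetilde Y]_\pounds,\qquad \widetilde Y\in\Gamma(\pounds^\pi E),
\]
which is tensorial because the non-tensorial contributions $\rho_\pounds(X^V)(f)\widetilde Y$ coming from $D_{X^V}(f\widetilde Y)$ and from $[X^V,f\widetilde Y]_\pounds$ cancel. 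Consequently, for every choice of arguments,
\[
(D_{X^V}K-\pounds^\pounds_{X^V}K)(\widetilde X_1,\dots,\widetilde X_k)=\Phi\big(K(\widetilde X_1,\dots,\widetilde X_k)\big)-\sum_{i=1}^k K(\widetilde X_1,\dots,\Phi(\widetilde X_i),\dots,\widetilde X_k),
\]
so it suffices to understand $\Phi$.

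Next I would compute $\Phi$. Since $X^V$ is vertical we have $hX^V=0$ and $vX^V=X^V$, so substituting $\widetilde X=X^V$ into the defining formula of $D$ kills its first two terms and leaves
\[
D_{X^V}\widetilde Y=h[X^V,\widetilde Y]_\pounds+J[X^V,F\widetilde Y]_\pounds.
\]
Therefore $\Phi(\widetilde Y)=-v[X^V,\widetilde Y]_\pounds+J[X^V,F\widetilde Y]_\pounds$, and since both summands lie in $\im v=\im J=v\pounds^\pi E$ we obtain $\im\Phi\subseteq v\pounds^\pi E$, equivalently $J\circ\Phi=0$. To see that $\Phi$ also kills verticals, evaluate it on the adapted basis: writing $X^V=(X^\alpha\circ\pi)\mathcal V_\alpha$ and using $D_{\mathcal V_\alpha}\mathcal V_\beta=0$ from (\ref{4ta}) together with $[\mathcal V_\alpha,\mathcal V_\beta]_\pounds=0$ and $\rho_\pounds(\mathcal V_\beta)(X^\alpha\circ\pi)=0$ from (\ref{LieB}), one finds $\Phi(\mathcal V_\beta)=0$. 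Hence $\Phi$ vanishes on $v\pounds^\pi E=\im J$, i.e. $\Phi\circ J=0$; in other words $\Phi$ is itself a semibasic vector $1$-form.

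Finally I would combine these facts with the two defining properties of a semibasic $K$. From $J\circ K=0$ the value $K(\widetilde X_1,\dots,\widetilde X_k)$ lies in $\ker J=v\pounds^\pi E$, on which $\Phi$ vanishes, so the first term in the displayed difference is zero. From $i_{J\widetilde X}K=0$ the form $K$ vanishes as soon as one of its arguments is vertical; since each $\Phi(\widetilde X_i)\in v\pounds^\pi E=\im J$, every summand $K(\widetilde X_1,\dots,\Phi(\widetilde X_i),\dots,\widetilde X_k)$ vanishes as well. Thus $(D_{X^V}K-\pounds^\pounds_{X^V}K)(\widetilde X_1,\dots,\widetilde X_k)=0$ for all arguments, which is the assertion.

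The main obstacle is the first step: one must justify carefully that $D_{X^V}-\pounds^\pounds_{X^V}$ is exactly the algebraic derivation attached to $\Phi$, which includes fixing the conventions for the Lie derivative $\pounds^\pounds_{X^V}$ of a vector-valued form along a section on the algebroid $\pounds^\pi E$, and then pinning down $\Phi$ explicitly. Once $\Phi$ is recognised as a semibasic vector $1$-form, the vanishing is immediate and convention-independent, since both surviving terms are annihilated regardless of the precise signs appearing in the derivation identity.
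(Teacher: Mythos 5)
Your proof is correct, and it is organized along a genuinely different route than the paper's. The paper works entirely on the adapted horizontal basis $\{\delta_\alpha=e_\alpha^h\}$: it expands $\pounds^\pounds_{X^V}K$ by the Leibniz rule, kills the correction terms because each $[X^V,e^h_{\alpha_i}]_\pounds$ is vertical and $K$ is semibasic, converts the surviving term $[X^V,K(e^h_{\alpha_1},\ldots,e^h_{\alpha_k})]_\pounds$ into $\stackrel{\text{\begin{tiny}B\end{tiny}}}{D}_{X^V}\big(K(e^h_{\alpha_1},\ldots,e^h_{\alpha_k})\big)$ through a chain of Fr\"{o}licher--Nijenhuis manipulations (using $JF=v$ and formula (\ref{F3})), and finally folds back using $\stackrel{\text{\begin{tiny}B\end{tiny}}}{D}_{\mathcal{V}_\alpha}\delta_\beta=0$ from (\ref{4ta}). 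You instead package the difference $\stackrel{\text{\begin{tiny}B\end{tiny}}}{D}_{X^V}-\pounds^\pounds_{X^V}$ as the algebraic derivation attached to the tensorial vector $1$-form $\Phi=\stackrel{\text{\begin{tiny}B\end{tiny}}}{D}_{X^V}-[X^V,\cdot\,]_\pounds$, prove once that $\Phi$ is itself semibasic ($J\circ\Phi=0$ from the explicit expression $\Phi=-v[X^V,\cdot\,]_\pounds+J[X^V,F\cdot\,]_\pounds$, and $\Phi\circ J=0$ from $\stackrel{\text{\begin{tiny}B\end{tiny}}}{D}_{\mathcal{V}_\alpha}\mathcal{V}_\beta=0$, $[X^V,\mathcal{V}_\beta]_\pounds=0$ and tensoriality), and then let the two semibasicity conditions on $K$ annihilate the leading term and the corrections simultaneously. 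The underlying facts are the same in both arguments (the degeneracy of the Berwald-type connection in vertical directions, and that a semibasic $K$ takes vertical values and kills vertical arguments), but your packaging buys two things: the identity is verified for arbitrary arguments, so the case where some argument of $K$ is vertical is covered explicitly rather than left implicit by evaluation on the $\delta$'s only, and the Fr\"{o}licher--Nijenhuis chain is replaced by the elementary basis check $\Phi(\mathcal{V}_\beta)=0$. What the paper's version buys in exchange is that it never has to justify the tensoriality of the difference operator or invoke the derivation formalism, remaining a single self-contained computation.
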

\begin{proof}
\begin{align*}
(\pounds^\pounds_{X^V}K)(\delta_{\alpha_1},\ldots, \delta_{\alpha_k})&=(\pounds^\pounds_{X^V}K)(e_{\alpha_1}^h,\ldots,e_{\alpha_k}^h)\\
&=[X^V, K(e_{\alpha_1}^h,\ldots,e_{\alpha_k}^h)]_\pounds-\sum_i K(e_{\alpha_1}^h,\ldots,[X^V, e_{\alpha_i}^h]_\pounds,\ldots,e_{\alpha_k}^h)\\
&=[X^V, K(e_{\alpha_1}^h,\ldots,e_{\alpha_k}^h)]_\pounds=-[JFK(e_{\alpha_1}^h,\ldots,e_{\alpha_k}^h),X^V]_\pounds\\
&=[J,X^V]^{F-N}_\pounds(FK(X_1^h,\ldots,X_s^h))-J[FK(X_1^h,\ldots,X_s^h),X^V]_\pounds\\
&=J[JX^C,FK(e_{\alpha_1}^h,\ldots,e_{\alpha_k}^h)]_\pounds=\stackrel{\text{\begin{tiny}B\end{tiny}}}{D}_{X^V}K(e_{\alpha_1}^h,\ldots,e_{\alpha_k}^h)\\
&=(\stackrel{\text{\begin{tiny}B\end{tiny}}}{D}_{X^V}K)(e_{\alpha_1}^h,\ldots,e_{\alpha_k}^h)+\sum_iK(e_{\alpha_1}^h,\ldots,\stackrel{\text{\begin{tiny}B\end{tiny}}}{D}_{X^V}e_{\alpha_i}^h,\ldots,e_{\alpha_k}^h)\\
&=(\stackrel{\text{\begin{tiny}B\end{tiny}}}{D}_{X^V}K)(e_{\alpha_1}^h,\ldots,e_{\alpha_k}^h)\\
&=(\stackrel{\text{\begin{tiny}B\end{tiny}}}{D}_{X^V}K)(\delta_{\alpha_1},\ldots, \delta_{\alpha_k}).
\end{align*}
\end{proof}
%------------------------------------------------------------------------------------------
\subsection{Yano-type connection}
%------------------------------------------------------------------------------------------
Let $h$ be a horizontal endomorphism on  $\pounds^\pi E$ with associated almost complex structure $F$ and $\omega\in\Gamma(\wedge^2(\pounds^\pi E)^*)$ be a symmetric tensor, satisfying the condition
\begin{equation}
i_S\omega=0,
\end{equation}
where $S$ is an arbitrary semispray on $\pounds^\pi E$. We define the mapping
\[
D:\Gamma(\pounds^\pi E)\times\Gamma(\pounds^\pi E)\rightarrow\Gamma(\pounds^\pi E),
\]
by the following rules:
\begin{align}
D_{v\widetilde{X}}v\widetilde{Y}&=J[v\widetilde{X}, F\widetilde{Y}]_\pounds=\stackrel{\text{\begin{tiny}B\end{tiny}}}{D}_{v\widetilde{X}}v\widetilde{Y},\label{Y1}\\
D_{h\widetilde{X}}v\widetilde{Y}&=v[h\widetilde{X}, v\widetilde{Y}]_\pounds+\omega(\widetilde{X}, F\widetilde{Y})\widetilde{U}=\stackrel{\text{\begin{tiny}B\end{tiny}}}{D}_{h\widetilde{X}}v\widetilde{Y}+\omega(\widetilde{X}, F\widetilde{Y})\widetilde{U},\label{Y2}\\
D_{v\widetilde{X}}h\widetilde{Y}&=h[v\widetilde{X}, \widetilde{Y}]_\pounds=\stackrel{\text{\begin{tiny}B\end{tiny}}}{D}_{v\widetilde{X}}h\widetilde{Y},\label{Y3}\\
D_{h\widetilde{X}}h\widetilde{Y}&=hF[h\widetilde{X}, J\widetilde{Y}]_\pounds+\omega(\widetilde{X}, \widetilde{Y})F\widetilde{U}=\stackrel{\text{\begin{tiny}B\end{tiny}}}{D}_{h\widetilde{X}}h\widetilde{Y}+\omega(\widetilde{X}, \widetilde{Y})F\widetilde{U},\label{Y4}
\end{align}
where $\widetilde{U}$ is a nonzero section of $v\pounds^\pi E$.
\begin{align}
D_{\widetilde{X}}\widetilde{Y}&=hF[h\widetilde{X}, J\widetilde{Y}]_\pounds+v[h\widetilde{X}, v\widetilde{Y}]_\pounds+h[v\widetilde{X}, \widetilde{Y}]_\pounds+J[v\widetilde{X}, F\widetilde{Y}]_\pounds\nonumber\\
&\ \ \ +\omega(\widetilde{X}, \widetilde{Y})F\widetilde{U}+\omega(\widetilde{X}, F\widetilde{Y})\widetilde{U}.
\end{align}
It is easy to see that $(D, h)$ is a d-connection on $\pounds^\pi E$. In the coordinate expression we have
\begin{equation}\label{4ta0}
\left\{
\begin{array}{cc}
D_{\delta_\alpha}\delta_\beta=(\omega_{\alpha\beta}\widetilde{U}^{\bar{\gamma}}-\frac{\partial \mathcal{B}^\gamma_\alpha}{\partial y^\beta})\delta_\gamma,\ \ \ \ \ D_{\mathcal{V}_\alpha}\mathcal{V}_\beta=0,\\
D_{\delta_\alpha}\mathcal{V}_\beta=(\omega_{\alpha\beta}\widetilde{U}^{\bar{\gamma}}-\frac{\partial \mathcal{B}^\gamma_\alpha}{\partial y^\beta})\mathcal{V}_\gamma,\ \ \ \ \ D_{\mathcal{V}_\alpha}\delta_\beta=0,
\end{array}
\right.
\end{equation}
and consequently
\begin{align*}
D_{\widetilde{X}}\widetilde{Y}&=\Big(\widetilde{X}^\alpha\{(\rho^i_\alpha\circ \pi)\frac{\partial \widetilde{Y}^\gamma}{\partial \textbf{x}^i}+\mathcal{B}^\lambda_\alpha\frac{\partial \widetilde{Y}^\gamma}{\partial \textbf{y}^\lambda}\}+\widetilde{X}^\alpha \widetilde{Y}^\beta\{-\frac{\partial \mathcal{B}^\gamma_\alpha}{\partial \textbf{y}^\beta}+\omega_{\alpha\beta} \widetilde{U}^{\bar{\gamma}}\}+\widetilde{X}^{\bar{\alpha}}\frac{\partial \widetilde{Y}^\gamma}{\partial \textbf{y}^\alpha}\Big)\delta_\gamma\\
&\ +\Big(\widetilde{X}^\alpha\{(\rho^i_\alpha\circ \pi)\frac{\partial \widetilde{Y}^{\bar{\gamma}}}{\partial \textbf{x}^i}+\mathcal{B}^\lambda_\alpha\frac{\partial \widetilde{Y}^{\bar{\gamma}}}{\partial \textbf{y}^\lambda}\}+\widetilde{X}^\alpha \widetilde{Y}^{\bar{\beta}}\{-\frac{\partial \mathcal{B}^\gamma_\alpha}{\partial \textbf{y}^\beta}+\omega_{\alpha\beta} \widetilde{U}^{\bar{\gamma}}\}+\widetilde{X}^{\bar{\alpha}}\frac{\partial \widetilde{Y}^{\bar{\gamma}}}{\partial \textbf{y}^\alpha}\Big)\mathcal{V}_\gamma,
\end{align*}
where $\widetilde{X}=\widetilde{X}^\alpha\delta_\alpha+\widetilde{X}^{\bar{\alpha}}\mathcal{V}_\alpha$, $\widetilde{Y}=\widetilde{y}^\beta\delta_\beta+\widetilde{Y}^{\bar{\beta}}\mathcal{V}_\beta$, $\widetilde{U}=\widetilde{U}^{\bar{\gamma}}\mathcal{V}_\gamma$ and $\omega_{\alpha\beta}=\omega(\delta_\alpha, \delta_\beta)$.
\begin{rem}\label{remark}
From (\ref{Y2}) and (\ref{Y4}) we deduce that $\omega(h\widetilde{X}, v\widetilde{Y})\widetilde{U}=\omega(v\widetilde{X}, v\widetilde{Y})\widetilde{U}=0$. Therefore we have $\omega(\delta_\alpha, \mathcal{V}_\beta)\widetilde{U}=\omega(\mathcal{V}_\alpha, \mathcal{V}_\beta)\widetilde{U}=0$.
\end{rem}
\begin{theorem}\label{above}
Let $(D, h)$ be the d-connection given by (\ref{Y1})-(\ref{Y4}). Then

(i)\ the $v$-mixed torsion of $D$ is $P^1=\omega\otimes \widetilde{U}$,

(ii)\ the $h$-mixed torsion $B$ of $D$ vanishes.\\
Moreover, if

(iii)\ the $h$-deflection of $(D, h)$ vanishes,

(iv)\ the $h$-horizontal torsion of $D$ vanishes,\\
then the horizontal endomorphism $h$ is homogeneous and torsion free.
\end{theorem}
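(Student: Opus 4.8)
The plan is to work entirely in the adapted frame $\{\delta_\alpha,\mathcal V_\alpha\}$, where the Yano-type connection has explicit coefficients, and to read off each torsion component from the general formulas (\ref{very im2})--(\ref{very im3}). Comparing the coordinate description (\ref{4ta0}) with the defining relations (\ref{conn})--(\ref{conn2}) for the coefficients $(F^\gamma_{\alpha\beta},C^\gamma_{\alpha\beta})$ of a d-connection, I would first record
\[
F^\gamma_{\alpha\beta}=\omega_{\alpha\beta}\widetilde U^{\bar\gamma}-\frac{\partial\mathcal B^\gamma_\alpha}{\partial\textbf{y}^\beta},\qquad C^\gamma_{\alpha\beta}=0 .
\]
Everything else is substitution into identities already established in the excerpt; no new geometric construction is required.

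For (i), since the $v$-mixed torsion is $P^1=P^\gamma_{\alpha\beta}\mathcal V_\gamma\otimes\mathcal X^\alpha\otimes\mathcal X^\beta$ with $P^\gamma_{\alpha\beta}=F^\gamma_{\alpha\beta}+\partial\mathcal B^\gamma_\alpha/\partial\textbf{y}^\beta$ by (ii) of (\ref{very im3}), the $\mathcal B$-terms cancel and leave $P^\gamma_{\alpha\beta}=\omega_{\alpha\beta}\widetilde U^{\bar\gamma}$; recognizing $\widetilde U=\widetilde U^{\bar\gamma}\mathcal V_\gamma$ and (by Remark \ref{remark}, which says $\omega$ has no mixed or vertical components) $\omega=\omega_{\alpha\beta}\mathcal X^\alpha\otimes\mathcal X^\beta$ yields $P^1=\omega\otimes\widetilde U$. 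For (ii) the $h$-mixed torsion is $B=-C^\gamma_{\alpha\beta}\delta_\gamma\otimes\mathcal X^\alpha\otimes\mathcal X^\beta$, so $C^\gamma_{\alpha\beta}=0$ forces $B=0$ at once. Invariantly, the same follows from (\ref{Y3}): one computes $B(\widetilde X,\widetilde Y)=h[v\widetilde X,J\widetilde Y]_\pounds$, which vanishes because the bracket of two vertical sections is vertical and $hv=0$.

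For the ``Moreover'' part I would treat the two hypotheses separately, since each governs one of the two $\omega$-terms hidden in $F^\gamma_{\alpha\beta}$. The condition $i_S\omega=0$ for every semispray, together with $\omega$ having only horizontal components, forces $\textbf{y}^\alpha\omega_{\alpha\beta}=0$. Substituting $F^\gamma_{\alpha\beta}$ into the general $h$-deflection formula $h^*(DC)=(\mathcal B^\gamma_\alpha+\textbf{y}^\beta F^\gamma_{\alpha\beta})\mathcal V_\gamma\otimes\mathcal X^\alpha$ (computed from (\ref{h*}) via (\ref{Liouville}) and (\ref{rho})), the term $\textbf{y}^\beta\omega_{\alpha\beta}\widetilde U^{\bar\gamma}$ drops out and I obtain $h^*(DC)=(\mathcal B^\gamma_\alpha-\textbf{y}^\beta\partial\mathcal B^\gamma_\alpha/\partial\textbf{y}^\beta)\mathcal V_\gamma\otimes\mathcal X^\alpha=H$, the tension (\ref{tension}). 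Hence hypothesis (iii) gives $H=0$, i.e. $\mathcal B^\gamma_\alpha=\textbf{y}^\beta\partial\mathcal B^\gamma_\alpha/\partial\textbf{y}^\beta$, which is precisely the homogeneity criterion for $h$. For torsion-freeness I would insert $F^\gamma_{\alpha\beta}$ into $T^\gamma_{\alpha\beta}=F^\gamma_{\alpha\beta}-F^\gamma_{\beta\alpha}-(L^\gamma_{\alpha\beta}\circ\pi)$ from (i) of (\ref{very im3}); here the \emph{symmetry} $\omega_{\alpha\beta}=\omega_{\beta\alpha}$ makes the $\omega\widetilde U$-terms cancel, leaving $T^\gamma_{\alpha\beta}=\partial\mathcal B^\gamma_\beta/\partial\textbf{y}^\alpha-\partial\mathcal B^\gamma_\alpha/\partial\textbf{y}^\beta-(L^\gamma_{\alpha\beta}\circ\pi)=t^\gamma_{\alpha\beta}$, the weak-torsion coefficient (\ref{wt1}). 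Thus hypothesis (iv), namely $A=0$, is equivalent to $t=0$, so $h$ is torsion free.

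The computations are routine once the coefficients are identified; the only genuinely load-bearing observations are the two that make the $\omega$-contributions vanish: the contraction identity $\textbf{y}^\alpha\omega_{\alpha\beta}=0$ coming from $i_S\omega=0$ (used for the deflection, hence homogeneity) and the symmetry of $\omega$ (used for the horizontal torsion, hence torsion-freeness). I expect the main care to be bookkeeping rather than mathematical difficulty: keeping the roles of the two hypotheses apart, and confirming via Remark \ref{remark} that $\omega$ really has no mixed or vertical components, so that $i_S\omega=0$ collapses to $\textbf{y}^\alpha\omega_{\alpha\beta}=0$.
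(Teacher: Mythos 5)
Your proposal is correct and takes essentially the same route as the paper's own proof: both read off the coefficients $F^\gamma_{\alpha\beta}=\omega_{\alpha\beta}\widetilde U^{\bar\gamma}-\partial\mathcal B^\gamma_\alpha/\partial\textbf{y}^\beta$, $C^\gamma_{\alpha\beta}=0$ from (\ref{4ta0}), evaluate the torsion components in the adapted frame via (\ref{very im2})--(\ref{very im3}), use $i_S\omega=0$ to eliminate $\textbf{y}^\beta\omega_{\alpha\beta}$ in the $h$-deflection (giving homogeneity), and rely on the symmetry of $\omega$ so that the $h$-horizontal torsion collapses to $t^\gamma_{\alpha\beta}$ (giving torsion-freeness). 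The only cosmetic differences are that you state the symmetry cancellation explicitly where the paper leaves it implicit, and you add an optional invariant argument for part (ii).
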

\begin{proof}
Using Remark (\ref{remark}) and (\ref{4ta0}) we get
\begin{align*}
P^1(\delta_\alpha, \delta_\beta)&=\omega_{\alpha\beta}\widetilde{U}^{\bar\gamma}\mathcal{V}_\gamma=\omega(\delta_\alpha, \delta_\beta)\widetilde{U},\\
P^1(\delta_\alpha, \mathcal{V}_\beta)&=0=\omega(\delta_\alpha, \mathcal{V}_\beta)\widetilde{U},\\
P^1(\mathcal{V}_\alpha, \mathcal{V}_\beta)&=0=\omega(\mathcal{V}_\alpha, \mathcal{V}_\beta)\widetilde{U}.
\end{align*}
Therefore we have (i). Also, using (\ref{4ta0}) we deduce $B(\delta_\alpha, \delta_\beta)=0$ that gives us (ii). Now let (iii) and (iv) hold. (iii) gives us
\[
\mathcal{B}^\gamma_\alpha+y^\beta\omega_{\alpha\beta}\widetilde{U}^{\bar\gamma}-y^\beta\frac{\partial \mathcal{B}^\gamma_\alpha}{\partial y^\beta}=0.
\]
But from the condition $i_S\omega=0$ we derive that $y^\beta\omega_{\alpha\beta}=0$. Setting this in the above equation we have $\mathcal{B}^\gamma_\alpha=y^\beta\frac{\partial \mathcal{B}^\gamma_\alpha}{\partial y^\beta}$, i.e., $h$ is homogenous. (iv) gives us
\[
0=\frac{\partial\mathcal{B}^\gamma_\beta}{\partial\textbf{y}^\alpha}-\frac{\partial\mathcal{B}^\gamma_\alpha}{\partial\textbf{y}^\beta}
-(L^\gamma_{\alpha\beta}\circ\pi)=t^\gamma_{\alpha\beta}.
\]
Therefore $h$ is torsion free.
\end{proof}
Here, let $h$ be a homogenous and torsion free horizontal endomorphism on $\pounds^\pi E$ and $\stackrel{\text{\begin{tiny}B\end{tiny}}}{P}_{ric}$ be the mixed Ricci tensor of the Berwald-type connection $(\stackrel{\text{\begin{tiny}B\end{tiny}}}{D}, h)$. From Proposition \ref{Berwald} we can deduce that $i_S\!\!\stackrel{\text{\begin{tiny}B\end{tiny}}}{P}_{ric}=0$. Replacing $\omega$ and $\widetilde{U}$ in (\ref{Y1})-(\ref{Y4}) by $\frac{1}{n+1}\stackrel{\text{\begin{tiny}B\end{tiny}}}{P}_{ric}$ and Liouville section $C$, respectively, where $n=rank E$, we have the following d-connection
\begin{align}
\stackrel{\text{\begin{tiny}Y\end{tiny}}}{D}_{v\widetilde{X}}v\widetilde{Y}&=\stackrel{\text{\begin{tiny}B\end{tiny}}}{D}_{v\widetilde{X}}v\widetilde{Y},\label{Y01}\\
\stackrel{\text{\begin{tiny}Y\end{tiny}}}{D}_{h\widetilde{X}}v\widetilde{Y}&=\stackrel{\text{\begin{tiny}B\end{tiny}}}{D}_{h\widetilde{X}}v\widetilde{Y}
+\frac{1}{n+1}\stackrel{\text{\begin{tiny}B\end{tiny}}}{P}_{ric}(\widetilde{X}, F\widetilde{Y})C,\label{Y02}\\
\stackrel{\text{\begin{tiny}Y\end{tiny}}}{D}_{v\widetilde{X}}h\widetilde{Y}&=\stackrel{\text{\begin{tiny}B\end{tiny}}}{D}_{v\widetilde{X}}h\widetilde{Y},\label{Y03}\\
\stackrel{\text{\begin{tiny}Y\end{tiny}}}{D}_{h\widetilde{X}}h\widetilde{Y}&=\stackrel{\text{\begin{tiny}B\end{tiny}}}{D}_{h\widetilde{X}}h\widetilde{Y}
+\frac{1}{n+1}\stackrel{\text{\begin{tiny}B\end{tiny}}}{P}_{ric}(\widetilde{X}, \widetilde{Y})FC.\label{Y04}
\end{align}
This d-connection is said to be the \textit{Yano-type connection induced by $h$}. If, in particular,
$h$ is a Berwald endomorphism, then we call it a \textit{Yano connection}. From (\ref{4ta0}) we derive that the Yano-type connection has the following coordinate expression:
\begin{equation}\label{4ta1}
\left\{
\begin{array}{cc}
\stackrel{\text{\begin{tiny}Y\end{tiny}}}{D}_{\delta_\alpha}\delta_\beta=(\frac{1}{n+1}\frac{\partial^2\mathcal{B}^\lambda_\alpha}{\partial y^\lambda\partial y^\beta}y^\gamma-\frac{\partial \mathcal{B}^\gamma_\alpha}{\partial y^\beta})\delta_\gamma,\ \ \ \ \ \stackrel{\text{\begin{tiny}Y\end{tiny}}}{D}_{\mathcal{V}_\alpha}\mathcal{V}_\beta=0,\\
\stackrel{\text{\begin{tiny}Y\end{tiny}}}{D}_{\delta_\alpha}\mathcal{V}_\beta=(\frac{1}{n+1}\frac{\partial^2\mathcal{B}^\lambda_\alpha}{\partial y^\lambda\partial y^\beta}y^\gamma-\frac{\partial \mathcal{B}^\gamma_\alpha}{\partial y^\beta})\mathcal{V}_\gamma,\ \ \ \ \ \stackrel{\text{\begin{tiny}Y\end{tiny}}}{D}_{\mathcal{V}_\alpha}\delta_\beta=0.
\end{array}
\right.
\end{equation}
If we denote the local coefficients of Yano-type connection $D$ by $(\stackrel{\text{\begin{tiny}Y\end{tiny}}}{F}_{\alpha\beta}^\gamma, \stackrel{\text{\begin{tiny}Y\end{tiny}}}{C}_{\alpha\beta}^\gamma)$, then from the above equation we conclude $\stackrel{\text{\begin{tiny}Y\end{tiny}}}{F}_{\alpha\beta}^\gamma=\frac{1}{n+1}\frac{\partial^2\mathcal{B}^\lambda_\alpha}{\partial y^\lambda\partial y^\beta}y^\gamma-\frac{\partial \mathcal{B}^\gamma_\alpha}{\partial y^\beta}$ and $\stackrel{\text{\begin{tiny}Y\end{tiny}}}{C}_{\alpha\beta}^\gamma=0$.
Therefore using (\ref{hh}), (\ref{hv}) and (\ref{vv}) we derive that
\begin{align}
\stackrel{\text{\begin{tiny}Y\end{tiny}}}{R}^{\ \ \ \ \lambda}_{\alpha\beta\gamma}&=(\rho^i_\alpha\circ \pi)(\frac{1}{n+1}\frac{\partial^3 \mathcal{B}^\mu_\beta}{\partial \textbf{x}^i\partial \textbf{y}^\mu\partial \textbf{y}^\gamma}\textbf{y}^\lambda-\frac{\partial^2 \mathcal{B}^\lambda_{\beta}}{\partial \textbf{x}^i\partial \textbf{y}^\gamma})-\frac{1}{n+1}\frac{\partial^2 \mathcal{B}^\nu_{\beta}}{\partial \textbf{y}^\nu\partial \textbf{y}^\gamma}\frac{\partial \mathcal{B}^\lambda_{\alpha}}{\partial \textbf{y}^\mu}\textbf{y}^\mu\nonumber\\
&\ \ \ +\mathcal{B}^\mu_\alpha(\frac{1}{n+1}\frac{\partial^3 \mathcal{B}^\nu_\beta}{\partial \textbf{y}^\mu\partial \textbf{y}^\nu\partial \textbf{y}^\gamma}\textbf{y}^\lambda-\frac{\partial^2 \mathcal{B}^\lambda_{\beta}}{\partial \textbf{y}^\mu\partial \textbf{y}^\gamma})\nonumber+\frac{1}{n+1}\frac{\partial^2 \mathcal{B}^\nu_{\alpha}}{\partial \textbf{y}^\nu\partial \textbf{y}^\gamma}\frac{\partial \mathcal{B}^\lambda_{\beta}}{\partial \textbf{y}^\mu}\textbf{y}^\mu\\
&\ \ \ +(\rho^i_\beta\circ \pi)(\frac{\partial^2 \mathcal{B}^\lambda_{\alpha}}{\partial \textbf{x}^i\partial \textbf{y}^\gamma}-\frac{1}{n+1}\frac{\partial^3 \mathcal{B}^\mu_\alpha}{\partial \textbf{x}^i\partial \textbf{y}^\mu\partial \textbf{y}^\gamma}\textbf{y}^\lambda)-\frac{1}{n+1}\frac{\partial^2 \mathcal{B}^\nu_{\alpha}}{\partial \textbf{y}^\nu\partial \textbf{y}^\mu}\frac{\partial \mathcal{B}^\mu_{\beta}}{\partial \textbf{y}^\gamma}\textbf{y}^\lambda\nonumber\\
&\ \ \ +\mathcal{B}^\mu_\beta(\frac{\partial^2 \mathcal{B}^\lambda_{\alpha}}{\partial \textbf{y}^\mu\partial \textbf{y}^\gamma}-\frac{1}{n+1}\frac{\partial^3 \mathcal{B}^\nu_\alpha}{\partial \textbf{y}^\mu\partial \textbf{y}^\nu\partial \textbf{y}^\gamma}\textbf{y}^\lambda)+\frac{1}{n+1}\frac{\partial^2 \mathcal{B}^\nu_{\beta}}{\partial \textbf{y}^\nu\partial \textbf{y}^\mu}\frac{\partial \mathcal{B}^\mu_{\alpha}}{\partial \textbf{y}^\gamma}\textbf{y}^\lambda\nonumber\\
&\ \ \ +\frac{\partial \mathcal{B}^\mu_{\beta}}{\partial \textbf{y}^\gamma}\frac{\partial \mathcal{B}^\lambda_{\alpha}}{\partial \textbf{y}^\mu}+(\frac{1}{n+1})^2(\frac{\partial^2 \mathcal{B}^\nu_\beta}{\partial \textbf{y}^\nu\partial \textbf{y}^\gamma}\frac{\partial^2 \mathcal{B}^\sigma_\alpha}{\partial \textbf{y}^\sigma\partial \textbf{y}^\mu}-\frac{\partial^2 \mathcal{B}^\nu_\alpha}{\partial \textbf{y}^\nu\partial \textbf{y}^\gamma}\frac{\partial^2 \mathcal{B}^\sigma_\beta}{\partial \textbf{y}^\sigma\partial \textbf{y}^\mu})\textbf{y}^\mu\textbf{y}^\lambda\nonumber\\
&\ \ \ -\frac{\partial \mathcal{B}^\mu_{\alpha}}{\partial \textbf{y}^\gamma}\frac{\partial \mathcal{B}^\lambda_{\beta}}{\partial \textbf{y}^\mu}+(L^\mu_{\alpha\beta}\circ \pi)(\frac{\partial \mathcal{B}^\lambda_{\mu}}{\partial \textbf{y}^\gamma}-\frac{1}{n+1}\frac{\partial^2 \mathcal{B}^\nu_\mu}{\partial \textbf{y}^\nu\partial \textbf{y}^\gamma}\textbf{y}^\lambda),\\
\stackrel{\text{\begin{tiny}Y\end{tiny}}}{P}^{\ \ \ \ \lambda}_{\alpha\beta\gamma}&=\frac{\partial^2 \mathcal{B}^\lambda_{\alpha}}{\partial \textbf{y}^\beta\partial \textbf{y}^\gamma}-\frac{1}{n+1}(\frac{\partial^2 \mathcal{B}^\mu_{\alpha}}{\partial \textbf{y}^\mu\partial \textbf{y}^\gamma}\delta^\lambda_\beta+\frac{\partial^3 \mathcal{B}^\mu_{\alpha}}{\partial\textbf{y}^\beta\partial \textbf{y}^\mu\partial \textbf{y}^\gamma}\textbf{y}^\lambda),\label{mixed00}\\
\stackrel{\text{\begin{tiny}Y\end{tiny}}}{S}^{\ \ \ \ \lambda}_{\alpha\beta\gamma}&=0,
\end{align}
where $\stackrel{\text{\begin{tiny}Y\end{tiny}}}{R}^{\ \ \ \ \lambda}_{\alpha\beta\gamma}$, $\stackrel{\text{\begin{tiny}Y\end{tiny}}}{P}^{\ \ \ \ \lambda}_{\alpha\beta\gamma}$ and $\stackrel{\text{\begin{tiny}Y\end{tiny}}}{S}^{\ \ \ \ \lambda}_{\alpha\beta\gamma}$ are the coefficients of the horizontal, mixed and vertical curvatures of Yano-type connection $(\stackrel{\text{\begin{tiny}Y\end{tiny}}}{D}, h)$, respectively. Therefore the vertical curvature of d-connection $\stackrel{\text{\begin{tiny}Y\end{tiny}}}{D}$ is vanished.

From theorem \ref{above} we have
\begin{cor}
Let $(\stackrel{\text{\begin{tiny}Y\end{tiny}}}{D}, h)$ be the Yano-type d-connection. Then

(i)\ the $v$-mixed torsion of $\stackrel{\text{\begin{tiny}Y\end{tiny}}}{D}$ is $\stackrel{\text{\begin{tiny}Y\end{tiny}}}{P^1}=\frac{1}{n+1}(\stackrel{\text{\begin{tiny}B\end{tiny}}}{P}_{ric}\otimes C)$,

(ii)\ the $h$-mixed torsion $\stackrel{\text{\begin{tiny}Y\end{tiny}}}{B}$ of $\stackrel{\text{\begin{tiny}Y\end{tiny}}}{D}$ vanishes.
\end{cor}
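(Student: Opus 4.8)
The plan is to recognize this corollary as a direct specialization of Theorem \ref{above}. The Yano-type connection defined by (\ref{Y01})--(\ref{Y04}) is precisely the d-connection constructed in (\ref{Y1})--(\ref{Y4}), obtained by taking the symmetric tensor $\omega$ to be $\frac{1}{n+1}\stackrel{\text{\begin{tiny}B\end{tiny}}}{P}_{ric}$ and the nonzero vertical section $\widetilde{U}$ to be the Liouville section $C$. Once this identification is made explicit, both assertions follow simply by substituting these two choices into the conclusions of Theorem \ref{above}.

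First I would confirm that the pair $(\tfrac{1}{n+1}\stackrel{\text{\begin{tiny}B\end{tiny}}}{P}_{ric},\, C)$ is an admissible choice of $(\omega, \widetilde{U})$ for the construction (\ref{Y1})--(\ref{Y4}). This amounts to two checks: that $\frac{1}{n+1}\stackrel{\text{\begin{tiny}B\end{tiny}}}{P}_{ric}$ is a symmetric $(0,2)$-tensor, and that it is annihilated by contraction with any semispray, i.e. $i_S\stackrel{\text{\begin{tiny}B\end{tiny}}}{P}_{ric}=0$. Under the standing hypothesis that $h$ is homogeneous and torsion free, the symmetry is supplied by the earlier proposition showing that $\stackrel{\text{\begin{tiny}B\end{tiny}}}{P}$ is symmetric in all of its arguments in the torsion-free case, so that its Ricci contraction is symmetric as well; and the vanishing $i_S\stackrel{\text{\begin{tiny}B\end{tiny}}}{P}_{ric}=0$ is exactly the second statement of Proposition \ref{Berwald}. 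Hence all hypotheses of Theorem \ref{above} are satisfied for this specific choice.

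Second, I would read off both parts directly. By part (i) of Theorem \ref{above} the $v$-mixed torsion equals $\omega\otimes\widetilde{U}$, and substituting our choices yields $\stackrel{\text{\begin{tiny}Y\end{tiny}}}{P^1}=\frac{1}{n+1}(\stackrel{\text{\begin{tiny}B\end{tiny}}}{P}_{ric}\otimes C)$, which is statement (i). By part (ii) of Theorem \ref{above} the $h$-mixed torsion $B$ vanishes regardless of the choice of $\omega$ and $\widetilde{U}$, so $\stackrel{\text{\begin{tiny}Y\end{tiny}}}{B}=0$, giving statement (ii). As a cross-check one could instead argue from the explicit local coefficients (\ref{4ta1}) together with the torsion formulas (\ref{very im2})--(\ref{very im3}): the $v$-mixed torsion component reads off the $\omega_{\alpha\beta}\widetilde{U}^{\bar\gamma}$ term and the $h$-mixed torsion $B=-C^\gamma_{\alpha\beta}\delta_\gamma\otimes\mathcal{X}^\alpha\otimes\mathcal{X}^\beta$ vanishes because $\stackrel{\text{\begin{tiny}Y\end{tiny}}}{C}^\gamma_{\alpha\beta}=0$. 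The structural argument through Theorem \ref{above} is the cleaner route, however.

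The only genuine point requiring care is the admissibility verification in the first step: one must be certain that the Ricci tensor appearing here really does meet the symmetry and $i_S\omega=0$ requirements imposed on $\omega$ in the construction. This hinges entirely on the torsion-freeness and homogeneity of $h$, and since both properties were established in the discussion immediately preceding the definition of the Yano-type connection, no fresh computation is needed and the corollary follows at once from Theorem \ref{above}.
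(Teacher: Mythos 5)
Your proposal is correct and follows exactly the paper's own route: the paper introduces the Yano-type connection precisely as the specialization of the construction (\ref{Y1})--(\ref{Y4}) with $\omega=\frac{1}{n+1}\stackrel{\text{\begin{tiny}B\end{tiny}}}{P}_{ric}$ and $\widetilde{U}=C$ (justifying admissibility via Proposition \ref{Berwald} and the torsion-free symmetry of $\stackrel{\text{\begin{tiny}B\end{tiny}}}{P}$), and then deduces the corollary directly from Theorem \ref{above}. Your admissibility checks are the same ones the paper performs in the paragraph preceding (\ref{Y01})--(\ref{Y04}), so nothing is missing.
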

\begin{proposition}
Let $h$ be a torsion free, homogeneous horizontal endomorphism. If $(\stackrel{\text{\begin{tiny}B\end{tiny}}}{D}, h)$ and $(D, h)$ are the induced Berwald-type and Yano-type connections with mixed
curvature and mixed Ricci tensors $\stackrel{\text{\begin{tiny}B\end{tiny}}}{P}$, $\stackrel{\text{\begin{tiny}Y\end{tiny}}}{P}$ and $\stackrel{\text{\begin{tiny}B\end{tiny}}}{P}_{ric}$, $\stackrel{\text{\begin{tiny}Y\end{tiny}}}{P}_{ric}$, respectively, then we have
\begin{align}
\stackrel{\text{\begin{tiny}Y\end{tiny}}}{P}&=\stackrel{\text{\begin{tiny}B\end{tiny}}}{P}-\frac{1}{n+1}D_J\stackrel{\text{\begin{tiny}B\end{tiny}}}{P}_{ric}\otimes C-\frac{1}{n+1}\stackrel{\text{\begin{tiny}B\end{tiny}}}{P}_{ric}\otimes J,\label{mr}\\
\stackrel{\text{\begin{tiny}Y\end{tiny}}}{P}_{ric}&=\frac{2}{n+1}\stackrel{\text{\begin{tiny}B\end{tiny}}}{P}_{ric}\label{mr0},
\end{align}
where $n=rank E$.
\end{proposition}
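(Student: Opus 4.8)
The plan is to establish both identities by comparing local components in the adapted basis $\{\delta_\alpha,\mathcal{V}_\alpha\}$, since the coordinate expressions of the two mixed curvatures are already in hand. First I would record the inputs: because $h$ is homogeneous and torsion free, the Berwald-type connection has vanishing vertical coefficients $\stackrel{\text{\begin{tiny}B\end{tiny}}}{C}{}^\gamma_{\alpha\beta}=0$ and mixed-curvature coefficients ${\stackrel{\text{\begin{tiny}B\end{tiny}}}{P}}^{\ \ \ \lambda}_{\alpha\beta\gamma}=\frac{\partial^2\mathcal{B}^\lambda_\alpha}{\partial\textbf{y}^\beta\partial\textbf{y}^\gamma}$ by (\ref{mixed}), so that its Ricci contraction is $\stackrel{\text{\begin{tiny}B\end{tiny}}}{P}_{\alpha\gamma}={\stackrel{\text{\begin{tiny}B\end{tiny}}}{P}}^{\ \ \ \beta}_{\alpha\beta\gamma}=\frac{\partial^2\mathcal{B}^\beta_\alpha}{\partial\textbf{y}^\beta\partial\textbf{y}^\gamma}$. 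The Yano-type coefficients ${\stackrel{\text{\begin{tiny}Y\end{tiny}}}{P}}^{\ \ \ \lambda}_{\alpha\beta\gamma}$ are given by (\ref{mixed00}), and its vertical coefficients $\stackrel{\text{\begin{tiny}Y\end{tiny}}}{C}{}^\gamma_{\alpha\beta}$ also vanish. The key observation is that the right-hand side of (\ref{mixed00}) splits into exactly three summands, the first of which is ${\stackrel{\text{\begin{tiny}B\end{tiny}}}{P}}^{\ \ \ \lambda}_{\alpha\beta\gamma}$, the second carrying a Kronecker factor $\delta^\lambda_\beta$, and the third carrying a factor $\textbf{y}^\lambda$.

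To obtain (\ref{mr}) I would make the invariant identification of the two correction summands. Using $J=\mathcal{V}_\lambda\otimes\mathcal{X}^\lambda$ (which contributes the factor $\delta^\lambda_\beta$) and $C=\textbf{y}^\lambda\mathcal{V}_\lambda$, the middle summand $-\frac{1}{n+1}\frac{\partial^2\mathcal{B}^\mu_\alpha}{\partial\textbf{y}^\mu\partial\textbf{y}^\gamma}\delta^\lambda_\beta=-\frac{1}{n+1}\stackrel{\text{\begin{tiny}B\end{tiny}}}{P}_{\alpha\gamma}\delta^\lambda_\beta$ is precisely the component form of $-\frac{1}{n+1}\stackrel{\text{\begin{tiny}B\end{tiny}}}{P}_{ric}\otimes J$, while the last summand $-\frac{1}{n+1}\frac{\partial^3\mathcal{B}^\mu_\alpha}{\partial\textbf{y}^\beta\partial\textbf{y}^\mu\partial\textbf{y}^\gamma}\textbf{y}^\lambda=-\frac{1}{n+1}\big(\frac{\partial}{\partial\textbf{y}^\beta}\stackrel{\text{\begin{tiny}B\end{tiny}}}{P}_{\alpha\gamma}\big)\textbf{y}^\lambda$ is the component form of $-\frac{1}{n+1}(D_J\stackrel{\text{\begin{tiny}B\end{tiny}}}{P}_{ric})\otimes C$. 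Here I would justify that $D_J\stackrel{\text{\begin{tiny}B\end{tiny}}}{P}_{ric}$ reduces to the plain fibre derivative $\frac{\partial}{\partial\textbf{y}^\beta}\stackrel{\text{\begin{tiny}B\end{tiny}}}{P}_{\alpha\gamma}$: since the vertical connection coefficients vanish, the $v$-covariant derivative of the semibasic tensor $\stackrel{\text{\begin{tiny}B\end{tiny}}}{P}_{ric}$ in the direction $J\widetilde{X}=\mathcal{V}_\beta$ is just $\partial/\partial\textbf{y}^\beta$. Assembling the three pieces reproduces (\ref{mr}).

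For (\ref{mr0}) I would simply contract (\ref{mr}), i.e.\ set $\lambda=\beta$ and sum, computing $\stackrel{\text{\begin{tiny}Y\end{tiny}}}{P}_{\alpha\gamma}={\stackrel{\text{\begin{tiny}Y\end{tiny}}}{P}}^{\ \ \ \beta}_{\alpha\beta\gamma}$ straight from (\ref{mixed00}). The first term gives $\stackrel{\text{\begin{tiny}B\end{tiny}}}{P}_{\alpha\gamma}$; the $\delta^\lambda_\beta$ term produces the trace $\delta^\beta_\beta=n$ and hence $-\frac{n}{n+1}\stackrel{\text{\begin{tiny}B\end{tiny}}}{P}_{\alpha\gamma}$; and the $\textbf{y}^\lambda$ term becomes $-\frac{1}{n+1}\textbf{y}^\beta\frac{\partial}{\partial\textbf{y}^\beta}\stackrel{\text{\begin{tiny}B\end{tiny}}}{P}_{\alpha\gamma}$. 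The crucial input is that $\stackrel{\text{\begin{tiny}B\end{tiny}}}{P}_{ric}$ is homogeneous of degree $-1$ (the Proposition preceding Theorem \ref{above}), so Euler's relation in the form of Lemma \ref{lemmahom} gives $\textbf{y}^\beta\frac{\partial}{\partial\textbf{y}^\beta}\stackrel{\text{\begin{tiny}B\end{tiny}}}{P}_{\alpha\gamma}=-\stackrel{\text{\begin{tiny}B\end{tiny}}}{P}_{\alpha\gamma}$, turning the last contribution into $+\frac{1}{n+1}\stackrel{\text{\begin{tiny}B\end{tiny}}}{P}_{\alpha\gamma}$. Collecting the three contributions yields $\stackrel{\text{\begin{tiny}Y\end{tiny}}}{P}_{\alpha\gamma}=\big(1-\frac{n}{n+1}+\frac{1}{n+1}\big)\stackrel{\text{\begin{tiny}B\end{tiny}}}{P}_{\alpha\gamma}=\frac{2}{n+1}\stackrel{\text{\begin{tiny}B\end{tiny}}}{P}_{\alpha\gamma}$, which is (\ref{mr0}).

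The main obstacle is not the algebra but the bookkeeping of the invariant form: the two correction terms of (\ref{mr}) are written index-free whereas (\ref{mixed00}) is purely local, so the real work is to match the $\delta^\lambda_\beta$ factor with $J$ in the correct slot, the $\textbf{y}^\lambda$ factor with $C$, and above all to verify that $D_J\stackrel{\text{\begin{tiny}B\end{tiny}}}{P}_{ric}$ coincides with the fibre derivative of the Ricci tensor, which hinges on the vanishing of the vertical coefficients together with the semibasic character of $\stackrel{\text{\begin{tiny}B\end{tiny}}}{P}_{ric}$. Once this identification is secured, the homogeneity of degree $-1$ is exactly what collapses the numerical factor to $\frac{2}{n+1}$.
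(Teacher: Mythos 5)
Your proof is correct and is essentially the paper's own argument run in reverse: the paper expands the right-hand side of (\ref{mr}) on $(\delta_\alpha,\delta_\beta,\delta_\gamma)$ in the adapted basis and then matches the result against (\ref{mixed00}), while you decompose (\ref{mixed00}) into its three summands and identify each one invariantly (using, as the paper does, that the vertical coefficients of both connections vanish so that $D_{\mathcal{V}_\beta}$ acts as the fibre derivative on $\stackrel{\text{\begin{tiny}B\end{tiny}}}{P}_{ric}$), and both proofs obtain (\ref{mr0}) by contracting (\ref{mixed00}) over $\beta=\lambda$, with $\delta^\lambda_\lambda=n$, and applying the homogeneity identity (\ref{00}) — your Euler-relation step — to collapse the coefficient to $\frac{2}{n+1}$.

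The one bookkeeping point you gloss over is that identifying the $\textbf{y}^\lambda$-summand of (\ref{mixed00}), whose component is $\frac{\partial}{\partial\textbf{y}^\beta}\stackrel{\text{\begin{tiny}B\end{tiny}}}{P}_{\alpha\gamma}$, with $-\frac{1}{n+1}D_J\stackrel{\text{\begin{tiny}B\end{tiny}}}{P}_{ric}\otimes C$ — whose component, with the covariant derivative taken in the first slot as the paper computes it in (\ref{mix}), is $\frac{\partial}{\partial\textbf{y}^\alpha}\stackrel{\text{\begin{tiny}B\end{tiny}}}{P}_{\beta\gamma}$ — requires the symmetry $\frac{\partial^3\mathcal{B}^\mu_\beta}{\partial\textbf{y}^\alpha\partial\textbf{y}^\mu\partial\textbf{y}^\gamma}=\frac{\partial^3\mathcal{B}^\mu_\alpha}{\partial\textbf{y}^\beta\partial\textbf{y}^\mu\partial\textbf{y}^\gamma}$, which is exactly where the paper invokes the torsion-freeness of $h$; your argument for (\ref{mr}) should cite that hypothesis at this step, since as written it never uses it.
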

\begin{proof}
Using (\ref{mixed}) we can obtain
\begin{align}\label{mix}
(\stackrel{\text{\begin{tiny}B\end{tiny}}}{P}&\ -\frac{1}{n+1}D_J\stackrel{\text{\begin{tiny}B\end{tiny}}}{P}_{ric}\otimes C-\frac{1}{n+1}\stackrel{\text{\begin{tiny}B\end{tiny}}}{P}_{ric}\otimes J)(\delta_\alpha, \delta_\beta, \delta_\gamma)\nonumber\\
&=(\frac{\partial^2 \mathcal{B}^\lambda_{\alpha}}{\partial \textbf{y}^\beta\partial \textbf{y}^\gamma}-\frac{1}{n+1}\frac{\partial^2 \mathcal{B}^\mu_{\alpha}}{\partial \textbf{y}^\mu\partial \textbf{y}^\gamma}\delta^\lambda_\beta
-\frac{1}{n+1}\frac{\partial^3 \mathcal{B}^\mu_{\beta}}{\partial\textbf{y}^\alpha\partial \textbf{y}^\mu\partial \textbf{y}^\gamma}\textbf{y}^\lambda)\mathcal{V}_\lambda.
\end{align}
Since $h$ is torsion free, then we get
\[
\frac{\partial^3 \mathcal{B}^\mu_{\beta}}{\partial\textbf{y}^\alpha\partial \textbf{y}^\mu\partial \textbf{y}^\gamma}=\frac{\partial^3 \mathcal{B}^\mu_{\alpha}}{\partial\textbf{y}^\beta\partial \textbf{y}^\mu\partial \textbf{y}^\gamma}.
\]
Setting the above equation in (\ref{mixed}) and using (\ref{mixed00}) we deduce (\ref{mr}). To prove the (\ref{mr0}) we let $\stackrel{\text{\begin{tiny}B\end{tiny}}}{P}_{\alpha\gamma}$ and $\stackrel{\text{\begin{tiny}Y\end{tiny}}}{P}_{\alpha\gamma}$ be the coefficients of mixed Ricci tensors of Berwald-type and Yano-type connections, respectively. Then using (\ref{mixed00}) we obtain
\[
\stackrel{\text{\begin{tiny}Y\end{tiny}}}{P}_{\alpha\gamma}=\stackrel{\text{\begin{tiny}Y\end{tiny}}}{P}^{\ \ \ \ \lambda}_{\alpha\lambda\gamma}=\frac{1}{n+1}(\frac{\partial^2 \mathcal{B}^\lambda_{\alpha}}{\partial \textbf{y}^\lambda\partial \textbf{y}^\gamma}-\frac{\partial^3 \mathcal{B}^\mu_{\alpha}}{\partial\textbf{y}^\lambda\partial \textbf{y}^\mu\partial \textbf{y}^\gamma}\textbf{y}^\lambda),
\]
where $n=rank E$. Since $h$ is homogenous, then we have (\ref{00}). Setting (\ref{00}) in the above equation and using (\ref{mixed}) we get
\[
\stackrel{\text{\begin{tiny}Y\end{tiny}}}{P}_{\alpha\gamma}=\stackrel{\text{\begin{tiny}Y\end{tiny}}}{P}^{\ \ \ \ \lambda}_{\alpha\lambda\gamma}=\frac{2}{n+1}\frac{\partial^2 \mathcal{B}^\lambda_{\alpha}}{\partial \textbf{y}^\lambda\partial \textbf{y}^\gamma}=\frac{2}{n+1}\stackrel{\text{\begin{tiny}B\end{tiny}}}{P}_{\alpha\gamma}.
\]
\end{proof}
%++++++++++++++++++++++++++++++++++++++++++++++++++++++++++++++++++++++++++
\subsubsection{The Douglas tensor of a Berwald endomorphism}
%++++++++++++++++++++++++++++++++++++++++++++++++++++++++++++++++++++++++++
Let $h$ be a Berwald endomorphism on the manifold $\pounds^\pi E$. If $(\stackrel{\text{\begin{tiny}Y\end{tiny}}}{D},h)$ is the
Yano connection induced by $h$ and $\stackrel{\text{\begin{tiny}Y\end{tiny}}}{P}$ is the mixed curvature of $\stackrel{\text{\begin{tiny}Y\end{tiny}}}{D}$, then the tensor
\[
\textbf{D}=\stackrel{\text{\begin{tiny}Y\end{tiny}}}{P}-\frac{1}{2}(\stackrel{\text{\begin{tiny}Y\end{tiny}}}{P}_{ric}\otimes J+J\otimes \stackrel{\text{\begin{tiny}Y\end{tiny}}}{P}_{ric}),
\]
is said to be the Douglas tensor of the Berwald endomorphism. Using (\ref{vertical00}) and (\ref{mixed00}), the Douglas tensor $\textbf{D}$ has the following coordinate expression:
\begin{equation}\label{D1}
\textbf{D}=D_{\alpha\beta\gamma}^\lambda\mathcal{V}_\lambda \otimes\mathcal{X^\alpha}\otimes\mathcal{X^\beta}\otimes\mathcal{X^\gamma},
\end{equation}
where
\begin{equation}\label{Dg}
D_{\alpha\beta\gamma}^\lambda=\frac{\partial^2 \mathcal{B}^\lambda_{\alpha}}{\partial \textbf{y}^\beta\partial \textbf{y}^\gamma}-\frac{1}{n+1}(\frac{\partial^2 \mathcal{B}^\mu_{\alpha}}{\partial \textbf{y}^\mu\partial \textbf{y}^\gamma}\delta^\lambda_\beta+\frac{\partial^3 \mathcal{B}^\mu_{\beta}}{\partial \textbf{y}^\alpha\partial \textbf{y}^\mu\partial \textbf{y}^\gamma}y^\lambda+\frac{\partial^2 \mathcal{B}^\mu_{\alpha}}{\partial \textbf{y}^\mu\partial \textbf{y}^\beta}\delta^\lambda_\gamma+\frac{\partial^2 \mathcal{B}^\mu_{\beta}}{\partial \textbf{y}^\mu\partial \textbf{y}^\gamma}\delta^\lambda_\alpha).
\end{equation}
(\ref{D1}) told us that $\textbf{D}$ is semibasic. Moreover, since the Berwald endomorphism is homogenous and torsion free, then  from the above equation we deduce $D_{\alpha\beta\gamma}^\lambda=D_{\beta\alpha\gamma}^\lambda=D_{\gamma\beta\alpha}^\lambda$, i.e., $\textbf{D}$ is symmetric.
\begin{proposition}
Let $\textbf{D}$ be the Douglas tensor of a Berwald endomorphism. Then
$i_S\textbf{D}=0$ and $\textbf{D}_{ric}=0$.

\end{proposition}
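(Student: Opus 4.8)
The plan is to reduce both identities to pointwise algebraic relations among the components $D^\lambda_{\alpha\beta\gamma}$ displayed in (\ref{Dg}), and to kill every term using the fact that a Berwald endomorphism is both homogeneous and torsion free (Theorem \ref{4.1} together with Corollary \ref{4.3}). Concretely, I will use: the homogeneity relations (\ref{0}) and (\ref{00}); the spray relation $(ii)$ of (\ref{4.4}), namely $S^\mu=\textbf{y}^\lambda\mathcal{B}^\mu_\lambda$, valid because $h$ is homogeneous with $hS=S$; and the torsion-free identity $t^\gamma_{\alpha\beta}=\frac{\partial\mathcal{B}^\gamma_\beta}{\partial\textbf{y}^\alpha}-\frac{\partial\mathcal{B}^\gamma_\alpha}{\partial\textbf{y}^\beta}-(L^\gamma_{\alpha\beta}\circ\pi)=0$ from (\ref{wt1}). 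One preliminary remark will recur: differentiating the torsion-free identity in $\textbf{y}^\mu$ and summing, the structure-function term drops because $L^\gamma_{\alpha\beta}\circ\pi$ is independent of the fibre coordinates; hence the symbol $A_{\alpha\gamma}:=\frac{\partial^2\mathcal{B}^\mu_\alpha}{\partial\textbf{y}^\mu\partial\textbf{y}^\gamma}$ is symmetric in $\alpha,\gamma$ and also equals $\frac{\partial^2}{\partial\textbf{y}^\alpha\partial\textbf{y}^\gamma}(\mathcal{B}^\mu_\mu)$.

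For $i_S\textbf{D}=0$: since $\textbf{D}$ is semibasic by (\ref{D1}) and, as already noted before the statement, fully symmetric in its lower indices, the contraction $i_S\textbf{D}$ only sees the horizontal part $\textbf{y}^\alpha\mathcal{X}_\alpha$ of the semispray $S$ (see (\ref{semispray})), so up to relabelling it equals $\textbf{y}^\gamma D^\lambda_{\alpha\beta\gamma}$. Contracting (\ref{Dg}) with $\textbf{y}^\gamma$, relation (\ref{0}) annihilates the first, second and last correction terms; relation (\ref{00}) turns the third-order term into $-\textbf{y}^\lambda\frac{\partial^2\mathcal{B}^\mu_\beta}{\partial\textbf{y}^\alpha\partial\textbf{y}^\mu}$; and the third correction term survives as $\textbf{y}^\lambda\frac{\partial^2\mathcal{B}^\mu_\alpha}{\partial\textbf{y}^\mu\partial\textbf{y}^\beta}$. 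The torsion-free identity, differentiated in $\textbf{y}^\mu$, gives $\frac{\partial^2\mathcal{B}^\mu_\beta}{\partial\textbf{y}^\alpha\partial\textbf{y}^\mu}=\frac{\partial^2\mathcal{B}^\mu_\alpha}{\partial\textbf{y}^\beta\partial\textbf{y}^\mu}$, so these two cancel and $i_S\textbf{D}=0$.

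For $\textbf{D}_{ric}=0$: I take the Ricci-type trace $D^\lambda_{\alpha\lambda\gamma}$ (upper index against the middle lower one, exactly as $P_{ric}$ is formed). Tracing (\ref{Dg}) and using the symmetry of $A_{\alpha\gamma}$ and the identity $A_{\alpha\gamma}=\frac{\partial^2}{\partial\textbf{y}^\alpha\partial\textbf{y}^\gamma}(\mathcal{B}^\mu_\mu)$, the leading term and three of the four correction terms each reduce to $A_{\alpha\gamma}$, while the $\delta^\lambda_\lambda$ term contributes $nA_{\alpha\gamma}$ with $n=rank E$. Assembling, $D^\lambda_{\alpha\lambda\gamma}=A_{\alpha\gamma}-\frac{1}{n+1}\big(nA_{\alpha\gamma}+\textbf{y}^\lambda\frac{\partial^3\mathcal{B}^\mu_\lambda}{\partial\textbf{y}^\alpha\partial\textbf{y}^\mu\partial\textbf{y}^\gamma}+A_{\alpha\gamma}+A_{\alpha\gamma}\big)$, so it remains only to show the stray term equals $-A_{\alpha\gamma}$, whereupon the bracket becomes $(n+1)A_{\alpha\gamma}$ and the whole expression vanishes.

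The hard part, genuinely distinct from routine index juggling, is this last term $\textbf{y}^\lambda\frac{\partial^3\mathcal{B}^\mu_\lambda}{\partial\textbf{y}^\alpha\partial\textbf{y}^\mu\partial\textbf{y}^\gamma}$: here $\textbf{y}^\lambda$ is contracted against the \emph{structural} subscript of $\mathcal{B}^\mu_\lambda$, not against a differentiation variable, so neither (\ref{0}) nor (\ref{00}) applies directly. My plan is to route through the spray: differentiate $S^\mu=\textbf{y}^\lambda\mathcal{B}^\mu_\lambda$ three times by $\partial_{\textbf{y}^\alpha},\partial_{\textbf{y}^\mu},\partial_{\textbf{y}^\gamma}$ with Leibniz (the linear factor $\textbf{y}^\lambda$ being hit at most once), which yields $\textbf{y}^\lambda\frac{\partial^3\mathcal{B}^\mu_\lambda}{\partial\textbf{y}^\alpha\partial\textbf{y}^\mu\partial\textbf{y}^\gamma}+3A_{\alpha\gamma}=\frac{\partial^3 S^\mu}{\partial\textbf{y}^\alpha\partial\textbf{y}^\mu\partial\textbf{y}^\gamma}$, where the three correction terms are each $A_{\alpha\gamma}$ by the trace identity and the symmetry of $A$. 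Then the Berwald formula (\ref{Ber}) gives $\frac{\partial S^\mu}{\partial\textbf{y}^\alpha}=2\mathcal{B}^\mu_\alpha+\textbf{y}^\nu(L^\mu_{\alpha\nu}\circ\pi)$, and differentiating twice more (the $L$-term dropping again) gives $\frac{\partial^3 S^\mu}{\partial\textbf{y}^\alpha\partial\textbf{y}^\mu\partial\textbf{y}^\gamma}=2A_{\alpha\gamma}$. Combining forces $\textbf{y}^\lambda\frac{\partial^3\mathcal{B}^\mu_\lambda}{\partial\textbf{y}^\alpha\partial\textbf{y}^\mu\partial\textbf{y}^\gamma}=-A_{\alpha\gamma}$, which closes both assertions.
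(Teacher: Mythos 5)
Your proof is correct, and its skeleton is the same as the paper's: both assertions are reduced to index identities on the coordinate expression (\ref{Dg}), which are then killed by homogeneity ((\ref{0}), (\ref{00})) together with torsion-freeness. The genuine difference is how you dispatch the terms in which $\textbf{y}$ is contracted against the \emph{structural} lower index of $\mathcal{B}$. The paper inserts $S$ into the middle slot and cites only (\ref{0}) and (\ref{00}); those relations do not apply verbatim to the structurally contracted terms, and the unstated fix is the differentiated torsion-free identity $\frac{\partial^2\mathcal{B}^\mu_\beta}{\partial\textbf{y}^\nu\partial\textbf{y}^\alpha}=\frac{\partial^2\mathcal{B}^\mu_\alpha}{\partial\textbf{y}^\nu\partial\textbf{y}^\beta}$, which trades a structural index for a derivative index. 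For $i_S\textbf{D}=0$ you largely sidestep this by contracting the last slot instead (legitimate, since $\textbf{D}$ is symmetric), so torsion-freeness enters only once, in the final cancellation of the two surviving terms; this is if anything tidier than the paper's computation. For $\textbf{D}_{ric}=0$ you isolate the one hard term $\textbf{y}^\lambda\frac{\partial^3\mathcal{B}^\mu_\lambda}{\partial\textbf{y}^\alpha\partial\textbf{y}^\mu\partial\textbf{y}^\gamma}$ and evaluate it by a genuinely different device: triple differentiation of the spray relation $S^\mu=\textbf{y}^\lambda\mathcal{B}^\mu_\lambda$ from (\ref{4.4}) compared with the third derivative of the Berwald formula (\ref{Ber}). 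That argument is valid (both ingredients hold for a Berwald endomorphism), but it is longer than the route the paper's citation of (\ref{00}) presupposes: differentiating (\ref{wt1}) with $t=0$ gives $\frac{\partial^3\mathcal{B}^\mu_\lambda}{\partial\textbf{y}^\alpha\partial\textbf{y}^\mu\partial\textbf{y}^\gamma}=\frac{\partial^3\mathcal{B}^\mu_\alpha}{\partial\textbf{y}^\lambda\partial\textbf{y}^\mu\partial\textbf{y}^\gamma}$, after which (\ref{00}) yields $-A_{\alpha\gamma}$ in one step, using only the two standing hypotheses rather than the explicit spray formula. One cosmetic slip: in the trace you write that ``the leading term and three of the four correction terms each reduce to $A_{\alpha\gamma}$''; the correct count is the leading term and two correction terms (the $\delta^\lambda_\lambda$ term gives $nA_{\alpha\gamma}$, and one term remains third order), but the displayed formula you then assemble is the right one, so nothing downstream is affected.
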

\begin{proof}
Let $\widetilde{X}=\widetilde{X}^\alpha\delta_\alpha+\widetilde{X}^{\bar\alpha}\mathcal{V}_\alpha$ and  $\widetilde{Y}=\widetilde{Y}^\gamma\delta_\gamma+\widetilde{Y}^{\bar\gamma}\mathcal{V}_\gamma$. Since $\textbf{D}$ is symmetric then using (\ref{D1}) we get
\[
(i_S\textbf{D})(\widetilde{X},\widetilde{Y})=\textbf{D}(\widetilde{X}, S)\widetilde{Y}=\textbf{y}^\beta\widetilde{X}^\alpha\widetilde{Y}^\gamma D_{\alpha\beta\gamma}^\lambda.
\]
But using (\ref{0}) and (\ref{00}) we deduce  $\textbf{y}^\beta D_{\alpha\beta\gamma}^\lambda=0$. Therefore we have $i_S\textbf{D}=0$. Now we prove the second part of assertion. It is easy to see that
\[
\textbf{D}_{ric}=D_{\alpha\gamma}\mathcal{X^\alpha}\otimes\mathcal{X^\gamma},
\]
where $D_{\alpha\gamma}=D_{\alpha\lambda\gamma}^\lambda$. But using (\ref{00}) and (\ref{Dg}) we deduce $D_{\alpha\gamma}=0$ and consequently $\textbf{D}_{ric}=0$.
\end{proof}
\begin{theorem}
The Douglas tensor of a Berwald endomorphism is invariant under the projective changes of the associated spray.
\end{theorem}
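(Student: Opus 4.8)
The plan is to reduce the statement to a coordinate computation with the coefficients $\mathcal{B}^\gamma_\alpha$ that encode the Berwald endomorphism through (\ref{Ber}). First I would record the effect of a projective change on these coefficients. If $S$ is the spray generating $h$ and $\bar S=S+\widetilde f\,C$ is its projective change by a function $\widetilde f$ homogeneous of degree $1$, then $\bar S$ is again a spray (by the projective-change lemma), so $h_{\bar S}$ is again a Berwald endomorphism and its Douglas tensor is defined. By (\ref{Liouville}) the components obey $\bar S^\gamma=S^\gamma+\widetilde f\,\textbf{y}^\gamma$; substituting into (\ref{Ber}) and noting that the term $\textbf{y}^\beta(L^\gamma_{\alpha\beta}\circ\pi)$ is untouched gives
\begin{equation*}
\bar{\mathcal{B}}^\gamma_\alpha=\mathcal{B}^\gamma_\alpha+B^\gamma_\alpha,
\qquad
B^\gamma_\alpha:=\tfrac12\Big(\frac{\partial\widetilde f}{\partial\textbf{y}^\alpha}\,\textbf{y}^\gamma+\widetilde f\,\delta^\gamma_\alpha\Big).
\end{equation*}

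Next I would exploit that the Douglas coefficients $D^\lambda_{\alpha\beta\gamma}$ in (\ref{Dg}) are a \emph{linear} differential operator in $\mathcal{B}$ (each summand is one $\textbf{y}$-derivative block acting on a single $\mathcal{B}$). Consequently the change in $\textbf{D}$ under the projective change is obtained simply by substituting $B^\gamma_\alpha$ for $\mathcal{B}^\gamma_\alpha$ in (\ref{Dg}), and the whole statement reduces to showing that this substitution produces zero. The essential input is the homogeneity of $\widetilde f$: from Lemma \ref{lemmahom} and successive differentiation of $\textbf{y}^\alpha\partial\widetilde f/\partial\textbf{y}^\alpha=\widetilde f$ one obtains the Euler relations
\begin{equation*}
\textbf{y}^\alpha\frac{\partial^2\widetilde f}{\partial\textbf{y}^\alpha\partial\textbf{y}^\beta}=0,
\qquad
\textbf{y}^\alpha\frac{\partial^3\widetilde f}{\partial\textbf{y}^\alpha\partial\textbf{y}^\beta\partial\textbf{y}^\gamma}=-\frac{\partial^2\widetilde f}{\partial\textbf{y}^\beta\partial\textbf{y}^\gamma},
\end{equation*}
which I would use together with $\partial\textbf{y}^\mu/\partial\textbf{y}^\mu=n=\mathrm{rank}\,E$.

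The computation then splits into the principal term and the four correction terms of (\ref{Dg}). Differentiating $B^\lambda_\alpha$ twice gives, with $\widetilde f_{\alpha\beta}:=\partial^2\widetilde f/\partial\textbf{y}^\alpha\partial\textbf{y}^\beta$ and the analogous notation for third derivatives,
\begin{equation*}
\frac{\partial^2 B^\lambda_\alpha}{\partial\textbf{y}^\beta\partial\textbf{y}^\gamma}
=\tfrac12\Big(\widetilde f_{\alpha\beta\gamma}\,\textbf{y}^\lambda+\widetilde f_{\alpha\gamma}\,\delta^\lambda_\beta+\widetilde f_{\alpha\beta}\,\delta^\lambda_\gamma+\widetilde f_{\beta\gamma}\,\delta^\lambda_\alpha\Big).
\end{equation*}
The crucial intermediate identity, obtained by combining the trace $\delta^\mu_\mu=n$ with the second Euler relation above, is
\begin{equation*}
\frac{\partial^2 B^\mu_\alpha}{\partial\textbf{y}^\mu\partial\textbf{y}^\gamma}=\frac{n+1}{2}\,\widetilde f_{\alpha\gamma},
\end{equation*}
and analogously for the block $\partial^2 B^\mu_\beta/\partial\textbf{y}^\mu\partial\textbf{y}^\gamma$, while the triple-derivative term gives $\partial_\alpha\big(\partial^2 B^\mu_\beta/\partial\textbf{y}^\mu\partial\textbf{y}^\gamma\big)=\tfrac{n+1}{2}\widetilde f_{\alpha\beta\gamma}$. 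Feeding these into (\ref{Dg}), every factor $n+1$ cancels the explicit prefactor $\tfrac1{n+1}$, and the four correction terms reproduce exactly the four pieces of the principal term, so the substitution yields $0$. Hence $\textbf{D}$ is unchanged by the projective change, which is the assertion. I expect the only delicate point to be the bookkeeping in the contracted block $\partial^2 B^\mu_\alpha/\partial\textbf{y}^\mu\partial\textbf{y}^\gamma$, where the trace term $n\,\widetilde f_{\alpha\gamma}$ and the homogeneity term $-\widetilde f_{\alpha\gamma}$ must be combined with the two $\delta$-contractions to produce precisely the factor $n+1$ that drives the cancellation.
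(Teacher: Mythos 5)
Your proposal is correct and follows essentially the same route as the paper's own proof: you record the shift $\bar{\mathcal{B}}^\gamma_\alpha=\mathcal{B}^\gamma_\alpha+\tfrac12\big(\widetilde f\,\delta^\gamma_\alpha+\textbf{y}^\gamma\,\partial\widetilde f/\partial\textbf{y}^\alpha\big)$, use linearity of (\ref{Dg}) in $\mathcal{B}$, and then the same Euler relation for degree-$1$ homogeneity together with the trace identity $\partial^2 B^\mu_\alpha/\partial\textbf{y}^\mu\partial\textbf{y}^\gamma=\tfrac{n+1}{2}\,\partial^2\widetilde f/\partial\textbf{y}^\alpha\partial\textbf{y}^\gamma$, which is exactly the paper's chain (\ref{pchange})--(\ref{Dchange5}) leading to $\bar{D}^\lambda_{\alpha\beta\gamma}=D^\lambda_{\alpha\beta\gamma}$. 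The cancellation you describe (each correction term, after the $n+1$ factors cancel, reproducing one piece of the principal term by symmetry of mixed partials) is precisely how the paper concludes $\bar{\textbf{D}}=\textbf{D}$.
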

\begin{proof}
Let $h$ be a Berwald endomorphism on $\pounds^\pi E$ with associated spray $S$ and $\textbf{D}$ be the
Douglas tensor of $h$. Also, let $\bar{S}$ be the projective change of $S$ by $\widetilde{f}$. Then $\bar{S}$ generates a Berwald endomorphism $\bar{h}$. Denote by $\bar{\textbf{D}}$ the Douglas tensor of $\bar{h}$. If $S=\textbf{y}^\alpha\mathcal{X}_\alpha+S^\alpha\mathcal{V}_\alpha$ and $\bar{S}=\textbf{y}^\alpha\mathcal{X}_\alpha+{\bar{S}}^\alpha\mathcal{V}_\alpha$, then $\bar{S}=S+\widetilde{f}C$ gives us
\begin{equation}\label{pchange}
{\bar{S}}^\alpha=S^\alpha+y^\alpha\widetilde{f}.
\end{equation}
From (\ref{Ber000}) and (\ref{Ber}), $h$ and $\bar{h}$ have the following coordinate expressions:
\begin{equation}
h=(\mathcal{X}_\alpha+\mathcal{B}^\gamma_\alpha\mathcal{V}_\gamma)\otimes\mathcal{X}^\alpha,\ \ \ \bar{h}=(\mathcal{X}_\alpha+{\bar{\mathcal{B}}}^\gamma_\alpha\mathcal{V}_\gamma)\otimes\mathcal{X}^\alpha,
\end{equation}
where
\begin{equation}\label{pchange1}
\mathcal{B}^\gamma_\alpha=\frac{1}{2}(\frac{\partial S^\gamma}{\partial \textbf{y}^\alpha}-\textbf{y}^\beta (L^\gamma_{\alpha\beta}\circ\pi)),\ \ \ {\bar{\mathcal{B}}}^\gamma_\alpha=\frac{1}{2}(\frac{\partial {\bar S}^\gamma}{\partial \textbf{y}^\alpha}-\textbf{y}^\beta (L^\gamma_{\alpha\beta}\circ\pi)).
\end{equation}
Using (\ref{pchange}) and (\ref{pchange1}) we get
\begin{equation}\label{pchange2}
{\bar{\mathcal{B}}}^\gamma_\alpha=\mathcal{B}^\gamma_\alpha+\widetilde{f}^\gamma_\alpha,
\end{equation}
where
\[
{\widetilde{f}}^\gamma_\alpha=\frac{1}{2}(\widetilde{f}\delta^\gamma_\alpha+\textbf{y}^\gamma\frac{\partial{\widetilde{f}}}{\partial \textbf{y}^\alpha}).
\]
If we denote by $D^\lambda_{\alpha\beta\gamma}$ and ${\bar{D}}^\lambda_{\alpha\beta\gamma}$ the coefficients of $\textbf{D}$ and $\bar{\textbf{D}}$, respectively, then using (\ref{Dg}) and (\ref{pchange2}) we get
\begin{align}\label{Dchange}
{\bar{D}}_{\alpha\beta\gamma}^\lambda&=D_{\alpha\beta\gamma}^\lambda+\frac{\partial^2 {\widetilde{f}}^\lambda_{\alpha}}{\partial \textbf{y}^\beta\partial \textbf{y}^\gamma}-\frac{1}{n+1}(\frac{\partial^2 {\widetilde{f}}^\mu_{\alpha}}{\partial \textbf{y}^\mu\partial \textbf{y}^\gamma}\delta^\lambda_\beta+\frac{\partial^3 {\widetilde{f}}^\mu_{\beta}}{\partial \textbf{y}^\alpha\partial \textbf{y}^\mu\partial \textbf{y}^\gamma}\textbf{y}^\lambda\nonumber\\
&\ \ \ +\frac{\partial^2 {\widetilde{f}}^\mu_{\alpha}}{\partial \textbf{y}^\mu\partial \textbf{y}^\beta}\delta^\lambda_\gamma+\frac{\partial^2 {\widetilde{f}}^\mu_{\beta}}{\partial \textbf{y}^\mu\partial \textbf{y}^\gamma}\delta^\lambda_\alpha).
\end{align}
Since $\widetilde{f}$ is homogenous of degree 1, then we can obtain
\[
\frac{\partial^3 \widetilde{f}}{\partial \textbf{y}^\beta\partial \textbf{y}^\gamma\partial \textbf{y}^\alpha}\textbf{y}^\beta=-\frac{\partial^2 \widetilde{f}}{\partial \textbf{y}^\gamma\partial \textbf{y}^\alpha}.
\]
The above equation and direct calculation give us
\begin{align}
\frac{\partial^2 {\widetilde{f}}^\lambda_{\alpha}}{\partial \textbf{y}^\beta\partial \textbf{y}^\gamma}&=\frac{1}{2}\Big(\frac{\partial^2 \widetilde{f}}{\partial \textbf{y}^\beta\partial \textbf{y}^\gamma}\delta^\lambda_\alpha+\frac{\partial^2 \widetilde{f}}{\partial \textbf{y}^\gamma\partial \textbf{y}^\alpha}\delta^\lambda_\beta+\frac{\partial^2 \widetilde{f}}{\partial \textbf{y}^\beta\partial \textbf{y}^\alpha}\delta^\lambda_\gamma+\frac{\partial^3 \widetilde{f}}{\partial \textbf{y}^\beta\partial \textbf{y}^\gamma\partial \textbf{y}^\alpha}\textbf{y}^\lambda\Big), \label{Dchange1}\\
\frac{\partial^2 {\widetilde{f}}^\mu_{\alpha}}{\partial \textbf{y}^\mu\partial \textbf{y}^\gamma}&=\frac{1}{2}(n+1)\frac{\partial^2 \widetilde{f}}{\partial \textbf{y}^\gamma\partial \textbf{y}^\alpha},\label{Dchange2}\\
\frac{\partial^2 {\widetilde{f}}^\mu_{\alpha}}{\partial \textbf{y}^\mu\partial \textbf{y}^\beta}&=\frac{1}{2}(n+1)\frac{\partial^2 \widetilde{f}}{\partial \textbf{y}^\beta\partial \textbf{y}^\alpha},\label{Dchange3}\\
\frac{\partial^2 {\widetilde{f}}^\mu_{\beta}}{\partial \textbf{y}^\mu\partial \textbf{y}^\gamma}&=\frac{1}{2}(n+1)\frac{\partial^2 \widetilde{f}}{\partial \textbf{y}^\gamma\partial \textbf{y}^\beta},\label{Dchange4}\\
\frac{\partial^3 {\widetilde{f}}^\mu_{\beta}}{\partial \textbf{y}^\alpha\partial \textbf{y}^\mu\partial \textbf{y}^\gamma}&=\frac{1}{2}(n+1)\frac{\partial^3 \widetilde{f}}{\partial \textbf{y}^\alpha\partial \textbf{y}^\gamma\partial \textbf{y}^\beta}.\label{Dchange5}
\end{align}
Setting (\ref{Dchange1})-(\ref{Dchange5}) in (\ref{Dchange}) we obtain ${\bar{D}}_{\alpha\beta\gamma}^\lambda=D_{\alpha\beta\gamma}^\lambda$, i.e., $\bar{\textbf{D}}=\textbf{D}$.
\end{proof}
%**************************************************************************
\section{$\rho_\pounds$-covariant derivatives in $\pi^*\pi$}
In this section, we investigate geometric properties of $\rho_\pounds$-covariant derivatives in $\pi^*\pi$ like torsion and partial curvature. Results are in a deep relation with Berwald derivative.

%**************************************************************************
We can deduce the following double-exact short sequence from the double-exact short sequenc (\ref{exact se})
\begin{diagram}[heads=LaTeX]
0 &\pile{\rTo\\  \lTo} &\Gamma(\pi^*\pi)&\pile{\rTo^{\bar i}\\  \lTo_{\bar{\mathcal{V}}}} &\Gamma(\pounds^\pi E)&\pile{\rTo^{\bar j}\\  \lTo_{\bar{\mathcal{H}}}} &\Gamma(\pi^*\pi)&\pile{\rTo\\  \lTo} &0,
\end{diagram}
such that for every $\bar{X}\in\Gamma(\pi^*\pi)$ and $\xi\in\Gamma(\pounds^\pi E)$ the followings hold
\begin{equation}\label{2base0}
\bar{i}(\bar{X}):=i\circ \bar{X}, \qquad \bar{j}(\xi):=j\circ \xi,
\qquad\bar{\mathcal{H}}(\bar{X}):=\mathcal{H}\circ \bar{X}, \qquad \bar{\mathcal{V}}(\xi):=\mathcal{V}\circ \xi.
\end{equation}
\begin{proposition}\label{YB}
Let $X$ belongs to $\Gamma(E)$. Then we have the followings
$$
(i)\ \bar{i}(\widehat{X})=X^V,\qquad (ii)\ \bar{j}(X^V)=0;\qquad (iii)\ \bar{j}(X^C)=\widehat{X},$$
$$(iv)\ \bar{\mathcal{H}}(\widehat{X})=X^h,\qquad
(v)\ \bar{\mathcal{V}}(X^V)=\widehat{X},\qquad
(vi)\ \bar{\mathcal{V}}(X^h)=0.
$$
\end{proposition}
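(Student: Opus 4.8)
The plan is to verify each of the six identities pointwise, by unwinding the defining formulas for the splitting maps $i$, $j$, $\mathcal{H}=F\circ i$ and $\mathcal{V}=j\circ F$, and then invoking the algebraic relations among $J$, $h$, $v$, $F$ already established in the excerpt. Since $\bar i,\bar j,\bar{\mathcal H},\bar{\mathcal V}$ all act by post-composition with a section (see (\ref{2base0})), it suffices to evaluate at an arbitrary $u\in E$.

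First I would dispatch (i)--(iii) by direct substitution. For (i), from $\widehat X(u)=(u,X(\pi(u)))$ and $i(u,v)=(0,v^\vee_u)$ I get $\bar i(\widehat X)(u)=i(u,X(\pi(u)))=(0,X(\pi(u))^\vee_u)=(0,X^\vee(u))=X^V(u)$, using the definition $X^\vee(u)=X(\pi(u))^\vee_u$. For (ii) and (iii), I apply $j(u',z)=(\pi_E(z),u')$ to $X^V(u)=(0,X^\vee(u))$ and to $X^C(u)=(X(\pi(u)),X^c(u))$; since $X^\vee(u),X^c(u)\in T_uE$ one has $\pi_E(X^\vee(u))=\pi_E(X^c(u))=u$, so the outputs are $(u,0)$, which is the zero section of $\pi^*\pi$, and $(u,X(\pi(u)))=\widehat X(u)$, giving (ii) and (iii) respectively.

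For the three splitting identities I would avoid further coordinate work and reuse (i)--(iii) together with Lemma \ref{IM} and the relations $\mathcal V\circ i=1_{E\times_ME}$ and $h=\mathcal H\circ j$. Item (iv) follows since $\bar{\mathcal H}(\widehat X)=F\circ i\circ\widehat X=F(X^V)$ by (i), and then $F(X^V)=F(JX^C)=(F\circ J)X^C=hX^C=X^h$, using $JX^C=X^V$ and $(i)$ of (\ref{IM0}). Item (v) is immediate from (i): $\bar{\mathcal V}(X^V)=\mathcal V\circ i\circ\widehat X=\widehat X$. For (vi), note $X^h=hX^C\in\im h$, and since $h=\mathcal H\circ j$ with $j$ surjective we get $\im h=\im\mathcal H=\ker\mathcal V$, whence $\bar{\mathcal V}(X^h)=0$; equivalently one computes directly $\mathcal V\circ X^h=j\circ F\circ h\circ X^C=-j\circ J\circ X^C=-\bar j(X^V)=0$ via $(ii)$ of (\ref{IM0}) and part (ii).

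I do not anticipate a genuine obstacle: each step is a substitution into a defining formula followed by one algebraic relation. The only point demanding care is the bookkeeping inside $\pi^*E=E\times_ME$, namely keeping the two copies of $E$ distinct and confirming that $\pi_E$ returns the correct base point $u$ when applied to the tangent vectors $X^\vee(u)$ and $X^c(u)$; once this is handled consistently, all six identities drop out.
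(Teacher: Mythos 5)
Your proof is correct and follows essentially the same route as the paper's: evaluate pointwise, unwind the definitions of $i$, $j$, $\mathcal{H}=F\circ i$, $\mathcal{V}=j\circ F$, and invoke the identities $F\circ J=h$, $F\circ h=-J$, $\mathcal{V}\circ i=1_{E\times_ME}$ and $\mathrm{Im}\,\mathcal{H}=\ker\mathcal{V}$ already established in the text. The only cosmetic differences are that you get (ii) and (iii) by substituting directly into $j(u,z)=(\pi_E(z),u)$, where the paper deduces (iii) from $J(X^C)=X^V$ together with the injectivity of $i$, and that you derive (v) from $\mathcal{V}\circ i=1_{E\times_ME}$ rather than from $j\circ h=j$; both variants rest on facts the paper has already proved, so the arguments are interchangeable.
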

\begin{proof}
Let $u\in E$. Then we have
\begin{align*}
\bar{i}(\widehat{X})(u)&=i\circ\widehat{X}=i(u, X(\pi(u)))=(0, X(\pi(u))^{\vee}_{u})\\
& =(0, X^{\vee}(u))=X^V(u),
\end{align*}
 that gives us the first one. The second one is obvious. For the thirst, since $J(X^C)=X^V$, then we have $i\circ j(X^C)=X^V=i(\widehat{X})$. Because $i$ is injective, $j(X^C)=\widehat{X}$ and consequently $\bar{j}(X^C)=\widehat{X}$. For the forth, we can deduce
 $$\bar{\mathcal{H}}(\widehat{X})=\mathcal{H}\circ\widehat{X}=F\circ i\circ\widehat{X}=F\circ X^V=F\circ J(X^C)=h(X^C)=X^h.$$
 Using (\ref{split}), the fifth equation proves as follows
 $$\bar{\mathcal{V}}(X^V)=j\circ F\circ X^V=j\circ F\circ J(X^C)=j\circ h(X^C)=j\circ X^C=\widehat{X}.$$
 The last one obvious.
\end{proof}
\begin{rem}
The mapping $\bar{i}$ is an isomorphism between $\Gamma(\pi^*\pi)$ and $\Gamma(v\pounds^\pi E)$. Thus every section of $v(\pounds^\pi E)$ can be shown like $\bar{i}{\widetilde{X}}$ where $\widetilde{X}\in \Gamma(\pi^*\pi)$. Moreover, since $\bar{j}$ is surjective, then every member of $\Gamma(\pi^*\pi)$ has the format $\bar{j}(\xi)$, where $\xi\in\Gamma(\pounds^\pi(E))$.
\end{rem}
\begin{defn}\label{YB2}
Operator $\nabla^v$  with  properties
\begin{enumerate}
\item[(i)]
$\nabla^{v}_{\bar{X}}\widetilde{f}:=\rho_\pounds(\bar{i}\bar{X})\widetilde{f}$,
\item[(ii)]
$\nabla^{v}_{\bar{X}}\bar{Y}:=\bar{j}[\bar{i}\bar{X},\bar{\mathcal{H}}\bar{Y}]_\pounds$,
\item[(iii)]
$(\nabla^{v}_{\bar{X}}\bar{\alpha})(\bar{Y}):=\rho_{\pounds}(\bar{i}\bar{X})(\bar{\alpha}(\bar{Y}))-\bar{\alpha}(\nabla^{v}_{\bar{X}}\bar{Y})$,
\end{enumerate}
is called the \textit{canonical $v$-covariant differential}, where $\widetilde{f}\in C^{\infty}(E)$, $\bar{X}, \bar{Y}\in\Gamma(\pi^*\pi)$, $\bar{\alpha}\in \Omega^1(\pi)$.
\end{defn}
\begin{rem}
The second condition of the above definition is independent of choosing $\bar{\mathcal{H}}$. Indeed since  $\bar{j}$ is surjective, there is some $\widetilde{Y}\in \Gamma(\pounds^\pi E)$, such that $\bar{Y}=\bar{j}\widetilde{Y}$. Thus
$$\nabla^{v}_{\bar{X}}\bar{j}\widetilde{Y}=\bar{j}[\bar{i}\bar{X}, \bar{\mathcal{H}}\circ \bar{j}\widetilde{Y}]_\pounds=j[\bar{i}\bar{X},h\widetilde{Y}]_\pounds.$$
But $[\bar{i}\bar{X},v\widetilde{Y}]_\pounds$ is vertical. Therefore
$$\nabla^{v}_{\bar{X}}\bar{j}\widetilde{Y}=\bar{j}[\bar{i}\bar{X},\widetilde{Y}]_\pounds.$$
\end{rem}
Let $\bar{A}\in\mathcal{T}^{k}_{l}(\pi)$. Then we define
\begin{align*}
(\nabla^{v}_{\bar{X}}\bar{A})(\bar{\alpha_1}, \bar{\alpha_2},..., \bar{\alpha_k}, \bar{X_1}, \bar{X_2},..., \bar{X_l})&:=\rho_{\pounds}(\bar{i}\bar{X})(\bar{\alpha_1}, \bar{\alpha_2},..., \bar{\alpha_k}, \bar{X_1}, \bar{X_2},..., \bar{X_l})\\
&-\sum_{i=1}^{k}\bar{A}(\bar{\alpha_1},...,\nabla^{v}_{\bar{X}}\bar{\alpha_i},..., \bar{\alpha_k}, \bar{X_1}, \bar{X_2},..., \bar{X_l})\\
&-\sum_{i=1}^{l}\bar{A}(\bar{\alpha_1},\bar{\alpha_2},..., \bar{\alpha_k}, \bar{X_1},...,\nabla^{v}_{\bar{X}}\bar{X_i},..., \bar{X_l}).
\end{align*}
Moreover, for $\bar{A}\in\mathcal{T}^{k}_{l}(\pi)$ tensor field $\nabla^{v}\bar{A}\in \mathcal{T}^{k}_{l+1}(\pi)$ is defined by the following rule
$$(\nabla^{v}\bar{A})(\bar{X},\bar{\alpha_1}, \bar{\alpha_2},..., \bar{\alpha_k}, \bar{X_1}, \bar{X_2},..., \bar{X_l}):=(\nabla^{v}_{\bar{X}}\bar{A})(\bar{\alpha_1}, \bar{\alpha_2},..., \bar{\alpha_k}, \bar{X_1}, \bar{X_2},..., \bar{X_l}).$$
\begin{defn}
Let $\widetilde{f}$ be a smooth function on $E$. Then tensor field
\[
\nabla^v\nabla^v\widetilde{f}:=\nabla^v(\nabla^v\widetilde{f})\in\mathcal{T}^0_2(\pi),
\]
\end{defn}
is said to be hession of $\widetilde{f}$.
\begin{proposition}
Function $\widetilde{f}\in C^\infty(E)$ is homogenous of degree 1 if and only if $\nabla^v_\delta\widetilde{f}=\widetilde{f}$.
\end{proposition}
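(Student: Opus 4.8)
The plan is to unwind the definition of $\nabla^v_\delta\widetilde{f}$ until it becomes $\rho_\pounds(C)\widetilde{f}$, after which the statement is nothing more than the definition of homogeneity of degree $1$. First I would apply part (i) of Definition \ref{YB2} with $\bar{X}=\delta$, which gives immediately
\[
\nabla^v_\delta\widetilde{f}=\rho_\pounds(\bar{i}\,\delta)\,\widetilde{f}.
\]
The key observation is then that $\bar{i}\,\delta=i\circ\delta=C$: by (\ref{2base0}) we have $\bar{i}(\bar{X})=i\circ\bar{X}$, and $C:=i\circ\delta$ is exactly the definition of the Liouville section. Substituting this back, I obtain the identity $\nabla^v_\delta\widetilde{f}=\rho_\pounds(C)\widetilde{f}$, which is the whole content of the proof.

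With that identity in hand, the equivalence is forced by the definition of homogeneity for functions on $E$: $\widetilde{f}$ is homogeneous of degree $r$ precisely when $\pounds^\pounds_C\widetilde{f}=\rho_\pounds(C)(\widetilde{f})=r\widetilde{f}$. Specializing to $r=1$, homogeneity of degree $1$ is exactly the equation $\rho_\pounds(C)\widetilde{f}=\widetilde{f}$. Combining this with $\nabla^v_\delta\widetilde{f}=\rho_\pounds(C)\widetilde{f}$ yields both implications simultaneously, so there is no need to treat the two directions separately.

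As a coordinate sanity check I would confirm the same conclusion via Lemma \ref{lemmahom}: using $C=\textbf{y}^\alpha\mathcal{V}_\alpha$ from (\ref{Liouville}) together with $\rho_\pounds(\mathcal{V}_\alpha)=\frac{\partial}{\partial\textbf{y}^\alpha}$, one computes $\rho_\pounds(C)\widetilde{f}=\textbf{y}^\alpha\frac{\partial\widetilde{f}}{\partial\textbf{y}^\alpha}$, and Lemma \ref{lemmahom} identifies the relation $\textbf{y}^\alpha\frac{\partial\widetilde{f}}{\partial\textbf{y}^\alpha}=\widetilde{f}$ with homogeneity of degree $1$. The only genuinely substantive step is recognizing $\bar{i}\,\delta=C$; everything else is a direct substitution of definitions, so I do not expect any real obstacle beyond this bookkeeping.
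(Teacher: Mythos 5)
Your proof is correct and follows essentially the same route as the paper: both unwind Definition \ref{YB2}(i) to get $\nabla^v_\delta\widetilde{f}=\rho_\pounds(\bar{i}\delta)\widetilde{f}$, identify $\bar{i}\delta=i\circ\delta=C$, and then read off the equivalence from the definition of homogeneity of degree $1$. Your additional coordinate check via Lemma \ref{lemmahom} is harmless but not needed.
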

\begin{proof}
Let $\widetilde{f}$ be a homogenous function of degree 1 on $E$. Then we have $\rho_\pounds(C)\widetilde{f}=\widetilde{f}$. Thus
\[
\nabla^v_\delta\widetilde{f}=\rho_\pounds(\bar{i}\delta)\widetilde{f}=\rho_\pounds(i\circ\delta)\widetilde{f}
=\rho_\pounds(C)\widetilde{f}=\widetilde{f}.
\]
From the above equation, also we can deduce the convers of assertion.
\end{proof}
\begin{proposition}
Let $X$ and $Y$ be sections of $E$ and $\widetilde{f}\in C^\infty(E)$. Then
\begin{equation}
\nabla^v\nabla^v\widetilde{f}(\widehat{X}, \widehat{Y})=\rho_\pounds(X^V)(\rho_\pounds(Y^V)\widetilde{f}).
\end{equation}
Moreover, the hessian of $\widetilde{f}$ is symmetric.
\end{proposition}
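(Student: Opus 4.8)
The plan is to unwind the two definitions that assemble the Hessian. First, the one-form $\nabla^v\widetilde f\in\mathcal T^0_1(\pi)$ acts by $(\nabla^v\widetilde f)(\bar Y)=\nabla^v_{\bar Y}\widetilde f=\rho_\pounds(\bar i\bar Y)\widetilde f$, which is property (i) of Definition \ref{YB2}. Setting $\bar\alpha:=\nabla^v\widetilde f$ and using the tensor rule together with property (iii) of Definition \ref{YB2}, I would write
\[
\nabla^v\nabla^v\widetilde f(\widehat X,\widehat Y)=(\nabla^v_{\widehat X}\bar\alpha)(\widehat Y)=\rho_\pounds(\bar i\widehat X)\big(\bar\alpha(\widehat Y)\big)-\bar\alpha\big(\nabla^v_{\widehat X}\widehat Y\big).
\]
So the proof reduces to evaluating the two terms on the right.

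For the first term I would invoke $(i)$ of Proposition \ref{YB}, namely $\bar i(\widehat X)=X^V$ and $\bar i(\widehat Y)=Y^V$, giving $\bar\alpha(\widehat Y)=\rho_\pounds(Y^V)\widetilde f$ and hence $\rho_\pounds(\bar i\widehat X)(\bar\alpha(\widehat Y))=\rho_\pounds(X^V)(\rho_\pounds(Y^V)\widetilde f)$, which is exactly the claimed right-hand side. The remaining work is to show the second term vanishes. Using $(iv)$ of Proposition \ref{YB}, $\bar{\mathcal H}(\widehat Y)=Y^h$, so
\[
\nabla^v_{\widehat X}\widehat Y=\bar j[\bar i\widehat X,\bar{\mathcal H}\widehat Y]_\pounds=\bar j[X^V,Y^h]_\pounds.
\]
The horizontal-lift lemma already computed $[X^h,Y^V]_\pounds$ and showed it is a section of $v\pounds^\pi E$; by antisymmetry of the bracket, $[X^V,Y^h]_\pounds$ is vertical as well. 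Since $\ker j=v\pounds^\pi E$ and $\bar j(\xi)=j\circ\xi$, the operator $\bar j$ annihilates it, so $\nabla^v_{\widehat X}\widehat Y=0$ and the second term disappears, establishing the formula.

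For the symmetry statement it then suffices to prove $\rho_\pounds(X^V)(\rho_\pounds(Y^V)\widetilde f)=\rho_\pounds(Y^V)(\rho_\pounds(X^V)\widetilde f)$, i.e. that the vector fields $\rho_\pounds(X^V)$ and $\rho_\pounds(Y^V)$ on $E$ commute. I would derive this from the anchor being a Lie-algebra homomorphism: since $(\pounds^\pi E,[.,.]_\pounds,\rho_\pounds)$ is a Lie algebroid, the analogue of relation (\ref{revise}) gives $[\rho_\pounds(X^V),\rho_\pounds(Y^V)]=\rho_\pounds([X^V,Y^V]_\pounds)$, and by (\ref{emroz}) we have $[X^V,Y^V]_\pounds=0$. (Equivalently, in coordinates $\rho_\pounds(X^V)=(X^\alpha\circ\pi)\,\partial/\partial\textbf{y}^\alpha$ has coefficients independent of the fibre variables, so the commutator is manifestly zero.) Hence the two derivations commute and $\nabla^v\nabla^v\widetilde f$ is symmetric.

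I do not expect a serious obstacle here; the only point needing care is the vanishing of the second term, that is, verifying that $[X^V,Y^h]_\pounds$ is genuinely a vertical section (so that $\bar j$ kills it) rather than merely $\pi$-projectable. This is precisely where the explicit horizontal-lift bracket computation recorded earlier is used.
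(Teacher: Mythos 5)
Your proof is correct and follows essentially the same route as the paper: expand the Hessian via property (iii) of Definition \ref{YB2}, kill the term $\nabla^v_{\widehat{X}}\widehat{Y}=\bar{j}[X^V,Y^h]_\pounds$ because $[X^V,Y^h]_\pounds$ is vertical and $\bar{j}$ annihilates $\Gamma(v\pounds^\pi E)$, and deduce symmetry from $[X^V,Y^V]_\pounds=0$ together with the anchor homomorphism property. Your explicit justification of the verticality of $[X^V,Y^h]_\pounds$ via the earlier horizontal-lift bracket computation is a welcome detail the paper leaves implicit, but it does not change the argument.
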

\begin{proof}
Using the definition of hessian of $\widetilde{f}$, (i) of proposition \ref{YB} and (iii) of definition \ref{YB2} we get
\begin{align}\label{YB1}
\nabla^v\nabla^v\widetilde{f}(\widehat{X}, \widehat{Y})&=(\nabla^v_{\widehat{X}}(\nabla^v\widetilde{f}))(\widehat{Y})=\rho_\pounds(\bar{i}\widehat{X})((\nabla^v\widetilde{f})(\widehat{Y}))-\nabla^v\widetilde{f}(\nabla_{\widehat{X}}\widehat{Y})\nonumber\\
&=\rho_\pounds(\bar{i}\widehat{X})(\rho_\pounds(\bar{i}\widehat{Y})\widetilde{f})-\rho_\pounds(\bar{i}(\nabla_{\widehat{X}}\widehat{Y}))\widetilde{f}\nonumber\\
&=\rho_\pounds(X^V)(\rho_\pounds(Y^V)\widetilde{f})-\rho_\pounds(\bar{i}(\nabla_{\widehat{X}}\widehat{Y}))\widetilde{f}.
\end{align}
But using (i), (ii) and (iv) of proposition \ref{YB} we deduce
\[
\nabla^v_{\widehat{X}}\widehat{Y}=\bar{j}[\bar{i}\widehat{X}, \bar{\mathcal{H}}\widehat{Y}]_\pounds=\bar{j}[X^V, Y^h]_\pounds=0,
\]
because $[X^V, Y^h]_\pounds\in\Gamma(v\pounds^\pi E)$. Plugging the above equation into (\ref{YB1}) implies the first part of assertion. Now, we prove the second part of assertion. Since $[X^V, Y^V]_\pounds=0$, then using the first part of assertion we get
\begin{align*}
\nabla^v\nabla^v\widetilde{f}(\widehat{X}, \widehat{Y})&=\rho_\pounds(X^V)(\rho_\pounds(Y^V)\widetilde{f})=\rho_\pounds([X^V, Y^V]_\pounds)(\widetilde{f})\\
&\ \ \ +\rho_\pounds(Y^V)(\rho_\pounds(X^V)\widetilde{f})\\
&=\rho_\pounds(Y^V)(\rho_\pounds(X^V)\widetilde{f})\\
&=\nabla^v\nabla^v\widetilde{f}(\widehat{Y}, \widehat{X}).
\end{align*}
\end{proof}
\begin{proposition}
Let $\widetilde{f}\in C^\infty(E)$ be a homogenous function of degree 1. Then
\[
\nabla^v_\delta(\nabla^v\nabla^v\widetilde{f})=-\nabla^v\nabla^v\widetilde{f}.
\]
\end{proposition}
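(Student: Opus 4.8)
The plan is to reduce everything to the local coordinate description of the hessian and to exploit that $\nabla^v$ is a flat connection whose Christoffel-type coefficients vanish on the generating sections $\widehat{e_\alpha}$. Set $\bar A:=\nabla^v\nabla^v\widetilde f\in\mathcal T^0_2(\pi)$. By the preceding proposition, $\bar A(\widehat X,\widehat Y)=\rho_\pounds(X^V)(\rho_\pounds(Y^V)\widetilde f)$, and since $\bar i\widehat{e_\alpha}=e_\alpha^V=\mathcal V_\alpha$ and $\rho_\pounds(\mathcal V_\alpha)=\partial/\partial\textbf y^\alpha$, its components are simply $A_{\alpha\beta}:=\bar A(\widehat{e_\alpha},\widehat{e_\beta})=\partial^2\widetilde f/\partial\textbf y^\alpha\partial\textbf y^\beta$. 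Because the $\widehat{e_\alpha}$ generate $\Gamma(\pi^*\pi)$ over $C^\infty(E)$ and both sides of the claimed identity are tensorial, it suffices to verify the equality on these sections.

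First I would record two structural facts. The map $\nabla^v_{\bar X}\bar Y=\bar j[\bar i\bar X,\bar{\mathcal H}\bar Y]_\pounds$ is $C^\infty(E)$-linear in $\bar X$: replacing $\bar X$ by $\widetilde g\bar X$ produces the extra term $-(\rho_\pounds(\bar{\mathcal H}\bar Y)\widetilde g)\,\bar j(\bar i\bar X)$, which vanishes since $j\circ i=0$. Combined with $\delta=\textbf y^\gamma\widehat{e_\gamma}$ (equivalently $\bar i\delta=C=\textbf y^\gamma\mathcal V_\gamma$), this gives $\nabla^v_\delta\widehat{e_\beta}=\textbf y^\gamma\nabla^v_{\widehat{e_\gamma}}\widehat{e_\beta}$. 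Second, $\nabla^v_{\widehat{e_\gamma}}\widehat{e_\beta}=\bar j[e_\gamma^V,e_\beta^h]_\pounds=0$, because $[e_\gamma^V,e_\beta^h]_\pounds$ is vertical (by the bracket formula for $[X^h,Y^V]_\pounds$) while $\ker j=v\pounds^\pi E$; this is exactly the vanishing already used in the proof of the previous proposition. Hence $\nabla^v_\delta\widehat{e_\beta}=0$.

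With these in hand, the tensor-derivative formula collapses its correction terms, leaving $(\nabla^v_\delta\bar A)(\widehat{e_\alpha},\widehat{e_\beta})=\rho_\pounds(\bar i\delta)(A_{\alpha\beta})=\rho_\pounds(C)(A_{\alpha\beta})=\textbf y^\gamma\,\partial A_{\alpha\beta}/\partial\textbf y^\gamma$. It then remains only to apply the Liouville operator $\textbf y^\gamma\partial/\partial\textbf y^\gamma$ to $A_{\alpha\beta}$. Starting from the homogeneity relation $\textbf y^\gamma\partial\widetilde f/\partial\textbf y^\gamma=\widetilde f$ (Lemma \ref{lemmahom} with $r=1$) and differentiating it once in $\textbf y^\alpha$ and once in $\textbf y^\beta$ shows that $A_{\alpha\beta}$ is homogeneous of degree $-1$, i.e. $\textbf y^\gamma\partial A_{\alpha\beta}/\partial\textbf y^\gamma=-A_{\alpha\beta}$. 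Therefore $(\nabla^v_\delta\bar A)(\widehat{e_\alpha},\widehat{e_\beta})=-A_{\alpha\beta}=-\bar A(\widehat{e_\alpha},\widehat{e_\beta})$, and by the generating property $\nabla^v_\delta(\nabla^v\nabla^v\widetilde f)=-\nabla^v\nabla^v\widetilde f$.

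The only genuinely delicate point is the bookkeeping in the second paragraph: one must check that direction-slot linearity together with the vanishing $\nabla^v_{\widehat{e_\gamma}}\widehat{e_\beta}=0$ really forces every Christoffel correction in $(\nabla^v_\delta\bar A)$ to disappear, so that $\nabla^v_\delta$ acts on $\bar A$ purely through the Liouville operator on its components. Everything after that is the routine double differentiation of the homogeneity identity, and the passage from generators to arbitrary sections is immediate by tensoriality.
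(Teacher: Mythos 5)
Your proof is correct, and its overall skeleton matches the paper's: both arguments hinge on showing that $\nabla^v_\delta$ acts on the Hessian $\bar{A}=\nabla^v\nabla^v\widetilde{f}$ purely through the Liouville derivative of its values, because all correction terms vanish. Indeed, your vanishing $\nabla^v_\delta\widehat{e_\beta}=\textbf{y}^\gamma\,\bar{j}[e_\gamma^V,e_\beta^h]_\pounds=0$ and the paper's $\nabla^v_\delta\widehat{X}=\bar{j}[C,X^h]_\pounds=0$ are the same fact, reached either through $C^\infty(E)$-linearity in the direction slot and $\delta=\textbf{y}^\gamma\widehat{e_\gamma}$, or directly; both rest on $[X^V,Y^h]_\pounds$ being vertical. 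Where you genuinely diverge is the final step. The paper stays coordinate-free: using $[X^V,C]_\pounds=X^V$ (relation (ii) of (\ref{Liover})) it writes $\rho_\pounds(X^V)$ as the commutator $[\rho_\pounds(X^V),\rho_\pounds(C)]$, moves $\rho_\pounds(C)$ past $\rho_\pounds(X^V)$ and $\rho_\pounds(Y^V)$, and invokes the homogeneity $\rho_\pounds(C)\widetilde{f}=\widetilde{f}$ to arrive at $\bar{A}(\widehat{X},\widehat{Y})=-\rho_\pounds(C)\big(\bar{A}(\widehat{X},\widehat{Y})\big)$. You instead pass to components $A_{\alpha\beta}=\partial^2\widetilde{f}/\partial\textbf{y}^\alpha\partial\textbf{y}^\beta$ and differentiate the Euler identity $\textbf{y}^\gamma\partial\widetilde{f}/\partial\textbf{y}^\gamma=\widetilde{f}$ of Lemma \ref{lemmahom} twice, obtaining that $A_{\alpha\beta}$ is homogeneous of degree $-1$. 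Your route is more elementary and makes the mechanism transparent (derivatives in the fibre variable drop the degree by one); the paper's route avoids choosing a local basis and exhibits the result as a consequence of the bracket relations of the Liouville section alone, in the intrinsic style of the surrounding sections. Both are complete; the only extra step your version needs is the (routine, but worth stating, as you do) tensoriality of $\nabla^v_\delta\bar{A}$, which lets you pass from the generators $\widehat{e_\alpha}$ to arbitrary sections, whereas the paper works with arbitrary $\widehat{X},\widehat{Y}$ from the outset.
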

\begin{proof}
Setting $\bar{A}=\nabla^v\nabla^v\widetilde{f}$, we must show $\nabla^v_\delta\bar{A}=-\bar{A}$. Let $X$ and $Y$ be sections of $E$. Then we have
\begin{equation}\label{**}
(\nabla^v_\delta\bar{A})(\widehat{X}, \widehat{Y})=\rho_\pounds(\bar{i}\delta)\bar{A}(\widehat{X}, \widehat{Y})-\bar{A}(\nabla^v_\delta\widehat{X}, \widehat{Y})-\bar{A}(\widehat{X}, \nabla^v_\delta\widehat{Y}).
\end{equation}
But using (ii) of definition \ref{YB2}, we deduce
\[
\nabla^v_\delta\widehat{X}=\bar{j}[\bar{i}\delta, \bar{H}\widehat{Y}]_\pounds=\bar{j}[C, Y^h]_\pounds=0,
\]
because $[C, Y^h]_\pounds\in\Gamma(v\pounds^\pi E)$. Similarly we have $\nabla^v_\delta\widehat{Y}=0$. Therefore (\ref{**}) reduce to the following
\begin{equation}\label{***}
(\nabla^v_\delta\bar{A})(\widehat{X}, \widehat{Y})=\rho_\pounds(C)\bar{A}(\widehat{X}, \widehat{Y})=\rho_\pounds(C)\Big(\rho_\pounds(X^V)(\rho_\pounds(Y^V)\widetilde{f})\Big).
\end{equation}
In other hand, using (ii) of (\ref{Liover}) we get
\begin{align*}
\bar{A}(\hat{X}, \hat{Y})&=\rho_\pounds(X^V)(\rho_\pounds(Y^V)\widetilde{f})=\rho_\pounds([X^V, C]_\pounds)(\rho_\pounds(Y^V)\widetilde{f})\nonumber\\
&=[\rho_\pounds(X^V), \rho_\pounds(C)](\rho_\pounds(Y^V)\widetilde{f})=\rho_\pounds(X^V)\Big(\rho_\pounds(C)(\rho_\pounds(Y^V)\widetilde{f})\Big)\nonumber\\
&\ \ \ -\rho_\pounds(C)\Big(\rho_\pounds(X^V)(\rho_\pounds(Y^V)\widetilde{f})\Big)=\rho_\pounds(X^V)\Big([\rho_\pounds(C), \rho_\pounds(Y^V)]\widetilde{f}\nonumber\\
&\ \ \ +\rho_\pounds(Y^V)(\rho_\pounds(C)\widetilde{f})\Big)-\rho_\pounds(C)\Big(\rho_\pounds(X^V)(\rho_\pounds(Y^V)\widetilde{f})\Big)\nonumber\\
&=\rho_\pounds(X^V)\Big(\rho_\pounds[C, Y^V]_\pounds\widetilde{f}+\rho_\pounds(Y^V)(\rho_L(C)\widetilde{f})\Big)\nonumber\\
&\ \ \ -\rho_\pounds(C)\Big(\rho_\pounds(X^V)(\rho_\pounds(Y^V)\widetilde{f})\Big).
\end{align*}
Since $\widetilde{f}$ is homogenous of degree 1, then we have $\rho_\pounds(C)\widetilde{f}=\widetilde{f}$. Setting this in the above equation and using (ii) of (\ref{Liover}) we get
\begin{equation}\label{****}
\bar{A}(\hat{X}, \hat{Y})=-\rho_\pounds(C)\Big(\rho_\pounds(X^V)(\rho_\pounds(Y^V)\widetilde{f})\Big).
\end{equation}
From (\ref{***}) and (\ref{****}) we have the assertion.
\end{proof}
\begin{defn}\label{Hi2}
Let $h$ be a horizontal endomorphism and $\bar{\mathcal{H}}$ be a horizontal map of $\pi$ associated to $h$. Operator $\nabla^h$  with  properties
\begin{enumerate}
\item[(i)]
$\nabla^{h}_{\bar{X}}\widetilde{f}:=\rho_\pounds(\bar{\mathcal{H}}\bar{X})\widetilde{f}$,
\item[(ii)]
$\nabla^{h}_{\bar{X}}\bar{Y}:=\bar{\mathcal{V}}[\bar{\mathcal{H}}\bar{X},\bar{i}\bar{Y}]_\pounds$,
\item[(iii)]
$(\nabla^{h}_{\bar{X}}\bar{\alpha})(\bar{Y}):=\rho_{\pounds}(\bar{\mathcal{H}}\bar{X})(\bar{\alpha}(\bar{Y}))-\bar{\alpha}(\nabla^{h}_{\bar{X}}\bar{Y})$,
\end{enumerate}
is called the \textit{canonical $h$-covariant differential}, where $\widetilde{f}\in C^{\infty}(E)$, $\bar{X}, \bar{Y}\in\Gamma(\pi^*\pi)$, $\bar{\alpha}\in \Omega^1(\pi)$.
\begin{lemma}
Let $H$ be the tension of $h$ and $\widetilde{X}$ be a section of $\pounds^\pi E$. Then
\begin{equation}\label{hdel}
(\nabla^h\delta)(\bar{j}\widetilde{X})=\bar{\mathcal{V}}H(\widetilde{X}).
\end{equation}
\end{lemma}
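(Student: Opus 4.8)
The plan is to expand the left-hand side directly through the defining property (ii) of $\nabla^h$ and then recognize the outcome as $\bar{\mathcal{V}}$ applied to the tension. First I would write $(\nabla^h\delta)(\bar{j}\widetilde{X})=\nabla^h_{\bar{j}\widetilde{X}}\delta$ and apply property (ii) of Definition \ref{Hi2} with $\bar{X}=\bar{j}\widetilde{X}$ and $\bar{Y}=\delta$, which yields
\[
\nabla^h_{\bar{j}\widetilde{X}}\delta=\bar{\mathcal{V}}[\bar{\mathcal{H}}\bar{j}\widetilde{X},\, \bar{i}\delta]_\pounds .
\]

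Next I would simplify the two entries of the bracket. Using the definitions in (\ref{2base0}) together with the identity $h=\mathcal{H}\circ j$ gives $\bar{\mathcal{H}}\bar{j}\widetilde{X}=\mathcal{H}\circ j\circ\widetilde{X}=h\widetilde{X}$, while the very definition $C:=i\circ\delta$ of the Liouville section gives $\bar{i}\delta=i\circ\delta=C$. Hence the left-hand side of (\ref{hdel}) equals $\bar{\mathcal{V}}[h\widetilde{X},\, C]_\pounds$.

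For the right-hand side I would expand the tension $H=[h, C]^{F-N}_\pounds$ by means of (\ref{F3}) with $K=h$ and $Y=C$, obtaining $H(\widetilde{X})=[h\widetilde{X},\, C]_\pounds-h[\widetilde{X},\, C]_\pounds$. Applying the (additive) operator $\bar{\mathcal{V}}$, it then remains only to see that the second term is annihilated, that is $\bar{\mathcal{V}}\circ h=0$. This is the single genuine computation: since $\bar{\mathcal{V}}$ is induced by $\mathcal{V}=j\circ F$, and $F\circ h=-J$ by (ii) of Lemma \ref{IM}, we get $\mathcal{V}\circ h=j\circ F\circ h=-\,j\circ J=-\,j\circ i\circ j=0$, the last equality holding because $j\circ i=0$ by exactness of (\ref{exact se}). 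Consequently $\bar{\mathcal{V}}\big(h[\widetilde{X},C]_\pounds\big)=0$, so $\bar{\mathcal{V}}H(\widetilde{X})=\bar{\mathcal{V}}[h\widetilde{X},\, C]_\pounds$, which coincides with the expression computed for the left-hand side and establishes (\ref{hdel}).

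I do not expect a real obstacle here: the argument is essentially a chain of substitutions. The only point requiring care is the bookkeeping of the barred operators $\bar i,\bar j,\bar{\mathcal H},\bar{\mathcal V}$, which must be treated as composition operators acting fibrewise on sections, together with the vanishing identity $\mathcal{V}\circ h=0$. This identity is exactly the statement that the vertical map $\mathcal{V}$ kills the horizontal subbundle $h\pounds^\pi E$, and it is what makes the horizontal term $h[\widetilde{X},C]_\pounds$ drop out, leaving precisely the tension contribution.
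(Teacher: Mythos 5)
Your proof is correct and follows essentially the same route as the paper: apply property (ii) of the definition of $\nabla^h$, identify $\bar{\mathcal{H}}\bar{j}\widetilde{X}=h\widetilde{X}$ and $\bar{i}\delta=C$, and recognize the result as $\bar{\mathcal{V}}$ applied to $[h,C]^{F-N}_\pounds(\widetilde{X})=H(\widetilde{X})$. The only difference is that you explicitly justify the step the paper leaves implicit, namely that $\bar{\mathcal{V}}\circ h=0$ (via $\mathcal{V}\circ h=j\circ F\circ h=-j\circ J=0$), which kills the term $h[\widetilde{X},C]_\pounds$ in the Fr\"{o}licher--Nijenhuis expansion; this is a welcome clarification, not a different argument.
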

\begin{proof}
Using (ii) of the above definition we get
\[
(\nabla^h\delta)(\bar{j}\widetilde{X})=\nabla^h_{\bar{j}\widetilde{X}}\delta=\bar{\mathcal{V}}[\bar{\mathcal{H}}\bar{j}\widetilde{X}, \bar{i}\delta]_\pounds=\bar{\mathcal{V}}[h\widetilde{X}, C]_\pounds=\bar{\mathcal{V}}[h, C]^{F-N}_\pounds(\widetilde{X})=\bar{\mathcal{V}}H(\widetilde{X}).
\]
\end{proof}
Since $\bar{i}\bar{\mathcal{V}}=v$, then (\ref{hdel}) gives us
\begin{equation}\label{hdel1}
\bar{i}(\nabla^h\delta)(\bar{j}\widetilde{X})=vH(\widetilde{X})=H(\widetilde{X}).
\end{equation}
By reason of the above relation, the (1, 1) tensor field $\bar{H}=\nabla^h\delta$ is called the tension of the horizontal map $\bar{\mathcal{H}}$. Indeed, we have
\begin{equation}\label{Hi1}
\bar{H}(\bar{X})=\bar{\mathcal{V}}[\bar{\mathcal{H}}\bar{X}, C]_\pounds,\ \ \ \forall \bar{X}\in\Gamma(\pi^*\pi).
\end{equation}
\end{defn}
Let $\bar{A}\in\mathcal{T}^{k}_{l}(\pi)$. Then we define
\begin{align*}
(\nabla^{h}_{\bar{X}}\bar{A})(\bar{\alpha_1}, \bar{\alpha_2},..., \bar{\alpha_k}, \bar{X_1}, \bar{X_2},..., \bar{X_l})&:=\rho_{\pounds}(\bar{\mathcal{H}}\bar{X})(\bar{\alpha_1}, \bar{\alpha_2},..., \bar{\alpha_k}, \bar{X_1}, \bar{X_2},..., \bar{X_l})\\
&-\sum_{i=1}^{k}\bar{A}(\bar{\alpha_1},...,\nabla^{h}_{\bar{X}}\bar{\alpha_i},..., \bar{\alpha_k}, \bar{X_1}, \bar{X_2},..., \bar{X_l})\\
&-\sum_{i=1}^{l}\bar{A}(\bar{\alpha_1},\bar{\alpha_2},..., \bar{\alpha_k}, \bar{X_1},...,\nabla^{h}_{\bar{X}}\bar{X_i},..., \bar{X_l}).
\end{align*}
Moreover, For $\bar{A}\in\mathcal{T}^{k}_{l}(\pi)$ tensor field $\nabla^{h}\bar{A}\in \mathcal{T}^{k}_{l+1}(\pi)$ is defined by the following rule
$$(\nabla^{h}\bar{A})(\bar{X},\bar{\alpha_1}, \bar{\alpha_2},..., \bar{\alpha_k}, \bar{X_1}, \bar{X_2},..., \bar{X_l}):=(\nabla^{h}_{\bar{X}}\bar{A})(\bar{\alpha_1}, \bar{\alpha_2},..., \bar{\alpha_k}, \bar{X_1}, \bar{X_2},..., \bar{X_l}).$$
Now, we consider map
\begin{equation}\left\{
\begin{array}{cc}
D: \Gamma(\pounds^\pi E)\times\Gamma(\pi^*\pi)\longrightarrow\Gamma(\pi^*\pi),\\
(\widetilde{X},\bar{Y})\longmapsto D_{\widetilde{X}}\bar{Y},
\end{array}\right.
\end{equation}
satisfies
\begin{enumerate}
\item[(i)] $D_{f\widetilde{X}+\widetilde{Y}}\bar{Z}=fD_{\widetilde{X}}\bar{Z}+D_{\widetilde{Y}}\bar{Z,}$
\item[(ii)] $D_{\widetilde{X}}\widetilde{f}\bar{Z}=\widetilde{f}D_{\widetilde{X}}\bar{Z}+\rho_{\pounds}(\widetilde{X})(\widetilde{f})\bar{Z,}$
\item[(iii)]$D_{\widetilde{X}}(\bar{Z}+\bar{W})=D_{\widetilde{X}}\bar{Z}+D_{\widetilde{X}}\bar{W}.$
\end{enumerate}
We call this map a \textit{$\rho_\pounds$-covariant derivative in $\Gamma(\pi^*\pi)$}.
\begin{theorem}
Let $h$ be a horizontal endomorphism and $\bar{\mathcal{H}}$ be a horizontal map of $\pi$ associated to $h$. Then
\[
\nabla: \Gamma(\pounds^\pi E)\times\Gamma(\pi^*\pi)\longrightarrow\Gamma(\pi^*\pi),
\]
given by
\begin{equation}\label{Beri}
\nabla_{\widetilde{X}}\bar{Y}:=\nabla^v_{\bar{\mathcal{V}}\widetilde{X}}\bar{Y}+\nabla^h_{\bar{j}\widetilde{X}}\bar{Y},
\end{equation}
is a $\rho_\pounds$-covariant derivative in $\Gamma(\pi^*\pi)$, where $\widetilde{X}\in\Gamma(\pounds^\pi E)$ and $\bar{Y}\in \Gamma(\pi^*\pi)$.
\end{theorem}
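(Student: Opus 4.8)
The plan is to verify directly the three defining axioms (i)--(iii) of a $\rho_\pounds$-covariant derivative for the operator $\nabla$ given by \eqref{Beri}. The whole argument rests on two facts already available in the excerpt: the splitting maps $\bar i,\bar j,\bar{\mathcal H},\bar{\mathcal V}$ are $C^\infty(E)$-linear (being the bundle morphisms $i,j,\mathcal H,\mathcal V$ postcomposed with sections), and they obey the composition identities coming from the double short exact sequence, namely $\bar j\circ\bar{\mathcal H}=\mathrm{id}$, $\bar{\mathcal V}\circ\bar i=\mathrm{id}$, $\bar j\circ\bar i=0$ and $\bar{\mathcal V}\circ\bar{\mathcal H}=0$, together with $\bar i\circ\bar{\mathcal V}=v$ and $\bar{\mathcal H}\circ\bar j=h$. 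In addition I will use the Leibniz rule of the Lie algebroid bracket on $\pounds^\pi E$, that is $[\xi,\widetilde f\,\eta]_\pounds=\widetilde f[\xi,\eta]_\pounds+\rho_\pounds(\xi)(\widetilde f)\eta$ (equivalently $[\widetilde f\,\xi,\eta]_\pounds=\widetilde f[\xi,\eta]_\pounds-\rho_\pounds(\eta)(\widetilde f)\xi$), and the decomposition $h+v=\mathrm{id}$.

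First I would dispatch axioms (i) and (iii), the routine ones. Additivity (iii) is immediate, since $\bar{\mathcal V},\bar j,\bar{\mathcal H},\bar i$ are additive and hence $\nabla^v_{\bar X}$ and $\nabla^h_{\bar X}$ are additive in their section argument. For $C^\infty(E)$-linearity in the $\pounds^\pi E$-slot (i), I would first observe that $\nabla^v$ and $\nabla^h$ are themselves tensorial in their first argument: substituting $\widetilde f\bar X$ into $\nabla^v_{\bar X}\bar Y=\bar j[\bar i\bar X,\bar{\mathcal H}\bar Y]_\pounds$ and applying the bracket's Leibniz rule yields a correction term proportional to $\bar j\bar i\bar X=0$, while the analogous computation for $\nabla^h_{\bar X}\bar Y=\bar{\mathcal V}[\bar{\mathcal H}\bar X,\bar i\bar Y]_\pounds$ produces a term proportional to $\bar{\mathcal V}\bar{\mathcal H}\bar X=0$. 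Combining this tensoriality with the $C^\infty(E)$-linearity of $\bar{\mathcal V}$ and $\bar j$ gives (i).

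The main work is axiom (ii), the Leibniz rule, and this is where I expect the only genuine bookkeeping. Writing $\nabla_{\widetilde X}(\widetilde f\,\bar Z)=\nabla^v_{\bar{\mathcal V}\widetilde X}(\widetilde f\,\bar Z)+\nabla^h_{\bar j\widetilde X}(\widetilde f\,\bar Z)$, I would expand each summand using the Leibniz rule of $[\cdot,\cdot]_\pounds$ together with $\bar{\mathcal H}(\widetilde f\bar Z)=\widetilde f\,\bar{\mathcal H}\bar Z$ and $\bar i(\widetilde f\bar Z)=\widetilde f\,\bar i\bar Z$. For the $v$-part this gives $\nabla^v_{\bar{\mathcal V}\widetilde X}(\widetilde f\bar Z)=\widetilde f\,\nabla^v_{\bar{\mathcal V}\widetilde X}\bar Z+\rho_\pounds(\bar i\bar{\mathcal V}\widetilde X)(\widetilde f)\,\bar j\bar{\mathcal H}\bar Z$, where $\bar j\bar{\mathcal H}=\mathrm{id}$ turns the last factor into $\bar Z$ and $\bar i\bar{\mathcal V}=v$ identifies the anchor as $\rho_\pounds(v\widetilde X)$. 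Symmetrically, $\bar{\mathcal V}\bar i=\mathrm{id}$ and $\bar{\mathcal H}\bar j=h$ yield $\nabla^h_{\bar j\widetilde X}(\widetilde f\bar Z)=\widetilde f\,\nabla^h_{\bar j\widetilde X}\bar Z+\rho_\pounds(h\widetilde X)(\widetilde f)\,\bar Z$. Adding the two contributions and invoking $\rho_\pounds(v\widetilde X)+\rho_\pounds(h\widetilde X)=\rho_\pounds(\widetilde X)$, which holds because $v+h=\mathrm{id}$, produces exactly $\widetilde f\,\nabla_{\widetilde X}\bar Z+\rho_\pounds(\widetilde X)(\widetilde f)\bar Z$. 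The one delicate point is to keep the two anchor contributions separate until the final step, so that the half coming from the vertical part and the half from the horizontal part recombine into the full anchor $\rho_\pounds(\widetilde X)$; with that observation, everything else is a direct application of the composition identities recorded above.
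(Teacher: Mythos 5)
Your proof is correct and follows essentially the same route as the paper: the core step in both is the Leibniz rule, obtained by expanding the $v$- and $h$-parts via the identities $\bar{i}\bar{\mathcal{V}}=v$, $\bar{\mathcal{H}}\bar{j}=h$, $\bar{j}\bar{\mathcal{H}}=\mathrm{id}$, $\bar{\mathcal{V}}\bar{i}=\mathrm{id}$ and recombining the two anchor terms through $v+h=\mathrm{id}$. The only difference is that you spell out the tensoriality in the $\pounds^\pi E$-slot (via $\bar{j}\bar{i}=0$ and $\bar{\mathcal{V}}\bar{\mathcal{H}}=0$) and the Leibniz rules for $\nabla^v,\nabla^h$ themselves, which the paper dispatches with ``similarly'' or uses implicitly.
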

\begin{proof}
Let $\widetilde{f}\in C^\infty(E)$. Then we have
\begin{align*}
\nabla_{\widetilde{X}} \widetilde{f}\bar{Y}&=\nabla^v_{\bar{\mathcal{V}}\widetilde{X}}\widetilde{f}\bar{Y}+\nabla^h_{\bar{j}\widetilde{X}}\widetilde{f}\bar{Y}
=\rho_\pounds(\bar{i}\bar{\mathcal{V}}\widetilde{X})\widetilde{f}+\rho_\pounds(\bar{\mathcal{H}}\bar{j}\widetilde{X})\widetilde{f}
+\widetilde{f}\nabla^v_{\bar{\mathcal{V}}\widetilde{X}}\bar{Y}+\widetilde{f}\nabla^h_{\bar{j}\widetilde{X}}\bar{Y}\nonumber\\
&=\rho_\pounds(\bar{i}\bar{\mathcal{V}}\widetilde{X})\widetilde{f}+\rho_\pounds(\bar{\mathcal{H}}\bar{j}\widetilde{X})\widetilde{f}+\widetilde{f}\nabla_{\widetilde{X}} \bar{Y}.
\end{align*}
It is easy to show that $\bar{i}\bar{\mathcal{V}}\widetilde{X}=v\widetilde{X}$ and $\bar{\mathcal{H}}\bar{j}\widetilde{X}=h\widetilde{X}$. Therefore the above equation gives us
\[
\nabla_{\widetilde{X}} \widetilde{f}\bar{Y}=\rho_\pounds(v\widetilde{X})\widetilde{f}+\rho_\pounds(h\widetilde{X})\widetilde{f}+\widetilde{f}\nabla_{\widetilde{X}} \bar{Y}=\rho_\pounds(\widetilde{X})\widetilde{f}+\widetilde{f}\nabla_{\widetilde{X}} \bar{Y}.
\]
Similarly we can show $\nabla_{\widetilde{X}}(\bar{Y}+\bar{Z})=\nabla_{\widetilde{X}}\bar{Y}+\nabla_{\widetilde{X}}\bar{Z}$ and $\nabla_{\widetilde{f}\widetilde{X}+\widetilde{Y}}\bar{Y}=\widetilde{f}\nabla_{\widetilde{X}}\bar{Z}+\nabla_{\widetilde{Y}}\bar{Z}$. Therefore $\nabla$ is a $\rho_\pounds$-covariant derivative in $\Gamma(\pi^*\pi)$.
\end{proof}
The $\rho_\pounds$-covariant derivative $\nabla$ introduced by the above theorem is called \textit{Berwald derivative generated by $h$}. Indeed the Berwald derivative is as follows:
\begin{equation}
\nabla_{\widetilde{X}}\bar{Y}=\bar{j}[v\widetilde{X}, \bar{\mathcal{H}}\bar{Y}]_\pounds+\bar{\mathcal{V}}[h\widetilde{X}, \bar{i}\bar{Y}]_\pounds,\ \ \ \forall\widetilde{X}\in\Gamma(\pounds^\pi E),\ \ \forall\bar{Y}\in\Gamma(\pi^*\pi).
\end{equation}
Using the above equation we can obtain
\begin{align}
\nabla_{X^V}\widehat{Y}&=0, \hspace{2.1cm}\nabla_{X^h}\widehat{Y}=\bar{\mathcal{V}}[X^h, Y^V]_\pounds,\label{Berder}\\ \nabla_{\bar{i}\bar{X}}\bar{Y}&=\bar{j}[\bar{i}\bar{X},\mathcal{H}\bar{Y}]_\pounds,\ \ \ \ \nabla_{\mathcal{H}\bar{X}}\bar{Y}=\bar{\mathcal{V}}[\mathcal{H}\bar{X},i\bar{Y}]_\pounds.\label{Berder1}
\end{align}
where $X$ and $Y$ are sections of $E$ and $\bar{X}, \bar{Y}\in\Gamma(\pi^*\pi)$.

Now we consider the local basis $\{e_\alpha\}$ of $\Gamma(E)$. Then $\{\widehat{e_\alpha}\}$ is a basis of $\Gamma(\pi^*\pi)$, where $\widehat{e_\alpha}(u)=(u, e_\alpha(\pi(u)))$, for all $u\in E$. Using (\ref{2base}), proposition \ref{YB}, and the definition of $j$, it is easy to check that
\begin{equation}\label{YB4}
\bar{\mathcal{H}}\widehat{e_\alpha}=\delta_\alpha,\ \ \ \bar{i}\widehat{e_\alpha}=\mathcal{V}_\alpha,\ \ \ \bar{j}(\delta_\alpha)=\widehat{e_\alpha},\ \ \ \bar{\mathcal{V}}(\mathcal{V}_\alpha)=\widehat{e_\alpha}.
\end{equation}
Also we can deduce $\bar{\mathcal{V}}(\delta_\alpha)=0$. Therefore using the above equation, (\ref{2base}) and (\ref{Berder}) we obtain
\begin{align*}
\nabla_{\delta_\alpha}\widehat{e_\beta}&=\bar{\mathcal{V}}[\delta_\alpha, e^V_\beta]_\pounds=\bar{\mathcal{V}}[\delta_\alpha, \mathcal{V}_\beta]_\pounds=-\frac{\partial \mathcal{B}^\gamma_\alpha}{\partial\textbf{y}^\beta}\widehat{e_{\ \gamma}},\\
\nabla_{\mathcal{V}_\alpha}\widehat{e_\beta}&=\bar{j}[\mathcal{V}_\alpha, e^h_\beta]_\pounds=\bar{j}[\mathcal{V}_\alpha, \delta_\beta]_\pounds=0,
\end{align*}
and consequently
\begin{equation}\label{YB3}
\nabla_{\widetilde{X}}\bar{Y}=\Big(\widetilde{X}^\alpha((\rho^i_\alpha\circ\pi)\frac{\partial\bar{Y}^\beta}{\partial\textbf{x}^i}
+\mathcal{B}^\gamma_\alpha\frac{\partial\bar{Y}^\beta}{\partial\textbf{y}^\gamma})-\widetilde{X}^\alpha\bar{Y}^\gamma
\frac{\partial \mathcal{B}^\beta_\alpha}{\partial\textbf{y}^\gamma}+\widetilde{X}^{\bar\alpha}\frac{\partial\bar{Y}^\beta}{\partial\textbf{y}^\alpha}\Big)
\widehat{e_\beta},
\end{equation}
where $\widetilde{X}=\widetilde{X}^\alpha\delta_\alpha+\widetilde{X}^{\bar\alpha}\mathcal{V}_\alpha\in\Gamma(\pounds^\pi E)$ and $\bar{Y}=\bar{Y}^\beta \widehat{e_\beta}\in\Gamma(\pi^*\pi)$.
\begin{defn}
A $\rho_\pounds$-covariant derivative operator $D$ in $\Gamma(\pi^*\pi)$ is said to be associated to the horizontal map $\bar{\mathcal{H}}$ if $D\delta=\bar{\mathcal{V}}$.
\end{defn}
\begin{lemma}
Let $\nabla$ be the Berwald derivative induced by $h$. Then
\begin{equation}\label{pr1}
\nabla\delta=\bar{H}\circ\bar{j}+\bar{\mathcal{V}}.
\end{equation}
\end{lemma}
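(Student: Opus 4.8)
The plan is to read $\nabla\delta$ as the $(1,1)$-tensor $\widetilde{X}\mapsto\nabla_{\widetilde{X}}\delta$ and to split it according to the very definition (\ref{Beri}) of the Berwald derivative, namely $\nabla_{\widetilde{X}}\delta=\nabla^v_{\bar{\mathcal{V}}\widetilde{X}}\delta+\nabla^h_{\bar{j}\widetilde{X}}\delta$. The horizontal summand is essentially free: by the very definition $\bar{H}=\nabla^h\delta$ of the tension of the horizontal map one has $\nabla^h_{\bar{j}\widetilde{X}}\delta=(\nabla^h\delta)(\bar{j}\widetilde{X})=\bar{H}(\bar{j}\widetilde{X})$, which already produces the term $\bar{H}\circ\bar{j}$. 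Hence everything reduces to the vertical summand, and I claim it equals $\bar{\mathcal{V}}$; equivalently, I will prove the clean intermediate fact $\nabla^v\delta=\mathrm{id}_{\pi^*\pi}$, after which $\nabla^v_{\bar{\mathcal{V}}\widetilde{X}}\delta=\bar{\mathcal{V}}\widetilde{X}$ follows at once.

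To establish $\nabla^v\delta=\mathrm{id}_{\pi^*\pi}$ I will work on the local frame $\{\widehat{e_\alpha}\}$ of $\Gamma(\pi^*\pi)$, which is legitimate because $\nabla^v\delta$ is a tensor field (the lower-argument map is $C^\infty(E)$-linear, since $\bar{j}\circ\bar{i}=j\circ i=0$ by exactness of (\ref{exact se}), so the derivative term drops out). Using property (ii) of Definition \ref{YB2} I write $\nabla^v_{\widehat{e_\alpha}}\delta=\bar{j}[\bar{i}\widehat{e_\alpha},\bar{\mathcal{H}}\delta]_\pounds$. Then I compute the two inputs: $\bar{i}\widehat{e_\alpha}=\mathcal{V}_\alpha$ by (\ref{YB4}), while the coordinate form $\delta=\textbf{y}^\beta\widehat{e_\beta}$ together with the $C^\infty(E)$-linearity of $\bar{\mathcal{H}}$ and $\bar{\mathcal{H}}\widehat{e_\beta}=\delta_\beta$ gives $\bar{\mathcal{H}}\delta=\textbf{y}^\beta\delta_\beta$.

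The heart of the computation is then the bracket $[\mathcal{V}_\alpha,\textbf{y}^\beta\delta_\beta]_\pounds$, which I expand by the Leibniz rule into $\rho_\pounds(\mathcal{V}_\alpha)(\textbf{y}^\beta)\delta_\beta+\textbf{y}^\beta[\mathcal{V}_\alpha,\delta_\beta]_\pounds$. Here $\rho_\pounds(\mathcal{V}_\alpha)(\textbf{y}^\beta)=\partial\textbf{y}^\beta/\partial\textbf{y}^\alpha=\delta^\beta_\alpha$, and by the adapted-basis brackets (\ref{LieB}) the remainder $[\mathcal{V}_\alpha,\delta_\beta]_\pounds=(\partial\mathcal{B}^\gamma_\beta/\partial\textbf{y}^\alpha)\mathcal{V}_\gamma$ is purely vertical. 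Applying $\bar{j}$ and using $\bar{j}(\delta_\alpha)=\widehat{e_\alpha}$ from (\ref{YB4}) together with $\bar{j}(\mathcal{V}_\gamma)=0$ (which follows from $\mathcal{V}_\gamma=e_\gamma^V$, see (\ref{2base}), and Proposition \ref{YB}(ii)) annihilates the vertical part and leaves $\nabla^v_{\widehat{e_\alpha}}\delta=\widehat{e_\alpha}$, proving $\nabla^v\delta=\mathrm{id}_{\pi^*\pi}$.

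Putting the two summands together yields $\nabla_{\widetilde{X}}\delta=\bar{\mathcal{V}}\widetilde{X}+\bar{H}(\bar{j}\widetilde{X})$ for every $\widetilde{X}\in\Gamma(\pounds^\pi E)$, which is exactly (\ref{pr1}). I expect the only delicate point to be the bracket computation: one must be careful that the coefficient $\textbf{y}^\beta$ is not differentiated away by $\rho_\pounds(\mathcal{V}_\alpha)$ prematurely and that the genuinely vertical remainder is discarded only after applying $\bar{j}$. As a cross-check, one can instead insert $\bar{Y}=\delta$, i.e. $\bar{Y}^\beta=\textbf{y}^\beta$, into the explicit formula (\ref{YB3}); the surviving terms reproduce the tension coefficient $\mathcal{B}^\beta_\alpha-\textbf{y}^\gamma\partial\mathcal{B}^\beta_\alpha/\partial\textbf{y}^\gamma$ of (\ref{tension}) acting through $\bar{j}$ plus the term $\widetilde{X}^{\bar\beta}$ coming from $\bar{\mathcal{V}}$, confirming the intrinsic answer.
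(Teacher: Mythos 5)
Your proof is correct and follows essentially the same route as the paper: split $\nabla\delta$ via the definition (\ref{Beri}), absorb the horizontal summand into $\bar{H}\circ\bar{j}$ using $\bar{H}=\nabla^h\delta$, and evaluate the vertical summand as $\bar{j}$ of a bracket with $\bar{\mathcal{H}}\delta=\textbf{y}^\beta\delta_\beta$, where the Leibniz term gives $\delta^\beta_\alpha\delta_\beta$ and the purely vertical remainder dies under $\bar{j}$. The only cosmetic difference is that you isolate the intermediate identity $\nabla^v\delta=\mathrm{id}_{\pi^*\pi}$ and verify it on the frame $\{\widehat{e_\alpha}\}$ with a tensoriality argument (correctly justified by $\bar{j}\circ\bar{i}=0$), whereas the paper performs the same bracket computation directly on $v\widetilde{X}=\widetilde{X}^{\bar{\alpha}}\mathcal{V}_\alpha$ for a general section $\widetilde{X}$.
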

\begin{proof}
Using (ii) of definition \ref{YB2}, (ii) of definition \ref{Hi2} and (\ref{Hi1}) we get
\begin{align}\label{pr2}
(\nabla\delta)(\widetilde{X})=\nabla^v_{\bar{\mathcal{V}}\widetilde{X}}\delta+\nabla^h_{\bar{j}\widetilde{X}}\delta
=\bar{j}[\bar{i}\bar{\mathcal{V}}\widetilde{X}, \bar{\mathcal{H}}\delta]_\pounds+\bar{H}(\bar{j}\widetilde{X})=\bar{j}[v\widetilde{X}, \bar{\mathcal{H}}\delta]_\pounds+\bar{H}(\bar{j}\widetilde{X}).
\end{align}
Now let $\widetilde{X}=\widetilde{X}^\alpha\delta_\alpha+\widetilde{X}^{\bar{\alpha}}\mathcal{V}_\alpha$. It is easy to see that $\delta=\textbf{y}^\alpha\widehat{e_\alpha}$. Then using (\ref{YB4}) we obtain
\[
\bar{j}[v\widetilde{X}, \bar{\mathcal{H}}\delta]_\pounds=\bar{j}[\widetilde{X}^{\bar{\alpha}}\mathcal{V}_\alpha, \textbf{y}^\beta\delta_\beta]_\pounds=\widetilde{X}^{\bar{\alpha}}\bar{j}(\delta_\alpha)=\widetilde{X}^{\bar{\alpha}}\widehat{e_\alpha}
=\bar{\mathcal{V}}(\widetilde{X}).
\]
Setting the above equation in (\ref{pr2}) implies (\ref{pr1}).
\end{proof}
\begin{proposition}
Let $S$ be a spray on $\pounds^\pi E$ and $h$ be the horizontal endomorphism generated by it. If $\bar{\mathcal{H}}$ be the horizontal map generated by $h$ and $\nabla$ be the Berwald derivative induced by $h$, then $\nabla_S\delta=0$.
\end{proposition}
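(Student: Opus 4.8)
The plan is to evaluate the identity (\ref{pr1}) at the spray $S$ and then show that each of the two resulting terms vanishes. Since $\nabla$ is the Berwald derivative induced by $h$, the preceding lemma gives $\nabla\delta=\bar{H}\circ\bar{j}+\bar{\mathcal{V}}$, and applying this $(1,1)$-tensor to $S$ yields
\[
\nabla_S\delta=(\nabla\delta)(S)=\bar{H}(\bar{j}S)+\bar{\mathcal{V}}(S).
\]
Thus it suffices to prove $\bar{H}(\bar{j}S)=0$ and $\bar{\mathcal{V}}(S)=0$ separately, and the whole argument reduces to bookkeeping with the splitting maps $\bar{i},\bar{j},\bar{\mathcal{H}},\bar{\mathcal{V}}$ together with the two special properties of $h$.

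First I would dispose of the term $\bar{H}(\bar{j}S)$. Because $h$ is the horizontal endomorphism generated by the spray $S$, Corollary \ref{4.3} tells us $h$ is homogeneous, so by definition its tension $H=[h,C]^{F-N}_\pounds$ vanishes identically. Recalling from (\ref{hdel1}) that $\bar{i}\,\bar{H}(\bar{j}\widetilde{X})=H(\widetilde{X})$ for every $\widetilde{X}\in\Gamma(\pounds^\pi E)$, the vanishing of $H$ together with the surjectivity of $\bar{j}$ forces $\bar{i}\,\bar{H}(\bar{X})=0$ for all $\bar{X}\in\Gamma(\pi^*\pi)$; since $\bar{i}$ is injective this gives $\bar{H}=0$, and in particular $\bar{H}(\bar{j}S)=0$.

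Next I would handle $\bar{\mathcal{V}}(S)$. Again by Corollary \ref{4.3} the spray associated to $h$ is $S$ itself, that is $hS=S$, so $S$ is horizontal and $vS=(\mathrm{Id}-h)S=0$. Using the relation $\bar{i}\,\bar{\mathcal{V}}\widetilde{X}=v\widetilde{X}$ (established while constructing the Berwald derivative), we obtain $\bar{i}\,\bar{\mathcal{V}}(S)=vS=0$, and injectivity of $\bar{i}$ gives $\bar{\mathcal{V}}(S)=0$. Combining the two vanishing terms yields $\nabla_S\delta=0$.

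I do not expect a genuine analytic obstacle here; the only point that needs care is invoking the \emph{two} distinct consequences of Corollary \ref{4.3} correctly, namely homogeneity of $h$ (which kills the tension $\bar{H}$) and the fact that the associated spray is $S$ (which makes $S$ horizontal). As a cross-check one can argue in the adapted frame: writing $\mathcal{X}_\alpha=\delta_\alpha-\mathcal{B}^\beta_\alpha\mathcal{V}_\beta$ gives $S=\textbf{y}^\alpha\delta_\alpha+(S^\beta-\textbf{y}^\alpha\mathcal{B}^\beta_\alpha)\mathcal{V}_\beta$, and $hS=S$ forces $S^\beta=\textbf{y}^\alpha\mathcal{B}^\beta_\alpha$, so $S=\textbf{y}^\alpha\delta_\alpha$ and hence $\bar{\mathcal{V}}(S)=\textbf{y}^\alpha\bar{\mathcal{V}}(\delta_\alpha)=0$ by (\ref{YB4}), confirming the coordinate-free computation.
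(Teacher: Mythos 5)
Your proof is correct, but it closes the argument by a genuinely different mechanism than the paper. Both arguments open the same way, applying the lemma $\nabla\delta=\bar{H}\circ\bar{j}+\bar{\mathcal{V}}$ of (\ref{pr1}) to $S$, and both lean on Corollary \ref{4.3} — but on different clauses of it, and the two terms are disposed of differently. The paper never shows that either term vanishes: it computes $\bar{j}S=\delta$, then uses (\ref{Hi1}) together with $\bar{\mathcal{H}}\delta=\bar{\mathcal{H}}\bar{j}S=hS=S$ and the spray identity $[S,C]_\pounds=-S$ to obtain $\bar{H}\delta=\bar{\mathcal{V}}[S,C]_\pounds=-\bar{\mathcal{V}}(S)$, so that the two terms cancel against each other. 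You instead kill each term separately: homogeneity of the Berwald endomorphism gives $H=0$, and pushing this through (\ref{hdel1}) using surjectivity of $\bar{j}$ and injectivity of $\bar{i}$ gives $\bar{H}=0$ identically; then $hS=S$ together with $v=\bar{i}\circ\bar{\mathcal{V}}$ gives $\bar{\mathcal{V}}(S)=0$. Your route proves strictly more — that the tension $\bar{H}$ of the horizontal map vanishes and that $S$ is horizontal are both reusable facts — at the price of invoking the homogeneity statement of Corollary \ref{4.3}, which the paper's cancellation argument never needs; conversely, the paper's proof works verbatim for any horizontal endomorphism satisfying only $hS=S$ with $S$ a spray, without knowing $H=0$. (One small citation slip: $\bar{\mathcal{V}}(\delta_\alpha)=0$ is stated in the text immediately after (\ref{YB4}) rather than being part of (\ref{YB4}) itself; the computation is unaffected.)
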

\begin{proof}
From the above lemma we have
\[
\nabla_S\delta=\bar{H}\bar{j}(S)+\bar{\mathcal{V}}(S).
\]
Using (\ref{YB4}) it is easy to see that $\bar{j}S=\textbf{y}^\alpha \widehat{e_\alpha}=\delta$. Thus we have
\begin{equation}\label{111}
\nabla_S\delta=\bar{H}\delta+\bar{\mathcal{V}}(S).
\end{equation}
But (\ref{Hi1}) gives us $\bar{H}\delta=\bar{\mathcal{V}}[\bar{\mathcal{H}}\delta, C]_\pounds$. In other hand, from Corollary \ref{4.3} we have $hS=S$. Therefore we get
\[
S=hS=\bar{\mathcal{H}}\bar{j}S=\bar{\mathcal{H}}\delta,
\]
and consequently $\bar{H}\delta=\bar{\mathcal{V}}[S, C]_\pounds$. Since $S$ is a spray then $[S, C]_\pounds=-S$. Therefore $\bar{H}\delta=-\bar{\mathcal{V}}(S)$. Setting this equation in (\ref{111}) we obtain $\nabla_S\delta=0$.

\end{proof}
%***************************************************************************************
%***************************************************************************************
\subsection{Torsions and partial curvatures}
Let $D$ be a $\rho_\pounds$-covariant derivative in $\Gamma(\pi^*\pi)$. The $\pi ^*\pi$-valued two-forms
\begin{align*}
T^h(D)(\widetilde{X}, \widetilde{Y})&:=D_{\widetilde{X}}\bar{j}\widetilde{Y}-D_{\widetilde{Y}}\bar{j}\widetilde{X}-\bar{j}[\widetilde{X},\widetilde{Y}]_\pounds,\\ T^v(D)(\widetilde{X}, \widetilde{Y})&:=D_{\widetilde{X}}\bar{\mathcal{V}}\widetilde{Y}-D_{\widetilde{Y}}\bar{\mathcal{V}}\widetilde{X}-\bar{\mathcal{V}}[\widetilde{X},\widetilde{Y}]_\pounds,
\end{align*}
are said to be the horizontal and the vertical torsion of $D$, respectively, where $\widetilde{X}$ and $\widetilde{Y}$ belong to $\Gamma(\pounds^\pi E)$.

Let $\bar{X}, \bar{Y}\in\Gamma(\pi^*\pi)$. The maps $A$ and $B$ given by
\begin{equation}\label{tensor}
A(\bar{X}, \bar{Y}):=T^h(D)(\bar{\mathcal{H}}\bar{X},\bar{\mathcal{H}}\bar{Y}), \ \ \ \ B(\bar{X}, \bar{Y}):=T^h(D)(\bar{\mathcal{H}}\bar{X},\bar{i}\bar{Y}),
\end{equation}
are called the $h$-horizontal and the $h$-mixed torsion of $D$ (with respect to $\bar{\mathcal{H}}$), respectively. $A$ will also be mentioned as the torsion of $D$, while for $B$ we
use the term Finsler torsion as well. $D$ is said to be symmetric if $A=0$ and $B$ is symmetric. The maps $R^1$, $P^1$ and $Q^1$ given by
\begin{align}
R^1(\bar{X},\bar{Y})&:=T^v(D)(\bar{\mathcal{H}}\bar{X},\bar{\mathcal{H}}\bar{Y}), \ \ \ \ P^1(\bar{X},\bar{Y}):=T^v(D)(\bar{\mathcal{H}}\bar{X},\bar{i}\bar{Y}),\label{tensor1}\\
Q^1(\bar{X},\bar{Y})&:=T^v(D)(\bar{i}\bar{X},\bar{i}\bar{Y}), \ \ \ \forall \bar{X}, \bar{Y}\in\Gamma(\pi^*\pi),\label{tensor2}
\end{align}
are called the $v$-horizontal , the $v$-mixed and the $v$-vertical torsion of $D$, respectively.
Using (\ref{Berder1}), (\ref{tensor}), (\ref{tensor1}) and (\ref{tensor2}) we can obtain
\begin{lemma}\label{5ta}
Let $D$ be a $\rho_\pounds$-covariant derivative in $\Gamma(\pi^*\pi)$. Then all of the partial torsions of the $\rho_\pounds$-covariant derivative operator $D$ are tensor fields of type (1, 2) on $\Gamma(\pi^*\pi)$. Moreover, for any vector fields $\bar{X}$, $\bar{Y}$ belong $\Gamma(\pi^*\pi)$ we have
\begin{align*}
A(\bar{X},\bar{Y})&=D_{\bar{\mathcal{H}}\bar{X}}\bar{Y}-D_{\bar{\mathcal{H}}\bar{Y}}\bar{X}-\bar{j}[\bar{\mathcal{H}}\bar{X},\bar{\mathcal{H}}\bar{Y}]_\pounds,\\
B(\bar{X},\bar{Y})&=-D_{\bar{i}\bar{Y}}\bar{X}-\bar{j}[\bar{\mathcal{H}}\bar{X},\bar{i}\bar{Y}]_\pounds=-D_{\bar{i}\bar{Y}}\bar{X}+\nabla_{\bar{i}\bar{Y}}\bar{X},\\
R^1(\bar{X},\bar{Y})&=-\bar{\mathcal{V}}[\bar{\mathcal{H}}\bar{X},\bar{\mathcal{H}}\bar{Y}]_\pounds,\\
P^1(\bar{X},\bar{Y})&=D_{\bar{\mathcal{H}}\bar{X}}\bar{Y}-\bar{\mathcal{V}}[\bar{\mathcal{H}}\bar{X},\bar{i}\bar{Y}]_\pounds
=D_{\bar{\mathcal{H}}\bar{X}}\bar{Y}-\nabla_{\bar{\mathcal{H}}\bar{X}}\bar{Y},\\
Q^1(\bar{X},\bar{Y})&=D_{\bar{i}\bar{X}}\bar{Y}-D_{\bar{i}\bar{Y}}\bar{X}-\bar{\mathcal{V}}[\bar{i}\bar{X},\bar{i}\bar{Y}]_\pounds,
\end{align*}
where $\nabla$ is the Berwald derivative given by (\ref{Beri}).
\end{lemma}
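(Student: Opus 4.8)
The plan is to first establish that the two primitive torsion forms $T^h(D)$ and $T^v(D)$ are genuine tensors on $\Gamma(\pounds^\pi E)$, and then obtain the five partial torsions together with their stated formulas by composing with the $C^\infty(E)$-linear splitting maps $\bar{\mathcal{H}}$ and $\bar{i}$.

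First I would verify that $T^h(D)$ is $C^\infty(E)$-bilinear. Writing $[f\widetilde{X}, \widetilde{Y}]_\pounds = f[\widetilde{X}, \widetilde{Y}]_\pounds - \rho_\pounds(\widetilde{Y})(f)\widetilde{X}$ for $f \in C^\infty(E)$, and using property (ii) of a $\rho_\pounds$-covariant derivative together with the $C^\infty(E)$-linearity of $\bar{j}$, the Leibniz term produced by $D_{\widetilde{Y}}(f\,\bar{j}\widetilde{X})$ cancels exactly against the term $\rho_\pounds(\widetilde{Y})(f)\,\bar{j}\widetilde{X}$ coming from the bracket, leaving $T^h(D)(f\widetilde{X}, \widetilde{Y}) = f\,T^h(D)(\widetilde{X}, \widetilde{Y})$. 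Since $T^h(D)$ is visibly antisymmetric, it is then $C^\infty(E)$-linear in both slots; the identical computation applies to $T^v(D)$, with $\bar{\mathcal{V}}$ in place of $\bar{j}$. Because $\bar{\mathcal{H}}$ and $\bar{i}$ are homomorphisms of $C^\infty(E)$-modules, composing these tensorial forms with them, as in (\ref{tensor})--(\ref{tensor2}), yields $(1,2)$-tensor fields on $\Gamma(\pi^*\pi)$.

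Next I would read off the closed formulas using the splitting identities recorded in the excerpt, namely $\bar{j}\circ\bar{\mathcal{H}} = 1_{\pi^*\pi}$, $\bar{j}\circ\bar{i} = 0$, $\bar{\mathcal{V}}\circ\bar{i} = 1_{\pi^*\pi}$ and $\bar{\mathcal{V}}\circ\bar{\mathcal{H}} = 0$. Substituting these into the definitions collapses the two covariant terms in each torsion: for $A$ both derivative terms survive, for $R^1$ both drop out, and for $B$, $P^1$, $Q^1$ exactly one survives, giving the five bracket expressions listed.

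The remaining, and most delicate, step is to exhibit the Berwald-derivative form of $B$ and $P^1$. For this I would use the relations $\bar{i}\bar{\mathcal{V}} = v$ and $\bar{\mathcal{H}}\bar{j} = h$ on sections, which give $v\,\bar{i}\bar{Y} = \bar{i}\bar{Y}$, $h\,\bar{i}\bar{Y} = 0$, $v\,\bar{\mathcal{H}}\bar{X} = 0$ and $h\,\bar{\mathcal{H}}\bar{X} = \bar{\mathcal{H}}\bar{X}$. Feeding these into the definition (\ref{Beri}) of $\nabla$ kills one of its two bracket terms, so that $\nabla_{\bar{i}\bar{Y}}\bar{X} = \bar{j}[\bar{i}\bar{Y}, \bar{\mathcal{H}}\bar{X}]_\pounds = -\bar{j}[\bar{\mathcal{H}}\bar{X}, \bar{i}\bar{Y}]_\pounds$ and $\nabla_{\bar{\mathcal{H}}\bar{X}}\bar{Y} = \bar{\mathcal{V}}[\bar{\mathcal{H}}\bar{X}, \bar{i}\bar{Y}]_\pounds$; identifying these with the surviving bracket terms of $B$ and $P^1$ produces the second expressions in the lemma. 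The main care needed throughout is bookkeeping with the four splitting relations and the sign conventions in the anchored bracket, rather than any genuine conceptual obstruction.
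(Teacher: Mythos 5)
Your proposal is correct and follows exactly the route the paper intends: the paper gives no detailed argument (it simply asserts the lemma follows from (\ref{Berder1}) and (\ref{tensor})--(\ref{tensor2})), and your computation -- tensoriality of $T^h(D)$, $T^v(D)$ via the Leibniz/bracket cancellation, then collapsing the covariant terms with $\bar{j}\circ\bar{\mathcal{H}}=1$, $\bar{j}\circ\bar{i}=0$, $\bar{\mathcal{V}}\circ\bar{i}=1$, $\bar{\mathcal{V}}\circ\bar{\mathcal{H}}=0$, and identifying the surviving brackets with the Berwald derivative -- is precisely the omitted verification. The only addition you make is the explicit $C^\infty(E)$-bilinearity check, which the paper leaves implicit.
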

\begin{cor}
A $\rho_\pounds$-covariant derivative in $\Gamma(\pi^*E)$ is the Berwald derivative
induced by a given horizontal endomorphism if and only if, its Finsler torsion and $v$-mixed torsion vanish.
\end{cor}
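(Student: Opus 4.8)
The plan is to exploit that a $\rho_\pounds$-covariant derivative is pointwise $C^\infty(E)$-linear in its differentiation slot, so it is completely determined by how it differentiates along horizontal sections $\bar{\mathcal{H}}\bar{X}$ and along vertical sections $\bar{i}\bar{X}$; the two torsions in question measure precisely the failure of $D$ to coincide with the Berwald derivative $\nabla$ on these two families. First I would invoke the two identities already recorded in Lemma \ref{5ta},
\begin{align*}
B(\bar{X},\bar{Y})&=-D_{\bar{i}\bar{Y}}\bar{X}+\nabla_{\bar{i}\bar{Y}}\bar{X},\\
P^1(\bar{X},\bar{Y})&=D_{\bar{\mathcal{H}}\bar{X}}\bar{Y}-\nabla_{\bar{\mathcal{H}}\bar{X}}\bar{Y},
\end{align*}
where $\nabla$ is the Berwald derivative induced by $h$ given by (\ref{Beri}). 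The forward implication is then immediate: if $D=\nabla$, both right-hand sides vanish identically, so the Finsler torsion $B$ and the $v$-mixed torsion $P^1$ of $D$ are zero.

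For the converse, I would assume $B=0$ and $P^1=0$ and read the two identities backwards to obtain $D_{\bar{i}\bar{Y}}\bar{X}=\nabla_{\bar{i}\bar{Y}}\bar{X}$ and $D_{\bar{\mathcal{H}}\bar{X}}\bar{Y}=\nabla_{\bar{\mathcal{H}}\bar{X}}\bar{Y}$ for all $\bar{X},\bar{Y}\in\Gamma(\pi^*\pi)$; that is, $D$ and $\nabla$ agree whenever the differentiation direction is purely vertical or purely horizontal. To upgrade this to $D=\nabla$ I would use the splitting (\ref{good}): every $\widetilde{X}\in\Gamma(\pounds^\pi E)$ decomposes as $\widetilde{X}=h\widetilde{X}+v\widetilde{X}=\bar{\mathcal{H}}\bar{j}\widetilde{X}+\bar{i}\bar{\mathcal{V}}\widetilde{X}$, since $h=\bar{\mathcal{H}}\circ\bar{j}$ and $v=\bar{i}\circ\bar{\mathcal{V}}$. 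Because both $D$ and $\nabla$ are additive and $C^\infty(E)$-linear in the first slot (property (i) of a $\rho_\pounds$-covariant derivative), this yields
\[
D_{\widetilde{X}}\bar{Y}=D_{\bar{\mathcal{H}}\bar{j}\widetilde{X}}\bar{Y}+D_{\bar{i}\bar{\mathcal{V}}\widetilde{X}}\bar{Y}
=\nabla_{\bar{\mathcal{H}}\bar{j}\widetilde{X}}\bar{Y}+\nabla_{\bar{i}\bar{\mathcal{V}}\widetilde{X}}\bar{Y}
=\nabla_{\widetilde{X}}\bar{Y},
\]
for every $\widetilde{X}\in\Gamma(\pounds^\pi E)$ and $\bar{Y}\in\Gamma(\pi^*\pi)$, whence $D=\nabla$.

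The computation is essentially formal once the torsion formulas of Lemma \ref{5ta} are in hand, so there is no deep analytic obstacle. The one point demanding care is that the horizontal map $\bar{\mathcal{H}}$ used to define $B$ and $P^1$ must be exactly the one associated to the given $h$, so that the operator $\nabla$ appearing in those formulas really is the Berwald derivative induced by $h$ and not some unrelated $\rho_\pounds$-covariant derivative; otherwise the equivalence $B=P^1=0\Leftrightarrow D=\nabla$ would fail to identify $D$ with the intended object. Accordingly I would make explicit at the outset that $h$, its horizontal map $\bar{\mathcal{H}}$, and the induced $\nabla$ are fixed throughout, and then verify cleanly the decomposition $\widetilde{X}=\bar{\mathcal{H}}\bar{j}\widetilde{X}+\bar{i}\bar{\mathcal{V}}\widetilde{X}$ together with the $C^\infty(E)$-linearity of both operators, after which the remaining steps reduce to substitution.
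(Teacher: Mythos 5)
Your proof is correct and takes essentially the same route the paper intends: the corollary is placed immediately after Lemma \ref{5ta} precisely because the identities $B(\bar{X},\bar{Y})=-D_{\bar{i}\bar{Y}}\bar{X}+\nabla_{\bar{i}\bar{Y}}\bar{X}$ and $P^1(\bar{X},\bar{Y})=D_{\bar{\mathcal{H}}\bar{X}}\bar{Y}-\nabla_{\bar{\mathcal{H}}\bar{X}}\bar{Y}$, combined with the splitting $\widetilde{X}=\bar{\mathcal{H}}\bar{j}\widetilde{X}+\bar{i}\bar{\mathcal{V}}\widetilde{X}$, yield the equivalence at once. Your write-up merely makes explicit the additivity in the differentiation slot and the fixing of $h$, $\bar{\mathcal{H}}$ and $\nabla$ that the paper leaves tacit.
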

Using the above lemma we get
\begin{align*}
A(\bar{j}\widetilde{X}, \bar{j}\widetilde{Y})&=D_{h\widetilde{X}}\bar{j}\widetilde{Y}-D_{h\widetilde{Y}}\bar{j}\widetilde{X}-\bar{j}[h\widetilde{X}, h\widetilde{Y}]_\pounds,\\
B(\bar{j}\widetilde{X}, \bar{\mathcal{V}}\widetilde{Y})&=-D_{v\widetilde{Y}}\bar{j}\widetilde{X}-\bar{j}[h\widetilde{X}, v\widetilde{Y}]_\pounds,\\
-B(\bar{j}\widetilde{Y}, \bar{\mathcal{V}}\widetilde{X})&=D_{v\widetilde{X}}\bar{j}\widetilde{Y}+\bar{j}[h\widetilde{Y}, v\widetilde{X}]_\pounds.
\end{align*}
Since $[v\widetilde{X}, v\widetilde{Y}]\in\Gamma(v\pounds^\pi E)$, then $\bar{j}[v\widetilde{X}, v\widetilde{Y}]=0$. Therefore summing the above equations give us
\begin{align*}
A(\bar{j}\widetilde{X}, \bar{j}\widetilde{Y})+B(\bar{j}\widetilde{X}, \bar{\mathcal{V}}\widetilde{Y})-B(\bar{j}\widetilde{Y}, \bar{\mathcal{V}}\widetilde{X})&=D_{\widetilde{X}}\bar{j}\widetilde{Y}-D_{\widetilde{Y}}\bar{j}\widetilde{X}-\bar{j}[\widetilde{X},\widetilde{Y}]_\pounds\\
&=T^h(D)(\widetilde{X}, \widetilde{Y}).
\end{align*}
Thus we have
\begin{lemma}
The horizontal torsion $T^h(D)$ is completely determined by the
torsion $A$ and the Finsler torsion $B$. Indeed, we have
\[
T^h(D)(\widetilde{X}, \widetilde{Y})=A(\bar{j}\widetilde{X}, \bar{j}\widetilde{Y})+B(\bar{j}\widetilde{X}, \bar{\mathcal{V}}\widetilde{Y})-B(\bar{j}\widetilde{Y}, \bar{\mathcal{V}}\widetilde{X}),\ \ \ \forall \widetilde{X}, \widetilde{Y}\in\Gamma(\pounds^\pi E).
\]
\end{lemma}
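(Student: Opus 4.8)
The plan is to start from the explicit formulas for $A$ and $B$ recorded in Lemma~\ref{5ta} and reassemble them, using the decomposition $\widetilde{X}=h\widetilde{X}+v\widetilde{X}$ together with the section-level identities $h=\bar{\mathcal{H}}\circ\bar{j}$ and $v=\bar{i}\circ\bar{\mathcal{V}}$ (these are the versions of $h=\mathcal{H}\circ j$ and $v=i\circ\mathcal{V}$ transported along $\bar i,\bar j,\bar{\mathcal H},\bar{\mathcal V}$ via (\ref{2base0})). Since all partial torsions are tensorial in $\widetilde{X},\widetilde{Y}$ — the first assertion of Lemma~\ref{5ta} — it suffices to verify the identity for arbitrary sections, with no choice of frame and no normalization.

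First I would evaluate $A$ on $(\bar{j}\widetilde{X},\bar{j}\widetilde{Y})$. Using $\bar{\mathcal{H}}\bar{j}\widetilde{X}=h\widetilde{X}$, the expression for $A$ in Lemma~\ref{5ta} gives
\[
A(\bar{j}\widetilde{X},\bar{j}\widetilde{Y})=D_{h\widetilde{X}}\bar{j}\widetilde{Y}-D_{h\widetilde{Y}}\bar{j}\widetilde{X}-\bar{j}[h\widetilde{X},h\widetilde{Y}]_\pounds.
\]
Next I would evaluate $B$ on $(\bar{j}\widetilde{X},\bar{\mathcal{V}}\widetilde{Y})$ and on $(\bar{j}\widetilde{Y},\bar{\mathcal{V}}\widetilde{X})$, using $\bar{i}\bar{\mathcal{V}}\widetilde{Y}=v\widetilde{Y}$, to obtain
\[
B(\bar{j}\widetilde{X},\bar{\mathcal{V}}\widetilde{Y})=-D_{v\widetilde{Y}}\bar{j}\widetilde{X}-\bar{j}[h\widetilde{X},v\widetilde{Y}]_\pounds,\qquad
-B(\bar{j}\widetilde{Y},\bar{\mathcal{V}}\widetilde{X})=D_{v\widetilde{X}}\bar{j}\widetilde{Y}+\bar{j}[h\widetilde{Y},v\widetilde{X}]_\pounds.
\]
These are precisely the three identities displayed immediately before the statement.

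Then I would add the three expressions. For the covariant-derivative terms, additivity in the first argument (property (i) of a $\rho_\pounds$-covariant derivative) yields $D_{h\widetilde{X}}\bar{j}\widetilde{Y}+D_{v\widetilde{X}}\bar{j}\widetilde{Y}=D_{\widetilde{X}}\bar{j}\widetilde{Y}$ and, symmetrically, $-D_{h\widetilde{Y}}\bar{j}\widetilde{X}-D_{v\widetilde{Y}}\bar{j}\widetilde{X}=-D_{\widetilde{Y}}\bar{j}\widetilde{X}$. For the bracket terms I would expand $[\widetilde{X},\widetilde{Y}]_\pounds=[h\widetilde{X}+v\widetilde{X},h\widetilde{Y}+v\widetilde{Y}]_\pounds$ into four pieces, apply $\bar{j}$, and note that $\bar{j}[v\widetilde{X},v\widetilde{Y}]_\pounds=0$ because $[v\widetilde{X},v\widetilde{Y}]_\pounds\in\Gamma(v\pounds^\pi E)=\ker\bar j$, while the two mixed pieces combine through $\bar{j}[v\widetilde{X},h\widetilde{Y}]_\pounds=-\bar{j}[h\widetilde{Y},v\widetilde{X}]_\pounds$. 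This produces $\bar{j}[\widetilde{X},\widetilde{Y}]_\pounds=\bar{j}[h\widetilde{X},h\widetilde{Y}]_\pounds+\bar{j}[h\widetilde{X},v\widetilde{Y}]_\pounds-\bar{j}[h\widetilde{Y},v\widetilde{X}]_\pounds$, so the bracket contributions collapse to $-\bar{j}[\widetilde{X},\widetilde{Y}]_\pounds$, and the total sum becomes $D_{\widetilde{X}}\bar{j}\widetilde{Y}-D_{\widetilde{Y}}\bar{j}\widetilde{X}-\bar{j}[\widetilde{X},\widetilde{Y}]_\pounds=T^h(D)(\widetilde{X},\widetilde{Y})$, which is the claim.

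The only step requiring genuine care — and the sole place where the structure of $\pounds^\pi E$ actually enters — is the bracket bookkeeping in the last paragraph: one must invoke both the involutivity of $v\pounds^\pi E$ (so that $\bar{j}$ kills $[v\widetilde{X},v\widetilde{Y}]_\pounds$) and the antisymmetry of $[\cdot,\cdot]_\pounds$ to fold the two mixed brackets into the single difference matching the Finsler-torsion terms $B(\bar{j}\widetilde{X},\bar{\mathcal{V}}\widetilde{Y})-B(\bar{j}\widetilde{Y},\bar{\mathcal{V}}\widetilde{X})$. Everything else is formal manipulation, and tensoriality guarantees the resulting identity for all sections once the signs are reconciled.
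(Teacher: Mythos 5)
Your proof is correct and takes essentially the same route as the paper: both evaluate $A(\bar{j}\widetilde{X},\bar{j}\widetilde{Y})$, $B(\bar{j}\widetilde{X},\bar{\mathcal{V}}\widetilde{Y})$ and $-B(\bar{j}\widetilde{Y},\bar{\mathcal{V}}\widetilde{X})$ from Lemma \ref{5ta} using $\bar{\mathcal{H}}\bar{j}\widetilde{X}=h\widetilde{X}$ and $\bar{i}\bar{\mathcal{V}}\widetilde{Y}=v\widetilde{Y}$, then sum the three identities, the key point being that $\bar{j}$ annihilates $[v\widetilde{X},v\widetilde{Y}]_\pounds$ since the vertical brackets stay vertical. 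The only difference is cosmetic: you make explicit the additivity of $D$ in its first slot and the four-term expansion of $\bar{j}[\widetilde{X},\widetilde{Y}]_\pounds$ with the antisymmetry sign, which the paper leaves implicit in its final summation.
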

\begin{lemma}
Let $D$ be a $\rho_\pounds$-covariant derivative in $\Gamma(\pi^*\pi)$. If $D$ is associated to the horizontal map $\bar{\mathcal{H}}$, then for every section $\bar{X}$ of $\pi^*\pi$ we have
\[
B(\delta, \bar{X})=0,\ \ \ P^1(\bar{X}, \delta)=-\bar{H}(\bar{X}).
\]
\end{lemma}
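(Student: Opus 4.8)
The plan is to evaluate both partial torsions on the canonical section $\delta$ by feeding $\delta$ into the intrinsic expressions for $B$ and $P^1$ supplied by Lemma \ref{5ta}, and then to collapse the resulting terms using the hypothesis $D\delta=\bar{\mathcal V}$ together with the structural identities attached to the double short exact sequence, namely $\bar{\mathcal V}\circ\bar i=\mathrm{id}$, $\bar{\mathcal V}\circ\bar{\mathcal H}=0$ and $\bar j\circ\bar i=0$ (which descend from $\mathcal V\circ i=1$, $\mathcal V\circ\mathcal H=0$ and $j\circ i=0$). Throughout I would also use that $\bar i\delta=i\circ\delta=C$ is exactly the Liouville section.

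For the second identity I would start from the expression $P^1(\bar X,\bar Y)=D_{\bar{\mathcal H}\bar X}\bar Y-\bar{\mathcal V}[\bar{\mathcal H}\bar X,\bar i\bar Y]_\pounds$ of Lemma \ref{5ta} and set $\bar Y=\delta$. The first summand is $D_{\bar{\mathcal H}\bar X}\delta$, and since $D$ is associated to $\bar{\mathcal H}$ this equals $\bar{\mathcal V}(\bar{\mathcal H}\bar X)$, which vanishes because $\bar{\mathcal V}\circ\bar{\mathcal H}=0$. In the second summand $\bar i\delta=C$, so the bracket is $[\bar{\mathcal H}\bar X,C]_\pounds$ and, by the defining relation (\ref{Hi1}) of the tension of $\bar{\mathcal H}$, one has $\bar{\mathcal V}[\bar{\mathcal H}\bar X,C]_\pounds=\bar H(\bar X)$. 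Hence $P^1(\bar X,\delta)=-\bar H(\bar X)$, which is the claim.

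For the first identity I would use the alternative form $B(\bar X,\bar Y)=-D_{\bar i\bar Y}\bar X+\nabla_{\bar i\bar Y}\bar X$ from Lemma \ref{5ta}, where $\nabla$ is the Berwald derivative induced by $h$, and put $\bar X=\delta$, $\bar Y=\bar X$, giving $B(\delta,\bar X)=-D_{\bar i\bar X}\delta+\nabla_{\bar i\bar X}\delta$. The first term is $-\bar{\mathcal V}(\bar i\bar X)=-\bar X$ by the associated hypothesis and $\bar{\mathcal V}\circ\bar i=\mathrm{id}$. For the second term I would invoke (\ref{pr1}), i.e. $\nabla\delta=\bar H\circ\bar j+\bar{\mathcal V}$, so that $\nabla_{\bar i\bar X}\delta=\bar H(\bar j\,\bar i\bar X)+\bar{\mathcal V}(\bar i\bar X)=\bar H(0)+\bar X=\bar X$, the first contribution dying because $\bar j\circ\bar i=0$. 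Therefore $B(\delta,\bar X)=-\bar X+\bar X=0$.

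The computation is short, so the only genuine subtlety—and the step I would verify most carefully—is the second term of $B$: it requires recognizing that the bracket piece $-\bar j[\bar{\mathcal H}\delta,\bar i\bar X]_\pounds$ appearing in $B$ is precisely $\nabla_{\bar i\bar X}\delta$ (this is what the alternative form of $B$ records), and then using that the Berwald derivative's own deflection (\ref{pr1}) agrees with the associated condition $D\delta=\bar{\mathcal V}$ in the vertical direction $\bar i\bar X$, so that the two contributions cancel exactly. As a cross-check, a coordinate computation with $\bar{\mathcal H}\delta=\textbf{y}^\alpha\delta_\alpha$, $\bar i\bar X=\bar X^\beta\mathcal V_\beta$ and the adapted brackets (\ref{LieB}) yields the same value $-\bar X$ for that bracket term.
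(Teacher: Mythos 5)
Your proof is correct, and it follows the paper's overall strategy — evaluate the expressions of Lemma \ref{5ta} on the canonical section $\delta$ and use the hypothesis $D\delta=\bar{\mathcal{V}}$ — but the two component computations are carried out via different identities than the paper uses, essentially swapped. For $P^1(\bar{X},\delta)$ the paper writes $P^1=D_{\bar{\mathcal{H}}\bar{X}}\delta-\nabla_{\bar{\mathcal{H}}\bar{X}}\delta$ and invokes the Berwald deflection formula (\ref{pr1}), letting $\bar{\mathcal{V}}(\bar{\mathcal{H}}\bar{X})$ cancel against the $\bar{\mathcal{V}}$-term of $\nabla\delta$; you instead stay with the bracket form, kill $D_{\bar{\mathcal{H}}\bar{X}}\delta=\bar{\mathcal{V}}\bar{\mathcal{H}}\bar{X}$ outright via $\bar{\mathcal{V}}\circ\bar{\mathcal{H}}=0$, and identify the remaining term through the tension's defining relation (\ref{Hi1}); both are equally valid since $\bar{i}\delta=C$. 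The more substantive difference is in $B(\delta,\bar{X})$: the paper evaluates the bracket $\bar{j}[\bar{\mathcal{H}}\delta,\bar{i}\bar{X}]_\pounds$ by an explicit local computation with $\bar{\mathcal{H}}\delta=\textbf{y}^\alpha\delta_\alpha$, $\bar{i}\bar{X}=\bar{X}^\beta\mathcal{V}_\beta$ and the adapted brackets (\ref{LieB}), whereas you recognize that term as $\nabla_{\bar{i}\bar{X}}\delta$ (via (\ref{Berder1})) and compute it intrinsically from (\ref{pr1}) together with $\bar{j}\circ\bar{i}=0$. Your route has the advantage of being entirely coordinate-free and of exhibiting the cancellation as the statement that the Berwald derivative's own deflection agrees with the associated condition $D\delta=\bar{\mathcal{V}}$ in vertical directions; the paper's coordinate calculation (which you correctly reproduce as a cross-check) is more pedestrian but requires no appeal to the Berwald derivative for this half of the lemma.
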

\begin{proof}
Since $D$ is associated to the horizontal map $\bar{\mathcal{H}}$, then $D\delta=\bar{\mathcal{V}}$. Therefore using lemma \ref{5ta} we get
\begin{align}\label{5ta0}
B(\delta, \bar{X})&=-D_{\bar{i}\bar{X}}\delta-\bar{j}[\bar{\mathcal{H}}\delta, \bar{i}\bar{X}]_\pounds=-\bar{\mathcal{V}}(\bar{i}\bar{X})-\bar{j}[\bar{\mathcal{H}}\delta, \bar{i}\bar{X}]_\pounds\nonumber\\
&=-\bar{X}-\bar{j}[\bar{\mathcal{H}}\delta, \bar{i}\bar{X}]_\pounds.
\end{align}
Now let $\bar{X}=\bar{X}^\alpha\widehat{e_\alpha}$. Then we deduce $\bar{i}\bar{X}=\bar{X}^\alpha\mathcal{V}_\alpha$ and consequently
\[
\bar{j}[\bar{\mathcal{H}}\delta, \bar{i}\bar{X}]_\pounds=\bar{j}[\textbf{y}^\alpha\delta_\alpha, \bar{X}^\beta\mathcal{V}_\beta]_\pounds=-\bar{j}(\bar{X}^\alpha\delta_\alpha)=-\bar{X}^\alpha\widehat{e_\alpha}
=-\bar{X}.
\]
Setting the above equation in (\ref{5ta0}) we derive that $B(\delta, \bar{X})=0$. Using (\ref{pr1}) and lemma \ref{5ta} we get
\[
P^1(\bar{X}, \delta)=D_{\bar{\mathcal{H}}\bar{X}}\delta-\nabla_{\bar{\mathcal{H}}\bar{X}}\delta
=\bar{\mathcal{V}}\bar{\mathcal{H}}\bar{X}-(\bar{H}\circ\bar{j}+\bar{\mathcal{V}})(\bar{\mathcal{H}}\bar{X})=-\bar{H}\circ\bar{j}(\bar{\mathcal{H}}\bar{X}).
\]
But we have $\bar{j}\bar{\mathcal{H}}=1_{\Gamma(\pi^*\pi)}$. Therefore the above equation gives us the second part of the assertion.
\end{proof}
\begin{defn}\label{RPQ}
Let $D$ be a $\rho_\pounds$-covariant derivative in $\Gamma(\pi^*\pi)$. Then the
maps $R$, $P$ and $Q$ given by
\begin{align*}
R(\bar{X},\bar{Y})\bar{Z}&:=K^D(\bar{\mathcal{H}}\bar{X},\bar{\mathcal{H}}\bar{Y})\bar{Z},\\
P(\bar{X},\bar{Y})\bar{Z}&:=K^D(\bar{\mathcal{H}}\bar{X},\bar{i}\bar{Y})\bar{Z},\\
Q(\bar{X},\bar{Y})\bar{Z}&:=K^D(\bar{i}\bar{X},\bar{i}\bar{Y})\bar{Z},
\end{align*}
are said to be the horizontal or Riemann curvature, the mixed or Berwald
curvature and the vertical or Berwald-Cartan curvature of $D$ (with respect
to $\bar{\mathcal{H}}$), respectively.
\end{defn}
\begin{lemma}
Let $D$ be a $\rho_\pounds$-covariant derivative in $\Gamma(\pi^*\pi)$. If $D$ is associated to the horizontal map $\bar{\mathcal{H}}$, then we have
\[
R(\bar{X},\bar{Y})\delta=R^1(\bar{X}, \bar{Y}),\ P(\bar{X},\bar{Y})\delta=P^1(\bar{X}, \bar{Y}),\ Q(\bar{X},\bar{Y})\delta=Q^1(\bar{X}, \bar{Y}),
\]
where $\bar{X}, \bar{Y}\in\Gamma(\pi^*\pi)$. Moreover, if the Finsler torsion is symmetric, then $Q(., .)\delta=Q^1=0$.
\end{lemma}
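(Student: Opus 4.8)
The plan is to reduce the three curvature-versus-torsion identities to a single observation about $D\delta$, and then to treat the vanishing of $Q^1$ by relating it to the Finsler torsion $B$. First I would use the hypothesis that $D$ is associated to $\bar{\mathcal{H}}$, which by definition means $D\delta=\bar{\mathcal{V}}$, that is $D_{\widetilde{Z}}\delta=\bar{\mathcal{V}}\widetilde{Z}$ for every $\widetilde{Z}\in\Gamma(\pounds^\pi E)$. Writing the curvature of $D$ in the usual form
\[
K^D(\widetilde{X},\widetilde{Y})\delta=D_{\widetilde{X}}D_{\widetilde{Y}}\delta-D_{\widetilde{Y}}D_{\widetilde{X}}\delta-D_{[\widetilde{X},\widetilde{Y}]_\pounds}\delta,
\]
and substituting $D_{\widetilde{Y}}\delta=\bar{\mathcal{V}}\widetilde{Y}$, $D_{\widetilde{X}}\delta=\bar{\mathcal{V}}\widetilde{X}$ and $D_{[\widetilde{X},\widetilde{Y}]_\pounds}\delta=\bar{\mathcal{V}}[\widetilde{X},\widetilde{Y}]_\pounds$, the right-hand side collapses to
\[
D_{\widetilde{X}}\bar{\mathcal{V}}\widetilde{Y}-D_{\widetilde{Y}}\bar{\mathcal{V}}\widetilde{X}-\bar{\mathcal{V}}[\widetilde{X},\widetilde{Y}]_\pounds=T^v(D)(\widetilde{X},\widetilde{Y}).
\]
Hence $K^D(\widetilde{X},\widetilde{Y})\delta=T^v(D)(\widetilde{X},\widetilde{Y})$ for all $\widetilde{X},\widetilde{Y}$. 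Specialising $(\widetilde{X},\widetilde{Y})$ to $(\bar{\mathcal{H}}\bar{X},\bar{\mathcal{H}}\bar{Y})$, then $(\bar{\mathcal{H}}\bar{X},\bar{i}\bar{Y})$, then $(\bar{i}\bar{X},\bar{i}\bar{Y})$, and comparing Definition \ref{RPQ} with the defining equations (\ref{tensor1})--(\ref{tensor2}) of $R^1$, $P^1$, $Q^1$, this identity gives immediately $R(\bar{X},\bar{Y})\delta=R^1(\bar{X},\bar{Y})$, $P(\bar{X},\bar{Y})\delta=P^1(\bar{X},\bar{Y})$ and $Q(\bar{X},\bar{Y})\delta=Q^1(\bar{X},\bar{Y})$; no further computation enters here.

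For the concluding claim I would establish the pointwise identity
\[
Q^1(\bar{X},\bar{Y})=B(\bar{X},\bar{Y})-B(\bar{Y},\bar{X}),
\]
from which the symmetry of the Finsler torsion $B$ forces $Q^1=0$, and therefore $Q(\cdot,\cdot)\delta=Q^1=0$. Starting from the formulas $B(\bar{X},\bar{Y})=-D_{\bar{i}\bar{Y}}\bar{X}-\bar{j}[\bar{\mathcal{H}}\bar{X},\bar{i}\bar{Y}]_\pounds$ and $Q^1(\bar{X},\bar{Y})=D_{\bar{i}\bar{X}}\bar{Y}-D_{\bar{i}\bar{Y}}\bar{X}-\bar{\mathcal{V}}[\bar{i}\bar{X},\bar{i}\bar{Y}]_\pounds$ supplied by Lemma \ref{5ta}, the covariant-derivative terms $D_{\bar{i}\bar{X}}\bar{Y}-D_{\bar{i}\bar{Y}}\bar{X}$ appear identically in $Q^1$ and in $B(\bar{X},\bar{Y})-B(\bar{Y},\bar{X})$, so the displayed identity is equivalent to the purely geometric relation
\[
\bar{\mathcal{V}}[\bar{i}\bar{X},\bar{i}\bar{Y}]_\pounds=\bar{j}[\bar{\mathcal{H}}\bar{X},\bar{i}\bar{Y}]_\pounds-\bar{j}[\bar{\mathcal{H}}\bar{Y},\bar{i}\bar{X}]_\pounds.
\]
This I would verify in the adapted frame: with $\bar{X}=\bar{X}^\alpha\widehat{e_\alpha}$, $\bar{Y}=\bar{Y}^\alpha\widehat{e_\alpha}$, so that $\bar{i}\bar{X}=\bar{X}^\alpha\mathcal{V}_\alpha$ and $\bar{\mathcal{H}}\bar{X}=\bar{X}^\alpha\delta_\alpha$, both sides reduce to $\big(\bar{X}^\alpha\frac{\partial\bar{Y}^\gamma}{\partial\textbf{y}^\alpha}-\bar{Y}^\alpha\frac{\partial\bar{X}^\gamma}{\partial\textbf{y}^\alpha}\big)\widehat{e_\gamma}$ after inserting $[\mathcal{V}_\alpha,\mathcal{V}_\beta]_\pounds=0$, $[\delta_\alpha,\mathcal{V}_\beta]_\pounds=-\frac{\partial\mathcal{B}^\gamma_\alpha}{\partial\textbf{y}^\beta}\mathcal{V}_\gamma$ and the frame actions of $\bar{j}$ and $\bar{\mathcal{V}}$ recorded in (\ref{YB4}).

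I expect the only genuine obstacle to be this last bracket identity. The first three equalities are automatic once $K^D(\cdot,\cdot)\delta=T^v(D)$ is noticed, whereas the vanishing of $Q^1$ rests on the explicit frame computation showing that the $\partial\bar{X}/\partial\textbf{y}$ and $\partial\bar{Y}/\partial\textbf{y}$ terms cancel, leaving only the antisymmetric part of the vertical connection coefficients $\mathcal{C}^\gamma_{\alpha\beta}$ of $D$. Equivalently, one may run the whole argument in coordinates: then $Q^1$ is governed by $\mathcal{C}^\gamma_{\alpha\beta}-\mathcal{C}^\gamma_{\beta\alpha}$ while $B$ being symmetric is precisely $\mathcal{C}^\gamma_{\alpha\beta}=\mathcal{C}^\gamma_{\beta\alpha}$, which annihilates $Q^1$.
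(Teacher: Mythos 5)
Your proof is correct, and its skeleton is the same as the paper's: both parts hinge on the hypothesis $D\delta=\bar{\mathcal{V}}$, and in both arguments the vanishing of $Q^1$ is reduced, via the symmetry of $B$, to a bracket identity involving $\bar{i}$, $\bar{j}$, $\bar{\mathcal{H}}$, $\bar{\mathcal{V}}$. Your packaging of the first part is slightly cleaner: you record once that $K^D(\widetilde{X},\widetilde{Y})\delta=T^v(D)(\widetilde{X},\widetilde{Y})$ and then specialize, whereas the paper notes $D_{\bar{\mathcal{H}}\bar{X}}\delta=0$, $D_{\bar{i}\bar{X}}\delta=\bar{X}$ and declares the three identities obvious; the content is identical, since your identity compares $T^v(D)$ directly with the defining formulas of $R^1$, $P^1$, $Q^1$. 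The only genuine divergence is the final step. The paper pushes the bracket expression through the injective map $\bar{i}$ (using $\bar{i}\circ\bar{\mathcal{V}}=v$, $\bar{i}\circ\bar{j}=J$ and $\bar{\mathcal{H}}\circ\bar{j}=h$ on sections) and recognizes it as $-N_J(\widetilde{X},\widetilde{Y})$, which vanishes because the vertical endomorphism has zero Nijenhuis tensor; you instead verify the equivalent relation $\bar{\mathcal{V}}[\bar{i}\bar{X},\bar{i}\bar{Y}]_\pounds=\bar{j}[\bar{\mathcal{H}}\bar{X},\bar{i}\bar{Y}]_\pounds-\bar{j}[\bar{\mathcal{H}}\bar{Y},\bar{i}\bar{X}]_\pounds$ by a direct computation in the adapted frame, which checks out (the $\mathcal{B}^\gamma_\alpha$-terms disappear because $[\delta_\alpha,\mathcal{V}_\beta]_\pounds$ is vertical and hence killed by $\bar{j}$). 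Your route is self-contained and, as a bonus, establishes the unconditional identity $Q^1(\bar{X},\bar{Y})=B(\bar{X},\bar{Y})-B(\bar{Y},\bar{X})$, valid for every $\rho_\pounds$-covariant derivative, from which the claim is immediate; the paper's route stays coordinate-free and reuses the fact $N_J=0$ already invoked in its treatment of intrinsic $v$-connections. Both are sound, and your frame computation is in effect a hands-on verification of that same integrability fact.
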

\begin{proof}
Since $D$ is associated to the horizontal map $\bar{\mathcal{H}}$, then $D\delta=\bar{\mathcal{V}}$ and therefore
\[
D_{\bar{\mathcal{H}}\bar{X}}\delta=0,\ \ \ D_{\bar{i}\bar{X}}\delta=\bar{X},\ \ \forall\bar{X}\in\Gamma(\pi^*\pi).
\]
Using the above equations, the proof of the first part of the assertion is obvious. Now we prove the second part. From the first part we have
\[
Q(\bar{X}, \bar{Y})\delta=Q^1(\bar{X}, \bar{Y})=D_{\bar{i}\bar{X}}\bar{Y}-D_{\bar{i}\bar{Y}}\bar{X}-\bar{\mathcal{V}}[\bar{i}\bar{X},\bar{i}\bar{Y}]_\pounds.
\]
Since the Finsler torsion $B$ is symmetric, then
\[
0=B(\bar{X}, \bar{Y})-B(\bar{Y}, \bar{X})=D_{\bar{i}\bar{X}}\bar{Y}-D_{\bar{i}\bar{Y}}\bar{X}-\bar{j}[\bar{\mathcal{H}}\bar{X}, \bar{i}\bar{Y}]_\pounds+\bar{j}[\bar{\mathcal{H}}\bar{Y}, \bar{i}\bar{X}]_\pounds.
\]
Two above equations give us
\[
Q(\bar{X}, \bar{Y})\delta=\bar{j}[\bar{\mathcal{H}}\bar{X}, \bar{i}\bar{Y}]_\pounds-\bar{j}[\bar{\mathcal{H}}\bar{Y}, \bar{i}\bar{X}]_\pounds-\bar{\mathcal{V}}[\bar{i}\bar{X},\bar{i}\bar{Y}]_\pounds.
\]
Since $\bar{j}$ is surjective, then there exist $\widetilde{X}, \widetilde{Y}\in\Gamma(\pounds^\pi E)$ such that $\bar{X}=\bar{j}\widetilde{X}$ and $\bar{Y}=\bar{j}\widetilde{Y}$. Setting these equations in the above equation imply
\[
Q(\bar{j}\widetilde{X}, \bar{j}\widetilde{Y})\delta=\bar{j}[h\widetilde{X}, J\widetilde{Y}]_\pounds-\bar{j}[h\widetilde{Y}, J\widetilde{X}]_\pounds-\bar{\mathcal{V}}[J\widetilde{X}, J\widetilde{Y}]_\pounds,
\]
and consequently
\begin{align*}
\bar{i}(Q(\bar{j}\widetilde{X}, \bar{j}\widetilde{Y})\delta)&=J[h\widetilde{X}, J\widetilde{Y}]_\pounds-J[h\widetilde{Y}, J\widetilde{X}]_\pounds-v[J\widetilde{X}, J\widetilde{Y}]_\pounds\\
&=J[\widetilde{X}, J\widetilde{Y}]_\pounds+J[J\widetilde{X}, \widetilde{Y}]_\pounds-[J\widetilde{X}, J\widetilde{Y}]_\pounds\\
&=-N_J(\widetilde{X}, \widetilde{Y})=0.
\end{align*}
Since $\bar{i}$ is injective, then the above equation gives us $Q(\bar{j}\widetilde{X}, \bar{j}\widetilde{Y})\delta=0$ and therefore $Q(\bar{X}, \bar{Y})\delta=0$.
\end{proof}
Now we denote the torsions and the curvatures of the Berwald derivative $\nabla$, by $\stackrel{\circ}{A}$, $\stackrel{\circ}{B}$, $\stackrel{\circ}{R^1}$, $\stackrel{\circ}{P^1}$, $\stackrel{\circ}{Q^1}$ and $\stackrel{\circ}{R}$, $\stackrel{\circ}{P}$, $\stackrel{\circ}{Q}$, respectively. Using (\ref{YB3}) and Lemma \ref{5ta} it is easy to prove the following
\begin{lemma}
Let $\nabla$ be the Berwald derivative induced by $h$ and $\{e_\alpha\}$ be a basis of $E$. Then
\begin{align}
\stackrel{\circ}{A}&=\frac{1}{2}t^\gamma_{\alpha\beta}\widehat{e^\alpha}\wedge\widehat{e^\beta}\otimes\widehat{\ e_\gamma},\label{151}\\
\stackrel{\circ}{R^1}&=-\frac{1}{2}R^\gamma_{\alpha\beta}\widehat{e^\alpha}\wedge\widehat{e^\beta}\otimes\widehat{\ e_\gamma},\label{152}\\
\stackrel{\circ}{B}&=0,\ \stackrel{\circ}{P^1}=0,\ \stackrel{\circ}{Q^1}=0,
\end{align}
where $\{\widehat{e^\alpha}\}$ is a dual basis of $\{\widehat{e_\alpha}\}$ and $t^\gamma_{\alpha\beta}$ and $R^\gamma_{\alpha\beta}$ are given by (\ref{wt1}) and (\ref{curv0}).
\end{lemma}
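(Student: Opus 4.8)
The plan is to read off all five partial torsions from the intrinsic formulas collected in Lemma \ref{5ta}, specializing the generic $\rho_\pounds$-covariant derivative $D$ to the Berwald derivative $\nabla$ itself, and then to evaluate the two surviving tensors on the adapted frame $\{\widehat{e_\alpha}\}$. Two of the identities are immediate: in Lemma \ref{5ta} the Finsler torsion and the $v$-mixed torsion are expressed as $B(\bar X,\bar Y)=-D_{\bar i\bar Y}\bar X+\nabla_{\bar i\bar Y}\bar X$ and $P^1(\bar X,\bar Y)=D_{\bar{\mathcal{H}}\bar X}\bar Y-\nabla_{\bar{\mathcal{H}}\bar X}\bar Y$, so setting $D=\nabla$ collapses each difference and gives $\stackrel{\circ}{B}=0$ and $\stackrel{\circ}{P^1}=0$ with no computation at all.

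For $\stackrel{\circ}{A}$ and $\stackrel{\circ}{R^1}$ I would evaluate $A(\bar X,\bar Y)=\nabla_{\bar{\mathcal{H}}\bar X}\bar Y-\nabla_{\bar{\mathcal{H}}\bar Y}\bar X-\bar j[\bar{\mathcal{H}}\bar X,\bar{\mathcal{H}}\bar Y]_\pounds$ and $R^1(\bar X,\bar Y)=-\bar{\mathcal{V}}[\bar{\mathcal{H}}\bar X,\bar{\mathcal{H}}\bar Y]_\pounds$ on $\bar X=\widehat{e_\alpha}$, $\bar Y=\widehat{e_\beta}$. Using $\bar{\mathcal{H}}\widehat{e_\alpha}=\delta_\alpha$ from (\ref{YB4}), the value $\nabla_{\delta_\alpha}\widehat{e_\beta}=-\frac{\partial \mathcal{B}^\gamma_\alpha}{\partial\textbf{y}^\beta}\widehat{e_\gamma}$ computed just before (\ref{YB3}), the bracket $[\delta_\alpha,\delta_\beta]_\pounds=(L^\gamma_{\alpha\beta}\circ\pi)\delta_\gamma+R^\gamma_{\alpha\beta}\mathcal{V}_\gamma$ from (\ref{LieB}), together with $\bar j(\delta_\gamma)=\widehat{e_\gamma}$, $\bar j(\mathcal{V}_\gamma)=0$, $\bar{\mathcal{V}}(\delta_\gamma)=0$ and $\bar{\mathcal{V}}(\mathcal{V}_\gamma)=\widehat{e_\gamma}$, the two computations reduce to $A(\widehat{e_\alpha},\widehat{e_\beta})=(\frac{\partial \mathcal{B}^\gamma_\beta}{\partial\textbf{y}^\alpha}-\frac{\partial \mathcal{B}^\gamma_\alpha}{\partial\textbf{y}^\beta}-L^\gamma_{\alpha\beta}\circ\pi)\widehat{e_\gamma}=t^\gamma_{\alpha\beta}\widehat{e_\gamma}$ and $R^1(\widehat{e_\alpha},\widehat{e_\beta})=-R^\gamma_{\alpha\beta}\widehat{e_\gamma}$, where $t^\gamma_{\alpha\beta}$ is recognized from (\ref{wt1}) and $R^\gamma_{\alpha\beta}$ from (\ref{curv0}). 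Since both tensors are skew in $(\alpha,\beta)$, these componentwise identities are precisely (\ref{151}) and (\ref{152}).

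Finally, for $\stackrel{\circ}{Q^1}$ I would evaluate $Q^1(\bar X,\bar Y)=\nabla_{\bar i\bar X}\bar Y-\nabla_{\bar i\bar Y}\bar X-\bar{\mathcal{V}}[\bar i\bar X,\bar i\bar Y]_\pounds$ on the frame: here $\bar i\widehat{e_\alpha}=\mathcal{V}_\alpha$, the Berwald derivative in a vertical direction kills lifted sections ($\nabla_{\mathcal{V}_\alpha}\widehat{e_\beta}=0$, again from the lines preceding (\ref{YB3})), and $[\mathcal{V}_\alpha,\mathcal{V}_\beta]_\pounds=0$ by (\ref{LieB}), so every term vanishes and $\stackrel{\circ}{Q^1}=0$. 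Alternatively one may invoke the preceding lemma: once $\stackrel{\circ}{B}=0$ the Finsler torsion is trivially symmetric, whence $\stackrel{\circ}{Q}(\cdot,\cdot)\delta=\stackrel{\circ}{Q^1}=0$. I do not expect a genuine obstacle; the only point requiring care is the bookkeeping of the four maps $\bar i,\bar j,\bar{\mathcal{H}},\bar{\mathcal{V}}$ on the adapted basis, where the cancellations $\bar j(\mathcal{V}_\gamma)=0$ and $\bar{\mathcal{V}}(\delta_\gamma)=0$ are exactly what discard the unwanted horizontal contributions and leave the clean coefficients $t^\gamma_{\alpha\beta}$ and $R^\gamma_{\alpha\beta}$.
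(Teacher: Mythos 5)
Your proposal is correct and is essentially the paper's own argument: the paper disposes of this lemma exactly by combining the frame values of the Berwald derivative (the computations yielding (\ref{YB3}), i.e. $\nabla_{\delta_\alpha}\widehat{e_\beta}=-\frac{\partial\mathcal{B}^\gamma_\alpha}{\partial\textbf{y}^\beta}\widehat{e_\gamma}$, $\nabla_{\mathcal{V}_\alpha}\widehat{e_\beta}=0$) with the intrinsic torsion formulas of Lemma \ref{5ta} and the adapted-basis brackets (\ref{LieB}), which is precisely what you do. One small caution: your alternative route to $\stackrel{\circ}{Q^1}=0$ cites a lemma whose stated hypothesis, $D\delta=\bar{\mathcal{V}}$, holds for the Berwald derivative only when $h$ is homogeneous (since $\nabla\delta=\bar{H}\circ\bar{j}+\bar{\mathcal{V}}$), so your primary direct evaluation on the frame is the argument to keep.
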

Using $\stackrel{\circ}{A}$ and $\stackrel{\circ}{R^1}$ we introduce the following tensor fields:
\begin{equation}\label{153}
\left\{
\begin{array}{cc}
\stackrel{\circ}{A}_\circ:\Gamma(\pounds^\pi E)\times\Gamma(\pounds^\pi E)\rightarrow\Gamma(\pounds^\pi E),\\
\stackrel{\circ}{A}_\circ(\widetilde{X}, \widetilde{Y})=\bar{i}\stackrel{\circ}{A}(\bar{j}\widetilde{X}, \bar{j}\widetilde{Y})
\end{array}
\right.
\end{equation}
\begin{equation}\label{154}
\left\{
\begin{array}{cc}
\stackrel{\circ}{R^1}_\circ:\Gamma(\pounds^\pi E)\times\Gamma(\pounds^\pi E)\rightarrow\Gamma(\pounds^\pi E),\\
\stackrel{\circ}{R^1}_\circ(\widetilde{X}, \widetilde{Y})=\bar{i}\stackrel{\circ}{R^1}(\bar{j}\widetilde{X}, \bar{j}\widetilde{Y})
\end{array}
\right.
\end{equation}
Using (\ref{151}) and (\ref{153}) we can obtain
\[
\stackrel{\circ}{A}_\circ(\delta_\alpha, \delta_\beta)=t^\gamma_{\alpha\beta}\mathcal{V}_\gamma,\ \ \ \stackrel{\circ}{A}_\circ(\mathcal{V}_\alpha, \delta_\beta)=\stackrel{\circ}{A}_\circ(\mathcal{V}_\alpha, \mathcal{V}_\beta)=0.
\]
Therefore from (\ref{wt}) we deduce
\[
\stackrel{\circ}{A}_\circ=\frac{1}{2}t^\gamma_{\alpha\beta}\mathcal{X}^\alpha\wedge\mathcal{X}^\beta\otimes\mathcal{V}_\gamma=t,
\]
where $t$ is the weak torsion of $h$. Similarly using (\ref{152}) and (\ref{154}) we obtain
\[
\stackrel{\circ}{R^1}_\circ=-\frac{1}{2}R^\gamma_{\alpha\beta}\mathcal{X}^\alpha\wedge\mathcal{X}^\beta\otimes\mathcal{V}_\gamma=\Omega,
\]
where $\Omega$ is the curvature of $h$ given in (\ref{curv000}).
\begin{proposition}
Let $\nabla$ be the Berwald derivativ induced by $h$ in $\Gamma(\pi^*\pi)$. Then $\stackrel{\circ}{A}_\circ=t$ and $\stackrel{\circ}{R^1}_\circ=\Omega$, where $t$ and $\Omega$ are weak torsion and curvature of $h$, respectively.
\end{proposition}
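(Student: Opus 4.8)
The plan is to recognize that this proposition is a frame-free repackaging of the coordinate computation of $\stackrel{\circ}{A}$ and $\stackrel{\circ}{R^1}$ recorded in the preceding lemma, so I would prove it by evaluating both sides on the adapted basis $\{\delta_\alpha, \mathcal{V}_\alpha\}$ and appealing to tensoriality. The only ingredients required are the coordinate forms (\ref{151}) and (\ref{152}), the definitions (\ref{153}) and (\ref{154}), and the way $\bar i$ and $\bar j$ interchange the two frames, namely $\bar j(\delta_\alpha)=\widehat{e_\alpha}$ and $\bar i(\widehat{e_\gamma})=\mathcal{V}_\gamma$ from (\ref{YB4}), together with $\bar j(\mathcal{V}_\alpha)=\bar j(e_\alpha^V)=0$, which follows from (ii) of Proposition \ref{YB} and (\ref{2base}).

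First I would compute $\stackrel{\circ}{A}_\circ$ on basis pairs. Substituting these evaluations into (\ref{153}) and using (\ref{151}),
\[
\stackrel{\circ}{A}_\circ(\delta_\alpha, \delta_\beta)=\bar i\,\stackrel{\circ}{A}(\widehat{e_\alpha}, \widehat{e_\beta})=t^\gamma_{\alpha\beta}\,\bar i(\widehat{e_\gamma})=t^\gamma_{\alpha\beta}\mathcal{V}_\gamma,
\]
while any vertical slot is annihilated because $\bar j(\mathcal{V}_\alpha)=0$, so $\stackrel{\circ}{A}_\circ(\delta_\alpha, \mathcal{V}_\beta)=\stackrel{\circ}{A}_\circ(\mathcal{V}_\alpha, \mathcal{V}_\beta)=0$. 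On the other hand, reading off (\ref{wt}) and using $\mathcal{X}^\gamma(\delta_\beta)=\delta^\gamma_\beta$, $\mathcal{X}^\gamma(\mathcal{V}_\beta)=0$, the weak torsion satisfies $t(\delta_\alpha, \delta_\beta)=t^\gamma_{\alpha\beta}\mathcal{V}_\gamma$ and vanishes whenever an argument is vertical. Thus $\stackrel{\circ}{A}_\circ$ and $t$ are both antisymmetric $(1,2)$-tensors agreeing on every pair of the basis $\{\delta_\alpha, \mathcal{V}_\alpha\}$, which forces $\stackrel{\circ}{A}_\circ=t$; the vanishing on vertical arguments is precisely the semibasic property of $t$ from Lemma \ref{torsion lemma}.

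The equality $\stackrel{\circ}{R^1}_\circ=\Omega$ I would then obtain by the identical argument, with (\ref{152}) and (\ref{154}) in place of (\ref{151}) and (\ref{153}) and the curvature expression (\ref{curv000}) in place of (\ref{wt}); this yields $\stackrel{\circ}{R^1}_\circ(\delta_\alpha, \delta_\beta)=-R^\gamma_{\alpha\beta}\mathcal{V}_\gamma=\Omega(\delta_\alpha, \delta_\beta)$ and zero on vertical arguments. I do not expect a genuine obstacle, since the hard content already sits in the preceding lemma; the only point demanding care is the correct bookkeeping of how $\bar i$ and $\bar j$ interchange the frames $\{\widehat{e_\alpha}\}$ and $\{\delta_\alpha, \mathcal{V}_\alpha\}$, together with the observation that both tensors are semibasic, so that their values on the horizontal frame $\{\delta_\alpha\}$ determine them completely.
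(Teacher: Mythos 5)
Your proposal is correct and follows essentially the same route as the paper: the paper establishes the proposition by exactly this computation (displayed just before the statement), evaluating $\stackrel{\circ}{A}_\circ$ and $\stackrel{\circ}{R^1}_\circ$ via (\ref{151})--(\ref{154}) and (\ref{YB4}), noting the vanishing on vertical arguments, and matching against the coordinate expressions (\ref{wt}) and (\ref{curv000}) of $t$ and $\Omega$. Your additional remarks on semibasicity and tensoriality are consistent with, and implicit in, the paper's argument.
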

Using (\ref{YB4}), (\ref{YB3}) and definition \ref{RPQ} we can deduce
\begin{theorem}
Let $\nabla$ be the Berwald derivative induced by $h$ in $\Gamma(\pi^*\pi)$ and $\{e_\alpha\}$ be a basis of $E$. Then
\begin{align*}
\stackrel{\circ}{R}&=\stackrel{\circ}{R}_{\alpha\beta\gamma}^{\ \ \ \ \lambda}\ \widehat{e_{\:\lambda}}\otimes\widehat{e^\alpha}\otimes\widehat{e^\beta}\otimes\widehat{e^{\:\gamma}},\\
\stackrel{\circ}{P}&=\stackrel{\circ}{P}_{\alpha\beta\gamma}^{\ \ \ \ \lambda}\widehat{e_{\:\lambda}}\otimes\widehat{e^\alpha}\otimes\widehat{e^\beta}\otimes\widehat{e^{\:\gamma}},\\
\stackrel{\circ}{Q}&=\stackrel{\circ}{S}_{\alpha\beta\gamma}^{\ \ \ \ \lambda}\ \widehat{e_{\:\lambda}}\otimes\widehat{e^\alpha}\otimes\widehat{e^\beta}\otimes\widehat{e^{\:\gamma}},
\end{align*}
where
\begin{align}
{\stackrel{\circ}{R}}^{\ \ \ \ \lambda}_{\alpha\beta\gamma}&=-(\rho^i_\alpha\circ \pi)\frac{\partial^2 \mathcal{B}^\lambda_{\beta}}{\partial \textbf{x}^i\partial \textbf{y}^\gamma}-\mathcal{B}^\mu_\alpha\frac{\partial^2 \mathcal{B}^\lambda_{\beta}}{\partial \textbf{y}^\mu\partial \textbf{y}^\gamma}+(\rho^i_\beta\circ \pi)\frac{\partial^2 \mathcal{B}^\lambda_{\alpha}}{\partial \textbf{x}^i\partial \textbf{y}^\gamma}+\mathcal{B}^\mu_\beta\frac{\partial^2 \mathcal{B}^\lambda_{\alpha}}{\partial \textbf{y}^\mu\partial \textbf{y}^\gamma}\nonumber\\
&\ \ \ +\frac{\partial \mathcal{B}^\mu_{\beta}}{\partial \textbf{y}^\gamma}\frac{\partial \mathcal{B}^\lambda_{\alpha}}{\partial \textbf{y}^\mu}-\frac{\partial \mathcal{B}^\mu_{\alpha}}{\partial \textbf{y}^\gamma}\frac{\partial \mathcal{B}^\lambda_{\beta}}{\partial \textbf{y}^\mu}+(L^\mu_{\alpha\beta}\circ \pi)\frac{\partial \mathcal{B}^\lambda_{\mu}}{\partial \textbf{y}^\gamma},\\
{\stackrel{\circ}{P}}^{\ \ \ \ \lambda}_{\alpha\beta\gamma}&=\frac{\partial^2 \mathcal{B}^\lambda_{\alpha}}{\partial \textbf{y}^\beta\partial \textbf{y}^\gamma},\label{mixed000}\\
{\stackrel{\circ}{S}}^{\ \ \ \ \lambda}_{\alpha\beta\gamma}&=0.
\end{align}
\end{theorem}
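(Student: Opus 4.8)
The plan is to obtain all three partial curvatures by a single direct computation on the adapted frame, using that, by Definition \ref{RPQ}, each is a restriction of the curvature tensor
\[
K^\nabla(\widetilde{X}, \widetilde{Y})\bar{Z}=\nabla_{\widetilde{X}}\nabla_{\widetilde{Y}}\bar{Z}-\nabla_{\widetilde{Y}}\nabla_{\widetilde{X}}\bar{Z}-\nabla_{[\widetilde{X}, \widetilde{Y}]_\pounds}\bar{Z}
\]
of the Berwald derivative $\nabla$. First I would record the local coefficients of $\nabla$ established just before (\ref{YB3}), namely $\nabla_{\delta_\alpha}\widehat{e_\beta}=-\frac{\partial \mathcal{B}^\gamma_\alpha}{\partial\textbf{y}^\beta}\widehat{e_\gamma}$ and $\nabla_{\mathcal{V}_\alpha}\widehat{e_\beta}=0$, together with $\rho_\pounds(\delta_\alpha)=(\rho^i_\alpha\circ\pi)\frac{\partial}{\partial\textbf{x}^i}+\mathcal{B}^\gamma_\alpha\frac{\partial}{\partial\textbf{y}^\gamma}$ from (\ref{rho}) and $\rho_\pounds(\mathcal{V}_\alpha)=\frac{\partial}{\partial\textbf{y}^\alpha}$. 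Since $\bar{\mathcal{H}}\widehat{e_\alpha}=\delta_\alpha$ and $\bar{i}\widehat{e_\alpha}=\mathcal{V}_\alpha$ by (\ref{YB4}), Definition \ref{RPQ} converts the three curvatures evaluated on $\widehat{e_\alpha}, \widehat{e_\beta}, \widehat{e_\gamma}$ into $K^\nabla(\delta_\alpha, \delta_\beta)\widehat{e_\gamma}$, $K^\nabla(\delta_\alpha, \mathcal{V}_\beta)\widehat{e_\gamma}$ and $K^\nabla(\mathcal{V}_\alpha, \mathcal{V}_\beta)\widehat{e_\gamma}$, respectively, which I would then expand with the Leibniz rule and the adapted-frame brackets (\ref{LieB}).

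For $\stackrel{\circ}{R}$ I would expand $\nabla_{\delta_\alpha}\nabla_{\delta_\beta}\widehat{e_\gamma}$ by letting $\rho_\pounds(\delta_\alpha)$ act on the coefficient $-\frac{\partial \mathcal{B}^\lambda_\beta}{\partial\textbf{y}^\gamma}$ (which produces the $\frac{\partial^2}{\partial\textbf{x}^i\partial\textbf{y}^\gamma}$ and $\frac{\partial^2}{\partial\textbf{y}^\mu\partial\textbf{y}^\gamma}$ terms) and on the frame $\widehat{e_\lambda}$ (which produces one of the two quadratic products of first $\textbf{y}$-derivatives of $\mathcal{B}$); the antisymmetric partner $-\nabla_{\delta_\beta}\nabla_{\delta_\alpha}\widehat{e_\gamma}$ supplies the $\alpha\leftrightarrow\beta$ counterparts. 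From (\ref{LieB}), writing $[\delta_\alpha, \delta_\beta]_\pounds=(L^\mu_{\alpha\beta}\circ\pi)\delta_\mu+R^\mu_{\alpha\beta}\mathcal{V}_\mu$, the $\mathcal{V}$-part drops out because $\nabla_{\mathcal{V}_\mu}\widehat{e_\gamma}=0$, leaving exactly the $(L^\mu_{\alpha\beta}\circ\pi)\frac{\partial \mathcal{B}^\lambda_\mu}{\partial\textbf{y}^\gamma}$ term. For $\stackrel{\circ}{P}$ the summand $\nabla_{\delta_\alpha}\nabla_{\mathcal{V}_\beta}\widehat{e_\gamma}$ vanishes, the mixed bracket $[\delta_\alpha, \mathcal{V}_\beta]_\pounds=-\frac{\partial \mathcal{B}^\mu_\alpha}{\partial\textbf{y}^\beta}\mathcal{V}_\mu$ acts trivially on $\widehat{e_\gamma}$, and only $-\nabla_{\mathcal{V}_\beta}\nabla_{\delta_\alpha}\widehat{e_\gamma}=\frac{\partial^2 \mathcal{B}^\lambda_\alpha}{\partial\textbf{y}^\beta\partial\textbf{y}^\gamma}\widehat{e_\lambda}$ survives. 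Finally $\stackrel{\circ}{Q}$ is immediate: since $\nabla_{\mathcal{V}_\alpha}\widehat{e_\gamma}=0$ and $[\mathcal{V}_\alpha, \mathcal{V}_\beta]_\pounds=0$, every term of $K^\nabla(\mathcal{V}_\alpha, \mathcal{V}_\beta)\widehat{e_\gamma}$ is zero, whence $\stackrel{\circ}{S}^{\ \ \ \ \lambda}_{\alpha\beta\gamma}=0$. Reading off the coefficient of $\widehat{e_\lambda}$ in each case gives the asserted formulas; equivalently, one may observe that the coefficients of $\nabla$ are $F^\gamma_{\alpha\beta}=-\frac{\partial \mathcal{B}^\gamma_\alpha}{\partial\textbf{y}^\beta}$ and $C^\gamma_{\alpha\beta}=0$, so the result also follows by substituting these into (\ref{hh}), (\ref{hv}), (\ref{vv}).

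I expect the conceptual content to be light and the genuine difficulty to lie in the horizontal curvature $\stackrel{\circ}{R}$. The care needed there is twofold: showing that the $R^\mu_{\alpha\beta}\mathcal{V}_\mu$ component of $[\delta_\alpha, \delta_\beta]_\pounds$ makes no contribution, and tracking the dummy-index relabelling in the two quadratic terms $\frac{\partial \mathcal{B}^\mu_\beta}{\partial\textbf{y}^\gamma}\frac{\partial \mathcal{B}^\lambda_\alpha}{\partial\textbf{y}^\mu}$ and $\frac{\partial \mathcal{B}^\mu_\alpha}{\partial\textbf{y}^\gamma}\frac{\partial \mathcal{B}^\lambda_\beta}{\partial\textbf{y}^\mu}$ so that they pair correctly with their $\alpha\leftrightarrow\beta$ images. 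This is bookkeeping rather than a structural obstacle, and once the terms are collected the agreement with the stated $\stackrel{\circ}{R}^{\ \ \ \ \lambda}_{\alpha\beta\gamma}$ is direct.
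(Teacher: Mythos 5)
Your computation is correct and is precisely what the paper intends: it asserts the theorem "Using (\ref{YB4}), (\ref{YB3}) and definition \ref{RPQ}," i.e., expanding $K^\nabla$ on the adapted frame $\{\delta_\alpha,\mathcal{V}_\alpha\}$ with the coefficients $\nabla_{\delta_\alpha}\widehat{e_\beta}=-\frac{\partial\mathcal{B}^\gamma_\alpha}{\partial\textbf{y}^\beta}\widehat{e_\gamma}$, $\nabla_{\mathcal{V}_\alpha}\widehat{e_\beta}=0$ and the brackets (\ref{LieB}), exactly as you do. Your closing observation that the result also follows by setting $F^\gamma_{\alpha\beta}=-\frac{\partial\mathcal{B}^\gamma_\alpha}{\partial\textbf{y}^\beta}$, $C^\gamma_{\alpha\beta}=0$ in (\ref{hh})--(\ref{vv}) is likewise consistent with the paper, which immediately afterwards identifies $\stackrel{\circ}{R}_\circ$, $\stackrel{\circ}{P}_\circ$, $\stackrel{\circ}{Q}_\circ$ with the Berwald connection curvatures (\ref{RB})--(\ref{QB}).
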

Using $\stackrel{\circ}{R}$, $\stackrel{\circ}{P}$ and $\stackrel{\circ}{Q}$ we introduce the following tensor fields:
\begin{equation}\label{159}
\left\{
\begin{array}{cc}
\stackrel{\circ}{R}_\circ:\Gamma(\pounds^\pi E)\times\Gamma(\pounds^\pi E)\rightarrow\Gamma(\pounds^\pi E),\\
\stackrel{\circ}{R}_\circ(\widetilde{X}, \widetilde{Y})=\bar{i}\stackrel{\circ}{R}(\bar{j}\widetilde{X}, \bar{j}\widetilde{Y}),
\end{array}
\right.
\end{equation}
\begin{equation}\label{160}
\left\{
\begin{array}{cc}
\stackrel{\circ}{P}_\circ:\Gamma(\pounds^\pi E)\times\Gamma(\pounds^\pi E)\rightarrow\Gamma(\pounds^\pi E),\\
\stackrel{\circ}{P}_\circ(\widetilde{X}, \widetilde{Y})=\bar{i}\stackrel{\circ}{P}(\bar{j}\widetilde{X}, \bar{j}\widetilde{Y}),
\end{array}
\right.
\end{equation}
\begin{equation}\label{161}
\left\{
\begin{array}{cc}
\stackrel{\circ}{Q}_\circ:\Gamma(\pounds^\pi E)\times\Gamma(\pounds^\pi E)\rightarrow\Gamma(\pounds^\pi E),\\
\stackrel{\circ}{Q}_\circ(\widetilde{X}, \widetilde{Y})=\bar{i}\stackrel{\circ}{Q}(\bar{j}\widetilde{X}, \bar{j}\widetilde{Y}).
\end{array}
\right.
\end{equation}
Using the above theorem, (\ref{RB})-(\ref{QB}) and (\ref{159})-(\ref{161}) we derive that
\begin{proposition}
Let $\nabla$ be the Berwald derivative induced by $h$. Then
\[
\stackrel{\circ}{R}_\circ={\stackrel{\text{\begin{tiny}B\end{tiny}}}{R}},\ \ \ \stackrel{\circ}{P}_\circ={\stackrel{\text{\begin{tiny}B\end{tiny}}}{P}},\ \ \ \stackrel{\circ}{Q}_\circ={\stackrel{\text{\begin{tiny}B\end{tiny}}}{Q}},
\]
where ${\stackrel{\text{\begin{tiny}B\end{tiny}}}{R}}$, ${\stackrel{\text{\begin{tiny}B\end{tiny}}}{P}}$ and ${\stackrel{\text{\begin{tiny}B\end{tiny}}}{Q}}$ are the horizontal, mixed and vertical curvatures of Berwald connection $(\stackrel{\text{\begin{tiny}B\end{tiny}}}{D}, h)$, respectively.
\end{proposition}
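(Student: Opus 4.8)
The plan is to prove the three identities by a direct comparison of coordinate components in the adapted frame $\{\delta_\alpha, \mathcal{V}_\alpha\}$, since both sides are already available in closed form. On one hand, the preceding theorem records the components ${\stackrel{\circ}{R}}^{\ \ \ \ \lambda}_{\alpha\beta\gamma}$, ${\stackrel{\circ}{P}}^{\ \ \ \ \lambda}_{\alpha\beta\gamma}$ and ${\stackrel{\circ}{S}}^{\ \ \ \ \lambda}_{\alpha\beta\gamma}$ of the curvatures $\stackrel{\circ}{R}$, $\stackrel{\circ}{P}$, $\stackrel{\circ}{Q}$ of the Berwald derivative $\nabla$ with respect to the basis $\{\widehat{e_\alpha}\}$ of $\Gamma(\pi^*\pi)$. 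On the other hand, (\ref{RB}), (\ref{mixed}) and (\ref{QB}) give the components ${\stackrel{\text{\begin{tiny}B\end{tiny}}}{R}}^{\ \ \ \ \lambda}_{\alpha\beta\gamma}$, ${\stackrel{\text{\begin{tiny}B\end{tiny}}}{P}}^{\ \ \ \ \lambda}_{\alpha\beta\gamma}$ and ${\stackrel{\text{\begin{tiny}B\end{tiny}}}{S}}^{\ \ \ \ \lambda}_{\alpha\beta\gamma}$ of the curvatures of the Berwald-type connection $\stackrel{\text{\begin{tiny}B\end{tiny}}}{D}$ in the adapted frame. Thus the whole argument reduces to transporting the first set of components into the frame $\{\delta_\alpha, \mathcal{V}_\alpha\}$ via the definitions (\ref{159})--(\ref{161}) and checking that the resulting functions are literally the second set.

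First I would evaluate $\stackrel{\circ}{R}_\circ$, $\stackrel{\circ}{P}_\circ$ and $\stackrel{\circ}{Q}_\circ$ on the adapted basis using (\ref{159})--(\ref{161}) together with the identities (\ref{YB4}), namely $\bar{j}(\delta_\alpha)=\widehat{e_\alpha}$, $\bar{j}(\mathcal{V}_\alpha)=0$ and $\bar{i}(\widehat{e_\lambda})=\mathcal{V}_\lambda$. Because every argument first passes through $\bar{j}$, any vertical input $\mathcal{V}_\alpha$ is annihilated, so the only surviving components are those with purely horizontal inputs; feeding in $\delta_\alpha, \delta_\beta, \delta_\gamma$ and applying $\bar{i}$ to the output yields
\[
\stackrel{\circ}{R}_\circ(\delta_\alpha,\delta_\beta)\delta_\gamma={\stackrel{\circ}{R}}^{\ \ \ \ \lambda}_{\alpha\beta\gamma}\mathcal{V}_\lambda,
\]
and likewise for $\stackrel{\circ}{P}_\circ$ and $\stackrel{\circ}{Q}_\circ$. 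The Berwald-type curvatures are $v\pounds^\pi E$-valued with covariant slots built from the dual forms $\mathcal{X}^\alpha$, hence they too vanish whenever a vertical vector $\mathcal{V}_\alpha$ is inserted; so both tensors are determined by their $(\delta_\alpha,\delta_\beta,\delta_\gamma)$-components alone, and it remains only to match the coefficient functions.

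The comparison is then immediate: the formula for ${\stackrel{\circ}{R}}^{\ \ \ \ \lambda}_{\alpha\beta\gamma}$ in the theorem coincides term for term with (\ref{RB}), the expression (\ref{mixed000}) giving ${\stackrel{\circ}{P}}^{\ \ \ \ \lambda}_{\alpha\beta\gamma}=\partial^2\mathcal{B}^\lambda_\alpha/\partial\textbf{y}^\beta\partial\textbf{y}^\gamma$ equals (\ref{mixed}), and both ${\stackrel{\circ}{S}}^{\ \ \ \ \lambda}_{\alpha\beta\gamma}$ and ${\stackrel{\text{\begin{tiny}B\end{tiny}}}{S}}^{\ \ \ \ \lambda}_{\alpha\beta\gamma}$ vanish by (\ref{QB}). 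Hence $\stackrel{\circ}{R}_\circ={\stackrel{\text{\begin{tiny}B\end{tiny}}}{R}}$, $\stackrel{\circ}{P}_\circ={\stackrel{\text{\begin{tiny}B\end{tiny}}}{P}}$ and $\stackrel{\circ}{Q}_\circ={\stackrel{\text{\begin{tiny}B\end{tiny}}}{Q}}$. The conceptual reason behind this coincidence, which I would highlight, is that under the isomorphisms $\bar{\mathcal{H}}:\widehat{e_\alpha}\mapsto\delta_\alpha$ and $\bar{i}:\widehat{e_\alpha}\mapsto\mathcal{V}_\alpha$ the Berwald derivative $\nabla$ of (\ref{YB3}) and the Berwald-type connection $\stackrel{\text{\begin{tiny}B\end{tiny}}}{D}$ of (\ref{4ta}) carry exactly the same connection coefficients $\big(-\partial\mathcal{B}^\gamma_\alpha/\partial\textbf{y}^\beta,\,0\big)$, so their curvature operators must agree once transported by these maps. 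There is no substantial obstacle here; the only delicate bookkeeping is to track which frame slots are killed by $\bar{j}$ and to confirm that the two independently computed component formulas, in the theorem and in (\ref{RB})--(\ref{QB}), are genuinely identical rather than merely proportional.
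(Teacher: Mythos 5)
Your proof is correct and follows essentially the same route as the paper: the paper derives this proposition precisely by transporting the components of $\stackrel{\circ}{R}$, $\stackrel{\circ}{P}$, $\stackrel{\circ}{Q}$ through the maps $\bar{i}$, $\bar{j}$ of (\ref{159})--(\ref{161}) and observing that the resulting coefficient functions coincide term for term with (\ref{RB}), (\ref{mixed}) and (\ref{QB}). Your added remarks --- that vertical inputs are annihilated because every slot passes through $\bar{j}$, and that the agreement is forced by the identical connection coefficients $(-\partial\mathcal{B}^\gamma_\alpha/\partial\textbf{y}^\beta,\,0)$ of $\nabla$ and $\stackrel{\text{\begin{tiny}B\end{tiny}}}{D}$ under the frame correspondence $\widehat{e_\alpha}\mapsto\delta_\alpha$, $\widehat{e_\alpha}\mapsto\mathcal{V}_\alpha$ --- are sound and make explicit the bookkeeping the paper leaves implicit.
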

\begin{proposition}
Let $\nabla$ be the Berwald derivative induced by $h$. Then for sections $X$, $Y$ and $Z$ of $E$ we have
\[
\stackrel{\circ}{P}(\widehat{X}, \widehat{Y})\widehat{Z}=\bar{\mathcal{V}}[[X^h, Y^V]_\pounds, Z^V]_\pounds.
\]
\end{proposition}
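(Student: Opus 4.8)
The plan is to unwind the definition of the mixed curvature of the Berwald derivative $\nabla$ and reduce it to an iterated bracket, exploiting the vanishing properties recorded in (\ref{Berder}). By Definition \ref{RPQ}, specialized to the Berwald derivative, together with $\bar{\mathcal{H}}\widehat{X}=X^h$ and $\bar{i}\widehat{Y}=Y^V$ from Proposition \ref{YB}, the left-hand side is the curvature operator of $\nabla$ evaluated on $(X^h,Y^V)$:
\[
\stackrel{\circ}{P}(\widehat{X},\widehat{Y})\widehat{Z}=K^\nabla(X^h,Y^V)\widehat{Z}=\nabla_{X^h}\nabla_{Y^V}\widehat{Z}-\nabla_{Y^V}\nabla_{X^h}\widehat{Z}-\nabla_{[X^h,Y^V]_\pounds}\widehat{Z}.
\]
First I would eliminate the first and third terms. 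By the first relation in (\ref{Berder}) we have $\nabla_{Y^V}\widehat{Z}=0$, so $\nabla_{X^h}\nabla_{Y^V}\widehat{Z}=0$. For the third term, recall from the computation of $[X^h,Y^V]_\pounds$ carried out in the lemma on $t(X^h,Y^h)$ that this bracket is a \emph{vertical} section; hence it equals $\bar{i}\bar{W}$ with $\bar{W}=\bar{\mathcal{V}}[X^h,Y^V]_\pounds$, and by the first relation in (\ref{Berder1}) we get $\nabla_{\bar{i}\bar{W}}\widehat{Z}=\bar{j}[\bar{i}\bar{W},Z^h]_\pounds$. A short check (using $[\mathcal{V}_\alpha,\mathcal{X}_\gamma]_\pounds=[\mathcal{V}_\alpha,\mathcal{V}_\gamma]_\pounds=0$ and that the coefficients of $\bar{i}\bar{W}$ and $Z^h$ are $\pi$-pullbacks in the base variables) shows $[\bar{i}\bar{W},Z^h]_\pounds$ stays vertical, so $\bar{j}$ annihilates it and the third term vanishes as well.

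What remains is $\stackrel{\circ}{P}(\widehat{X},\widehat{Y})\widehat{Z}=-\nabla_{Y^V}\nabla_{X^h}\widehat{Z}$, and by the second relation in (\ref{Berder}) we have $\nabla_{X^h}\widehat{Z}=\bar{\mathcal{V}}[X^h,Z^V]_\pounds$. The most economical finish is to bypass the remaining differentiation altogether: the identity established inside the proof of the proposition on the Berwald-type mixed curvature gives
\[
[[X^h,Y^V]_\pounds,Z^V]_\pounds=((X^\alpha Y^\beta Z^\gamma)\circ\pi)\,\frac{\partial^2\mathcal{B}^\lambda_\alpha}{\partial\textbf{y}^\beta\partial\textbf{y}^\gamma}\,\mathcal{V}_\lambda,
\]
and applying the $C^\infty(E)$-linear map $\bar{\mathcal{V}}$ together with $\bar{\mathcal{V}}(\mathcal{V}_\lambda)=\widehat{e_\lambda}$ from (\ref{YB4}) yields $\bar{\mathcal{V}}[[X^h,Y^V]_\pounds,Z^V]_\pounds=((X^\alpha Y^\beta Z^\gamma)\circ\pi)\,\frac{\partial^2\mathcal{B}^\lambda_\alpha}{\partial\textbf{y}^\beta\partial\textbf{y}^\gamma}\,\widehat{e_\lambda}$. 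On the other side, since $\stackrel{\circ}{P}$ is tensorial and $\widehat{X}=(X^\alpha\circ\pi)\widehat{e_\alpha}$ (and likewise for $Y,Z$), the coordinate expression (\ref{mixed000}), namely $\stackrel{\circ}{P}^{\ \ \ \ \lambda}_{\alpha\beta\gamma}=\partial^2\mathcal{B}^\lambda_\alpha/\partial\textbf{y}^\beta\partial\textbf{y}^\gamma$, gives exactly the same section; the two sides therefore coincide. (Alternatively, one can grind out $-\nabla_{Y^V}\bigl(\bar{\mathcal{V}}[X^h,Z^V]_\pounds\bigr)$ directly from the local formula (\ref{YB3}): as $Y^V$ has no horizontal component, $\nabla_{Y^V}$ acts as $(Y^\alpha\circ\pi)\partial/\partial\textbf{y}^\alpha$ on the components, promoting the single $\textbf{y}$-derivative in $\bar{\mathcal{V}}[X^h,Z^V]_\pounds$ to the required second derivative.)

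I expect the only delicate points to be bookkeeping rather than analysis: first, rigorously confirming that $[X^h,Y^V]_\pounds$ is vertical so that the third term drops, and that the same holds for $[\bar{i}\bar{W},Z^h]_\pounds$; and second, keeping the index placement consistent between the curvature convention fixed in Definition \ref{RPQ} and the coefficient formula (\ref{mixed000}), so that the arguments $\widehat{X},\widehat{Y},\widehat{Z}$ are matched with $\alpha,\beta,\gamma$ in the correct order. Once these are pinned down, both routes produce $((X^\alpha Y^\beta Z^\gamma)\circ\pi)\,\partial^2\mathcal{B}^\lambda_\alpha/\partial\textbf{y}^\beta\partial\textbf{y}^\gamma\,\widehat{e_\lambda}$ and the claimed equality follows term by term, with no genuine obstacle remaining.
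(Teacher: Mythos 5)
Your proposal is correct, and the step you actually commit to in your ``economical finish''---comparing the coordinate expression of $\stackrel{\circ}{P}(\widehat{X},\widehat{Y})\widehat{Z}$ read off from (\ref{mixed000}) with the coordinate expression of $\bar{\mathcal{V}}[[X^h,Y^V]_\pounds,Z^V]_\pounds$---is precisely the paper's proof: the paper writes $\widehat{X}=(X^\alpha\circ\pi)\widehat{e}_\alpha$ etc., uses (\ref{mixed000}) for the left side, computes the double bracket in the adapted basis for the right side, and observes that both equal $((X^\alpha Y^\beta Z^\gamma)\circ\pi)\,\partial^2\mathcal{B}^\lambda_\alpha/\partial\textbf{y}^\beta\partial\textbf{y}^\gamma\,\widehat{e}_\lambda$. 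What you place in front of this---expanding $K^\nabla(X^h,Y^V)\widehat{Z}$, killing $\nabla_{X^h}\nabla_{Y^V}\widehat{Z}$ via $\nabla_{Y^V}\widehat{Z}=0$, and killing $\nabla_{[X^h,Y^V]_\pounds}\widehat{Z}$ via verticality of $[X^h,Y^V]_\pounds$ and of $[[X^h,Y^V]_\pounds,Z^h]_\pounds$---is correct but redundant for that finish; it is load-bearing only for your parenthetical alternative, namely computing $-\nabla_{Y^V}\bar{\mathcal{V}}[X^h,Z^V]_\pounds$ directly, where $\rho_\pounds(Y^V)=(Y^\mu\circ\pi)\,\partial/\partial\textbf{y}^\mu$ promotes the single $\textbf{y}$-derivative in $\bar{\mathcal{V}}[X^h,Z^V]_\pounds$ to the required second derivative. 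That alternative is a genuinely different and somewhat more intrinsic route: it derives the identity from the definition of the Berwald derivative and (\ref{Berder})--(\ref{Berder1}) alone, without invoking the general coefficient formula (\ref{mixed000}) or the earlier Berwald-type curvature proposition, at the cost of the verticality bookkeeping; the paper's route is shorter given that (\ref{mixed000}) has already been established. One small wording slip to fix: in your verticality check for $[\bar{i}\bar{W},Z^h]_\pounds$, the coefficients of $\bar{i}\bar{W}$ are \emph{not} $\pi$-pullbacks (they depend on $\textbf{y}$); what is actually needed, and what holds, is only that the coefficients of $Z^h$ in the $\mathcal{X}_\gamma$-directions are pullbacks, so that vertical derivatives of them vanish and the bracket acquires no horizontal component.
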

\begin{proof}
Let $X=X^\alpha e_\alpha$, $Y=Y^\beta e_\beta$ and $Z=Z^\gamma e_\gamma$ be sections of $E$. Then we have $\widehat{X}=(X^\alpha\circ\pi)\widehat{e}_\alpha$, $\widehat{Y}=(Y^\beta\circ\pi)\widehat{e}_\beta$ and $\widehat{Z}=(Z^\gamma\circ\pi)\widehat{e}_\gamma$. Therefore, (\ref{mixed000}) implies
\[
\stackrel{\circ}{P}(\widehat{X}, \widehat{Y})\widehat{Z}=((X^\alpha Y^\beta Z^\gamma)\circ\pi)\frac{\partial^2 \mathcal{B}^\lambda_\alpha}{\partial\textbf{y}^\beta\partial\textbf{y}^\gamma}\widehat{e}_\lambda.
\]
Similarly we can obtain
\begin{align*}
\bar{\mathcal{V}}[[X^h, Y^V]_\pounds, Z^V]_\pounds&=\bar{\mathcal{V}}[[(X^\alpha\circ\pi)\delta_\alpha, (Y^\beta\circ\pi)\mathcal{V}_\beta]_\pounds, (Z^\gamma\circ\pi)\mathcal{V}_\gamma]_\pounds\\
&=((X^\alpha Y^\beta Z^\gamma)\circ\pi)\frac{\partial^2 \mathcal{B}^\lambda_\alpha}{\partial\textbf{y}^\beta\partial\textbf{y}^\gamma}\widehat{e}_\lambda.
\end{align*}
Two above equations gives us the assertion.
\end{proof}
With the help of the mixed curvature $\stackrel{\circ}{P}$, we define an important change of the Berwald derivative $\nabla$ by the formula
\begin{equation}
D_{\widetilde{X}}\bar{Y}:=\nabla_{\widetilde{X}}\bar{Y}+\frac{1}{n+1}(tr \stackrel{\circ}{P}(\bar{j}\widetilde{X}, \bar{Y}))\delta.
\end{equation}
The covariant derivative operator D so obtained is called the Yano derivative induced
by $\bar{\mathcal{H}}$. Using (\ref{YB3}) and the above equation we get
\begin{align*}
D_{\widetilde{X}}\bar{Y}&=\Big(\widetilde{X}^\alpha((\rho^i_\alpha\circ\pi)\frac{\partial\bar{Y}^\beta}{\partial\textbf{x}^i}
+\mathcal{B}^\gamma_\alpha\frac{\partial\bar{Y}^\beta}{\partial\textbf{y}^\gamma})-\widetilde{X}^\alpha\bar{Y}^\gamma
\frac{\partial \mathcal{B}^\beta_\alpha}{\partial\textbf{y}^\gamma}+\widetilde{X}^{\bar\alpha}\frac{\partial\bar{Y}^\beta}{\partial\textbf{y}^\alpha}
\\
&\ \ \ +\frac{1}{n+1}\widetilde{X}^\alpha\bar{Y}^\gamma \textbf{y}^\beta\frac{\partial^2\mathcal{B}^\lambda_\alpha}{\partial\textbf{y}^\lambda\partial\textbf{y}^\gamma}\Big)
\widehat{e}_\beta,
\end{align*}
where $\widetilde{X}=\widetilde{X}^\alpha\delta_\alpha+\widetilde{X}^{\bar\alpha}\mathcal{V}_\alpha\in\Gamma(\pounds^\pi E)$ and $\bar{Y}=\bar{Y}^\beta \widehat{e}_\beta\in\Gamma(\pi^*\pi)$. In particular case we have
\begin{align*}
D_{\delta_\alpha}\widehat{e}_\beta&=(\frac{1}{n+1}\textbf{y}^\gamma\frac{\partial^2\mathcal{B}^\lambda_\alpha}{\partial\textbf{y}^\lambda\partial\textbf{y}^\beta}-\frac{\partial \mathcal{B}^\gamma_\alpha}{\partial\textbf{y}^\beta})\widehat{e}_\gamma,\\
D_{\mathcal{V}_\alpha}\widehat{e}_\beta&=0.
\end{align*}
%******************************************************************************************
\section{Finsler algebroids}
This section is devoted to Finsler algebroids and their outputs. We will derive a pseudo-Riemannian metric from Finsler algebroid. Gradient of  smooth functions on Lie algebroid bundle and their lifts is studied. Special case of horizontal endomorphism named conservative, are visited. Barthel endomorphism on Finsler algebroids is proceeded too. Cartan tensor and some distinguished connections on Finsler algebroids are studied finally.

\begin{defn}
Finsler algebroid $(E, \mathcal{F})$ is a Lie algebroid $\pounds^\pi E$ provided with a fundamental Finsler function
$\mathcal{F}:E\rightarrow \mathbb{R}$ satisfying the conditions:

(i)\ $\mathcal{F}$ is a scalar differentiable function on the manifold $\stackrel{\circ}{E}=E-\{0\}$ and
continuous on the null section of $\pi:E\rightarrow M$,

(ii)\ $\mathcal{F}$ is a positive function and homogeneous of degree 2, i. e., $\pounds^\pounds_C\mathcal{F}=2\mathcal{F}$,

(iii)\ The fundamental form $\omega=d^\pounds d^\pounds_J\mathcal{F}$ is nondegenerate, where
\[
d^\pounds_J\mathcal{F}=i_Jd^\pounds\mathcal{F}=d^\pounds\mathcal{F}\circ J.
\]
\end{defn}
For the basis $\{\mathcal{X}_\alpha, \mathcal{V}_\alpha\}$ of $\Gamma(\pounds^\pi E)$ and the dual basis $\{\mathcal{X}^\alpha, \mathcal{V}^\alpha\}$ of it, we get $d^\pounds_J\mathcal{F}(\mathcal{V}_\alpha)=0$ and $d^\pounds_J\mathcal{F}(\mathcal{X}_\alpha)=\frac{\partial\mathcal{F}}{\partial{\textbf{y}}^\alpha}$. Therefore $d^\pounds_J\mathcal{F}$ has the following coordinate expression:
\begin{equation}\label{Finsler algebroid}
d^\pounds_J\mathcal{F}=\frac{\partial\mathcal{F}}{\partial{\textbf{y}}^\alpha}\mathcal{X}^\alpha.
\end{equation}
\begin{lemma}
The fundamental form $\omega$ of a Finsler algebroid has the following coordinate expression:
\begin{equation}\label{Finsler algebroid1}
\omega=\Big((\rho^i_\alpha\circ\pi)\frac{\partial^2\mathcal{F}}{\partial\textbf{x}^i\partial\textbf{y}^\beta}
-\frac{1}{2}\frac{\partial\mathcal{F}}{\partial{\textbf{y}}^\gamma}(L^\gamma_{\alpha\beta}\circ\pi)\Big)\mathcal{X}^\alpha\wedge\mathcal{X}^\beta
-\frac{\partial^2\mathcal{F}}{\partial\textbf{y}^\alpha\partial\textbf{y}^\beta}\mathcal{X}^\alpha\wedge\mathcal{V}^\beta.
\end{equation}
\end{lemma}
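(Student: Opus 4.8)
The plan is to compute $\omega=d^\pounds d^\pounds_J\mathcal{F}$ directly on the local frame $\{\mathcal{X}_\alpha,\mathcal{V}_\alpha\}$, by applying $d^\pounds$ to the $1$-form $\eta:=d^\pounds_J\mathcal{F}$ whose coordinate expression $\eta=\frac{\partial\mathcal{F}}{\partial\textbf{y}^\gamma}\mathcal{X}^\gamma$ is already supplied by (\ref{Finsler algebroid}). Since $\pounds^\pi E$ carries the Lie algebroid structure $([.,.]_\pounds,\rho_\pounds)$, the differential $d^\pounds$ acts on a $1$-form through the degree-one case of the formula defining $d^E$, namely
\[
d^\pounds\eta(U,V)=\rho_\pounds(U)(\eta(V))-\rho_\pounds(V)(\eta(U))-\eta([U,V]_\pounds),
\]
for sections $U,V$ of $\pounds^\pi E$. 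So the whole proof reduces to feeding the frame into this identity.

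First I would collect the structural data. From (\ref{revise2}) the anchor reads $\rho_\pounds(\mathcal{X}_\alpha)=(\rho^i_\alpha\circ\pi)\frac{\partial}{\partial\textbf{x}^i}$ and $\rho_\pounds(\mathcal{V}_\alpha)=\frac{\partial}{\partial\textbf{y}^\alpha}$, while the lemma computing the brackets of $\{\mathcal{X}_\alpha,\mathcal{V}_\alpha\}$ gives $[\mathcal{X}_\alpha,\mathcal{X}_\beta]_\pounds=(L^\gamma_{\alpha\beta}\circ\pi)\mathcal{X}_\gamma$ and $[\mathcal{X}_\alpha,\mathcal{V}_\beta]_\pounds=[\mathcal{V}_\alpha,\mathcal{V}_\beta]_\pounds=0$. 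Using $\eta(\mathcal{X}_\gamma)=\frac{\partial\mathcal{F}}{\partial\textbf{y}^\gamma}$ and $\eta(\mathcal{V}_\gamma)=0$, I would then evaluate the three component types. The computation gives
\[
\omega(\mathcal{X}_\alpha,\mathcal{X}_\beta)=(\rho^i_\alpha\circ\pi)\frac{\partial^2\mathcal{F}}{\partial\textbf{x}^i\partial\textbf{y}^\beta}-(\rho^i_\beta\circ\pi)\frac{\partial^2\mathcal{F}}{\partial\textbf{x}^i\partial\textbf{y}^\alpha}-(L^\gamma_{\alpha\beta}\circ\pi)\frac{\partial\mathcal{F}}{\partial\textbf{y}^\gamma},
\]
together with $\omega(\mathcal{X}_\alpha,\mathcal{V}_\beta)=-\frac{\partial^2\mathcal{F}}{\partial\textbf{y}^\alpha\partial\textbf{y}^\beta}$ and $\omega(\mathcal{V}_\alpha,\mathcal{V}_\beta)=0$, the last because every term of the formula vanishes (both anchors of vertical fields annihilate $\eta$, and the vertical bracket is zero).

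Finally I would reassemble $\omega$ from its components via
\[
\omega=\tfrac{1}{2}\omega(\mathcal{X}_\alpha,\mathcal{X}_\beta)\,\mathcal{X}^\alpha\wedge\mathcal{X}^\beta+\omega(\mathcal{X}_\alpha,\mathcal{V}_\beta)\,\mathcal{X}^\alpha\wedge\mathcal{V}^\beta+\tfrac{1}{2}\omega(\mathcal{V}_\alpha,\mathcal{V}_\beta)\,\mathcal{V}^\alpha\wedge\mathcal{V}^\beta,
\]
where $\{\mathcal{X}^\alpha,\mathcal{V}^\alpha\}$ is the dual frame. The antisymmetry $\mathcal{X}^\alpha\wedge\mathcal{X}^\beta=-\mathcal{X}^\beta\wedge\mathcal{X}^\alpha$ lets the first two summands of $\omega(\mathcal{X}_\alpha,\mathcal{X}_\beta)$ merge, after relabeling $\alpha\leftrightarrow\beta$, into the single term $(\rho^i_\alpha\circ\pi)\frac{\partial^2\mathcal{F}}{\partial\textbf{x}^i\partial\textbf{y}^\beta}$ with the $\frac{1}{2}$ absorbed, whereas the structure-function term retains its $\frac{1}{2}$; this yields exactly (\ref{Finsler algebroid1}). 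The only real obstacle is bookkeeping: keeping the factor $\frac{1}{2}$ on the $L^\gamma_{\alpha\beta}$ part while losing it on the Hessian-in-$(\textbf{x},\textbf{y})$ part, and observing that the mixed $\mathcal{X}\wedge\mathcal{V}$ block carries no $\frac{1}{2}$ because $\mathcal{X}^\alpha$ and $\mathcal{V}^\beta$ span independent dual directions. No genuine difficulty beyond this index manipulation arises.
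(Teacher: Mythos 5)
Your proof is correct and is essentially the paper's computation in a different packaging: both start from the coordinate expression (\ref{Finsler algebroid}) of $d^\pounds_J\mathcal{F}$ and evaluate its $d^\pounds$ in the frame $\{\mathcal{X}_\alpha,\mathcal{V}_\alpha\}$ using the same structural data (the anchors $\rho_\pounds(\mathcal{X}_\alpha)=(\rho^i_\alpha\circ\pi)\frac{\partial}{\partial\textbf{x}^i}$, $\rho_\pounds(\mathcal{V}_\alpha)=\frac{\partial}{\partial\textbf{y}^\alpha}$ and the frame brackets). The only difference is organizational: the paper splits $\omega$ by the Leibniz rule into $d^\pounds\big(\frac{\partial\mathcal{F}}{\partial\textbf{y}^\gamma}\big)\wedge\mathcal{X}^\gamma+\frac{\partial\mathcal{F}}{\partial\textbf{y}^\gamma}\,d^\pounds\mathcal{X}^\gamma$ and computes each piece separately, whereas you evaluate $\omega$ on frame pairs via the degree-one case of the defining formula for $d^E$; the two are term-by-term equivalent (your bracket term is the paper's $d^\pounds\mathcal{X}^\gamma$ contribution, your anchor-derivative terms are the paper's $d^\pounds\big(\frac{\partial\mathcal{F}}{\partial\textbf{y}^\gamma}\big)$ contribution), and your bookkeeping of the $\tfrac{1}{2}$ factors in the reassembly is correct.
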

\begin{proof}
Using (\ref{Finsler algebroid}) we have
\begin{equation}\label{Finsler algebroid2}
\omega=d^\pounds d^\pounds_J\mathcal{F}=d^\pounds(\frac{\partial\mathcal{F}}{\partial{\textbf{y}}^\gamma})\wedge\mathcal{X}^\gamma+\frac{\partial\mathcal{F}}{\partial{\textbf{y}}^\gamma}d^\pounds\mathcal{X}^\gamma.
\end{equation}
It is easy to see that $(d^\pounds\mathcal{X}^\gamma)(\mathcal{X}_\alpha, \mathcal{X}_\beta)=-(L^\gamma_{\alpha\beta}\circ\pi)$ and $(d^\pounds\mathcal{X}^\gamma)(\mathcal{X}_\alpha, \mathcal{V}_\beta)=(d^\pounds\mathcal{X}^\gamma)(\mathcal{V}_\alpha, \mathcal{V}_\beta)=0$. Thus we have
\[
d^\pounds\mathcal{X}^\gamma=-\frac{1}{2}(L^\gamma_{\alpha\beta}\circ\pi)\mathcal{X}^\alpha\wedge\mathcal{X}^\beta.
\]
Also it is easy to check that $(d^\pounds(\frac{\partial\mathcal{F}}{\partial{\textbf{y}}^\gamma}))(\mathcal{X}_\beta)=\rho^i_\beta\frac{\partial^2\mathcal{F}}{\partial\textbf{x}^i\partial{\textbf{y}}^\gamma}$ and $(d^\pounds(\frac{\partial\mathcal{F}}{\partial{\textbf{y}}^\gamma}))(\mathcal{V}_\beta)=\frac{\partial^2\mathcal{F}}{\partial\textbf{y}^\beta\partial{\textbf{y}}^\gamma}$. Thus we have
\[
d^\pounds(\frac{\partial\mathcal{F}}{\partial{\textbf{y}}^\gamma})=(\rho^i_\beta\circ\pi)\frac{\partial^2\mathcal{F}}{\partial\textbf{x}^i\partial{\textbf{y}}^\gamma}\mathcal{X}^\beta+\frac{\partial^2\mathcal{F}}{\partial\textbf{y}^\beta\partial{\textbf{y}}^\gamma}\mathcal{V}^\beta.
\]
Setting the above two equations in (\ref{Finsler algebroid2}) imply (\ref{Finsler algebroid1}).
\end{proof}
From (\ref{Finsler algebroid1}) we deduce that the fundamental form $\omega$ is nondegenarate if and only if the symmetric matrix $(\frac{\partial^2\mathcal{F}}{\partial\textbf{y}^\alpha\partial\textbf{y}^\beta})$ is regular.
\begin{proposition}\label{Best1}
For the fundamental form $\omega$ we have the following identities:
\[
(i)\ i_J\omega=0,\ \ \ (ii)\ \pounds^\pounds_C\omega=\omega,\ \ \ (iii)\ i_C\omega=d^\pounds_J\mathcal{F}.
\]
\end{proposition}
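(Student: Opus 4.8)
The plan is to prove (i) and (iii) by contracting the coordinate expression (\ref{Finsler algebroid1}) for $\omega$ against $J$ and $C$ respectively, and then to deduce (ii) from (iii) by a Cartan-type argument. For the contractions I will use $J=\mathcal{V}_\alpha\otimes\mathcal{X}^\alpha$ from (\ref{vertical00}), so that $i_J=\mathcal{X}^\mu\wedge i_{\mathcal{V}_\mu}$, together with $C=\textbf{y}^\mu\mathcal{V}_\mu$ from (\ref{Liouville}), so that $i_C=\textbf{y}^\mu i_{\mathcal{V}_\mu}$. The only elementary contractions needed are $i_{\mathcal{V}_\mu}(\mathcal{X}^\alpha\wedge\mathcal{X}^\beta)=0$ and $i_{\mathcal{V}_\mu}(\mathcal{X}^\alpha\wedge\mathcal{V}^\beta)=-\delta^\beta_\mu\mathcal{X}^\alpha$, both immediate from $\mathcal{X}^\alpha(\mathcal{V}_\mu)=0$ and $\mathcal{V}^\beta(\mathcal{V}_\mu)=\delta^\beta_\mu$.

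For (i), applying $i_J$ to (\ref{Finsler algebroid1}) I expect the $\mathcal{X}^\alpha\wedge\mathcal{X}^\beta$ term to drop out, while the mixed term yields $i_J(\mathcal{X}^\alpha\wedge\mathcal{V}^\beta)=\mathcal{X}^\alpha\wedge\mathcal{X}^\beta$, so that $i_J\omega=-\frac{\partial^2\mathcal{F}}{\partial\textbf{y}^\alpha\partial\textbf{y}^\beta}\mathcal{X}^\alpha\wedge\mathcal{X}^\beta$; since the Hessian is symmetric in $\alpha,\beta$ whereas $\mathcal{X}^\alpha\wedge\mathcal{X}^\beta$ is skew, the contracted sum vanishes, giving $i_J\omega=0$. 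For (iii), the same two contractions give $i_C\omega=\frac{\partial^2\mathcal{F}}{\partial\textbf{y}^\alpha\partial\textbf{y}^\beta}\textbf{y}^\beta\mathcal{X}^\alpha$, and here homogeneity enters: axiom (ii) of the Finsler algebroid and Lemma \ref{lemmahom} give $\textbf{y}^\gamma\frac{\partial\mathcal{F}}{\partial\textbf{y}^\gamma}=2\mathcal{F}$, and differentiating this Euler relation with respect to $\textbf{y}^\alpha$ produces $\textbf{y}^\beta\frac{\partial^2\mathcal{F}}{\partial\textbf{y}^\beta\partial\textbf{y}^\alpha}=\frac{\partial\mathcal{F}}{\partial\textbf{y}^\alpha}$. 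Combined with the symmetry of the Hessian this collapses $i_C\omega$ to $\frac{\partial\mathcal{F}}{\partial\textbf{y}^\alpha}\mathcal{X}^\alpha$, which is precisely $d^\pounds_J\mathcal{F}$ by (\ref{Finsler algebroid}).

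Finally, (ii) follows cheaply from (iii). Since $C$ is a section, i.e.\ a degree-one element, its Lie differential is the Cartan homotopy operator $\pounds^\pounds_C=i_C\circ d^\pounds+d^\pounds\circ i_C$. Applying this to $\omega=d^\pounds d^\pounds_J\mathcal{F}$ and using the analogue of part (1) of Theorem \ref{Best}, namely $(d^\pounds)^2=0$, annihilates the term $i_C d^\pounds\omega$, so that $\pounds^\pounds_C\omega=d^\pounds(i_C\omega)=d^\pounds(d^\pounds_J\mathcal{F})=\omega$, the middle equality being exactly the identity (iii) just established. The whole computation is routine; the single substantive step is the differentiation of the Euler relation in (iii), so I will carry (iii) out first and let (i) and (ii) follow from it, (i) by the symmetric/skew cancellation and (ii) by the Cartan formula. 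The main bookkeeping hazard is tracking the signs in $i_{\mathcal{V}_\mu}(\mathcal{X}^\alpha\wedge\mathcal{V}^\beta)$ and the relabeling of summation indices when invoking the symmetry of $\frac{\partial^2\mathcal{F}}{\partial\textbf{y}^\alpha\partial\textbf{y}^\beta}$.
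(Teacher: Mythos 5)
Your proofs of (i) and (iii) are exactly the paper's: both contract the coordinate expression of $\omega$ against $J=\mathcal{V}_\alpha\otimes\mathcal{X}^\alpha$ and $C=\textbf{y}^\alpha\mathcal{V}_\alpha$, kill the first term by the symmetric-Hessian-versus-skew-wedge cancellation, and for (iii) invoke the differentiated Euler identity $\textbf{y}^\gamma\frac{\partial^2\mathcal{F}}{\partial\textbf{y}^\gamma\partial\textbf{y}^\alpha}=\frac{\partial\mathcal{F}}{\partial\textbf{y}^\alpha}$ coming from degree-$2$ homogeneity (the paper's equation (\ref{only God})). Where you genuinely diverge is (ii). The paper proves it by brute force: it evaluates $\pounds^\pounds_C\omega$ on the basis pairs $(\mathcal{X}_\alpha,\mathcal{X}_\beta)$, $(\mathcal{X}_\alpha,\mathcal{V}_\beta)$, $(\mathcal{V}_\alpha,\mathcal{V}_\beta)$, using $[C,\mathcal{X}_\alpha]_\pounds=0$ so that the Lie derivative reduces to $\rho_\pounds(C)$ acting on the coefficient functions, and then applies the Euler relation a second time to see that each coefficient is reproduced. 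You instead observe that, by the paper's own definition of the Lie differential for a degree-one element, $\pounds^\pounds_C=i_C\circ d^\pounds+d^\pounds\circ i_C$, so that exactness of $\omega=d^\pounds d^\pounds_J\mathcal{F}$ gives $d^\pounds\omega=0$ and hence $\pounds^\pounds_C\omega=d^\pounds(i_C\omega)=d^\pounds(d^\pounds_J\mathcal{F})=\omega$ directly from (iii). This is cleaner and concentrates all use of homogeneity into the single Euler step inside (iii); it is also arguably more faithful to the paper's definitions, since $\pounds^\pounds_C$ is \emph{defined} by the homotopy formula, whereas the paper's computation silently uses its equivalence with the derivation-style formula on arguments. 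What the paper's route buys in exchange is an explicit component-by-component verification that does not depend on having established (iii) first. Both arguments are correct.
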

\begin{proof}
We have
\[
i_J\omega=i_{\mathcal{X}^\gamma\otimes\mathcal{V}_\gamma}\omega=\mathcal{X}^\gamma\wedge i_{\mathcal{V}_\gamma}\omega.
\]
It is easy to check that $i_{\mathcal{V}_\gamma}\mathcal{X^\alpha}=0$ and $i_{\mathcal{V}_\gamma}\mathcal{V^\alpha}=\delta^\alpha_\gamma$. Therefore from (\ref{Finsler algebroid1}) we get
$
i_{\mathcal{V}_\gamma}\omega=\frac{\partial^2\mathcal{F}}{\partial\textbf{y}^\alpha\partial\textbf{y}^\gamma}\mathcal{X}^\alpha,
$
and consequently
\[
i_J\omega=\frac{\partial^2\mathcal{F}}{\partial\textbf{y}^\alpha\partial\textbf{y}^\gamma}\mathcal{X}^\gamma\wedge\mathcal{X}^\alpha.
\]
It is easy to see that $\frac{\partial^2\mathcal{F}}{\partial\textbf{y}^\alpha\partial\textbf{y}^\gamma}\mathcal{X}^\gamma\wedge\mathcal{X}^\alpha=-\frac{\partial^2\mathcal{F}}{\partial\textbf{y}^\alpha\partial\textbf{y}^\gamma}\mathcal{X}^\gamma\wedge\mathcal{X}^\alpha$. Thus we deduce $i_J\omega=0$. Now we prove (ii). Since $[C, \mathcal{X}_\alpha]=0$, then using (\ref{Finsler algebroid1}) we derive that
\begin{align*}
(\pounds^\pounds_C\omega)(\mathcal{X}_\alpha, \mathcal{X}_\beta)&=\rho_\pounds(C)\Big((\rho^i_\alpha\circ\pi)\frac{\partial^2\mathcal{F}}{\partial\textbf{x}^i\partial\textbf{y}^\beta}
-(\rho^i_\beta\circ\pi)\frac{\partial^2\mathcal{F}}{\partial\textbf{x}^i\partial\textbf{y}^\alpha}
-\frac{\partial\mathcal{F}}{\partial{\textbf{y}}^\gamma}(L^\gamma_{\alpha\beta}\circ\pi)\Big)\\
&=\textbf{y}^\lambda\Big((\rho^i_\alpha\circ\pi)\frac{\partial^3\mathcal{F}}{\partial\textbf{x}^i\partial\textbf{y}^\beta\partial\textbf{y}^\lambda}
-(\rho^i_\beta\circ\pi)\frac{\partial^3\mathcal{F}}{\partial\textbf{x}^i\partial\textbf{y}^\alpha\partial\textbf{y}^\lambda}\\
&\ \ \ -\frac{\partial^2\mathcal{F}}{\partial{\textbf{y}}^\gamma\partial\textbf{y}^\lambda}(L^\gamma_{\alpha\beta}\circ\pi)\Big).
\end{align*}
Since $\mathcal{F}$ is homogenous of degree 2, then we can obtain
\begin{equation}\label{only God}
\frac{\partial\mathcal{F}}{\partial{\textbf{y}}^\gamma}=\textbf{y}^\lambda\frac{\partial^2\mathcal{F}}{\partial{\textbf{y}}^\gamma\partial\textbf{y}^\lambda}.
\end{equation}
Using this equation in the above equation we get
\[
(\pounds^\pounds_C\omega)(\mathcal{X}_\alpha, \mathcal{X}_\beta)=(\rho^i_\alpha\frac{\partial^2\mathcal{F}}{\partial\textbf{x}^i\partial\textbf{y}^\beta}-\rho^i_\beta\frac{\partial^2\mathcal{F}}{\partial\textbf{x}^i\partial\textbf{y}^\alpha}-\frac{\partial\mathcal{F}}{\partial{\textbf{y}}^\gamma}L^\gamma_{\alpha\beta})=\omega(\mathcal{X}_\alpha, \mathcal{X}_\beta).
\]
Similarly, we can obtain
\[
(\pounds^\pounds_C\omega)(\mathcal{X}_\alpha, \mathcal{V}_\beta)=-\frac{\partial^2\mathcal{F}}{\partial\textbf{y}^\alpha\partial\textbf{y}^\beta}=\omega(\mathcal{X}_\alpha, \mathcal{V}_\beta),\ \ \  (\pounds^\pounds_C\omega)(\mathcal{V}_\alpha, \mathcal{V}_\beta)=0=\omega(\mathcal{V}_\alpha, \mathcal{V}_\beta).
\]
Thus we have (ii). It is easy to check that $i_C\mathcal{X}^\gamma=0$ and $i_C\mathcal{V}^\gamma=\textbf{y}^\gamma$. Thus using (\ref{Finsler algebroid1}) and (\ref{only God}) we get
\[
i_C\omega=\textbf{y}^\gamma\frac{\partial^2\mathcal{F}}{\partial\textbf{y}^\alpha\partial\textbf{y}^\gamma}\mathcal{X}^\alpha=\frac{\partial\mathcal{F}}{\partial\textbf{y}^\alpha}\mathcal{X}^\alpha=d^\pounds_J\mathcal{F}.
\]
\end{proof}
\begin{defn}
Let $(E, \mathcal{F})$ be a Finsler algebroid with fundamental form $\omega$. Map
\[
\mathcal{G}:\Gamma(v\!\!\stackrel{\circ}{\pounds^\pi E})\times\Gamma(v\!\!\stackrel{\circ}{\pounds^\pi E})\rightarrow C^\infty(\stackrel{\circ}{\pounds^\pi E}),
\]
defined by $\mathcal{G}(J\widetilde{X}, J\widetilde{Y}):=\omega(J\widetilde{X}, \widetilde{Y})$ is called the vertical metric of Finsle algebroid $(E, \mathcal{F})$.
\end{defn}
\begin{rem}\label{Best4}
It is easy to check that $\mathcal{G}$ is bilinear, symmetric and nondegenerate on $v\!\!\stackrel{\circ}{\pounds^\pi E}$.
\end{rem}
From remark \ref{Best4} we have
\begin{proposition}
Let $h$ be a horizontal endomorphism and $\mathcal{G}$ be the vertical metric of Finsler manifold $(E, \mathcal{F})$. Then
the function $\widetilde{\mathcal{G}}:\Gamma(\stackrel{\circ}{\pounds^\pi E})\times\Gamma(\stackrel{\circ}{\pounds^\pi E})\rightarrow C^\infty(\stackrel{\circ}{\pounds^\pi E})$ given by
\begin{equation}\label{Best5}
\widetilde{\mathcal{G}}(\widetilde{X}, \widetilde{Y}):=\mathcal{G}(J\widetilde{X}, J\widetilde{Y})+\mathcal{G}(v\widetilde{X}, v\widetilde{Y}),\ \ \ \forall \widetilde{X}, \widetilde{Y}\in\Gamma(\stackrel{\circ}{\pounds^\pi E}).
\end{equation}
 is a pseudo-Riemannian metric on $\stackrel{\circ}{\pounds^\pi E}$.
\end{proposition}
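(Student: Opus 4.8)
The plan is to verify the three defining properties of a pseudo-Riemannian metric for $\widetilde{\mathcal{G}}$: $C^\infty(\stackrel{\circ}{\pounds^\pi E})$-bilinearity, symmetry, and nondegeneracy. Bilinearity and symmetry will be immediate consequences of Remark \ref{Best4}. Indeed, both $J$ and the vertical projector $v=Id-h$ are $C^\infty$-linear bundle endomorphisms whose images lie in $v\pounds^\pi E$ (recall $\im J=\im v=v\pounds^\pi E$), so each summand $\mathcal{G}(J\widetilde{X}, J\widetilde{Y})$ and $\mathcal{G}(v\widetilde{X}, v\widetilde{Y})$ is well defined and, being the composition of linear maps with the bilinear symmetric form $\mathcal{G}$, is itself bilinear and symmetric. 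The sum $\widetilde{\mathcal{G}}$ therefore inherits these properties at once.

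The real content is nondegeneracy, and I would establish it by passing to the adapted basis $\{\delta_\alpha, \mathcal{V}_\alpha\}$ and exhibiting the Gram matrix of $\widetilde{\mathcal{G}}$ as block diagonal. First I would record the action of $J$ and $v$ on the adapted frame: from $Jh=J$ together with $\delta_\alpha=h(\mathcal{X}_\alpha)$ and (\ref{vertical}) one gets $J\delta_\alpha=\mathcal{V}_\alpha$ and $J\mathcal{V}_\alpha=0$, while $v\delta_\alpha=0$ and $v\mathcal{V}_\alpha=\mathcal{V}_\alpha$ because $\{\delta_\alpha\}$ spans $h\pounds^\pi E$ and $\{\mathcal{V}_\alpha\}$ spans $v\pounds^\pi E$. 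Next I would compute the basic quantity $\mathcal{G}(\mathcal{V}_\alpha, \mathcal{V}_\beta)$: writing $\mathcal{V}_\alpha=J\mathcal{X}_\alpha$ and applying the definition $\mathcal{G}(J\widetilde{X}, J\widetilde{Y})=\omega(J\widetilde{X}, \widetilde{Y})$ gives $\mathcal{G}(\mathcal{V}_\alpha, \mathcal{V}_\beta)=\omega(\mathcal{V}_\alpha, \mathcal{X}_\beta)$, which by the coordinate expression (\ref{Finsler algebroid1}) equals $g_{\alpha\beta}:=\frac{\partial^2\mathcal{F}}{\partial\textbf{y}^\alpha\partial\textbf{y}^\beta}$.

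Combining these, I would evaluate $\widetilde{\mathcal{G}}$ on every pair of adapted frame fields and find $\widetilde{\mathcal{G}}(\delta_\alpha, \delta_\beta)=\widetilde{\mathcal{G}}(\mathcal{V}_\alpha, \mathcal{V}_\beta)=g_{\alpha\beta}$ and $\widetilde{\mathcal{G}}(\delta_\alpha, \mathcal{V}_\beta)=0$; that is, the horizontal and vertical distributions are $\widetilde{\mathcal{G}}$-orthogonal and the form restricts to the same symmetric matrix $(g_{\alpha\beta})$ on each. Hence the matrix of $\widetilde{\mathcal{G}}$ in the adapted basis is block diagonal with both diagonal blocks equal to $(g_{\alpha\beta})$, so its determinant is $(\det(g_{\alpha\beta}))^2$. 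Finally I would invoke the Finsler nondegeneracy condition (iii): the remark following (\ref{Finsler algebroid1}) says $\omega$ is nondegenerate precisely when $(g_{\alpha\beta})$ is regular, so $\det(g_{\alpha\beta})\neq0$ on $\stackrel{\circ}{\pounds^\pi E}$ and therefore $\det\widetilde{\mathcal{G}}\neq0$, giving nondegeneracy. The only point demanding care — the closest thing to an obstacle — is confirming that the cross terms $\widetilde{\mathcal{G}}(\delta_\alpha, \mathcal{V}_\beta)$ vanish and that the two diagonal blocks genuinely coincide; both reduce to the identities $Jh=J$, $vh=0$, $J\mathcal{V}_\alpha=0$ and the bilinearity of $\mathcal{G}$, so no serious difficulty arises once the action of $J$ and $v$ on the adapted frame is in hand.
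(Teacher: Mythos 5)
Your proof is correct and follows essentially the same route as the paper: the paper derives the proposition directly from Remark \ref{Best4} (bilinearity, symmetry, nondegeneracy of $\mathcal{G}$ on the vertical subbundle) and then records exactly the adapted-basis computation you carry out, namely $\widetilde{\mathcal{G}}(\delta_\alpha, \delta_\beta)=\widetilde{\mathcal{G}}(\mathcal{V}_\alpha, \mathcal{V}_\beta)=\mathcal{G}_{\alpha\beta}$ and $\widetilde{\mathcal{G}}(\delta_\alpha, \mathcal{V}_\beta)=0$, giving the block-diagonal form (\ref{pmetric}) whose regularity follows from the Finsler condition on $\bigl(\frac{\partial^2\mathcal{F}}{\partial\textbf{y}^\alpha\partial\textbf{y}^\beta}\bigr)$. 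Your write-up simply makes explicit the frame identities $J\delta_\alpha=\mathcal{V}_\alpha$, $v\delta_\alpha=0$, $v\mathcal{V}_\alpha=\mathcal{V}_\alpha$ that the paper leaves implicit.
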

The pseudo-Riemannian metric $\widetilde{\mathcal{G}}$ introduced in the above proposition, is called the \textit{prolongation of $\mathcal{G}$ along $h$}.

In the coordinate expression, using (\ref{Finsler algebroid1}) we obtain
\begin{equation}\label{Best6}
\mathcal{G}_{\alpha\beta}:=\mathcal{G}(\mathcal{V}_\alpha, \mathcal{V}_\beta)=\omega(\mathcal{V}_\alpha, \mathcal{X}_\beta)=\frac{\partial^2\mathcal{F}}{\partial\textbf{y}^\alpha\partial\textbf{y}^\beta}.
\end{equation}
Also, using (\ref{Best5}) we can obtain
\[
\widetilde{\mathcal{G}}(\delta_\alpha, \delta_\beta)=\mathcal{G}_{\alpha\beta},\ \ \ \ \ \ \ \ \widetilde{\mathcal{G}}(\delta_\alpha, \mathcal{V}_\beta)=0, \ \ \ \ \ \ \ \ \widetilde{\mathcal{G}}(\mathcal{V}_\alpha, \mathcal{V}_\beta)=\mathcal{G}_{\alpha\beta},
\]
and consequently
\begin{equation}\label{pmetric}
\widetilde{\mathcal{G}}=\mathcal{G}_{\alpha\beta}\mathcal{X}^\alpha\otimes\mathcal{X}^\beta
+\mathcal{G}_{\alpha\beta}\delta\mathcal{V}^\alpha\otimes\delta\mathcal{V}^\beta.
\end{equation}
\begin{proposition}
For metrics $\mathcal{G}$, $\widetilde{\mathcal{G}}$ and sections $X$, $Y$ of $\stackrel{\circ}{E}$, we have
\begin{align}
\widetilde{\mathcal{G}}(X^V, Y^V)&=\mathcal{G}(X^V, Y^V)=\rho_\pounds(X^V)(\rho_\pounds(Y^V)\mathcal{F}),\\
\widetilde{\mathcal{G}}(C, C)&=\mathcal{G}(C, C)=2\mathcal{F}.
\end{align}
\end{proposition}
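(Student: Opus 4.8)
The plan is to exploit that $X^V$, $Y^V$ and $C$ are all \emph{vertical} sections, so that the prolongation $\widetilde{\mathcal{G}}$ collapses onto $\mathcal{G}$, after which everything reduces to the coefficients $\mathcal{G}_{\alpha\beta}=\frac{\partial^2\mathcal{F}}{\partial\textbf{y}^\alpha\partial\textbf{y}^\beta}$ recorded in (\ref{Best6}).

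First I would note that $X^V=(X^\alpha\circ\pi)\mathcal{V}_\alpha$ from (\ref{esi}) and $C=\textbf{y}^\alpha\mathcal{V}_\alpha$ from (\ref{Liouville}) both lie in $v\pounds^\pi E=\ker J=\im v$. Hence $JX^V=0$ and $vX^V=X^V$ (and likewise for $Y^V$ and, using (iii) of (\ref{Liover}), for $C$). Substituting these into the defining formula (\ref{Best5}) for $\widetilde{\mathcal{G}}$ makes the $J$-term vanish while leaving the $v$-term untouched, which yields at once $\widetilde{\mathcal{G}}(X^V,Y^V)=\mathcal{G}(X^V,Y^V)$ and $\widetilde{\mathcal{G}}(C,C)=\mathcal{G}(C,C)$. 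This disposes of the two leftmost equalities.

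Next I would establish the remaining equalities in coordinates. Expanding by bilinearity and using (\ref{Best6}) gives $\mathcal{G}(X^V,Y^V)=(X^\alpha\circ\pi)(Y^\beta\circ\pi)\frac{\partial^2\mathcal{F}}{\partial\textbf{y}^\alpha\partial\textbf{y}^\beta}$. For the right-hand side I would use $\rho_\pounds(\mathcal{V}_\alpha)=\frac{\partial}{\partial\textbf{y}^\alpha}$, so that $\rho_\pounds(Y^V)\mathcal{F}=(Y^\beta\circ\pi)\frac{\partial\mathcal{F}}{\partial\textbf{y}^\beta}$; applying $\rho_\pounds(X^V)$ once more and noting that $Y^\beta\circ\pi$ depends only on the base coordinates, hence is annihilated by $\frac{\partial}{\partial\textbf{y}^\alpha}$, reproduces exactly the same expression. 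This gives the middle equality of the first line.

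Finally, for $\mathcal{G}(C,C)=2\mathcal{F}$ I would substitute $C=\textbf{y}^\alpha\mathcal{V}_\alpha$ to obtain $\mathcal{G}(C,C)=\textbf{y}^\alpha\textbf{y}^\beta\frac{\partial^2\mathcal{F}}{\partial\textbf{y}^\alpha\partial\textbf{y}^\beta}$, then invoke the homogeneity relation (\ref{only God}) to contract one factor into $\textbf{y}^\beta\frac{\partial^2\mathcal{F}}{\partial\textbf{y}^\alpha\partial\textbf{y}^\beta}=\frac{\partial\mathcal{F}}{\partial\textbf{y}^\alpha}$, and at last apply Lemma \ref{lemmahom} (Euler's relation for the degree-$2$ function $\mathcal{F}$) to get $\textbf{y}^\alpha\frac{\partial\mathcal{F}}{\partial\textbf{y}^\alpha}=2\mathcal{F}$. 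The only point requiring a little care is that homogeneity enters twice here, once through (\ref{only God}) and once through Lemma \ref{lemmahom}, but neither step is a genuine obstacle; the whole argument is a short verticality-plus-Euler computation with no real difficulty.
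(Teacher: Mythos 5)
Your proposal is correct and takes essentially the same approach as the paper: both arguments collapse $\widetilde{\mathcal{G}}$ onto $\mathcal{G}$ by noting that $X^V$, $Y^V$ and $C$ are vertical (so $J$ annihilates them and $v$ fixes them) in the defining formula (\ref{Best5}), and then evaluate $\mathcal{G}$ in coordinates via (\ref{Best6}) together with the degree-$2$ homogeneity of $\mathcal{F}$. The only difference is presentational: you spell out the Euler contraction in two explicit steps, via (\ref{only God}) and Lemma \ref{lemmahom}, where the paper invokes homogeneity in a single line.
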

\begin{proof}
Using (\ref{Best6}) we get
\[
\mathcal{G}(C, C)=\textbf{y}^\alpha\textbf{y}^\beta\frac{\partial^2\mathcal{F}}{\partial\textbf{y}^\alpha\partial\textbf{y}^\beta}.
\]
Since $\mathcal{F}$ is homogenous of degree 2, then we can obtain $\textbf{y}^\alpha\textbf{y}^\beta\frac{\partial^2\mathcal{F}}{\partial\textbf{y}^\alpha\partial\textbf{y}^\beta}=2\mathcal{F}$. Thus we deduce $\mathcal{G}(C, C)=2\mathcal{F}$. Using (iii) of (\ref{Liover}) and (\ref{Best5}) we deduce
\[
\widetilde{\mathcal{G}}(C, C)=\mathcal{G}(JC, JC)+\mathcal{G}(vC, vC)=\mathcal{G}(C, C)=2\mathcal{F}.
\]
Now let $X=X^\alpha e_\alpha$ and $Y=Y^\beta e_\beta$ be sections of $\stackrel{\circ}{E}$. Then we have
\begin{align*}
\mathcal{G}(X^V, Y^V)&=\mathcal{G}((X^\alpha\circ\pi)\mathcal{V}_\alpha, (Y^\beta\circ\pi)\mathcal{V}_\beta)=(X^\alpha\circ\pi)(Y^\beta\circ\pi)\frac{\partial^2\mathcal{F}}{\partial\textbf{y}^\alpha\partial\textbf{y}^\beta}\\
&=\rho_\pounds(X^V)(\rho_\pounds(Y^V)\mathcal{F}).
\end{align*}
Using (\ref{Best5}) and the above equation we can obtain
\[
\widetilde{\mathcal{G}}(X^V, Y^V)=\rho_\pounds(X^V)(\rho_\pounds(Y^V)\mathcal{F}).
\]
\end{proof}
Let $h$ be a horizontal endomorphism on $\pounds^\pi E$ and $\widetilde{\mathcal{G}}$ be a pseudo-Riemannian metric given by (\ref{Best5}). We consider
\[
\mathcal{K}_h(\widetilde{X}, \widetilde{Y})=\widetilde{\mathcal{G}}(\widetilde{X}, J\widetilde{Y})-\widetilde{\mathcal{G}}(J\widetilde{X}, \widetilde{Y}),\ \ \ \forall \widetilde{X}, \widetilde{Y}\in\Gamma(\pounds^\pi E),
\]
and we call it the \textit{K\"{a}hler form with respect to $\widetilde{\mathcal{G}}$}.
\begin{proposition}
We have $\mathcal{K}_h=i_v\omega$.
\end{proposition}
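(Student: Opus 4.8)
$\mathcal{K}_h = i_v\omega$.

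The plan is to verify the identity by evaluating both sides on the adapted basis $\{\delta_\alpha,\mathcal{V}_\alpha\}$ and comparing, since both $\mathcal{K}_h$ and $i_v\omega$ are $2$-forms on $\pounds^\pi E$ and it suffices to show they agree on every pair of basis sections. First I would unwind the definition of the K\"{a}hler form. Using the coordinate expression (\ref{pmetric}) for $\widetilde{\mathcal{G}}$, together with the vertical endomorphism relations $J(\delta_\alpha)=\mathcal{V}_\alpha$ and $J(\mathcal{V}_\alpha)=0$ (which follow from (\ref{vertical}) since $J\delta_\alpha=Jh\mathcal{X}_\alpha=J\mathcal{X}_\alpha=\mathcal{V}_\alpha$), I would compute the three cases. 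For $\mathcal{K}_h(\delta_\alpha,\delta_\beta)$ one gets $\widetilde{\mathcal{G}}(\delta_\alpha,\mathcal{V}_\beta)-\widetilde{\mathcal{G}}(\mathcal{V}_\alpha,\delta_\beta)$, and both terms vanish because $\widetilde{\mathcal{G}}$ has no mixed horizontal-vertical part in (\ref{pmetric}); for $\mathcal{K}_h(\delta_\alpha,\mathcal{V}_\beta)$ one gets $-\widetilde{\mathcal{G}}(\mathcal{V}_\alpha,\mathcal{V}_\beta)=-\mathcal{G}_{\alpha\beta}$; and $\mathcal{K}_h(\mathcal{V}_\alpha,\mathcal{V}_\beta)=0$.

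Next I would compute $i_v\omega$ on the same pairs, using $v=Id-h$ together with the coordinate relations $v\mathcal{V}_\alpha=\mathcal{V}_\alpha$ and $v\mathcal{X}_\alpha=-\mathcal{B}^\beta_\alpha\mathcal{V}_\beta$ derived in the proof of the curvature proposition, or equivalently $v\delta_\alpha=0$ and $v\mathcal{V}_\alpha=\mathcal{V}_\alpha$ directly in the adapted frame. Since $v$ is a type $(1,1)$ tensor and $i_v\omega(\widetilde{X},\widetilde{Y})=\omega(v\widetilde{X},\widetilde{Y})-\omega(\widetilde{X},v\widetilde{Y})$ (the standard contraction of a $2$-form by a vector-valued $1$-form), I would evaluate using $\omega$ in the adapted basis. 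The cleanest route is to re-express $\omega$ of (\ref{Finsler algebroid1}) in the adapted coframe $\{\mathcal{X}^\alpha,\delta\mathcal{V}^\alpha\}$; by (\ref{Best6}) the relevant component is $\omega(\mathcal{V}_\alpha,\mathcal{X}_\beta)=\mathcal{G}_{\alpha\beta}$. Then $i_v\omega(\delta_\alpha,\mathcal{V}_\beta)=\omega(v\delta_\alpha,\mathcal{V}_\beta)-\omega(\delta_\alpha,v\mathcal{V}_\beta)=0-\omega(\delta_\alpha,\mathcal{V}_\beta)=-\mathcal{G}_{\alpha\beta}$, matching $\mathcal{K}_h$; the remaining two cases give $0$ analogously, using $v\delta_\alpha=0$ and $\omega(\mathcal{V}_\alpha,\mathcal{V}_\beta)=0$.

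A more conceptual alternative, which I would mention to avoid the index grind, uses Proposition \ref{Best1}(i): since $i_J\omega=0$, we have $\omega(J\widetilde{X},\widetilde{Y})=-\omega(\widetilde{X},J\widetilde{Y})$ after antisymmetrization, and combined with the defining relation $\mathcal{G}(J\widetilde{X},J\widetilde{Y})=\omega(J\widetilde{X},\widetilde{Y})$ for the vertical metric, one identifies the vertical part of $\widetilde{\mathcal{G}}$ with $\omega\circ(J\otimes Id)$ on the appropriate arguments, and then $v=i\circ\mathcal{V}$ together with $J=i\circ j$ lets one rewrite $\mathcal{K}_h$ directly as a contraction of $\omega$ by $v$. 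The main obstacle I anticipate is purely bookkeeping: keeping the sign conventions for the contraction $i_v$ of a $2$-form by the $(1,1)$-tensor $v$ consistent with the sign conventions already fixed for $i_K$ and $i_J$ earlier in the paper, and making sure the antisymmetry of $\omega$ is applied correctly so that the horizontal-horizontal and vertical-vertical blocks both collapse to zero. Once the conventions are pinned down, the verification on the three basis cases is immediate and the equality $\mathcal{K}_h=i_v\omega$ follows by bilinearity.
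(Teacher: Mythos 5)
Your verification of the K\"{a}hler-form side is fine, but the $i_v\omega$ side rests on a wrong contraction formula, and the agreement you report comes from two sign errors that cancel. The paper's convention, coming from $i_{\mu\otimes X}\nu=\mu\wedge i_X\nu$, gives for a $(1,1)$-tensor the \emph{sum}
\[
(i_v\omega)(\widetilde{X},\widetilde{Y})=\omega(v\widetilde{X},\widetilde{Y})+\omega(\widetilde{X},v\widetilde{Y}),
\]
not the difference you wrote. This is not a harmless choice of convention: with your minus sign the expression equals $\omega(v\widetilde{X},\widetilde{Y})+\omega(v\widetilde{Y},\widetilde{X})$, which is \emph{symmetric} in $(\widetilde{X},\widetilde{Y})$, so it cannot coincide with the antisymmetric form $\mathcal{K}_h$ except where both vanish; under your formula the proposition is actually false. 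The second error is the evaluation of the mixed component: from (\ref{Best6}) one has $\omega(\mathcal{V}_\alpha,\mathcal{X}_\beta)=\mathcal{G}_{\alpha\beta}$, hence by antisymmetry $\omega(\delta_\alpha,\mathcal{V}_\beta)=\omega(\mathcal{X}_\alpha,\mathcal{V}_\beta)=-\mathcal{G}_{\alpha\beta}$, whereas your chain $0-\omega(\delta_\alpha,\mathcal{V}_\beta)=-\mathcal{G}_{\alpha\beta}$ implicitly uses $\omega(\delta_\alpha,\mathcal{V}_\beta)=+\mathcal{G}_{\alpha\beta}$. With your (wrong) contraction sign this slip produces the desired $-\mathcal{G}_{\alpha\beta}$; with the correct plus sign the computation is clean without it: $i_v\omega(\delta_\alpha,\mathcal{V}_\beta)=\omega(v\delta_\alpha,\mathcal{V}_\beta)+\omega(\delta_\alpha,v\mathcal{V}_\beta)=\omega(\delta_\alpha,\mathcal{V}_\beta)=-\mathcal{G}_{\alpha\beta}=\mathcal{K}_h(\delta_\alpha,\mathcal{V}_\beta)$.

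Once the convention is fixed, your basis-by-basis verification does go through, but note that the paper's own proof avoids coordinates entirely and is shorter: using $v=J\circ F$, $vJ=J$ and $J^2=0$ one gets $\widetilde{\mathcal{G}}(\widetilde{X},J\widetilde{Y})=\mathcal{G}(v\widetilde{X},J\widetilde{Y})=\omega(v\widetilde{X},\widetilde{Y})$ directly from the definition of the vertical metric, whence
\[
(i_v\omega)(\widetilde{X},\widetilde{Y})=\omega(v\widetilde{X},\widetilde{Y})-\omega(v\widetilde{Y},\widetilde{X})
=\widetilde{\mathcal{G}}(\widetilde{X},J\widetilde{Y})-\widetilde{\mathcal{G}}(J\widetilde{X},\widetilde{Y})=\mathcal{K}_h(\widetilde{X},\widetilde{Y}).
\]
This is essentially the ``conceptual alternative'' you sketched at the end; if you develop that route you should make the identity $\omega(v\widetilde{X},\widetilde{Y})=\widetilde{\mathcal{G}}(\widetilde{X},J\widetilde{Y})$ explicit rather than appealing loosely to $i_J\omega=0$.
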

\begin{proof}
Let $\widetilde{X}, \widetilde{Y}\in\Gamma(\pounds^\pi E)$. Then we have
\begin{align*}
(i_v\omega)(\widetilde{X}, \widetilde{Y})&=\omega(v\widetilde{X}, \widetilde{Y})+\omega(\widetilde{X}, v\widetilde{Y})=\omega(v\widetilde{X}, \widetilde{Y})-\omega(v\widetilde{Y}, \widetilde{X})\\
&=\widetilde{\mathcal{G}}(\widetilde{X}, J\widetilde{Y})-\widetilde{\mathcal{G}}(J\widetilde{X}, \widetilde{Y})\\
&=\mathcal{K}_h(\widetilde{X}, \widetilde{Y}).
\end{align*}
\end{proof}
Using (\ref{pmetric}), K\"{a}hler form $\mathcal{K}_h$ has the following coordinate expression with respect to $\{\delta_\alpha, \mathcal{V}_\alpha\}$:
\[
\mathcal{K}_h=\mathcal{G}_{\alpha\beta}\delta\mathcal{V}^\alpha\wedge\mathcal{X}^\beta.
\]
\begin{defn}
Let $(E, \mathcal{F})$ be a Finsler algebroid with fundamental form $\omega$. If $\phi:E\rightarrow\mathbb{R}$ is a smooth function, then the section $\text{grad}\phi\in\Gamma(\pounds^\pi E)$ characterized by
\begin{equation}\label{gradian}
d^\pounds\phi=i_{\text{grad} \phi}\omega,
\end{equation}
is called the gradient of $\phi$.
\end{defn}

\begin{rem}
In the above definition, the nondegeneracy of $\omega$ guarantees the existence and unicity of the gradient section.
\end{rem}
If $\beta$ is a nonzero 1-form on $\pounds^\pi E$, we denote by $\beta^\sharp$ the section corresponding to $\omega$, i.e., $i_{\beta^\sharp}\omega=\beta$. Thus we can introduce the gradient of $\phi$ by $\text{grad} \phi=(d^\pounds\phi)^\sharp$.

Since $\text{grad}\phi\in\Gamma(\pounds^\pi E)$, then we can write it as follow
\begin{equation}\label{gradian0}
\text{grad}\phi=(\text{grad}\phi)^\alpha\mathcal{X}_\alpha+(\text{grad}\phi)^{\bar{\alpha}}\mathcal{V}_\alpha.
\end{equation}
Thus using (\ref{Finsler algebroid1}) and (\ref{gradian}) we get
\[
\frac{\partial\phi}{\partial \textbf{y}^\beta}=(d^\pounds\phi)(\mathcal{V}_\beta)=(i_{\text{grad} \phi}\omega)(\mathcal{V}_\beta)=-(\text{grad}\phi)^\alpha\frac{\partial^2\mathcal{F}}{\partial \textbf{y}^\alpha\partial \textbf{y}^\beta}=-(\text{grad}\phi)^\alpha\mathcal{G}_{\alpha\beta},
\]
which yields
\begin{equation}\label{gradian1}
(\text{grad}\phi)^\alpha=-\mathcal{G}^{\alpha\beta}\frac{\partial\phi}{\partial \textbf{y}^\beta},
\end{equation}
where $(\mathcal{G}^{\alpha\beta})$ is the inverse matric of $(\mathcal{G}_{\alpha\beta})$. Similarly, using (\ref{Finsler algebroid1}), (\ref{gradian}) and the above equation we obtain
\begin{align*}
(\rho^i_\beta\circ\pi)\frac{\partial\phi}{\partial \textbf{x}^i}&=(d^\pounds\phi)(\mathcal{X}_\beta)=(i_{\text{grad} \phi}\omega)(\mathcal{X}_\beta)=-\mathcal{G}^{\alpha\gamma}\frac{\partial\phi}{\partial \textbf{y}^\gamma}\Big((\rho^i_\alpha\circ\pi)\frac{\partial^2\mathcal{F}}{\partial \textbf{x}^i\partial \textbf{y}^\beta}\\
&\ \ \ -(\rho^i_\beta\circ\pi)\frac{\partial^2\mathcal{F}}{\partial \textbf{x}^i\partial \textbf{y}^\alpha}-\frac{\partial\mathcal{F}}{\partial{\textbf{y}}^\gamma}(L^\gamma_{\alpha\beta}\circ\pi)\Big)
+(\text{grad}\phi)^{\bar{\alpha}}\mathcal{G}_{\alpha\beta},
\end{align*}
which gives us
\begin{align}\label{gradian2}
(\text{grad}\phi)^{\bar{\alpha}}&=\mathcal{G}^{\alpha\beta}\Big\{(\rho^i_\beta\circ\pi)\frac{\partial\phi}{\partial \textbf{x}^i}+\mathcal{G}^{\lambda\gamma}\frac{\partial\phi}{\partial \textbf{y}^\gamma}\Big((\rho^i_\lambda\circ\pi)\frac{\partial^2\mathcal{F}}{\partial \textbf{x}^i\partial \textbf{y}^\beta}-(\rho^i_\beta\circ\pi)\frac{\partial^2\mathcal{F}}{\partial \textbf{x}^i\partial \textbf{y}^\lambda}\nonumber\\
&\ \ \ -\frac{\partial\mathcal{F}}{\partial{\textbf{y}}^\gamma}(L^\gamma_{\lambda\beta}\circ\pi)\Big)\Big\}.
\end{align}
Plugging (\ref{gradian1}) and (\ref{gradian2}) into (\ref{gradian0}) imply the following local expression for gradient
\begin{align}\label{gradian3}
\text{grad}\phi&=-\mathcal{G}^{\alpha\beta}\frac{\partial\phi}{\partial \textbf{y}^\beta}\mathcal{X}_\alpha+\mathcal{G}^{\alpha\beta}\Big\{(\rho^i_\beta\circ\pi)\frac{\partial\phi}{\partial \textbf{x}^i}+\mathcal{G}^{\lambda\gamma}\frac{\partial\phi}{\partial \textbf{y}^\gamma}\Big((\rho^i_\lambda\circ\pi)\frac{\partial^2\mathcal{F}}{\partial \textbf{x}^i\partial \textbf{y}^\beta}\nonumber\\
&\ \ \ -(\rho^i_\beta\circ\pi)\frac{\partial^2\mathcal{F}}{\partial \textbf{x}^i\partial \textbf{y}^\lambda}-\frac{\partial\mathcal{F}}{\partial{\textbf{y}}^\gamma}(L^\gamma_{\lambda\beta}\circ\pi)\Big)\Big\}\mathcal{V}_\alpha.
\end{align}
\begin{proposition}
Let $(E, \mathcal{F})$ be a Finsler algebroid and $f\in C^\infty(M)$. Then we have
\[
(i)\ \text{grad} f^\vee\in\Gamma(v\pounds^\pi E),\ (ii)\ [C, \text{grad} f^\vee]_\pounds=-\text{grad} f^\vee,
\ (iii)\ \rho_\pounds(\text{grad} f^\vee)(\mathcal{F})=f^c.
\]
\end{proposition}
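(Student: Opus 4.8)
The plan is to compute the coordinate expression of $\text{grad} f^\vee$ from the general gradient formula (\ref{gradian3}) and then read off all three assertions. The crucial observation is that $f^\vee = f\circ\pi$ depends only on the base coordinates, so $\frac{\partial f^\vee}{\partial \textbf{y}^\beta} = 0$, while $\frac{\partial f^\vee}{\partial\textbf{x}^i} = \frac{\partial f}{\partial x^i}\circ\pi$ (using $\pi_*\frac{\partial}{\partial\textbf{x}^i} = \frac{\partial}{\partial x^i}$, as established before (\ref{esa})). Substituting $\phi = f^\vee$ into (\ref{gradian3}), the entire $\mathcal{X}_\alpha$-component vanishes and every term carrying a $\frac{\partial\phi}{\partial\textbf{y}^\gamma}$ factor drops out, leaving
\[
\text{grad} f^\vee = \mathcal{G}^{\alpha\beta}\Big((\rho^i_\beta\frac{\partial f}{\partial x^i})\circ\pi\Big)\mathcal{V}_\alpha.
\]
Since this section has no $\mathcal{X}_\alpha$-component, it lies in the vertical subbundle $v\pounds^\pi E$ spanned by the $\mathcal{V}_\alpha$; this is exactly assertion (i).

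For (ii) I would invoke Lemma \ref{lemma0}: $\text{grad} f^\vee$ satisfies $[C, \text{grad} f^\vee]_\pounds = -\text{grad} f^\vee$ precisely when it is homogeneous of degree $0$, i.e.\ when its single nonzero component $(\text{grad} f^\vee)^{\bar\alpha} = \mathcal{G}^{\alpha\beta}((\rho^i_\beta\frac{\partial f}{\partial x^i})\circ\pi)$ satisfies $\textbf{y}^\lambda\frac{\partial}{\partial\textbf{y}^\lambda}(\text{grad} f^\vee)^{\bar\alpha} = 0$. The factor $(\rho^i_\beta\frac{\partial f}{\partial x^i})\circ\pi$ depends only on $x$, so only the homogeneity of $\mathcal{G}^{\alpha\beta}$ is at issue. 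Differentiating (\ref{only God}) once more in $\textbf{y}^\gamma$ yields $\textbf{y}^\lambda\frac{\partial}{\partial\textbf{y}^\lambda}\mathcal{G}_{\beta\gamma} = 0$, so $\mathcal{G}_{\beta\gamma}$ is homogeneous of degree $0$; then differentiating the identity $\mathcal{G}^{\alpha\beta}\mathcal{G}_{\beta\gamma} = \delta^\alpha_\gamma$ and contracting with $\textbf{y}^\lambda$ transfers degree-$0$ homogeneity to $\mathcal{G}^{\alpha\beta}$. This short but non-automatic computation is the one place requiring genuine care, so I regard it as the main obstacle.

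For (iii) I would use $\rho_\pounds(\mathcal{V}_\alpha) = \frac{\partial}{\partial\textbf{y}^\alpha}$, so that
\[
\rho_\pounds(\text{grad} f^\vee)(\mathcal{F}) = \mathcal{G}^{\alpha\beta}\Big((\rho^i_\beta\frac{\partial f}{\partial x^i})\circ\pi\Big)\frac{\partial\mathcal{F}}{\partial\textbf{y}^\alpha}.
\]
The homogeneity identity (\ref{only God}) reads $\frac{\partial\mathcal{F}}{\partial\textbf{y}^\alpha} = \textbf{y}^\lambda\mathcal{G}_{\alpha\lambda}$, whence $\mathcal{G}^{\alpha\beta}\frac{\partial\mathcal{F}}{\partial\textbf{y}^\alpha} = \textbf{y}^\beta$ by definition of the inverse metric. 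Substituting gives $\rho_\pounds(\text{grad} f^\vee)(\mathcal{F}) = \textbf{y}^\beta((\rho^i_\beta\frac{\partial f}{\partial x^i})\circ\pi)$, which is precisely the coordinate expression (\ref{cf}) of $f^c$. This closes the argument, and the whole proof is seen to hinge on the reduction of (\ref{gradian3}) for a base function together with the degree-$2$ homogeneity of $\mathcal{F}$.
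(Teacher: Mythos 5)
Your proof is correct and follows essentially the same route as the paper: substitute $\phi=f^\vee$ into (\ref{gradian3}), use $\frac{\partial f^\vee}{\partial\textbf{y}^\beta}=0$ to obtain $\text{grad}f^\vee=\mathcal{G}^{\alpha\beta}\big((\rho^i_\beta\frac{\partial f}{\partial x^i})\circ\pi\big)\mathcal{V}_\alpha$, read off (i), and for (iii) combine (\ref{only God}) with $\mathcal{G}^{\alpha\beta}\mathcal{G}_{\alpha\lambda}=\delta^\beta_\lambda$ and (\ref{cf}), exactly as the paper does. The only divergence is in (ii): the paper computes $[C,\text{grad}f^\vee]_\pounds$ directly in coordinates, whereas you invoke Lemma \ref{lemma0}; these are equivalent, since that lemma is proved by the same bracket computation. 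However, your treatment of the homogeneity step is the more careful one, and this matters: to kill the term $\textbf{y}^\alpha\frac{\partial\mathcal{G}^{\beta\gamma}}{\partial\textbf{y}^\alpha}$ the paper asserts that $\frac{\partial\mathcal{G}_{\beta\gamma}}{\partial\textbf{y}^\alpha}=0$ follows from (\ref{only God}), which is false in general (it would force every Finsler metric to be $y$-independent); what actually follows, and what the bracket formula needs, is the contracted Euler identity $\textbf{y}^\alpha\frac{\partial\mathcal{G}_{\beta\gamma}}{\partial\textbf{y}^\alpha}=0$, transferred to $\mathcal{G}^{\alpha\beta}$ via differentiation of $\mathcal{G}^{\alpha\beta}\mathcal{G}_{\beta\gamma}=\delta^\alpha_\gamma$. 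That is precisely the step you flagged as the main obstacle and carried out correctly, so your version repairs the one sloppy point in the paper's own argument.
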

\begin{proof}
Since $f^\vee=f\circ\pi$ is a function with respect to $(\textbf{x}^i)$, then we have $\frac{\partial f^\vee}{\partial \textbf{y}^\beta}=0$. Thus from (\ref{gradian3}), we deduce that $\text{grad}f^\vee$ has the following coordinate expression
\begin{equation}\label{gradian4}
\text{grad}f^\vee=\mathcal{G}^{\alpha\beta}(\rho^i_\beta\circ\pi)\frac{\partial(f\circ\pi)}{\partial \textbf{x}^i}\mathcal{V}_\alpha.
\end{equation}
Thus we have (i). The above equation and (\ref{Liouville}) give us
\[
[C, \text{grad} f^\vee]_\pounds=\Big(\textbf{y}^\alpha\frac{\partial\mathcal{G}^{\beta\gamma}}{\partial \textbf{y}^\alpha}(\rho^i_\gamma\circ\pi)\frac{\partial(f\circ\pi)}{\partial \textbf{x}^i}-\mathcal{G}^{\beta\gamma}(\rho^i_\gamma\circ\pi)\frac{\partial(f\circ\pi)}{\partial \textbf{x}^i}\Big)\mathcal{V}_\beta.
\]
But using (\ref{only God}) we can deduce $\frac{\partial\mathcal{G}_{\beta\gamma}}{\partial \textbf{y}^\alpha}=0$ and consequently $\frac{\partial\mathcal{G}^{\beta\gamma}}{\partial \textbf{y}^\alpha}=0$. Setting this equation in the above equation implies
\[
[C, \text{grad} f^\vee]_\pounds=-\mathcal{G}^{\beta\gamma}(\rho^i_\gamma\circ\pi)\frac{\partial(f\circ\pi)}{\partial \textbf{x}^i}\mathcal{V}_\beta=-\text{grad} f^\vee.
\]
Thus we have (ii). To prove (iii), we use (\ref{only God}) and (\ref{gradian4}) as follows
\begin{align*}
\rho_\pounds(\text{grad} f^\vee)(\mathcal{F})&=\mathcal{G}^{\alpha\beta}(\rho^i_\beta\circ\pi)\frac{\partial(f\circ\pi)}{\partial \textbf{x}^i}\rho_\pounds(\mathcal{V}_\alpha)(\mathcal{F})=\mathcal{G}^{\alpha\beta}(\rho^i_\beta\circ\pi)\frac{\partial(f\circ\pi)}{\partial \textbf{x}^i}\frac{\partial\mathcal{F}}{\partial \textbf{y}^\alpha}\\
&=\mathcal{G}^{\alpha\beta}(\rho^i_\beta\circ\pi)\frac{\partial(f\circ\pi)}{\partial \textbf{x}^i}\textbf{y}^\lambda\mathcal{G}_{\alpha\lambda}=\textbf{y}^\beta(\rho^i_\beta\circ\pi)\frac{\partial(f\circ\pi)}{\partial \textbf{x}^i}=f^c.
\end{align*}
\end{proof}
%-------------------------------------------------------------------------------------
\subsection{Conservative endomorphism on Finsler algebroids}
%-------------------------------------------------------------------------------------
\begin{defn}
Horizontal endomorphism $h$ on Finsler algebroid $(E, \mathcal{F})$ is called conservative if $d^\pounds_h\mathcal{F}=0$.
\end{defn}
Using (\ref{horizontal end}), it is easy to check that $h$ is conservative if and only if
\begin{equation}\label{cons}
(\rho^i_\alpha\circ\pi)\frac{\partial\mathcal{F}}{\partial\textbf{x}^i}+\mathcal{B}^\beta_\alpha\frac{\partial\mathcal{F}}{\partial\textbf{y}^\beta}=0.
\end{equation}
\begin{proposition}
Let $h$ be a conservative horizontal endomorphism on Finsler algebroid $(E, \mathcal{F})$. Then we have $d^\pounds_H\mathcal{F}=0$, where $H$ is the tension of $h$.
\end{proposition}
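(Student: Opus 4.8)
The plan is to avoid coordinates altogether and derive the identity purely from the generalized Fröhlicher--Nijenhuis machinery, using only the definition of the tension and the homogeneity/conservativeness hypotheses. First I would record the two structural facts I intend to feed in: by definition the tension is $H=[h,C]^{F-N}_\pounds$, and for any vector-valued form $K$ the Lie differential acting on a \emph{function} collapses to a contraction, since $i_K\mathcal{F}=0$ forces $\pounds^\pounds_K\mathcal{F}=i_Kd^\pounds\mathcal{F}=d^\pounds_K\mathcal{F}$. In particular $\pounds^\pounds_H\mathcal{F}=d^\pounds_H\mathcal{F}$ and $\pounds^\pounds_h\mathcal{F}=d^\pounds_h\mathcal{F}$, so it suffices to prove $\pounds^\pounds_H\mathcal{F}=0$.

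Next I would apply the commutator identity (\ref{F2}) on $\pounds^\pi E$ with $K=h$ (degree $k=1$) and $L=C$, the Liouville section viewed as a degree $l=0$ vector-valued form. Since $(-1)^{kl}=(-1)^0=1$, this gives $\pounds^\pounds_H=\pounds^\pounds_{[h,C]^{F-N}_\pounds}=\pounds^\pounds_h\circ\pounds^\pounds_C-\pounds^\pounds_C\circ\pounds^\pounds_h$. Evaluating each term on $\mathcal{F}$ is then immediate. For the first term, $\pounds^\pounds_C\mathcal{F}=\rho_\pounds(C)\mathcal{F}=2\mathcal{F}$ by condition (ii) of a Finsler algebroid, whence $\pounds^\pounds_h(2\mathcal{F})=2\,d^\pounds_h\mathcal{F}=0$ because $h$ is conservative. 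For the second term, $\pounds^\pounds_h\mathcal{F}=d^\pounds_h\mathcal{F}=0$ already, so applying $\pounds^\pounds_C$ leaves $0$. Hence $d^\pounds_H\mathcal{F}=\pounds^\pounds_H\mathcal{F}=0$.

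The only genuinely delicate point is the degree bookkeeping in (\ref{F2}): one must treat $C$ as a $0$-degree vector-valued form so that the sign is $+1$ and the left-hand bracket is exactly the tension $H$, and one must justify the reduction $\pounds^\pounds_K\mathcal{F}=d^\pounds_K\mathcal{F}$ for functions. Everything else is formal, so I expect no serious obstacle once these conventions are pinned down.

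As a cross-check (and an alternative proof for readers who prefer coordinates), I would compute directly from (\ref{tension}): since $H(\mathcal{V}_\beta)=0$ and $\rho_\pounds(\mathcal{V}_\alpha)=\partial/\partial\textbf{y}^\alpha$, the only possibly nonzero component is $d^\pounds_H\mathcal{F}(\mathcal{X}_\beta)=\bigl(\mathcal{B}^\alpha_\beta-\textbf{y}^\gamma\frac{\partial\mathcal{B}^\alpha_\beta}{\partial\textbf{y}^\gamma}\bigr)\frac{\partial\mathcal{F}}{\partial\textbf{y}^\alpha}$. Differentiating the conservativeness relation (\ref{cons}) in $\textbf{y}^\gamma$, contracting with $\textbf{y}^\gamma$, and using (\ref{only God}) together with the degree-$2$ homogeneity of $\partial\mathcal{F}/\partial\textbf{x}^i$ in $y$ (so that $\textbf{y}^\gamma\partial^2\mathcal{F}/\partial\textbf{y}^\gamma\partial\textbf{x}^i=2\,\partial\mathcal{F}/\partial\textbf{x}^i$), one reduces the whole expression to $\bigl(\textbf{y}^\gamma\frac{\partial\mathcal{B}^\alpha_\beta}{\partial\textbf{y}^\gamma}-\mathcal{B}^\alpha_\beta\bigr)\frac{\partial\mathcal{F}}{\partial\textbf{y}^\alpha}=0$, which is precisely the vanishing of the surviving component.
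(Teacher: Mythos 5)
Your proposal is correct, and your primary argument takes a genuinely different route from the paper's; only your ``cross-check'' coincides with what the paper actually does. The paper's proof is the coordinate computation: from (\ref{tension}) it reads off $d^\pounds_H\mathcal{F}(\mathcal{V}_\alpha)=0$ and $d^\pounds_H\mathcal{F}(\mathcal{X}_\alpha)=(\mathcal{B}^\beta_\alpha-\textbf{y}^\gamma\frac{\partial\mathcal{B}^\beta_\alpha}{\partial\textbf{y}^\gamma})\frac{\partial\mathcal{F}}{\partial\textbf{y}^\beta}$, differentiates the conservativeness relation (\ref{cons}) in $\textbf{y}^\gamma$ to get (\ref{cons2}), contracts with $\textbf{y}^\gamma$ using the Euler identities to get (\ref{cons00}), and then combines (\ref{cons}) with (\ref{cons00}) to kill the surviving component --- exactly your last paragraph, modulo your streamlining of substituting (\ref{cons}) directly instead of isolating (\ref{cons00}). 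Your main argument, by contrast, is coordinate-free and sound as written: the two conventions you flag are indeed the paper's conventions, since (\ref{F2}) is stated for arbitrary bidegrees $(k,l)$, the tension is by definition the bracket of the vector-valued $1$-form $h$ with the section (degree-$0$ object) $C$, and $i_K$ annihilates $0$-forms (for simple tensors $i_{\mu\otimes X}\mathcal{F}=\mu\wedge i_X\mathcal{F}=0$), so that $\pounds^\pounds_K\mathcal{F}=i_Kd^\pounds\mathcal{F}=d^\pounds_K\mathcal{F}$ for $K\in\{h,H\}$; with this, $d^\pounds_H\mathcal{F}=\pounds^\pounds_h(\pounds^\pounds_C\mathcal{F})-\pounds^\pounds_C(\pounds^\pounds_h\mathcal{F})=2\,d^\pounds_h\mathcal{F}-\pounds^\pounds_C(0)=0$ is immediate. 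What your route buys: no Euler-identity bookkeeping, and it makes visible that only $\pounds^\pounds_C\mathcal{F}=r\mathcal{F}$ for \emph{some} constant $r$ is needed, so the statement holds for any homogeneity degree, not just $2$; conceptually it exhibits $d^\pounds_H\mathcal{F}$ as the commutator $[\pounds^\pounds_h,\pounds^\pounds_C]$ acting on $\mathcal{F}$. What the paper's route buys: the intermediate identities (\ref{cons2}) and (\ref{cons00}) are not throwaway, as they are reused repeatedly later (in the lemma computing $i_h\omega$, in Proposition \ref{mainpor}, and in the Cartan-connection arguments around Theorem \ref{TCartan}), so the coordinate derivation doubles as preparation for those results.
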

\begin{proof}
Using (\ref{tension}) we can obtain $d^\pounds_H\mathcal{F}(\mathcal{V}_\alpha)=0$ and
\begin{equation}\label{cons1}
d^\pounds_H\mathcal{F}(\mathcal{X}_\alpha)=(\mathcal{B}^\beta_\alpha-\textbf{y}^\gamma\frac{\partial \mathcal{B}^\beta_\alpha}{\partial\textbf{y}^\gamma})\frac{\partial \mathcal{F}}{\partial \textbf{y}^\beta}.
\end{equation}
Since $h$ is conservative, then differentiating (\ref{cons}) with respect to $\textbf{y}^\gamma$ we obtain
\begin{equation}\label{cons2}
(\rho^i_\alpha\circ\pi)\frac{\partial^2\mathcal{F}}{\partial \textbf{x}^i\partial \textbf{y}^\gamma}+\frac{\partial \mathcal{B}^\beta_\alpha}{\partial \textbf{y}^\gamma}\frac{\partial \mathcal{F}}{\partial \textbf{y}^\beta}+\mathcal{B}^\beta_\alpha\frac{\partial^2 F}{\partial\textbf{y}^\beta\partial\textbf{y}^\gamma}=0.
\end{equation}
Contracting the above equation by $\textbf{y}^\gamma$ and using homogeneity of $\mathcal{F}$ we get
\begin{equation}\label{cons00}
(\rho^i_\alpha\circ\pi)\frac{\partial\mathcal{F}}{\partial\textbf{x}^i}+\textbf{y}^\gamma\frac{\partial \mathcal{B}^\beta_\alpha}{\partial\textbf{y}^\gamma}\frac{\partial\mathcal{F}}{\partial\textbf{y}^\beta}=0.
\end{equation}
Setting the above equation in (\ref{cons1}) and using (\ref{cons}) we deduce $d^\pounds_H\mathcal{F}(\mathcal{X}_\alpha)=0$. Therefore $d^\pounds_H\mathcal{F}=0$.
\end{proof}
\begin{lemma}
If $\omega$ is the fundamental two-form of Finsler algebroid $(E, \mathcal{F})$
and $h$ is a conservative horizontal endomorphism on $\pounds^\pi E$, then
\[
i_h\omega=\omega+i_td^\pounds\mathcal{F}.
\]
\end{lemma}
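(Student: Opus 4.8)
The plan is to prove the identity by evaluating both sides on the frame $\{\mathcal{X}_\alpha,\mathcal{V}_\alpha\}$ and matching components, the only genuine computation occurring in the purely horizontal block, where the conservation hypothesis enters.

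First I would pin down the two contractions. Since $h=\delta_\alpha\otimes\mathcal{X}^\alpha$ is a type $(1,1)$ tensor (see (\ref{hF})), $i_h$ acts on the two-form $\omega$ as a derivation, so that $(i_h\omega)(\widetilde{X},\widetilde{Y})=\omega(h\widetilde{X},\widetilde{Y})+\omega(\widetilde{X},h\widetilde{Y})$. For the right-hand side, recalling $t=\tfrac12 t^\gamma_{\alpha\beta}\mathcal{X}^\alpha\wedge\mathcal{X}^\beta\otimes\mathcal{V}_\gamma$ from (\ref{wt}) and that $(d^\pounds\mathcal{F})(\mathcal{V}_\gamma)=\rho_\pounds(\mathcal{V}_\gamma)\mathcal{F}=\partial\mathcal{F}/\partial\textbf{y}^\gamma$, the definition $i_{\mu\otimes X}\nu=\mu\wedge i_X\nu$ gives
\[
i_t d^\pounds\mathcal{F}=\tfrac12\,t^\gamma_{\alpha\beta}\frac{\partial\mathcal{F}}{\partial\textbf{y}^\gamma}\,\mathcal{X}^\alpha\wedge\mathcal{X}^\beta,
\]
a two-form with no mixed or vertical components.

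Next I would dispatch the easy blocks. Because $h\mathcal{V}_\alpha=0$, because $\omega(\mathcal{V}_\alpha,\mathcal{V}_\beta)=0$ by (\ref{Finsler algebroid1}), and because $i_t d^\pounds\mathcal{F}$ annihilates any pair containing a $\mathcal{V}$, evaluating on $(\mathcal{X}_\alpha,\mathcal{V}_\beta)$ gives $(i_h\omega)(\mathcal{X}_\alpha,\mathcal{V}_\beta)=\omega(\mathcal{X}_\alpha,\mathcal{V}_\beta)=(\omega+i_t d^\pounds\mathcal{F})(\mathcal{X}_\alpha,\mathcal{V}_\beta)$, while evaluating on $(\mathcal{V}_\alpha,\mathcal{V}_\beta)$ yields $0=0$. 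Thus the mixed and vertical components coincide at once.

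The crux is the horizontal component. Using $h\mathcal{X}_\alpha=\delta_\alpha=\mathcal{X}_\alpha+\mathcal{B}^\gamma_\alpha\mathcal{V}_\gamma$ together with $\omega(\mathcal{V}_\gamma,\mathcal{X}_\beta)=\mathcal{G}_{\gamma\beta}$ from (\ref{Best6}) and the symmetry of $\mathcal{G}$, I compute $(i_h\omega)(\mathcal{X}_\alpha,\mathcal{X}_\beta)=2\omega(\mathcal{X}_\alpha,\mathcal{X}_\beta)+\mathcal{B}^\gamma_\alpha\mathcal{G}_{\gamma\beta}-\mathcal{B}^\gamma_\beta\mathcal{G}_{\alpha\gamma}$, whereas $(\omega+i_t d^\pounds\mathcal{F})(\mathcal{X}_\alpha,\mathcal{X}_\beta)=\omega(\mathcal{X}_\alpha,\mathcal{X}_\beta)+t^\gamma_{\alpha\beta}\frac{\partial\mathcal{F}}{\partial\textbf{y}^\gamma}$. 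The identity therefore reduces to
\[
t^\gamma_{\alpha\beta}\frac{\partial\mathcal{F}}{\partial\textbf{y}^\gamma}-\omega(\mathcal{X}_\alpha,\mathcal{X}_\beta)=\mathcal{B}^\gamma_\alpha\mathcal{G}_{\gamma\beta}-\mathcal{B}^\gamma_\beta\mathcal{G}_{\alpha\gamma}.
\]
I would expand the left side using (\ref{wt1}) and (\ref{Finsler algebroid1}); the two $(L^\gamma_{\alpha\beta}\circ\pi)\,\partial\mathcal{F}/\partial\textbf{y}^\gamma$ contributions cancel, leaving the first-derivative terms $\frac{\partial\mathcal{B}^\gamma_\beta}{\partial\textbf{y}^\alpha}\frac{\partial\mathcal{F}}{\partial\textbf{y}^\gamma}-\frac{\partial\mathcal{B}^\gamma_\alpha}{\partial\textbf{y}^\beta}\frac{\partial\mathcal{F}}{\partial\textbf{y}^\gamma}$ together with $-(\rho^i_\alpha\circ\pi)\frac{\partial^2\mathcal{F}}{\partial\textbf{x}^i\partial\textbf{y}^\beta}+(\rho^i_\beta\circ\pi)\frac{\partial^2\mathcal{F}}{\partial\textbf{x}^i\partial\textbf{y}^\alpha}$. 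The main obstacle, and the only place the hypothesis is used, is eliminating these $\textbf{x}$-derivative terms: differentiating the conservation condition (\ref{cons}) with respect to $\textbf{y}^\beta$ produces exactly (\ref{cons2}), namely $(\rho^i_\alpha\circ\pi)\frac{\partial^2\mathcal{F}}{\partial\textbf{x}^i\partial\textbf{y}^\beta}=-\frac{\partial\mathcal{B}^\mu_\alpha}{\partial\textbf{y}^\beta}\frac{\partial\mathcal{F}}{\partial\textbf{y}^\mu}-\mathcal{B}^\mu_\alpha\mathcal{G}_{\mu\beta}$ and its $\alpha\leftrightarrow\beta$ analogue. Substituting both, the four surviving products of first derivatives cancel in pairs, and there remains precisely $\mathcal{B}^\mu_\alpha\mathcal{G}_{\mu\beta}-\mathcal{B}^\mu_\beta\mathcal{G}_{\mu\alpha}$, which equals the right side by the symmetry of $\mathcal{G}$. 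This settles the horizontal block and hence establishes the lemma.
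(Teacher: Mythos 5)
Your proof is correct and follows essentially the same route as the paper's: both evaluate the identity on the frame $\{\mathcal{X}_\alpha,\mathcal{V}_\alpha\}$, dispose of the mixed and vertical blocks using $h\mathcal{V}_\alpha=0$ and $\omega(\mathcal{V}_\alpha,\mathcal{V}_\beta)=0$, and settle the horizontal block by combining (\ref{Finsler algebroid1}), (\ref{wt1}) and the differentiated conservation condition (\ref{cons2}). The only difference is bookkeeping — the paper substitutes (\ref{cons2}) into $i_h\omega$ first and then subtracts $i_t d^\pounds\mathcal{F}$, while you isolate the difference and substitute at the end — which is not a genuinely distinct argument.
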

\begin{proof}
Since $h$ is conservative, then we have (\ref{cons2}). Thus using (\ref{Finsler algebroid1}) and (\ref{cons2}) we get
\begin{align*}
(i_h\omega)(\mathcal{X_\alpha},\mathcal{X_\beta})&=(\rho^i_\alpha\circ\pi)\frac{\partial^2\mathcal{F}}{{\partial\textbf{x}^i}{\partial\textbf{y}^\beta}}
-(\rho^i_\beta\circ\pi)\frac{\partial^2\mathcal{F}}{{\partial\textbf{x}^i}{\partial\textbf{y}^\alpha}}-2\frac{\partial \mathcal{F}}{{\partial\textbf{y}^\gamma}}(L^\gamma_{\alpha\beta}\circ\pi)\\
&\ \ \ -\frac{\partial \mathcal{B}^\lambda_\alpha}{{\partial\textbf{y}^\beta}}\frac{\partial\mathcal{F}}{{\partial\textbf{y}^\lambda}}+\frac{\partial \mathcal{B}^\lambda_\beta}{{\partial\textbf{y}^\alpha}}\frac{\partial\mathcal{F}}{{\partial\textbf{y}^\lambda}}.
\end{align*}
Also, (\ref{wt}) and (\ref{wt1}) give us
\[
(i_td^\pounds\mathcal{F})(\mathcal{X_\alpha},\mathcal{X_\beta})=\frac{\partial\mathcal{F}}{{\partial\textbf{y}^\gamma}}\Big(\frac{\partial \mathcal{B}^\gamma_\beta}{{\partial\textbf{y}^\alpha}}-\frac{\partial \mathcal{B}^\gamma_\alpha}{{\partial\textbf{y}^\beta}}-(L^\gamma_{\alpha\beta}\circ\pi)\Big).
\]
Two above equations yield
\begin{align*}
(i_h\omega-i_td^\pounds\mathcal{F})(\mathcal{X_\alpha},\mathcal{X_\beta})&=(\rho^i_\alpha\circ\pi)\frac{\partial^2\mathcal{F}}{{\partial\textbf{x}^i}{\partial\textbf{y}^\beta}}
-(\rho^i_\beta\circ\pi)\frac{\partial^2\mathcal{F}}{{\partial\textbf{x}^i}{\partial\textbf{y}^\alpha}}-\frac{\partial \mathcal{F}}{{\partial\textbf{y}^\gamma}}(L^\gamma_{\alpha\beta}\circ\pi)\\
&=\omega(\mathcal{X_\alpha},\mathcal{X_\beta}).
\end{align*}
Similarly we get
\[
(i_h\omega-i_td^\pounds\mathcal{F})(\mathcal{X_\alpha},\mathcal{V_\beta})=(i_h\omega)(\mathcal{X_\alpha},\mathcal{X_\beta})=\omega(h\mathcal{X}_\alpha, \mathcal{V}_\beta)=\omega(\mathcal{X}_\alpha, \mathcal{V}_\beta),
\]
and
\[
(i_h\omega-i_td^\pounds\mathcal{F})(\mathcal{V}_\alpha,\mathcal{V}_\beta)=0=\omega(\mathcal{V}_\alpha, \mathcal{V}_\beta).
\]
\end{proof}
\begin{cor}
If $\omega$ is the fundamental two-form of Finsler algebroid $(E, \mathcal{F})$
and $h$ is a torsion free conservative horizontal endomorphism on $\pounds^\pi E$, then
\[
i_h\omega=\omega.
\]
\end{cor}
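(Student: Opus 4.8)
The statement to prove is the Corollary: if $h$ is a torsion free conservative horizontal endomorphism on $\pounds^\pi E$, then $i_h\omega = \omega$. The plan is simply to invoke the preceding Lemma, which asserts that for any conservative horizontal endomorphism $h$ one has
\[
i_h\omega = \omega + i_t d^\pounds\mathcal{F},
\]
where $t$ is the weak torsion of $h$. The entire content of the corollary is therefore to show that the correction term $i_t d^\pounds\mathcal{F}$ vanishes under the additional torsion-free hypothesis. Since torsion free means precisely $t = 0$ (the weak torsion, by definition), the term $i_t d^\pounds\mathcal{F}$ is the contraction of $d^\pounds\mathcal{F}$ against the zero tensor, hence identically zero.

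\textbf{Key steps.} First I would recall that $t$ denotes the weak torsion of $h$, given in coordinates by (\ref{wt}) and (\ref{wt1}); saying $h$ is torsion free means $t = 0$. Second, I would apply the preceding Lemma verbatim to the conservative endomorphism $h$, obtaining $i_h\omega = \omega + i_t d^\pounds\mathcal{F}$. Third, I would observe that $i_t$ is linear in its tensor argument $t$, so that $t = 0$ forces $i_t d^\pounds\mathcal{F} = 0$. Combining these gives $i_h\omega = \omega$, as required. Concretely, one could also verify this at the level of the coordinate computation inside the Lemma's proof: the only place the discrepancy between $i_h\omega$ and $\omega$ enters is through the expression $(i_t d^\pounds\mathcal{F})(\mathcal{X}_\alpha, \mathcal{X}_\beta) = \frac{\partial\mathcal{F}}{\partial\textbf{y}^\gamma}\, t^\gamma_{\alpha\beta}$, which vanishes termwise once $t^\gamma_{\alpha\beta} = 0$.

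\textbf{Main obstacle.} There is essentially no obstacle here: the corollary is an immediate specialization of the Lemma, and the only thing one must be careful about is the bookkeeping of \emph{which} torsion is meant. One should confirm that ``torsion free'' in the hypothesis refers to the vanishing of the weak torsion $t = [J,h]^{F-N}_\pounds$ (rather than the strong torsion $T$), since it is precisely the weak torsion that appears in the contraction $i_t d^\pounds\mathcal{F}$ in the Lemma. Given the definitions fixed earlier in the excerpt, this identification is the natural one, and the conclusion follows directly. I would therefore present the proof in a single short line: by the Lemma and $t=0$ we have $i_h\omega = \omega + i_t d^\pounds\mathcal{F} = \omega + 0 = \omega$.
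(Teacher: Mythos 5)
Your proof is correct and is exactly the argument the paper intends: the corollary is stated without a separate proof precisely because it follows from the preceding Lemma by setting the weak torsion $t=0$, which kills the correction term $i_t d^\pounds\mathcal{F}$. Your remark that ``torsion free'' refers to the weak torsion $t=[J,h]^{F-N}_\pounds$ is also the convention used throughout the paper, so there is nothing to add.
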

On any Finsler algebroid there is a spray $S_\circ:E\rightarrow\pounds^\pi E$, which is uniquely determined on $\stackrel{\circ}{\pounds^\pi E}$ by the formula
\begin{equation}\label{Best3}
i_{S_\circ}\omega=-d^\pounds\mathcal{F}.
\end{equation}
This spray is called the \textit{canonical spray} of the Finsler algebroid.\\

Using (\ref{pmetric}) and the above equation, the canonical spray $S_\circ$ has the coordinate expression $S_\circ=y^\alpha\mathcal{X}_\alpha+S_\circ^\alpha\mathcal{V}_\alpha$, where
\begin{equation}\label{canonical}
S_\circ^\alpha=\mathcal{G}^{\alpha\beta}\Big((\rho^i_\beta\circ\pi)\frac{\partial \mathcal{F}}{\partial \textbf{x}^i}+\textbf{y}^\gamma(\frac{\partial\mathcal{F}}{\partial \textbf{y}^\lambda}(L^\lambda_{\gamma\beta}\circ\pi)-(\rho^i_\gamma\circ\pi)\frac{\partial^2\mathcal{F}}{\partial \textbf{x}^i\partial \textbf{y}^\beta})\Big),
\end{equation}
and $(\mathcal{G}^{\alpha\beta})$ is the inverse matric of $(\mathcal{G}_{\alpha\beta})$.
\begin{proposition}\label{mainpor}
Let $S_\circ$ be the canonical spray and $h$ be a conservative horizontal endomorphism on Finsler algebroid $(E, \mathcal{F})$ with the associated semispray $S$. Then we have
\[
S-S_\circ=(d^\pounds_{i_St}\mathcal{F})^\sharp,
\]
where $i_{(d^\pounds_{i_St}\mathcal{F})^\sharp}\omega=d^\pounds_{i_St}\mathcal{F}$.
\end{proposition}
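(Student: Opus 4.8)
The plan is to translate the claimed identity into a statement about $1$-forms and then verify it on the natural frame $\{\mathcal{X}_\alpha,\mathcal{V}_\alpha\}$. Since $\omega$ is nondegenerate, the musical operator $\sharp$ is a bijection, so $S-S_\circ=(d^\pounds_{i_St}\mathcal{F})^\sharp$ is equivalent to $i_{S-S_\circ}\omega=d^\pounds_{i_St}\mathcal{F}$. Using the defining equation of the canonical spray $i_{S_\circ}\omega=-d^\pounds\mathcal{F}$ from (\ref{Best3}), this reduces to proving the single identity
\[
i_S\omega+d^\pounds\mathcal{F}=d^\pounds_{i_St}\mathcal{F}.
\]
Here $i_St$ is the semibasic $(1,1)$-tensor of (\ref{4.5}), and, in accordance with the notation $d^\pounds_K\mathcal{F}=d^\pounds\mathcal{F}\circ K$ used for $J$ and $h$, the right-hand side is the $1$-form $\widetilde{X}\mapsto d^\pounds\mathcal{F}((i_St)\widetilde{X})=\rho_\pounds((i_St)\widetilde{X})\mathcal{F}$.

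Both sides being $1$-forms on $\pounds^\pi E$, I would check equality by evaluating on $\mathcal{V}_\mu$ and on $\mathcal{X}_\mu$ separately. On $\mathcal{V}_\mu$ the right-hand side vanishes because $i_St$ is semibasic, so $(i_St)\mathcal{V}_\mu=0$; the left-hand side evaluates, via (\ref{Finsler algebroid1}) and (\ref{Best6}), to $-\textbf{y}^\alpha\mathcal{G}_{\alpha\mu}+\partial\mathcal{F}/\partial\textbf{y}^\mu$, which is zero by the homogeneity identity (\ref{only God}). This settles the vertical components without using conservativity.

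The substantive part is the $\mathcal{X}_\mu$-component. Writing the associated semispray $S=hS'$ and using $h(\mathcal{X}_\alpha)=\delta_\alpha=\mathcal{X}_\alpha+\mathcal{B}^\beta_\alpha\mathcal{V}_\beta$, $h(\mathcal{V}_\alpha)=0$, one gets $S=\textbf{y}^\alpha\delta_\alpha$, hence $S^\gamma=\textbf{y}^\beta\mathcal{B}^\gamma_\beta$ in the form (\ref{semispray}). Expanding $(i_S\omega)(\mathcal{X}_\mu)$ and $(d^\pounds\mathcal{F})(\mathcal{X}_\mu)$ from (\ref{Finsler algebroid1}), and $(d^\pounds_{i_St}\mathcal{F})(\mathcal{X}_\mu)=\textbf{y}^\alpha t^\gamma_{\alpha\mu}\,\partial\mathcal{F}/\partial\textbf{y}^\gamma$ from $(i_St)(\mathcal{X}_\mu)=\textbf{y}^\alpha t^\gamma_{\alpha\mu}\mathcal{V}_\gamma$ together with (\ref{wt1}), and applying degree-$2$ homogeneity in the form $\textbf{y}^\alpha\partial^2\mathcal{F}/\partial\textbf{x}^i\partial\textbf{y}^\alpha=2\,\partial\mathcal{F}/\partial\textbf{x}^i$, the two $(L^\gamma_{\alpha\mu}\circ\pi)$-terms cancel and the remaining identity to be shown is
\[
\textbf{y}^\alpha(\rho^i_\alpha\circ\pi)\frac{\partial^2\mathcal{F}}{\partial\textbf{x}^i\partial\textbf{y}^\mu}-(\rho^i_\mu\circ\pi)\frac{\partial\mathcal{F}}{\partial\textbf{x}^i}+\textbf{y}^\beta\mathcal{B}^\alpha_\beta\frac{\partial^2\mathcal{F}}{\partial\textbf{y}^\alpha\partial\textbf{y}^\mu}=\textbf{y}^\alpha\Big(\frac{\partial\mathcal{B}^\gamma_\mu}{\partial\textbf{y}^\alpha}-\frac{\partial\mathcal{B}^\gamma_\alpha}{\partial\textbf{y}^\mu}\Big)\frac{\partial\mathcal{F}}{\partial\textbf{y}^\gamma}.
\]

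This is where conservativity enters, and it is the main obstacle. The strategy is to eliminate the mixed $\textbf{x}$--$\textbf{y}$ derivative of $\mathcal{F}$ using the $\textbf{y}$-differentiated conservative relation (\ref{cons2}): multiplying it by $\textbf{y}^\alpha$ rewrites $\textbf{y}^\alpha(\rho^i_\alpha\circ\pi)\partial^2\mathcal{F}/\partial\textbf{x}^i\partial\textbf{y}^\mu$ in terms of $\mathcal{B}$-quantities and produces a term $-\textbf{y}^\alpha\mathcal{B}^\beta_\alpha\,\partial^2\mathcal{F}/\partial\textbf{y}^\beta\partial\textbf{y}^\mu$ which cancels the $\textbf{y}^\beta\mathcal{B}^\alpha_\beta\,\partial^2\mathcal{F}/\partial\textbf{y}^\alpha\partial\textbf{y}^\mu$ term after relabeling. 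The leftover $-(\rho^i_\mu\circ\pi)\partial\mathcal{F}/\partial\textbf{x}^i$ is then converted, via the contracted conservative relation (\ref{cons00}), into $\textbf{y}^\gamma(\partial\mathcal{B}^\beta_\mu/\partial\textbf{y}^\gamma)\,\partial\mathcal{F}/\partial\textbf{y}^\beta$, and a final relabeling of summation indices matches the right-hand side exactly. Care must be taken with the index bookkeeping and with the two uses of homogeneity (once in deriving (\ref{cons00}), once through (\ref{only God})); these are the only points where the hypotheses on $\mathcal{F}$ and on $h$ are invoked.
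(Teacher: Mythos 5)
Your proposal is correct and follows essentially the same route as the paper: both reduce the claim, via nondegeneracy of $\omega$, to the coordinate identity $i_{S-S_\circ}\omega=d^\pounds_{i_St}\mathcal{F}$, substitute $S^\gamma=\textbf{y}^\beta\mathcal{B}^\gamma_\beta$ from $S=hS_\circ$, and close the computation with the differentiated and contracted conservativity relations (\ref{cons2}), (\ref{cons00}) together with the homogeneity of $\mathcal{F}$. The only organizational difference is that the paper plugs in the explicit formula (\ref{canonical}) for $S_\circ$ and uses that $S-S_\circ$ is vertical (so only $\mathcal{X}^\beta$-components arise), whereas you invoke the defining relation $i_{S_\circ}\omega=-d^\pounds\mathcal{F}$ abstractly and verify the $\mathcal{V}_\mu$-components separately; the underlying algebra is identical.
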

\begin{proof}
Let $h=(\mathcal{X}_\alpha+\mathcal{B}^\beta_\alpha\mathcal{V}_\beta)\otimes\mathcal{X}^\alpha$, $S=\textbf{y}^\alpha\mathcal{X}_\alpha+S^\alpha\mathcal{V}_\alpha$ and $S_\circ=\textbf{y}^\alpha\mathcal{X}_\alpha+S_\circ^\alpha\mathcal{V}_\alpha$, where $S_\circ^\alpha$ are given by (\ref{canonical}). Since $(i_{\mathcal{V}_\alpha}\omega)(\mathcal{X}_\beta)=\frac{\partial^2\mathcal{F}}{\partial\textbf{y}^\alpha\partial\textbf{y}^\beta}=\mathcal{G}_{\alpha\beta}$ and $(i_{\mathcal{V}_\alpha}\omega)(\mathcal{V}_\beta)=0$, then we have $i_{\mathcal{V}_\alpha}\omega=\mathcal{G}_{\alpha\beta}\mathcal{X}^\beta$. Therefore, using (\ref{canonical}) we get
\begin{align*}
i_{S-S_\circ}\omega=(S-S_\circ)i_{\mathcal{V}_\alpha}\omega&=(S^\alpha\frac{\partial^2\mathcal{F}}{\partial\textbf{y}^\alpha\partial\textbf{y}^\beta}
-(\rho^i_\beta\circ\pi)\frac{\partial\mathcal{F}}{\partial\textbf{x}^i}
-\textbf{y}^\gamma\frac{\partial\mathcal{F}}{\partial\textbf{y}^\lambda}(L^\lambda_{\gamma\beta}\circ\pi)\\
&\ \ \ \ +(\rho^i_\gamma\circ\pi)\textbf{y}^\gamma\frac{\partial^2\mathcal{F}}{\partial\textbf{x}^i\partial\textbf{y}^\beta})\mathcal{X}^\beta.
\end{align*}
From $S=hS_\circ$ we deduce $S^\alpha=\textbf{y}^\gamma \mathcal{B}^\alpha_\gamma$. Setting this in the above equation gives us
\begin{align*}
i_{S-S_\circ}\omega&=(\textbf{y}^\gamma \mathcal{B}^\alpha_\gamma\frac{\partial^2\mathcal{F}}{\partial\textbf{y}^\alpha\partial\textbf{y}^\beta}-(\rho^i_\beta\circ\pi)\frac{\partial\mathcal{F}}{\partial\textbf{x}^i}
-\textbf{y}^\gamma\frac{\partial\mathcal{F}}{\partial\textbf{y}^\lambda}(L^\lambda_{\gamma\beta}\circ\pi)\\
&\ \ \ +(\rho^i_\gamma\circ\pi)\textbf{y}^\gamma\frac{\partial^2\mathcal{F}}{\partial\textbf{x}^i\partial\textbf{y}^\beta})\mathcal{X}^\beta.
\end{align*}
Since $h$ is conservative, then we have (\ref{cons}), (\ref{cons2}) and (\ref{cons00}). Using these equations in the above equation and using (\ref{wt1}) we get
\begin{align*}
i_{S-S_\circ}\omega&=\textbf{y}^\alpha(\frac{\partial \mathcal{B}^\gamma_\beta}{\partial\textbf{y}^\alpha}-\frac{\partial \mathcal{B}^\gamma_\alpha}{\partial\textbf{y}^\beta}-(L^\gamma_{\alpha\beta}\circ\pi))\frac{\partial\mathcal{F}}{\partial\textbf{y}^\gamma}\mathcal{X}^\beta=\textbf{y}^\alpha t^\gamma_{\alpha\beta}\frac{\partial\mathcal{F}}{\partial\textbf{y}^\gamma}\mathcal{X}^\beta\\
&=\frac{1}{2}t^\gamma_{\alpha\beta}\frac{\partial\mathcal{F}}{\partial\textbf{y}^\gamma}(\textbf{y}^\alpha\mathcal{X}^\beta-\textbf{y}^\beta\mathcal{X}^\alpha)=\frac{1}{2}t^\gamma_{\alpha\beta}(\textbf{y}^\alpha\mathcal{X}^\beta-\textbf{y}^\beta\mathcal{X}^\alpha)\rho_\pounds(\mathcal{V}_\gamma)(\mathcal{F})\\
&=\frac{1}{2}t^\gamma_{\alpha\beta}(\textbf{y}^\alpha\mathcal{X}^\beta-\textbf{y}^\beta\mathcal{X}^\alpha)i_{\mathcal{V}_\gamma}d^\pounds\mathcal{F}
=i_{i_St}d^\pounds\mathcal{F}=d^\pounds_{i_St}\mathcal{F}=i_{(d^\pounds_{i_St}\mathcal{F})^\sharp}\omega.
\end{align*}
\end{proof}
%+++++++++++++++++++++++++++++++++++++++++++++++++++++++++++++++++++++++++++++++++++
\subsubsection{Barthel endomorphism on Finsler algebroids}
%+++++++++++++++++++++++++++++++++++++++++++++++++++++++++++++++++++++++++++++++++++
Let $S_\circ$ be the canonical spray on Finsler algebroid $(E, \mathcal{F})$. We consider
\[
h_\circ=\frac{1}{2}(1_{\Gamma(\pounds^\pi E)}+[J, S_\circ]^{F-N}_\pounds).
\]
In the coordinate expression, we can obtain
\begin{equation}
h_\circ=\Big(\mathcal{X}_\alpha+\frac{1}{2}(\frac{\partial S^\beta_\circ}{\partial\textbf{y}^\alpha}-\textbf{y}^\gamma (L^\beta_{\alpha\gamma}\circ\pi))\mathcal{V}_\beta\Big)\otimes\mathcal{X}^\alpha.
\end{equation}
From the above equation we deduce $h_\circ^2=h_\circ$ and $\ker h_\circ=v\pounds^\pi E$. Thus $h_\circ$ is a horizontal endomorphism on $\pounds^\pi E$ which is called {\it Barthel endomorphism}. Since $S_0$ is a spray on $(E, \mathcal{F})$, then we can deduce that the Barthel endomorphism is homogenous.
\begin{proposition}\label{12.16}
Let $h$ be a conservative and homogenous horizontal endomorphism and $h_\circ$ be the Barthel endomorphism on Finsle algebroid $(E, \mathcal{F})$. Then we have
\[
h=h_\circ+\frac{1}{2}i_St+\frac{1}{2}[J, (d^\pounds_{i_St}\mathcal{F})^\sharp]_\pounds.
\]
\end{proposition}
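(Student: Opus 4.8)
The plan is to assemble the identity by chaining two results already proved in the excerpt---Theorem~\ref{mainth} and Proposition~\ref{mainpor}---with the common defining formula of the Berwald-type (and Barthel) endomorphisms as a Fr\"{o}licher--Nijenhuis bracket. The computation is light; the work is in invoking each hypothesis on $h$ exactly where it is needed.

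First I would observe that, since $h$ is homogeneous, the semispray $S$ associated to $h$ is in fact a spray (by the proposition stating that the semispray associated to a homogeneous horizontal endomorphism is a spray). Hence $h_S$, the horizontal endomorphism generated by $S$, is a Berwald endomorphism, and Theorem~\ref{mainth} applies verbatim to give
\[
h = h_S + \tfrac{1}{2}\,i_S t,
\]
where $t$ is the weak torsion of $h$. This already isolates the term $\tfrac{1}{2}i_S t$, so it remains only to rewrite $h_S$ through the Barthel endomorphism $h_\circ$.

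Next I would use that both $h_S$ and $h_\circ$ arise from the single construction $h_{(\cdot)}=\tfrac{1}{2}\big(1_{\pounds^\pi E}+[J,(\cdot)]^{F-N}_\pounds\big)$, applied to $S$ and to the canonical spray $S_\circ$ respectively (the latter being precisely the definition of the Barthel endomorphism). Because the generalized Fr\"{o}licher--Nijenhuis bracket is additive in its vector-valued $0$-form slot---immediate from (\ref{F3}), since $[J,Y]^{F-N}_\pounds(\widetilde X)=[J\widetilde X,Y]_\pounds-J[\widetilde X,Y]_\pounds$ is $\mathbb{R}$-linear in $Y$---the two identity terms cancel upon subtraction and
\[
h_S - h_\circ = \tfrac{1}{2}\,[J,\,S-S_\circ]^{F-N}_\pounds .
\]

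Finally, because $h$ is conservative, Proposition~\ref{mainpor} supplies $S-S_\circ=(d^\pounds_{i_St}\mathcal{F})^\sharp$. Substituting this into the last display and then into $h=h_S+\tfrac{1}{2}i_S t$ yields
\[
h = h_\circ + \tfrac{1}{2}\,i_S t + \tfrac{1}{2}\,[J,\,(d^\pounds_{i_St}\mathcal{F})^\sharp]^{F-N}_\pounds ,
\]
which is the asserted formula (the bracket there being the generalized Fr\"{o}licher--Nijenhuis bracket on $\pounds^\pi E$). There is no serious analytic obstacle; the only point demanding care is the additivity of the bracket used in the middle step, and the bookkeeping that both hypotheses on $h$ are genuinely consumed---homogeneity to apply Theorem~\ref{mainth} and to guarantee that $S$ is a spray, and conservativeness to apply Proposition~\ref{mainpor}.
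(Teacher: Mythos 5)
Your proposal is correct and follows essentially the same route as the paper's own proof: both combine Theorem~\ref{mainth} (via homogeneity of $h$), the defining formula $h_{(\cdot)}=\tfrac{1}{2}\bigl(1_{\pounds^\pi E}+[J,(\cdot)]^{F-N}_\pounds\bigr)$ for $h_S$ and $h_\circ$ together with linearity of the bracket in the section slot, and Proposition~\ref{mainpor} (via conservativeness) to replace $S-S_\circ$ by $(d^\pounds_{i_St}\mathcal{F})^\sharp$. The only difference is cosmetic: the paper expands $h_\circ$ and solves for $h$, whereas you isolate $h_S-h_\circ$ first.
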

\begin{proof}
Let $S$ be the semispray associated to $h$ and $h'$ be the horizontal endomorphism generated by $S$. Then using theorem \ref{mainth} we get
\begin{align*}
h_\circ&=\frac{1}{2}(1_{\Gamma(\pounds^\pi E)}+[J, S_\circ]_\pounds)=\frac{1}{2}(1_{\Gamma(\pounds^\pi E)}+[J, S]_\pounds-[J, (d^\pounds_{i_St}\mathcal{F})^\sharp]_\pounds)\\
&=h'-\frac{1}{2}[J, (d^\pounds_{i_St}\mathcal{F})^\sharp]_\pounds=h-\frac{1}{2}i_St-\frac{1}{2}[J, (d^\pounds_{i_St}\mathcal{F})^\sharp]_\pounds.
\end{align*}
\end{proof}
\begin{theorem}
Barthel endomorphism of Finsler algebroid $(E, \mathcal{F})$ is conservative.
\end{theorem}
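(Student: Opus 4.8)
The goal is to verify the defining condition $d^\pounds_{h_\circ}\mathcal{F}=0$, equivalently $d^\pounds\mathcal{F}\circ h_\circ=0$, where $h_\circ=\frac{1}{2}(1_{\Gamma(\pounds^\pi E)}+[J,S_\circ]^{F-N}_\pounds)$ is built from the canonical spray $S_\circ$. The plan is to avoid the coordinate condition (\ref{cons}) altogether and argue intrinsically, exploiting that $K\mapsto d^\pounds_K\mathcal{F}=i_Kd^\pounds\mathcal{F}$ is linear in the vector-valued form $K$. Applying this to $h_\circ$ reduces everything to the single identity
\[
d^\pounds_{[J,S_\circ]^{F-N}}\mathcal{F}=-d^\pounds\mathcal{F},
\]
since then $d^\pounds_{h_\circ}\mathcal{F}=\frac{1}{2}\big(d^\pounds\mathcal{F}+d^\pounds_{[J,S_\circ]^{F-N}}\mathcal{F}\big)=0$.

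First I would record the conservation of energy along $S_\circ$. The defining relation (\ref{Best3}), namely $i_{S_\circ}\omega=-d^\pounds\mathcal{F}$, together with the antisymmetry of $\omega$ gives
\[
\rho_\pounds(S_\circ)\mathcal{F}=d^\pounds\mathcal{F}(S_\circ)=-\omega(S_\circ,S_\circ)=0.
\]
Next I would expand $d^\pounds_{[J,S_\circ]^{F-N}}\mathcal{F}$ using the graded product rule (\ref{F2}). Since $\mathcal{F}$ is a function and $i_J\mathcal{F}=i_{S_\circ}\mathcal{F}=0$, one has $\pounds^\pounds_{[J,S_\circ]^{F-N}}\mathcal{F}=d^\pounds_{[J,S_\circ]^{F-N}}\mathcal{F}$, and (\ref{F2}) with $k=1$, $l=0$ yields
\[
\pounds^\pounds_{[J,S_\circ]^{F-N}}\mathcal{F}=\pounds^\pounds_J\big(\pounds^\pounds_{S_\circ}\mathcal{F}\big)-\pounds^\pounds_{S_\circ}\big(\pounds^\pounds_J\mathcal{F}\big).
\]
The first term vanishes by the energy conservation just established ($\pounds^\pounds_{S_\circ}\mathcal{F}=\rho_\pounds(S_\circ)\mathcal{F}=0$), while $\pounds^\pounds_J\mathcal{F}=d^\pounds_J\mathcal{F}$; hence $d^\pounds_{[J,S_\circ]^{F-N}}\mathcal{F}=-\pounds^\pounds_{S_\circ}(d^\pounds_J\mathcal{F})$.

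It then remains to compute $\pounds^\pounds_{S_\circ}(d^\pounds_J\mathcal{F})$, which I would do with the Cartan-type formula $\pounds^\pounds_{S_\circ}=i_{S_\circ}\circ d^\pounds+d^\pounds\circ i_{S_\circ}$ for the section $S_\circ$. Using $d^\pounds(d^\pounds_J\mathcal{F})=\omega$ (the definition of the fundamental form), $i_{S_\circ}\omega=-d^\pounds\mathcal{F}$, and
\[
i_{S_\circ}(d^\pounds_J\mathcal{F})=d^\pounds\mathcal{F}(JS_\circ)=d^\pounds\mathcal{F}(C)=\rho_\pounds(C)\mathcal{F}=2\mathcal{F},
\]
where the middle steps use $JS_\circ=C$ and the degree-$2$ homogeneity $\pounds^\pounds_C\mathcal{F}=2\mathcal{F}$, I obtain
\[
\pounds^\pounds_{S_\circ}(d^\pounds_J\mathcal{F})=i_{S_\circ}\omega+d^\pounds(2\mathcal{F})=-d^\pounds\mathcal{F}+2d^\pounds\mathcal{F}=d^\pounds\mathcal{F}.
\]
Substituting back gives $d^\pounds_{[J,S_\circ]^{F-N}}\mathcal{F}=-d^\pounds\mathcal{F}$ and therefore $d^\pounds_{h_\circ}\mathcal{F}=0$, so $h_\circ$ is conservative.

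The individual computations are short; the only real care is bookkeeping in the graded calculus, and this is where I expect the main obstacle. One must apply the conventions of the excerpt consistently: the degrees $k=1$ for $J$ and $l=0$ for $S_\circ$ in (\ref{F2}), the two different sign rules for $\pounds^\pounds_K$ on vector-valued forms versus on sections, and the linearity of $K\mapsto i_Kd^\pounds\mathcal{F}$. As an independent cross-check one may verify the coordinate identity (\ref{cons}) directly for the Barthel coefficients $\mathcal{B}^\gamma_\alpha=\frac{1}{2}(\partial S_\circ^\gamma/\partial\textbf{y}^\alpha-\textbf{y}^\beta(L^\gamma_{\alpha\beta}\circ\pi))$, using (\ref{canonical}), the Euler relation (\ref{only God}), and $\mathcal{G}^{\alpha\beta}\,\partial\mathcal{F}/\partial\textbf{y}^\beta=\textbf{y}^\alpha$; this route is more laborious but reaches the same conclusion.
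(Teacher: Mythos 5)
Your proof is correct, and it proceeds by a genuinely different route than the paper. The paper's proof is a local computation: it writes the Barthel coefficients $\mathcal{B}^\beta_\alpha=\frac{1}{2}\bigl(\frac{\partial S^\beta_\circ}{\partial\textbf{y}^\alpha}-\textbf{y}^\gamma(L^\beta_{\alpha\gamma}\circ\pi)\bigr)$, substitutes the explicit spray coefficients $S^\beta_\circ$ from (\ref{canonical}), and grinds through the Euler relations (\ref{only God}), (\ref{2eq}) (using identities such as $\textbf{y}^\mu\mathcal{G}_{\mu\beta}\frac{\partial\mathcal{G}^{\beta\sigma}}{\partial\textbf{y}^\alpha}=0$ and $\textbf{y}^\gamma\textbf{y}^\sigma(L^\lambda_{\gamma\sigma}\circ\pi)=0$) to verify the coordinate criterion (\ref{cons}) directly. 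You instead argue intrinsically: antisymmetry of $\omega$ and $i_{S_\circ}\omega=-d^\pounds\mathcal{F}$ give energy conservation $\rho_\pounds(S_\circ)\mathcal{F}=0$; the commutation rule (\ref{F2}) with $k=1$, $l=0$ then collapses everything to
\begin{equation*}
d^\pounds_{[J,S_\circ]^{F-N}_\pounds}\mathcal{F}=-\pounds^\pounds_{S_\circ}\bigl(d^\pounds_J\mathcal{F}\bigr)
=-\Bigl(i_{S_\circ}\omega+d^\pounds\bigl(d^\pounds\mathcal{F}(JS_\circ)\bigr)\Bigr)
=-\bigl(-d^\pounds\mathcal{F}+2\,d^\pounds\mathcal{F}\bigr)=-d^\pounds\mathcal{F},
\end{equation*}
using $d^\pounds d^\pounds_J\mathcal{F}=\omega$, $JS_\circ=C$, and $2$-homogeneity, whence $d^\pounds_{h_\circ}\mathcal{F}=\frac{1}{2}\bigl(d^\pounds\mathcal{F}+d^\pounds_{[J,S_\circ]^{F-N}_\pounds}\mathcal{F}\bigr)=0$. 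All the individual steps are consistent with the paper's conventions: $\pounds^\pounds_J\mathcal{F}=d^\pounds_J\mathcal{F}$ and $\pounds^\pounds_{[J,S_\circ]}\mathcal{F}=d^\pounds_{[J,S_\circ]}\mathcal{F}$ hold because contractions of $0$-forms vanish, and the Cartan formula $\pounds^\pounds_{S_\circ}=i_{S_\circ}\circ d^\pounds+d^\pounds\circ i_{S_\circ}$ agrees with both of the paper's sign conventions when applied to a section. What your approach buys is a short, coordinate-free argument (essentially Grifone's classical proof transplanted to the Lie algebroid setting) that exposes the conceptual mechanism — energy conservation plus homogeneity — and avoids any reference to the matrix $(\mathcal{G}^{\alpha\beta})$; its only delicate point is the graded sign bookkeeping, which you handle correctly. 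What the paper's computation buys is independence from the Fr\"{o}licher--Nijenhuis machinery and, as a byproduct, a concrete verification at the level of the structure functions, which is then reused (e.g.\ equations (\ref{cons2}), (\ref{cons00})) elsewhere in the text.
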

\begin{proof}
Using (\ref{cons}) it is sufficient to show that
\begin{equation}\label{only God1}
(\rho^i_\alpha\circ\pi)\frac{\partial\mathcal{F}}{\partial\textbf{x}^i}+\mathcal{B}^\beta_\alpha\frac{\partial\mathcal{F}}{\partial\textbf{y}^\beta}=0,
\end{equation}
where $\mathcal{B}^\beta_\alpha=\frac{1}{2}(\frac{\partial S^\beta_\circ}{\partial\textbf{y}^\alpha}-\textbf{y}^\gamma (L^\beta_{\alpha\gamma}\circ\pi))$ and $S^\beta_\circ$ are given by (\ref{canonical}). Using (\ref{only God}) we deduce
\begin{equation}\label{2eq}
(i)\ \frac{\partial\mathcal{F}}{\partial\textbf{y}^\gamma}=\textbf{y}^\lambda\mathcal{G}_{\gamma\lambda},\ \ \ \ \ \ \ (ii)\ \textbf{y}^\mu\frac{\partial^3\mathcal{F}}{\partial\textbf{y}^\gamma\partial\textbf{y}^\lambda\partial\textbf{y}^\mu}=0.
\end{equation}
Thus using (i) of (\ref{2eq}) we derive that
\begin{equation}\label{+}
\mathcal{B}^\beta_\alpha\frac{\partial\mathcal{F}}{\partial\textbf{y}^\beta}=\frac{1}{2}(\frac{\partial S^\beta_\circ}{\partial\textbf{y}^\alpha}-\textbf{y}^\gamma (L^\beta_{\alpha\gamma}\circ\pi))\textbf{y}^\mu\mathcal{G}_{\mu\beta}.
\end{equation}
Using (\ref{canonical}) we obtain
\begin{align}\label{++}
\frac{\partial S^\beta_\circ}{\partial\textbf{y}^\alpha}\textbf{y}^\mu\mathcal{G}_{\mu\beta}&=\textbf{y}^\mu\mathcal{G}_{\mu\beta}(\frac{\partial\mathcal{G}^{\beta\sigma}}{\partial\textbf{y}^\alpha})\Big((\rho^i_\sigma\circ\pi)\frac{\partial\mathcal{F}}{\partial\textbf{x}^i}+\textbf{y}^\gamma(\frac{\partial\mathcal{F}}{\partial\textbf{y}^\lambda}(L^\lambda_{\gamma\sigma}\circ\pi)\nonumber\\
&\ \ \ -(\rho^i_\gamma\circ\pi)\frac{\partial^2\mathcal{F}}{\partial\textbf{x}^i\partial\textbf{y}^\sigma})\Big)+\textbf{y}^\sigma\Big((\rho^i_\sigma\circ\pi)\frac{\partial^2\mathcal{F}}{\partial\textbf{x}^i\partial\textbf{y}^\alpha}\nonumber\\
&\ \ \ +\frac{\partial\mathcal{F}}{\partial\textbf{y}^\lambda}(L^\lambda_{\alpha\sigma}\circ\pi)-(\rho^i_\alpha\circ\pi)\frac{\partial^2\mathcal{F}}{\partial\textbf{x}^i\partial\textbf{y}^\sigma}+\textbf{y}^\gamma\frac{\partial^2\mathcal{F}}{\partial\textbf{y}^\alpha\partial\textbf{y}^\lambda}(L^\lambda_{\gamma\sigma}\circ\pi)\nonumber\\
&\ \ \ -\textbf{y}^\gamma(\rho^i_\gamma\circ\pi)\frac{\partial^3\mathcal{F}}{\partial\textbf{x}^i\partial\textbf{y}^\alpha\partial\textbf{y}^\sigma}\Big).
\end{align}
But (ii) of (\ref{2eq}) implies
\[
\textbf{y}^\mu\mathcal{G}_{\mu\beta}\frac{\partial\mathcal{G}^{\beta\sigma}}{\partial\textbf{y}^\alpha}=-\textbf{y}^\mu\mathcal{G}^{\beta\sigma}\frac{\partial\mathcal{G}_{\mu\beta}}{\partial\textbf{y}^\alpha}=-\textbf{y}^\mu\mathcal{G}^{\beta\sigma}\frac{\partial^3\mathcal{F}}{\partial\textbf{y}^\alpha\partial\textbf{y}^\mu\partial\textbf{y}^\beta}=0.
\]
Moreover, we have $\textbf{y}^\gamma\textbf{y}^\sigma(L^\lambda_{\gamma\sigma}\circ\pi)=0$, $\textbf{y}^\sigma\frac{\partial^3\mathcal{F}}{\partial\textbf{x}^i\partial\textbf{y}^\alpha\partial\textbf{y}^\sigma}=\frac{\partial^2\mathcal{F}}{\partial\textbf{x}^i\partial\textbf{y}^\alpha}$ and $\textbf{y}^\sigma\frac{\partial^2\mathcal{F}}{\partial\textbf{x}^i\partial\textbf{y}^\sigma}=2\frac{\partial^2\mathcal{F}}{\partial\textbf{x}^i}$, because $\frac{\partial^2\mathcal{F}}{\partial\textbf{x}^i\partial\textbf{y}^\alpha}$ and $\frac{\partial\mathcal{F}}{\partial\textbf{x}^i}$ are homogenous of degree 1 and 2, respectively. Therefore (\ref{++}) reduce to
\[
\frac{\partial S^\beta_\circ}{\partial\textbf{y}^\alpha}\textbf{y}^\mu\mathcal{G}_{\mu\beta}=\textbf{y}^\sigma\frac{\partial\mathcal{F}}{\partial\textbf{y}^\lambda}(L^\lambda_{\alpha\sigma}\circ\pi)-2(\rho^i_\alpha\circ\pi)\frac{\partial\mathcal{F}}{\partial\textbf{x}^i}.
\]
Setting the above equation in (\ref{+}) we deduce $\mathcal{B}^\beta_\alpha\frac{\partial\mathcal{F}}{\partial\textbf{y}^\beta}=-(\rho^i_\alpha\circ\pi)\frac{\partial\mathcal{F}}{\partial\textbf{x}^i}$. Therefore we have (\ref{only God1}).
\end{proof}
\begin{theorem}\label{12.18}
Let $h_1$ and $h_2$ be conservative horizontal endomorphisms on
Finsler algebroid $(E, \mathcal{F})$. If $h_1$ and $h_2$ have common strong torsions, then $h_1=h_2$.
\end{theorem}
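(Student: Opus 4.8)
The plan is to reduce the statement to Theorem \ref{mainth00}, which asserts that two horizontal endomorphisms coinciding in both their associated semisprays and their strong torsions must be equal. Since $h_1$ and $h_2$ are assumed to share the same strong torsion $T$, it suffices to prove that they have the same associated semispray. Denote by $S_1,S_2$ the semisprays associated to $h_1,h_2$, by $t_1,t_2$ their weak torsions, and by $H_1,H_2$ their tensions, so that the common strong torsion is $T=i_{S_i}t_i+H_i$ for $i=1,2$.

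The key input is Proposition \ref{mainpor}: for a conservative horizontal endomorphism with associated semispray $S$, one has $S-S_\circ=(d^\pounds_{i_St}\mathcal{F})^\sharp$, where $S_\circ$ is the canonical spray of $(E,\mathcal{F})$. First I would apply this to each $h_i$, obtaining $S_i-S_\circ=(d^\pounds_{i_{S_i}t_i}\mathcal{F})^\sharp$. Next I would exploit conservativity through the earlier fact that $d^\pounds_H\mathcal{F}=0$ whenever $h$ is conservative; applied to the tensions $H_1,H_2$ this gives $d^\pounds_{H_i}\mathcal{F}=0$. Because the contraction $i_{(\cdot)}d^\pounds\mathcal{F}$ is linear in its tensor argument and $T=i_{S_i}t_i+H_i$, it follows that
\[
d^\pounds_T\mathcal{F}=d^\pounds_{i_{S_i}t_i}\mathcal{F}+d^\pounds_{H_i}\mathcal{F}=d^\pounds_{i_{S_i}t_i}\mathcal{F},\qquad i=1,2.
\]
Thus the term $d^\pounds_{i_{S_i}t_i}\mathcal{F}$ that controls the associated semispray is governed entirely by the strong torsion, once $h_i$ is known to be conservative.

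Since $h_1$ and $h_2$ have the common strong torsion $T$, the displayed identity yields $d^\pounds_{i_{S_1}t_1}\mathcal{F}=d^\pounds_{T}\mathcal{F}=d^\pounds_{i_{S_2}t_2}\mathcal{F}$. Applying the sharp operation (well defined by nondegeneracy of $\omega$ and linear in its argument) to both sides and substituting into the two instances of Proposition \ref{mainpor} gives $S_1-S_\circ=S_2-S_\circ$, hence $S_1=S_2$. Therefore $h_1$ and $h_2$ possess the same associated semispray and the same strong torsion, and Theorem \ref{mainth00} forces $h_1=h_2$. I expect the only genuinely delicate point to be the bookkeeping of the identity $d^\pounds_T\mathcal{F}=d^\pounds_{i_St}\mathcal{F}$: one must use that the tension contribution $d^\pounds_{H}\mathcal{F}$ vanishes precisely because of conservativity, so that the strong torsion — rather than the weak torsion and tension taken separately — determines the associated semispray.
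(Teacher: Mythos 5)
Your proof is correct and follows essentially the same route as the paper: both reduce to Theorem \ref{mainth00} by showing the associated semisprays coincide, using Proposition \ref{mainpor} together with $d^\pounds_{H_i}\mathcal{F}=0$ (from conservativity) to conclude $d^\pounds_{i_{S_i}t_i}\mathcal{F}=d^\pounds_{T}\mathcal{F}$ and hence $S_1=S_2$ via nondegeneracy of $\omega$. The only cosmetic difference is that you phrase the final step through the sharp operator while the paper writes $i_{S_1-S_2}\omega=0$ directly; these are the same argument.
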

\begin{proof}
We denote by $S_1$ and $S_2$ the associated semispray of $h_1$ and $h_2$, respectively and we let $T_1$ and $T_2$ be the strong torsions of $h_1$ and $h_2$, respectively. Then from hypothesis we have $d^\pounds_{h_1}\mathcal{F}=d^\pounds_{h_2}\mathcal{F}=0$ and $T_1=T_2$. Also, from the last equation in the proof of proposition \ref{mainpor}, we deduce
$i_{S_1-S_0}\omega=d^\pounds_{i_{S_1}t_1}\mathcal{F}$, $i_{S_2-S_0}\omega=d^\pounds_{i_{S_2}t_2}\mathcal{F}$ and consequently
\begin{equation}\label{comp}
i_{S_1-S_2}\omega=d^\pounds_{i_{S_1}t_1}\mathcal{F}-d^\pounds_{i_{S_2}t_2}\mathcal{F},
\end{equation}
where $t_1$ and $t_2$ are weak torsions of $h_1$ and $h_2$, respectively. From the definition of strong torsion we have
\[
d^\pounds_{i_{S_1}t_1}\mathcal{F}=d^\pounds_{T_1-H_1}\mathcal{F}=d^\pounds_{T_1}\mathcal{F},
\]
because $d^\pounds_{H_1}\mathcal{F}=0$, where $H_1$ is the tension of $h_1$. Similarly we obtain $d^\pounds_{i_{S_1}t_1}\mathcal{F}=d^\pounds_{T_2}\mathcal{F}$. Setting this equation together the above equation in (\ref{comp}) we deduce $i_{S_1-S_2}\omega=d^\pounds_{T_1}\mathcal{F}-d^\pounds_{T_2}\mathcal{F}=0$. Since $\omega$ is nondegenerate, then this equation gives us $S_1=S_2$ and consequently using theorem \ref{mainth00}, we deduce $h_1=h_2$.
\end{proof}
From the above results we understand that Barthel endomorphism is homogenous, conservative and torsion free. Moreover, since Barthel endomorphism is homogenous and torsion free, then we deduce that the it's strong torsion is zero. Also, from the above theorem we derive that if $h$ is a homogenous, conservative and torsion free horizontal endomorphism then it is coincide with Barthel endomorphism. Thus we have the following
\begin{theorem}
There exists a unique horizontal endomorphism on Finsler algebroid $(E, \mathcal{F})$ such that it is homogenous, conservative and torsion free.
\end{theorem}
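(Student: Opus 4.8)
The plan is to exhibit the Barthel endomorphism $h_\circ$ as the required object and then invoke the preceding rigidity results to rule out any competitor. First I would recall that $h_\circ=\frac{1}{2}(1_{\Gamma(\pounds^\pi E)}+[J,S_\circ]^{F-N}_\pounds)$ is precisely the horizontal endomorphism generated by the canonical spray $S_\circ$, so it falls under the machinery of Section~4. Since $S_\circ$ is a genuine spray, Corollary~\ref{4.3} gives that $h_\circ$ is homogeneous, and Theorem~\ref{4.1} gives that $h_\circ$ is torsion free. Conservativeness is the content of the theorem just established (that the Barthel endomorphism of $(E,\mathcal{F})$ is conservative). Taken together these three facts show that $h_\circ$ satisfies all the demanded properties, which settles existence.

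For uniqueness I would let $h$ be an arbitrary homogeneous, conservative, torsion-free horizontal endomorphism and show $h=h_\circ$. The key observation is that homogeneity forces the tension to vanish, $H=0$, while torsion-freeness forces the weak torsion to vanish, $t=0$; hence the strong torsion $T=i_St+H$ is identically zero. The same computation applies verbatim to the Barthel endomorphism $h_\circ$, so both $h$ and $h_\circ$ carry the vanishing strong torsion. Now $h$ and $h_\circ$ are two conservative horizontal endomorphisms sharing a common strong torsion, and Theorem~\ref{12.18} then yields $h=h_\circ$ at once.

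The conceptual weight of the argument is carried entirely by Theorem~\ref{12.18}, whose proof routes the comparison of two conservative endomorphisms through the canonical spray and the nondegeneracy of the fundamental form $\omega$; that is where the Finsler structure is genuinely used. The only delicate bookkeeping left here is to check that the definition of strong torsion $T=i_St+H$ is applied consistently and that the vanishing of $H$ and of $t$ really does follow from homogeneity and torsion-freeness as defined, both of which are immediate from the coordinate formulas~(\ref{tension}) and~(\ref{wt1}). Thus I expect no substantive obstacle beyond citing the correct earlier results in the right order: the statement is in effect a synthesis of Corollary~\ref{4.3}, Theorem~\ref{4.1}, the conservativeness of the Barthel endomorphism, and the rigidity Theorem~\ref{12.18}.
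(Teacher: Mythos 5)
Your proposal is correct and follows essentially the same route as the paper: existence via the Barthel endomorphism (homogeneous by Corollary \ref{4.3}, torsion free by Theorem \ref{4.1}, conservative by the preceding theorem), and uniqueness by noting that homogeneity and torsion-freeness force the strong torsion $T=i_St+H$ to vanish, so that any competitor and $h_\circ$ are conservative endomorphisms with common strong torsion and Theorem \ref{12.18} applies. No gaps.
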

%--------------------------------------------------------------------------------
\subsection{Cartan tensor on Finsler algebroids}
%--------------------------------------------------------------------------------
Here, we consider the tensor
\begin{equation}
\left\{
\begin{array}{cc}
\mathcal{C}:\Gamma(\stackrel{\circ}{\pounds^\pi E})\times\Gamma(\stackrel{\circ}{\pounds^\pi E})\rightarrow\Gamma(\stackrel{\circ}{\pounds^\pi E}),\\
(\widetilde{X}, \widetilde{Y})\rightarrow\mathcal{C}(\widetilde{X}, \widetilde{Y}),
\end{array}
\right.
\end{equation}
on Finsler algebroid $(E, \mathcal{F})$ which  satisfies in
\begin{equation}\label{Cartan}
J\circ\mathcal{C}=0,
\end{equation}
\begin{equation}\label{Cartan1}
\mathcal{G}(\mathcal{C}(\widetilde{X}, \widetilde{Y}), J\widetilde{Z})=\frac{1}{2}(\pounds_{J\widetilde{X}}J^*\mathcal{G})(\widetilde{Y}, \widetilde{Z}),
\end{equation}
where $\widetilde{X}, \widetilde{Y}, \widetilde{Z}\in\Gamma(\stackrel{\circ}{\pounds^\pi E})$ and we call it the \textit{first Cartan tensor}. Also, \textit{the lowered tensor} $\mathcal{C}_\flat$ of $\mathcal{C}$ is defined by
\begin{equation}\label{Cartan3}
\mathcal{C}_\flat(\widetilde{X}, \widetilde{Y}, \widetilde{Z})=\mathcal{G}(\mathcal{C}(\widetilde{X}, \widetilde{Y}), J\widetilde{Z}),\ \ \ \forall\widetilde{X}, \widetilde{Y}, \widetilde{Z}\in\Gamma(\stackrel{\circ}{\pounds^\pi E}).
\end{equation}
(\ref{Cartan}) told us that $\mathcal{C}(\widetilde{X}, \widetilde{Y})$ belongs to $\Gamma(v\!\!\stackrel{\circ}{\pounds^\pi E})$. Also, from (\ref{Cartan1})we deduce that $\mathcal{C}(\mathcal{X}_\alpha, \mathcal{V}_\beta)=\mathcal{C}(\mathcal{V}_\alpha, \mathcal{V}_\beta)=0$ and
\[
\mathcal{C}(\mathcal{X}_\alpha, \mathcal{X}_\beta)=\frac{1}{2}\frac{\partial\mathcal{G}_{\beta\gamma}}{\partial\textbf{y}^\alpha}\mathcal{G}^{\gamma\lambda}\mathcal{V}_\lambda
=\frac{1}{2}\frac{\partial^3\mathcal{F}}{\partial\textbf{y}^\alpha\partial\textbf{y}^\beta\partial\textbf{y}^\gamma}\mathcal{G}^{\gamma\lambda}\mathcal{V}_\lambda.
\]
Therefore the first Cartan tensor has the following coordinate expression:
\begin{equation}\label{Cartan2}
\mathcal{C}=\mathcal{C}_{\alpha\beta}^\gamma\mathcal{X}^\alpha\otimes\mathcal{X}^\beta\otimes\mathcal{V}_\gamma,
\end{equation}
where
\[
\mathcal{C}_{\alpha\beta}^\gamma=\frac{1}{2}\frac{\partial\mathcal{G}_{\beta\lambda}}{\partial\textbf{y}^\alpha}\mathcal{G}^{\gamma\lambda}
=\frac{1}{2}\frac{\partial^3\mathcal{F}}{\partial\textbf{y}^\alpha\partial\textbf{y}^\beta\partial\textbf{y}^\lambda}\mathcal{G}^{\gamma\lambda}.
\]
From (\ref{Cartan2}) and the above equation, we can deduce
\begin{proposition}
The first Cartan tensor is semibasic. Moreover, it and the lowered tensor of it, are symmetric tensors.
\end{proposition}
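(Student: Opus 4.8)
The plan is to read off all three assertions directly from the coordinate expression (\ref{Cartan2}) together with the formula $\mathcal{C}^\gamma_{\alpha\beta}=\frac{1}{2}\frac{\partial^3\mathcal{F}}{\partial\textbf{y}^\alpha\partial\textbf{y}^\beta\partial\textbf{y}^\lambda}\mathcal{G}^{\gamma\lambda}$ obtained just before the statement. The single structural fact driving everything is that mixed partial derivatives of $\mathcal{F}$ in the fibre coordinates commute, so the object $\frac{\partial^3\mathcal{F}}{\partial\textbf{y}^\alpha\partial\textbf{y}^\beta\partial\textbf{y}^\lambda}$ is totally symmetric in its three lower indices.

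First I would establish that $\mathcal{C}$ is semibasic. One half, $J\circ\mathcal{C}=0$, is already recorded as (\ref{Cartan}) and simply reflects that $\mathcal{C}$ takes values in $v\pounds^\pi E=\mathrm{span}\{\mathcal{V}_\gamma\}$, on which $J$ vanishes by (\ref{vertical}). For the other half I would imitate the argument of Lemma \ref{torsion lemma}: since the two covariant slots of $\mathcal{C}$ in (\ref{Cartan2}) are built from $\mathcal{X}^\alpha$, and any $J\widetilde{X}$ lies in $v\pounds^\pi E$ where $\mathcal{X}^\alpha(J\widetilde{X})=0$, the contraction $i_{J\widetilde{X}}\mathcal{C}$ vanishes identically. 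Together these two facts give that $\mathcal{C}$ is semibasic.

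Next I would treat the symmetry of $\mathcal{C}$ and of $\mathcal{C}_\flat$. From the explicit coefficient $\mathcal{C}^\gamma_{\alpha\beta}=\frac{1}{2}\frac{\partial^3\mathcal{F}}{\partial\textbf{y}^\alpha\partial\textbf{y}^\beta\partial\textbf{y}^\lambda}\mathcal{G}^{\gamma\lambda}$, the symmetry of the third derivative in $\alpha,\beta$ yields $\mathcal{C}^\gamma_{\alpha\beta}=\mathcal{C}^\gamma_{\beta\alpha}$, so $\mathcal{C}$ is symmetric in its two covariant arguments. For the lowered tensor I would compute its components through (\ref{Cartan3}), (\ref{Best6}) and $J\mathcal{X}_\sigma=\mathcal{V}_\sigma$: lowering the upper index against $\mathcal{G}_{\gamma\sigma}$ cancels the factor $\mathcal{G}^{\gamma\lambda}$ and leaves $\mathcal{C}_\flat(\mathcal{X}_\alpha,\mathcal{X}_\beta,\mathcal{X}_\sigma)=\frac{1}{2}\frac{\partial^3\mathcal{F}}{\partial\textbf{y}^\alpha\partial\textbf{y}^\beta\partial\textbf{y}^\sigma}$, which is manifestly symmetric in all three indices.

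There is essentially no serious obstacle here, since the content of the proposition is already packaged in the coordinate formula for $\mathcal{C}^\gamma_{\alpha\beta}$. The only point requiring a little care is the bookkeeping in the semibasic step, namely ensuring that $i_{J\widetilde{X}}$ acts on the covariant slots while the value slot is handled separately by $J\circ\mathcal{C}=0$; beyond that, every claim reduces to the commutativity of the fibre partial derivatives.
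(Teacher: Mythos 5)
Your proof is correct and takes essentially the same route as the paper: the paper offers no separate argument at all, simply stating that the proposition ``can be deduced'' from the coordinate expression (\ref{Cartan2}) and the formula $\mathcal{C}^\gamma_{\alpha\beta}=\frac{1}{2}\frac{\partial^3\mathcal{F}}{\partial\textbf{y}^\alpha\partial\textbf{y}^\beta\partial\textbf{y}^\lambda}\mathcal{G}^{\gamma\lambda}$, which is exactly what you unpack. Your details are all sound — the semibasic property via the defining condition (\ref{Cartan}) together with the Lemma \ref{torsion lemma}-style vanishing of $i_{J\widetilde{X}}\mathcal{C}$, the symmetry of $\mathcal{C}$ from commuting fibre partials, and the total symmetry of $\mathcal{C}_\flat$ after the metric factors cancel — so you have merely made explicit what the paper leaves implicit.
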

Using (\ref{Cartan3}) and (\ref{Cartan2}) we can obtain the following coordinate expression for the lowered tensor:
\[
\mathcal{C}_\flat=\mathcal{C}_{\alpha\beta_\gamma}\mathcal{X}^\alpha\otimes\mathcal{X}^\beta\otimes\mathcal{X}^\gamma,
\]
where
\[
\mathcal{C}_{\alpha\beta\gamma}=\mathcal{C}_{\alpha\beta}^\lambda\mathcal{G}_{\gamma\lambda}
=\frac{1}{2}\frac{\partial^3\mathcal{F}}{\partial\textbf{y}^\alpha\partial\textbf{y}^\beta\partial\textbf{y}^\gamma}.
\]
\begin{proposition}
If $S$ is a semispray on $\pounds^\pi E$, then we have $i_S\mathcal{C}=i_S\mathcal{C}_\flat=0$.
\end{proposition}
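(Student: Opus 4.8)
The plan is to reduce everything to the coordinate expression (\ref{Cartan2}) for $\mathcal{C}$ and its lowered companion $\mathcal{C}_\flat$, and then to exploit the fact that both tensors are semibasic together with the homogeneity of $\mathcal{F}$. First I would write the semispray in the form $S=\textbf{y}^\alpha\mathcal{X}_\alpha+S^\alpha\mathcal{V}_\alpha$ given by (\ref{semispray}). Since all covariant slots of $\mathcal{C}$ and $\mathcal{C}_\flat$ are of type $\mathcal{X}^\bullet$ and $\mathcal{X}^\alpha(\mathcal{V}_\mu)=0$, contracting $S$ into any such slot only detects its horizontal part, concretely $\mathcal{X}^\alpha(S)=\textbf{y}^\alpha$. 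Hence $i_S\mathcal{C}=\textbf{y}^\alpha\mathcal{C}^\gamma_{\alpha\beta}\,\mathcal{X}^\beta\otimes\mathcal{V}_\gamma$ and $i_S\mathcal{C}_\flat=\textbf{y}^\alpha\mathcal{C}_{\alpha\beta\gamma}\,\mathcal{X}^\beta\otimes\mathcal{X}^\gamma$, the symmetry of $\mathcal{C}$ and $\mathcal{C}_\flat$ in their first two arguments (already noted above) making the choice of slot irrelevant.

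The decisive step is the identity $\textbf{y}^\alpha\frac{\partial^3\mathcal{F}}{\partial\textbf{y}^\alpha\partial\textbf{y}^\beta\partial\textbf{y}^\lambda}=0$. This is exactly (ii) of (\ref{2eq}), but it can also be obtained directly: differentiating the Euler relation (\ref{only God}), namely $\frac{\partial\mathcal{F}}{\partial\textbf{y}^\lambda}=\textbf{y}^\mu\frac{\partial^2\mathcal{F}}{\partial\textbf{y}^\lambda\partial\textbf{y}^\mu}$, with respect to $\textbf{y}^\beta$ shows that $\mathcal{G}_{\lambda\beta}=\frac{\partial^2\mathcal{F}}{\partial\textbf{y}^\lambda\partial\textbf{y}^\beta}$ is homogeneous of degree $0$, i.e. $\textbf{y}^\alpha\frac{\partial\mathcal{G}_{\lambda\beta}}{\partial\textbf{y}^\alpha}=0$, which is the claimed identity. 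Substituting into $\mathcal{C}^\gamma_{\alpha\beta}=\frac{1}{2}\frac{\partial^3\mathcal{F}}{\partial\textbf{y}^\alpha\partial\textbf{y}^\beta\partial\textbf{y}^\lambda}\mathcal{G}^{\gamma\lambda}$ gives $\textbf{y}^\alpha\mathcal{C}^\gamma_{\alpha\beta}=0$, and likewise $\textbf{y}^\alpha\mathcal{C}_{\alpha\beta\gamma}=\frac{1}{2}\textbf{y}^\alpha\frac{\partial^3\mathcal{F}}{\partial\textbf{y}^\alpha\partial\textbf{y}^\beta\partial\textbf{y}^\gamma}=0$. Therefore both $i_S\mathcal{C}$ and $i_S\mathcal{C}_\flat$ vanish.

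There is essentially no serious obstacle here; the only point requiring mild care is the bookkeeping about which covariant slot $i_S$ acts on, and confirming, via the already established symmetry of $\mathcal{C}$ and $\mathcal{C}_\flat$, that contracting $S$ into the $\alpha$-slot rather than the $\beta$- or $\gamma$-slot yields the same expression, so that the single homogeneity identity above suffices in all cases. I would also remark that the conclusion is independent of the particular semispray chosen, since $i_S$ only ever sees the canonical horizontal component $\textbf{y}^\alpha$ and never the fibre coefficients $S^\alpha$.
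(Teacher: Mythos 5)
Your proposal is correct and follows essentially the same route as the paper: the paper's proof also evaluates $\mathcal{C}_\flat(S,\widetilde{Y},\widetilde{Z})$ in coordinates, uses that only the horizontal component $\textbf{y}^\alpha$ of $S$ survives contraction against the semibasic slots, and kills the result with the homogeneity identity $\textbf{y}^\alpha\frac{\partial^3\mathcal{F}}{\partial\textbf{y}^\alpha\partial\textbf{y}^\beta\partial\textbf{y}^\gamma}=0$ from (ii) of (\ref{2eq}). Your re-derivation of that identity by differentiating the Euler relation (\ref{only God}) is a harmless addition, since it is exactly how the paper obtains (\ref{2eq}) in the first place.
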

\begin{proof}
Let $\widetilde{Y}=\widetilde{Y}^\beta\mathcal{X}_\beta+\widetilde{Y}^{\bar{\beta}}\mathcal{V}_\beta$ and $\widetilde{Z}=\widetilde{Z}^\gamma\mathcal{X}_\gamma+\widetilde{Z}^{\bar{\gamma}}\mathcal{V}_\gamma$ be sections of $\stackrel{\circ}{\pounds^\pi E}$. Then using (\ref{2eq}), we have
\[
(i_S\mathcal{C}_\flat)(\widetilde{Y}, \widetilde{Z})=\mathcal{C}_\flat(S, \widetilde{Y}, \widetilde{Z})=\frac{1}{2}\textbf{y}^\alpha\widetilde{Y}^\beta\widetilde{Z}^\gamma
\frac{\partial^3\mathcal{F}}{\partial\textbf{y}^\alpha\partial\textbf{y}^\beta\partial\textbf{y}^\gamma}=0.
\]
Similarly we can prove $i_S\mathcal{C}=0$.
\end{proof}
Now we consider a horizontal endomorphism $h$ on $\pounds^\pi E$, and the prolongation $\widetilde{\mathcal{G}}$ of
the vertical metric $\mathcal{G}$ along $h$. \textit{The second Cartan tensor}
\begin{equation}
\left\{
\begin{array}{cc}
\widetilde{\mathcal{C}}:\Gamma(\stackrel{\circ}{\pounds^\pi E})\times\Gamma(\stackrel{\circ}{\pounds^\pi E})\rightarrow\Gamma(\stackrel{\circ}{\pounds^\pi E}),\\
(\widetilde{X}, \widetilde{Y})\rightarrow\widetilde{\mathcal{C}}(\widetilde{X}, \widetilde{Y}),
\end{array}
\right.
\end{equation}
(belonging to $h$) is defined by the rules
\begin{equation}\label{Cartan4}
J\circ\widetilde{\mathcal{C}}=0,
\end{equation}
\begin{equation}\label{Cartan5}
\widetilde{\mathcal{G}}(\widetilde{\mathcal{C}}(\widetilde{X}, \widetilde{Y}), J\widetilde{Z})=\frac{1}{2}(\pounds_{h\widetilde{X}}\widetilde{\mathcal{G}})(J\widetilde{Y}, J\widetilde{Z}),
\end{equation}
where $\widetilde{X}, \widetilde{Y}, \widetilde{Z}\in\Gamma(\stackrel{\circ}{\pounds^\pi E})$. Also, \textit{the lowered tensor} $\widetilde{\mathcal{C}}_\flat$ of $\widetilde{\mathcal{C}}$ is defined by
\begin{equation}\label{Cartan6}
\widetilde{\mathcal{C}}_\flat(\widetilde{X}, \widetilde{Y}, \widetilde{Z})=\widetilde{\mathcal{G}}(\widetilde{\mathcal{C}}(\widetilde{X}, \widetilde{Y}), J\widetilde{Z}), \ \ \ \forall\widetilde{X}, \widetilde{Y}, \widetilde{Z}\in\Gamma(\stackrel{\circ}{\pounds^\pi E}).
\end{equation}
Similar to the first Cartan tensor, using (\ref{Cartan4}) and (\ref{Cartan5}), we can deduce that the second Cartan tensor has the following coordinate expression:
\begin{equation}\label{Cartan7}
\widetilde{\mathcal{C}}={\widetilde{\mathcal{C}}}^\gamma_{\alpha\beta}\mathcal{X}^\alpha\otimes\mathcal{X}^\beta\otimes\mathcal{V}_\gamma,
\end{equation}
where
\begin{equation}\label{Cartan17}
{\widetilde{\mathcal{C}}}^\gamma_{\alpha\beta}=\frac{1}{2}\Big((\rho^i_\alpha\circ\pi)\frac{\partial\mathcal{G}_{\beta\mu}}{\partial\textbf{x}^i}\mathcal{G}^{\gamma\mu}
+\mathcal{B}^\lambda_\alpha\frac{\partial\mathcal{G}_{\beta\mu}}{\partial\textbf{y}^\lambda}\mathcal{G}^{\gamma\mu}
+\frac{\partial \mathcal{B}^\gamma_\alpha}{\partial\textbf{y}^\beta}
+\frac{\partial \mathcal{B}^\lambda_\alpha}{\partial\textbf{y}^\mu}\mathcal{G}^{\gamma\mu}\mathcal{G}_{\beta\lambda}\Big).
\end{equation}
From (\ref{Cartan7}), it is easy to see that the second Cartan tensor is semibasic. Moreover, (\ref{Cartan6}) and (\ref{Cartan7}) give us
\begin{equation}
\widetilde{\mathcal{C}}_\flat={\widetilde{\mathcal{C}}}_{\alpha\beta\gamma}\mathcal{X}^\alpha\otimes\mathcal{X}^\beta\otimes\mathcal{X}^\gamma,
\end{equation}
where
\begin{equation}\label{Cartan8}
{\widetilde{\mathcal{C}}}_{\alpha\beta\gamma}={\widetilde{\mathcal{C}}}^\lambda_{\alpha\beta}\mathcal{G}_{\lambda\gamma}
=\frac{1}{2}\Big((\rho^i_\alpha\circ\pi)\frac{\partial\mathcal{G}_{\beta\gamma}}{\partial\textbf{x}^i}
+\mathcal{B}^\lambda_\alpha\frac{\partial\mathcal{G}_{\beta\gamma}}{\partial\textbf{y}^\lambda}
+\frac{\partial \mathcal{B}^\lambda_\alpha}{\partial\textbf{y}^\beta}\mathcal{G}_{\lambda\gamma}
+\frac{\partial \mathcal{B}^\lambda_\alpha}{\partial\textbf{y}^\gamma}\mathcal{G}_{\beta\lambda}\Big).
\end{equation}
\begin{proposition}
Let $(E, \mathcal{F})$ be a Finsler algebroid. Then we have
\begin{align}
2\mathcal{C}_\flat(X^C, Y^C, Z^C)&=\rho_\pounds(X^V)(\rho_\pounds(Y^V)(\rho_\pounds(Z^V)\mathcal{F})),\\
2\widetilde{\mathcal{C}}_\flat(X^C, Y^C, Z^C)&=[Y^V, [X^h, Z^V]_\pounds]_\pounds+\rho_\pounds(Y^V)(\rho_\pounds(Z^V)(\rho_\pounds(X^h)\mathcal{F})).\label{2az1}
\end{align}
\end{proposition}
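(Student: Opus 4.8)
The plan is to reduce both identities to coordinate computations, exploiting that $\mathcal{C}_\flat$ and $\widetilde{\mathcal{C}}_\flat$ are semibasic. First I would record the basic evaluations: since $\mathcal{C}_\flat=\mathcal{C}_{\alpha\beta\gamma}\mathcal{X}^\alpha\otimes\mathcal{X}^\beta\otimes\mathcal{X}^\gamma$ and $\{\mathcal{X}^\alpha,\mathcal{V}^\alpha\}$ is dual to $\{\mathcal{X}_\alpha,\mathcal{V}_\alpha\}$, the coordinate form (\ref{esi}) of $X^C$ gives $\mathcal{X}^\alpha(X^C)=X^\alpha\circ\pi$; hence $\mathcal{C}_\flat(X^C,Y^C,Z^C)=\mathcal{C}_{\alpha\beta\gamma}(X^\alpha Y^\beta Z^\gamma\circ\pi)$ and likewise $\widetilde{\mathcal{C}}_\flat(X^C,Y^C,Z^C)=\widetilde{\mathcal{C}}_{\alpha\beta\gamma}(X^\alpha Y^\beta Z^\gamma\circ\pi)$. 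I will use repeatedly that $\rho_\pounds(\mathcal{V}_\alpha)=\partial/\partial\textbf{y}^\alpha$ and, by (\ref{rho}), that $\rho_\pounds(X^h)=(X^\alpha\circ\pi)\big((\rho^i_\alpha\circ\pi)\partial/\partial\textbf{x}^i+\mathcal{B}^\gamma_\alpha\partial/\partial\textbf{y}^\gamma\big)$, together with the fact that any coefficient of the form $f\circ\pi$ (in particular $X^\alpha\circ\pi$, $\rho^i_\alpha\circ\pi$, $L^\gamma_{\alpha\beta}\circ\pi$) is annihilated by every $\partial/\partial\textbf{y}^\bullet$.

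For the first identity I would start from $\rho_\pounds(Z^V)\mathcal{F}=(Z^\gamma\circ\pi)\partial\mathcal{F}/\partial\textbf{y}^\gamma$ and apply $\rho_\pounds(Y^V)$ and then $\rho_\pounds(X^V)$. Because the coefficients $X^\alpha\circ\pi$, $Y^\beta\circ\pi$, $Z^\gamma\circ\pi$ are constant along the fibres, each application of $\rho_\pounds(\cdot^{V})$ only differentiates $\mathcal{F}$, producing $(X^\alpha Y^\beta Z^\gamma\circ\pi)\,\partial^3\mathcal{F}/\partial\textbf{y}^\alpha\partial\textbf{y}^\beta\partial\textbf{y}^\gamma$. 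Comparing with $2\mathcal{C}_{\alpha\beta\gamma}=\partial^3\mathcal{F}/\partial\textbf{y}^\alpha\partial\textbf{y}^\beta\partial\textbf{y}^\gamma$ and the reduction above closes the first line.

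The second identity carries the real work. I would first expand its left side via (\ref{Cartan8}) and $\mathcal{G}_{\beta\gamma}=\partial^2\mathcal{F}/\partial\textbf{y}^\beta\partial\textbf{y}^\gamma$, obtaining four terms built from third- and second-order $\mathcal{F}$-derivatives paired with $\mathcal{B}$ and its $\textbf{y}$-derivatives. On the right, the iterated anchor term $\rho_\pounds(Y^V)(\rho_\pounds(Z^V)(\rho_\pounds(X^h)\mathcal{F}))$ expands, using the expression for $\rho_\pounds(X^h)$ above and the annihilation of $X^\alpha\circ\pi$ and $\rho^i_\alpha\circ\pi$ by $\partial/\partial\textbf{y}^\bullet$, into five terms, one of which carries the Hessian $\partial^2\mathcal{B}^\gamma_\alpha/\partial\textbf{y}^\delta\partial\textbf{y}^\epsilon$. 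The bracket term is to be read as the anchor image applied to $\mathcal{F}$, namely $\rho_\pounds\big([Y^V,[X^h,Z^V]_\pounds]_\pounds\big)\mathcal{F}$; using the earlier coordinate formula for $[X^h,Z^V]_\pounds$ and then $[\mathcal{V}_\alpha,\mathcal{V}_\beta]_\pounds=0$, one finds $[Y^V,[X^h,Z^V]_\pounds]_\pounds=-(X^\alpha Y^\delta Z^\gamma\circ\pi)(\partial^2\mathcal{B}^\beta_\alpha/\partial\textbf{y}^\delta\partial\textbf{y}^\gamma)\mathcal{V}_\beta$, whose anchor applied to $\mathcal{F}$ cancels the Hessian term by symmetry of the mixed second derivatives of $\mathcal{B}$. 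The four surviving terms then coincide, after relabelling the summation indices, with the four terms of $2\widetilde{\mathcal{C}}_{\alpha\beta\gamma}(X^\alpha Y^\beta Z^\gamma\circ\pi)$.

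The hard part will be the bookkeeping in this last step: tracking which factors are base-only and hence killed by the vertical derivatives, keeping the several third- and second-order $\mathcal{F}$-derivatives correctly matched under index relabelling, and recognising that the single $\mathcal{B}$-Hessian term produced by the double vertical differentiation is precisely what the iterated bracket removes. I would also fix at the outset the reading of the bracket term as $\rho_\pounds(\cdot)\mathcal{F}$, so that both summands on the right are scalar-valued and the asserted equality is well posed.
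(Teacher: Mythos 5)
Your proof is correct and follows essentially the same route as the paper's: both identities are established by direct coordinate computation using (\ref{esi}), (\ref{rho}), (\ref{Best6}) and (\ref{Cartan8}), with the single $\mathcal{B}$-Hessian term produced by the double vertical differentiation cancelling against the iterated bracket, leaving exactly the four terms of $2\widetilde{\mathcal{C}}_{\alpha\beta\gamma}$. Your explicit decision to read the bracket summand as $\rho_\pounds\big([Y^V,[X^h,Z^V]_\pounds]_\pounds\big)\mathcal{F}$, so that both terms on the right are scalars, is precisely the interpretation the paper uses implicitly in its ``direct calculation,'' so it clarifies rather than alters the argument.
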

\begin{proof}
Let $X$, $Y$ and $Z$ be sections of $E$. Using the second part of (\ref{esi}) we get
\begin{align*}
2\mathcal{C}_\flat(X^C, Y^C, Z^C)&=2(X^\alpha\circ\pi)(Y^\beta\circ\pi)(Z^\gamma\circ\pi)\mathcal{C}_\flat(\mathcal{X}_\alpha, \mathcal{X}_\beta, \mathcal{X}_\gamma)\\
&=(X^\alpha\circ\pi)(Y^\beta\circ\pi)(Z^\gamma\circ\pi)\frac{\partial^3\mathcal{F}}{\partial\textbf{y}^\alpha\partial\textbf{y}^\beta\partial\textbf{y}^\gamma}\\
&=(X^\alpha\circ\pi)(Y^\beta\circ\pi)(Z^\gamma\circ\pi)\rho_\pounds(\mathcal{V}_\alpha)(\rho_\pounds(\mathcal{V}_\beta)
(\rho_\pounds(\mathcal{V}_\gamma)\mathcal{F}))\\
&=\rho_\pounds(X^V)(\rho_\pounds(Y^V)(\rho_\pounds(Z^V)\mathcal{F})).
\end{align*}
Now we prove (\ref{2az1}). Direct calculation gives us
\begin{align*}
&[Y^V, [X^h, Z^V]_\pounds]_\pounds+\rho_\pounds(Y^V)(\rho_\pounds(Z^V)(\rho_\pounds(X^h)\mathcal{F}))\\
&=(X^\alpha\circ\pi)(Y^\beta\circ\pi)(Z^\gamma\circ\pi)\Big((\rho^i_\alpha\circ\pi)\frac{\partial^3\mathcal{F}}{\partial\textbf{y}^\beta\partial
\textbf{y}^\gamma\partial\textbf{x}^i}
+\frac{\partial \mathcal{B}^\lambda_\alpha}{\partial\textbf{y}^\gamma}
\frac{\partial^2\mathcal{F}}{\partial\textbf{y}^\beta\partial\textbf{y}^\lambda}\\
&\ \ \ +\frac{\partial \mathcal{B}^\lambda_\alpha}{\partial\textbf{y}^\beta}
\frac{\partial^2\mathcal{F}}{\partial\textbf{y}^\gamma\partial\textbf{y}^\lambda}+\mathcal{B}^\lambda_\alpha\frac{\partial^3\mathcal{F}}{\partial\textbf{y}^\beta\partial
\textbf{y}^\gamma\partial\textbf{y}^\lambda}\Big),
\end{align*}
But using (\ref{Cartan8}), we can see that the above equation is equal to $2\widetilde{\mathcal{C}}_\flat(X^C, Y^C, Z^C)$. Thus we have (\ref{2az1}).
\end{proof}
\begin{proposition}
Let $(E, \mathcal{F})$ be a Finsler algebroid. If $h$ is a torsion free and conservative horizontal endomorphism on $\pounds^\pi E$, then the lowered second Cartan tensor is symmetric.
\end{proposition}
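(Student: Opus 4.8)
The plan is to work entirely in the coordinate frame and to establish the symmetry of $\widetilde{\mathcal{C}}_{\alpha\beta\gamma}$ directly from the expression (\ref{Cartan8}). Symmetry in the last two indices is immediate: $\mathcal{G}_{\beta\gamma}$ is symmetric, so the first two terms of (\ref{Cartan8}) are symmetric in $\beta,\gamma$, while the last two terms interchange under $\beta\leftrightarrow\gamma$ after relabeling the dummy index $\lambda$. Hence the real content is invariance under a transposition involving $\alpha$, which is exactly where both hypotheses enter.

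First I would reduce (\ref{Cartan8}) to a single clean term using conservativeness. Differentiating the identity (\ref{cons2}) once more with respect to $\textbf{y}^\gamma$, and using that $\rho^i_\alpha\circ\pi$ is independent of the fibre coordinates, produces an expression for $(\rho^i_\alpha\circ\pi)\frac{\partial^3\mathcal{F}}{\partial\textbf{x}^i\partial\textbf{y}^\beta\partial\textbf{y}^\gamma}$. Substituting this for the term $(\rho^i_\alpha\circ\pi)\frac{\partial\mathcal{G}_{\beta\gamma}}{\partial\textbf{x}^i}$ in (\ref{Cartan8}) cancels the two terms $\frac{\partial\mathcal{B}^\lambda_\alpha}{\partial\textbf{y}^\beta}\mathcal{G}_{\lambda\gamma}$ and $\frac{\partial\mathcal{B}^\lambda_\alpha}{\partial\textbf{y}^\gamma}\mathcal{G}_{\beta\lambda}$, while the surviving $\mathcal{B}^\lambda_\alpha$-terms cancel against each other because $\frac{\partial\mathcal{G}_{\beta\lambda}}{\partial\textbf{y}^\gamma}$ and $\frac{\partial\mathcal{G}_{\beta\gamma}}{\partial\textbf{y}^\lambda}$ are both equal to $\frac{\partial^3\mathcal{F}}{\partial\textbf{y}^\beta\partial\textbf{y}^\gamma\partial\textbf{y}^\lambda}$. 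The outcome is the compact formula
\[
\widetilde{\mathcal{C}}_{\alpha\beta\gamma}=-\frac{1}{2}\frac{\partial^2\mathcal{B}^\lambda_\alpha}{\partial\textbf{y}^\beta\partial\textbf{y}^\gamma}\frac{\partial\mathcal{F}}{\partial\textbf{y}^\lambda},
\]
in which the dependence on the index $\alpha$ is isolated in $\mathcal{B}^\lambda_\alpha$.

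Then I would invoke torsion-freeness. From $t^\lambda_{\alpha\beta}=0$ in (\ref{wt1}) we read $\frac{\partial\mathcal{B}^\lambda_\beta}{\partial\textbf{y}^\alpha}-\frac{\partial\mathcal{B}^\lambda_\alpha}{\partial\textbf{y}^\beta}=L^\lambda_{\alpha\beta}\circ\pi$; differentiating once more with respect to $\textbf{y}^\gamma$ and using that $L^\lambda_{\alpha\beta}\circ\pi$ is a function on the base annihilates the structure term and yields $\frac{\partial^2\mathcal{B}^\lambda_\alpha}{\partial\textbf{y}^\beta\partial\textbf{y}^\gamma}=\frac{\partial^2\mathcal{B}^\lambda_\beta}{\partial\textbf{y}^\alpha\partial\textbf{y}^\gamma}$. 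Together with the commutativity of the two fibre derivatives, the coefficient $\frac{\partial^2\mathcal{B}^\lambda_\alpha}{\partial\textbf{y}^\beta\partial\textbf{y}^\gamma}$ is then invariant under every transposition of $\alpha,\beta,\gamma$, hence fully symmetric; contracting with $\frac{\partial\mathcal{F}}{\partial\textbf{y}^\lambda}$ shows $\widetilde{\mathcal{C}}_{\alpha\beta\gamma}$ is fully symmetric, which is the claim. The only delicate point is the index bookkeeping in the first step, namely tracking which terms cancel after the substitution, but there is no analytic obstacle: everything rests on the symmetry of the higher partial derivatives of $\mathcal{F}$ and the fibrewise constancy of the structure functions $\rho^i_\alpha$ and $L^\lambda_{\alpha\beta}$.
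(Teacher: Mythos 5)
Your proposal is correct and follows essentially the same route as the paper: both reduce (\ref{Cartan8}), via the $\textbf{y}$-differentiated conservativeness identity (\ref{cons2}), to the compact form $\widetilde{\mathcal{C}}_{\alpha\beta\gamma}=-\frac{1}{2}\frac{\partial^2\mathcal{B}^\lambda_\alpha}{\partial\textbf{y}^\beta\partial\textbf{y}^\gamma}\frac{\partial\mathcal{F}}{\partial\textbf{y}^\lambda}$ (the paper writes this as $-\frac{1}{2}\textbf{y}^\mu\frac{\partial^2\mathcal{B}^\lambda_\alpha}{\partial\textbf{y}^\beta\partial\textbf{y}^\gamma}\mathcal{G}_{\lambda\mu}$ using (i) of (\ref{2eq})), and then differentiate the torsion-free condition (\ref{wt1}) to obtain symmetry in $\alpha,\beta$. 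Your extra observation that the two transpositions generate full $S_3$-symmetry is a harmless sharpening of the same argument.
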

\begin{proof}
(\ref{Cartan8}) told us that $\widetilde{C}_{\alpha\beta\gamma}$ is symmetric with respect to last two variables. Thus it is sufficient to prove that $\widetilde{C}_{\alpha\beta\gamma}$ is symmetric with respect to first two variables. Since $h$ is conservative, then using (\ref{only God1}) and (i) of (\ref{2eq}) in (\ref{Cartan8}) we obtain
\[
\widetilde{C}_{\alpha\beta\gamma}=-\frac{1}{2}\textbf{y}^\mu\frac{\partial^2\mathcal{B}^\lambda_\alpha}{\partial\textbf{y}^\beta\partial\textbf{y}^\gamma}
\mathcal{G}_{\lambda\mu}.
\]
Since $h$ is torsion free, then using (\ref{wt1}) we have $\frac{\partial^2\mathcal{B}^\lambda_\alpha}{\partial\textbf{y}^\beta\partial\textbf{y}^\gamma}
=\frac{\partial^2\mathcal{B}^\lambda_\beta}{\partial\textbf{y}^\alpha\partial\textbf{y}^\gamma}$. Setting this equation in the above equation implies $\widetilde{C}_{\alpha\beta\gamma}=\widetilde{C}_{\beta\alpha\gamma}$.
\end{proof}
%-----------------------------------------------------------------
\subsection{Distinguished connections on Finsler algebroids}
%-----------------------------------------------------------------
\begin{theorem}
Let $(E, \mathcal{F})$ be a Finsler algebroid and $h$ be a conservative horizontal endomorphism on $\pounds^\pi E$. Then there exists a unique d-connection
$\stackrel{\text{\begin{tiny}BF\end{tiny}}}{D}$ on $(E, \mathcal{F})$ such that the $v$-mixed and $h$-mixed torsions of $\stackrel{\text{\begin{tiny}BF\end{tiny}}}{D}$ are zero.
\end{theorem}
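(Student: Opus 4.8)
The plan is to reduce the whole statement to the coordinate description of a d-connection by its local coefficients $(F^\gamma_{\alpha\beta},C^\gamma_{\alpha\beta})$ from (\ref{conn}), and to read the two torsion normalizations directly off the formulas (\ref{very im2}) and (\ref{very im3}). First I would translate the hypotheses at the level of coefficients. By (\ref{very im2}) the $h$-mixed torsion is $B=-C^\gamma_{\alpha\beta}\,\delta_\gamma\otimes\mathcal{X}^\alpha\otimes\mathcal{X}^\beta$, so $B=0$ is equivalent to $C^\gamma_{\alpha\beta}=0$. By (\ref{very im2}) together with (ii) of (\ref{very im3}) the $v$-mixed torsion is $P^1=(F^\gamma_{\alpha\beta}+\partial\mathcal{B}^\gamma_\alpha/\partial\textbf{y}^\beta)\,\mathcal{V}_\gamma\otimes\mathcal{X}^\alpha\otimes\mathcal{X}^\beta$, so $P^1=0$ is equivalent to $F^\gamma_{\alpha\beta}=-\partial\mathcal{B}^\gamma_\alpha/\partial y^\beta$. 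Hence the two prescribed conditions fix the entire coefficient pair.

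For existence I would observe that this forced pair is exactly the coefficient pair of the Berwald-type connection of $h$: comparing with (\ref{4ta}) gives ${\stackrel{\text{\begin{tiny}B\end{tiny}}}{F}}^\gamma_{\alpha\beta}=-\partial\mathcal{B}^\gamma_\alpha/\partial y^\beta$ and ${\stackrel{\text{\begin{tiny}B\end{tiny}}}{C}}^\gamma_{\alpha\beta}=0$. Since the Berwald-type construction was already shown to yield a genuine d-connection (it is reducible and almost complex, ${\stackrel{\text{\begin{tiny}B\end{tiny}}}{D}}h={\stackrel{\text{\begin{tiny}B\end{tiny}}}{D}}F=0$), I would simply set ${\stackrel{\text{\begin{tiny}BF\end{tiny}}}{D}}:={\stackrel{\text{\begin{tiny}B\end{tiny}}}{D}}$; the computation above then shows both $B=0$ and $P^1=0$ for it. Uniqueness follows because a d-connection is completely determined by $(F^\gamma_{\alpha\beta},C^\gamma_{\alpha\beta})$ through (\ref{conn}), (\ref{conn1}) and (\ref{conn2}): any $(D,h)$ with $B=0$ and $P^1=0$ has the same coefficients as ${\stackrel{\text{\begin{tiny}B\end{tiny}}}{D}}$ and hence equals it.

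The one point that deserves genuine care — and the only real obstacle — is making the argument basis-free rather than purely coordinate-algebraic, so that vanishing of $B$ and $P^1$ in the adapted frame $\{\delta_\alpha,\mathcal{V}_\alpha\}$ really is equivalent to their vanishing as tensors. I would handle this by exploiting the tensorial torsion identities (\ref{TD2}) and (\ref{TD4}): the condition $P^1=0$ forces $D_{h\widetilde{X}}J\widetilde{Y}=v[h\widetilde{X},J\widetilde{Y}]_\pounds$, while $B=0$ forces $D_{J\widetilde{Y}}h\widetilde{X}=-h[h\widetilde{X},J\widetilde{Y}]_\pounds$. Combined with the almost-complex relations (\ref{conn0}) and (\ref{conn00}), which express $D$ with horizontal second argument in terms of $D$ with vertical second argument, these two identities determine $D$ on all of $\pounds^\pi E\times v\pounds^\pi E$ and hence, again by almost-complexity, on the whole of $\pounds^\pi E\times\pounds^\pi E$; applying this to the difference of two candidate connections (which is $C^\infty(E)$-linear) gives uniqueness intrinsically.

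Finally I would remark that neither the conservativeness of $h$ nor the Finsler function $\mathcal{F}$ is used in the construction itself: they are the standing hypotheses of the section, guaranteeing that a suitable horizontal endomorphism $h$ (for instance the Barthel endomorphism) and the attached metric data are available, but the two torsion normalizations alone pin down ${\stackrel{\text{\begin{tiny}BF\end{tiny}}}{D}}$. This is why I expect the proof to be short once the coefficient bookkeeping and the tensoriality remark are in place.
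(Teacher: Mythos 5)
Your proposal is correct, and it arrives at the same object the paper does --- the two torsion normalizations force the connection to coincide with the Berwald-type connection --- but the execution is genuinely different. The paper argues intrinsically: from $P^1=0$, reducibility and (\ref{TD4}) it extracts $\stackrel{\text{\begin{tiny}BF\end{tiny}}}{D}_{h\widetilde{X}}v\widetilde{Y}=v[h\widetilde{X},v\widetilde{Y}]_\pounds$ (its (\ref{BF1})); from $B=0$ and (\ref{TD2}) it extracts $\stackrel{\text{\begin{tiny}BF\end{tiny}}}{D}_{v\widetilde{X}}h\widetilde{Y}=h[v\widetilde{X},\widetilde{Y}]_\pounds$ (its (\ref{BF2})); then the almost-complex property together with (\ref{Jh}) and (\ref{IM0}) produces the remaining two components (\ref{BF3})--(\ref{BF4}). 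These four formulas pin $D$ down uniquely and are literally the defining decomposition of the Berwald-type connection, although the paper never states this identification. Your primary route instead reads the two conditions off in the adapted frame, forcing $C^\gamma_{\alpha\beta}=0$ and $F^\gamma_{\alpha\beta}=-\partial\mathcal{B}^\gamma_\alpha/\partial\textbf{y}^\beta$, and matches these with (\ref{4ta}). What your route buys: existence becomes transparent, since you reuse the already-verified fact that $(\stackrel{\text{\begin{tiny}B\end{tiny}}}{D},h)$ is a d-connection, whereas the paper's closing assertion that (\ref{BF1})--(\ref{BF4}) prove existence is terse and silently relies on that same earlier fact. What the paper's route buys: being basis-free, it never needs to worry whether vanishing of $B$ and $P^1$ on the adapted frame is equivalent to vanishing as tensors --- the one issue you rightly flag, and which you then resolve by invoking exactly the paper's intrinsic identities (\ref{TD2}) and (\ref{TD4}), so your fallback argument is in effect the paper's proof. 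Your closing observation is also accurate: neither the conservativeness of $h$ nor $\mathcal{F}$ is used in either argument; they matter only afterwards, e.g.\ in Proposition \ref{PBerwald}, where vanishing $h$-deflection and $h$-horizontal torsion single out the Barthel endomorphism and justify calling the result the Berwald connection of $(E,\mathcal{F})$.
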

\begin{proof}
Let there exist a d-connection $\stackrel{\text{\begin{tiny}BF\end{tiny}}}{D}$ on $(E, \mathcal{F})$ such that the $v$-mixed and $h$-mixed torsions of are zero. If we denote by $\stackrel{\text{\begin{tiny}BF\end{tiny}}}{P^1}$, the $v$-mixed torsion of $\stackrel{\text{\begin{tiny}BF\end{tiny}}}{D}$, then we have
\begin{align*}
0&=\stackrel{\text{\begin{tiny}BF\end{tiny}}}{P^1}(\widetilde{X}, F\widetilde{Y})=v\stackrel{\text{\begin{tiny}BF\end{tiny}}}{T}(h\widetilde{X}, v\widetilde{Y})=v(\stackrel{\text{\begin{tiny}BF\end{tiny}}}{D}_{h\widetilde{X}}v\widetilde{Y}
-\stackrel{\text{\begin{tiny}BF\end{tiny}}}{D}_{v\widetilde{Y}}h\widetilde{X}-[h\widetilde{X}, v\widetilde{Y}]_\pounds)\nonumber\\
&=\stackrel{\text{\begin{tiny}BF\end{tiny}}}{D}_{h\widetilde{X}}v\widetilde{Y}
-v[h\widetilde{X}, v\widetilde{Y}]_\pounds,
\end{align*}
where $\stackrel{\text{\begin{tiny}BF\end{tiny}}}{T}$ is the torsion of $\stackrel{\text{\begin{tiny}BF\end{tiny}}}{D}$. The above equation gives us
\begin{equation}\label{BF1}
\stackrel{\text{\begin{tiny}BF\end{tiny}}}{D}_{h\widetilde{X}}v\widetilde{Y}
=v[h\widetilde{X}, v\widetilde{Y}]_\pounds.
\end{equation}
Since the $h$-mixed torsion of $\stackrel{\text{\begin{tiny}BF\end{tiny}}}{D}$ is zero, then we have
\begin{align*}
0&=\stackrel{\text{\begin{tiny}BF\end{tiny}}}{B}(\widetilde{Y}, F\widetilde{X})=h\stackrel{\text{\begin{tiny}BF\end{tiny}}}{T}(h\widetilde{Y}, v\widetilde{X})=h(\stackrel{\text{\begin{tiny}BF\end{tiny}}}{D}_{h\widetilde{Y}}v\widetilde{X}
-\stackrel{\text{\begin{tiny}BF\end{tiny}}}{D}_{v\widetilde{X}}h\widetilde{Y}-[h\widetilde{Y}, v\widetilde{X}]_\pounds)\nonumber\\
&=-\stackrel{\text{\begin{tiny}BF\end{tiny}}}{D}_{v\widetilde{X}}h\widetilde{Y}
-h[h\widetilde{Y}, v\widetilde{X}]_\pounds=-\stackrel{\text{\begin{tiny}BF\end{tiny}}}{D}_{v\widetilde{X}}h\widetilde{Y}
-h[\widetilde{Y}, v\widetilde{X}]_\pounds,
\end{align*}
where $\stackrel{\text{\begin{tiny}BF\end{tiny}}}{B}$ is the $h$-mixed torsion of $\stackrel{\text{\begin{tiny}BF\end{tiny}}}{D}$. The above equation gives us
\begin{equation}\label{BF2}
\stackrel{\text{\begin{tiny}BF\end{tiny}}}{D}_{v\widetilde{X}}h\widetilde{Y}
=h[v\widetilde{X}, \widetilde{Y}]_\pounds.
\end{equation}
Since $\stackrel{\text{\begin{tiny}BF\end{tiny}}}{D}$ is d-connection, then using (\ref{BF1}), (iv) of (\ref{Jh}) and (i), (iv) of (\ref{IM0}) we get
\begin{align}\label{BF3}
\stackrel{\text{\begin{tiny}BF\end{tiny}}}{D}_{h\widetilde{X}}h\widetilde{Y}&=F\stackrel{\text{\begin{tiny}BF\end{tiny}}}{D}_{h\widetilde{X}}J\widetilde{Y}
=F\stackrel{\text{\begin{tiny}BF\end{tiny}}}{D}_{h\widetilde{X}}vJ\widetilde{Y}=Fv[h\widetilde{X}, vJ\widetilde{Y}]_\pounds\nonumber\\
&=hF[h\widetilde{X}, J\widetilde{Y}]_\pounds.
\end{align}
Since $\stackrel{\text{\begin{tiny}BF\end{tiny}}}{D}$ is d-connection, then (iii), (iv) of (\ref{IM0}), (ii), (iv) of (\ref{Jh}) and (\ref{BF2}) give us
\begin{align}\label{BF4}
\stackrel{\text{\begin{tiny}BF\end{tiny}}}{D}_{v\widetilde{X}}v\widetilde{Y}&=\stackrel{\text{\begin{tiny}BF\end{tiny}}}{D}_{v\widetilde{X}}v(v\widetilde{Y})
=\stackrel{\text{\begin{tiny}BF\end{tiny}}}{D}_{v\widetilde{X}}J(Fv\widetilde{Y})=J\stackrel{\text{\begin{tiny}BF\end{tiny}}}{D}_{v\widetilde{X}}hF\widetilde{Y}
=Jh[v\widetilde{X}, F\widetilde{Y}]_\pounds\nonumber\\
&=J[v\widetilde{X}, F\widetilde{Y}]_\pounds.
\end{align}
Relations (\ref{BF1})-(\ref{BF4}) prove the existence and uniqueness of $\stackrel{\text{\begin{tiny}BF\end{tiny}}}{D}$.
\end{proof}
Using (\ref{BF1})-(\ref{BF4}), the d-connection $\stackrel{\text{\begin{tiny}BF\end{tiny}}}{D}$ has the following coordinate expression:
\begin{equation}\label{BF5}
\left\{
\begin{array}{cc}
\stackrel{\text{\begin{tiny}BF\end{tiny}}}{D}_{\delta_\alpha}\!\!\delta_\beta=-\frac{\partial \mathcal{B}^\gamma_\alpha}{\partial y^\beta}\delta_\gamma,\ \ \ \ \ \stackrel{\text{\begin{tiny}BF\end{tiny}}}{D}_{v_\alpha}\!\!\mathcal{V}_\beta=0,\\
\stackrel{\text{\begin{tiny}BF\end{tiny}}}{D}_{\delta_\alpha}\!\!\mathcal{V}_\beta=-\frac{\partial \mathcal{B}^\gamma_\alpha}{\partial y^\beta}\mathcal{V}_\gamma,\ \ \ \ \ \stackrel{\text{\begin{tiny}BF\end{tiny}}}{D}_{\mathcal{V}_\alpha}\!\!\delta_\beta=0.
\end{array}
\right.
\end{equation}
\begin{proposition}\label{PBerwald}
Let $(E, \mathcal{F})$ be a Finsler algebroid, $h$ be a conservative horizontal endomorphism on $\pounds^\pi E$ and $\stackrel{\text{\begin{tiny}BF\end{tiny}}}{D}$ be the d-connection given by (\ref{BF5}). If $h$-deflection and $h$-horizontal torsion of $\stackrel{\text{\begin{tiny}BF\end{tiny}}}{D}$ are zero, then $h$ is the Barthel endomorphism.
\end{proposition}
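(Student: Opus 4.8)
The plan is to reduce the statement to the uniqueness result proved at the close of the Barthel subsection, which asserts that there is a \emph{unique} homogeneous, conservative, torsion free horizontal endomorphism on $(E,\mathcal{F})$, and that it is the Barthel endomorphism. Since $h$ is conservative by hypothesis, it suffices to show that the two additional hypotheses, vanishing of the $h$-deflection and of the $h$-horizontal torsion of $\stackrel{\text{\begin{tiny}BF\end{tiny}}}{D}$, force $h$ to be homogeneous and torsion free respectively. The whole argument is then a matter of reading off two coordinate conditions from the local coefficients of $\stackrel{\text{\begin{tiny}BF\end{tiny}}}{D}$ recorded in (\ref{BF5}).

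First I would observe that (\ref{BF5}) exhibits $\stackrel{\text{\begin{tiny}BF\end{tiny}}}{D}$ with local coefficients $F^\gamma_{\alpha\beta}=-\frac{\partial\mathcal{B}^\gamma_\alpha}{\partial\textbf{y}^\beta}$ and $C^\gamma_{\alpha\beta}=0$, exactly those of the Berwald-type connection in (\ref{4ta}). Substituting $F^\gamma_{\alpha\beta}$ into the $h$-deflection formula (\ref{h-def}) gives
\[
h^*(\stackrel{\text{\begin{tiny}BF\end{tiny}}}{D}C)=\Big(\mathcal{B}^\gamma_\alpha-\textbf{y}^\beta\frac{\partial\mathcal{B}^\gamma_\alpha}{\partial\textbf{y}^\beta}\Big)\mathcal{V}_\gamma\otimes\mathcal{X}^\alpha,
\]
which is precisely the tension $H$ of $h$ as given by (\ref{tension}); this is just the statement, already recorded for the Berwald-type connection, that the $h$-deflection coincides with the tension. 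Hence the vanishing of the $h$-deflection is exactly $H=0$, i.e. $h$ is homogeneous by definition.

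Next I would compute the $h$-horizontal torsion. By (i) of (\ref{very im3}) its coefficient is $T^\gamma_{\alpha\beta}=F^\gamma_{\alpha\beta}-F^\gamma_{\beta\alpha}-(L^\gamma_{\alpha\beta}\circ\pi)$, and inserting $F^\gamma_{\alpha\beta}=-\frac{\partial\mathcal{B}^\gamma_\alpha}{\partial\textbf{y}^\beta}$ yields
\[
T^\gamma_{\alpha\beta}=\frac{\partial\mathcal{B}^\gamma_\beta}{\partial\textbf{y}^\alpha}-\frac{\partial\mathcal{B}^\gamma_\alpha}{\partial\textbf{y}^\beta}-(L^\gamma_{\alpha\beta}\circ\pi)=t^\gamma_{\alpha\beta},
\]
the weak torsion coefficient of (\ref{wt1}). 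Therefore the vanishing of the $h$-horizontal torsion is equivalent to $t^\gamma_{\alpha\beta}=0$, that is, $h$ is torsion free.

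Having shown that $h$ is homogeneous and torsion free, and recalling that $h$ is conservative by assumption, I would conclude by invoking the uniqueness theorem at the end of the Barthel subsection: there is exactly one homogeneous, conservative, torsion free horizontal endomorphism, namely the Barthel endomorphism, so $h$ must coincide with it. I expect no genuine obstacle in this argument; the only point demanding care is the bookkeeping that identifies the $h$-deflection of $\stackrel{\text{\begin{tiny}BF\end{tiny}}}{D}$ with the tension $H$ and the $h$-horizontal torsion with the weak torsion $t$, after which the conclusion is a direct appeal to the uniqueness statement.
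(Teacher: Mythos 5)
Your proposal is correct and follows essentially the same route as the paper's own proof: both reduce the claim to showing that vanishing $h$-deflection forces homogeneity and vanishing $h$-horizontal torsion forces $t=0$, and then conclude via the uniqueness of the homogeneous, conservative, torsion free horizontal endomorphism. The only difference is cosmetic: you substitute the coefficients $F^\gamma_{\alpha\beta}=-\frac{\partial\mathcal{B}^\gamma_\alpha}{\partial\textbf{y}^\beta}$, $C^\gamma_{\alpha\beta}=0$ into the previously derived formulas (\ref{h-def}) and (\ref{very im3}), whereas the paper recomputes $h^*(\stackrel{\text{\begin{tiny}BF\end{tiny}}}{D}\!C)(\delta_\alpha)$ and $hT(\delta_\alpha,\delta_\beta)$ directly.
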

\begin{proof}
It is sufficient to show that $h$ is homogenous and torsion free. Since $h$-deflection of $(\stackrel{\text{\begin{tiny}BF\end{tiny}}}{D}, h)$ is zero, then we have
\[
0=h^*(\stackrel{\text{\begin{tiny}BF\end{tiny}}}{D}\!\!C)(\delta_\alpha)=\stackrel{\text{\begin{tiny}BF\end{tiny}}}{D}_{h\delta_\alpha}\!\!\!(C)
=\stackrel{\text{\begin{tiny}BF\end{tiny}}}{D}_{h\delta_\alpha}\!\!\!(\textbf{y}^\beta\mathcal{V_\beta})
=(\mathcal{B}^\beta_\alpha-\textbf{y}^\lambda\frac{\partial \mathcal{B}^\beta_\alpha}{\partial\textbf{y}^\lambda})\mathcal{V_\beta}.
\]
The above equation shows that $h$ is homogenous. Also, since the $h$-horizontal torsion of $\stackrel{\text{\begin{tiny}BF\end{tiny}}}{D}$ is zero, then we get
\begin{align*}
0&=hT(\delta_\alpha,\delta_\beta)=h(\stackrel{\text{\begin{tiny}BF\end{tiny}}}{D}_{\delta_\alpha}\!\!\!\delta_\beta
-\stackrel{\text{\begin{tiny}BF\end{tiny}}}{D}_{\delta_\beta}\!\!\delta_\alpha-[\delta_\alpha,\delta_\beta]_\pounds)\nonumber\\
&=h\Big((\frac{\partial \mathcal{B}^\gamma_\beta}{\partial\textbf{y}^\alpha}-\frac{\partial \mathcal{B}^\gamma_\alpha}{\partial\textbf{y}^\beta}-(L^\gamma_{\alpha\beta}\circ\pi))\delta_\gamma-R^\gamma_{\alpha\beta}\mathcal{V\gamma}\Big)\\
&=t^\gamma_{\alpha\beta}\delta_\gamma.
\end{align*}
From the above equation we deduce that the weak torsion of $h$ is zero.
\end{proof}
If $h$ is the Barthel endomorphism of Finsler algebroid $(E, \mathcal{F})$, then the d-connection $\stackrel{\text{\begin{tiny}BF\end{tiny}}}{D}$ given in (\ref{BF5}) is called the Berwald connection of  $(E, \mathcal{F})$.
\begin{theorem}\label{TCartan}
Let $(E, \mathcal{F})$ be a Finsler algebroid, $h$ be a torsion free and conservative horizontal endomorphism on $\pounds^\pi E$, $\widetilde{\mathcal{G}}$ be the prolongation of $\mathcal{G}$ along $h$. Then there exists a unique d-connection
$\stackrel{\text{\begin{tiny}C\end{tiny}}}{D}$ on $(E, \mathcal{F})$ such that $\stackrel{\text{\begin{tiny}C\end{tiny}}}{D}$ is metrical, i.e., $\stackrel{\text{\begin{tiny}C\end{tiny}}}{D}\widetilde{\mathcal{G}}=0$ and the $v$-vertical and $h$-horizontal torsions of $\stackrel{\text{\begin{tiny}C\end{tiny}}}{D}$ are zero.
\end{theorem}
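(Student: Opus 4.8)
The plan is to imitate the fundamental theorem of Finsler geometry: translate the four requirements (being a d-connection, metricity $D\widetilde{\mathcal{G}}=0$, vanishing $v$-vertical torsion $S^1$, vanishing $h$-horizontal torsion $A$) into algebraic conditions on the local coefficients $(F^\gamma_{\alpha\beta},C^\gamma_{\alpha\beta})$ of \eqref{conn}, and then to solve for these coefficients by Koszul-type formulas. By \eqref{conn}, \eqref{conn1}, \eqref{conn2} a d-connection $D$ is completely encoded by the arrays $F^\gamma_{\alpha\beta}$ and $C^\gamma_{\alpha\beta}$, so existence and uniqueness of $\stackrel{\text{\begin{tiny}C\end{tiny}}}{D}$ reduce to showing the conditions have a unique solution in $(F,C)$. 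First I would note that, since $D$ is reducible and by \eqref{pmetric} $\widetilde{\mathcal{G}}(\delta_\alpha,\mathcal{V}_\beta)=0$ while $\widetilde{\mathcal{G}}(\delta_\alpha,\delta_\beta)=\widetilde{\mathcal{G}}(\mathcal{V}_\alpha,\mathcal{V}_\beta)=\mathcal{G}_{\alpha\beta}$, the metricity equation decouples into an independent vertical block and horizontal block.

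For the vertical block I would use the $v$-vertical torsion condition. By \eqref{very im3} its components are $S^\gamma_{\alpha\beta}=C^\gamma_{\alpha\beta}-C^\gamma_{\beta\alpha}$, so $S^1=0$ is exactly the symmetry $C^\gamma_{\alpha\beta}=C^\gamma_{\beta\alpha}$. Substituting this into the vertical metricity identity $\partial\mathcal{G}_{\beta\gamma}/\partial\textbf{y}^\alpha=C^\lambda_{\alpha\beta}\mathcal{G}_{\lambda\gamma}+C^\lambda_{\alpha\gamma}\mathcal{G}_{\beta\lambda}$, and using that $\partial\mathcal{G}_{\beta\gamma}/\partial\textbf{y}^\alpha=\partial^3\mathcal{F}/\partial\textbf{y}^\alpha\partial\textbf{y}^\beta\partial\textbf{y}^\gamma$ is totally symmetric by \eqref{Best6}, the cyclic-permutation argument collapses and yields $C^\gamma_{\alpha\beta}=\tfrac12\mathcal{G}^{\gamma\lambda}\partial\mathcal{G}_{\beta\lambda}/\partial\textbf{y}^\alpha$, i.e. the $v$-coefficients are forced to be the first Cartan tensor $\mathcal{C}^\gamma_{\alpha\beta}$. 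This step carries no obstruction precisely because the $\textbf{y}$-derivative of $\mathcal{G}$ is symmetric.

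The horizontal block is the substantive step, and here the symmetry just exploited is absent: $\rho_\pounds(\delta_\alpha)\mathcal{G}_{\beta\gamma}$ is not symmetric in $(\alpha,\beta,\gamma)$, so the genuine three-term Koszul combination must be run. The condition $A=0$ reads, via \eqref{very im3}(i), $F^\gamma_{\alpha\beta}-F^\gamma_{\beta\alpha}=(L^\gamma_{\alpha\beta}\circ\pi)$, so unlike the classical case the antisymmetric part of $F$ is prescribed by the Lie-algebroid structure functions. Combining this with horizontal metricity $\rho_\pounds(\delta_\alpha)\mathcal{G}_{\beta\gamma}=F^\lambda_{\alpha\beta}\mathcal{G}_{\lambda\gamma}+F^\lambda_{\alpha\gamma}\mathcal{G}_{\beta\lambda}$, where $\rho_\pounds(\delta_\alpha)$ is the $\delta$-derivative \eqref{rho}, I would take the cyclic sum over $(\alpha,\beta,\gamma)$ and solve for $F_{\gamma\alpha\beta}:=F^\lambda_{\alpha\beta}\mathcal{G}_{\lambda\gamma}$, obtaining a Lie-algebroid Christoffel expression of the form
\[
F_{\gamma\alpha\beta}=\tfrac12\big(\rho_\pounds(\delta_\alpha)\mathcal{G}_{\beta\gamma}+\rho_\pounds(\delta_\beta)\mathcal{G}_{\gamma\alpha}-\rho_\pounds(\delta_\gamma)\mathcal{G}_{\alpha\beta}\big)+\tfrac12\big((L^\lambda_{\alpha\beta}\circ\pi)\mathcal{G}_{\lambda\gamma}-(L^\lambda_{\beta\gamma}\circ\pi)\mathcal{G}_{\lambda\alpha}+(L^\lambda_{\gamma\alpha}\circ\pi)\mathcal{G}_{\lambda\beta}\big).
\]
Since $\widetilde{\mathcal{G}}$ is nondegenerate (Remark \ref{Best4}), this determines $F^\gamma_{\alpha\beta}$ uniquely, which together with the vertical block gives uniqueness of $D$. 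I expect the careful bookkeeping of the three $L$-correction terms, and checking that they are exactly what is needed to reconcile metricity with $A=0$, to be the main obstacle.

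Finally, for existence I would turn the argument around: define $D$ by declaring $D_{\delta_\alpha}\delta_\beta=F^\gamma_{\alpha\beta}\delta_\gamma$, $D_{\mathcal{V}_\alpha}\mathcal{V}_\beta=C^\gamma_{\alpha\beta}\mathcal{V}_\gamma$ with the $F,C$ found above, and the matching actions $D_{\delta_\alpha}\mathcal{V}_\beta=F^\gamma_{\alpha\beta}\mathcal{V}_\gamma$, $D_{\mathcal{V}_\alpha}\delta_\beta=C^\gamma_{\alpha\beta}\delta_\gamma$ dictated by \eqref{conn1} and \eqref{conn2}. Reducibility and the almost-complex property $DF=0$ then hold by construction, since the coefficients act identically on $\delta_\gamma$ and $\mathcal{V}_\gamma$; metricity $D\widetilde{\mathcal{G}}=0$ and the two torsion identities $S^1=0$, $A=0$ follow by reading the symmetry and Koszul relations backwards. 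As a coordinate-free cross-check, parallel to \eqref{BF1}--\eqref{BF4}, I would record $D$ as the Berwald-type pieces $hF[h\widetilde{X},J\widetilde{Y}]_\pounds$ and $J[v\widetilde{X},F\widetilde{Y}]_\pounds$ each corrected by a Cartan-tensor contribution built from $\mathcal{C}$, which makes the d-connection axioms and the symmetry of the Cartan tensor transparent.
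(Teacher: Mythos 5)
Your proposal is correct and, at its core, runs along the same lines as the paper's proof: both encode the d-connection by its adapted-basis coefficients $(F^\gamma_{\alpha\beta},C^\gamma_{\alpha\beta})$, settle the vertical block from $S^1=0$ together with the total symmetry of $\partial^3\mathcal{F}/\partial\textbf{y}^\alpha\partial\textbf{y}^\beta\partial\textbf{y}^\gamma$ (recovering the first Cartan tensor, as in \eqref{CF1}), settle the horizontal block by the Koszul cyclic sum with the antisymmetric part of $F$ prescribed by $A=0$, and read off the cross-derivatives $D_{\delta_\alpha}\mathcal{V}_\beta$, $D_{\mathcal{V}_\alpha}\delta_\beta$ from $DJ=DF=0$. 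In fact, your displayed Koszul formula is precisely the intermediate expression the paper obtains by summing \eqref{3ta1}--\eqref{3ta3}. The genuine difference lies in how each argument finishes. The paper invokes the two hypotheses on $h$: torsion-freeness, via \eqref{wt1}, to trade the structure functions $L^\gamma_{\alpha\beta}$ for differences of $\partial\mathcal{B}$-terms, and conservativity, via \eqref{cons2} and \eqref{211}, to cancel what remains, so as to land on the explicit coefficients \eqref{CF}--\eqref{CF3}; those explicit formulas are what the paper needs afterwards (the definition of the Cartan connection, its curvature coefficients, and Proposition \ref{PCartan}). You use neither hypothesis: the raw Koszul expression with its three $L$-corrections already has the antisymmetric part demanded by $A=0$ and reproduces horizontal metricity when the relations are read backwards, so your existence-and-uniqueness argument goes through for any horizontal endomorphism smooth on the slit bundle. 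This makes your proof slightly more general than the statement requires, at the price of not producing the explicit form \eqref{CF} on which the remainder of the section relies. One further small point in your favour: in the $A=0$ step you correctly use only $h[\delta_\alpha,\delta_\beta]_\pounds$, whereas the paper writes the full bracket including $R^\gamma_{\alpha\beta}\mathcal{V}_\gamma$; this is harmless only because that vertical part pairs to zero against $\delta_\gamma$ under $\widetilde{\mathcal{G}}$ by \eqref{pmetric}.
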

\begin{proof}
Let there exist a d-connection $\stackrel{\text{\begin{tiny}C\end{tiny}}}{D}$ such that $\stackrel{\text{\begin{tiny}C\end{tiny}}}{D}$ is metrical and the $v$-vertical and $h$-horizontal torsions of $\stackrel{\text{\begin{tiny}C\end{tiny}}}{D}$ are zero. Since $\stackrel{\text{\begin{tiny}C\end{tiny}}}{D}$ is metrical, then we have
\begin{align}
\rho_\pounds(\delta_\alpha)\widetilde{\mathcal{G}}(\delta_\beta, \delta_\gamma)&=\widetilde{\mathcal{G}}(\stackrel{\text{\begin{tiny}C\end{tiny}}}{D}_{\delta_\alpha}\!\!\delta_\beta, \delta_\gamma)+\widetilde{\mathcal{G}}(\delta_\beta, \stackrel{\text{\begin{tiny}C\end{tiny}}}{D}_{\delta_\alpha}\!\!\delta_\gamma),\label{3ta1}\\
\rho_\pounds(\delta_\beta)\widetilde{\mathcal{G}}(\delta_\gamma, \delta_\alpha)&=\widetilde{\mathcal{G}}(\stackrel{\text{\begin{tiny}C\end{tiny}}}{D}_{\delta_\beta}\!\!\delta_\gamma, \delta_\alpha)+\mathcal{G}(\delta_\gamma, \stackrel{\text{\begin{tiny}C\end{tiny}}}{D}_{\delta_\beta}\!\!\delta_\alpha),\\
-\rho_\pounds(\delta_\gamma)\widetilde{\mathcal{G}}(\delta_\alpha, \delta_\beta)&=-\widetilde{\mathcal{G}}(\stackrel{\text{\begin{tiny}C\end{tiny}}}{D}_{\delta_\gamma}\!\!\delta_\alpha, \delta_\beta)-\widetilde{\mathcal{G}}(\delta_\alpha, \stackrel{\text{\begin{tiny}C\end{tiny}}}{D}_{\delta_\gamma}\!\!\delta_\beta).\label{3ta3}
\end{align}
Since the $h$-horizontal torsion of $\stackrel{\text{\begin{tiny}C\end{tiny}}}{D}$ is zero, then we have
\[
\stackrel{\text{\begin{tiny}C\end{tiny}}}{D}_{\delta_\alpha}\!\!\delta_\beta-\stackrel{\text{\begin{tiny}C\end{tiny}}}{D}_{\delta_\beta}\!\!\delta_\alpha
=[\delta_\alpha, \delta_\beta]_\pounds=(L^\gamma_{\alpha\beta}\circ\pi)\delta_\gamma+R^\gamma_{\alpha\beta}\mathcal{V}_\gamma.
\]
Summing (\ref{3ta1})-(\ref{3ta3}) and using the above equation give us
\begin{align*}
\widetilde{\mathcal{G}}(\stackrel{\text{\begin{tiny}C\end{tiny}}}{D}_{\delta_\alpha}\!\!\delta_\beta, \delta_\gamma)&=\frac{1}{2}\Big((\rho^i_\alpha\circ\pi)\frac{\partial\mathcal{G}_{\beta\gamma}}{\partial\textbf{x}^i}+\mathcal{B}^\lambda_\alpha
\frac{\partial\mathcal{G}_{\beta\gamma}}{\partial\textbf{y}^\lambda}+(\rho^i_\beta\circ\pi)\frac{\partial\mathcal{G}_{\alpha\gamma}}{\partial\textbf{x}^i}
+\mathcal{B}^\lambda_\beta\frac{\partial\mathcal{G}_{\alpha\gamma}}{\partial\textbf{y}^\lambda}\\
&\ \ \ -(\rho^i_\gamma\circ\pi)\frac{\partial\mathcal{G}_{\alpha\beta}}{\partial\textbf{x}^i}-\mathcal{B}^\lambda_\gamma
\frac{\partial\mathcal{G}_{\alpha\beta}}{\partial\textbf{y}^\lambda}-(L^\lambda_{\beta\alpha}\circ\pi)\mathcal{G}_{\lambda\gamma}
-(L^\lambda_{\alpha\gamma}\circ\pi)\mathcal{G}_{\lambda\beta}\\
&\ \ \ -(L^\lambda_{\beta\gamma}\circ\pi)\mathcal{G}_{\alpha\lambda}\Big).
\end{align*}
Since $h$ is torsion free, then using (\ref{wt1}) in the above equation we get
\begin{align}\label{3ta4}
\stackrel{\text{\begin{tiny}C\end{tiny}}}{D}_{\delta_\alpha}\!\!\delta_\beta&=\frac{1}{2}\mathcal{G}^{\mu\gamma}\Big((\rho^i_\alpha\circ\pi)\frac{\partial\mathcal{G}_{\beta\gamma}}{\partial\textbf{x}^i}+\mathcal{B}^\lambda_\alpha
\frac{\partial\mathcal{G}_{\beta\gamma}}{\partial\textbf{y}^\lambda}+(\rho^i_\beta\circ\pi)\frac{\partial\mathcal{G}_{\alpha\gamma}}{\partial\textbf{x}^i}
+\mathcal{B}^\lambda_\beta\frac{\partial\mathcal{G}_{\alpha\gamma}}{\partial\textbf{y}^\lambda}\nonumber\\
&\ \ \ -(\rho^i_\gamma\circ\pi)\frac{\partial\mathcal{G}_{\alpha\beta}}{\partial\textbf{x}^i}-\mathcal{B}^\lambda_\gamma
\frac{\partial\mathcal{G}_{\alpha\beta}}{\partial\textbf{y}^\lambda}-\frac{\partial \mathcal{B}^\lambda_\alpha}{\partial\textbf{y}^\beta}\mathcal{G}_{\lambda\gamma}+
\frac{\partial \mathcal{B}^\lambda_\beta}{\partial\textbf{y}^\alpha}\mathcal{G}_{\lambda\gamma}-\frac{\partial \mathcal{B}^\lambda_\gamma}{\partial\textbf{y}^\alpha}\mathcal{G}_{\lambda\beta}\nonumber\\
&\ \ \ +\frac{\partial \mathcal{B}^\lambda_\alpha}{\partial\textbf{y}^\gamma}\mathcal{G}_{\lambda\beta}-\frac{\partial \mathcal{B}^\lambda_\gamma}{\partial\textbf{y}^\beta}\mathcal{G}_{\alpha\lambda}
+\frac{\partial \mathcal{B}^\lambda_\beta}{\partial\textbf{y}^\gamma}\mathcal{G}_{\alpha\lambda}\Big)\delta_\mu.
\end{align}
Since $h$ is conservative, then we have (\ref{cons2}). Differentiation of this equation with respect to $y$ give us
\begin{align}
&(\rho^i_\beta\circ\pi)\frac{\partial\mathcal{G}_{\gamma\alpha}}{\partial\textbf{x}^i}+\frac{\partial^2\mathcal{B}^\lambda_\beta}{\partial\textbf{y}^\gamma
\partial\textbf{y}^\alpha}\frac{\partial\mathcal{F}}{\partial\textbf{y}^\lambda}+\frac{\partial \mathcal{B}^\lambda_\beta}{\partial\textbf{y}^\gamma}\mathcal{G}_{\lambda\alpha}+\frac{\partial \mathcal{B}^\lambda_\beta}{\partial\textbf{y}^\alpha}\mathcal{G}_{\lambda\gamma}+\mathcal{B}^\lambda_\beta\frac{\partial\mathcal{G}_{\gamma\alpha}}{\partial\textbf{y}^\lambda}
=0,\label{211}\\
&(\rho^i_\gamma\circ\pi)\frac{\partial\mathcal{G}_{\beta\alpha}}{\partial\textbf{x}^i}+\frac{\partial^2\mathcal{B}^\lambda_\gamma}{\partial\textbf{y}^\beta
\partial\textbf{y}^\alpha}\frac{\partial\mathcal{F}}{\partial\textbf{y}^\lambda}+\frac{\partial \mathcal{B}^\lambda_\gamma}{\partial\textbf{y}^\beta}\mathcal{G}_{\lambda\alpha}+\frac{\partial \mathcal{B}^\lambda_\gamma}{\partial\textbf{y}^\alpha}\mathcal{G}_{\lambda\beta}+\mathcal{B}^\lambda_\gamma\frac{\partial\mathcal{G}_{\beta\alpha}}{\partial\textbf{y}^\lambda}=0.
\end{align}
Setting two above equation in (\ref{3ta4}) we obtain
\begin{equation}\label{CF}
\stackrel{\text{\begin{tiny}C\end{tiny}}}{D}_{\delta_\alpha}\!\!\delta_\beta=\frac{1}{2}\mathcal{G}^{\mu\gamma}
\Big((\rho^i_\alpha\circ\pi)\frac{\partial\mathcal{G}_{\beta\gamma}}{\partial\textbf{x}^i}+\mathcal{B}^\lambda_\alpha
\frac{\partial\mathcal{G}_{\beta\gamma}}{\partial\textbf{y}^\lambda}
-\frac{\partial \mathcal{B}^\lambda_\alpha}{\partial\textbf{y}^\beta}\mathcal{G}_{\lambda\gamma}+\frac{\partial \mathcal{B}^\lambda_\alpha}{\partial\textbf{y}^\gamma}\mathcal{G}_{\lambda\beta}\Big)\delta_\mu.
\end{equation}
Since the $v$-horizontal torsion of $\stackrel{\text{\begin{tiny}C\end{tiny}}}{D}$ is zero, then we have
\[
\stackrel{\text{\begin{tiny}C\end{tiny}}}{D}_{\mathcal{V}_\alpha}\!\!\mathcal{V}_\beta-\stackrel{\text{\begin{tiny}C\end{tiny}}}{D}_{\mathcal{V}_\beta}\!\!\mathcal{V}_\alpha
=[\mathcal{V}_\alpha, \mathcal{V}_\beta]_\pounds=0.
\]
If we replace $\delta_\alpha$, $\delta_\beta$, $\delta_\gamma$ by $\mathcal{V}_\alpha$, $\mathcal{V}_\beta$, $\mathcal{V}_\gamma$ in (\ref{3ta1})-(\ref{3ta3}), then summing these equations and using the above equation we get
\[
\widetilde{\mathcal{G}}(\stackrel{\text{\begin{tiny}C\end{tiny}}}{D}_{\mathcal{V}_\alpha}\!\!\mathcal{V}_\beta, \mathcal{V}_\gamma)=\frac{1}{2}\Big(\frac{\partial\mathcal{G}_{\beta\gamma}}{\partial\textbf{y}^\alpha}+\frac{\partial\mathcal{G}_{\alpha\gamma}}{\partial\textbf{y}^\beta}-\frac{\partial\mathcal{G}_{\alpha\beta}}{\partial\textbf{y}^\gamma}\Big)=\frac{1}{2}\frac{\partial\mathcal{G}_{\beta\gamma}}{\partial\textbf{y}^\alpha},
\]
which gives us
\begin{equation}\label{CF1}
\stackrel{\text{\begin{tiny}C\end{tiny}}}{D}_{\mathcal{V}_\alpha}\!\!\mathcal{V}_\beta=\frac{1}{2}\frac{\partial\mathcal{G}_{\beta\gamma}}{\partial\textbf{y}^\alpha}\mathcal{G}^{\gamma\mu}\mathcal{V}_\mu.
\end{equation}
Since $\stackrel{\text{\begin{tiny}C\end{tiny}}}{D}$ is d-connection, then using the above equation we obtain
\[
\stackrel{\text{\begin{tiny}C\end{tiny}}}{D}_{\mathcal{V}_\alpha}\!\!\delta_\beta=\stackrel{\text{\begin{tiny}C\end{tiny}}}{D}_{\mathcal{V}_\alpha}\!\!F\mathcal{V}_\beta=F\stackrel{\text{\begin{tiny}C\end{tiny}}}{D}_{\mathcal{V}_\alpha}\!\!\mathcal{V}_\beta=\frac{1}{2}\frac{\partial\mathcal{G}_{\beta\gamma}}{\partial\textbf{y}^\alpha}\mathcal{G}^{\gamma\mu}F(\mathcal{V}_\mu),
\]
which gives us
\begin{equation}\label{CF2}
\stackrel{\text{\begin{tiny}C\end{tiny}}}{D}_{\mathcal{V}_\alpha}\!\!\delta_\beta=\frac{1}{2}\frac{\partial\mathcal{G}_{\beta\gamma}}{\partial\textbf{y}^\alpha}\mathcal{G}^{\gamma\mu}\delta_\mu.
\end{equation}
Similarly, using (\ref{CF}) we get
\begin{align*}
\stackrel{\text{\begin{tiny}C\end{tiny}}}{D}_{\delta_\alpha}\!\!\mathcal{V}_\beta&=\stackrel{\text{\begin{tiny}C\end{tiny}}}{D}_{\delta_\alpha}\!\!J\delta_\beta=J\stackrel{\text{\begin{tiny}C\end{tiny}}}{D}_{\delta_\alpha}\!\!\delta_\beta\\
&=\frac{1}{2}\mathcal{G}^{\mu\gamma}
\Big((\rho^i_\alpha\circ\pi)\frac{\partial\mathcal{G}_{\beta\gamma}}{\partial\textbf{x}^i}+\mathcal{B}^\lambda_\alpha
\frac{\partial\mathcal{G}_{\beta\gamma}}{\partial\textbf{y}^\lambda}
-\frac{\partial \mathcal{B}^\lambda_\alpha}{\partial\textbf{y}^\beta}\mathcal{G}_{\lambda\gamma}+\frac{\partial \mathcal{B}^\lambda_\alpha}{\partial\textbf{y}^\gamma}\mathcal{G}_{\lambda\beta}\Big)J(\delta_\mu),
\end{align*}
which gives us
\begin{equation}\label{CF3}
\stackrel{\text{\begin{tiny}C\end{tiny}}}{D}_{\delta_\alpha}\!\!\mathcal{V}_\beta=\frac{1}{2}\mathcal{G}^{\mu\gamma}
\Big((\rho^i_\alpha\circ\pi)\frac{\partial\mathcal{G}_{\beta\gamma}}{\partial\textbf{x}^i}+\mathcal{B}^\lambda_\alpha
\frac{\partial\mathcal{G}_{\beta\gamma}}{\partial\textbf{y}^\lambda}
-\frac{\partial \mathcal{B}^\lambda_\alpha}{\partial\textbf{y}^\beta}\mathcal{G}_{\lambda\gamma}+\frac{\partial \mathcal{B}^\lambda_\alpha}{\partial\textbf{y}^\gamma}\mathcal{G}_{\lambda\beta}\Big)\mathcal{V}_\mu.
\end{equation}
Relations (\ref{CF})-(\ref{CF3}) prove the existence and uniqueness of $\stackrel{\text{\begin{tiny}C\end{tiny}}}{D}$.
\end{proof}
\begin{proposition}\label{PCartan}
Let $(E, \mathcal{F})$ be a Finsler algebroid, $h$ be a torsion free and conservative horizontal endomorphism on $\pounds^\pi E$ and $\stackrel{\text{\begin{tiny}C\end{tiny}}}{D}$ be the d-connection given by the above theorem. If $h$-deflection of $\stackrel{\text{\begin{tiny}C\end{tiny}}}{D}$ is zero, then $h$ is the Barthel endomorphism.
\end{proposition}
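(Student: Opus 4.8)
The plan is to reduce the claim to the uniqueness theorem for homogeneous, conservative, torsion free horizontal endomorphisms established earlier in this section. Since $h$ is already assumed torsion free and conservative, it suffices to prove that the vanishing of the $h$-deflection of $\stackrel{\text{\begin{tiny}C\end{tiny}}}{D}$ forces $h$ to be homogeneous; then, because the Barthel endomorphism is the unique horizontal endomorphism that is simultaneously homogeneous, conservative and torsion free, $h$ must coincide with it.

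First I would evaluate the $h$-deflection on the adapted basis. Since $h\delta_\alpha=\delta_\alpha$, the definition (\ref{h*}) gives $h^*(\stackrel{\text{\begin{tiny}C\end{tiny}}}{D}C)(\delta_\alpha)=\stackrel{\text{\begin{tiny}C\end{tiny}}}{D}_{\delta_\alpha}C=\stackrel{\text{\begin{tiny}C\end{tiny}}}{D}_{\delta_\alpha}(\textbf{y}^\beta\mathcal{V}_\beta)$. Expanding by the Leibniz rule and using $\rho_\pounds(\delta_\alpha)(\textbf{y}^\beta)=\mathcal{B}^\beta_\alpha$ from (\ref{rho}), I obtain $\stackrel{\text{\begin{tiny}C\end{tiny}}}{D}_{\delta_\alpha}C=\mathcal{B}^\beta_\alpha\mathcal{V}_\beta+\textbf{y}^\beta\stackrel{\text{\begin{tiny}C\end{tiny}}}{D}_{\delta_\alpha}\mathcal{V}_\beta$, where the last term is the $\textbf{y}^\beta$-contraction of the coordinate formula (\ref{CF3}).

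The core of the argument, which I expect to be the main obstacle, is the simplification of this contraction. I would use three ingredients: the identity $\textbf{y}^\beta\mathcal{G}_{\beta\gamma}=\frac{\partial\mathcal{F}}{\partial\textbf{y}^\gamma}$ from (i) of (\ref{2eq}), its differentiated consequence $\textbf{y}^\beta\frac{\partial\mathcal{G}_{\beta\gamma}}{\partial\textbf{y}^\lambda}=0$, and the conservativeness relation (\ref{cons2}), which rewrites the anchor term $(\rho^i_\alpha\circ\pi)\frac{\partial^2\mathcal{F}}{\partial\textbf{x}^i\partial\textbf{y}^\gamma}$ in terms of $\mathcal{B}$ and its $\textbf{y}$-derivatives. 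The delicate point is that (\ref{cons2}) must be applied precisely to the term coming from $\frac{\partial\mathcal{G}_{\beta\gamma}}{\partial\textbf{x}^i}$ after pulling $\textbf{y}^\beta$ inside the derivative, while the degree-zero homogeneity of $\mathcal{G}_{\beta\gamma}$ annihilates the interior $\frac{\partial\mathcal{G}}{\partial\textbf{y}}$ term. After these cancellations the four summands of the contracted (\ref{CF3}) collapse, and raising the index with $\mathcal{G}^{\mu\gamma}$ leaves $\textbf{y}^\beta\stackrel{\text{\begin{tiny}C\end{tiny}}}{D}_{\delta_\alpha}\mathcal{V}_\beta=-\frac{1}{2}(\mathcal{B}^\beta_\alpha+\textbf{y}^\lambda\frac{\partial\mathcal{B}^\beta_\alpha}{\partial\textbf{y}^\lambda})\mathcal{V}_\beta$.

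Adding the first piece $\mathcal{B}^\beta_\alpha\mathcal{V}_\beta$ then yields
\[ h^*(\stackrel{\text{\begin{tiny}C\end{tiny}}}{D}C)(\delta_\alpha)=\frac{1}{2}\Big(\mathcal{B}^\beta_\alpha-\textbf{y}^\lambda\frac{\partial\mathcal{B}^\beta_\alpha}{\partial\textbf{y}^\lambda}\Big)\mathcal{V}_\beta, \]
which is precisely one half of the tension $H$ of $h$ evaluated on $\mathcal{X}_\alpha$ (compare (\ref{tension})). Setting the $h$-deflection equal to zero therefore forces $\mathcal{B}^\beta_\alpha=\textbf{y}^\lambda\frac{\partial\mathcal{B}^\beta_\alpha}{\partial\textbf{y}^\lambda}$, which by the homogeneity criterion for horizontal endomorphisms means that $h$ is homogeneous. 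Combined with the standing hypotheses that $h$ is conservative and torsion free, the uniqueness theorem identifies $h$ with the Barthel endomorphism, completing the argument.
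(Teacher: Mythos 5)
Your proposal is correct and follows essentially the same route as the paper's own proof: reduce to showing homogeneity of $h$, compute $h^*(\stackrel{\text{\begin{tiny}C\end{tiny}}}{D}C)(\delta_\alpha)$ from (\ref{CF3}) using (\ref{2eq}) and the conservativeness relation (\ref{cons2}), and conclude that the deflection equals $\frac{1}{2}(\mathcal{B}^\beta_\alpha-\textbf{y}^\lambda\frac{\partial\mathcal{B}^\beta_\alpha}{\partial\textbf{y}^\lambda})\mathcal{V}_\beta$, whose vanishing gives homogeneity and hence, by the uniqueness theorem, identifies $h$ with the Barthel endomorphism. Your added observation that the $h$-deflection is exactly one half of the tension $H$ is consistent with the paper's computation and makes the final step transparent.
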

\begin{proof}
It is sufficient to show that $h$ is homogenous. Since $h$-deflection of $(\stackrel{\text{\begin{tiny}C\end{tiny}}}{D}, h)$ is zero, then using (\ref{2eq}) and (\ref{CF3}) we obtain
\begin{align*}
0&=h^*(\stackrel{\text{\begin{tiny}C\end{tiny}}}{D}\!\!C)(\delta_\alpha)=\stackrel{\text{\begin{tiny}C\end{tiny}}}{D}_{h\delta_\alpha}\!\!\!(C)
=\stackrel{\text{\begin{tiny}C\end{tiny}}}{D}_{\delta_\alpha}\!\!\!(\textbf{y}^\beta\mathcal{V_\beta})\\
&=\frac{1}{2}\mathcal{G}^{\mu\gamma}
\Big((\rho^i_\alpha\circ\pi)\frac{\partial^2\mathcal{F}}{\partial\textbf{x}^i\partial\textbf{y}^\gamma}
-\textbf{y}^\beta\frac{\partial \mathcal{B}^\lambda_\alpha}{\partial\textbf{y}^\beta}\mathcal{G}_{\lambda\gamma}+\frac{\partial \mathcal{B}^\lambda_\alpha}{\partial\textbf{y}^\gamma}\textbf{y}^\beta\mathcal{G}_{\lambda\beta}\Big)\mathcal{V}_\mu+\mathcal{B}^\mu_\alpha\mathcal{V}_\mu.
\end{align*}
Since $h$ is conservative, then we have (\ref{cons2}). Using this equation in the above equation we deduce
\[
0=\frac{1}{2}\mathcal{G}^{\mu\gamma}
\Big(-\mathcal{B}^\beta_\alpha\frac{\partial^2\mathcal{F}}{\partial\textbf{y}^\beta\partial\textbf{y}^\gamma}
-\textbf{y}^\beta\frac{\partial \mathcal{B}^\lambda_\alpha}{\partial\textbf{y}^\beta}\mathcal{G}_{\lambda\gamma}\Big)\mathcal{V}_\mu+\mathcal{B}^\mu_\alpha\mathcal{V}_\mu
=\frac{1}{2}(\mathcal{B}^\mu_\alpha-\textbf{y}^\beta\frac{\partial \mathcal{B}^\mu_\alpha}{\partial\textbf{y}^\beta})\mathcal{V}_\mu.
\]
The above equation shows that $h$ is homogenous.
\end{proof}
If $h$ is the Barthel endomorphism of Finsler algebroid $(E, \mathcal{F})$, then the d-connection $\stackrel{\text{\begin{tiny}C\end{tiny}}}{D}$ given by (\ref{CF})-(\ref{CF3}) is called the Cartan connection of  $(E, \mathcal{F})$.

Using (\ref{hh}), (\ref{hv}), (\ref{vv}) and (\ref{CF})-(\ref{CF3}) we can obtain
\begin{align*}
\stackrel{\text{\begin{tiny}C\end{tiny}}}{R}_{\alpha\beta\gamma}^{\ \ \ \ \lambda}&=-(\rho^i_\alpha\circ\pi)\frac{\partial}{\partial\textbf{x}^i}\Big(\frac{1}{2}\frac{\partial ^2\mathcal{B}^\nu_\beta}{\partial\textbf{y}^\gamma\partial\textbf{y}^\kappa}\frac{\partial \mathcal{F}}{\partial\textbf{y}^\nu}\mathcal{G}^{\lambda\kappa}+\frac{\partial \mathcal{B}^\lambda_\beta}{\partial\textbf{y}^\gamma}\Big)\\
&\ \ \ -\mathcal{B}^\mu_\alpha\frac{\partial}{\partial\textbf{y}^\mu}\Big(\frac{\partial \mathcal{B}^\lambda_\beta}{\partial\textbf{y}^\gamma}+\frac{1}{2}\frac{\partial ^2\mathcal{B}^\nu_\beta}{\partial\textbf{y}^\gamma\partial\textbf{y}^\kappa}\frac{\partial \mathcal{F}}{\partial\textbf{y}^\nu}\mathcal{G}^{\lambda\kappa}\Big)\\
&\ \ \ +(\rho^i_\beta\circ\pi)\frac{\partial}{\partial\textbf{x}^i}\Big(\frac{1}{2}\frac{\partial ^2\mathcal{B}^\nu_\alpha}{\partial\textbf{y}^\gamma\partial\textbf{y}^\kappa}\frac{\partial \mathcal{F}}{\partial\textbf{y}^\nu}\mathcal{G}^{\lambda\kappa}+\frac{\partial \mathcal{B}^\lambda_\alpha}{\partial\textbf{y}^\gamma}\Big)\\
&\ \ \ +\mathcal{B}^\mu_\beta\frac{\partial}{\partial\textbf{y}^\mu}\Big(\frac{1}{2}\frac{\partial ^2\mathcal{B}^\nu_\alpha}{\partial\textbf{y}^\gamma\partial\textbf{y}^\kappa}\frac{\partial \mathcal{F}}{\partial\textbf{y}^\nu}\mathcal{G}^{\lambda\kappa}+\frac{\partial \mathcal{B}^\lambda_\alpha}{\partial\textbf{y}^\gamma}\Big)\\
&\ \ \ +\Big(\frac{1}{2}\frac{\partial ^2\mathcal{B}^\nu_\beta}{\partial\textbf{y}^\gamma\partial\textbf{y}^\kappa}\frac{\partial \mathcal{F}}{\partial\textbf{y}^\nu}\mathcal{G}^{\mu\kappa}+\frac{\partial \mathcal{B}^\mu_\beta}{\partial\textbf{y}^\gamma}\Big)\Big(\frac{1}{2}\frac{\partial ^2\mathcal{B}^\iota_\alpha}{\partial\textbf{y}^\mu\partial\textbf{y}^\sigma}\frac{\partial \mathcal{F}}{\partial\textbf{y}^\iota}\mathcal{G}^{\lambda\sigma}+\frac{\partial \mathcal{B}^\lambda_\alpha}{\partial\textbf{y}^\mu}\Big)\\
&\ \ \ -\Big(\frac{1}{2}\frac{\partial ^2\mathcal{B}^\nu_\alpha}{\partial\textbf{y}^\gamma\partial\textbf{y}^\kappa}\frac{\partial \mathcal{F}}{\partial\textbf{y}^\nu}\mathcal{G}^{\mu\kappa}+\frac{\partial \mathcal{B}^\mu_\alpha}{\partial\textbf{y}^\gamma}\Big)\Big(\frac{1}{2}\frac{\partial ^2\mathcal{B}^\iota_\beta}{\partial\textbf{y}^\mu\partial\textbf{y}^\sigma}\frac{\partial \mathcal{F}}{\partial\textbf{y}^\iota}\mathcal{G}^{\lambda\sigma}+\frac{\partial \mathcal{B}^\lambda_\beta}{\partial\textbf{y}^\mu}\Big)\\
&\ \ \ +(L^\mu_{\alpha\beta}\circ\pi)\Big(\frac{1}{2}\frac{\partial ^2\mathcal{B}^\nu_\mu}{\partial\textbf{y}^\gamma\partial\textbf{y}^\kappa}\frac{\partial \mathcal{F}}{\partial\textbf{y}^\nu}\mathcal{G}^{\lambda\kappa}+\frac{\partial \mathcal{B}^\lambda_\mu}{\partial\textbf{y}^\gamma}\Big)-\frac{1}{2}R^\mu_{\alpha\beta}\frac{\partial \mathcal{G}_{\gamma\kappa}}{\partial\textbf{y}^\mu}\mathcal{G}^{\lambda\kappa},
\end{align*}
\begin{align*}
\stackrel{\text{\begin{tiny}C\end{tiny}}}{P}_{\alpha\beta\gamma}^{\ \ \ \lambda}&=\frac{1}{2}(\rho^i_\alpha\circ\pi)\frac{\partial}{\partial\textbf{x}^i}(\frac{\partial \mathcal{G}_{\gamma\kappa}}{\partial\textbf{y}^\beta}\mathcal{G}^{\lambda\kappa})+\frac{1}{2}\mathcal{B}^\mu_\alpha\frac{\partial}{\partial\textbf{y}^\mu}(\frac{\partial \mathcal{G}_{\gamma\kappa}}{\partial\textbf{y}^\beta}\mathcal{G}^{\lambda\kappa})-\frac{1}{2}\frac{\partial \mathcal{G}_{\gamma\kappa}}{\partial\textbf{y}^\beta}\mathcal{G}^{\mu\kappa}(\frac{\partial \mathcal{B}^\lambda_\alpha}{\partial\textbf{y}^\mu}\\
&\ \ \ +\frac{1}{2}\frac{\partial ^2\mathcal{B}^\nu_\alpha}{\partial\textbf{y}^\mu\partial\textbf{y}^\sigma}\frac{\partial \mathcal{F}}{\partial\textbf{y}^\nu}\mathcal{G}^{\lambda\sigma})+\frac{\partial}{\partial\textbf{y}^\beta}(\frac{1}{2}\frac{\partial ^2\mathcal{B}^\nu_\alpha}{\partial\textbf{y}^\gamma\partial\textbf{y}^\kappa}\frac{\partial \mathcal{F}}{\partial\textbf{y}^\nu}\mathcal{G}^{\lambda\kappa}
+\frac{\partial \mathcal{B}^\lambda_\alpha}{\partial\textbf{y}^\gamma})\\
&\ \ \ +\frac{1}{2}\frac{\partial \mathcal{G}_{\mu\kappa}}{\partial\textbf{y}^\beta}\mathcal{G}^{\lambda\kappa}(\frac{\partial \mathcal{B}^\mu_\alpha}{\partial\textbf{y}^\gamma}
+\frac{1}{2}\frac{\partial ^2\mathcal{B}^\nu_\alpha}{\partial\textbf{y}^\gamma\partial\textbf{y}^\iota}\frac{\partial \mathcal{F}}{\partial\textbf{y}^\nu}\mathcal{G}^{\mu\iota})+\frac{1}{2}
\frac{\partial \mathcal{B}^\mu_\alpha}{\partial\textbf{y}^\beta}\frac{\partial \mathcal{G}_{\gamma\kappa}}{\partial\textbf{y}^\mu}\mathcal{G}^{\lambda\kappa},
\end{align*}
\begin{align*}
\stackrel{\text{\begin{tiny}C\end{tiny}}}{S}_{\alpha\beta\gamma}^{\ \ \ \lambda}&=\frac{1}{2}(\frac{\partial \mathcal{G}_{\gamma\kappa}}{\partial\textbf{y}^\beta}\frac{\partial \mathcal{G}^{\lambda\kappa}}{\partial\textbf{y}^\alpha}-\frac{\partial \mathcal{G}_{\gamma\kappa}}{\partial\textbf{y}^\alpha}\frac{\partial \mathcal{G}^{\lambda\kappa}}{\partial\textbf{y}^\beta})+\frac{1}{4}(\mathcal{G}^{\sigma\mu}\mathcal{G}^{\lambda\kappa}\frac{\partial \mathcal{G}_{\gamma\sigma}}{\partial\textbf{y}^\beta}\frac{\partial \mathcal{G}_{\mu\kappa}}{\partial\textbf{y}^\alpha}\\
&\ \ \ -\mathcal{G}^{\mu\kappa}\mathcal{G}^{\lambda\sigma}\frac{\partial \mathcal{G}_{\gamma\kappa}}{\partial\textbf{y}^\alpha}\frac{\partial \mathcal{G}_{\mu\sigma}}{\partial\textbf{y}^\beta}).
\end{align*}
Let $\widetilde{X}$ and $\widetilde{Y}$ are sections of $\stackrel{\circ}{\pounds^\pi E}$. Then using (\ref{CF})-(\ref{CF3}) we can obtain the following formula for Cartan connection:
\[
\stackrel{\text{\begin{tiny}C\end{tiny}}}{D}_{\widetilde{X}}\widetilde{Y}=\stackrel{\text{\begin{tiny}C\end{tiny}}}{D}_{v\widetilde{X}}v\widetilde{Y}+\stackrel{\text{\begin{tiny}C\end{tiny}}}{D}_{v\widetilde{X}}h\widetilde{Y}+\stackrel{\text{\begin{tiny}C\end{tiny}}}{D}_{h\widetilde{X}}v\widetilde{Y}+\stackrel{\text{\begin{tiny}C\end{tiny}}}{D}_{h\widetilde{X}}h\widetilde{Y},
\]
where
\begin{align}
\stackrel{\text{\begin{tiny}C\end{tiny}}}{D}_{h\widetilde{X}}h\widetilde{Y}&=hF[h\widetilde{X}, J\widetilde{Y}]_\pounds+F\widetilde{\mathcal{C}}(\widetilde{X}, \widetilde{Y}),\\
\stackrel{\text{\begin{tiny}C\end{tiny}}}{D}_{v\widetilde{X}}v\widetilde{Y}&=J[v\widetilde{X}, F\widetilde{Y}]_\pounds
+\mathcal{C}(F\widetilde{X}, F\widetilde{Y}),\\
\stackrel{\text{\begin{tiny}C\end{tiny}}}{D}_{v\widetilde{X}}h\widetilde{Y}&=h[v\widetilde{X}, \widetilde{Y}]_\pounds+F\mathcal{C}(F\widetilde{X}, \widetilde{Y}),\\
\stackrel{\text{\begin{tiny}C\end{tiny}}}{D}_{h\widetilde{X}}v\widetilde{Y}&=v[h\widetilde{X}, v\widetilde{Y}]_\pounds+\widetilde{\mathcal{C}}(\widetilde{X}, F\widetilde{Y}).
\end{align}
\begin{theorem}
Let $(E, \mathcal{F})$ be a Finsler algebroid, $h$ be a torsion free and conservative horizontal endomorphism on $\pounds^\pi E$, $\widetilde{\mathcal{G}}$ be the prolongation of $\mathcal{G}$ along $h$. Then there exists a unique d-connection
$\stackrel{\text{\begin{tiny}CR\end{tiny}}}{D}$ on $(E, \mathcal{F})$ such that $\stackrel{\text{\begin{tiny}CR\end{tiny}}}{D}$ is $h$-metrical, (i.e., $\forall\widetilde{X}\in\Gamma(\stackrel{\circ}{\pounds^\pi E})$, $\stackrel{\text{\begin{tiny}CR\end{tiny}}}{D}_{h\widetilde{X}}\widetilde{\mathcal{G}}=0$), $J^*\!\!\!\stackrel{\text{\begin{tiny}CR\end{tiny}}}{D}=J^*\!\!\!\stackrel{\text{\begin{tiny}BF\end{tiny}}}{D}$ and the $h$-horizontal torsion of $\stackrel{\text{\begin{tiny}CR\end{tiny}}}{D}$ is zero. Moreover, if the $h$-deflection of $\stackrel{\text{\begin{tiny}CR\end{tiny}}}{D}$ is zero, then $h$ is the Barthel endomorphism.
\end{theorem}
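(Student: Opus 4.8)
The plan is to mirror the structure of Theorem~\ref{TCartan} and Proposition~\ref{PCartan}: first assume that such a $\stackrel{\text{\begin{tiny}CR\end{tiny}}}{D}$ exists and extract the coordinate formula forced by the three requirements (this yields uniqueness), then take that formula as a definition and verify it produces a genuine d-connection with the stated properties (this yields existence). I read $J^*D$ as the operator $\widetilde{Y}\mapsto D_{J\,\cdot\,}\widetilde{Y}$, so that $J^*\!\!\stackrel{\text{\begin{tiny}CR\end{tiny}}}{D}=J^*\!\!\stackrel{\text{\begin{tiny}BF\end{tiny}}}{D}$ means $\stackrel{\text{\begin{tiny}CR\end{tiny}}}{D}_{\mathcal{V}_\alpha}\widetilde{Y}=\stackrel{\text{\begin{tiny}BF\end{tiny}}}{D}_{\mathcal{V}_\alpha}\widetilde{Y}$. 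Feeding $\widetilde{Y}=\delta_\beta$ and $\widetilde{Y}=\mathcal{V}_\beta$ into (\ref{BF5}) immediately forces the vertical coefficients to vanish, $\stackrel{\text{\begin{tiny}CR\end{tiny}}}{D}_{\mathcal{V}_\alpha}\delta_\beta=0$ and $\stackrel{\text{\begin{tiny}CR\end{tiny}}}{D}_{\mathcal{V}_\alpha}\mathcal{V}_\beta=0$, which is exactly the feature separating the Chern--Rund connection from the Cartan connection.

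Next I would pin down the horizontal coefficients. Since $\stackrel{\text{\begin{tiny}CR\end{tiny}}}{D}$ is reducible, $\stackrel{\text{\begin{tiny}CR\end{tiny}}}{D}_{\delta_\alpha}\delta_\beta$ lies in $h\pounds^\pi E$, hence is determined by the scalars $\widetilde{\mathcal{G}}(\stackrel{\text{\begin{tiny}CR\end{tiny}}}{D}_{\delta_\alpha}\delta_\beta,\delta_\gamma)$. The $h$-metricity condition $\stackrel{\text{\begin{tiny}CR\end{tiny}}}{D}_{h\widetilde{X}}\widetilde{\mathcal{G}}=0$ produces the three cyclic identities (\ref{3ta1})--(\ref{3ta3}) verbatim, and vanishing of the $h$-horizontal torsion $A$ gives $\stackrel{\text{\begin{tiny}CR\end{tiny}}}{D}_{\delta_\alpha}\delta_\beta-\stackrel{\text{\begin{tiny}CR\end{tiny}}}{D}_{\delta_\beta}\delta_\alpha=[\delta_\alpha,\delta_\beta]_\pounds$. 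These are precisely the inputs used in the Cartan case, so the Koszul-type solution reproduces (\ref{CF}); using that $h$ is conservative (so (\ref{cons2}) holds) and torsion free (so (\ref{wt1}) symmetrises the $\mathcal{B}$-derivatives), I would arrive at $\stackrel{\text{\begin{tiny}CR\end{tiny}}}{D}_{\delta_\alpha}\delta_\beta=\stackrel{\text{\begin{tiny}C\end{tiny}}}{D}_{\delta_\alpha}\delta_\beta$, and then reducibility together with almost complexity give $\stackrel{\text{\begin{tiny}CR\end{tiny}}}{D}_{\delta_\alpha}\mathcal{V}_\beta=\stackrel{\text{\begin{tiny}C\end{tiny}}}{D}_{\delta_\alpha}\mathcal{V}_\beta$ as in (\ref{CF3}). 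Nondegeneracy of $\widetilde{\mathcal{G}}$ on $h\pounds^\pi E$ makes these values unique, so $\stackrel{\text{\begin{tiny}CR\end{tiny}}}{D}$ is the hybrid carrying the Cartan horizontal part and the Berwald (vanishing) vertical part; for existence I would read these four bracket expressions backwards and check $\stackrel{\text{\begin{tiny}CR\end{tiny}}}{D}h=\stackrel{\text{\begin{tiny}CR\end{tiny}}}{D}F=0$ along with $h$-metricity and $A=0$.

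Finally, for the deflection statement I would copy the argument of Proposition~\ref{PCartan}. Because the mixed coefficient $\stackrel{\text{\begin{tiny}CR\end{tiny}}}{D}_{\delta_\alpha}\mathcal{V}_\beta$ coincides with the Cartan one, the computation of $h^*(\stackrel{\text{\begin{tiny}CR\end{tiny}}}{D}C)(\delta_\alpha)=\stackrel{\text{\begin{tiny}CR\end{tiny}}}{D}_{\delta_\alpha}(\textbf{y}^\beta\mathcal{V}_\beta)$ is unchanged, and its vanishing forces $\mathcal{B}^\beta_\alpha=\textbf{y}^\lambda\,\partial\mathcal{B}^\beta_\alpha/\partial\textbf{y}^\lambda$, i.e. $h$ is homogeneous; since $h$ is already conservative and torsion free by hypothesis, the uniqueness theorem for homogeneous, conservative, torsion-free horizontal endomorphisms identifies it with the Barthel endomorphism. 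The main obstacle I anticipate is the existence half: one must confirm that prescribing the vanishing vertical part and the Cartan horizontal part really is self-consistent, namely that the resulting operator is genuinely almost complex and $h$-metrical, and that no hidden incompatibility between the simultaneous normalisations (metricity versus the torsion condition) arises. The delicate bookkeeping of symmetric versus skew index pairs in the Koszul solution is where any error would appear.
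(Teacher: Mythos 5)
Your proposal is correct and takes essentially the same route as the paper's proof: the paper likewise extracts the horizontal coefficients by repeating the Koszul argument of Theorem \ref{TCartan} (arriving at (\ref{CR}), which coincides with the Cartan formula (\ref{CF}), and then (\ref{CR1}) by reducibility and almost complexity), kills the vertical coefficients using $J^*\!\stackrel{\text{\begin{tiny}CR\end{tiny}}}{D}=J^*\!\stackrel{\text{\begin{tiny}BF\end{tiny}}}{D}$ together with (\ref{BF5}), and settles the Barthel claim exactly as in Proposition \ref{PCartan}. The only difference is notational: you read $J^*D$ as $D_{J\widetilde{X}}\widetilde{Y}$ whereas the paper uses $D_{J\widetilde{X}}J\widetilde{Y}$, but since $\stackrel{\text{\begin{tiny}BF\end{tiny}}}{D}_{\mathcal{V}_\alpha}$ annihilates both $\delta_\beta$ and $\mathcal{V}_\beta$, either reading forces $\stackrel{\text{\begin{tiny}CR\end{tiny}}}{D}_{\mathcal{V}_\alpha}\delta_\beta=\stackrel{\text{\begin{tiny}CR\end{tiny}}}{D}_{\mathcal{V}_\alpha}\mathcal{V}_\beta=0$, so the resulting connection is the same.
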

\begin{proof}
Let there exists a d-connection $\stackrel{\text{\begin{tiny}CR\end{tiny}}}{D}$ on $(E, \mathcal{F})$ such that $\stackrel{\text{\begin{tiny}CR\end{tiny}}}{D}$ is $h$-metrical, $J^*\!\!\!\stackrel{\text{\begin{tiny}CR\end{tiny}}}{D}=J^*\!\!\!\stackrel{\text{\begin{tiny}B\end{tiny}}}{D}$ and the $h$-horizontal torsions of $\stackrel{\text{\begin{tiny}CR\end{tiny}}}{D}$ is zero. Since $\stackrel{\text{\begin{tiny}CR\end{tiny}}}{D}$ is $h$-metrical and the $h$-horizontal torsion of $\stackrel{\text{\begin{tiny}CR\end{tiny}}}{D}$ is zero, then similar to the proof of theorem \ref{TCartan} we can deduce
\begin{equation}\label{CR}
\stackrel{\text{\begin{tiny}CR\end{tiny}}}{D}_{\delta_\alpha}\!\!\delta_\beta=\frac{1}{2}\mathcal{G}^{\mu\gamma}
\Big((\rho^i_\alpha\circ\pi)\frac{\partial\mathcal{G}_{\beta\gamma}}{\partial\textbf{x}^i}+\mathcal{B}^\lambda_\alpha
\frac{\partial\mathcal{G}_{\beta\gamma}}{\partial\textbf{y}^\lambda}
-\frac{\partial \mathcal{B}^\lambda_\alpha}{\partial\textbf{y}^\beta}\mathcal{G}_{\lambda\gamma}+\frac{\partial \mathcal{B}^\lambda_\alpha}{\partial\textbf{y}^\gamma}\mathcal{G}_{\lambda\beta}\Big)\delta_\mu.
\end{equation}
Also, since $\stackrel{\text{\begin{tiny}CR\end{tiny}}}{D}$ is d-connection, then above equation gives us
\begin{equation}\label{CR1}
\stackrel{\text{\begin{tiny}CR\end{tiny}}}{D}_{\delta_\alpha}\!\!\mathcal{V}_\beta=\frac{1}{2}\mathcal{G}^{\mu\gamma}
\Big((\rho^i_\alpha\circ\pi)\frac{\partial\mathcal{G}_{\beta\gamma}}{\partial\textbf{x}^i}+\mathcal{B}^\lambda_\alpha
\frac{\partial\mathcal{G}_{\beta\gamma}}{\partial\textbf{y}^\lambda}
-\frac{\partial \mathcal{B}^\lambda_\alpha}{\partial\textbf{y}^\beta}\mathcal{G}_{\lambda\gamma}+\frac{\partial \mathcal{B}^\lambda_\alpha}{\partial\textbf{y}^\gamma}\mathcal{G}_{\lambda\beta}\Big)\mathcal{V}_\mu.
\end{equation}
The condition $J^*\!\!\!\stackrel{\text{\begin{tiny}CR\end{tiny}}}{D}=J^*\!\!\!\stackrel{\text{\begin{tiny}BF\end{tiny}}}{D}$ and (\ref{BF5}) gives us
\begin{equation}\label{CR2}
\stackrel{\text{\begin{tiny}CR\end{tiny}}}{D}_{\mathcal{V}_\alpha}\!\!\!\mathcal{V}_\beta=\stackrel{\text{\begin{tiny}CR\end{tiny}}}{D}_{J\delta_\alpha}\!\!\!J\delta_\beta=\stackrel{\text{\begin{tiny}BF\end{tiny}}}{D}_{J\delta_\alpha}\!\!\!J\delta_\beta=\stackrel{\text{\begin{tiny}BF\end{tiny}}}{D}_{\mathcal{V}_\alpha}\!\!\!\mathcal{V}_\beta=0,
\end{equation}
and consequently
\begin{equation}\label{CR3}
\stackrel{\text{\begin{tiny}CR\end{tiny}}}{D}_{\mathcal{V}_\alpha}\!\!\!\delta_\beta=0.
\end{equation}
Relations (\ref{CR})-(\ref{CR3}) prove the existence and uniqueness of $\stackrel{\text{\begin{tiny}CR\end{tiny}}}{D}$. The proof of the second part of assertion is similar to proposition \ref{PCartan}.
\end{proof}
If $h$ is the Barthel endomorphism of Finsler algebroid $(E, \mathcal{F})$, then the d-connection $\stackrel{\text{\begin{tiny}CR\end{tiny}}}{D}$ given by (\ref{CR})-(\ref{CR3}) is called the Chern-Rand connection of  $(E, \mathcal{F})$.

Using (\ref{hh}), (\ref{hv}), (\ref{vv}) and (\ref{CR})-(\ref{CR3}) we can get
\begin{align*}
\stackrel{\text{\begin{tiny}CR\end{tiny}}}{R}_{\alpha\beta\gamma}^{\ \ \ \lambda}&=-(\rho^i_\alpha\circ\pi)\frac{\partial}{\partial\textbf{x}^i}\Big(\frac{1}{2}\frac{\partial ^2\mathcal{B}^\nu_\beta}{\partial\textbf{y}^\gamma\partial\textbf{y}^\kappa}\frac{\partial \mathcal{F}}{\partial\textbf{y}^\nu}\mathcal{G}^{\lambda\kappa}+\frac{\partial \mathcal{B}^\lambda_\beta}{\partial\textbf{y}^\gamma}\Big)\\
&\ \ \ -\mathcal{B}^\mu_\alpha\frac{\partial}{\partial\textbf{y}^\mu}\Big(\frac{1}{2}\frac{\partial ^2\mathcal{B}^\nu_\beta}{\partial\textbf{y}^\gamma\partial\textbf{y}^\kappa}\frac{\partial \mathcal{F}}{\partial\textbf{y}^\nu}\mathcal{G}^{\lambda\kappa}+\frac{\partial \mathcal{B}^\lambda_\beta}{\partial\textbf{y}^\gamma}\Big)\\
&\ \ \ +(\rho^i_\beta\circ\pi)\frac{\partial}{\partial\textbf{x}^i}\Big(\frac{1}{2}\frac{\partial ^2\mathcal{B}^\nu_\alpha}{\partial\textbf{y}^\gamma\partial\textbf{y}^\kappa}\frac{\partial \mathcal{F}}{\partial\textbf{y}^\nu}\mathcal{G}^{\lambda\kappa}+\frac{\partial \mathcal{B}^\lambda_\alpha}{\partial\textbf{y}^\gamma}\Big)\\
&\ \ \ +\mathcal{B}^\mu_\beta\frac{\partial}{\partial\textbf{y}^\mu}\Big(\frac{1}{2}\frac{\partial ^2\mathcal{B}^\nu_\alpha}{\partial\textbf{y}^\gamma\partial\textbf{y}^\kappa}\frac{\partial \mathcal{F}}{\partial\textbf{y}^\nu}\mathcal{G}^{\lambda\kappa}+\frac{\partial \mathcal{B}^\lambda_\alpha}{\partial\textbf{y}^\gamma}\Big)\\
&\ \ \ +\Big(\frac{1}{2}\frac{\partial ^2\mathcal{B}^\nu_\beta}{\partial\textbf{y}^\gamma\partial\textbf{y}^\kappa}\frac{\partial \mathcal{F}}{\partial\textbf{y}^\nu}\mathcal{G}^{\mu\kappa}+\frac{\partial \mathcal{B}^\mu_\beta}{\partial\textbf{y}^\gamma}\Big)\Big(\frac{1}{2}\frac{\partial ^2\mathcal{B}^\iota_\alpha}{\partial\textbf{y}^\mu\partial\textbf{y}^\sigma}\frac{\partial \mathcal{F}}{\partial\textbf{y}^\iota}\mathcal{G}^{\lambda\sigma}+\frac{\partial \mathcal{B}^\lambda_\alpha}{\partial\textbf{y}^\mu}\Big)\\
&\ \ \ -\Big(\frac{1}{2}\frac{\partial ^2\mathcal{B}^\nu_\alpha}{\partial\textbf{y}^\gamma\partial\textbf{y}^\kappa}\frac{\partial \mathcal{F}}{\partial\textbf{y}^\nu}\mathcal{G}^{\mu\kappa}+\frac{\partial \mathcal{B}^\mu_\alpha}{\partial\textbf{y}^\gamma}\Big)\Big(\frac{1}{2}\frac{\partial ^2\mathcal{B}^\iota_\beta}{\partial\textbf{y}^\mu\partial\textbf{y}^\sigma}\frac{\partial \mathcal{F}}{\partial\textbf{y}^\iota}\mathcal{G}^{\lambda\sigma}+\frac{\partial \mathcal{B}^\lambda_\beta}{\partial\textbf{y}^\mu}\Big)\\
&\ \ \ +(L^\mu_{\alpha\beta}\circ\pi)\Big(\frac{1}{2}\frac{\partial ^2\mathcal{B}^\nu_\mu}{\partial\textbf{y}^\gamma\partial\textbf{y}^\kappa}\frac{\partial \mathcal{F}}{\partial\textbf{y}^\nu}\mathcal{G}^{\lambda\kappa}+\frac{\partial \mathcal{B}^\lambda_\mu}{\partial\textbf{y}^\gamma}\Big),
\end{align*}
\[
\stackrel{\text{\begin{tiny}CR\end{tiny}}}{P}_{\alpha\beta\gamma}^{\ \ \ \lambda}=\frac{\partial}{\partial\textbf{y}^\beta}(\frac{1}{2}\frac{\partial ^2\mathcal{B}^\nu_\alpha}{\partial\textbf{y}^\gamma\partial\textbf{y}^\kappa}\frac{\partial \mathcal{F}}{\partial\textbf{y}^\nu}\mathcal{G}^{\lambda\kappa}+\frac{\partial \mathcal{B}^\lambda_\alpha}{\partial\textbf{y}^\gamma}),
\]
\[
\stackrel{\text{\begin{tiny}CR\end{tiny}}}{S}_{\alpha\beta\gamma}^{\ \ \ \lambda}=0.
\]
Let $\widetilde{X}$ and $\widetilde{Y}$ are sections of $\stackrel{\circ}{\pounds^\pi E}$. Then using (\ref{CR})-(\ref{CR3}) we can obtain the following formula for Chern-Rand connection:
\[
\stackrel{\text{\begin{tiny}CR\end{tiny}}}{D}_{\widetilde{X}}\widetilde{Y}=\stackrel{\text{\begin{tiny}CR\end{tiny}}}{D}_{v\widetilde{X}}v\widetilde{Y}+\stackrel{\text{\begin{tiny}CR\end{tiny}}}{D}_{v\widetilde{X}}h\widetilde{Y}+\stackrel{\text{\begin{tiny}CR\end{tiny}}}{D}_{h\widetilde{X}}v\widetilde{Y}+\stackrel{\text{\begin{tiny}CR\end{tiny}}}{D}_{h\widetilde{X}}h\widetilde{Y},
\]
where
\begin{align}
\stackrel{\text{\begin{tiny}CR\end{tiny}}}{D}_{h\widetilde{X}}h\widetilde{Y}&=hF[h\widetilde{X}, J\widetilde{Y}]_\pounds+F\widetilde{\mathcal{C}}(\widetilde{X}, \widetilde{Y}),\\
\stackrel{\text{\begin{tiny}CR\end{tiny}}}{D}_{v\widetilde{X}}v\widetilde{Y}&=J[v\widetilde{X}, F\widetilde{Y}]_\pounds
,\\
\stackrel{\text{\begin{tiny}CR\end{tiny}}}{D}_{v\widetilde{X}}h\widetilde{Y}&=h[v\widetilde{X}, \widetilde{Y}]_\pounds,\\
\stackrel{\text{\begin{tiny}CR\end{tiny}}}{D}_{h\widetilde{X}}v\widetilde{Y}&=v[h\widetilde{X}, v\widetilde{Y}]_\pounds+\widetilde{\mathcal{C}}(\widetilde{X}, F\widetilde{Y}).
\end{align}
\begin{theorem}
Let $(E, \mathcal{F})$ be a Finsler algebroid, $h$ be a conservative horizontal endomorphism on $\pounds^\pi E$, $\widetilde{\mathcal{G}}$ be the prolongation of $\mathcal{G}$ along $h$. Then there exists a unique d-connection
$\stackrel{\text{\begin{tiny}H\end{tiny}}}{D}$ on $(E, \mathcal{F})$ such that $\stackrel{\text{\begin{tiny}H\end{tiny}}}{D}$ is $v$-metrical, (i.e., $\forall\widetilde{X}\in\Gamma(\stackrel{\circ}{\pounds^\pi E})$, $\stackrel{\text{\begin{tiny}H\end{tiny}}}{D}_{v\widetilde{X}}\widetilde{\mathcal{G}}=0$) and the $v$-vertical and $v$-mixed torsions of $\stackrel{\text{\begin{tiny}H\end{tiny}}}{D}$ are zero.
\end{theorem}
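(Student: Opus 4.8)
The plan is to imitate, almost verbatim, the arguments used for the Berwald-Finsler connection $\stackrel{\text{\begin{tiny}BF\end{tiny}}}{D}$ and for the Cartan connection in Theorem \ref{TCartan}: first assume a d-connection $\stackrel{\text{\begin{tiny}H\end{tiny}}}{D}$ with the three prescribed properties exists and show that its four blocks $\stackrel{\text{\begin{tiny}H\end{tiny}}}{D}_{h\widetilde{X}}h\widetilde{Y}$, $\stackrel{\text{\begin{tiny}H\end{tiny}}}{D}_{h\widetilde{X}}v\widetilde{Y}$, $\stackrel{\text{\begin{tiny}H\end{tiny}}}{D}_{v\widetilde{X}}h\widetilde{Y}$, $\stackrel{\text{\begin{tiny}H\end{tiny}}}{D}_{v\widetilde{X}}v\widetilde{Y}$ are forced, and then reverse the logic to obtain existence. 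Since $\stackrel{\text{\begin{tiny}H\end{tiny}}}{D}$ is reducible and almost complex, the lemma yielding (\ref{conn0})--(\ref{conn00}) shows the whole connection is determined by its restriction to $\pounds^\pi E\times v\pounds^\pi E$, so the essential task is to pin down the mixed block $\stackrel{\text{\begin{tiny}H\end{tiny}}}{D}_{h\widetilde{X}}v\widetilde{Y}$ and the vertical block $\stackrel{\text{\begin{tiny}H\end{tiny}}}{D}_{v\widetilde{X}}v\widetilde{Y}$.

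For the mixed block I would invoke the vanishing of the $v$-mixed torsion exactly as in the derivation of (\ref{BF1}): writing $P^1=vT(h\widetilde{X},J\widetilde{Y})$ from (\ref{TD4}) and setting it to zero gives $\stackrel{\text{\begin{tiny}H\end{tiny}}}{D}_{h\widetilde{X}}v\widetilde{Y}=v[h\widetilde{X},v\widetilde{Y}]_\pounds$, whose adapted-frame coefficients are the Berwald ones $-\partial\mathcal{B}^\gamma_\alpha/\partial\textbf{y}^\beta$. For the vertical block I would couple $v$-metricity with the vanishing of the $v$-vertical torsion. Because $[\mathcal{V}_\alpha,\mathcal{V}_\beta]_\pounds=0$, the condition $S^1=0$ from (\ref{TD5}) reduces to $\stackrel{\text{\begin{tiny}H\end{tiny}}}{D}_{\mathcal{V}_\alpha}\mathcal{V}_\beta=\stackrel{\text{\begin{tiny}H\end{tiny}}}{D}_{\mathcal{V}_\beta}\mathcal{V}_\alpha$; inserting this symmetry into the three cyclically permuted copies of $\stackrel{\text{\begin{tiny}H\end{tiny}}}{D}_{v\widetilde{X}}\widetilde{\mathcal{G}}=0$ (the purely vertical analogue of (\ref{3ta1})--(\ref{3ta3})) and solving the resulting Koszul-type system produces precisely the Cartan vertical coefficients, i.e. formula (\ref{CF1}). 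Thus $\stackrel{\text{\begin{tiny}H\end{tiny}}}{D}$ is forced to be the hybrid carrying the Berwald horizontal part and the Cartan vertical part.

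The two remaining blocks then drop out of the almost-complex identity $F\mathcal{V}_\alpha=\delta_\alpha$ together with $\stackrel{\text{\begin{tiny}H\end{tiny}}}{D}F=0$: I would write $\stackrel{\text{\begin{tiny}H\end{tiny}}}{D}_{\mathcal{V}_\alpha}\delta_\beta=F\stackrel{\text{\begin{tiny}H\end{tiny}}}{D}_{\mathcal{V}_\alpha}\mathcal{V}_\beta$ and $\stackrel{\text{\begin{tiny}H\end{tiny}}}{D}_{\delta_\alpha}\delta_\beta=F\stackrel{\text{\begin{tiny}H\end{tiny}}}{D}_{\delta_\alpha}\mathcal{V}_\beta$, exactly mirroring the passage leading to (\ref{CF2}) and (\ref{CF3}). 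This settles uniqueness. For existence I would take these four formulas as the \emph{definition} of $\stackrel{\text{\begin{tiny}H\end{tiny}}}{D}$ and check directly that the map is additive, is $C^\infty(E)$-linear in the first slot, obeys the Leibniz rule $\stackrel{\text{\begin{tiny}H\end{tiny}}}{D}_{\widetilde{X}}(\widetilde{f}\widetilde{Y})=\rho_\pounds(\widetilde{X})(\widetilde{f})\widetilde{Y}+\widetilde{f}\stackrel{\text{\begin{tiny}H\end{tiny}}}{D}_{\widetilde{X}}\widetilde{Y}$, and is reducible and almost complex, so that $(\stackrel{\text{\begin{tiny}H\end{tiny}}}{D},h)$ is a genuine d-connection; then I would confirm that $v$-metricity and the vanishing of $P^1$ and $S^1$ really hold for it.

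The step I expect to be the main obstacle is the existence verification rather than the uniqueness computation, since the latter is a direct transcription of Theorem \ref{TCartan}. Specifically, one must ensure that the hybrid coefficients are globally consistent: that $\stackrel{\text{\begin{tiny}H\end{tiny}}}{D}_{v\widetilde{X}}\widetilde{\mathcal{G}}=0$ on \emph{all} pairs (not merely the $(\mathcal{V}_\alpha,\mathcal{V}_\beta)$ components from which it was deduced) and that $\stackrel{\text{\begin{tiny}H\end{tiny}}}{D}F=0$ persists after the Berwald horizontal and Cartan vertical blocks are glued. Here I expect the conservativeness of $h$, in the differentiated form (\ref{cons2}), to supply the compatibility needed to match the mixed $v$-metricity components, playing the same role it did in Theorem \ref{TCartan}; I would therefore isolate (\ref{cons2}) early and use it to close the cross-term identities.
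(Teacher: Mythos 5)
Your proposal is correct and follows essentially the same route as the paper's own proof: derive the Cartan-type vertical block from $v$-metricity plus vanishing $v$-vertical torsion (the purely vertical Koszul argument borrowed from Theorem \ref{TCartan}), derive the Berwald-type mixed block $\stackrel{\text{\begin{tiny}H\end{tiny}}}{D}_{h\widetilde{X}}v\widetilde{Y}=v[h\widetilde{X},v\widetilde{Y}]_\pounds$ from the vanishing $v$-mixed torsion, obtain the two remaining blocks from $\stackrel{\text{\begin{tiny}H\end{tiny}}}{D}F=0$, and then read these four formulas as the definition to get existence. One minor remark: the conservativity of $h$ in the form (\ref{cons2}), which you anticipate needing to close the existence check, is in fact not required here—the components $\mathcal{G}_{\alpha\beta}$ involve only $\textbf{y}$-derivatives of $\mathcal{F}$, so all the verification identities close without it, and the paper's proof likewise never invokes it.
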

\begin{proof}
Let there exists a d-connection $\stackrel{\text{\begin{tiny}H\end{tiny}}}{D}$ on $(E, \mathcal{F})$ such that $\stackrel{\text{\begin{tiny}H\end{tiny}}}{D}$ is $v$-metrical and the $v$-vertical and $v$-mixed torsions of $\stackrel{\text{\begin{tiny}H\end{tiny}}}{D}$ are zero. Since $\stackrel{\text{\begin{tiny}H\end{tiny}}}{D}$ is $v$-metrical and the $v$-vertical torsion of $\stackrel{\text{\begin{tiny}H\end{tiny}}}{D}$ is zero, then similar to the proof of theorem \ref{TCartan} we can deduce
\begin{equation}\label{H}
\stackrel{\text{\begin{tiny}H\end{tiny}}}{D}_{\mathcal{V}_\alpha}\!\!\mathcal{V}_\beta=\frac{1}{2}\frac{\partial\mathcal{G}_{\beta\gamma}}{\partial\textbf{y}^\alpha}\mathcal{G}^{\gamma\mu}\mathcal{V}_\mu.
\end{equation}
Also, since $\stackrel{\text{\begin{tiny}H\end{tiny}}}{D}$ is d-connection, then using the above equation we obtain
\begin{equation}\label{H1}
\stackrel{\text{\begin{tiny}H\end{tiny}}}{D}_{\mathcal{V}_\alpha}\!\!\delta_\beta=\frac{1}{2}\frac{\partial\mathcal{G}_{\beta\gamma}}{\partial\textbf{y}^\alpha}\mathcal{G}^{\gamma\mu}\delta_\mu.
\end{equation}
Moreover, since the $v$-mixed torsion of $\stackrel{\text{\begin{tiny}H\end{tiny}}}{D}$ is zero, then we can obtain
\begin{equation}\label{H2}
\stackrel{\text{\begin{tiny}H\end{tiny}}}{D}_{\delta_\alpha}\mathcal{V}_\beta=v[\delta_\alpha, \mathcal{V}_\beta]_\pounds=-\frac{\partial \mathcal{B}^\mu_\alpha}{\partial\textbf{y}^\beta}\mathcal{V}_\mu,
\end{equation}
and consequently
\begin{equation}\label{H3}
\stackrel{\text{\begin{tiny}H\end{tiny}}}{D}_{\delta_\alpha}\delta_\beta=-\frac{\partial \mathcal{B}^\mu_\alpha}{\partial\textbf{y}^\beta}\delta_\mu,
\end{equation}
because $\stackrel{\text{\begin{tiny}H\end{tiny}}}{D}$ is d-connection. Relations (\ref{H})-(\ref{H3}) prove the existence and uniqueness of $\stackrel{\text{\begin{tiny}H\end{tiny}}}{D}$.
\end{proof}
\begin{proposition}
Let $(E, \mathcal{F})$ be a Finsler algebroid, $h$ be a conservative horizontal endomorphism on $\pounds^\pi E$ and $\stackrel{\text{\begin{tiny}H\end{tiny}}}{D}$ be the d-connection given by the above theorem. If $h$-horizontal torsion and $h$-deflection of $\stackrel{\text{\begin{tiny}H\end{tiny}}}{D}$ are zero, then $h$ is the Barthel endomorphism.
\end{proposition}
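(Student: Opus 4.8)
The plan is to follow exactly the pattern of Propositions \ref{PBerwald} and \ref{PCartan}: the uniqueness theorem established above guarantees that there is only one horizontal endomorphism on $(E, \mathcal{F})$ that is simultaneously homogeneous, conservative and torsion free, and that endomorphism is the Barthel one. Since $h$ is already assumed conservative, it suffices to deduce two things from the hypotheses, namely that the vanishing of the $h$-deflection forces $h$ to be homogeneous, and that the vanishing of the $h$-horizontal torsion forces the weak torsion of $h$ to vanish (i.e. $h$ to be torsion free). Once both are in hand, the uniqueness theorem identifies $h$ with the Barthel endomorphism.

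First I would compute the $h$-deflection of $\stackrel{\text{\begin{tiny}H\end{tiny}}}{D}$ on the adapted frame. Using the definition (\ref{h*}), the coordinate form (\ref{Liouville}) of the Liouville section $C=\textbf{y}^\beta\mathcal{V}_\beta$, the anchor formula (\ref{rho}) which yields $\rho_\pounds(\delta_\alpha)(\textbf{y}^\beta)=\mathcal{B}^\beta_\alpha$, and the Hashiguchi coefficient (\ref{H2}), a direct calculation gives
\[
h^*(\stackrel{\text{\begin{tiny}H\end{tiny}}}{D}C)(\delta_\alpha)=\stackrel{\text{\begin{tiny}H\end{tiny}}}{D}_{\delta_\alpha}(\textbf{y}^\beta\mathcal{V}_\beta)=\Big(\mathcal{B}^\mu_\alpha-\textbf{y}^\beta\frac{\partial \mathcal{B}^\mu_\alpha}{\partial\textbf{y}^\beta}\Big)\mathcal{V}_\mu .
\]
Setting this to zero is precisely the homogeneity criterion $\mathcal{B}^\mu_\alpha=\textbf{y}^\beta\frac{\partial \mathcal{B}^\mu_\alpha}{\partial\textbf{y}^\beta}$ recorded in the lemma following (\ref{tension}); hence $h$ is homogeneous.

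Next I would exploit the vanishing of the $h$-horizontal torsion. From the Hashiguchi coefficient (\ref{H3}) together with the bracket relation (\ref{LieB}), I compute
\[
\stackrel{\text{\begin{tiny}H\end{tiny}}}{D}_{\delta_\alpha}\delta_\beta-\stackrel{\text{\begin{tiny}H\end{tiny}}}{D}_{\delta_\beta}\delta_\alpha-[\delta_\alpha,\delta_\beta]_\pounds=t^\gamma_{\alpha\beta}\delta_\gamma-R^\gamma_{\alpha\beta}\mathcal{V}_\gamma ,
\]
with $t^\gamma_{\alpha\beta}$ given by (\ref{wt1}). Applying $h$ fixes $\delta_\gamma$ and annihilates $\mathcal{V}_\gamma$, so the $h$-horizontal torsion on the frame reduces to $t^\gamma_{\alpha\beta}\delta_\gamma$; its vanishing yields $t^\gamma_{\alpha\beta}=0$, that is, $h$ is torsion free.

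Combining the two conclusions, $h$ is homogeneous, conservative and torsion free, so by the uniqueness theorem it coincides with the Barthel endomorphism. I do not expect any genuine obstacle here: the whole argument is a short frame computation parallel to Propositions \ref{PBerwald} and \ref{PCartan}. The only point needing slight care is the second step, where one must observe that $h$ discards the vertical summand $-R^\gamma_{\alpha\beta}\mathcal{V}_\gamma$ of the horizontal bracket, leaving the weak torsion $t^\gamma_{\alpha\beta}$ as the sole surviving contribution.
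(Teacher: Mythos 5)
Your proposal is correct and follows essentially the same route as the paper: the paper's proof simply defers to Proposition \ref{PBerwald}, and since the Hashiguchi coefficients (\ref{H2})--(\ref{H3}) in the horizontal directions coincide with the Berwald-connection coefficients (\ref{BF5}), your two frame computations (deflection $\Rightarrow$ homogeneity, $h$-horizontal torsion $\Rightarrow$ $t^\gamma_{\alpha\beta}=0$) reproduce that argument verbatim. Your explicit appeal to the uniqueness theorem for homogeneous, conservative, torsion-free horizontal endomorphisms is exactly what the paper leaves implicit in the phrase ``it is sufficient to show that $h$ is homogenous and torsion free.''
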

\begin{proof}
The proof is similar to proof of proposition \ref{PBerwald}.
\end{proof}
If $h$ is the Barthel endomorphism of Finsler algebroid $(E, \mathcal{F})$, then the d-connection $\stackrel{\text{\begin{tiny}H\end{tiny}}}{D}$ given by (\ref{H})-(\ref{H3}) is called the Hashiguchi connection of  $(E, \mathcal{F})$.

using (\ref{hh}), (\ref{hv}), (\ref{vv}) and (\ref{H})-(\ref{H3}) we can obtain
\begin{align*}
\stackrel{\text{\begin{tiny}H\end{tiny}}}{R}_{\alpha\beta\gamma}^{\ \ \ \lambda}&=-(\rho^i_\alpha\circ\pi)\frac{\partial^2\mathcal{B}^\lambda_\beta}{\partial\textbf{x}^i\partial\textbf{y}^\gamma}
-\mathcal{B}^\mu_\alpha\frac{\partial^2\mathcal{B}^\lambda_\beta}{\partial\textbf{y}^\mu\partial\textbf{y}^\gamma}+(\rho^i_\beta\circ\pi)\frac{\partial^2\mathcal{B}^\lambda_\alpha}
{\partial\textbf{x}^i\partial\textbf{y}^\gamma}
+\mathcal{B}^\mu_\beta\frac{\partial^2\mathcal{B}^\lambda_\alpha}{\partial\textbf{y}^\mu\partial\textbf{y}^\gamma}\\
&\ \ \ +\frac{\partial \mathcal{B}^\lambda_\alpha}{\partial\textbf{y}^\mu}\frac{\partial \mathcal{B}^\mu_\beta}{\partial\textbf{y}^\gamma}-\frac{\partial \mathcal{B}^\mu_\alpha}{\partial\textbf{y}^\gamma}\frac{\partial \mathcal{B}^\lambda_\beta}{\partial\textbf{y}^\mu}+(L^\mu_{\alpha\beta}\circ\pi)\frac{\partial \mathcal{B}^\lambda_\mu}{\partial\textbf{y}^\gamma}-\frac{1}{2}R^\mu_{\alpha\beta}\frac{\partial g_{\gamma\kappa}}{\partial\textbf{y}^\mu}g^{\lambda\kappa},
\end{align*}
\begin{align*}
\stackrel{\text{\begin{tiny}H\end{tiny}}}{P}_{\alpha\beta\gamma}^{\ \ \ \lambda}&=(\rho^i_\alpha\circ\pi)\frac{\partial}{\partial\textbf{x}^i}(\frac{\partial \mathcal{G}_{\gamma\kappa}}{\partial\textbf{y}^\beta}\mathcal{G}^{\lambda\kappa})+\frac{1}{2}\mathcal{B}^\mu_\alpha\frac{\partial}{\partial\textbf{y}^\mu}(\frac{\partial \mathcal{G}_{\gamma\kappa}}{\partial\textbf{y}^\beta}\mathcal{G}^{\lambda\kappa})-\frac{1}{2}\frac{\partial \mathcal{G}_{\gamma\kappa}}{\partial\textbf{y}^\beta}\mathcal{G}^{\lambda\kappa}\frac{\partial \mathcal{B}^\lambda_\alpha}{\partial\textbf{y}^\mu}\\
&\ \ \ +\frac{\partial^2\mathcal{B}^\lambda_\alpha}{\partial\textbf{y}^\beta\partial\textbf{y}^\gamma}+\frac{1}{2}\frac{\partial \mathcal{G}_{\mu\kappa}}{\partial\textbf{y}^\beta}\mathcal{G}^{\lambda\kappa}\frac{\partial \mathcal{B}^\mu_\alpha}{\partial\textbf{y}^\gamma}+\frac{1}{2}\frac{\partial \mathcal{G}_{\gamma\kappa}}{\partial\textbf{y}^\mu}\mathcal{G}^{\lambda\kappa}\frac{\partial \mathcal{B}^\mu_\alpha}{\partial\textbf{y}^\beta},
\end{align*}
\begin{align*}
\stackrel{\text{\begin{tiny}H\end{tiny}}}{S}_{\alpha\beta\gamma}^{\ \ \ \lambda}&=\frac{1}{2}(\frac{\partial \mathcal{G}_{\gamma\kappa}}{\partial\textbf{y}^\beta}\frac{\partial \mathcal{G}^{\lambda\kappa}}{\partial\textbf{y}^\alpha}-\frac{\partial \mathcal{G}_{\gamma\kappa}}{\partial\textbf{y}^\alpha}\frac{\partial \mathcal{G}^{\lambda\kappa}}{\partial\textbf{y}^\beta})+\frac{1}{4}(\mathcal{G}^{\sigma\mu}\mathcal{G}^{\lambda\kappa}\frac{\partial \mathcal{G}_{\gamma\sigma}}{\partial\textbf{y}^\beta}\frac{\partial \mathcal{G}_{\mu\kappa}}{\partial\textbf{y}^\alpha}\\
&\ \ \ -\mathcal{G}^{\mu\kappa}\mathcal{G}^{\lambda\sigma}\frac{\partial \mathcal{G}_{\gamma\kappa}}{\partial\textbf{y}^\alpha}\frac{\partial \mathcal{G}_{\mu\sigma}}{\partial\textbf{y}^\beta}).
\end{align*}
Let $\widetilde{X}$ and $\widetilde{Y}$ are sections of $\stackrel{\circ}{\pounds^\pi E}$. Then using (\ref{H})-(\ref{H3}) we can obtain the following formula for Hashiguchi connection:
\[
\stackrel{\text{\begin{tiny}H\end{tiny}}}{D}_{\widetilde{X}}\widetilde{Y}=\stackrel{\text{\begin{tiny}H\end{tiny}}}{D}_{v\widetilde{X}}v\widetilde{Y}+\stackrel{\text{\begin{tiny}H\end{tiny}}}{D}_{v\widetilde{X}}h\widetilde{Y}+\stackrel{\text{\begin{tiny}H\end{tiny}}}{D}_{h\widetilde{X}}v\widetilde{Y}+\stackrel{\text{\begin{tiny}H\end{tiny}}}{D}_{h\widetilde{X}}h\widetilde{Y},
\]
where
\begin{align}
\stackrel{\text{\begin{tiny}H\end{tiny}}}{D}_{h\widetilde{X}}h\widetilde{Y}&=hF[h\widetilde{X}, J\widetilde{Y}]_\pounds,\\
\stackrel{\text{\begin{tiny}H\end{tiny}}}{D}_{v\widetilde{X}}v\widetilde{Y}&=J[v\widetilde{X}, F\widetilde{Y}]_\pounds+\mathcal{C}(F\widetilde{X}, F\widetilde{Y}),\\
\stackrel{\text{\begin{tiny}H\end{tiny}}}{D}_{v\widetilde{X}}h\widetilde{Y}&=h[v\widetilde{X}, \widetilde{Y}]_\pounds+F\mathcal{C}(F\widetilde{X}, \widetilde{Y}),\\
\stackrel{\text{\begin{tiny}H\end{tiny}}}{D}_{h\widetilde{X}}v\widetilde{Y}&=v[h\widetilde{X}, v\widetilde{Y}]_\pounds.
\end{align}
\begin{theorem}
Let $h$ be the Barthel endomorphism on Finsler algebroid $(E, \mathcal{F})$. Then the Cartan connection $\stackrel{\text{\begin{tiny}C\end{tiny}}}{D}$

(i)\ is Chern-Rund connection if $J^*\!\!\stackrel{\text{\begin{tiny}C\end{tiny}}}{D}=J^*\!\!\stackrel{\text{\begin{tiny}BF\end{tiny}}}{D}$,

(ii)\  is Hashiguchi connection if $h^*\!\!\stackrel{\text{\begin{tiny}C\end{tiny}}}{D}=\stackrel{\text{\begin{tiny}BF\end{tiny}}}{D}$,

(iii)\ is Berwald connection if it is the Chern-Rund connection and the Hashiguchi connection at the same time.
\end{theorem}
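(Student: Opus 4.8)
The cleanest route is to invoke the uniqueness clauses of the Chern--Rund and Hashiguchi theorems, using the observation that the Cartan connection already satisfies most of their defining properties. The plan rests on the fact, recorded in (\ref{conn})--(\ref{conn2}), that a d-connection on $\pounds^\pi E$ is completely determined by its pair of local coefficients $(F^\gamma_{\alpha\beta}, C^\gamma_{\alpha\beta})$, fixed by $D_{\delta_\alpha}\mathcal{V}_\beta=F^\gamma_{\alpha\beta}\mathcal{V}_\gamma$ and $D_{\mathcal{V}_\alpha}\mathcal{V}_\beta=C^\gamma_{\alpha\beta}\mathcal{V}_\gamma$; hence two d-connections coincide exactly when both their horizontal and their vertical coefficients agree. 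Comparing (\ref{CF})--(\ref{CF3}) with (\ref{CR})--(\ref{CR3}) shows that the Cartan and Chern--Rund connections have identical horizontal coefficients and differ only in that the Cartan vertical coefficient is the first Cartan tensor $\mathcal{C}^\gamma_{\alpha\beta}$ while the Chern--Rund one vanishes; comparing (\ref{CF1}) with (\ref{H}) shows that the Cartan and Hashiguchi connections share the vertical coefficient $\mathcal{C}^\gamma_{\alpha\beta}$, while (\ref{H2}) shows the Hashiguchi horizontal coefficient is the Berwald one $-\partial\mathcal{B}^\gamma_\alpha/\partial\textbf{y}^\beta$ of (\ref{BF5}).

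For (i): the Cartan connection is metrical, hence $h$-metrical, and its $h$-horizontal torsion vanishes by construction; these are two of the three conditions characterising the Chern--Rund connection. Since the remaining condition is precisely $J^*\stackrel{\text{\begin{tiny}C\end{tiny}}}{D}=J^*\stackrel{\text{\begin{tiny}BF\end{tiny}}}{D}$, the hypothesis of (i), the uniqueness part of the Chern--Rund theorem forces the Cartan connection to be the Chern--Rund connection. Concretely, as $\mathcal{V}_\alpha=J\delta_\alpha$, the datum $J^*D$ encodes the vertical coefficients through $D_{\mathcal{V}_\alpha}\mathcal{V}_\beta$, so the hypothesis simply says $\mathcal{C}=0$, collapsing the two coefficient pairs. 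For (ii): the Cartan connection is $v$-metrical and its $v$-vertical torsion vanishes, which are two of the three Hashiguchi conditions; the last, vanishing of the $v$-mixed torsion, is equivalent by (\ref{very im3}) to $F^\gamma_{\alpha\beta}=-\partial\mathcal{B}^\gamma_\alpha/\partial\textbf{y}^\beta$, i.e.\ to the Cartan horizontal coefficient equalling the Berwald one, which is exactly what $h^*\stackrel{\text{\begin{tiny}C\end{tiny}}}{D}=\stackrel{\text{\begin{tiny}BF\end{tiny}}}{D}$ asserts. Thus the hypothesis supplies the missing condition and uniqueness in the Hashiguchi theorem yields the claim.

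Finally, for (iii): if the Cartan connection is simultaneously the Chern--Rund and the Hashiguchi connection, then its vertical coefficient must agree with that of Chern--Rund (namely $0$) and its horizontal coefficient must agree with that of Hashiguchi (namely $-\partial\mathcal{B}^\gamma_\alpha/\partial\textbf{y}^\beta$); by the first paragraph its coefficient pair is then exactly the Berwald pair of (\ref{BF5}), so it is the Berwald connection. The one genuinely delicate point, which I would isolate and justify first, is the precise reading of the operators $J^*$ and $h^*$ on a d-connection and the verification that the two hypotheses coincide, respectively, with the vanishing of the Cartan vertical coefficient and with the vanishing of the Cartan $v$-mixed torsion; once these two identifications are secured, all three parts follow by the coefficient matching already carried out in Section~7, with no further computation.
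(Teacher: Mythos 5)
The paper states this theorem with no proof at all: it closes Section 7 as a summary of the preceding constructions, so there is no argument of the author's to compare yours against, and your proposal must stand on its own. It does. Identifying d-connections (over the same Barthel $h$, hence the same adapted basis) with their coefficient pairs $(F^\gamma_{\alpha\beta}, C^\gamma_{\alpha\beta})$ via (\ref{conn})--(\ref{conn2}) is legitimate; the coefficient tables you read off from (\ref{CF})--(\ref{CF3}), (\ref{CR})--(\ref{CR3}), (\ref{H})--(\ref{H3}) and (\ref{BF5}) are correct; and both of your routes work: direct coefficient matching, or feeding the hypothesis into the uniqueness clauses of the Chern--Rund and Hashiguchi existence theorems, whose remaining conditions the Cartan connection satisfies by Theorem \ref{TCartan} (metrical implies both $h$-metrical and $v$-metrical, and its $h$-horizontal and $v$-vertical torsions vanish by construction). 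Part (iii) then follows by combining the two coefficient identifications, giving exactly the Berwald pair of (\ref{BF5}).

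The one point you rightly isolate deserves emphasis: the hypothesis of (ii) as literally printed, $h^*\!\stackrel{\text{C}}{D}=\stackrel{\text{BF}}{D}$, cannot hold as an equality of operators, since the left-hand side annihilates vertical directions ($\stackrel{\text{C}}{D}_{h(v\widetilde{X})}=0$) while $\stackrel{\text{BF}}{D}_{v\widetilde{X}}$ does not vanish on arbitrary sections. It must be read as equality of the $h$-parts, $\stackrel{\text{C}}{D}_{h\widetilde{X}}=\stackrel{\text{BF}}{D}_{h\widetilde{X}}$, i.e.\ $F^\gamma_{\alpha\beta}=-\partial\mathcal{B}^\gamma_\alpha/\partial\textbf{y}^\beta$, which by (ii) of (\ref{very im3}) is precisely the vanishing of the $v$-mixed torsion of the Cartan connection. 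That is an imprecision in the paper's statement rather than a gap in your argument, and with that reading your proof of (ii), and hence of the whole theorem, is complete.
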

%$$\xymatrix{
%(\stackrel{\text{\begin{tiny}C\end{tiny}}}{D}, h) \ar[d]_{h^*\!\!\stackrel{\text{\begin{tiny}C\end{tiny}}}{D}=\stackrel{\text{\begin{tiny}BF\end{tiny}}}{D}} %\ar[r]^{J^*\!\!\stackrel{\text{\begin{tiny}C\end{tiny}}}{D}=J^*\!\!\stackrel{\text{\begin{tiny}BF\end{tiny}}}{D}}
%& (\stackrel{\text{\begin{tiny}CR\end{tiny}}}{D}, h)  \ar[d]^{h^*\!\!\stackrel{\text{\begin{tiny}CR\end{tiny}}}{D}=\stackrel{\text{\begin{tiny}BF\end{tiny}}}{D}} \\
%(\stackrel{\text{\begin{tiny}H\end{tiny}}}{D}, h) \ar[r]_{J^*\!\!\stackrel{\text{\begin{tiny}C\end{tiny}}}{D}=J^*\!\!\stackrel{\text{\begin{tiny}BF\end{tiny}}}{D}} & %(\stackrel{\text{\begin{tiny}BF\end{tiny}}}{D}, h) }$$
%***************************************************************************************
%***************************************************************************************
\section{Generalized Berwald Lie algebroids}
In this section, $h$-basic distinguished connections are introduced on Finsler algeboids. We have more attention to Ichijy\={o} connection. Dealing with conservative endomorphisms, generalized Berwald Lie algebroid is introduced and Wagner-Ichijy\={o} connection as a special case is studied notably.

\begin{defn}
Let $(D, h)$ be a d-connection on $\pounds^\pi E$. We call it a $h$-basic d-connection if there is a linear connection $\nabla$ on $E$ such that
\begin{equation}\label{GB}
D_{X^h}Y^V=(\nabla_XY)^V,\ \ \ \forall X, Y\in \Gamma(E).
\end{equation}
\end{defn}
Linear connection $\nabla$ in the above definition is called the \textit{basic connection} belongs to $(D, h)$. Note that the base connection of a $h$-basic d-connection is unique.
\begin{proposition}\label{suitable}
Let $(D, h)$ be a d-connection on $\pounds^\pi E$ and $(\widetilde{D}, h)$ be the d-connection associated to $(D, h)$ given by (\ref{m2}). Then $(D, h)$ is $h$-basic if and only if the mixed curvature of $(\widetilde{D}, h)$ is zero.
\end{proposition}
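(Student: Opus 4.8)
The plan is to reduce both sides of the equivalence to a single condition, namely that the connection coefficients $F^\gamma_{\alpha\beta}$ of $D$ are independent of the fibre coordinates $\textbf{y}^\lambda$. First I would recall that $D_{\delta_\alpha}\mathcal{V}_\beta=F^\gamma_{\alpha\beta}\mathcal{V}_\gamma$ by (\ref{conn}), that $\delta_\alpha=e_\alpha^h$ and $\mathcal{V}_\alpha=e_\alpha^V$, and that a section $X=X^\alpha e_\alpha$ lifts to $X^h=(X^\alpha\circ\pi)\delta_\alpha$ and $Y^V=(Y^\beta\circ\pi)\mathcal{V}_\beta$ by (\ref{esi}). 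Then, applying the Leibniz rule for $D$ together with the anchor expression (\ref{rho}) for $\rho_\pounds(\delta_\alpha)$ and using that the pullback functions $Y^\beta\circ\pi$ are annihilated by $\partial/\partial\textbf{y}^\gamma$, I would compute for arbitrary $X,Y\in\Gamma(E)$ that
\[
D_{X^h}Y^V=(X^\alpha\circ\pi)\Big[\big(\rho^i_\alpha\frac{\partial Y^\gamma}{\partial x^i}\big)\circ\pi+(Y^\beta\circ\pi)F^\gamma_{\alpha\beta}\Big]\mathcal{V}_\gamma.
\]

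Next I would establish the equivalence with the $\textbf{y}$-independence of $F$. Since $(\nabla_XY)^V=((\nabla_XY)^\gamma\circ\pi)\mathcal{V}_\gamma$ is the vertical lift of a genuine section of $E$, its components are pullbacks of functions on $M$; the first summand above is always such a pullback (it equals $\rho(X)(Y^\gamma)\circ\pi$), so $(D,h)$ is $h$-basic precisely when the remaining term $((X^\alpha Y^\beta)\circ\pi)F^\gamma_{\alpha\beta}$ is a pullback for all $X,Y$, i.e. precisely when each $F^\gamma_{\alpha\beta}$ is the pullback of a function on $M$, equivalently $\partial F^\gamma_{\alpha\beta}/\partial\textbf{y}^\lambda=0$. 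For the forward direction I would specialize to $X=e_\alpha$, $Y=e_\beta$ to read off $F^\gamma_{\alpha\beta}=\Gamma^\gamma_{\alpha\beta}\circ\pi$, where $\nabla_{e_\alpha}e_\beta=\Gamma^\gamma_{\alpha\beta}e_\gamma$ for the basic connection $\nabla$. For the converse, when $F$ is $\textbf{y}$-independent I would write $F^\gamma_{\alpha\beta}=\Gamma^\gamma_{\alpha\beta}\circ\pi$ and define $\nabla$ by $\nabla_XY:=[\rho(X)(Y^\gamma)+X^\alpha Y^\beta\Gamma^\gamma_{\alpha\beta}]e_\gamma$, then verify it satisfies the two defining rules of a linear connection on the Lie algebroid $E$ and that $D_{X^h}Y^V=(\nabla_XY)^V$ holds, both of which are immediate from the displayed formula.

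Finally, I would connect this to the mixed curvature of $(\widetilde{D},h)$. Since the associated connection has coefficients $\widetilde{F}^\gamma_{\alpha\beta}=F^\gamma_{\alpha\beta}$ and $\widetilde{C}^\gamma_{\alpha\beta}=0$, its mixed curvature is given by (\ref{niaz}) as $\widetilde{P}^{\ \ \ \ \lambda}_{\alpha\beta\gamma}=-\partial F^\lambda_{\alpha\gamma}/\partial\textbf{y}^\beta$. Hence the mixed curvature of $(\widetilde{D},h)$ vanishes if and only if $\partial F^\lambda_{\alpha\gamma}/\partial\textbf{y}^\beta=0$, which after relabeling the indices is exactly the $h$-basic condition established above; this closes the chain of equivalences.

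The step I expect to be the main obstacle is the converse construction of $\nabla$: one must check that the $\textbf{y}$-independent symbols $\Gamma^\gamma_{\alpha\beta}$ on $M$ genuinely assemble into a linear connection on $E$ (that the two defining rules hold invariantly, i.e. under change of local frame), and that the relation $D_{X^h}Y^V=(\nabla_XY)^V$ then propagates from the frame $\{e_\alpha\}$ to all sections. This is precisely where the tensoriality in $X$ and the $\rho(X)$-Leibniz behaviour in $Y$ of the coordinate formula must be matched against the linear-connection axioms; the remaining computations are routine.
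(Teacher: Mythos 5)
Your proposal is correct and takes essentially the same route as the paper's proof: both reduce $h$-basicity to the coordinate condition that the coefficients $F^\gamma_{\alpha\beta}$ be pullbacks from $M$ (equivalently $\partial F^\gamma_{\alpha\beta}/\partial\textbf{y}^\lambda=0$), and both then invoke (\ref{niaz}) to identify this with the vanishing of the mixed curvature of $(\widetilde{D},h)$. The frame-independence obstacle you flag is resolved exactly as the paper does it, by defining $\nabla$ invariantly through $(\nabla_XY)^V:=D_{X^h}Y^V$ and appealing to the injectivity of the vertical lift, which your displayed formula for $D_{X^h}Y^V$ already shows is legitimate once $F^\gamma_{\alpha\beta}$ is a pullback.
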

\begin{proof}
Let $(D, h)$ be a $h$-basic d-connection on $\pounds^\pi E$ and $\{e_\alpha\}$ be a basis of $\Gamma(E)$. Since $\nabla_{e_\alpha}e_\beta$ belongs to $\Gamma(E)$, then we can write it as $\nabla_{e_\alpha}e_\beta=\Gamma^\gamma_{\alpha\beta}e_\gamma$, where $\Gamma^\gamma_{\alpha\beta}$ are local functions on $M$. From (\ref{GB}) we can deduce
\[
D_{\delta_\alpha}\mathcal{V}_\beta=D_{e_\alpha^h}e_\beta^V=(\nabla_{e_\alpha}e_\beta)^V=(\Gamma^\gamma_{\alpha\beta}\circ\pi)\mathcal{V}_\gamma.
\]
Thus we have $F^\gamma_{\alpha\beta}=(\Gamma^\gamma_{\alpha\beta}\circ\pi)$, where $F^\gamma_{\alpha\beta}$ are the local coefficients of $D_{\delta_\alpha}\mathcal{V}_\beta$. Since $F^\gamma_{\alpha\beta}$ are functions with respect to $(x^h)$, only, then using the first part of (\ref{niaz}) we get $P_{\alpha\beta\gamma}^{\ \ \ \ \lambda}=0$, i.e., the mixed curvature of $(\widetilde{D}, h)$ is zero.

Conversely, let the mixed curvature of $(\widetilde{D}, h)$ be zero. Then from (\ref{niaz}) we derive that $F^\gamma_{\alpha\beta}$ are functions with respect to $(x^h)$, only. Now we define $\nabla:\Gamma(E)\times\Gamma(E)\rightarrow\Gamma(E)$ by $(\nabla_XY)^V:=D_{X^h}Y^V$. Since the vertical lift of a section of $E$ is unique, then $\nabla$ is well defined. Also, we have
\[
(\nabla_X(fY))^V=D_{X^h}(fY)^V=D_{X^h}(f^vY^V)=\rho_\pounds(X^h)(f^v)Y^V+f^vD_{X^h}Y^V,
\]
where $X, Y\in\Gamma(E)$ and $f\in C^\infty(M)$. It is easy to check that $\rho_\pounds(X^h)(f^v)=(\rho(X)f)^v$. Setting this in the above equation we get
\begin{align*}
(\nabla_X(fY))^V&=(\rho(X)f)^vY^V+f^vD_{X^h}Y^V=(\rho(X)f)^vY^V+f^v(\nabla_XY)^V\\
&=(\rho(X)(f) Y+f\nabla_XY)^V,
\end{align*}
which gives us $\nabla_X(fY)=\rho(X)(f) Y+f\nabla_XY$, because the vertical lift is unique. Similarly we can obtain $\nabla_{fX+gY}Z=f\nabla_XZ+g\nabla_YZ$ and $\nabla_X(Y+Z)=\nabla_XZ+\nabla_YZ$, for all $X, Y, Z\in\Gamma(E)$ and $f, g\in C^\infty(M)$. Thus $\nabla$ is a linear connection on $E$ and consequently $(D, h)$ is $h$-basic.
\end{proof}
Let $\nabla$ be a linear connection on $E$, $\{e_\alpha\}$ be a basis of $\Gamma(E)$ and $\nabla_{e_\alpha}e_\beta=\Gamma^\gamma_{\alpha\beta}e_\gamma$. Then
\begin{equation}\label{delta}
h_\nabla=(\mathcal{X}_\alpha-y^\gamma(\Gamma^\beta_{\alpha\gamma}\circ\pi)\mathcal{V}_\beta)\otimes\mathcal{X}^\alpha,
\end{equation}
is a horizontal endomorphism on $\pounds^\pi E$. Indeed we have
\[
(\nabla_XY)^V=[X^{h_\nabla}, Y^V]_\pounds,\ \ \ \forall X, Y\in\Gamma(E).
\]
We call $h_\nabla$ given by (\ref{delta}) the \textit{horizontal endomorphism generated by $\nabla$}. It is easy to see that $h_\nabla$ is homogenous and it is smooth on the whole $\pounds^\pi E$.
\begin{lemma}
Let $\nabla$ be a linear connection on $E$ and $h_\nabla$ be the horizontal endomorphism generated by $\nabla$. If $K_{\alpha\beta\gamma}^{\ \ \ \ \lambda}$ and $R^\lambda_{\alpha\beta}$ are the local coefficients of the curvature tensors of $\nabla$ and $h_\nabla$, respectively, then we have $\textbf{y}^\gamma(K_{\alpha\beta\gamma}^{\ \ \ \ \lambda}\circ\pi)=-R^\lambda_{\alpha\beta}$.
\end{lemma}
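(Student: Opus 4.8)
The plan is to read off the local components $\mathcal{B}^\lambda_\beta$ of $h_\nabla$, feed them into the curvature formula (\ref{curv0}) for a horizontal endomorphism, and then verify that the result is precisely the negative of the $\textbf{y}$-contracted pullback of the curvature of $\nabla$. Comparing the expression of $h_\nabla$ with the general form (\ref{horizontal end}) of a horizontal endomorphism, I first identify
\[
\mathcal{B}^\lambda_\beta=-\textbf{y}^\gamma(\Gamma^\lambda_{\beta\gamma}\circ\pi).
\]
Two elementary facts will be used repeatedly. Since the $\Gamma^\lambda_{\beta\gamma}$ are functions on $M$, the identity $\frac{\partial}{\partial\textbf{x}^i}(f\circ\pi)=\frac{\partial f}{\partial x^i}\circ\pi$ gives $\frac{\partial\mathcal{B}^\lambda_\beta}{\partial\textbf{x}^i}=-\textbf{y}^\gamma(\frac{\partial\Gamma^\lambda_{\beta\gamma}}{\partial x^i}\circ\pi)$, while $\frac{\partial\textbf{y}^\gamma}{\partial\textbf{y}^\mu}=\delta^\gamma_\mu$ gives the crucial collapse $\frac{\partial\mathcal{B}^\lambda_\beta}{\partial\textbf{y}^\mu}=-(\Gamma^\lambda_{\beta\mu}\circ\pi)$, so that each $\textbf{y}$-derivative removes one factor of $\textbf{y}$ and one Christoffel index.

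Next I would substitute these into (\ref{curv0}). The first two terms produce the $\rho$-derivative contributions, namely $-\textbf{y}^\gamma(\rho^i_\alpha\circ\pi)(\frac{\partial\Gamma^\lambda_{\beta\gamma}}{\partial x^i}\circ\pi)$ together with its $\alpha\leftrightarrow\beta$ partner of opposite sign; the two $\textbf{y}$-derivative terms $\mathcal{B}^\mu_\alpha\frac{\partial\mathcal{B}^\lambda_\beta}{\partial\textbf{y}^\mu}-\mathcal{B}^\mu_\beta\frac{\partial\mathcal{B}^\lambda_\alpha}{\partial\textbf{y}^\mu}$ become the quadratic Christoffel terms $\textbf{y}^\gamma((\Gamma^\mu_{\alpha\gamma}\Gamma^\lambda_{\beta\mu}-\Gamma^\mu_{\beta\gamma}\Gamma^\lambda_{\alpha\mu})\circ\pi)$; and the last term $(L^\lambda_{\beta\alpha}\circ\pi)\mathcal{B}^\gamma_\lambda$, after substituting and invoking the antisymmetry $L^\mu_{\beta\alpha}=-L^\mu_{\alpha\beta}$, yields $+\textbf{y}^\gamma((L^\mu_{\alpha\beta}\Gamma^\lambda_{\mu\gamma})\circ\pi)$. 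Thus $R^\lambda_{\alpha\beta}$ is expressed entirely as $\textbf{y}^\gamma$ times the pullback of an expression built from $\rho$, $\Gamma$ and $L$.

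Finally I would compute $K_{\alpha\beta\gamma}^{\ \ \ \ \lambda}$ directly from $K(X,Y)Z=\nabla_X\nabla_YZ-\nabla_Y\nabla_XZ-\nabla_{[X,Y]_E}Z$ applied to the basis $\{e_\alpha\}$, using $\nabla_{e_\alpha}e_\beta=\Gamma^\gamma_{\alpha\beta}e_\gamma$, $\rho(e_\alpha)=\rho^i_\alpha\frac{\partial}{\partial x^i}$ and $[e_\alpha,e_\beta]_E=L^\gamma_{\alpha\beta}e_\gamma$; this gives $K_{\alpha\beta\gamma}^{\ \ \ \ \lambda}=\rho^i_\alpha\frac{\partial\Gamma^\lambda_{\beta\gamma}}{\partial x^i}-\rho^i_\beta\frac{\partial\Gamma^\lambda_{\alpha\gamma}}{\partial x^i}+\Gamma^\mu_{\beta\gamma}\Gamma^\lambda_{\alpha\mu}-\Gamma^\mu_{\alpha\gamma}\Gamma^\lambda_{\beta\mu}-L^\mu_{\alpha\beta}\Gamma^\lambda_{\mu\gamma}$. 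Contracting with $\textbf{y}^\gamma$ and pulling back by $\pi$, I would then compare term by term with the expression obtained for $R^\lambda_{\alpha\beta}$: each of the five groups of terms reappears with exactly the opposite sign, which yields $\textbf{y}^\gamma(K_{\alpha\beta\gamma}^{\ \ \ \ \lambda}\circ\pi)=-R^\lambda_{\alpha\beta}$. The only genuinely delicate point — and the one place a sign slip can occur — is tracking the antisymmetry of $L$ in the bracket term together with the two minus signs carried by $\mathcal{B}^\lambda_\beta$; once these are handled consistently, the remainder is routine index bookkeeping.
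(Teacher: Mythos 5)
Your proposal is correct and takes essentially the same route as the paper: the paper's proof simply substitutes $\mathcal{B}^\lambda_\alpha=-\textbf{y}^\gamma(\Gamma^\lambda_{\alpha\gamma}\circ\pi)$ into the curvature formula (\ref{curv0}) and identifies the resulting expression as $-\textbf{y}^\gamma(K_{\alpha\beta\gamma}^{\ \ \ \ \lambda}\circ\pi)$, exactly as you do. The only difference is that you spell out the computation of $K_{\alpha\beta\gamma}^{\ \ \ \ \lambda}$ from the definition of the curvature of $\nabla$ and track the sign of the $L$-term explicitly, steps the paper leaves implicit.
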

\begin{proof}
Setting $\mathcal{B}^\lambda_\alpha=-\textbf{y}^\gamma(\Gamma^\lambda_{\alpha\gamma}\circ\pi)$ in (\ref{curv0}) give us
\begin{align*}
R^\lambda_{\alpha\beta}&=\textbf{y}^\gamma\Big((\rho^i_\beta\circ\pi)\frac{\partial(\Gamma^\lambda_{\alpha\gamma}\circ\pi)}{\partial\textbf{x}^i}
+(\Gamma^\mu_{\alpha\gamma}\circ\pi)(\Gamma^\lambda_{\beta\mu}\circ\pi)
-(\rho^i_\alpha\circ\pi)\frac{\partial(\Gamma^\lambda_{\beta\gamma}\circ\pi)}{\partial\textbf{x}^i}
\\
&\ \ \ -(\Gamma^\mu_{\beta\gamma}\circ\pi)(\Gamma^\lambda_{\alpha\mu}\circ\pi)-(L^\mu_{\beta\alpha}\circ\pi)(\Gamma^\lambda_{\mu\gamma}\circ\pi)\Big)
=-\textbf{y}^\gamma(K_{\alpha\beta\gamma}^{\ \ \ \ \lambda}\circ\pi).
\end{align*}
\end{proof}
\begin{cor}\label{very im}
Let $\nabla$ be a linear connection on $E$ and $h_\nabla$ be the horizontal endomorphism generated by $\nabla$. Then the curvature of $\nabla$ is zero if and only if the curvature of $h_\nabla$ vanishes.
\end{cor}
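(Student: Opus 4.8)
The plan is to read off the corollary directly from the preceding lemma, which records the pointwise identity $\textbf{y}^\gamma(K_{\alpha\beta\gamma}^{\ \ \ \ \lambda}\circ\pi)=-R^\lambda_{\alpha\beta}$ between the curvature coefficients of $\nabla$ on $E$ and those of $h_\nabla$ on $\pounds^\pi E$. By the coordinate expression (\ref{curv000}), the curvature $\Omega$ of $h_\nabla$ equals $-\frac{1}{2}R^\gamma_{\alpha\beta}\mathcal{X}^\alpha\wedge\mathcal{X}^\beta\otimes\mathcal{V}_\gamma$, so that $\Omega$ vanishes precisely when all the functions $R^\gamma_{\alpha\beta}$ vanish identically on $\pounds^\pi E$; likewise the curvature of $\nabla$ is zero precisely when all the structure functions $K_{\alpha\beta\gamma}^{\ \ \ \ \lambda}$ vanish on $M$. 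Thus the whole statement reduces to showing that the vanishing of the family $\{K_{\alpha\beta\gamma}^{\ \ \ \ \lambda}\}$ is equivalent to the vanishing of the family $\{R^\gamma_{\alpha\beta}\}$.

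First I would treat the forward implication, which is immediate: if $\nabla$ is flat then $K_{\alpha\beta\gamma}^{\ \ \ \ \lambda}=0$ for all indices, and the lemma gives $R^\lambda_{\alpha\beta}=-\textbf{y}^\gamma(K_{\alpha\beta\gamma}^{\ \ \ \ \lambda}\circ\pi)=0$, whence $\Omega=0$ by (\ref{curv000}).

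For the converse --- the only point actually requiring an argument --- suppose $\Omega=0$, i.e.\ $R^\lambda_{\alpha\beta}=0$ identically. The lemma then yields $\textbf{y}^\gamma(K_{\alpha\beta\gamma}^{\ \ \ \ \lambda}\circ\pi)=0$ at every point of $E$, that is, for every value of the fibre coordinates $\textbf{y}^\gamma$. The key observation is that the pulled-back functions $K_{\alpha\beta\gamma}^{\ \ \ \ \lambda}\circ\pi$ do not depend on the $\textbf{y}$-variables, so I would differentiate this identity with respect to $\textbf{y}^\delta$ to obtain $K_{\alpha\beta\delta}^{\ \ \ \ \lambda}\circ\pi=0$ for all $\alpha,\beta,\delta,\lambda$. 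Since $\pi$ is surjective this forces $K_{\alpha\beta\delta}^{\ \ \ \ \lambda}=0$ on $M$, so $\nabla$ is flat.

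The main (and essentially only) obstacle is this ``decontraction'' in the converse direction: the lemma supplies only the $\textbf{y}$-contracted quantity $\textbf{y}^\gamma K_{\alpha\beta\gamma}^{\ \ \ \ \lambda}$, and one must recover the full tensor from it. This is handled cleanly by the independence of $K$ from the direction variables --- the identity holds for all $\textbf{y}$ while $K$ is a base object --- but it is worth isolating, since it is exactly the step that exploits that $h_\nabla$ is built from a genuine linear connection on $E$ (so that $\mathcal{B}^\lambda_\alpha=-\textbf{y}^\gamma(\Gamma^\lambda_{\alpha\gamma}\circ\pi)$ is linear in $\textbf{y}$) rather than from an arbitrary horizontal endomorphism.
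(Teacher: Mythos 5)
Your proposal is correct and matches the argument the paper intends: the corollary is read off from the preceding lemma's identity $\textbf{y}^\gamma(K_{\alpha\beta\gamma}^{\ \ \ \ \lambda}\circ\pi)=-R^\lambda_{\alpha\beta}$ together with the coordinate expression (\ref{curv000}), the forward direction being immediate and the converse recovered by differentiating the contracted identity in $\textbf{y}^\delta$ (equivalently, evaluating at suitable fibre points), which is legitimate precisely because $K_{\alpha\beta\gamma}^{\ \ \ \ \lambda}\circ\pi$ is independent of the fibre coordinates. Your isolation of the ``decontraction'' step as the only nontrivial point, and its dependence on $\mathcal{B}^\lambda_\alpha$ being linear in $\textbf{y}$, is exactly the right reading of why the statement holds for $h_\nabla$ and not for an arbitrary horizontal endomorphism.
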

\begin{proposition}\label{Bestth}
Let $(D, h)$ be a $h$-basic d-connection with base connection $\nabla$ and $h_\nabla$ be the horizontal endomorphism generated by $\nabla$. Then
\[
D_{X^h}C=X^h-X^{h_\nabla}.
\]
\end{proposition}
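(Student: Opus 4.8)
The plan is to verify the identity by expanding both sides in the adapted basis $\{\delta_\alpha,\mathcal{V}_\alpha\}$ and matching coefficients. Writing $X=X^\alpha e_\alpha$, the coordinate form of the horizontal lift gives $X^h=(X^\alpha\circ\pi)\delta_\alpha$, and since $D$ is $C^\infty(E)$-linear in its first slot it suffices to evaluate $D_{\delta_\alpha}C$. This is precisely the $h$-deflection of $(D,h)$ read off on $\mathcal{X}_\alpha$, since $h\mathcal{X}_\alpha=\delta_\alpha$; using $C=\textbf{y}^\beta\mathcal{V}_\beta$ from (\ref{Liouville}), the Leibniz rule, the anchor formula (\ref{rho}) (which yields $\rho_\pounds(\delta_\alpha)(\textbf{y}^\beta)=\mathcal{B}^\beta_\alpha$), and $D_{\delta_\alpha}\mathcal{V}_\beta=F^\gamma_{\alpha\beta}\mathcal{V}_\gamma$ from (\ref{conn}), one recovers the $h$-deflection coefficients already computed earlier, namely
\[
D_{\delta_\alpha}C=(\mathcal{B}^\gamma_\alpha+\textbf{y}^\beta F^\gamma_{\alpha\beta})\mathcal{V}_\gamma,
\qquad
D_{X^h}C=(X^\alpha\circ\pi)(\mathcal{B}^\gamma_\alpha+\textbf{y}^\beta F^\gamma_{\alpha\beta})\mathcal{V}_\gamma.
\]

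The decisive step is to identify the local coefficients $F^\gamma_{\alpha\beta}$ of $D$ with the structure functions of the base connection $\nabla$. Writing $\nabla_{e_\alpha}e_\beta=\Gamma^\gamma_{\alpha\beta}e_\gamma$ and using $\delta_\alpha=e_\alpha^h$ together with $\mathcal{V}_\beta=e_\beta^V$ from (\ref{2base}), the $h$-basic defining relation (\ref{GB}) gives
\[
F^\gamma_{\alpha\beta}\mathcal{V}_\gamma=D_{e_\alpha^h}e_\beta^V=(\nabla_{e_\alpha}e_\beta)^V=(\Gamma^\gamma_{\alpha\beta}\circ\pi)\mathcal{V}_\gamma,
\]
so that $F^\gamma_{\alpha\beta}=\Gamma^\gamma_{\alpha\beta}\circ\pi$. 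Substituting this into the expression for $D_{X^h}C$ I obtain
\[
D_{X^h}C=(X^\alpha\circ\pi)\bigl(\mathcal{B}^\gamma_\alpha+\textbf{y}^\beta(\Gamma^\gamma_{\alpha\beta}\circ\pi)\bigr)\mathcal{V}_\gamma.
\]

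It then remains to expand the right-hand side. The horizontal endomorphism $h_\nabla$ generated by $\nabla$ has $\mathcal{V}$-coefficient $-\textbf{y}^\beta(\Gamma^\gamma_{\alpha\beta}\circ\pi)$, whence $X^{h_\nabla}=(X^\alpha\circ\pi)\bigl(\mathcal{X}_\alpha-\textbf{y}^\beta(\Gamma^\gamma_{\alpha\beta}\circ\pi)\mathcal{V}_\gamma\bigr)$, while $X^h=(X^\alpha\circ\pi)(\mathcal{X}_\alpha+\mathcal{B}^\gamma_\alpha\mathcal{V}_\gamma)$. Since both lifts share the horizontal part $(X^\alpha\circ\pi)\mathcal{X}_\alpha$, subtraction cancels it and leaves
\[
X^h-X^{h_\nabla}=(X^\alpha\circ\pi)\bigl(\mathcal{B}^\gamma_\alpha+\textbf{y}^\beta(\Gamma^\gamma_{\alpha\beta}\circ\pi)\bigr)\mathcal{V}_\gamma,
\]
which coincides termwise with the expression already obtained for $D_{X^h}C$, completing the argument.

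Every step is a direct substitution, so there is no genuine analytic difficulty; the only care needed is the index bookkeeping, in particular confirming that the $\mathcal{X}_\alpha$-components cancel in $X^h-X^{h_\nabla}$ and that the dummy index in $\textbf{y}^\beta F^\gamma_{\alpha\beta}$ is matched consistently with that in $\textbf{y}^\beta(\Gamma^\gamma_{\alpha\beta}\circ\pi)$. Conceptually, the statement simply expresses that the $h$-deflection of a $h$-basic d-connection measures precisely the discrepancy between $h$ and the horizontal endomorphism $h_\nabla$ canonically attached to its base connection.
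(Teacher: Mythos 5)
Your proof is correct and follows essentially the same route as the paper: compute $D_{X^h}C$ via the $h$-deflection coefficients $\mathcal{B}^\gamma_\alpha+\textbf{y}^\beta F^\gamma_{\alpha\beta}$, identify $F^\gamma_{\alpha\beta}=\Gamma^\gamma_{\alpha\beta}\circ\pi$ from the $h$-basic condition (\ref{GB}), and match this against the coordinate expansion of $X^h-X^{h_\nabla}$, whose $\mathcal{X}_\alpha$-parts cancel. The only cosmetic difference is that you re-derive the identity $F^\gamma_{\alpha\beta}=\Gamma^\gamma_{\alpha\beta}\circ\pi$ inline, whereas the paper cites it from its preceding proposition, where it was established by exactly the same one-line computation.
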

\begin{proof}
Let $F^\gamma_{\alpha\beta}$ be the local coefficients of $D_{\delta_\alpha}\mathcal{V}_\beta$ and $\Gamma^\gamma_{\alpha\beta}$ be the local coefficients of $\nabla_{e_\alpha}e_\beta$. In the above proposition we show that $F^\gamma_{\alpha\beta}=(\Gamma^\gamma_{\alpha\beta}\circ\pi)$, because $(D, h)$ be a $h$-basic d-connection with base connection $\nabla$. Thus we can obtain
\begin{equation}\label{ok3}
D_{X^h}C=(X^\alpha\circ\pi)(\mathcal{B}^\beta_\alpha+\textbf{y}^\gamma F^\beta_{\alpha\gamma})\mathcal{V}_\beta=(X^\alpha\circ\pi)(\mathcal{B}^\beta_\alpha+\textbf{y}^\gamma(\Gamma^\beta_{\alpha\gamma}\circ\pi))\mathcal{V}_\beta,
\end{equation}
where $X=X^\alpha e_\alpha$, $X^h=(X^\alpha\circ\pi)\delta_\alpha$ and $h$ is given by (\ref{horizontal end}). (\ref{horizontal end}), (\ref{delta}) and the above equation give us
\[
X^h-X^{h_\nabla}=(X^\alpha\circ\pi)(\mathcal{X}_\alpha+\mathcal{B}^\beta_\alpha\mathcal{V}_\beta)-(X^\alpha\circ\pi)(\mathcal{X}_\alpha
-\textbf{y}^\gamma(\Gamma^\beta_{\alpha\gamma}\circ\pi)\mathcal{V}_\beta)=D_{X^h}C.
\]
\end{proof}
\begin{cor}\label{ok}
Let $(D, h)$ be a $h$-basic d-connection with base connection $\nabla$ and $h_\nabla$ be the horizontal endomorphism generated by $\nabla$. Then $h_\nabla$ coincides with $h$ if and only if the $h$-deflection of $(D, h)$ is zero.
\end{cor}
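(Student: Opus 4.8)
The plan is to obtain the corollary as an immediate consequence of Proposition \ref{Bestth}, which already computes $D_{X^h}C = X^h - X^{h_\nabla}$ for every $X\in\Gamma(E)$. The only conceptual work is to translate the vanishing of the $h$-deflection into the vanishing of $D_{X^h}C$, and then to promote the resulting equality of horizontal lifts to an equality of the two endomorphisms.

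First I would observe that the $h$-deflection $h^*(DC)$, defined in (\ref{h*}) by $\widetilde{X}\mapsto D_{h\widetilde{X}}C$, vanishes identically if and only if $D_{X^h}C=0$ for all $X\in\Gamma(E)$. Indeed, $h^*(DC)$ annihilates $v\pounds^\pi E$ (since $h\mathcal{V}_\alpha=0$) and is $C^\infty(E)$-linear in its argument, so it is zero precisely when $D_{\delta_\alpha}C=0$ for every $\alpha$; writing $X=X^\alpha e_\alpha$ gives $X^h=(X^\alpha\circ\pi)\delta_\alpha$ and hence $D_{X^h}C=(X^\alpha\circ\pi)D_{\delta_\alpha}C$, which establishes the equivalence.

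Combining this with Proposition \ref{Bestth}, the vanishing of the $h$-deflection is equivalent to $X^h=X^{h_\nabla}$ for all $X\in\Gamma(E)$. One implication is then immediate: if $h=h_\nabla$ then $X^h=hX^C=h_\nabla X^C=X^{h_\nabla}$, so $D_{X^h}C=0$ and the $h$-deflection is zero. For the converse I would upgrade the agreement on horizontal lifts to equality of the two endomorphisms. Taking $X=e_\alpha$, the coordinate expression (\ref{esi}) shows that $e_\alpha^C=\mathcal{X}_\alpha$ plus a purely vertical term; since both $h$ and $h_\nabla$ kill $v\pounds^\pi E$, we get $h(\mathcal{X}_\alpha)=h(e_\alpha^C)=e_\alpha^h=e_\alpha^{h_\nabla}=h_\nabla(e_\alpha^C)=h_\nabla(\mathcal{X}_\alpha)$. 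As moreover $h(\mathcal{V}_\alpha)=0=h_\nabla(\mathcal{V}_\alpha)$ and $\{\mathcal{X}_\alpha,\mathcal{V}_\alpha\}$ is a local basis of $\pounds^\pi E$, the two endomorphisms coincide on a basis, whence $h=h_\nabla$.

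The argument is essentially formal once Proposition \ref{Bestth} is in hand, so there is no serious obstacle; the only step needing care is the final one, namely that a horizontal endomorphism is determined by its restriction to the $\mathcal{X}_\alpha$ and that the horizontal lift $e_\alpha^h$ recovers $h(\mathcal{X}_\alpha)$ because the vertical part of $e_\alpha^C$ lies in $\ker h$. Everything else is a direct translation through the identity $D_{X^h}C=X^h-X^{h_\nabla}$.
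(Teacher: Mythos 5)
Your proposal is correct and follows essentially the same route as the paper's own proof: both reduce the statement to Proposition \ref{Bestth} via the observation that the $h$-deflection vanishes precisely when $D_{X^h}C=0$ for all $X\in\Gamma(E)$. The only difference is that you justify the final step (that $X^h=X^{h_\nabla}$ for all $X$ forces $h=h_\nabla$, using that $e_\alpha^C-\mathcal{X}_\alpha$ is vertical and both endomorphisms kill $v\pounds^\pi E$), a point the paper asserts with a bare ``consequently.''
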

\begin{proof}
If $h_\nabla=h$, then from the above proposition we have $D_{X^h}C=0$ and in particular $D_{\delta_\alpha}C=D_{e_\alpha^h}C=0$. Therefore we deduce $h^*(DC)(\delta_\alpha)=D_{\delta_\alpha}C=0$, i.e., the $h$-deflection of $(D, h)$ vanishes. Conversely, if the $h$-deflection of $(D, h)$ is zero, then we deduce $D_{\delta_\alpha}C=0$ and consequently $D_{X^h}C=0$. Thus from the above proposition we derive that $X^h=X^{h_\nabla}$ and consequently $h=h_\nabla$.
\end{proof}
\begin{cor}\label{ok1}
Let $(D, h)$ be a $h$-basic d-connection with base connection $\nabla$ and $h_\nabla$ be the horizontal endomorphism generated by $\nabla$. If the $h$-deflection of $(D, h)$ is zero, then we have
\[
(i)\ D_{h\widetilde{X}}v\widetilde{Y}=v[h\widetilde{X}, v\widetilde{Y}]_\pounds,\ \ \ \ \ \ \ (ii)\ D_{h\widetilde{X}}h\widetilde{Y}=hF[h\widetilde{X}, J\widetilde{Y}]_\pounds,
\]
where $\widetilde{X}, \widetilde{Y}\in\Gamma(\pounds^\pi E)$.
\end{cor}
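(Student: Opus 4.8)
The plan is to derive part (i) directly from the definition of an $h$-basic d-connection together with Corollary~\ref{ok}, and then to obtain part (ii) as a purely formal consequence of (i) using the almost-complex and reducibility axioms of a d-connection and the structural identities collected in Lemma~\ref{IM}. No new coordinate computation should be needed beyond what is already recorded in the excerpt.

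For (i), first I would start from the defining relation (\ref{GB}) of the base connection, $D_{X^h}Y^V=(\nabla_XY)^V$ for $X,Y\in\Gamma(E)$, and combine it with the identity $(\nabla_XY)^V=[X^{h_\nabla},Y^V]_\pounds$ that accompanies the horizontal endomorphism generated by $\nabla$. The hypothesis that the $h$-deflection of $(D,h)$ vanishes is exactly what Corollary~\ref{ok} requires in order to conclude $h_\nabla=h$; substituting this gives $D_{X^h}Y^V=[X^h,Y^V]_\pounds$. Since $[X^h,Y^V]_\pounds$ is vertical (it is a multiple of $\mathcal{V}_\beta$, as recorded in the proof leading to (\ref{leila1})), it coincides with its own vertical projection, so $D_{X^h}Y^V=v[X^h,Y^V]_\pounds$. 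It then remains to promote this identity, established on horizontal and vertical lifts of sections of $E$, to arbitrary $\widetilde{X},\widetilde{Y}\in\Gamma(\pounds^\pi E)$.

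For (ii), I would avoid touching coordinates altogether and instead exploit that $D$ is almost complex. Writing $h\widetilde{Y}=FJ\widetilde{Y}$ via $(i)$ of (\ref{IM0}) and using $DF=0$, equivalently (\ref{conn00}), gives $D_{h\widetilde{X}}h\widetilde{Y}=F\,D_{h\widetilde{X}}J\widetilde{Y}$. Now $J\widetilde{Y}$ lies in $v\pounds^\pi E$, hence equals its own vertical projection, so part (i) applies to it and yields $D_{h\widetilde{X}}J\widetilde{Y}=v[h\widetilde{X},J\widetilde{Y}]_\pounds$. Applying $F$ and invoking $F\circ v=h\circ F$, that is $(iv)$ of (\ref{IM0}), turns $Fv[h\widetilde{X},J\widetilde{Y}]_\pounds$ into $hF[h\widetilde{X},J\widetilde{Y}]_\pounds$, which is precisely the asserted formula.

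The main obstacle is the localization step closing part (i): one must check that the difference $D_{h\widetilde{X}}v\widetilde{Y}-v[h\widetilde{X},v\widetilde{Y}]_\pounds$ is genuinely tensorial. Both terms are $C^\infty(E)$-linear in $\widetilde{X}$ because $vh=0$ annihilates the anchor term produced when a function is carried through the bracket; in the $\widetilde{Y}$-slot neither term is tensorial on its own, but the derivation contributions $\rho_\pounds(h\widetilde{X})(f)\,v\widetilde{Y}$ agree and cancel in the difference. Once tensoriality is secured, the fact that the lifts $\{X^h\}$ and $\{Y^V\}$ locally generate $\Gamma(h\pounds^\pi E)$ and $\Gamma(v\pounds^\pi E)$ respectively lets me pass from the lift identity to arbitrary sections, completing (i) and therefore (ii).
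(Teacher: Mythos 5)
Your proof is correct, and for part (i) it takes a genuinely different route from the paper's. The paper proves (i) by a direct coordinate computation: after invoking Corollary \ref{ok} to get $h=h_\nabla$, hence $\mathcal{B}^\beta_\alpha=-\textbf{y}^\lambda(\Gamma^\beta_{\alpha\lambda}\circ\pi)$, it expands $v[h\widetilde{X}, v\widetilde{Y}]_\pounds$ in the adapted basis and matches it term by term with $D_{h\widetilde{X}}v\widetilde{Y}$, using the fact (from Proposition \ref{suitable}) that the coefficients of $D_{\delta_\alpha}\mathcal{V}_\beta$ are $F^\gamma_{\alpha\beta}=\Gamma^\gamma_{\alpha\beta}\circ\pi$. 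You instead stay coordinate-free: the chain $D_{X^h}Y^V=(\nabla_XY)^V=[X^{h_\nabla}, Y^V]_\pounds=[X^h, Y^V]_\pounds=v[X^h, Y^V]_\pounds$ establishes (i) on lifts, and your localization step is sound: the difference $\Phi(\widetilde{X},\widetilde{Y})=D_{h\widetilde{X}}v\widetilde{Y}-v[h\widetilde{X}, v\widetilde{Y}]_\pounds$ is $C^\infty(E)$-bilinear (using $vh=0$ to kill the anchor term in the $\widetilde{X}$-slot, and the cancellation of the two derivation terms $\rho_\pounds(h\widetilde{X})(f)\,v\widetilde{Y}$ in the $\widetilde{Y}$-slot), it vanishes whenever $h\widetilde{X}=0$ or $v\widetilde{Y}=0$, and it vanishes on the generators $\delta_\alpha=e_\alpha^h$ and $\mathcal{V}_\beta=e_\beta^V$ by the lift identity, hence vanishes identically. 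What your approach buys is transparency about which structural facts drive (i) — the $h$-basic axiom, the bracket characterization of $h_\nabla$, Corollary \ref{ok}, and tensoriality — and it avoids reproving anything in coordinates; what the paper's approach buys is brevity, since in the adapted basis the verification is a one-line comparison of coefficients. For part (ii) the two arguments coincide: both repeat the computation of (\ref{BF3}), writing $h\widetilde{Y}=FJ\widetilde{Y}$, commuting $F$ through $D$ by $DF=0$, applying (i) to $J\widetilde{Y}=vJ\widetilde{Y}$, and finishing with $F\circ v=h\circ F$.
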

\begin{proof}
Let $\widetilde{X}=\widetilde{X}^\alpha\delta_\alpha+\widetilde{X}^{\bar{\alpha}}\mathcal{V}_\alpha$ and  $\widetilde{Y}=\widetilde{Y}^\beta\delta_\beta+\widetilde{Y}^{\bar{\beta}}\mathcal{V}_\beta$ be sections of $\pounds^\pi E$. Since the $h$-deflection of $(D, h)$ is zero, then using the above corollary we have $h=h_\nabla$ and consequently $\mathcal{B}^\beta_\alpha=-\textbf{y}^\lambda(\Gamma^\beta_{\alpha\lambda}\circ\pi)$. Thus we can obtain
\begin{align*}
v[h\widetilde{X}, v\widetilde{Y}]_\pounds&=\widetilde{X}^\alpha\Big((\rho^i_\alpha\circ\pi)\frac{\partial \widetilde{Y}^{\bar\beta}}{\partial\textbf{x}^i}-\textbf{y}^\lambda(\Gamma^{\gamma}_{\lambda\alpha}\circ\pi)\frac{\partial \widetilde{Y}^{\bar\beta}}{\partial\textbf{y}^\gamma}\Big)\mathcal{V}_\beta+X^\alpha Y^{\bar\beta}(\Gamma^\gamma_{\alpha\beta}\circ\pi)\mathcal{V}_\gamma\\
&=D_{h\widetilde{X}}v\widetilde{Y},
\end{align*}
because $F^\gamma_{\alpha\beta}=(\Gamma^\gamma_{\alpha\beta}\circ\pi)$, where $F^\gamma_{\alpha\beta}$ are the local coefficients of $D_{\delta_\alpha}\mathcal{V}_\beta$. Therefore we have (i). Similar to (\ref{BF3}), using (i) we can deduce (ii).
\end{proof}
\begin{proposition}\label{ok2}
Let $(D, h)$ be a $h$-basic d-connection with base connection $\nabla$ and $h$ be a homogenous horizontal endomorphism. Then $h$-deflection of $(D, h)$ is zero if and only if the $v$-mixed torsion of $D$ is zero.
\end{proposition}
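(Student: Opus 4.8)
The plan is to work intrinsically and to pivot both implications around the action of $D$ on the Liouville section $C$. Recall that the $h$-deflection of $(D,h)$ is by definition the operator $\widetilde{X}\mapsto D_{h\widetilde{X}}C$ of (\ref{h*}), while the $v$-mixed torsion is given by (\ref{TD4}) as $P^1(\widetilde{X},\widetilde{Y})=D_{h\widetilde{X}}J\widetilde{Y}-v[h\widetilde{X},J\widetilde{Y}]_\pounds$. The bridge between the two quantities is obtained by evaluating $P^1$ along an arbitrary semispray $S$: since $JS=C$ and $P^1$ is tensorial (being $vT(h\cdot,J\cdot)$), one gets $P^1(\widetilde{X},S)=D_{h\widetilde{X}}C-v[h\widetilde{X},C]_\pounds$ for every $\widetilde{X}\in\Gamma(\pounds^\pi E)$. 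First I would establish this identity, so that both directions reduce to comparing $D_{h\widetilde{X}}C$ with the term $v[h\widetilde{X},C]_\pounds$.

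For the implication that a vanishing $h$-deflection forces $P^1=0$, I would invoke Corollary \ref{ok1}: under the $h$-basic hypothesis, vanishing $h$-deflection yields $D_{h\widetilde{X}}v\widetilde{Y}=v[h\widetilde{X},v\widetilde{Y}]_\pounds$. Applying this with the vertical section $J\widetilde{Y}$ in place of $v\widetilde{Y}$ (note $v(J\widetilde{Y})=J\widetilde{Y}$ because $J\widetilde{Y}\in v\pounds^\pi E$) gives $D_{h\widetilde{X}}J\widetilde{Y}=v[h\widetilde{X},J\widetilde{Y}]_\pounds$, whence $P^1=0$ directly from (\ref{TD4}). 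Notice that this direction needs only that $(D,h)$ is $h$-basic and does not use homogeneity of $h$.

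The converse is where homogeneity enters and is the main point. Assuming $P^1=0$, the bridge identity gives $D_{h\widetilde{X}}C=v[h\widetilde{X},C]_\pounds$ for all $\widetilde{X}$. I would then discard the right-hand side using that $h$ is homogeneous: by definition this means the tension $H=[h,C]^{F-N}_\pounds$ vanishes, and (\ref{F3}) rewrites $H(\widetilde{X})=[h\widetilde{X},C]_\pounds-h[\widetilde{X},C]_\pounds$, so $[h\widetilde{X},C]_\pounds=h[\widetilde{X},C]_\pounds$; applying $v$ and using $vh=0$ from (\ref{Jh}) yields $v[h\widetilde{X},C]_\pounds=0$. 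Hence $D_{h\widetilde{X}}C=0$, i.e. the $h$-deflection vanishes. The hard part is exactly this step: the $v$-mixed torsion controls $D_{h\widetilde{X}}C$ only up to the anholonomy term $v[h\widetilde{X},C]_\pounds$, and homogeneity is precisely what kills that term. As a cross-check I would also run the argument in coordinates, comparing the $h$-deflection coefficients $\mathcal{B}^\gamma_\alpha+\textbf{y}^\beta F^\gamma_{\alpha\beta}$ with the torsion coefficients $P^\gamma_{\alpha\beta}=F^\gamma_{\alpha\beta}+\frac{\partial\mathcal{B}^\gamma_\alpha}{\partial\textbf{y}^\beta}$ of (\ref{very im3}), using that the $h$-basic property renders $F^\gamma_{\alpha\beta}$ independent of the fibre coordinates (so differentiating the deflection in $\textbf{y}^\beta$ returns $\pm P^\gamma_{\alpha\beta}$) and that homogeneity supplies the Euler relation $\textbf{y}^\beta\frac{\partial\mathcal{B}^\gamma_\alpha}{\partial\textbf{y}^\beta}=\mathcal{B}^\gamma_\alpha$.
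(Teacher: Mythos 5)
Your proof is correct, but it follows a genuinely different route from the paper's. The paper argues entirely in the adapted coordinate frame: using the $h$-basic property it writes $P^1(\delta_\alpha,\delta_\beta)=\bigl((\Gamma^\gamma_{\alpha\beta}\circ\pi)+\frac{\partial\mathcal{B}^\gamma_\alpha}{\partial\textbf{y}^\beta}\bigr)\mathcal{V}_\gamma$, then contracts with $\textbf{y}^\beta$ via the Euler relation to show that $P^1=0$ is equivalent to $\mathcal{B}^\gamma_\alpha=-\textbf{y}^\beta(\Gamma^\gamma_{\alpha\beta}\circ\pi)$, i.e.\ to $h=h_\nabla$, which by Corollary \ref{ok} is exactly the vanishing of the $h$-deflection. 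You instead work intrinsically, contracting the tensorial $v$-mixed torsion with a semispray to get the bridge identity $P^1(\widetilde{X},S)=D_{h\widetilde{X}}C-v[h\widetilde{X},C]_\pounds$, proving the forward direction from Corollary \ref{ok1} applied to $J\widetilde{Y}$ (using $vJ=J$), and the converse by showing homogeneity kills the anholonomy term: $H=0$ plus (\ref{F3}) gives $[h\widetilde{X},C]_\pounds=h[\widetilde{X},C]_\pounds$, so $vh=0$ forces $v[h\widetilde{X},C]_\pounds=0$ and hence $D_{h\widetilde{X}}C=0$. Each approach has its advantages: the paper's computation exhibits both conditions as equivalent to the structural identity $h=h_\nabla$, which it immediately reuses in the remark on smoothness of $h$ over the whole of $\pounds^\pi E$; your argument is coordinate-free, makes transparent exactly where homogeneity enters (only in the converse, so the implication from vanishing deflection to $P^1=0$ holds for any $h$-basic d-connection), and identifies the geometric mechanism — the tension vanishing is precisely what discards $v[h\widetilde{X},C]_\pounds$ — rather than leaving it buried in an index contraction.
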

\begin{proof}
Using (\ref{TD4}) we have
\begin{equation}\label{ok4}
P^1(\delta_\alpha, \delta_\beta)=D_{\delta_\alpha}\mathcal{V}_\beta-v[\delta_\alpha, \mathcal{V}_\beta]_\pounds=((\Gamma_{\alpha\beta}^\gamma\circ\pi)+\frac{\partial \mathcal{B}^\gamma_\alpha}{\partial\textbf{y}^\beta})\mathcal{V}_\gamma.
\end{equation}
Thus $P^1=0$ if and only if $\frac{\partial \mathcal{B}^\gamma_\alpha}{\partial\textbf{y}^\beta}=-(\Gamma_{\alpha\beta}^\gamma\circ\pi)$. But since $h$ is homogenous, then we have $\textbf{y}^\beta\frac{\partial \mathcal{B}^\gamma_\alpha}{\partial\textbf{y}^\beta}=\mathcal{B}^\gamma_\alpha$. Thus we can deduce $P^1=0$ if and only if $\mathcal{B}^\gamma_\alpha=-y^\beta(\Gamma_{\alpha\beta}^\gamma\circ\pi)$ (this equation gives us $h=h_\nabla$). Therefore the vanishing of $P^1$ is equivalent to vanishing of  the $h$-deflection of $(D, h)$.
\end{proof}
\begin{rem}
Since in corollaries \ref{ok}, \ref{ok1} and proposition \ref{ok2} we work on the vanishing of $h$-deflection of $(D, h)$, then we have $h=h_\nabla$. But $h_\nabla$ is smooth on the whole $\pounds^\pi E$. Therefore the horizontal endomorphism $h$ should be smooth on the whole $\pounds^\pi E$.
\end{rem}
\begin{proposition}
Let $(D, h)$ be a $h$-basic d-connection with base connection $\nabla$ and the horizontal endomorphism $h$ be smooth on whole $\pounds^\pi E$. Then the $h$-deflection of $(D, h)$ coincides with the tension of $h$ if and only if the $v$-mixed torsion of $D$ is zero.
\end{proposition}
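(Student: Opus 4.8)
The plan is to translate both sides of the claimed equivalence into coordinates, recognize their difference as the fiber-radial contraction of the $v$-mixed torsion, and then upgrade ``radial contraction vanishes'' to ``the whole torsion vanishes'' by combining a homogeneity identity with the global smoothness of $h$.

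First I would write down the two objects to be compared. By (\ref{h-def}) the $h$-deflection of $(D,h)$ equals $(\mathcal{B}^\gamma_\alpha+\textbf{y}^\beta F^\gamma_{\alpha\beta})\mathcal{V}_\gamma\otimes\mathcal{X}^\alpha$, while by (\ref{tension}) the tension $H$ of $h$ equals $(\mathcal{B}^\gamma_\alpha-\textbf{y}^\beta\partial\mathcal{B}^\gamma_\alpha/\partial\textbf{y}^\beta)\mathcal{V}_\gamma\otimes\mathcal{X}^\alpha$. Subtracting, the two coincide if and only if $\textbf{y}^\beta(F^\gamma_{\alpha\beta}+\partial\mathcal{B}^\gamma_\alpha/\partial\textbf{y}^\beta)=0$; by (ii) of (\ref{very im3}) the parenthesis is exactly the coefficient $P^\gamma_{\alpha\beta}$ of the $v$-mixed torsion $P^1$ recorded in (\ref{very im2}). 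Hence the hypothesis ``$h$-deflection $=$ tension'' is equivalent to the single relation $\textbf{y}^\beta P^\gamma_{\alpha\beta}=0$.

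The heart of the proof is to show that, under the two standing hypotheses, $\textbf{y}^\beta P^\gamma_{\alpha\beta}=0$ forces $P^\gamma_{\alpha\beta}=0$ identically. Since $(D,h)$ is $h$-basic, the proof of Proposition \ref{suitable} gives $F^\gamma_{\alpha\beta}=\Gamma^\gamma_{\alpha\beta}\circ\pi$, a function of the base point alone; therefore $\partial P^\gamma_{\alpha\beta}/\partial\textbf{y}^\delta=\partial^2\mathcal{B}^\gamma_\alpha/\partial\textbf{y}^\delta\partial\textbf{y}^\beta$ is symmetric in $\beta$ and $\delta$. Differentiating $\textbf{y}^\beta P^\gamma_{\alpha\beta}=0$ with respect to $\textbf{y}^\delta$ and inserting this symmetry yields $P^\gamma_{\alpha\delta}+\textbf{y}^\beta\partial P^\gamma_{\alpha\delta}/\partial\textbf{y}^\beta=0$, which by Lemma \ref{lemmahom} says precisely that each $P^\gamma_{\alpha\delta}$ is homogeneous of degree $-1$ in the fiber coordinates.

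The decisive step, and the one I expect to be the main obstacle, is to close the argument through smoothness: because $h$ is assumed smooth on all of $\pounds^\pi E$ rather than only on $\pounds^\pi E-\{0\}$, the functions $\mathcal{B}^\gamma_\alpha$ and hence $P^\gamma_{\alpha\beta}$ extend smoothly across the zero section. A function smooth on an entire fiber and homogeneous of degree $-1$ must vanish, since $P^\gamma_{\alpha\delta}(x,t\textbf{y})=t^{-1}P^\gamma_{\alpha\delta}(x,\textbf{y})$ would blow up as $t\to 0^{+}$ unless $P^\gamma_{\alpha\delta}(x,\textbf{y})=0$, whereas continuity at $\textbf{y}=0$ keeps the left-hand side bounded. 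Thus $P^1=0$. The converse is immediate, for if $P^1=0$ then in particular $\textbf{y}^\beta P^\gamma_{\alpha\beta}=0$, so the $h$-deflection and the tension have identical coefficients and coincide. I would finally note, in the spirit of the surrounding results, that the global smoothness of $h$ is exactly the ingredient that here replaces the homogeneity hypothesis invoked in Proposition \ref{ok2}.
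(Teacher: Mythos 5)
Your proof is correct, and its overall architecture matches the paper's: both reduce the statement to the coordinate identity that coincidence of the $h$-deflection and the tension is equivalent to $\textbf{y}^\beta P^\gamma_{\alpha\beta}=0$, where $P^\gamma_{\alpha\beta}=F^\gamma_{\alpha\beta}+\partial\mathcal{B}^\gamma_\alpha/\partial\textbf{y}^\beta$ are the coefficients of the $v$-mixed torsion from (\ref{very im3}), and in both treatments the direction $P^1=0\Rightarrow$ coincidence is a one-line substitution. Where you genuinely add something is the converse, which is the hard direction: the paper asserts that coincidence plus smoothness of $h$ on the whole $\pounds^\pi E$ yields $\partial\mathcal{B}^\gamma_\alpha/\partial\textbf{y}^\beta=-(\Gamma^\gamma_{\alpha\beta}\circ\pi)$ ``using (\ref{tension}) and (\ref{ok3})'', without explaining how the radial identity $\textbf{y}^\beta P^\gamma_{\alpha\beta}=0$ is depolarized into the full identity $P^\gamma_{\alpha\beta}=0$. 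Your argument closes exactly this gap: $h$-basicness (via Proposition \ref{suitable}) makes $F^\gamma_{\alpha\beta}$ fiber-independent, so $\partial P^\gamma_{\alpha\beta}/\partial\textbf{y}^\delta$ is symmetric in $\beta,\delta$; differentiating the radial identity then produces the Euler relation $\textbf{y}^\beta\partial P^\gamma_{\alpha\delta}/\partial\textbf{y}^\beta=-P^\gamma_{\alpha\delta}$, i.e.\ homogeneity of degree $-1$ in the sense of Lemma \ref{lemmahom}; and a degree $-1$ homogeneous function that extends continuously across the zero section must vanish (after the standard one-line integration of the Euler ODE along rays, which turns the Euler relation into the scaling law $P(x,t\textbf{y})=t^{-1}P(x,\textbf{y})$ for $t>0$ — you use this implicitly and it is harmless). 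This is also precisely the correct role of the global smoothness hypothesis, which, as you observe, substitutes for the homogeneity assumption of Proposition \ref{ok2}. An equivalent, slightly more direct way to finish, closer in spirit to the paper's asserted conclusion: the coincidence hypothesis gives $\frac{d}{dt}\mathcal{B}^\gamma_\alpha(x,t\textbf{y})=-\textbf{y}^\beta(\Gamma^\gamma_{\alpha\beta}\circ\pi)$ for all $t$, and integrating from $t=0$ (legitimate by global smoothness) shows $\mathcal{B}^\gamma_\alpha(x,\textbf{y})=\mathcal{B}^\gamma_\alpha(x,0)-\textbf{y}^\beta(\Gamma^\gamma_{\alpha\beta}\circ\pi)$, whence $\partial\mathcal{B}^\gamma_\alpha/\partial\textbf{y}^\beta=-(\Gamma^\gamma_{\alpha\beta}\circ\pi)$ and $P^1=0$ immediately; either route is valid.
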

\begin{proof}
Let the $v$-mixed torsion of $D$ be zero. Then from (\ref{ok4}) we can deduce $(\Gamma_{\alpha\beta}^\gamma\circ\pi)=-\frac{\partial \mathcal{B}^\gamma_\alpha}{\partial\textbf{y}^\beta}$. But from (\ref{ok3}) we have
\[
D_{\delta_\alpha}C=(\mathcal{B}^\beta_\alpha+\textbf{y}^\gamma(\Gamma^\beta_{\alpha\gamma}\circ\pi))\mathcal{V}_\beta.
\]
Setting $(\Gamma_{\alpha\beta}^\gamma\circ\pi)=-\frac{\partial \mathcal{B}^\gamma_\alpha}{\partial\textbf{y}^\beta}$ in the above equation and using (\ref{tension}) we obtain
\[
h^*(DC)(\delta_\alpha)=D_{\delta_\alpha}C=(\mathcal{B}^\beta_\alpha-\textbf{y}^\gamma\frac{\partial \mathcal{B}^\alpha_\alpha}{\partial\textbf{y}^\gamma})\mathcal{V}_\beta=H(\delta_\alpha).
\]
Conversely, if $h^*(DC)=H$ and $h$ is smooth on whole $\pounds^\pi E$ then using (\ref{tension}) and (\ref{ok3}) we obtain $\frac{\partial \mathcal{B}^\gamma_\alpha}{\partial\textbf{y}^\beta}=-(\Gamma_{\alpha\beta}^\gamma\circ\pi)$. Setting this equation in (\ref{ok4}) we deduce $P^1=0$.
\end{proof}
\begin{theorem}
Let $(D, h)$ be a $h$-basic d-connection on Finsler algebroid $(E, \mathcal{F})$ and the first Cartan tensor be nonzero on $(E, \mathcal{F})$. Then $(D, h)$ is $h$-metrical if and only if $h$ is conservative and the $h$-deflection of $(D, h)$ is zero.
\end{theorem}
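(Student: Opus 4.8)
The plan is to push both conditions into the adapted frame $\{\delta_\alpha,\mathcal{V}_\alpha\}$ and reduce the whole equivalence to a single tensor identity. First I would record what the $h$-basic hypothesis provides: by Proposition \ref{suitable} the horizontal coefficients of $D$ satisfy $F^\gamma_{\alpha\beta}=\Gamma^\gamma_{\alpha\beta}\circ\pi$ for the base connection $\nabla$, so they are pulled back from $M$ and in particular $\partial F^\gamma_{\alpha\beta}/\partial\textbf{y}^\mu=0$. Using the coordinate form (\ref{h-cov}) of the $h$-covariant derivative, the expression (\ref{pmetric}) of $\widetilde{\mathcal{G}}$, the formula (\ref{rho}) for $\rho_\pounds(\delta_\alpha)$, and $\mathcal{G}_{\beta\gamma}=\partial^2\mathcal{F}/\partial\textbf{y}^\beta\partial\textbf{y}^\gamma$ from (\ref{Best6}), I would show that $D_{h\widetilde{X}}\widetilde{\mathcal{G}}=0$ is equivalent to the single condition $M_{\alpha\beta\gamma}:=\rho_\pounds(\delta_\alpha)(\mathcal{G}_{\beta\gamma})-F^\lambda_{\alpha\beta}\mathcal{G}_{\lambda\gamma}-F^\lambda_{\alpha\gamma}\mathcal{G}_{\beta\lambda}=0$; the horizontal and vertical blocks yield the same equation because $D$ is reducible, and the mixed block vanishes automatically. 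Alongside this I keep the two quantities naming the conclusions: the conservativity defect $E_\alpha:=(\rho^i_\alpha\circ\pi)\,\partial\mathcal{F}/\partial\textbf{x}^i+\mathcal{B}^\lambda_\alpha\,\partial\mathcal{F}/\partial\textbf{y}^\lambda$ (so $h$ is conservative iff $E_\alpha=0$, cf.\ (\ref{cons})) and the deflection $d^\gamma_\alpha:=\mathcal{B}^\gamma_\alpha+\textbf{y}^\beta F^\gamma_{\alpha\beta}$ (so the $h$-deflection vanishes iff $d^\gamma_\alpha=0$, cf.\ (\ref{h-def})).

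For the direction $(\Leftarrow)$ I would assume $E_\alpha=0$ and $d^\gamma_\alpha=0$. Vanishing deflection gives $\mathcal{B}^\gamma_\alpha=-\textbf{y}^\beta F^\gamma_{\alpha\beta}$ and hence $\partial\mathcal{B}^\gamma_\alpha/\partial\textbf{y}^\beta=-F^\gamma_{\alpha\beta}$; equivalently $h$ coincides with the horizontal endomorphism $h_\nabla$ generated by $\nabla$ (Corollary \ref{ok}). Differentiating the conservativity identity (\ref{cons2}) once more with respect to $\textbf{y}^\beta$ and substituting these relations, the quantity $M_{\alpha\beta\gamma}$ collapses to $\textbf{y}^\mu F^\lambda_{\alpha\mu}\big(\partial\mathcal{G}_{\lambda\gamma}/\partial\textbf{y}^\beta-\partial\mathcal{G}_{\beta\gamma}/\partial\textbf{y}^\lambda\big)$. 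Since $\partial\mathcal{G}_{\lambda\gamma}/\partial\textbf{y}^\beta=\partial^3\mathcal{F}/\partial\textbf{y}^\beta\partial\textbf{y}^\lambda\partial\textbf{y}^\gamma$ is totally symmetric (it is twice the lowered first Cartan tensor), the bracket vanishes and $M_{\alpha\beta\gamma}=0$, i.e.\ $(D,h)$ is $h$-metrical. I note that this direction uses neither the nonvanishing of the Cartan tensor nor the $h$-basic structure beyond $\partial_{\textbf{y}}F=0$.

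For the direction $(\Rightarrow)$ I would start from $M_{\alpha\beta\gamma}=0$ and exploit the homogeneity of $\mathcal{F}$ through (\ref{only God}) and (\ref{2eq}). Contracting with $\textbf{y}^\beta\textbf{y}^\gamma$ yields $E_\alpha=d^\lambda_\alpha\,(\partial\mathcal{F}/\partial\textbf{y}^\lambda)$. Contracting with $\textbf{y}^\beta$ alone gives, after the homogeneity substitutions, the relation $(\rho^i_\alpha\circ\pi)\,\partial^2\mathcal{F}/\partial\textbf{x}^i\partial\textbf{y}^\gamma-\textbf{y}^\beta F^\lambda_{\alpha\beta}\mathcal{G}_{\lambda\gamma}-F^\lambda_{\alpha\gamma}\,\partial\mathcal{F}/\partial\textbf{y}^\lambda=0$, which is exactly the $\textbf{y}$-derivative of the conservativity defect of $h_\nabla$; since that defect is homogeneous of degree two in $\textbf{y}$ and now seen to be $\textbf{y}$-independent, it must vanish, so $h_\nabla$ is conservative. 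Because $h_\nabla$ is conservative and has vanishing deflection by construction, the $(\Leftarrow)$ computation applied to $h_\nabla$ shows its metricity defect $M^{(\nabla)}_{\alpha\beta\gamma}$ vanishes. Subtracting $M^{(\nabla)}$ from $M$ and using $\partial\mathcal{G}_{\beta\gamma}/\partial\textbf{y}^\lambda=2\mathcal{C}_{\lambda\beta\gamma}$, I would obtain $M_{\alpha\beta\gamma}=M^{(\nabla)}_{\alpha\beta\gamma}+2\,d^\lambda_\alpha\mathcal{C}_{\lambda\beta\gamma}$, so that $M=0$ forces $d^\lambda_\alpha\mathcal{C}_{\lambda\beta\gamma}=0$. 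Invoking the hypothesis that the first Cartan tensor is nonzero should then force $d^\lambda_\alpha=0$, after which $E_\alpha=d^\lambda_\alpha\,(\partial\mathcal{F}/\partial\textbf{y}^\lambda)=0$ delivers conservativity.

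The delicate point, and the step I expect to be the genuine obstacle, is precisely the last inference of $(\Rightarrow)$: passing from $d^\lambda_\alpha\mathcal{C}_{\lambda\beta\gamma}=0$ to $d^\lambda_\alpha=0$. Nonvanishing of the Cartan tensor by itself only places $d_\alpha$ in the annihilator of $\mathcal{C}$ in its first slot, and since $\textbf{y}^\lambda\mathcal{C}_{\lambda\beta\gamma}=0$ any radial component $d^\lambda_\alpha\propto\textbf{y}^\lambda$ is invisible to $\mathcal{C}$; such a radial deflection is exactly what would produce an $h$-metrical connection that is neither conservative nor deflection-free. I would therefore treat this step carefully, combining $d^\lambda_\alpha\mathcal{C}_{\lambda\beta\gamma}=0$ with the already-derived $E_\alpha=d^\lambda_\alpha\,(\partial\mathcal{F}/\partial\textbf{y}^\lambda)$ and the homogeneity behaviour of $d^\gamma_\alpha$ to eliminate the radial ambiguity. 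Everything else — the reduction to the single identity $M_{\alpha\beta\gamma}=0$ and the symmetric-third-derivative cancellation — is routine bookkeeping of the homogeneity identities and of third derivatives of $\mathcal{F}$.
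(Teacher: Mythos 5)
Your reduction of $h$-metricity to the single coordinate identity $M_{\alpha\beta\gamma}=0$, your proof of the backward implication, and both contractions in the forward implication are correct, and up to the choice of working in coordinates they retrace the paper's own argument: the paper gets conservativity of $h_\nabla$ from $\widetilde{\mathcal{G}}(C,C)=2\mathcal{F}$ together with Proposition \ref{Bestth}, then combines metricity with the differentiated conservativity identity to obtain (\ref{Niaz}), which in components is exactly your $d^\lambda_\alpha\mathcal{C}_{\lambda\beta\gamma}=0$. Up to that point the two proofs coincide.

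The gap is the step you flag and then defer: passing from $d^\lambda_\alpha\mathcal{C}_{\lambda\beta\gamma}=0$ to $d^\lambda_\alpha=0$. It cannot be closed by combining this with $E_\alpha=d^\lambda_\alpha\,\partial\mathcal{F}/\partial\textbf{y}^\lambda$ and homogeneity, because a radial deflection $d^\lambda_\alpha=c_\alpha\textbf{y}^\lambda$ with $c_\alpha=c_\alpha(x)\neq 0$ satisfies every identity you have derived: $d^\lambda_\alpha\mathcal{C}_{\lambda\beta\gamma}=0$ since $\textbf{y}^\lambda\mathcal{C}_{\lambda\beta\gamma}=0$, $d$ is homogeneous of degree one exactly as a deflection should be, and $E_\alpha=2c_\alpha\mathcal{F}$ is then simply nonzero, producing no contradiction. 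Worse, this is not merely a gap in your write-up but an actual counterexample scheme: on any generalized Berwald Lie algebroid with $\mathcal{C}\neq 0$ (for instance a non-Riemannian locally Minkowski structure on $TM$), put $\mathcal{B}^\gamma_\alpha=-\textbf{y}^\beta(\Gamma^\gamma_{\alpha\beta}\circ\pi)+c_\alpha\textbf{y}^\gamma$ and take the $h$-basic d-connection with $F^\gamma_{\alpha\beta}=\Gamma^\gamma_{\alpha\beta}\circ\pi$; then $M_{\alpha\beta\gamma}=M^{(\nabla)}_{\alpha\beta\gamma}=0$ by (\ref{ziad}), so $(D,h)$ is $h$-metrical, while its deflection $c_\alpha\textbf{y}^\gamma$ and conservativity defect $2c_\alpha\mathcal{F}$ are both nonzero. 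You should also know that the paper's own proof makes precisely the leap you distrusted: from $\mathcal{G}(\mathcal{C}(\widetilde{Y},\widetilde{Z}),X^h-X^{h_\nabla})=0$ it concludes $X^h=X^{h_\nabla}$ ``since $\mathcal{G}$ is nondegenerate,'' which is invalid because $X^h-X^{h_\nabla}$ may be proportional to the Liouville section $C$, and $C$ is always $\mathcal{G}$-orthogonal to the image of $\mathcal{C}$ (this is the paper's own $i_S\mathcal{C}_\flat=0$). So your instinct located a real flaw rather than a technicality: the forward implication needs a hypothesis strong enough to exclude radial deflections, and no amount of care with the identities at hand will prove it as stated.
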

\begin{proof}
Let $(D, h)$ be $h$-metrical. Then we get
\begin{align*}
X^h\mathcal{F}&=\frac{1}{2}X^h(\widetilde{\mathcal{G}}(C, C))=\widetilde{\mathcal{G}}(C, D_{X^h}C)=(X^\alpha\circ\pi)(\mathcal{B}^\beta_\alpha+\textbf{y}^\gamma(\Gamma^\beta_{\alpha\gamma}\circ\pi))\frac{\partial \mathcal{F}}{\partial \textbf{y}^\beta}\\
&=(D_{X^h}C)\mathcal{F}.
\end{align*}
But from proposition \ref{Bestth} we have
\[
(D_{X^h}C)\mathcal{F}=X^h\mathcal{F}-X^{h_\nabla}\mathcal{F}.
\]
Two above equations gives us $X^{h_\nabla}\mathcal{F}=0$ and consequently $d_{h_\nabla}\mathcal{F}=0$. Thus $h_\nabla$ is conservative. Direct calculation we obtain
\begin{align}
&X^{h_\nabla}\widetilde{\mathcal{G}}(\mathcal{V}_\beta, \mathcal{V}_\lambda)-\widetilde{\mathcal{G}}(D_{X^h}\mathcal{V}_\beta, \mathcal{V}_\lambda)-\widetilde{\mathcal{G}}(\mathcal{V}_\beta, D_{X^h}\mathcal{V}_\lambda)=(X^\alpha\circ\pi)\Big((\rho^i_\alpha\circ\pi)\frac{\partial\mathcal{G}_{\beta\lambda}}{\partial \textbf{x}^i}\nonumber\\
&\ \ \ -y^\gamma(\Gamma^\mu_{\alpha\gamma}\circ\pi)\frac{\partial\mathcal{G}_{\beta\lambda}}{\partial \textbf{y}^\mu}-(\Gamma^\gamma_{\alpha\beta}\circ\pi)\mathcal{G}_{\gamma\lambda}-(\Gamma^\gamma_{\alpha\lambda}\circ\pi)\mathcal{G}_{\gamma\beta}\Big).
\end{align}
Since $h_\nabla$ is conservative, then we have (\ref{211}) with $\mathcal{B}^\lambda_\beta=-\textbf{y}^\mu(\Gamma^\lambda_{\mu\beta}\circ\pi)$. Setting this equation in (\ref{211}) we can see that the right side of the above equation vanishes. Therefore we have
\begin{equation}\label{Niaz1}
X^{h_\nabla}\widetilde{\mathcal{G}}(\mathcal{V}_\beta, \mathcal{V}_\lambda)=\widetilde{\mathcal{G}}(D_{X^h}\mathcal{V}_\beta, \mathcal{V}_\lambda)+\widetilde{\mathcal{G}}(\mathcal{V}_\beta, D_{X^h}\mathcal{V}_\lambda).
\end{equation}
In other hand, since $(D, h)$ is $h$-metrical, then we have
\[
X^h\widetilde{\mathcal{G}}(\mathcal{V}_\beta, \mathcal{V}_\lambda)=\widetilde{\mathcal{G}}(D_{X^h}\mathcal{V}_\beta, \mathcal{V}_\lambda)+\widetilde{\mathcal{G}}(\mathcal{V}_\beta, D_{X^h}\mathcal{V}_\lambda).
\]
Two above equations give us
\begin{equation}\label{Niaz}
(X^{h_\nabla}-X^h)\widetilde{\mathcal{G}}(\mathcal{V}_\beta, \mathcal{V}_\lambda)=0.
\end{equation}
For vertical metric $\mathcal{G}$, using (\ref{Cartan1}) we can obtain
\begin{align*}
\mathcal{G}(C(\delta_\alpha, \delta_\beta), X^h-X^{h_\nabla})&=(X^\sigma\circ\pi)(\mathcal{B}^\lambda_\sigma+\textbf{y}^\gamma(\Gamma^\lambda_{\sigma\gamma}\circ\pi))\mathcal{G}(C(\delta_\alpha, \delta_\beta), \mathcal{V}_\lambda)\\
&=\frac{1}{2}(X^\sigma\circ\pi)(\mathcal{B}^\lambda_\sigma+\textbf{y}^\gamma(\Gamma^\lambda_{\sigma\gamma}\circ\pi))(\pounds_{\mathcal{V}_\alpha}J^*\mathcal{G})(\delta_\beta, \delta_\lambda)\\
&=\frac{1}{2}(X^\sigma\circ\pi)(\mathcal{B}^\lambda_\sigma+\textbf{y}^\gamma(\Gamma^\lambda_{\sigma\gamma}\circ\pi))(\mathcal{V}_\alpha\mathcal{G}(\mathcal{V}_\beta, \mathcal{V}_\lambda)).
\end{align*}
Since $\mathcal{V}_\alpha\mathcal{G}(\mathcal{V}_\beta, \mathcal{V}_\lambda)=\mathcal{V}_\lambda\mathcal{G}(\mathcal{V}_\alpha, \mathcal{V}_\beta)$, then using this equation in the above equation and using (\ref{Niaz}) we deduce
\begin{align*}
\mathcal{G}(C(\delta_\alpha, \delta_\beta), X^h-X^{h_\nabla})&=\frac{1}{2}(X^\sigma\circ\pi)(\mathcal{B}^\lambda_\sigma+y^\gamma(\Gamma^\lambda_{\sigma\gamma}\circ\pi))(\mathcal{V}_\lambda\mathcal{G}(\mathcal{V}_\alpha, \mathcal{V}_\beta))\\
&=(X^{h_\nabla}-X^h)\mathcal{G}(\mathcal{V}_\alpha, \mathcal{V}_\beta)\\
&=(X^{h_\nabla}-X^h)\widetilde{\mathcal{G}}(\mathcal{V}_\alpha, \mathcal{V}_\beta)=0.
\end{align*}
From the above equation we can derive that $\mathcal{G}(C(\widetilde{Y}, \widetilde{Z}), X^h-X^{h_\nabla})=0$, for all $\widetilde{Y}, \widetilde{Z}\in\Gamma(\pounds^\pi E)$. Since $\mathcal{G}$ is non-degenerated, then this equation gives us $X^h-X^{h_\nabla}=0$ or $X^h=X^{h_\nabla}$ and consequently $h=h_\nabla$. Thus $h$ is conservative and using corollary \ref{ok}, the $h$-deflection of $(D, h)$ vanishes. Conversely, let $h$ be the conservative horizontal endomorphism and the $h$-deflection of $(D, h)$ be zero. Then from corollary \ref{ok}, $h$ coincides with $h_\nabla$ and so $h_\nabla$ is conservative. Therefore we have (\ref{Niaz1}) which gives us
\[
(D_{X^h}\widetilde{\mathcal{G}})(\mathcal{V}_\alpha, \mathcal{V}_\beta)=(X^h-X^{h_\nabla})\widetilde{\mathcal{G}}(\mathcal{V}_\alpha, \mathcal{V}_\beta)=0.
\]
Also, since $h=h_\nabla$ and $h$ is conservative, then using (ii) of corollary \ref{ok1} and (\ref{211}) we obtain
\[
X^h\widetilde{\mathcal{G}}(\mathcal{V}_\beta, \mathcal{V}_\lambda)-\widetilde{\mathcal{G}}(D_{X^h}\mathcal{V}_\beta, \mathcal{V}_\lambda)-\widetilde{\mathcal{G}}(\mathcal{V}_\beta, D_{X^h}\mathcal{V}_\lambda)=0,
\]
which gives us $(D_{X^h}\widetilde{\mathcal{G}})(\delta_\alpha, \delta_\beta)=0$. Therefore we can deduce $D_{h\widetilde{X}}\widetilde{\mathcal{G}}=0$, for all $\widetilde{X}\in\pounds^\pi E$.
\end{proof}
%-------------------------------------------------------------------------------------
%-------------------------------------------------------------------------------------
\subsection{Ichijy\={o} connection}
\begin{theorem}
Let $(E, \mathcal{F})$ be a Finsler algebroid, $\nabla$ be a linear connection on $E$, $h_\nabla$ be the horizontal endomorphism generated by $\nabla$ and $\mathcal{G}$ be the prolongation of vertical metric along $h_\nabla$. Then there is a unique d-connection $(\stackrel{{\begin{tiny}\nabla\end{tiny}}}{D}, h_\nabla)$ on $(E, \mathcal{F})$ such that

(i)\ $\stackrel{{\begin{tiny}\nabla\end{tiny}}}{D}$ is $v$-metrical,

(ii)\ The $v$-vertical torsion of $\stackrel{{\begin{tiny}\nabla\end{tiny}}}{D}$ is zero,

(iii)\ The $h$-deflection of $(\stackrel{{\begin{tiny}\nabla\end{tiny}}}{D}, h_\nabla)$ is zero,

(iv)\ The mixed curvature of $(\widetilde{\stackrel{{\begin{tiny}\nabla\end{tiny}}}{D}}, h_\nabla)$ is zero,\\
where $(\widetilde{\stackrel{{\begin{tiny}\nabla\end{tiny}}}{D}}, h_\nabla)$ the d-connection associated to $(\stackrel{{\begin{tiny}\nabla\end{tiny}}}{D}, h_\nabla)$ given by (\ref{m2}).
\end{theorem}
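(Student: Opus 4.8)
The plan is to reproduce the existence–uniqueness scheme of Theorem \ref{TCartan}, exploiting the fact, recorded in (\ref{conn})--(\ref{conn2}), that a d-connection $(D,h_\nabla)$ on $\pounds^\pi E$ is completely encoded by its local coefficients $(F^\gamma_{\alpha\beta},C^\gamma_{\alpha\beta})$ through $D_{\delta_\alpha}\mathcal{V}_\beta=F^\gamma_{\alpha\beta}\mathcal{V}_\gamma$ and $D_{\mathcal{V}_\alpha}\mathcal{V}_\beta=C^\gamma_{\alpha\beta}\mathcal{V}_\gamma$. The four hypotheses split into two independent groups: conditions (iii)--(iv) constrain only the horizontal block $F^\gamma_{\alpha\beta}$, while conditions (i)--(ii) constrain only the vertical block $C^\gamma_{\alpha\beta}$. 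Showing that each group forces its block yields uniqueness, and running the computation backwards from the forced coefficients gives existence.

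First I would pin down $F$. By Proposition \ref{suitable}, condition (iv) says precisely that $(D,h_\nabla)$ is $h$-basic; concretely (\ref{niaz}) gives $\widetilde{P}^{\ \ \ \ \lambda}_{\alpha\beta\gamma}=-\partial F^\lambda_{\alpha\gamma}/\partial\textbf{y}^\beta$, so (iv) forces $F^\gamma_{\alpha\beta}$ to be independent of the fibre coordinates, i.e.\ a pullback of functions on $M$. Writing $\nabla_{e_\alpha}e_\beta=\Gamma^\gamma_{\alpha\beta}e_\gamma$, the generator $h_\nabla$ has $\mathcal{B}^\gamma_\alpha=-\textbf{y}^\lambda(\Gamma^\gamma_{\alpha\lambda}\circ\pi)$, so the vanishing $h$-deflection of (iii), namely $\mathcal{B}^\gamma_\alpha+\textbf{y}^\beta F^\gamma_{\alpha\beta}=0$, becomes $\textbf{y}^\beta\big(F^\gamma_{\alpha\beta}-(\Gamma^\gamma_{\alpha\beta}\circ\pi)\big)=0$. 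Since the parenthesis is now $\textbf{y}$-independent, holding this for all $\textbf{y}$ forces $F^\gamma_{\alpha\beta}=\Gamma^\gamma_{\alpha\beta}\circ\pi$; equivalently, by Corollary \ref{ok} the base connection of the $h$-basic connection must be $\nabla$ itself.

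Next I would pin down $C$ by a Koszul-type computation. It suffices to impose (i) on the vertical block, because reducibility ($D h_\nabla=0$) and almost-complexity ($DF=0$) propagate $v$-metricity to the mixed and horizontal blocks, all being governed by the single coefficient $C^\gamma_{\alpha\beta}$. Evaluating $(D_{\mathcal{V}_\alpha}\widetilde{\mathcal{G}})(\mathcal{V}_\beta,\mathcal{V}_\gamma)=0$ gives
\[
\frac{\partial\mathcal{G}_{\beta\gamma}}{\partial\textbf{y}^\alpha}=C^\mu_{\alpha\beta}\mathcal{G}_{\mu\gamma}+C^\mu_{\alpha\gamma}\mathcal{G}_{\beta\mu}.
\]
Cyclically permuting $(\alpha,\beta,\gamma)$, combining the three relations with signs $+,+,-$, and using the symmetry $C^\mu_{\alpha\beta}=C^\mu_{\beta\alpha}$ from condition (ii) yields $C^\mu_{\alpha\beta}\mathcal{G}_{\mu\gamma}=\tfrac12\partial\mathcal{G}_{\beta\gamma}/\partial\textbf{y}^\alpha$; the decisive simplification is that $\mathcal{G}_{\alpha\beta}=\partial^2\mathcal{F}/\partial\textbf{y}^\alpha\partial\textbf{y}^\beta$ makes $\partial\mathcal{G}_{\beta\gamma}/\partial\textbf{y}^\alpha$ totally symmetric, so the permuted derivatives collapse and
\[
C^\gamma_{\alpha\beta}=\frac12\,\mathcal{G}^{\gamma\lambda}\frac{\partial\mathcal{G}_{\beta\lambda}}{\partial\textbf{y}^\alpha},
\]
which is exactly the first Cartan tensor $\mathcal{C}^\gamma_{\alpha\beta}$ of (\ref{Cartan2}).

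Both blocks being forced, uniqueness follows. For existence I would define $D$ on the adapted basis through (\ref{conn})--(\ref{conn2}) by $F^\gamma_{\alpha\beta}=\Gamma^\gamma_{\alpha\beta}\circ\pi$ and $C^\gamma_{\alpha\beta}=\mathcal{C}^\gamma_{\alpha\beta}$, observe that this is a genuine d-connection, and verify the four items: (iv) holds because $F$ is $\textbf{y}$-independent, so $\widetilde{P}=0$ by (\ref{niaz}); (iii) holds by the cancellation $\mathcal{B}^\gamma_\alpha+\textbf{y}^\beta F^\gamma_{\alpha\beta}=0$ established above; (ii) holds since the first Cartan tensor is symmetric in its lower indices; and (i) holds because $C=\mathcal{C}$ was built to solve the vertical metricity relation, which then lifts to $D_{v\widetilde{X}}\widetilde{\mathcal{G}}=0$. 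The main obstacle I anticipate is organizational rather than conceptual: justifying cleanly that (i) need only be tested on $(\mathcal{V}_\beta,\mathcal{V}_\gamma)$ via reducibility and almost-complexity, and confirming that (iii)+(iv) and (i)+(ii) truly address disjoint coefficient blocks so that the system is exactly determined with no hidden compatibility constraint.
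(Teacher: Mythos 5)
Your proof is correct and follows essentially the same route as the paper: conditions (iii)--(iv) pin the horizontal coefficients down to $\Gamma^\gamma_{\alpha\beta}\circ\pi$ (the paper phrases this through Proposition \ref{suitable} and Corollary \ref{ok}, while you run the identical argument directly in coordinates via (\ref{niaz}) and the deflection formula), and conditions (i)--(ii) force the vertical coefficients to be the first Cartan tensor by the same Koszul-type computation the paper imports from Theorem \ref{TCartan}. Existence is then obtained, exactly as in the paper, by defining the connection through these forced coefficients and checking the four conditions.
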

\begin{proof}
Let there exists a d-connection $\stackrel{{\begin{tiny}\nabla\end{tiny}}}{D}$ on $(E, \mathcal{F})$ such that $\stackrel{{\begin{tiny}\nabla\end{tiny}}}{D}$ satisfies in (i)-(iv). Since $\stackrel{{\begin{tiny}\nabla\end{tiny}}}{D}$ is $v$-metrical and the $v$-vertical of $\stackrel{{\begin{tiny}\nabla\end{tiny}}}{D}$ is zero, then similar to the proof of theorem \ref{TCartan} we can deduce
\begin{equation}\label{I}
\stackrel{{\begin{tiny}\nabla\end{tiny}}}{D}_{\mathcal{V}_\alpha}\!\!\mathcal{V}_\beta
=\frac{1}{2}\frac{\partial\mathcal{G}_{\beta\gamma}}{\partial\textbf{y}^\alpha}\mathcal{G}^{\gamma\mu}\mathcal{V}_\mu=\mathcal{C}_{\alpha\beta}^\mu\mathcal{V}_\mu.
\end{equation}
Also, since $\stackrel{{\begin{tiny}\nabla\end{tiny}}}{D}$ is d-connection, then using the above equation we obtain
\begin{equation}\label{I1}
\stackrel{{\begin{tiny}\nabla\end{tiny}}}{D}_{\mathcal{V}_\alpha}\!\!\delta_\beta
=\frac{1}{2}\frac{\partial\mathcal{G}_{\beta\gamma}}{\partial\textbf{y}^\alpha}\mathcal{G}^{\gamma\mu}\delta_\mu=\mathcal{C}_{\alpha\beta}^\mu\delta_\mu.
\end{equation}
Condition (iv) together proposition \ref{suitable} told us that $(\stackrel{{\begin{tiny}\nabla\end{tiny}}}{D}, h_\nabla)$ is $h$-basic. Thus there exists a unique linear connection $\widetilde{\nabla}$ on $E$ such that $(\widetilde{\nabla}_XY)^V=\stackrel{{\begin{tiny}\nabla\end{tiny}}}{D}_{X^{h_\nabla}}Y^V$. But using (iii) and corollary \ref{ok} we deduce that $\widetilde{\nabla}$ coincides with $\nabla$. Thus we have
\[
\stackrel{{\begin{tiny}\nabla\end{tiny}}}{D}_{X^{h_\nabla}}Y^V=(\nabla_XY)^V,\ \ \ \forall X, Y\in\Gamma(E).
\]
From the above equation we obtain
\begin{equation}\label{I2}
\stackrel{{\begin{tiny}\nabla\end{tiny}}}{D}_{\delta_\alpha}\mathcal{V}_\beta=(\Gamma^\gamma_{\alpha\beta}\circ\pi)\mathcal{V}_\gamma,
\end{equation}
where $\Gamma^\gamma_{\alpha\beta}$ are the local coefficients of linear connection $\nabla$. The above equation gives us
\begin{equation}\label{I3}
\stackrel{{\begin{tiny}\nabla\end{tiny}}}{D}_{\delta_\alpha}\delta_\beta=(\Gamma^\gamma_{\alpha\beta}\circ\pi)\delta_\gamma,
\end{equation}
because $\stackrel{{\begin{tiny}\nabla\end{tiny}}}{D}$ is a d-connection. Relations (\ref{I})-(\ref{I3}) prove the existence and uniqueness of $\stackrel{{\begin{tiny}\nabla\end{tiny}}}{D}$
\end{proof}
We call d-connection $(\stackrel{{\begin{tiny}\nabla\end{tiny}}}{D}, h_\nabla)$ introduced in the above theorem, \textit{Ichijy\={o} connection induced by $\nabla$} on Finsler algebroid $(E, \mathcal{F})$.

Let $\widetilde{X}$ and $\widetilde{Y}$ are sections of $\stackrel{\circ}{\pounds^\pi E}$. Then using (\ref{I})-(\ref{I3}) we can obtain the following formula for Ichijy\={o} connection:
\[
\stackrel{{\begin{tiny}\nabla\end{tiny}}}{D}_{\widetilde{X}}\widetilde{Y}=\stackrel{{\begin{tiny}\nabla\end{tiny}}}{D}_{v_\nabla\widetilde{X}}v_\nabla\widetilde{Y}
+\stackrel{{\begin{tiny}\nabla\end{tiny}}}{D}_{v_\nabla\widetilde{X}}h_\nabla\widetilde{Y}+\stackrel{{\begin{tiny}\nabla\end{tiny}}}{D}_{h_\nabla\widetilde{X}}v_\nabla\widetilde{Y}
+\stackrel{{\begin{tiny}\nabla\end{tiny}}}{D}_{h_\nabla\widetilde{X}}h_\nabla\widetilde{Y},
\]
where
\begin{align}
\stackrel{{\begin{tiny}\nabla\end{tiny}}}{D}_{h_\nabla\widetilde{X}}h_\nabla\widetilde{Y}&=h_\nabla F_\nabla[h_\nabla\widetilde{X}, J\widetilde{Y}]_\pounds,\\
\stackrel{{\begin{tiny}\nabla\end{tiny}}}{D}_{v_\nabla\widetilde{X}}v_\nabla\widetilde{Y}&=J[v_\nabla\widetilde{X}, F_\nabla\widetilde{Y}]_\pounds
+\mathcal{C}(F_\nabla\widetilde{X}, F_\nabla\widetilde{Y}),\\
\stackrel{{\begin{tiny}\nabla\end{tiny}}}{D}_{v_\nabla\widetilde{X}}h_\nabla\widetilde{Y}&=h_\nabla[v_\nabla\widetilde{X}, \widetilde{Y}]_\pounds+F_\nabla\mathcal{C}(F_\nabla\widetilde{X}, \widetilde{Y}),\\
\stackrel{{\begin{tiny}\nabla\end{tiny}}}{D}_{h_\nabla\widetilde{X}}v_\nabla\widetilde{Y}&=v_\nabla[h_\nabla\widetilde{X}, v_\nabla\widetilde{Y}]_\pounds.
\end{align}
Using the above equations we can obtain
\begin{align}
\stackrel{{\begin{tiny}\nabla\end{tiny}}}{D}_{X^{h_\nabla}}Y^{h_\nabla}&=\Big((X^\alpha\circ\pi)(\rho^i_\alpha\circ\pi)\frac{\partial (Y^\gamma\circ\pi)}{\partial\textbf{x}^i}+(X^\alpha\circ\pi)(Y^\beta\circ\pi)(\Gamma^\gamma_{\alpha\beta}\circ\pi)\Big)\delta_\gamma\nonumber\\
&=(\nabla_XY)^{h_\nabla},\\
\stackrel{{\begin{tiny}\nabla\end{tiny}}}{D}_{X^V}Y^V&=(X^\alpha\circ\pi)(Y^\beta\circ\pi)\mathcal{C}^\mu_{\alpha\beta}
\mathcal{V}_\mu=\mathcal{C}(X^{h_\nabla}, Y^{h_\nabla}),\\
\stackrel{{\begin{tiny}\nabla\end{tiny}}}{D}_{X^V}Y^{h_\nabla}&=(X^\alpha\circ\pi)(Y^\beta\circ\pi)\mathcal{C}^\mu_{\alpha\beta}
\delta_\mu=F\mathcal{C}(X^{h_\nabla}, Y^{h_\nabla}),\\
\stackrel{{\begin{tiny}\nabla\end{tiny}}}{D}_{X^{h_\nabla}}Y^V&=\Big((X^\alpha\circ\pi)(\rho^i_\alpha\circ\pi)\frac{\partial (Y^\gamma\circ\pi)}{\partial\textbf{x}^i}+(X^\alpha\circ\pi)(Y^\beta\circ\pi)(\Gamma^\gamma_{\alpha\beta}\circ\pi)\Big)\mathcal{V}_\gamma\nonumber\\
&=(\nabla_XY)^V,
\end{align}
where $X, Y\in\Gamma(E)$.
\begin{proposition}
Let $(E, \mathcal{F})$ be a Finsler algebroid, $\nabla$ a linear connection on $E$ and $(\stackrel{{\begin{tiny}\nabla\end{tiny}}}{D}, h_\nabla)$ be the d-connection induced by $\nabla$. Then
\[
(\stackrel{{\begin{tiny}\nabla\end{tiny}}}{D}_{J\widetilde{X}}\mathcal{C})(\widetilde{Y}, \widetilde{Z})=(\stackrel{{\begin{tiny}\nabla\end{tiny}}}{D}_{J\widetilde{Y}}\mathcal{C})(\widetilde{X}, \widetilde{Z}),\ \ \ \forall \widetilde{X}, \widetilde{Y}, \widetilde{Z}\in\Gamma(\stackrel{\circ}{\pounds^\pi E}).
\]
\end{proposition}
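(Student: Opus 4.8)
The plan is to prove the identity by reducing it to a computation in the adapted local frame $\{\delta_\alpha,\mathcal{V}_\alpha\}$, exploiting that both sides are tensorial in $\widetilde{X},\widetilde{Y},\widetilde{Z}$. Writing $D$ for the Ichijy\={o} connection $\stackrel{{\begin{tiny}\nabla\end{tiny}}}{D}$, the first observation is that $J\widetilde{X}$ is vertical (indeed $J\delta_\sigma=\mathcal{V}_\sigma$ and $J\mathcal{V}_\sigma=0$), so the derivative directions in the statement range exactly over the vertical frame vectors as $\widetilde{X},\widetilde{Y}$ range over the $\delta$'s; when $\widetilde{X}$ or $\widetilde{Y}$ is itself vertical both sides vanish because $J\mathcal{V}=0$ and because $\mathcal{C}$ is semibasic, so it annihilates any vertical argument. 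Using that $\mathcal{C}$ is semibasic together with $D_{\mathcal{V}_\sigma}\mathcal{V}_\tau=\mathcal{C}^\mu_{\sigma\tau}\mathcal{V}_\mu$ and $D_{\mathcal{V}_\sigma}\delta_\gamma=\mathcal{C}^\mu_{\sigma\gamma}\delta_\mu$ (equations (\ref{I})--(\ref{I3})), one checks that $(D_{\mathcal{V}_\sigma}\mathcal{C})(\widetilde{Y},\widetilde{Z})=0$ whenever one of $\widetilde{Y},\widetilde{Z}$ is vertical. Hence the only case requiring work is the fully horizontal one, $\widetilde{X}=\delta_\sigma$, $\widetilde{Y}=\delta_\beta$, $\widetilde{Z}=\delta_\gamma$, for which we must establish $(D_{\mathcal{V}_\sigma}\mathcal{C})(\delta_\beta,\delta_\gamma)=(D_{\mathcal{V}_\beta}\mathcal{C})(\delta_\sigma,\delta_\gamma)$.

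Second, I would expand the left-hand side with the Leibniz rule for the covariant derivative of the $(1,2)$-tensor $\mathcal{C}$, substituting the connection coefficients from (\ref{I})--(\ref{I3}), the value $\mathcal{C}(\delta_\beta,\delta_\gamma)=\mathcal{C}^\mu_{\beta\gamma}\mathcal{V}_\mu$, and $\rho_\pounds(\mathcal{V}_\sigma)=\partial/\partial\textbf{y}^\sigma$. This yields
\[
(D_{\mathcal{V}_\sigma}\mathcal{C})(\delta_\beta,\delta_\gamma)=\Big(\frac{\partial\mathcal{C}^\nu_{\beta\gamma}}{\partial\textbf{y}^\sigma}+\mathcal{C}^\mu_{\beta\gamma}\mathcal{C}^\nu_{\sigma\mu}-\mathcal{C}^\mu_{\sigma\beta}\mathcal{C}^\nu_{\mu\gamma}-\mathcal{C}^\mu_{\sigma\gamma}\mathcal{C}^\nu_{\beta\mu}\Big)\mathcal{V}_\nu,
\]
so the claim reduces to the symmetry of the bracketed coefficient under $\sigma\leftrightarrow\beta$.

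Third, the decisive step is to lower the free index $\nu$ with $\mathcal{G}$ in order to trivialise the combinatorics. Because the Ichijy\={o} connection is $v$-metrical (property (i) of its defining theorem) and $DJ=0$ for any d-connection, contraction with $\mathcal{V}_\rho$ commutes with $D_{\mathcal{V}_\sigma}$, giving
\[
\mathcal{G}\big((D_{\mathcal{V}_\sigma}\mathcal{C})(\delta_\beta,\delta_\gamma),\mathcal{V}_\rho\big)=(D_{\mathcal{V}_\sigma}\mathcal{C}_\flat)(\delta_\beta,\delta_\gamma,\delta_\rho)=\frac12\frac{\partial^4\mathcal{F}}{\partial\textbf{y}^\sigma\partial\textbf{y}^\beta\partial\textbf{y}^\gamma\partial\textbf{y}^\rho}-\mathcal{C}^\mu_{\sigma\beta}\mathcal{C}_{\mu\gamma\rho}-\mathcal{C}^\mu_{\sigma\gamma}\mathcal{C}_{\beta\mu\rho}-\mathcal{C}^\mu_{\sigma\rho}\mathcal{C}_{\beta\gamma\mu},
\]
using $\mathcal{C}_{\beta\gamma\rho}=\tfrac12\partial^3\mathcal{F}/\partial\textbf{y}^\beta\partial\textbf{y}^\gamma\partial\textbf{y}^\rho$. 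Since $\mathcal{G}$ is nondegenerate and $\mathcal{V}_\rho$ is arbitrary, symmetry of the bracket above is equivalent to symmetry of this scalar expression. The fourth-derivative term is totally symmetric, hence invariant under the swap, so everything reduces to showing that $\mathcal{C}^\mu_{\sigma\beta}\mathcal{C}_{\mu\gamma\rho}+\mathcal{C}^\mu_{\sigma\gamma}\mathcal{C}_{\beta\mu\rho}+\mathcal{C}^\mu_{\sigma\rho}\mathcal{C}_{\beta\gamma\mu}$ is symmetric in $\sigma\leftrightarrow\beta$. Here I would invoke that the lowered first Cartan tensor $\mathcal{C}_{\alpha\beta\gamma}$ is totally symmetric (proved earlier in the excerpt) and $\mathcal{C}^\mu_{\alpha\beta}=\mathcal{G}^{\mu\lambda}\mathcal{C}_{\alpha\beta\lambda}$: the first summand is already symmetric in $\sigma,\beta$, while relabelling the contracted indices and using total symmetry shows that the second summand is carried to the third under the swap and conversely, so their sum is symmetric.

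I expect the main obstacle to be the bookkeeping in this last step: the two quadratic terms $\mathcal{C}^\mu_{\sigma\gamma}\mathcal{C}_{\beta\mu\rho}$ and $\mathcal{C}^\mu_{\sigma\rho}\mathcal{C}_{\beta\gamma\mu}$ are not individually symmetric, and one must see precisely how total symmetry of $\mathcal{C}_\flat$ together with symmetry of $\mathcal{G}^{\mu\lambda}$ interchanges them. This is where lowering the index pays off, since performing the same cancellation directly on the mixed expression would additionally require differentiating $\mathcal{G}^{\nu\lambda}$ and re-expressing $\partial\mathcal{G}^{\nu\lambda}/\partial\textbf{y}^\sigma$ through $\mathcal{C}$. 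A minor point to record carefully is the justification that contraction with $\mathcal{G}$ commutes with $D_{\mathcal{V}_\sigma}$, which rests squarely on the $v$-metricity of the Ichijy\={o} connection.
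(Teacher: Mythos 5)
Your proof is correct, and although it shares the paper's overall skeleton --- both arguments reduce the identity, by tensoriality, to the single adapted-frame statement $(\stackrel{\nabla}{D}_{\mathcal{V}_\alpha}\mathcal{C})(\delta_\beta,\delta_\gamma)=(\stackrel{\nabla}{D}_{\mathcal{V}_\beta}\mathcal{C})(\delta_\alpha,\delta_\gamma)$ and then inspect the symmetry of a local expression --- the computational core is genuinely different. The paper works with the $(1,2)$-tensor throughout: it substitutes the explicit Ichijy\={o} coefficients $\mathcal{C}^{\mu}_{\alpha\beta}=\frac12\frac{\partial\mathcal{G}_{\beta\lambda}}{\partial\textbf{y}^\alpha}\mathcal{G}^{\lambda\mu}$, which forces derivatives of the inverse metric into the calculation, and it then invokes the identity $\mathcal{G}^{\lambda\sigma}\frac{\partial\mathcal{G}_{\nu\sigma}}{\partial\textbf{y}^\alpha}=-\mathcal{G}_{\nu\sigma}\frac{\partial\mathcal{G}^{\lambda\sigma}}{\partial\textbf{y}^\alpha}$ to collapse the five resulting terms into a form whose $\alpha\leftrightarrow\beta$ symmetry can be read off. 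You instead lower the free index with $\mathcal{G}$, justifying the exchange of contraction with $\stackrel{\nabla}{D}_{\mathcal{V}_\sigma}$ by the $v$-metricity of the Ichijy\={o} connection (its defining property (i)) together with $DJ=0$, and using nondegeneracy of $\mathcal{G}$ to pass back; this trades all the inverse-metric bookkeeping for the clean scalar expression $\frac12\frac{\partial^4\mathcal{F}}{\partial\textbf{y}^\sigma\partial\textbf{y}^\beta\partial\textbf{y}^\gamma\partial\textbf{y}^\rho}-\mathcal{C}^{\mu}_{\sigma\beta}\mathcal{C}_{\mu\gamma\rho}-\mathcal{C}^{\mu}_{\sigma\gamma}\mathcal{C}_{\beta\mu\rho}-\mathcal{C}^{\mu}_{\sigma\rho}\mathcal{C}_{\beta\gamma\mu}$, whose symmetry is purely combinatorial: the swap $\sigma\leftrightarrow\beta$ fixes the first quadratic term and interchanges the last two, by total symmetry of $\mathcal{C}_\flat$ and of $\mathcal{G}^{\mu\lambda}$, exactly as you indicate. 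What your route buys is shorter algebra and conceptual transparency (the symmetry is visibly a consequence of the total symmetry of the lowered Cartan tensor); what the paper's buys is independence from the metricity property as an input, since it only ever uses the explicit coefficient formulas. A further small point in your favour: you actually justify the reduction to horizontal arguments (both sides vanish when any slot is vertical, via $J\circ\mathcal{V}=0$, semibasicity of $\mathcal{C}$, and the fact that $\stackrel{\nabla}{D}_{\mathcal{V}_\sigma}$ preserves the vertical and horizontal distributions), a step the paper asserts with only ``it is sufficient to show that'' and no argument.
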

\begin{proof}
It is sufficient to show that $(\stackrel{{\begin{tiny}\nabla\end{tiny}}}{D}_{\mathcal{V}_\alpha}\mathcal{C})(\delta_\beta, \delta_\gamma)=(\stackrel{{\begin{tiny}\nabla\end{tiny}}}{D}_{\mathcal{V}_\beta}\mathcal{C})(\delta_\alpha, \delta_\gamma)$. Using the local expression of the first Cartan tensor and (\ref{I1}) we get
\begin{align}\label{jomee}
(\stackrel{{\begin{tiny}\nabla\end{tiny}}}{D}_{\mathcal{V}_\alpha}\mathcal{C})(\delta_\beta, \delta_\gamma)&=\frac{1}{4}\Big(2\frac{\partial^2\mathcal{G}_{\gamma\lambda}}{\partial\textbf{y}^\alpha\partial\textbf{y}^\beta}\mathcal{G}^{\lambda\mu}
+2\frac{\partial\mathcal{G}_{\gamma\lambda}}{\partial\textbf{y}^\beta}\frac{\partial\mathcal{G}^{\lambda\mu}}{\partial\textbf{y}^\alpha}
+\frac{\partial\mathcal{G}_{\gamma\lambda}}{\partial\textbf{y}^\beta}\mathcal{G}^{\lambda\sigma}
\frac{\partial\mathcal{G}_{\nu\sigma}}{\partial\textbf{y}^\alpha}\mathcal{G}^{\mu\nu}\nonumber\\
&\ \ \ -\frac{\partial\mathcal{G}_{\beta\lambda}}{\partial\textbf{y}^\alpha}\mathcal{G}^{\nu\lambda}
\frac{\partial\mathcal{G}_{\gamma\sigma}}{\partial\textbf{y}^\nu}\mathcal{G}^{\sigma\mu}
-\frac{\partial\mathcal{G}_{\gamma\lambda}}{\partial\textbf{y}^\alpha}\mathcal{G}^{\nu\lambda}
\frac{\partial\mathcal{G}_{\beta\sigma}}{\partial\textbf{y}^\nu}\mathcal{G}^{\sigma\mu}\Big)\mathcal{V}_\mu.
\end{align}
Since $\mathcal{G}^{\lambda\sigma}\frac{\partial\mathcal{G}_{\nu\sigma}}{\partial\textbf{y}^\alpha}=-\mathcal{G}_{\nu\sigma}\frac{\partial\mathcal{G}^{\lambda\sigma}}{\partial\textbf{y}^\alpha}$, then we get
\[
\frac{\partial\mathcal{G}_{\gamma\lambda}}{\partial\textbf{y}^\beta}\mathcal{G}^{\lambda\sigma}\frac{\partial\mathcal{G}_{\nu\sigma}}{\partial\textbf{y}^\alpha}\mathcal{G}^{\mu\nu}=-\frac{\partial\mathcal{G}_{\gamma\lambda}}{\partial\textbf{y}^\beta}\frac{\partial\mathcal{G}^{\lambda\mu}}{\partial\textbf{y}^\alpha}.
\]
Similary we obtain
\[
\frac{\partial\mathcal{G}_{\gamma\lambda}}{\partial\textbf{y}^\alpha}\mathcal{G}^{\nu\lambda}\frac{\partial\mathcal{G}_{\beta\sigma}}{\partial\textbf{y}^\nu}\mathcal{G}^{\sigma\mu}=\frac{\partial\mathcal{G}_{\gamma\lambda}}{\partial\textbf{y}^\alpha}\mathcal{G}^{\nu\lambda}\frac{\partial\mathcal{G}_{\nu\sigma}}{\partial\textbf{y}^\beta}\mathcal{G}^{\sigma\mu}=-\frac{\partial\mathcal{G}_{\gamma\lambda}}{\partial\textbf{y}^\alpha}\frac{\partial\mathcal{G}^{\lambda\mu}}{\partial\textbf{y}^\beta}.
\]
Setting two above equation in (\ref{jomee}) give us
\begin{align*}
(\stackrel{{\begin{tiny}\nabla\end{tiny}}}{D}_{\mathcal{V}_\alpha}\mathcal{C})(\delta_\beta, \delta_\gamma)&=\frac{1}{4}\Big(2\frac{\partial^2\mathcal{G}_{\gamma\lambda}}{\partial\textbf{y}^\alpha\partial\textbf{y}^\beta}\mathcal{G}^{\lambda\mu}
+\frac{\partial\mathcal{G}_{\gamma\lambda}}{\partial\textbf{y}^\beta}\frac{\partial\mathcal{G}^{\lambda\mu}}{\partial\textbf{y}^\alpha}
-\frac{\partial\mathcal{G}_{\beta\lambda}}{\partial\textbf{y}^\alpha}\mathcal{G}^{\nu\lambda}
\frac{\partial\mathcal{G}_{\gamma\sigma}}{\partial\textbf{y}^\nu}\mathcal{G}^{\sigma\mu}\\
&\ \ \ +\frac{\partial\mathcal{G}_{\gamma\lambda}}{\partial\textbf{y}^\alpha}\frac{\partial\mathcal{G}^{\lambda\mu}}{\partial\textbf{y}^\beta}\Big)\mathcal{V}_\mu.
\end{align*}
Similarly we can obtain
\begin{align*}
(\stackrel{{\begin{tiny}\nabla\end{tiny}}}{D}_{\mathcal{V}_\beta}\mathcal{C})(\delta_\alpha, \delta_\gamma)&=\frac{1}{4}\Big(2\frac{\partial^2\mathcal{G}_{\gamma\lambda}}{\partial\textbf{y}^\beta\partial\textbf{y}^\alpha}\mathcal{G}^{\lambda\mu}
+\frac{\partial\mathcal{G}_{\gamma\lambda}}{\partial\textbf{y}^\alpha}\frac{\partial\mathcal{G}^{\lambda\mu}}{\partial\textbf{y}^\beta}
-\frac{\partial\mathcal{G}_{\alpha\lambda}}{\partial\textbf{y}^\beta}\mathcal{G}^{\nu\lambda}
\frac{\partial\mathcal{G}_{\gamma\sigma}}{\partial\textbf{y}^\nu}\mathcal{G}^{\sigma\mu}\\
&\ \ \ +\frac{\partial\mathcal{G}_{\gamma\lambda}}{\partial\textbf{y}^\beta}\frac{\partial\mathcal{G}^{\lambda\mu}}{\partial\textbf{y}^\alpha}\Big)\mathcal{V}_\mu.
\end{align*}
Two above equation show that $(\stackrel{{\begin{tiny}\nabla\end{tiny}}}{D}_{\mathcal{V}_\alpha}\mathcal{C})(\delta_\beta, \delta_\gamma)=(\stackrel{{\begin{tiny}\nabla\end{tiny}}}{D}_{\mathcal{V}_\beta}\mathcal{C})(\delta_\alpha, \delta_\gamma)$.
\end{proof}
Let $t_\nabla$ be the weak torsion of $h_\nabla$ and $T_\nabla$ be the tosrion of $\nabla$. Then using (\ref{wt1}) and (\ref{delta}) we deduce
\[
t^\gamma_{\alpha\beta}=(\Gamma^\gamma_{\alpha\beta}-\Gamma^\gamma_{\beta\alpha}-L^\gamma_{\alpha\beta})\circ\pi=(T_\nabla(e_\alpha, e_\beta))^{h_\nabla},
\]
where $t^\gamma_{\alpha\beta}$ are the coefficient of $t_\nabla$. If we denote by $\stackrel{{\begin{tiny}\nabla\end{tiny}}}{T}$, the torsion of Ichijy\={o} connection $(\stackrel{{\begin{tiny}\nabla\end{tiny}}}{D}, h_\nabla)$ then we get
\begin{align*}
\stackrel{{\begin{tiny}\nabla\end{tiny}}}{T}(\delta_\alpha, \delta_\beta)&=\Big((\Gamma^\gamma_{\alpha\beta}-\Gamma^\gamma_{\beta\alpha}-L^\gamma_{\alpha\beta})\circ\pi\Big)\delta_\gamma-R^\gamma_{\alpha\beta}\mathcal{V}_\gamma\\
&=t^\gamma_{\alpha\beta}\delta_\gamma+\Omega(\delta_\alpha, \delta_\beta)=F_\nabla t_\nabla(\delta_\alpha, \delta_\beta)+\Omega_\nabla(\delta_\alpha, \delta_\beta)\\
&=(T_\nabla(e_\alpha, e_\beta))^{h_\nabla}+\Omega_\nabla(\delta_\alpha, \delta_\beta),\\
\stackrel{{\begin{tiny}\nabla\end{tiny}}}{T}(\delta_\alpha, \mathcal{V}_\beta)&=-\frac{1}{2}\frac{\partial\mathcal{G}_{\alpha\gamma}}{\partial\textbf{y}^\beta}\mathcal{G}^{\gamma\mu}\delta_\mu=-F_\nabla\mathcal{C}(\delta_\alpha, \delta_\beta)=-F_\nabla\mathcal{C}(\delta_\alpha, F_\nabla\mathcal{V}_\beta),\\
\stackrel{{\begin{tiny}\nabla\end{tiny}}}{T}(\mathcal{V}_\alpha, \mathcal{V}_\beta)&=0,
\end{align*}
where $\Omega_\nabla$ is the curvature tensor of $h_\nabla$. From the above equations we can conclude the following
\begin{proposition}
Let $(\stackrel{{\begin{tiny}\nabla\end{tiny}}}{D}, h_\nabla)$ be the Ichijy\={o} connection on Finsler algebroid $(E, \mathcal{F})$ with base connection $\nabla$. Then the torsion tensor of $\stackrel{{\begin{tiny}\nabla\end{tiny}}}{D}$ satisfies
\begin{align*}
\stackrel{{\begin{tiny}\nabla\end{tiny}}}{T}(\widetilde{X}, \widetilde{Y})&=F_\nabla t_\nabla(h_\nabla\widetilde{X}, h_\nabla\widetilde{Y})+\Omega(h_\nabla\widetilde{X}, h_\nabla\widetilde{Y})-F_\nabla\mathcal{C}(h_\nabla\widetilde{X}, F_\nabla v_\nabla\widetilde{Y})\\
&\ \ \ +F_\nabla\mathcal{C}(F_\nabla v_\nabla\widetilde{X}, h_\nabla\widetilde{Y}),\ \ \ \forall \widetilde{X}, \widetilde{Y}\in\Gamma(\stackrel{\circ}{\pounds^\pi E}).
\end{align*}
\end{proposition}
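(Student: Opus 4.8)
The plan is to derive the coordinate-free identity from the three component evaluations displayed immediately above the statement, using that $\stackrel{{\begin{tiny}\nabla\end{tiny}}}{T}$ is a tensor field of type $(1,2)$ and hence $C^\infty(E)$-bilinear. First I would expand two arbitrary sections in the adapted basis $\{\delta_\alpha,\mathcal{V}_\alpha\}$, writing $\widetilde{X}=\widetilde{X}^\alpha\delta_\alpha+\widetilde{X}^{\bar\alpha}\mathcal{V}_\alpha$ and $\widetilde{Y}=\widetilde{Y}^\beta\delta_\beta+\widetilde{Y}^{\bar\beta}\mathcal{V}_\beta$, so that by bilinearity
\begin{align*}
\stackrel{{\begin{tiny}\nabla\end{tiny}}}{T}(\widetilde{X},\widetilde{Y})&=\widetilde{X}^\alpha\widetilde{Y}^\beta\stackrel{{\begin{tiny}\nabla\end{tiny}}}{T}(\delta_\alpha,\delta_\beta)+\widetilde{X}^\alpha\widetilde{Y}^{\bar\beta}\stackrel{{\begin{tiny}\nabla\end{tiny}}}{T}(\delta_\alpha,\mathcal{V}_\beta)\\
&\ \ +\widetilde{X}^{\bar\alpha}\widetilde{Y}^\beta\stackrel{{\begin{tiny}\nabla\end{tiny}}}{T}(\mathcal{V}_\alpha,\delta_\beta)+\widetilde{X}^{\bar\alpha}\widetilde{Y}^{\bar\beta}\stackrel{{\begin{tiny}\nabla\end{tiny}}}{T}(\mathcal{V}_\alpha,\mathcal{V}_\beta).
\end{align*}
The preceding computation supplies $\stackrel{{\begin{tiny}\nabla\end{tiny}}}{T}(\delta_\alpha,\delta_\beta)$, $\stackrel{{\begin{tiny}\nabla\end{tiny}}}{T}(\delta_\alpha,\mathcal{V}_\beta)$, and $\stackrel{{\begin{tiny}\nabla\end{tiny}}}{T}(\mathcal{V}_\alpha,\mathcal{V}_\beta)=0$, so the goal reduces to recognising each of these as the basis evaluation of the corresponding term on the right-hand side.

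Next I would record the action of the relevant operators on the adapted basis, read off from $(\ref{hF})$: namely $h_\nabla\delta_\alpha=\delta_\alpha$, $h_\nabla\mathcal{V}_\alpha=0$, hence $v_\nabla\delta_\alpha=0$, $v_\nabla\mathcal{V}_\alpha=\mathcal{V}_\alpha$, together with $F_\nabla\mathcal{V}_\alpha=\delta_\alpha$ and $F_\nabla\delta_\alpha=-\mathcal{V}_\alpha$. Consequently $h_\nabla\widetilde{X}=\widetilde{X}^\alpha\delta_\alpha$ and $F_\nabla v_\nabla\widetilde{X}=\widetilde{X}^{\bar\alpha}\delta_\alpha$. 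Then I would match term by term. For the horizontal pair, $F_\nabla t_\nabla(\delta_\alpha,\delta_\beta)=t^\gamma_{\alpha\beta}F_\nabla\mathcal{V}_\gamma=t^\gamma_{\alpha\beta}\delta_\gamma=(T_\nabla(e_\alpha,e_\beta))^{h_\nabla}$, so $F_\nabla t_\nabla(h_\nabla\widetilde{X},h_\nabla\widetilde{Y})+\Omega(h_\nabla\widetilde{X},h_\nabla\widetilde{Y})$ contributes exactly $\widetilde{X}^\alpha\widetilde{Y}^\beta\stackrel{{\begin{tiny}\nabla\end{tiny}}}{T}(\delta_\alpha,\delta_\beta)$. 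For the mixed term $-F_\nabla\mathcal{C}(h_\nabla\widetilde{X},F_\nabla v_\nabla\widetilde{Y})=-\widetilde{X}^\alpha\widetilde{Y}^{\bar\beta}F_\nabla\mathcal{C}(\delta_\alpha,\delta_\beta)$, which equals $\widetilde{X}^\alpha\widetilde{Y}^{\bar\beta}\stackrel{{\begin{tiny}\nabla\end{tiny}}}{T}(\delta_\alpha,\mathcal{V}_\beta)$ by the listed value $\stackrel{{\begin{tiny}\nabla\end{tiny}}}{T}(\delta_\alpha,\mathcal{V}_\beta)=-F_\nabla\mathcal{C}(\delta_\alpha,F_\nabla\mathcal{V}_\beta)=-F_\nabla\mathcal{C}(\delta_\alpha,\delta_\beta)$. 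The symmetric mixed term $+F_\nabla\mathcal{C}(F_\nabla v_\nabla\widetilde{X},h_\nabla\widetilde{Y})=\widetilde{X}^{\bar\alpha}\widetilde{Y}^\beta F_\nabla\mathcal{C}(\delta_\alpha,\delta_\beta)$ must be identified with $\widetilde{X}^{\bar\alpha}\widetilde{Y}^\beta\stackrel{{\begin{tiny}\nabla\end{tiny}}}{T}(\mathcal{V}_\alpha,\delta_\beta)$.

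This last identification is the only point requiring genuine care, since $\stackrel{{\begin{tiny}\nabla\end{tiny}}}{T}(\mathcal{V}_\alpha,\delta_\beta)$ is the one component not written out explicitly above; I would obtain it by combining the skew-symmetry of the torsion, $\stackrel{{\begin{tiny}\nabla\end{tiny}}}{T}(\mathcal{V}_\alpha,\delta_\beta)=-\stackrel{{\begin{tiny}\nabla\end{tiny}}}{T}(\delta_\beta,\mathcal{V}_\alpha)=F_\nabla\mathcal{C}(\delta_\beta,\delta_\alpha)$, with the symmetry of the first Cartan tensor $\mathcal{C}(\delta_\beta,\delta_\alpha)=\mathcal{C}(\delta_\alpha,\delta_\beta)$ established earlier. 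Finally, the $(\mathcal{V}_\alpha,\mathcal{V}_\beta)$ summand is consistent with the right-hand side because each right-hand term carries a factor $h_\nabla$ in one of its two slots, which annihilates a vertical argument; thus the whole right-hand side vanishes on vertical pairs, matching $\stackrel{{\begin{tiny}\nabla\end{tiny}}}{T}(\mathcal{V}_\alpha,\mathcal{V}_\beta)=0$. Summing the four expanded summands then reproduces the claimed formula. No deep obstacle is expected; the work is purely the tensorial bookkeeping of placing $h_\nabla$, $v_\nabla$ and $F_\nabla$ in the correct slots, and the symmetry/antisymmetry pairing needed for the cross term is the single spot where a sign could slip.
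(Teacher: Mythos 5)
Your proposal is correct and follows essentially the same route as the paper: the proposition is deduced there from exactly the basis evaluations $\stackrel{{\begin{tiny}\nabla\end{tiny}}}{T}(\delta_\alpha,\delta_\beta)$, $\stackrel{{\begin{tiny}\nabla\end{tiny}}}{T}(\delta_\alpha,\mathcal{V}_\beta)$, $\stackrel{{\begin{tiny}\nabla\end{tiny}}}{T}(\mathcal{V}_\alpha,\mathcal{V}_\beta)=0$ displayed just before it, with the passage to arbitrary sections left to tensoriality. Your write-up merely makes explicit what the paper leaves implicit, namely the $C^\infty(E)$-bilinear expansion in the adapted basis and the recovery of the $(\mathcal{V}_\alpha,\delta_\beta)$ component via skew-symmetry of the torsion together with the symmetry of $\mathcal{C}$, and both of those supporting facts are indeed established earlier in the paper.
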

\begin{cor}
Let $(\stackrel{{\begin{tiny}\nabla\end{tiny}}}{D}, h_\nabla)$ be the Ichijy\={o} connection on Finsler algebroid $(E, \mathcal{F})$ with base connection $\nabla$. Then for all $X, Y\in\Gamma(E)$ we have
\begin{align*}
\stackrel{{\begin{tiny}\nabla\end{tiny}}}{T}(X^{h_\nabla}, Y^{h_\nabla})&=(T_\nabla(X, Y))^{h_\nabla}+\Omega_\nabla(X^{h_\nabla}, Y^{h_\nabla}),\\
\stackrel{{\begin{tiny}\nabla\end{tiny}}}{T}(X^{h_\nabla}, Y^V)&=-F_\nabla\mathcal{C}(X^{h_\nabla}, F_\nabla Y^V),\\
\stackrel{{\begin{tiny}\nabla\end{tiny}}}{T}(X^V, Y^V)&=0.
\end{align*}
\end{cor}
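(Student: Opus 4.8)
The plan is to specialize the general torsion formula of the preceding Proposition to the three pairs of lifts $(X^{h_\nabla}, Y^{h_\nabla})$, $(X^{h_\nabla}, Y^V)$ and $(X^V, Y^V)$, and to read off each identity using only how the projectors $h_\nabla$ and $v_\nabla$ act on these lifts. First I would record the elementary facts that $X^{h_\nabla}=h_\nabla X^{C}$ lies in the horizontal subbundle $h_\nabla\pounds^\pi E$, while $X^V\in v\pounds^\pi E=\ker h_\nabla$, so that
\[
h_\nabla X^{h_\nabla}=X^{h_\nabla},\quad v_\nabla X^{h_\nabla}=0,\quad h_\nabla X^V=0,\quad v_\nabla X^V=X^V,
\]
and likewise for $Y$. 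Since $t_\nabla$, $\Omega_\nabla$ and the first Cartan tensor $\mathcal{C}$ are all tensorial, any term of the Proposition's formula that carries a vanishing slot drops out automatically, and the three cases reduce to bookkeeping.

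For the pair $(X^{h_\nabla}, Y^{h_\nabla})$ both vertical projections vanish, so the two $\mathcal{C}$-terms (which contain $F_\nabla v_\nabla$) die and only
\[
\stackrel{{\begin{tiny}\nabla\end{tiny}}}{T}(X^{h_\nabla}, Y^{h_\nabla})=F_\nabla t_\nabla(X^{h_\nabla}, Y^{h_\nabla})+\Omega_\nabla(X^{h_\nabla}, Y^{h_\nabla})
\]
survives, where $\Omega_\nabla=\Omega$ is the curvature of $h_\nabla$. To finish this case I would invoke the coefficient computation carried out just before the Proposition, namely $t^\gamma_{\alpha\beta}=(\Gamma^\gamma_{\alpha\beta}-\Gamma^\gamma_{\beta\alpha}-L^\gamma_{\alpha\beta})\circ\pi$, together with $F_\nabla\mathcal{V}_\gamma=\delta_\gamma=e_\gamma^{h_\nabla}$; these identify $F_\nabla t_\nabla(X^{h_\nabla}, Y^{h_\nabla})$ with $(T_\nabla(X, Y))^{h_\nabla}$ and yield the first equality.

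For the pair $(X^{h_\nabla}, Y^V)$ one has $h_\nabla Y^V=0$, so the $t_\nabla$- and $\Omega_\nabla$-terms vanish; moreover $v_\nabla X^{h_\nabla}=0$ kills the last $\mathcal{C}$-term, leaving only $-F_\nabla\mathcal{C}(h_\nabla X^{h_\nabla}, F_\nabla v_\nabla Y^V)=-F_\nabla\mathcal{C}(X^{h_\nabla}, F_\nabla Y^V)$, which is the second identity. Finally, for $(X^V, Y^V)$ every slot that refers to $h_\nabla X^V$ or $h_\nabla Y^V$ vanishes simultaneously, so all four terms of the formula are zero and $\stackrel{{\begin{tiny}\nabla\end{tiny}}}{T}(X^V, Y^V)=0$. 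The only step demanding more than substitution is the identification of $F_\nabla t_\nabla$ on horizontal lifts with the base torsion $T_\nabla$; but this is precisely the content of the $\stackrel{{\begin{tiny}\nabla\end{tiny}}}{T}(\delta_\alpha, \delta_\beta)$ computation already displayed before the Proposition, so no genuine obstacle remains and the corollary follows by tensoriality in $X=X^\alpha e_\alpha$ and $Y=Y^\beta e_\beta$.
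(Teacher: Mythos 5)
Your proposal is correct and follows essentially the same route as the paper: the corollary is exactly the preceding Proposition's formula specialized to $\widetilde{X}=X^{h_\nabla}$ or $X^V$ and $\widetilde{Y}=Y^{h_\nabla}$ or $Y^V$, using $h_\nabla X^{h_\nabla}=X^{h_\nabla}$, $v_\nabla X^{h_\nabla}=0$, $h_\nabla X^V=0$, $v_\nabla X^V=X^V$, with the identification $F_\nabla t_\nabla(X^{h_\nabla},Y^{h_\nabla})=(T_\nabla(X,Y))^{h_\nabla}$ coming from the coefficient computation $t^\gamma_{\alpha\beta}=(\Gamma^\gamma_{\alpha\beta}-\Gamma^\gamma_{\beta\alpha}-L^\gamma_{\alpha\beta})\circ\pi$ and $F_\nabla\mathcal{V}_\gamma=\delta_\gamma$ displayed just before the Proposition. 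The paper itself treats the corollary as an immediate consequence of that Proposition, which is precisely what your projector bookkeeping and tensoriality argument make explicit.
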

Let $\stackrel{{\begin{tiny}\nabla\end{tiny}}}{R}^{\ \ \ \ \lambda}_{\alpha\beta\gamma}$, $\stackrel{{\begin{tiny}\nabla\end{tiny}}}{P}^{\ \ \ \ \lambda}_{\alpha\beta\gamma}$ and $\stackrel{{\begin{tiny}\nabla\end{tiny}}}{S}^{\ \ \ \ \lambda}_{\alpha\beta\gamma}$ be the coefficients of the horizontal, mixed and vertical curvatures of Ichijy\={o} connection $(\stackrel{{\begin{tiny}\nabla\end{tiny}}}{D}, h_\nabla)$, respectively. Then using (\ref{hh})-(\ref{hv}) and (\ref{I})-(\ref{I3}) we get
\begin{align}
\stackrel{{\begin{tiny}\nabla\end{tiny}}}{R}^{\ \ \ \ \lambda}_{\alpha\beta\gamma}&=(\rho^i_\alpha\circ \pi)\frac{\partial (\Gamma^\lambda_{\beta\gamma}\circ\pi)}{\partial \textbf{x}^i}-(\rho^i_\beta\circ \pi)\frac{\partial (\Gamma^\lambda_{\alpha\gamma}\circ\pi)}{\partial \textbf{x}^i}+(\Gamma^\mu_{\beta\gamma}\circ\pi)(\Gamma^\lambda_{\alpha\mu}\circ\pi)\nonumber\\
&\ -(\Gamma^\mu_{\alpha\gamma}\circ\pi)(\Gamma^\lambda_{\beta\mu}\circ\pi)-(L^\mu_{\alpha\beta}\circ \pi)(\Gamma^\lambda_{\mu\gamma}\circ\pi)-R^{\ \ \mu}_{\alpha\beta}\mathcal{C}^\lambda_{\mu\gamma}\nonumber\\
&=-\frac{\partial R_{\alpha\beta}^\lambda}{\partial \textbf{y}^\gamma}-R^{\ \ \mu}_{\alpha\beta}\mathcal{C}^\lambda_{\mu\gamma},\label{hhI}\\
\stackrel{{\begin{tiny}\nabla\end{tiny}}}{P}^{\ \ \ \ \lambda}_{\alpha\beta\gamma}&=(\rho^i_\alpha\circ \pi)\frac{\partial \mathcal{C}^\lambda_{\beta\gamma}}{\partial \textbf{x}^i}-\textbf{y}^\nu(\Gamma^\mu_{\alpha\nu}\circ\pi)\frac{\partial \mathcal{C}^\lambda_{\beta\gamma}}{\partial \textbf{y}^\mu}+\mathcal{C}^\mu_{\beta\gamma}(\Gamma^\lambda_{\alpha\mu}\circ\pi)-(\Gamma^\mu_{\alpha\gamma}\circ\pi)\mathcal{C}^\lambda_{\beta\mu}\nonumber\\
&\ \ \ -(\Gamma^\mu_{\alpha\beta}\circ\pi)\mathcal{C}^\lambda_{\mu\gamma},\label{hvI}\\
\stackrel{{\begin{tiny}\nabla\end{tiny}}}{S}^{\ \ \ \ \lambda}_{\alpha\beta\gamma}&=\frac{\partial \mathcal{C}^\lambda_{\beta\gamma}}{\partial \textbf{y}^\alpha}+\mathcal{C}^\mu_{\beta\gamma}\mathcal{C}^\lambda_{\alpha\mu}-\frac{\partial \mathcal{C}^\lambda_{\alpha\gamma}}{\partial \textbf{y}^\beta}-\mathcal{C}^\mu_{\alpha\gamma}\mathcal{C}^\lambda_{\beta\mu}.\label{vvI}
\end{align}
Using the above equations we conclude the following proposition which gives us the global expressions of horizontal, mixed and vertical curvatures of Ichijy\={o} connection.
\begin{proposition}\label{very im1}
Let $(\stackrel{{\begin{tiny}\nabla\end{tiny}}}{D}, h_\nabla)$ be the Ichijy\={o} connection on Finsler algebroid $(E, \mathcal{F})$ with base connection $\nabla$. Then we have
\begin{align*}
\stackrel{{\begin{tiny}\nabla\end{tiny}}}{R}(\widetilde{X}, \widetilde{Y})\widetilde{Z}&=[J, \Omega_\nabla(\widetilde{X}, \widetilde{Y})]^{F-N}_\pounds(h_\nabla \widetilde{Z})+\mathcal{C}(F_\nabla\Omega_\nabla(\widetilde{X}, \widetilde{Y}), \widetilde{Z}),\\
\stackrel{{\begin{tiny}\nabla\end{tiny}}}{P}(\widetilde{X}, \widetilde{Y})\widetilde{Z}&=(\stackrel{{\begin{tiny}\nabla\end{tiny}}}{D}_{h_\nabla \widetilde{X}}\mathcal{C})(h_\nabla \widetilde{Y}, h_\nabla \widetilde{Z}),\\
\stackrel{{\begin{tiny}\nabla\end{tiny}}}{Q}(\widetilde{X}, \widetilde{Y})\widetilde{Z}&=\mathcal{C}(F_\nabla\mathcal{C}(\widetilde{X}, \widetilde{Z}), \widetilde{Y})-\mathcal{C}(\widetilde{X}, F_\nabla \mathcal{C}(\widetilde{Y}, \widetilde{Z})),
\end{align*}
where $\widetilde{X}, \widetilde{Y}, \widetilde{Z}\in\Gamma(\stackrel{\circ}{\pounds^\pi E})$.
\end{proposition}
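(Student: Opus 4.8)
The plan is to verify the three identities componentwise in the adapted frame $\{\delta_\alpha,\mathcal{V}_\alpha\}$ generated by $h_\nabla$, comparing each proposed right-hand side with the coefficient formulas (\ref{hhI}), (\ref{hvI}), (\ref{vvI}) already established for $\stackrel{{\begin{tiny}\nabla\end{tiny}}}{R}$, $\stackrel{{\begin{tiny}\nabla\end{tiny}}}{P}$, $\stackrel{{\begin{tiny}\nabla\end{tiny}}}{S}$. First I would record the local data to be fed in: from (\ref{hF}), $F_\nabla\mathcal{V}_\alpha=\delta_\alpha$ and $F_\nabla\delta_\alpha=-\mathcal{V}_\alpha$; from (\ref{curv000})--(\ref{curv0}) together with (\ref{LieB}), $\Omega_\nabla(\delta_\alpha,\delta_\beta)=-v[\delta_\alpha,\delta_\beta]_\pounds=-R^\gamma_{\alpha\beta}\mathcal{V}_\gamma$ (vanishing on mixed and vertical pairs); from (\ref{I1})--(\ref{I3}) the connection acts by $\stackrel{{\begin{tiny}\nabla\end{tiny}}}{D}_{\delta_\alpha}\mathcal{V}_\beta=(\Gamma^\gamma_{\alpha\beta}\circ\pi)\mathcal{V}_\gamma$ and $\stackrel{{\begin{tiny}\nabla\end{tiny}}}{D}_{\mathcal{V}_\alpha}\mathcal{V}_\beta=\mathcal{C}^\gamma_{\alpha\beta}\mathcal{V}_\gamma$; and $\mathcal{C}$ is the semibasic, fully symmetric tensor of (\ref{Cartan2}), so $\mathcal{C}(\delta_\alpha,\delta_\beta)=\mathcal{C}^\gamma_{\alpha\beta}\mathcal{V}_\gamma$ and $\mathcal{C}$ annihilates every vertical argument. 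Since all three partial curvatures are tensorial with the stated coefficients, it suffices to evaluate each global expression on $(\delta_\alpha,\delta_\beta,\delta_\gamma)$.

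For the horizontal curvature I would expand the Fr\"olicher--Nijenhuis term by the $\pounds$-analogue of (\ref{F3}):
\[
[J,\Omega_\nabla(\delta_\alpha,\delta_\beta)]^{F-N}_\pounds(\delta_\gamma)=[\mathcal{V}_\gamma,-R^\mu_{\alpha\beta}\mathcal{V}_\mu]_\pounds-J[\delta_\gamma,-R^\mu_{\alpha\beta}\mathcal{V}_\mu]_\pounds.
\]
The first bracket equals $-\frac{\partial R^\lambda_{\alpha\beta}}{\partial\textbf{y}^\gamma}\mathcal{V}_\lambda$, since $[\mathcal{V}_\gamma,\mathcal{V}_\mu]_\pounds=0$ and $\rho_\pounds(\mathcal{V}_\gamma)=\frac{\partial}{\partial\textbf{y}^\gamma}$; the second vanishes because $[\delta_\gamma,\mathcal{V}_\mu]_\pounds$ is vertical by (\ref{LieB}) and $J$ kills verticals (here $J\circ\Omega_\nabla=0$ is used). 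The remaining piece is $\mathcal{C}(F_\nabla\Omega_\nabla(\delta_\alpha,\delta_\beta),\delta_\gamma)=\mathcal{C}(-R^\mu_{\alpha\beta}\delta_\mu,\delta_\gamma)=-R^\mu_{\alpha\beta}\mathcal{C}^\lambda_{\mu\gamma}\mathcal{V}_\lambda$, and the sum reproduces (\ref{hhI}).

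The mixed curvature is routine: I would simply write out the covariant derivative of the $(1,2)$-tensor $\mathcal{C}$, namely $(\stackrel{{\begin{tiny}\nabla\end{tiny}}}{D}_{\delta_\alpha}\mathcal{C})(\delta_\beta,\delta_\gamma)=\stackrel{{\begin{tiny}\nabla\end{tiny}}}{D}_{\delta_\alpha}(\mathcal{C}^\lambda_{\beta\gamma}\mathcal{V}_\lambda)-\mathcal{C}(\stackrel{{\begin{tiny}\nabla\end{tiny}}}{D}_{\delta_\alpha}\delta_\beta,\delta_\gamma)-\mathcal{C}(\delta_\beta,\stackrel{{\begin{tiny}\nabla\end{tiny}}}{D}_{\delta_\alpha}\delta_\gamma)$, insert $\rho_\pounds(\delta_\alpha)=(\rho^i_\alpha\circ\pi)\frac{\partial}{\partial\textbf{x}^i}+\mathcal{B}^\mu_\alpha\frac{\partial}{\partial\textbf{y}^\mu}$ with $\mathcal{B}^\mu_\alpha=-\textbf{y}^\nu(\Gamma^\mu_{\alpha\nu}\circ\pi)$, and read off exactly (\ref{hvI}).

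The vertical curvature is where the real work lies, and I expect it to be the main obstacle. Expanding $\mathcal{C}(F_\nabla\mathcal{C}(\delta_\alpha,\delta_\gamma),\delta_\beta)-\mathcal{C}(\delta_\alpha,F_\nabla\mathcal{C}(\delta_\beta,\delta_\gamma))$ with $F_\nabla\mathcal{V}_\mu=\delta_\mu$ and the symmetry $\mathcal{C}^\lambda_{\mu\beta}=\mathcal{C}^\lambda_{\beta\mu}$ yields the purely quadratic term $(\mathcal{C}^\mu_{\alpha\gamma}\mathcal{C}^\lambda_{\beta\mu}-\mathcal{C}^\mu_{\beta\gamma}\mathcal{C}^\lambda_{\alpha\mu})\mathcal{V}_\lambda$, whereas (\ref{vvI}) also carries the derivative part $\frac{\partial\mathcal{C}^\lambda_{\beta\gamma}}{\partial\textbf{y}^\alpha}-\frac{\partial\mathcal{C}^\lambda_{\alpha\gamma}}{\partial\textbf{y}^\beta}$. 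Reconciling the two requires the Cartan identity
\[
\frac{\partial\mathcal{C}^\lambda_{\beta\gamma}}{\partial\textbf{y}^\alpha}-\frac{\partial\mathcal{C}^\lambda_{\alpha\gamma}}{\partial\textbf{y}^\beta}=2\big(\mathcal{C}^\mu_{\alpha\gamma}\mathcal{C}^\lambda_{\beta\mu}-\mathcal{C}^\mu_{\beta\gamma}\mathcal{C}^\lambda_{\alpha\mu}\big),
\]
which I would prove from the definitions behind (\ref{Best6}) and (\ref{Cartan2}): writing $\mathcal{C}^\lambda_{\beta\gamma}=\mathcal{C}_{\beta\gamma\sigma}\mathcal{G}^{\sigma\lambda}$ with $\mathcal{C}_{\beta\gamma\sigma}=\tfrac12\frac{\partial^3\mathcal{F}}{\partial\textbf{y}^\beta\partial\textbf{y}^\gamma\partial\textbf{y}^\sigma}$ fully symmetric and $\frac{\partial\mathcal{G}_{\mu\nu}}{\partial\textbf{y}^\alpha}=2\mathcal{C}_{\alpha\mu\nu}$, the fourth-derivative contribution to $\frac{\partial\mathcal{C}^\lambda_{\beta\gamma}}{\partial\textbf{y}^\alpha}$ is symmetric in $\alpha\leftrightarrow\beta$ and drops out of the antisymmetrization, while the term issuing from $\frac{\partial\mathcal{G}^{\sigma\lambda}}{\partial\textbf{y}^\alpha}=-2\mathcal{G}^{\sigma\mu}\mathcal{G}^{\lambda\nu}\mathcal{C}_{\alpha\mu\nu}$ collapses precisely to the quadratic right-hand side. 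Substituting this identity into (\ref{vvI}) turns $\stackrel{{\begin{tiny}\nabla\end{tiny}}}{S}^{\ \ \ \ \lambda}_{\alpha\beta\gamma}$ into $\mathcal{C}^\mu_{\alpha\gamma}\mathcal{C}^\lambda_{\beta\mu}-\mathcal{C}^\mu_{\beta\gamma}\mathcal{C}^\lambda_{\alpha\mu}$, matching the frame expansion and completing the vertical case. Throughout, the reductions $J\circ\Omega_\nabla=0$, $F_\nabla\mathcal{V}_\alpha=\delta_\alpha$, and the semibasic symmetry of $\mathcal{C}$ are exactly what legitimize the frame computation, so that agreement on $\{\delta_\alpha,\mathcal{V}_\alpha\}$ delivers the asserted coordinate-free identities.
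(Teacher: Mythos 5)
Your overall strategy --- verify the three formulas in the adapted frame $\{\delta_\alpha,\mathcal{V}_\alpha\}$ against the coefficient expressions (\ref{hhI}), (\ref{hvI}), (\ref{vvI}) --- is exactly the route the paper takes: its ``proof'' is the single sentence ``using the above equations we conclude,'' so the content of any genuine proof is precisely the computation you carry out. Your frame computations are correct, and you correctly isolate the one nontrivial ingredient the paper suppresses entirely: the vertical case does not match (\ref{vvI}) term by term but needs the identity
\[
\frac{\partial\mathcal{C}^\lambda_{\beta\gamma}}{\partial\textbf{y}^\alpha}-\frac{\partial\mathcal{C}^\lambda_{\alpha\gamma}}{\partial\textbf{y}^\beta}
=2\bigl(\mathcal{C}^\mu_{\alpha\gamma}\mathcal{C}^\lambda_{\beta\mu}-\mathcal{C}^\mu_{\beta\gamma}\mathcal{C}^\lambda_{\alpha\mu}\bigr),
\]
and your derivation of it (the fourth $\textbf{y}$-derivative of $\mathcal{F}$ is totally symmetric and drops out of the antisymmetrization, so only the $\partial\mathcal{G}^{\sigma\lambda}/\partial\textbf{y}^\alpha$ contribution survives) is sound.

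There is, however, a genuine gap in your reduction step, ``since all three partial curvatures are tensorial \ldots\ it suffices to evaluate on $(\delta_\alpha,\delta_\beta,\delta_\gamma)$.'' Frame evaluation proves an identity only if \emph{both} sides are $C^\infty(E)$-multilinear. The left-hand sides are, and so are the right-hand sides of the second and third formulas (a covariant derivative of a tensor, and compositions of tensors), so there your verification is legitimate and complete. But the right-hand side of the first formula is not tensorial, because of the Fr\"olicher--Nijenhuis term: for a vertical section $W$ and $f\in C^\infty(E)$ one computes from (\ref{F3}) that
\[
[J,fW]^{F-N}_\pounds(h_\nabla\widetilde{Z})=f\,[J,W]^{F-N}_\pounds(h_\nabla\widetilde{Z})+\bigl(\rho_\pounds(J\widetilde{Z})f\bigr)W ,
\]
since $JW=0$ kills the other derivative term. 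Hence replacing $\widetilde{X}$ by $f\widetilde{X}$, i.e.\ $W=\Omega_\nabla(\widetilde{X},\widetilde{Y})$ by $fW$, produces the extra summand $(\rho_\pounds(J\widetilde{Z})f)\,\Omega_\nabla(\widetilde{X},\widetilde{Y})$, which is nonzero for generic $f$ whenever $\Omega_\nabla\neq 0$, while the left-hand side simply scales by $f$. So the first identity, read literally for arbitrary sections $\widetilde{X},\widetilde{Y},\widetilde{Z}$, fails in the non-flat case; it is valid only on the adapted frame (equivalently, as the coordinate statement (\ref{hhI})), or on basic/lifted sections, which is how the paper states analogous F--N formulas elsewhere. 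To be fair, this defect sits in the paper's own statement and its one-line assertion hides exactly the same problem, and your computation establishes everything the paper's argument does; but a complete write-up should either restrict the first formula accordingly or point out that the non-tensoriality of $[J,\cdot\,]^{F-N}_\pounds$ obstructs it as stated.
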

\begin{cor}
The horizontal curvature of Ichijy\={o} connection is zero if and only if the curvature of $h_\nabla$ (or the curvature of base connection $\nabla$) is zero.
\end{cor}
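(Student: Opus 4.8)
The plan is to reduce the whole statement to the coordinate formula (\ref{hhI}) for the horizontal curvature of the Ichijy\={o} connection,
\[
\stackrel{{\begin{tiny}\nabla\end{tiny}}}{R}^{\ \ \ \ \lambda}_{\alpha\beta\gamma}=-\frac{\partial R_{\alpha\beta}^\lambda}{\partial \textbf{y}^\gamma}-R^{\ \ \mu}_{\alpha\beta}\mathcal{C}^\lambda_{\mu\gamma},
\]
combined with Corollary \ref{very im}, which already equates the vanishing of the curvature of $h_\nabla$ with that of the base connection $\nabla$. Thus it suffices to show that $\stackrel{{\begin{tiny}\nabla\end{tiny}}}{R}=0$ if and only if the curvature coefficients $R^\lambda_{\alpha\beta}$ of $h_\nabla$ vanish; the passage to $\nabla$ is then automatic. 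Alternatively the global identities of Proposition \ref{very im1} may be used in place of the coordinate description, but the coordinate route is cleaner here.

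One implication is immediate. If the curvature of $h_\nabla$ is zero, then $R^\lambda_{\alpha\beta}=0$, so both terms on the right-hand side of the displayed formula vanish and hence $\stackrel{{\begin{tiny}\nabla\end{tiny}}}{R}=0$. (Equivalently, from Proposition \ref{very im1}, $\Omega_\nabla=0$ kills both $[J,\Omega_\nabla]^{F-N}_\pounds$ and the Cartan term.)

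The substantive direction is the converse, and my plan is to contract the identity $\stackrel{{\begin{tiny}\nabla\end{tiny}}}{R}^{\ \ \ \ \lambda}_{\alpha\beta\gamma}=0$ with $\textbf{y}^\gamma$ and exploit two homogeneity facts recorded earlier. First, by the lemma preceding Corollary \ref{very im} one has $R^\lambda_{\alpha\beta}=-\textbf{y}^\gamma(K_{\alpha\beta\gamma}^{\ \ \ \ \lambda}\circ\pi)$ with the $K_{\alpha\beta\gamma}^{\ \ \ \ \lambda}$ depending only on the base coordinates; hence $R^\lambda_{\alpha\beta}$ is homogeneous of degree one in the fibre coordinates, and Euler's relation gives $\textbf{y}^\gamma\,\partial R^\lambda_{\alpha\beta}/\partial\textbf{y}^\gamma=R^\lambda_{\alpha\beta}$. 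Second, since $\mathcal{C}^\lambda_{\mu\gamma}$ is assembled from third $\textbf{y}$-derivatives of $\mathcal{F}$, the homogeneity of $\mathcal{F}$ in the form (\ref{2eq}) yields $\textbf{y}^\gamma\mathcal{C}^\lambda_{\mu\gamma}=0$. Substituting both into the contracted identity collapses it to $R^\lambda_{\alpha\beta}=0$, which is exactly the vanishing of the curvature of $h_\nabla$; Corollary \ref{very im} then transfers this to $\nabla$, closing the equivalence.

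The main obstacle I anticipate is precisely this converse step: the relation $\partial R/\partial\textbf{y}+R\,\mathcal{C}=0$ does not force $R=0$ by any pointwise manipulation, so the argument genuinely depends on selecting the contraction with $\textbf{y}^\gamma$ and on having the two homogeneity identities ($R^\lambda_{\alpha\beta}$ of degree one and $\textbf{y}^\gamma\mathcal{C}^\lambda_{\mu\gamma}=0$) in play at once. Once these are available the remaining computation is routine, and everything else is bookkeeping with the already-established formulas.
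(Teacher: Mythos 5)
Your proposal is correct and follows essentially the same route as the paper: both directions rest on the coordinate formula (\ref{hhI}), with the converse obtained by contracting with $\textbf{y}^\gamma$, killing the Cartan term via $\textbf{y}^\gamma\mathcal{C}^\lambda_{\mu\gamma}=0$, and invoking the degree-one homogeneity $\textbf{y}^\gamma\,\partial R^\lambda_{\alpha\beta}/\partial\textbf{y}^\gamma=R^\lambda_{\alpha\beta}$, after which Corollary \ref{very im} transfers the statement to $\nabla$. Your only addition is an explicit justification of that homogeneity identity from $R^\lambda_{\alpha\beta}=-\textbf{y}^\gamma(K_{\alpha\beta\gamma}^{\ \ \ \ \lambda}\circ\pi)$, a step the paper dismisses as ``easy to see.''
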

\begin{proof}
If the curvature of $h_\nabla$ vanishes, then we have $R_{\alpha\beta}^\lambda=0$. Therefore from (\ref{hhI}) we deduce $\stackrel{{\begin{tiny}\nabla\end{tiny}}}{R}^{\ \ \ \ \lambda}_{\alpha\beta\gamma}=0$, i.e., the horizontal curvature of Ichijy\={o} connection is zero. Conversely, if $\stackrel{{\begin{tiny}\nabla\end{tiny}}}{R}^{\ \ \ \ \lambda}_{\alpha\beta\gamma}=0$, then from (\ref{hhI}) we derive that
\[
\frac{\partial R_{\alpha\beta}^\lambda}{\partial \textbf{y}^\gamma}+R^{\ \ \mu}_{\alpha\beta}\mathcal{C}^\lambda_{\mu\gamma}=0.
\]
Multiplying $\textbf{y}^\gamma$ in the above equation and using $\textbf{y}^\gamma\mathcal{C}^\lambda_{\mu\gamma}=0$, give us $\textbf{y}^\gamma\frac{\partial R_{\alpha\beta}^\lambda}{\partial \textbf{y}^\gamma}=0$. But it is easy to see that $\textbf{y}^\gamma\frac{\partial R_{\alpha\beta}^\lambda}{\partial \textbf{y}^\gamma}=R_{\alpha\beta}^\lambda$. Thus we deduce $R_{\alpha\beta}^\lambda=0$, i.e., the curvature of $h_\nabla$ is zero. Note that from corollary \ref{very im}, we deduce that the vanishing of the horizontal curvature of Ichijy\={o} connection is equivalent to the vanishing of the curvature of base connection $\nabla$.
\end{proof}
From the second relation of proposition (\ref{very im1}) we conclude
\begin{cor}
The mixed curvature of Ichijy\={o} connection is zero if and only if the $h$-covariant derivative of the first Cartan tensor with respect to $\stackrel{{\begin{tiny}\nabla\end{tiny}}}{D}$ (i.e., $\stackrel{{\begin{tiny}\nabla\end{tiny}}}{D}_{h_\nabla}\mathcal{C}$) vanishes.
\end{cor}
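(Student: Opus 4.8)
The plan is to read the corollary off the second identity of Proposition \ref{very im1}, namely
\[
\stackrel{{\begin{tiny}\nabla\end{tiny}}}{P}(\widetilde{X}, \widetilde{Y})\widetilde{Z}=(\stackrel{{\begin{tiny}\nabla\end{tiny}}}{D}_{h_\nabla \widetilde{X}}\mathcal{C})(h_\nabla \widetilde{Y}, h_\nabla \widetilde{Z}),
\]
which already exhibits the mixed curvature as the $h$-covariant derivative of the first Cartan tensor, evaluated on horizontal arguments. The \emph{if} direction is then immediate: assuming $\stackrel{{\begin{tiny}\nabla\end{tiny}}}{D}_{h_\nabla}\mathcal{C}=0$, the right-hand side vanishes for every triple of sections, whence $\stackrel{{\begin{tiny}\nabla\end{tiny}}}{P}=0$.

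For the converse I would start from $\stackrel{{\begin{tiny}\nabla\end{tiny}}}{P}=0$, which by the displayed identity says exactly that $(\stackrel{{\begin{tiny}\nabla\end{tiny}}}{D}_{h_\nabla \widetilde{X}}\mathcal{C})(h_\nabla \widetilde{Y}, h_\nabla \widetilde{Z})=0$ for all $\widetilde{X},\widetilde{Y},\widetilde{Z}\in\Gamma(\stackrel{\circ}{\pounds^\pi E})$. The gap to close is that this controls the tensor $\stackrel{{\begin{tiny}\nabla\end{tiny}}}{D}_{h_\nabla \widetilde{X}}\mathcal{C}$ only on purely horizontal inputs, whereas the conclusion $\stackrel{{\begin{tiny}\nabla\end{tiny}}}{D}_{h_\nabla}\mathcal{C}=0$ demands vanishing on arbitrary inputs. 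To bridge this I would show that $\stackrel{{\begin{tiny}\nabla\end{tiny}}}{D}_{h_\nabla \widetilde{X}}\mathcal{C}$ is again semibasic.

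The key point is that the first Cartan tensor $\mathcal{C}$ is semibasic, i.e. $J\circ\mathcal{C}=0$ and $i_{J\widetilde{W}}\mathcal{C}=0$, and that $(\stackrel{{\begin{tiny}\nabla\end{tiny}}}{D},h_\nabla)$, being a d-connection, satisfies $\stackrel{{\begin{tiny}\nabla\end{tiny}}}{D}J=0$. Since the covariant derivative then commutes with $J$, it preserves both defining conditions of semibasicness, so $\stackrel{{\begin{tiny}\nabla\end{tiny}}}{D}_{h_\nabla \widetilde{X}}\mathcal{C}$ is semibasic for each $\widetilde{X}$. A semibasic tensor vanishes as soon as one of its arguments is vertical, and since $v\pounds^\pi E=\im J$ the vertical part of any section can be written as $J\widetilde{W}$; splitting $\widetilde{Y}=h_\nabla\widetilde{Y}+v_\nabla\widetilde{Y}$ and $\widetilde{Z}=h_\nabla\widetilde{Z}+v_\nabla\widetilde{Z}$ one therefore obtains
\[
(\stackrel{{\begin{tiny}\nabla\end{tiny}}}{D}_{h_\nabla \widetilde{X}}\mathcal{C})(\widetilde{Y}, \widetilde{Z})=(\stackrel{{\begin{tiny}\nabla\end{tiny}}}{D}_{h_\nabla \widetilde{X}}\mathcal{C})(h_\nabla\widetilde{Y}, h_\nabla\widetilde{Z})=0.
\]
Hence $\stackrel{{\begin{tiny}\nabla\end{tiny}}}{D}_{h_\nabla}\mathcal{C}=0$, completing the equivalence.

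The step I expect to be the main obstacle is justifying that $\stackrel{{\begin{tiny}\nabla\end{tiny}}}{D}_{h_\nabla \widetilde{X}}\mathcal{C}$ remains semibasic; the clean route is the commutation $\stackrel{{\begin{tiny}\nabla\end{tiny}}}{D}J=0$. Should one prefer to avoid invoking it, the same conclusion is reachable in coordinates by differentiating $\mathcal{C}=\mathcal{C}^\gamma_{\alpha\beta}\mathcal{X}^\alpha\otimes\mathcal{X}^\beta\otimes\mathcal{V}_\gamma$ with the connection rules (\ref{I})--(\ref{I3}) and checking that $\stackrel{{\begin{tiny}\nabla\end{tiny}}}{D}_{\delta_\mu}\mathcal{C}$ carries only components of the $\mathcal{X}^\alpha\otimes\mathcal{X}^\beta\otimes\mathcal{V}_\gamma$ type, whose coefficients are precisely the $\stackrel{{\begin{tiny}\nabla\end{tiny}}}{P}^{\ \ \ \ \lambda}_{\alpha\beta\gamma}$ of (\ref{hvI}); their simultaneous vanishing is then manifestly equivalent to $\stackrel{{\begin{tiny}\nabla\end{tiny}}}{D}_{h_\nabla}\mathcal{C}=0$.
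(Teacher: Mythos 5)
Your proposal is correct and follows essentially the same route as the paper, which simply reads the corollary off the second relation of Proposition \ref{very im1} with no further argument. Your additional step --- showing via $\stackrel{{\begin{tiny}\nabla\end{tiny}}}{D}J=0$ that $\stackrel{{\begin{tiny}\nabla\end{tiny}}}{D}_{h_\nabla\widetilde{X}}\mathcal{C}$ is again semibasic, so that vanishing on horizontal arguments forces vanishing on all arguments --- correctly fills in the ``only if'' detail that the paper leaves implicit, and your coordinate alternative (identifying the components of $\stackrel{{\begin{tiny}\nabla\end{tiny}}}{D}_{\delta_\alpha}\mathcal{C}$ with $\stackrel{{\begin{tiny}\nabla\end{tiny}}}{P}^{\ \ \ \ \lambda}_{\alpha\beta\gamma}$ from (\ref{hvI})) checks out as well.
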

If we denote by $\stackrel{\nabla}{A}$, $\stackrel{\nabla}{B}$, $\stackrel{\nabla}{R^1}$, $\stackrel{\nabla}{P^1}$, $\stackrel{\nabla}{Q^1}$ the components of torsion of  Ichijy\={o} connection, then using (\ref{very im2}), (\ref{very im3}) and (\ref{I})-(\ref{I3}) we obtain
\begin{align}
\stackrel{\nabla}{A}(\delta_\alpha, \delta_\beta)&=\Big((\Gamma^\gamma_{\alpha\beta}-\Gamma^\gamma_{\beta\alpha}-L^\gamma_{\alpha\beta})\circ\pi\Big)\delta_\gamma=t^\gamma_{\alpha\beta}\delta_\gamma
=F_\nabla t_\nabla(\delta_\alpha, \delta_\beta)\nonumber\\
&=(T_\nabla(e_\alpha, e_\beta))^{h_\nabla},\label{Wag1}\\
\stackrel{\nabla}{B}(\delta_\alpha, \delta_\beta)&=-\mathcal{C}_{\alpha\beta}^\gamma\delta_\gamma=-F_\nabla \mathcal{C}(\delta_\alpha, \delta_\beta),\\
\stackrel{\nabla}{R^1}(\delta_\alpha, \delta_\beta)&=-R^\gamma_{\alpha\beta}\mathcal{V}_\gamma=\Omega_\nabla(\delta_\alpha, \delta_\beta),\\
\stackrel{\nabla}{P^1}=0,\ \stackrel{\nabla}{Q^1}&=0.
\end{align}
From the above equation we conclude the following
\begin{proposition}
Let $(\stackrel{{\begin{tiny}\nabla\end{tiny}}}{D}, h_\nabla)$ be the Ichijy\={o} connection on Finsler algebroid $(E, \mathcal{F})$ with base connection $\nabla$. Then for all sections $X$ and $Y$ of $E$ we have
\begin{align*}
\stackrel{\nabla}{A}(X^{h_\nabla}, Y^{h_\nabla})&=(T_\nabla(X, Y))^{h_\nabla}=F_\nabla t_\nabla(X^{h_\nabla}, Y^{h_\nabla}),\\
\stackrel{\nabla}{B}(X^{h_\nabla}, Y^{h_\nabla})&=-F_\nabla\mathcal{C}(X^{h_\nabla}, Y^{h_\nabla}),\\
\stackrel{\nabla}{R^1}(X^{h_\nabla}, Y^{h_\nabla})&=\Omega_\nabla\mathcal{C}(X^{h_\nabla}, Y^{h_\nabla}),\\
\stackrel{\nabla}{P^1}=0,\ \ \ \ \ \ \stackrel{\nabla}{Q^1}&=0.
\end{align*}
\end{proposition}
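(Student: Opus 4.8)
The plan is to read this proposition as nothing more than the index-free (global) reformulation of the local identities (\ref{Wag1}) and the three displays immediately preceding the statement, which already evaluate $\stackrel{\nabla}{A}$, $\stackrel{\nabla}{B}$, $\stackrel{\nabla}{R^1}$, $\stackrel{\nabla}{P^1}$, $\stackrel{\nabla}{Q^1}$ on the adapted frame $\{\delta_\alpha=e_\alpha^{h_\nabla}\}$. First I would isolate two tensoriality facts that do all the work. By Lemma \ref{5ta} (and visibly from the coordinate forms (\ref{very im2}) and (\ref{very im3})) each of the five partial torsions of the Ichij\={o} connection is a genuine tensor field, hence $C^\infty(E)$-bilinear; and the horizontal lift is $C^\infty(M)$-linear with coordinate expression $X^{h_\nabla}=(X^\alpha\circ\pi)\delta_\alpha$ for $X=X^\alpha e_\alpha$, so that $(fX)^{h_\nabla}=(f\circ\pi)X^{h_\nabla}$ for $f\in C^\infty(M)$.

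Granting these, the bulk of the statement follows by a single bilinear expansion. For example, expanding in the frame and then invoking the respective line of (\ref{Wag1}) gives \[\stackrel{\nabla}{A}(X^{h_\nabla},Y^{h_\nabla})=(X^\alpha\circ\pi)(Y^\beta\circ\pi)\stackrel{\nabla}{A}(\delta_\alpha,\delta_\beta)=(X^\alpha\circ\pi)(Y^\beta\circ\pi)(T_\nabla(e_\alpha,e_\beta))^{h_\nabla}.\] The identities for $\stackrel{\nabla}{B}$ and $\stackrel{\nabla}{R^1}$ are then immediate, because $\mathcal{C}$ and $\Omega_\nabla$ are themselves $C^\infty(E)$-bilinear and $F_\nabla$ is $C^\infty(E)$-linear: the scalar coefficients $(X^\alpha\circ\pi),(Y^\beta\circ\pi)$ can be absorbed back into the arguments, reproducing $-F_\nabla\mathcal{C}(X^{h_\nabla},Y^{h_\nabla})$ and $\Omega_\nabla(X^{h_\nabla},Y^{h_\nabla})$. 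The vanishing of $\stackrel{\nabla}{P^1}$ and $\stackrel{\nabla}{Q^1}$ is inherited directly, since a tensor that vanishes on a generating frame vanishes identically.

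The one place that is not a pure one-line bilinearity is the first equality $\stackrel{\nabla}{A}(X^{h_\nabla},Y^{h_\nabla})=(T_\nabla(X,Y))^{h_\nabla}$, whose right-hand side is a horizontal lift of a section of $E$ rather than a manifestly bilinear expression in $X^{h_\nabla},Y^{h_\nabla}$. Here I would use tensoriality of the base torsion over $C^\infty(M)$ to write $T_\nabla(X,Y)=X^\alpha Y^\beta\,T_\nabla(e_\alpha,e_\beta)$, and then the $C^\infty(M)$-linearity of the lift to get $(T_\nabla(X,Y))^{h_\nabla}=(X^\alpha\circ\pi)(Y^\beta\circ\pi)(T_\nabla(e_\alpha,e_\beta))^{h_\nabla}$, which matches the bilinear expansion above; the chain $(T_\nabla(e_\alpha,e_\beta))^{h_\nabla}=F_\nabla t_\nabla(\delta_\alpha,\delta_\beta)$ in (\ref{Wag1}) simultaneously delivers the middle equality $=F_\nabla t_\nabla(X^{h_\nabla},Y^{h_\nabla})$ by the same pull-back. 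I expect the only (mild) obstacle to be exactly this bookkeeping: keeping track that the structure functions $\Gamma^\gamma_{\alpha\beta},L^\gamma_{\alpha\beta}$ are functions on $M$ so that $\circ\pi$ distributes correctly across products, and using the $C^\infty(M)$-linearity (not $C^\infty(E)$-linearity) of the lift at precisely the step where a lift of a section of $E$ is split off. Everything else is a direct consequence of tensoriality together with the already-computed frame values.
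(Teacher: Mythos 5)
Your proposal is correct and follows essentially the same route as the paper: the paper first computes the frame values $\stackrel{\nabla}{A}(\delta_\alpha,\delta_\beta)$, $\stackrel{\nabla}{B}(\delta_\alpha,\delta_\beta)$, $\stackrel{\nabla}{R^1}(\delta_\alpha,\delta_\beta)$, $\stackrel{\nabla}{P^1}=\stackrel{\nabla}{Q^1}=0$ in (\ref{Wag1}) and the lines following it, and then passes to arbitrary sections exactly by the tensoriality and horizontal-lift bookkeeping you spell out (the paper compresses this into ``from the above equation we conclude''). Your explicit treatment of the equality $\stackrel{\nabla}{A}(X^{h_\nabla},Y^{h_\nabla})=(T_\nabla(X,Y))^{h_\nabla}$ via $C^\infty(M)$-linearity of the lift, and your silent correction of the stray $\mathcal{C}$ in the $\stackrel{\nabla}{R^1}$ line to $\Omega_\nabla(X^{h_\nabla},Y^{h_\nabla})$, are both consistent with the paper's frame computation.
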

From the first equation of the above proposition we have
\begin{cor}
The $h$-horizontal torsion of the Ichijy\={o} connection is zero if and only if the torsion tensor of $\nabla$ ( or the weak torsion of $h_\nabla$) vanishes.
\end{cor}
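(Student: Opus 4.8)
The plan is to obtain the corollary as an immediate consequence of the preceding proposition, which already records $\stackrel{\nabla}{A}(X^{h_\nabla}, Y^{h_\nabla}) = (T_\nabla(X, Y))^{h_\nabla} = F_\nabla t_\nabla(X^{h_\nabla}, Y^{h_\nabla})$ for all $X, Y \in \Gamma(E)$. The key structural fact I would invoke first is that the $h$-horizontal torsion $\stackrel{\nabla}{A}$ is a tensor field of type $(1,2)$ (Lemma \ref{5ta}), so it vanishes identically if and only if it vanishes on the adapted horizontal frame $\{\delta_\alpha = e_\alpha^{h_\nabla}\}$, equivalently on every pair of horizontal lifts $X^{h_\nabla}, Y^{h_\nabla}$. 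Thus it suffices to test $\stackrel{\nabla}{A}$ on horizontal lifts, where the proposition applies verbatim.

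Next I would use that the horizontal lift $X \mapsto X^{h_\nabla} = h_\nabla X^C$ is a fibrewise linear isomorphism onto the horizontal subbundle $h_\nabla\pounds^\pi E$, hence injective as a map $\Gamma(E) \to \Gamma(\pounds^\pi E)$. Consequently $(T_\nabla(X, Y))^{h_\nabla} = 0$ for all $X, Y$ if and only if $T_\nabla(X, Y) = 0$ for all $X, Y$, i.e. $T_\nabla = 0$. Combined with the first step this yields $\stackrel{\nabla}{A} = 0 \iff T_\nabla = 0$. For the weak-torsion formulation I would use that $F_\nabla$ is the almost complex structure induced by $h_\nabla$, so $F_\nabla^2 = -1_{\pounds^\pi E}$ and $F_\nabla$ is invertible; hence $F_\nabla t_\nabla(X^{h_\nabla}, Y^{h_\nabla}) = 0 \iff t_\nabla(X^{h_\nabla}, Y^{h_\nabla}) = 0$. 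Because $t_\nabla$ is semibasic (Lemma \ref{torsion lemma}), its coordinate expression (\ref{wt}) shows it is determined by its action on horizontal pairs $(\delta_\alpha, \delta_\beta)$, so the latter condition is equivalent to $t_\nabla = 0$.

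Alternatively, and more economically, I would quote the local identity (\ref{Wag1}), namely $t^\gamma_{\alpha\beta} = (\Gamma^\gamma_{\alpha\beta} - \Gamma^\gamma_{\beta\alpha} - L^\gamma_{\alpha\beta})\circ\pi$, whose right-hand side is precisely the structure-function form of the torsion $T_\nabla$ of $\nabla$ relative to the bracket $[\cdot,\cdot]_E$; this exhibits directly that the coefficients of $\stackrel{\nabla}{A}$, of $t_\nabla$, and of the lift of $T_\nabla$ coincide, so all three vanish simultaneously. I do not expect any genuine obstacle here: the content of the corollary is entirely carried by the preceding proposition, and the only points requiring a word of justification are the tensoriality of $\stackrel{\nabla}{A}$ (so that vanishing on a generating frame suffices), the injectivity of the horizontal lift, and the invertibility of $F_\nabla$ — each of which is already available in the excerpt.
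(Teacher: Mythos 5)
Your proposal is correct and takes essentially the same route as the paper: the corollary is read off directly from the first identity of the preceding proposition (equivalently from the local formula (\ref{Wag1})), which identifies $\stackrel{\nabla}{A}$ with the horizontal lift of $T_\nabla$ and with $F_\nabla t_\nabla$. The extra points you spell out — tensoriality of $\stackrel{\nabla}{A}$, injectivity of the horizontal lift, and invertibility of $F_\nabla$ — are precisely the routine justifications the paper leaves implicit.
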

%---------------------------------------------------------------------------------------
%---------------------------------------------------------------------------------------
\subsection{Generalized Berwald Lie algebroid}
\begin{defn}
Let $(E, \mathcal{F})$ be a Finsler algebroid and $\nabla$ be a linear connection on $E$. Then $(E, \mathcal{F}, \nabla)$ is called generalized Berwald Lie algebroid, if the horizontal endomorphism $h_\nabla$ is conservative.
\end{defn}
\begin{proposition}\label{khob}
Let $(E, \mathcal{F})$ be a Finsler algebroid and $\nabla$ be a linear connection on $E$. Then the following items are equivalent:

(i)\ $(E, \mathcal{F}, \nabla)$ is a generalized Berwald Lie algebroid.

(ii)\ Second Cartan tensor $\widetilde{\mathcal{C}}_\nabla$ belonging to $\nabla$ is zero.

(iii)\ Ichijy\={o} connection $(\stackrel{{\begin{tiny}\nabla\end{tiny}}}{D}, h_\nabla)$ is $h_\nabla$-metrical.
\end{proposition}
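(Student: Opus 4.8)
The plan is to run the equivalences through the explicit coordinate data already assembled for the Ichijyō connection $\stackrel{\nabla}{D}$ and for $h_\nabla$. Throughout I would write $\mathcal{B}^\lambda_\alpha=-\textbf{y}^\gamma(\Gamma^\lambda_{\alpha\gamma}\circ\pi)$ for the coefficients of $h_\nabla$, so that $\partial\mathcal{B}^\lambda_\alpha/\partial\textbf{y}^\beta=-(\Gamma^\lambda_{\alpha\beta}\circ\pi)$ and, crucially, $\partial^2\mathcal{B}^\lambda_\alpha/\partial\textbf{y}^\beta\partial\textbf{y}^\gamma=0$ because these coefficients are linear in the fibre coordinates. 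Note also that $h_\nabla$ is homogeneous and smooth on all of $\pounds^\pi E$.

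First I would settle (ii)$\Leftrightarrow$(iii), which is the cleanest link. Using the Ichijyō formulas \eqref{I}--\eqref{I3}, the reducibility of $\stackrel{\nabla}{D}$ (Lemma \ref{6.2}), and the adapted form \eqref{pmetric} of $\widetilde{\mathcal{G}}$, I would evaluate $(\stackrel{\nabla}{D}_{\delta_\alpha}\widetilde{\mathcal{G}})$ on pairs of the adapted basis. Since $\widetilde{\mathcal{G}}(\delta_\beta,\mathcal{V}_\gamma)=0$ and $\stackrel{\nabla}{D}$ preserves both $h\pounds^\pi E$ and $v\pounds^\pi E$, the mixed component vanishes identically; the purely vertical and the purely horizontal components both reduce, after substituting $\rho_\pounds(\delta_\alpha)\mathcal{G}_{\beta\gamma}=(\rho^i_\alpha\circ\pi)\partial\mathcal{G}_{\beta\gamma}/\partial\textbf{x}^i+\mathcal{B}^\lambda_\alpha\partial\mathcal{G}_{\beta\gamma}/\partial\textbf{y}^\lambda$ together with $\stackrel{\nabla}{D}_{\delta_\alpha}\mathcal{V}_\beta=(\Gamma^\gamma_{\alpha\beta}\circ\pi)\mathcal{V}_\gamma=-(\partial\mathcal{B}^\gamma_\alpha/\partial\textbf{y}^\beta)\mathcal{V}_\gamma$, to precisely $2\widetilde{\mathcal{C}}_{\alpha\beta\gamma}$ as written in \eqref{Cartan8}. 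Hence $\stackrel{\nabla}{D}_{h_\nabla\widetilde{X}}\widetilde{\mathcal{G}}=0$ for all $\widetilde{X}$ exactly when every $\widetilde{\mathcal{C}}_{\alpha\beta\gamma}$ vanishes, i.e.\ when $\widetilde{\mathcal{C}}_\nabla=0$.

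Next I would close the loop with (i)$\Leftrightarrow$(ii). For (i)$\Rightarrow$(ii), conservativeness gives \eqref{cons2}; differentiating it once more in $\textbf{y}$ produces \eqref{211}, which upon comparison with \eqref{Cartan8} reads $2\widetilde{\mathcal{C}}_{\alpha\beta\gamma}=-(\partial^2\mathcal{B}^\lambda_\alpha/\partial\textbf{y}^\beta\partial\textbf{y}^\gamma)(\partial\mathcal{F}/\partial\textbf{y}^\lambda)$; since the second fibre-derivative of $\mathcal{B}^\lambda_\alpha$ is zero for $h_\nabla$, we conclude $\widetilde{\mathcal{C}}_\nabla=0$. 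For the converse (ii)$\Rightarrow$(i), I would take $\widetilde{\mathcal{C}}_{\alpha\beta\gamma}=0$ in the form \eqref{Cartan8} and contract with $\textbf{y}^\beta\textbf{y}^\gamma$: the $\partial/\partial\textbf{x}^i$ term collapses to $2(\rho^i_\alpha\circ\pi)\partial\mathcal{F}/\partial\textbf{x}^i$ because $\textbf{y}^\beta\textbf{y}^\gamma\mathcal{G}_{\beta\gamma}=2\mathcal{F}$ by homogeneity of $\mathcal{F}$; the term $\mathcal{B}^\lambda_\alpha\,\textbf{y}^\beta\textbf{y}^\gamma\,\partial\mathcal{G}_{\beta\gamma}/\partial\textbf{y}^\lambda$ dies by \eqref{2eq}; and the remaining two terms, using the homogeneity of $\mathcal{B}^\lambda_\alpha$ together with $\textbf{y}^\gamma\mathcal{G}_{\lambda\gamma}=\partial\mathcal{F}/\partial\textbf{y}^\lambda$ from \eqref{only God}, each equal $\mathcal{B}^\lambda_\alpha\,\partial\mathcal{F}/\partial\textbf{y}^\lambda$. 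Collecting the pieces yields exactly twice the conservativeness relation \eqref{cons}, so $h_\nabla$ is conservative.

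The routine part is the index bookkeeping in the two contractions. The one genuinely delicate point, and the place where the hypotheses are really spent, is tracking which homogeneity identities are legitimately available without already assuming the conclusion: the compact identity $2\widetilde{\mathcal{C}}_{\alpha\beta\gamma}=-\textbf{y}^\mu(\partial^2\mathcal{B}^\lambda_\alpha/\partial\textbf{y}^\beta\partial\textbf{y}^\gamma)\mathcal{G}_{\lambda\mu}$ rests on conservativeness via \eqref{211} and so may be invoked only for (i)$\Rightarrow$(ii), whereas the reverse implication must be obtained by the direct $\textbf{y}^\beta\textbf{y}^\gamma$-contraction, which uses only the homogeneity of $\mathcal{F}$ and the fibre-linearity of $\mathcal{B}^\lambda_\alpha$. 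Keeping these two arguments separate is what makes the equivalence go through.
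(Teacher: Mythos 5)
Your proposal is correct and follows essentially the same route as the paper: both arguments use (ii) as the pivot, identify $(\stackrel{{\begin{tiny}\nabla\end{tiny}}}{D}_{\delta_\alpha}\widetilde{\mathcal{G}})(\delta_\beta,\delta_\gamma)$ with $2\widetilde{\mathcal{C}}_{\alpha\beta\gamma}$ in the adapted basis to get (ii)$\Leftrightarrow$(iii), and recover conservativeness from $\widetilde{\mathcal{C}}_\nabla=0$ by the $\textbf{y}^\beta\textbf{y}^\gamma$-contraction using (\ref{2eq}) and (\ref{only God}). The only cosmetic difference is that for (i)$\Rightarrow$(ii) you pass through the generic identity (\ref{211}) plus fibre-linearity of $\mathcal{B}^\lambda_\alpha$, while the paper substitutes $\mathcal{B}^\lambda_\alpha=-\textbf{y}^\sigma(\Gamma^\lambda_{\alpha\sigma}\circ\pi)$ first and then differentiates, which is the same computation in a different order.
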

\begin{proof}
$(i)\Rightarrow (ii)$. Since $h_\nabla$ is conservative, then we have (\ref{cons}). Setting $\mathcal{B}^\lambda_\alpha=-\textbf{y}^\sigma(\Gamma^\lambda_{\alpha\sigma}\circ\pi)$ in this equation we have
\begin{equation}\label{ziad0}
(\rho^i_\alpha\circ\pi)\frac{\partial\mathcal{F}}{\partial\textbf{x}^i}-\textbf{y}^\sigma(\Gamma^\lambda_{\alpha\sigma}\circ\pi)
\frac{\partial\mathcal{F}}{\partial\textbf{y}^\lambda}=0.
\end{equation}
Differentiating the above equation with respect to $\textbf{y}^\beta$ and $\textbf{y}^\mu$ gives us
\begin{align}\label{ziad}
&(\rho^i_\alpha\circ\pi)\frac{\partial^3\mathcal{F}}{\partial\textbf{x}^i\partial\textbf{y}^\beta\partial\textbf{y}^\mu}
-(\Gamma^\lambda_{\alpha\beta}\circ\pi)
\frac{\partial^2\mathcal{F}}{\partial\textbf{y}^\mu\partial\textbf{y}^\lambda}-(\Gamma^\lambda_{\alpha\mu}\circ\pi)
\frac{\partial^2\mathcal{F}}{\partial\textbf{y}^\beta\partial\textbf{y}^\lambda}\nonumber\\
&-\textbf{y}^\sigma(\Gamma^\lambda_{\alpha\sigma}\circ\pi)
\frac{\partial^3\mathcal{F}}{\partial\textbf{y}^\mu\partial\textbf{y}^\beta\partial\textbf{y}^\lambda}=0.
\end{align}
If we multiply $g^{\gamma\mu}$ in the above equation, then we obtain $\widetilde{\mathcal{C}}^\gamma_{\alpha\beta}=0$, where $\widetilde{\mathcal{C}}^\gamma_{\alpha\beta}$ are the coefficients of second Cartan tensor $\widetilde{\mathcal{C}}_\nabla$ given by (\ref{Cartan17}).\\
$(ii)\Rightarrow (i)$. Since Second Cartan tensor $\widetilde{\mathcal{C}}_\nabla$ belonging to $\nabla$ is zero, then we have $\widetilde{\mathcal{C}}^\gamma_{\alpha\beta}=0$. Thus setting $\mathcal{B}^\lambda_\alpha=-\textbf{y}^\sigma(\Gamma^\lambda_{\alpha\sigma}\circ\pi)$ in (\ref{Cartan17}) and multiply $g_{\gamma\mu}$ in it, we deduce (\ref{ziad}). Multiplying $\textbf{y}^\beta \textbf{y}^\mu$ in (\ref{ziad}) and using (ii) of (\ref{2eq}) and (\ref{only God}) we obtain (\ref{ziad0}). Thus $h_\nabla$ is conservative.\\
$(iii)\Rightarrow (ii)$. Since $\stackrel{{\begin{tiny}\nabla\end{tiny}}}{D}$ is $h$-metrical, then we have $\stackrel{{\begin{tiny}\nabla\end{tiny}}}{D}_{h_\nabla}\widetilde{\mathcal{G}}=0$. Thus we get
\begin{align*}
0&=(\stackrel{{\begin{tiny}\nabla\end{tiny}}}{D}_{h_\nabla\delta_\alpha}\widetilde{\mathcal{G}})(\delta_\beta,\delta_\gamma)=
(\rho^i_\alpha\circ\pi)\frac{\partial\mathcal{G_{\beta\gamma}}}{\partial\textbf{x}^i}
-(\Gamma^\lambda_{\alpha\beta}\circ\pi)
{\mathcal{G_{\lambda\gamma}}}-(\Gamma^\lambda_{\alpha\gamma}\circ\pi)
{\mathcal{G_{\beta\lambda}}}\\
&\ \ \ \ -\textbf{y}^\sigma(\Gamma^\lambda_{\alpha\sigma}\circ\pi)
\frac{\partial^2\mathcal{G_{\beta\gamma}}}{\partial\textbf{y}^\lambda}.
\end{align*}
Therefore we have (\ref{ziad}), i.e., the second Cartan tensor $\widetilde{\mathcal{C}}_\nabla$ belonging to $\nabla$ is zero.\\
$(ii)\Rightarrow (iii)$. If (ii) holds, then we have (\ref{ziad}). Using this equation it is easy to check that $(\stackrel{{\begin{tiny}\nabla\end{tiny}}}{D}_{h_\nabla\delta_\alpha}\widetilde{\mathcal{G}})(\delta_\beta,\delta_\gamma)
=(\stackrel{{\begin{tiny}\nabla\end{tiny}}}{D}_{h_\nabla\delta_\alpha}\widetilde{\mathcal{G}})(\mathcal{V}_\beta, \mathcal{V}_\gamma)=0$. Also, we have $(\stackrel{{\begin{tiny}\nabla\end{tiny}}}{D}_{h_\nabla\delta_\alpha}\widetilde{\mathcal{G}})(\delta_\beta, \mathcal{V}_\gamma)=0$. Thus Ichijy\={o} connection $(\stackrel{{\begin{tiny}\nabla\end{tiny}}}{D}, h_\nabla)$ is $h_\nabla$-metrical.
\end{proof}
\begin{proposition}\label{Nour}
Let $(E, \mathcal{F}, \nabla)$ be a generalized Berwald Lie algebroid. Then the mixed curvature of Ichijy\={o} connection $(\stackrel{{\begin{tiny}\nabla\end{tiny}}}{D}, h_\nabla)$ is zero.
\end{proposition}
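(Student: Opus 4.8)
The plan is to reduce the statement to the vanishing of the $h$-covariant derivative of the first Cartan tensor and then to extract this from the conservativeness of $h_\nabla$. By the corollary following Proposition \ref{very im1}, the mixed curvature $\stackrel{{\begin{tiny}\nabla\end{tiny}}}{P}$ of the Ichijy\={o} connection vanishes if and only if $\stackrel{{\begin{tiny}\nabla\end{tiny}}}{D}_{h_\nabla}\mathcal{C}=0$; equivalently, in view of (\ref{hvI}) and (\ref{I})--(\ref{I3}), one must show that the components $\stackrel{{\begin{tiny}\nabla\end{tiny}}}{P}^{\ \ \ \ \lambda}_{\alpha\beta\gamma}$, which are precisely the coordinates of $(\stackrel{{\begin{tiny}\nabla\end{tiny}}}{D}_{\delta_\alpha}\mathcal{C})(\delta_\beta,\delta_\gamma)$, are identically zero. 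First I would record that, by hypothesis, $(E,\mathcal{F},\nabla)$ is generalized Berwald, so $h_\nabla$ is conservative and Proposition \ref{khob} applies.

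Next I would rewrite the differentiated conservativeness relation (\ref{ziad}), obtained in the proof of Proposition \ref{khob}, in terms of the vertical metric and the lowered Cartan tensor. Using $\partial^2\mathcal{F}/\partial\textbf{y}^\mu\partial\textbf{y}^\lambda=\mathcal{G}_{\mu\lambda}$, the identity $\partial^3\mathcal{F}/\partial\textbf{y}^\mu\partial\textbf{y}^\beta\partial\textbf{y}^\lambda=2\mathcal{C}_{\mu\beta\lambda}$, and the defining relation $\mathcal{B}^\lambda_\alpha=-\textbf{y}^\sigma(\Gamma^\lambda_{\alpha\sigma}\circ\pi)$ for $h_\nabla$ from (\ref{delta}), relation (\ref{ziad}) collapses (after using the total symmetry of $\mathcal{C}_\flat$ to write $2\textbf{y}^\sigma(\Gamma^\lambda_{\alpha\sigma}\circ\pi)\mathcal{C}_{\mu\beta\lambda}=-\mathcal{B}^\lambda_\alpha\,\partial\mathcal{G}_{\mu\beta}/\partial\textbf{y}^\lambda$) into exactly the horizontal metricity statement $(\stackrel{{\begin{tiny}\nabla\end{tiny}}}{D}_{\delta_\alpha}\widetilde{\mathcal{G}})(\mathcal{V}_\beta,\mathcal{V}_\gamma)=0$, namely $(\rho^i_\alpha\circ\pi)\frac{\partial\mathcal{G}_{\beta\gamma}}{\partial\textbf{x}^i}+\mathcal{B}^\lambda_\alpha\frac{\partial\mathcal{G}_{\beta\gamma}}{\partial\textbf{y}^\lambda}-(\Gamma^\lambda_{\alpha\beta}\circ\pi)\mathcal{G}_{\lambda\gamma}-(\Gamma^\lambda_{\alpha\gamma}\circ\pi)\mathcal{G}_{\beta\lambda}=0$. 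This also recovers Proposition \ref{khob}(iii).

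The decisive step is to differentiate this metricity identity once more, with respect to $\textbf{y}^\nu$. Because $\partial/\partial\textbf{y}^\nu$ commutes with $\partial/\partial\textbf{x}^i$, because $\partial\mathcal{B}^\lambda_\alpha/\partial\textbf{y}^\nu=-(\Gamma^\lambda_{\alpha\nu}\circ\pi)$ by (\ref{delta}), and because $\partial\mathcal{G}_{\beta\gamma}/\partial\textbf{y}^\nu=2\mathcal{C}_{\nu\beta\gamma}$, the differentiated identity rearranges (after dividing by $2$ and using (\ref{rho}) to identify $(\rho^i_\alpha\circ\pi)\frac{\partial}{\partial\textbf{x}^i}+\mathcal{B}^\lambda_\alpha\frac{\partial}{\partial\textbf{y}^\lambda}=\rho_\pounds(\delta_\alpha)$) into
\[
\rho_\pounds(\delta_\alpha)(\mathcal{C}_{\nu\beta\gamma})-(\Gamma^\lambda_{\alpha\nu}\circ\pi)\mathcal{C}_{\lambda\beta\gamma}-(\Gamma^\lambda_{\alpha\beta}\circ\pi)\mathcal{C}_{\nu\lambda\gamma}-(\Gamma^\lambda_{\alpha\gamma}\circ\pi)\mathcal{C}_{\nu\beta\lambda}=0.
\]
The extra Christoffel term $-(\Gamma^\lambda_{\alpha\nu}\circ\pi)\mathcal{C}_{\lambda\beta\gamma}$ is exactly the one produced by $\partial\mathcal{B}^\lambda_\alpha/\partial\textbf{y}^\nu$, and it is precisely what upgrades the vertically differentiated metricity relation into the full horizontal covariant derivative of the lowered Cartan tensor. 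Hence the left-hand side equals $(\stackrel{{\begin{tiny}\nabla\end{tiny}}}{D}_{\delta_\alpha}\mathcal{C}_\flat)(\delta_\nu,\delta_\beta,\delta_\gamma)$, so $\stackrel{{\begin{tiny}\nabla\end{tiny}}}{D}_{h_\nabla}\mathcal{C}_\flat=0$.

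Finally, since $\mathcal{C}^\gamma_{\alpha\beta}=\mathcal{C}_{\alpha\beta\lambda}\mathcal{G}^{\gamma\lambda}$ and $\widetilde{\mathcal{G}}$ is horizontally parallel (the metricity established above), raising the last index commutes with $\stackrel{{\begin{tiny}\nabla\end{tiny}}}{D}_{h_\nabla}$; therefore $\stackrel{{\begin{tiny}\nabla\end{tiny}}}{D}_{h_\nabla}\mathcal{C}=0$, and by the corollary following Proposition \ref{very im1} the mixed curvature $\stackrel{{\begin{tiny}\nabla\end{tiny}}}{P}$ vanishes. I expect the main obstacle to be purely organizational: correctly translating (\ref{ziad}) into metric--Cartan language and tracking which $\textbf{y}$-derivative produces which Christoffel correction. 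The structural heart of the argument is the single identity $\partial\mathcal{B}^\lambda_\alpha/\partial\textbf{y}^\nu=-(\Gamma^\lambda_{\alpha\nu}\circ\pi)$, valid because $h_\nabla$ is generated by the linear connection $\nabla$; it supplies exactly the term needed to convert a vertical derivative of the horizontal metricity relation into the horizontal covariant derivative of $\mathcal{C}$.
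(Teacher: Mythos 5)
Your proposal is correct, and although it rests on the same two pillars as the paper's proof --- the conservativeness relation (\ref{ziad}) and the $h_\nabla$-metricality supplied by Proposition \ref{khob} --- it is organized along a genuinely different route. The paper never invokes the corollary $\stackrel{{\begin{tiny}\nabla\end{tiny}}}{P}(\widetilde{X},\widetilde{Y})\widetilde{Z}=(\stackrel{{\begin{tiny}\nabla\end{tiny}}}{D}_{h_\nabla\widetilde{X}}\mathcal{C})(h_\nabla\widetilde{Y},h_\nabla\widetilde{Z})$; instead it expands the coefficients $\stackrel{{\begin{tiny}\nabla\end{tiny}}}{P}^{\ \ \ \ \lambda}_{\alpha\beta\gamma}$ directly from (\ref{hvI}) with $\mathcal{C}^\lambda_{\beta\gamma}$ written through $\mathcal{G}$, substitutes $\stackrel{{\begin{tiny}\nabla\end{tiny}}}{D}_{h_\nabla\delta_\alpha}\mathcal{G}^{\sigma\lambda}=0$ to eliminate the derivatives of the inverse metric, and then asserts that the residual expression (\ref{nour}) vanishes ``using (\ref{ziad})''. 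That residual expression is in fact exactly $\tfrac{1}{2}\mathcal{G}^{\sigma\lambda}$ times the $\textbf{y}^\gamma$-derivative of (\ref{ziad}), so the paper's last step tacitly requires precisely the vertical differentiation that you carry out explicitly. Your organization --- reduce to $\stackrel{{\begin{tiny}\nabla\end{tiny}}}{D}_{h_\nabla}\mathcal{C}=0$ via the corollary to Proposition \ref{very im1}, recognize (\ref{ziad}) as horizontal metricality of $\widetilde{\mathcal{G}}$, differentiate it once in $\textbf{y}^\nu$ using $\partial\mathcal{B}^\lambda_\alpha/\partial\textbf{y}^\nu=-(\Gamma^\lambda_{\alpha\nu}\circ\pi)$ and $\partial\mathcal{G}_{\beta\gamma}/\partial\textbf{y}^\nu=2\mathcal{C}_{\nu\beta\gamma}$ to obtain $\stackrel{{\begin{tiny}\nabla\end{tiny}}}{D}_{h_\nabla}\mathcal{C}_\flat=0$, then raise the index with the parallel metric --- buys two things: it isolates the single identity doing all the work, and it repairs the gloss in the paper's final sentence. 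The only step deserving explicit mention is that raising the index requires $\stackrel{{\begin{tiny}\nabla\end{tiny}}}{D}_{\delta_\alpha}\mathcal{G}^{\mu\lambda}=0$, which follows from metricality of the components $\mathcal{G}_{\mu\lambda}$ and nondegeneracy; you use this correctly, and it is the same fact the paper employs in its own substitution.
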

\begin{proof}
It is sufficient to show that $\stackrel{{\begin{tiny}\nabla\end{tiny}}}{P}^{\ \ \ \ \lambda}_{\alpha\beta\gamma}=0$. Using (\ref{hvI}) we have
\begin{align}\label{nour}
\stackrel{{\begin{tiny}\nabla\end{tiny}}}{P}^{\ \ \ \ \lambda}_{\alpha\beta\gamma}&=\frac{1}{2}(\rho^i_\alpha\circ \pi)(\frac{\partial^2 \mathcal{G}_{\beta\sigma}}{\partial \textbf{x}^i\partial\textbf{y}^\gamma}\mathcal{G}^{\sigma\lambda}+\frac{\partial \mathcal{G}_{\beta\sigma}}{\partial\textbf{y}^\gamma}\frac{\partial \mathcal{G}^{\sigma\lambda}}{\partial \textbf{x}^i})-\frac{1}{2}\textbf{y}^\nu(\Gamma^\mu_{\alpha\nu}\circ\pi)(\frac{\partial^2 \mathcal{G}_{\beta\sigma}}{\partial \textbf{y}^\mu\partial\textbf{y}^\gamma}\mathcal{G}^{\sigma\lambda}\nonumber\\
&\ \ \ +\frac{\partial \mathcal{G}_{\beta\sigma}}{\partial\textbf{y}^\gamma}\frac{\partial \mathcal{G}^{\sigma\lambda}}{\partial \textbf{y}^\mu})+\frac{1}{2}\frac{\partial\mathcal{G}_{\beta\sigma}}{\partial\textbf{y}^\gamma}\mathcal{G}^{\sigma\mu}(\Gamma^\lambda_{\alpha\mu}\circ\pi)
-\frac{1}{2}\frac{\partial\mathcal{G}_{\beta\sigma}}{\partial\textbf{y}^\mu}\mathcal{G}^{\sigma\lambda}(\Gamma^\mu_{\alpha\gamma}\circ\pi)
\nonumber\\
&\ \ \ -\frac{1}{2}\frac{\partial\mathcal{G}_{\mu\sigma}}{\partial\textbf{y}^\gamma}\mathcal{G}^{\sigma\lambda}(\Gamma^\mu_{\alpha\beta}\circ\pi).
\end{align}
Since Ichijy\={o} connection is $h$-metrical, then we have
\[
0=\stackrel{{\begin{tiny}\nabla\end{tiny}}}{D}_{h_\nabla\delta_\alpha}\mathcal{G}^{\sigma\lambda}=(\rho^i_\alpha\circ \pi)\frac{\partial \mathcal{G}^{\sigma\lambda}}{\partial \textbf{x}^i}-\textbf{y}^\nu(\Gamma^\mu_{\alpha\nu}\circ\pi)\frac{\partial \mathcal{G}^{\sigma\lambda}}{\partial \textbf{y}^\mu}+\mathcal{G}^{\sigma\mu}(\Gamma^\lambda_{\alpha\mu}\circ\pi)+\mathcal{G}^{\lambda\mu}(\Gamma^\sigma_{\alpha\mu}\circ\pi),
\]
which gives us
\[
(\rho^i_\alpha\circ \pi)\frac{\partial \mathcal{G}^{\sigma\lambda}}{\partial \textbf{x}^i}-\textbf{y}^\nu(\Gamma^\mu_{\alpha\nu}\circ\pi)\frac{\partial \mathcal{G}^{\sigma\lambda}}{\partial \textbf{y}^\mu}+\mathcal{G}^{\sigma\mu}(\Gamma^\lambda_{\alpha\mu}\circ\pi)=-\mathcal{G}^{\lambda\mu}(\Gamma^\sigma_{\alpha\mu}\circ\pi).
\]
Setting the above equation in (\ref{nour}) we get
\begin{align*}
\stackrel{{\begin{tiny}\nabla\end{tiny}}}{P}^{\ \ \ \ \lambda}_{\alpha\beta\gamma}&=\frac{1}{2}(\rho^i_\alpha\circ \pi)\frac{\partial^2 \mathcal{G}_{\beta\sigma}}{\partial \textbf{x}^i\partial\textbf{y}^\gamma}\mathcal{G}^{\sigma\lambda}-\frac{1}{2}\textbf{y}^\nu(\Gamma^\mu_{\alpha\nu}\circ\pi)\frac{\partial^2 \mathcal{G}_{\beta\sigma}}{\partial \textbf{y}^\mu\partial\textbf{y}^\gamma}\mathcal{G}^{\sigma\lambda}\nonumber\\
&\ \ \ -\frac{1}{2}\frac{\partial\mathcal{G}_{\beta\sigma}}{\partial\textbf{y}^\mu}\mathcal{G}^{\sigma\lambda}(\Gamma^\mu_{\alpha\gamma}\circ\pi)
-\frac{1}{2}\frac{\partial\mathcal{G}_{\mu\sigma}}{\partial\textbf{y}^\gamma}\mathcal{G}^{\sigma\lambda}(\Gamma^\mu_{\alpha\beta}\circ\pi)\nonumber\\
&\ \ \ -\frac{1}{2}\frac{\partial\mathcal{G}_{\beta\sigma}}{\partial\textbf{y}^\gamma}\mathcal{G}^{\lambda\mu}(\Gamma^\sigma_{\alpha\mu}\circ\pi).
\end{align*}
Since $h_\nabla$ is conservative, then using (\ref{ziad}) the right side of the above equation vanishes. Thus we have $\stackrel{{\begin{tiny}\nabla\end{tiny}}}{P}^{\ \ \ \ \lambda}_{\alpha\beta\gamma}=0$.
\end{proof}
Let $(E, \mathcal{F}, \nabla)$ be a generalized Berwald Lie algebroid and $f$ be a non-constant smooth function on $E$. We define $\bar{h}_\nabla:=h_\nabla-df^\vee\otimes C$. Since $df^\vee=(\rho^i_\alpha\circ\pi)\frac{\partial(f\circ\pi)}{\partial\textbf{x}^i}\mathcal{X}^\alpha$, then using (\ref{delta}) we can see that $\bar{h}_\nabla$ has the local expression
\begin{equation}
\bar{h}_\nabla=(\mathcal{X}_\alpha+\mathcal{B}^\beta_\alpha\mathcal{V}_\beta)\otimes\mathcal{X}^\alpha,
\end{equation}
where
\begin{equation}\label{Hii}
\mathcal{B}^\beta_\alpha=-(\textbf{y}^\beta(\rho^i_\alpha\circ\pi)\frac{\partial(f\circ\pi)}{\partial\textbf{x}^i}+\textbf{y}^\lambda (\Gamma^\beta_{\alpha\lambda}\circ\pi)).
\end{equation}
Using two above equation it is easy to check that $\bar{h}_\nabla$ is an everywhere smooth function and ${\bar{h}_\nabla}^2=\bar{h}_\nabla$, $\ker\bar{h}_\nabla=\Gamma(v\pounds^\pi E)$. Thus $\bar{h}_\nabla$ is an everywhere smooth, horizontal endomorphism on $\pounds^\pi E$. Moreover we can obtain $\textbf{y}^\gamma\frac{\partial \mathcal{B}^\beta_\alpha}{\partial \textbf{y}^\gamma}=\mathcal{B}^\beta_\alpha$, i.e., $\bar{h}_\nabla$ is a homogenous horizontal endomorphism.
\begin{lemma}
Let $(E, \mathcal{F}, \nabla)$ be a generalized Berwald Lie algebroid and $\{e_\alpha\}$ be a basis of sections of $E$. Then $\bar{h}_{\nabla}$ is conservative if and only if $\rho(e_\alpha)f=0$.
\end{lemma}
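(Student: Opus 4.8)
The plan is to reduce the statement to the coordinate criterion (\ref{cons}) for conservativeness and to exploit that the generalized Berwald hypothesis already encodes conservativeness of $h_\nabla$. First I would recall that for any horizontal endomorphism of the form $(\mathcal{X}_\alpha+\mathcal{B}^\beta_\alpha\mathcal{V}_\beta)\otimes\mathcal{X}^\alpha$, being conservative is by (\ref{cons}) equivalent to $(\rho^i_\alpha\circ\pi)\frac{\partial\mathcal{F}}{\partial\textbf{x}^i}+\mathcal{B}^\beta_\alpha\frac{\partial\mathcal{F}}{\partial\textbf{y}^\beta}=0$ for each $\alpha$. I would then plug in the explicit coefficients (\ref{Hii}) of $\bar{h}_\nabla$ into this expression.

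The key step is a cancellation. Substituting (\ref{Hii}) splits the left-hand side into a ``connection part'' $(\rho^i_\alpha\circ\pi)\frac{\partial\mathcal{F}}{\partial\textbf{x}^i}-\textbf{y}^\lambda(\Gamma^\beta_{\alpha\lambda}\circ\pi)\frac{\partial\mathcal{F}}{\partial\textbf{y}^\beta}$ together with a ``function part'' $-\textbf{y}^\beta(\rho^i_\alpha\circ\pi)\frac{\partial(f\circ\pi)}{\partial\textbf{x}^i}\frac{\partial\mathcal{F}}{\partial\textbf{y}^\beta}$. Since $(E,\mathcal{F},\nabla)$ is a generalized Berwald Lie algebroid, $h_\nabla$ is conservative, and setting $\mathcal{B}^\lambda_\alpha=-\textbf{y}^\sigma(\Gamma^\lambda_{\alpha\sigma}\circ\pi)$ in (\ref{cons}) shows the connection part vanishes identically; this is precisely (\ref{ziad0}). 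Hence conservativeness of $\bar{h}_\nabla$ is equivalent to the vanishing of the function part alone.

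Next I would simplify the function part. By homogeneity of degree $2$ of $\mathcal{F}$ (condition (ii) of the Finsler algebroid together with Lemma \ref{lemmahom}) one has $\textbf{y}^\beta\frac{\partial\mathcal{F}}{\partial\textbf{y}^\beta}=2\mathcal{F}$, so the function part becomes $-2\mathcal{F}\,(\rho^i_\alpha\circ\pi)\frac{\partial(f\circ\pi)}{\partial\textbf{x}^i}$. Using $\rho(e_\alpha)=\rho^i_\alpha\frac{\partial}{\partial x^i}$ and $\frac{\partial(f\circ\pi)}{\partial\textbf{x}^i}=\frac{\partial f}{\partial x^i}\circ\pi$, I would recognize $(\rho^i_\alpha\circ\pi)\frac{\partial(f\circ\pi)}{\partial\textbf{x}^i}=(\rho(e_\alpha)f)^\vee$, so the criterion reads $-2\mathcal{F}\,(\rho(e_\alpha)f)^\vee=0$ for every $\alpha$.

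The only place requiring a short argument is passing from this product being zero to $\rho(e_\alpha)f=0$. Since $\mathcal{F}$ is positive on $\stackrel{\circ}{E}$, it is nonvanishing there, so $(\rho(e_\alpha)f)^\vee$ must vanish fibrewise, and by injectivity of the vertical lift of functions on $M$ this forces $\rho(e_\alpha)f=0$; conversely, if $\rho(e_\alpha)f=0$ then the function part is identically zero and $\bar{h}_\nabla$ is conservative. I do not expect any genuine obstacle beyond index bookkeeping and reading the identity (\ref{cons}) separately for each fixed $\alpha$.
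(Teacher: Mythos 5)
Your proposal is correct and follows essentially the same route as the paper's own proof: you apply the conservativeness criterion (\ref{cons}) to the coefficients (\ref{Hii}) of $\bar{h}_\nabla$, cancel the $\Gamma$-terms using the conservativeness of $h_\nabla$ guaranteed by the generalized Berwald hypothesis (equation (\ref{ziad0})), contract with $\textbf{y}^\beta$ via homogeneity of degree $2$ of $\mathcal{F}$, and conclude from the nonvanishing of $\mathcal{F}$ on $\stackrel{\circ}{E}$ that $(\rho(e_\alpha)f)^\vee=0$, hence $\rho(e_\alpha)f=0$. The only (welcome) refinement over the paper is that you make explicit the final step that vanishing of the vertical lift $(\rho(e_\alpha)f)^\vee$ forces vanishing of $\rho(e_\alpha)f$ itself, which the paper leaves implicit.
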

\begin{proof}
Using (\ref{cons}), $\bar{h}_{\nabla}$ is conservative, if and only if
\begin{equation}
(\rho^i_\alpha\circ\pi)\frac{\partial\mathcal{F}}{\partial\textbf{x}^i}+\mathcal{B}^\beta_\alpha\frac{\partial\mathcal{F}}{\partial\textbf{y}^\beta}=0,
\end{equation}
where $\mathcal{B}^\beta_\alpha$ are given by (\ref{Hii}). Setting (\ref{Hii}) in the above equation give us
\[
(\rho^i_\alpha\circ\pi)\frac{\partial\mathcal{F}}{\partial\textbf{x}^i}
-\textbf{y}^\beta(\rho^i_\alpha\circ\pi)\frac{\partial(f\circ\pi)}{\partial\textbf{x}^i}\frac{\partial\mathcal{F}}{\partial\textbf{y}^\beta}
-\textbf{y}^\lambda (\Gamma^\beta_{\alpha\lambda}\circ\pi)\frac{\partial\mathcal{F}}{\partial\textbf{y}^\beta}=0.
\]
In other hand, since $h_\nabla$ is conservative, then we have
\[
(\rho^i_\alpha\circ\pi)\frac{\partial\mathcal{F}}{\partial\textbf{x}^i}
-\textbf{y}^\lambda (\Gamma^\beta_{\alpha\lambda}\circ\pi)\frac{\partial\mathcal{F}}{\partial\textbf{y}^\beta}=0.
\]
Two above equations gives us
\[
\textbf{y}^\beta(\rho^i_\alpha\circ\pi)\frac{\partial(f\circ\pi)}{\partial\textbf{x}^i}\frac{\partial\mathcal{F}}{\partial\textbf{y}^\beta}=0,
\]
and consequently
\[
(\rho^i_\alpha\circ\pi)\frac{\partial(f\circ\pi)}{\partial\textbf{x}^i}\mathcal{F}=0,
\]
because $\mathcal{F}$ is homogenous of degree 2. But since $\mathcal{F}$ is non-zero, then from the above equation we deduce $(\rho^i_\alpha\circ\pi)\frac{\partial(f\circ\pi)}{\partial\textbf{x}^i}=0$ or $(\rho(e_\alpha)f)^\vee=0$. Thus $h_{\bar{\nabla}}$ is conservative if and only if $\rho(e_\alpha)f=0$.
\end{proof}
\begin{cor}
Let $(E, \mathcal{F}, \nabla)$ be a generalized Berwald Lie algebroid and the anchor map $\rho$ be injective. Then $\bar{h}_{\nabla}$ is not conservative.
\end{cor}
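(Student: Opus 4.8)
The plan is to argue by contradiction, using the lemma that immediately precedes the corollary as essentially the only input. Suppose $\bar{h}_\nabla$ is conservative. That lemma then supplies, for a local basis $\{e_\alpha\}$ of sections of $E$, the relations $\rho(e_\alpha)f=0$ for every index $\alpha$. My first move is simply to record these relations and observe that they are the whole content of conservativeness in this setting, so I never need to reopen the computation with $\mathcal{B}^\beta_\alpha$ behind the lemma.

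Next I would recast the statement in a basis-free form. Since $\{e_\alpha\}$ generates $\Gamma(E)$ as a $C^\infty(M)$-module and $\rho(gX)=g\,\rho(X)$, the vanishing on the basis propagates to $\rho(X)f=0$ for all $X\in\Gamma(E)$. By (\ref{1}) we have $d^Ef=\frac{\partial f}{\partial x^i}\rho^i_\alpha e^\alpha=\sum_\alpha(\rho(e_\alpha)f)\,e^\alpha$, so these relations are precisely the assertion $d^Ef=0$; equivalently, the ordinary differential $df$ on $M$ annihilates the image $\rho(E)\subseteq TM$. This reformulation is the natural bridge between the coordinate identity coming out of the lemma and a geometric condition on $\rho$.

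The decisive step, and the one I expect to be the main obstacle, is to convert injectivity of $\rho$ into $df=0$ (hence $f$ locally constant, contradicting that $f$ is non-constant). Injectivity makes each fibrewise map $\rho_m:E_m\to T_mM$ a monomorphism, so $\rho(E)$ is a subbundle of $TM$ of rank equal to the rank of $E$; the relation ``$df$ annihilates $\rho(E)$'' forces $df=0$ exactly when this subbundle exhausts $TM$. I would therefore close the argument by a rank comparison: an injective fibrewise anchor with the rank of $E$ equal to $\dim M$ is a fibrewise isomorphism, whence $\rho(E)=TM$ and $df=0$. (Alternatively, using (\ref{revise}) one sees that $\rho(E)$ is an involutive distribution and $d^Ef=0$ says $f$ is constant along its leaves, so the contradiction again reduces to this distribution being all of $TM$.) Assembling the three steps yields the required contradiction and shows that $\bar{h}_\nabla$ cannot be conservative.
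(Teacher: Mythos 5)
Your first two steps coincide with the route the paper itself intends: the corollary is printed immediately after the preceding lemma with no separate argument, so its entire justification is ``apply the lemma and contradict the non-constancy of $f$,'' and your reformulation of conservativity of $\bar{h}_\nabla$ as $\rho(e_\alpha)f=0$ for all $\alpha$, i.e. $d^Ef=0$, i.e. $df$ annihilates $\rho(E)\subseteq TM$, is correct. The genuine gap is in the decisive third step, which you partly flag yourself: to force $df=0$ you need $\rho(E)=TM$, and you obtain this by a ``rank comparison'' that silently assumes $\mathrm{rank}\,E=\dim M$. That equality is nowhere among the hypotheses of the corollary. Fibrewise injectivity of $\rho$ only makes $\rho(E)$ a subbundle of $TM$ of rank equal to $\mathrm{rank}\,E$, which may be a proper subbundle, and then a non-constant $f$ can perfectly well satisfy $df|_{\rho(E)}=0$. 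So, as written, your argument proves the statement only under the additional unstated assumption $\mathrm{rank}\,E=\dim M$, not from injectivity alone.

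Moreover, this gap cannot be closed, because the corollary is false under injectivity alone; the defect is in the statement, and your proof inherits it. Take $M=\mathbb{R}^2$, let $E$ be the trivial line bundle with $\rho(e_1)=\partial/\partial x^1$ and the induced bracket, let $\mathcal{F}=(\textbf{y}^1)^2$ (positive, $2$-homogeneous, with regular Hessian), and let $\nabla$ be the flat connection $\nabla_{e_1}e_1=0$. Then $\mathcal{B}^1_1=0$, the conservativity condition $(\rho^i_1\circ\pi)\frac{\partial\mathcal{F}}{\partial\textbf{x}^i}+\mathcal{B}^\beta_1\frac{\partial\mathcal{F}}{\partial\textbf{y}^\beta}=0$ holds, so $(E,\mathcal{F},\nabla)$ is a generalized Berwald Lie algebroid with injective anchor; yet $f=x^2$ is non-constant and $\rho(e_1)f=0$, so by the preceding lemma $\bar{h}_\nabla$ \emph{is} conservative. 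What actually makes the intended argument work is surjectivity of the anchor: at any point where $df\neq 0$, surjectivity of $\rho_m$ yields some $\alpha$ with $\rho(e_\alpha)f\neq 0$ there, contradicting the lemma. So you should either add $\mathrm{rank}\,E=\dim M$ (making the injective anchor bijective) as an explicit hypothesis, which is what your rank comparison really uses, or replace ``injective'' by ``surjective''; the paper's own formulation appears to rest on the same conflation, and an honest proof must say so rather than present the missing equality as a consequence of injectivity.
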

Now we consider the linear connection $\bar{\nabla}_{e_\alpha}e_\beta=\bar{\Gamma}^{\gamma}_{\alpha\beta}e_\gamma$, where
\[
(\bar{\Gamma}^{\gamma}_{\alpha\beta}\circ\pi)=-\frac{\partial\mathcal{B}^\gamma_\alpha}{\partial\textbf{y}^\beta}
=\delta^\gamma_\beta(\rho^i_\alpha\circ\pi)\frac{\partial(f\circ\pi)}{\partial\textbf{x}^i}+(\Gamma^\gamma_{\alpha\beta}\circ\pi),
\]
or
\begin{equation}\label{gammabar}
\bar{\Gamma}^{\gamma}_{\alpha\beta}=\delta_\beta^\gamma\rho^i_\alpha\frac{\partial f}{\partial x^i}
+\Gamma^\gamma_{\alpha\beta},
\end{equation}
and we call it \textit{the linear connection generated by $\bar{h}_\nabla$}.
\begin{proposition}
Let $(E, \mathcal{F}, \nabla)$ be a generalized Berwald Lie algebroid and $\bar{\nabla}$ be the linear connection generated by $\bar{h}_\nabla$. Then the mixed curvature of Ichijy\={o} connection $(\stackrel{{\begin{tiny}\bar{\nabla}\end{tiny}}}{D}, \bar{h}_\nabla)$ vanishes.
\end{proposition}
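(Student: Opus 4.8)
The plan is to reduce the statement to Proposition \ref{Nour} by a coordinate comparison between the mixed curvatures of the two Ichijy\={o} connections built from $\nabla$ and from $\bar{\nabla}$. First I would confirm that $(\stackrel{{\begin{tiny}\bar{\nabla}\end{tiny}}}{D},\bar{h}_\nabla)$ really is the Ichijy\={o} connection \emph{generated by} $\bar{\nabla}$, i.e. that $\bar{h}_\nabla$ equals the horizontal endomorphism generated by $\bar{\nabla}$. This is immediate: since $\bar{h}_\nabla$ is homogeneous we have $\mathcal{B}^\gamma_\alpha=\textbf{y}^\beta\frac{\partial \mathcal{B}^\gamma_\alpha}{\partial\textbf{y}^\beta}$, and combining this with the defining relation $(\bar{\Gamma}^\gamma_{\alpha\beta}\circ\pi)=-\frac{\partial\mathcal{B}^\gamma_\alpha}{\partial\textbf{y}^\beta}$ gives $\mathcal{B}^\gamma_\alpha=-\textbf{y}^\beta(\bar{\Gamma}^\gamma_{\alpha\beta}\circ\pi)$, which is exactly the coefficient shape of the horizontal endomorphism generated by $\bar{\nabla}$. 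Consequently the coefficients $\stackrel{{\begin{tiny}\bar{\nabla}\end{tiny}}}{P}^{\ \ \ \ \lambda}_{\alpha\beta\gamma}$ are given by formula (\ref{hvI}) with $\Gamma$ replaced by $\bar{\Gamma}$, the first Cartan coefficients $\mathcal{C}^\lambda_{\beta\gamma}$ being connection--independent.

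Next I would set $\phi_\alpha:=\rho^i_\alpha\frac{\partial f}{\partial x^i}$, so that (\ref{gammabar}) reads $\bar{\Gamma}^\gamma_{\alpha\beta}=\delta^\gamma_\beta\phi_\alpha+\Gamma^\gamma_{\alpha\beta}$, and form the difference $\stackrel{{\begin{tiny}\bar{\nabla}\end{tiny}}}{P}^{\ \ \ \ \lambda}_{\alpha\beta\gamma}-\stackrel{{\begin{tiny}\nabla\end{tiny}}}{P}^{\ \ \ \ \lambda}_{\alpha\beta\gamma}$ term by term from (\ref{hvI}). Each occurrence of $\bar{\Gamma}-\Gamma$ contributes a Kronecker delta times $(\phi_\alpha\circ\pi)$, and I expect four nonzero contributions: from the second term $-(\phi_\alpha\circ\pi)\textbf{y}^\mu\frac{\partial\mathcal{C}^\lambda_{\beta\gamma}}{\partial\textbf{y}^\mu}$, from the third term $+(\phi_\alpha\circ\pi)\mathcal{C}^\lambda_{\beta\gamma}$, from the fourth term $-(\phi_\alpha\circ\pi)\mathcal{C}^\lambda_{\beta\gamma}$, and from the fifth term $-(\phi_\alpha\circ\pi)\mathcal{C}^\lambda_{\beta\gamma}$ (the first term of (\ref{hvI}) is connection--independent and cancels outright).

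The key computational input is the homogeneity of the Cartan tensor. Since $\mathcal{F}$ is homogeneous of degree $2$, the coefficient $\mathcal{C}^\lambda_{\beta\gamma}=\frac{1}{2}\frac{\partial^3\mathcal{F}}{\partial\textbf{y}^\beta\partial\textbf{y}^\gamma\partial\textbf{y}^\sigma}\mathcal{G}^{\lambda\sigma}$ is homogeneous of degree $-1$, so Lemma \ref{lemmahom} yields $\textbf{y}^\mu\frac{\partial\mathcal{C}^\lambda_{\beta\gamma}}{\partial\textbf{y}^\mu}=-\mathcal{C}^\lambda_{\beta\gamma}$; this turns the second contribution into $+(\phi_\alpha\circ\pi)\mathcal{C}^\lambda_{\beta\gamma}$, and the four contributions sum to $(1+1-1-1)(\phi_\alpha\circ\pi)\mathcal{C}^\lambda_{\beta\gamma}=0$. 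Hence $\stackrel{{\begin{tiny}\bar{\nabla}\end{tiny}}}{P}^{\ \ \ \ \lambda}_{\alpha\beta\gamma}=\stackrel{{\begin{tiny}\nabla\end{tiny}}}{P}^{\ \ \ \ \lambda}_{\alpha\beta\gamma}$, and since $(E,\mathcal{F},\nabla)$ is a generalized Berwald Lie algebroid, Proposition \ref{Nour} gives $\stackrel{{\begin{tiny}\nabla\end{tiny}}}{P}^{\ \ \ \ \lambda}_{\alpha\beta\gamma}=0$, so the mixed curvature of $(\stackrel{{\begin{tiny}\bar{\nabla}\end{tiny}}}{D},\bar{h}_\nabla)$ vanishes. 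The only real obstacle is the careful index bookkeeping in the term-by-term difference and getting the homogeneity degree of $\mathcal{C}^\lambda_{\beta\gamma}$ exactly right so that the derivative term realigns with the algebraic ones; everything else is a direct appeal to homogeneity and to the already established Proposition \ref{Nour}.
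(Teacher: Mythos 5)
Your proof is correct and is essentially the paper's own argument: the paper likewise forms the term-by-term difference of the mixed-curvature coefficients from (\ref{hvI}) and (\ref{gammabar}), cancels it using homogeneity (your identity $\textbf{y}^\mu\frac{\partial\mathcal{C}^\lambda_{\beta\gamma}}{\partial\textbf{y}^\mu}=-\mathcal{C}^\lambda_{\beta\gamma}$ is exactly the paper's pair $\textbf{y}^\mu\frac{\partial^2\mathcal{G}_{\beta\sigma}}{\partial\textbf{y}^\mu\partial\textbf{y}^\gamma}=-\frac{\partial\mathcal{G}_{\beta\sigma}}{\partial\textbf{y}^\gamma}$ and $\textbf{y}^\mu\frac{\partial\mathcal{G}^{\sigma\lambda}}{\partial\textbf{y}^\mu}=0$ rewritten in Cartan-coefficient form), and then invokes Proposition \ref{Nour}. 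Your preliminary verification that $\bar{h}_\nabla$ coincides with the horizontal endomorphism generated by $\bar{\nabla}$, which is what legitimizes applying (\ref{hvI}) with $\bar{\Gamma}$, is left implicit in the paper; this is a point of added rigor rather than a genuinely different route.
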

\begin{proof}
Using (\ref{hvI}) and (\ref{gammabar}) we get
\begin{align}\label{nour1}
\stackrel{{\begin{tiny}\bar{\nabla}\end{tiny}}}{P}^{\ \ \ \ \lambda}_{\alpha\beta\gamma}&=\stackrel{{\begin{tiny}\nabla\end{tiny}}}{P}^{\ \ \ \ \lambda}_{\alpha\beta\gamma}-\frac{1}{2}\textbf{y}^\mu(\rho^i_\alpha\circ\pi)\frac{\partial (f\circ\pi)}{\partial\textbf{x}^i}(\frac{\partial^2 \mathcal{G}_{\beta\sigma}}{\partial \textbf{y}^\mu\partial\textbf{y}^\gamma}\mathcal{G}^{\sigma\lambda}\nonumber\\
&\ \ \ +\frac{\partial \mathcal{G}_{\beta\sigma}}{\partial\textbf{y}^\gamma}\frac{\partial \mathcal{G}^{\sigma\lambda}}{\partial \textbf{y}^\mu})+\frac{1}{2}\frac{\partial\mathcal{G}_{\beta\sigma}}{\partial\textbf{y}^\gamma}\mathcal{G}^{\sigma\lambda}(\rho^i_\alpha\circ\pi)\frac{\partial (f\circ\pi)}{\partial\textbf{x}^i}
\nonumber\\
&\ \ \ -\frac{1}{2}\frac{\partial\mathcal{G}_{\beta\sigma}}{\partial\textbf{y}^\gamma}\mathcal{G}^{\sigma\lambda}(\rho^i_\alpha\circ\pi)\frac{\partial (f\circ\pi)}{\partial\textbf{x}^i}-\frac{1}{2}\frac{\partial\mathcal{G}_{\beta\sigma}}{\partial\textbf{y}^\gamma}\mathcal{G}^{\sigma\lambda}(\rho^i_\alpha\circ\pi)\frac{\partial (f\circ\pi)}{\partial\textbf{x}^i}.
\end{align}
Since $(E, \mathcal{F}, \nabla)$ is a generalized Berwald Lie algebroid, then $h_\nabla$ is conservative. Thus according to proposition \ref{Nour}, $\stackrel{{\begin{tiny}\nabla\end{tiny}}}{P}^{\ \ \ \ \lambda}_{\alpha\beta\gamma}=0$. Moreover, we have
\[
\textbf{y}^\mu\frac{\partial^2 \mathcal{G}_{\beta\sigma}}{\partial \textbf{y}^\mu\partial\textbf{y}^\gamma}=-\frac{\partial\mathcal{G}_{\beta\sigma}}{\partial\textbf{y}^\gamma},\ \ \ \textbf{y}^\mu\frac{\partial \mathcal{G}^{\sigma\lambda}}{\partial \textbf{y}^\mu}=0,
\]
because $\frac{\partial\mathcal{G}_{\beta\sigma}}{\partial\textbf{y}^\gamma}$ and $\mathcal{G}^{\sigma\lambda}$ are homogenous functions of degree -1 and 0, respectively. Therefore, (\ref{nour1}) reduce to the following
\begin{align*}
\stackrel{{\begin{tiny}\bar{\nabla}\end{tiny}}}{P}^{\ \ \ \ \lambda}_{\alpha\beta\gamma}&=\frac{1}{2}(\rho^i_\alpha\circ\pi)\frac{\partial (f\circ\pi)}{\partial\textbf{x}^i}\frac{\partial \mathcal{G}_{\beta\sigma}}{\partial\textbf{y}^\gamma}\mathcal{G}^{\sigma\lambda}+\frac{1}{2}\frac{\partial\mathcal{G}_{\beta\sigma}}{\partial\textbf{y}^\gamma}\mathcal{G}^{\sigma\lambda}(\rho^i_\alpha\circ\pi)\frac{\partial (f\circ\pi)}{\partial\textbf{x}^i}
\nonumber\\
&\ \ \ -\frac{1}{2}\frac{\partial\mathcal{G}_{\beta\sigma}}{\partial\textbf{y}^\gamma}\mathcal{G}^{\sigma\lambda}(\rho^i_\alpha\circ\pi)\frac{\partial (f\circ\pi)}{\partial\textbf{x}^i}-\frac{1}{2}\frac{\partial\mathcal{G}_{\beta\sigma}}{\partial\textbf{y}^\gamma}\mathcal{G}^{\sigma\lambda}(\rho^i_\alpha\circ\pi)\frac{\partial (f\circ\pi)}{\partial\textbf{x}^i}\\
&=0.
\end{align*}
\end{proof}
\begin{defn}
Generalized Berwald Lie algebroid $(E, \mathcal{F}, \nabla)$ is called Berwald Lie algebroid, if $\nabla$ be a torsion free linear connection on $E$.
\end{defn}
\begin{proposition}
Let $(E, \mathcal{F})$ be a Finsler Lie algebroid and $h_\circ$ be a Barthel endomorphism of it. Then $(E, \mathcal{F})$ is Berwald Lie algebroid if and only if there is a linear connection on $E$ such that
\[
(\nabla_XY)^V=[X^{h_\circ}, Y^V]_\pounds,\ \ \ \forall X, Y\in\Gamma(E).
\]
\end{proposition}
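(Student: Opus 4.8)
The plan is to reduce both sides of the equivalence to the single statement that the horizontal endomorphism $h_\nabla$ generated by the relevant linear connection coincides with the Barthel endomorphism $h_\circ$. I will use repeatedly that for any linear connection $\nabla$ on $E$ the generated endomorphism satisfies the defining relation $(\nabla_XY)^V=[X^{h_\nabla},Y^V]_\pounds$, that $h_\nabla$ is always homogeneous with local coefficients $\mathcal{B}^\beta_\alpha=-\textbf{y}^\lambda(\Gamma^\beta_{\alpha\lambda}\circ\pi)$ (where $\Gamma^\gamma_{\alpha\beta}$ are the coefficients of $\nabla$), and that the weak torsion of $h_\nabla$ has coefficients $t^\gamma_{\alpha\beta}=(\Gamma^\gamma_{\alpha\beta}-\Gamma^\gamma_{\beta\alpha}-L^\gamma_{\alpha\beta})\circ\pi=(T_\nabla(e_\alpha,e_\beta))^{h_\nabla}$. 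The crucial external input is the uniqueness result proved earlier: the Barthel endomorphism $h_\circ$ is the \emph{unique} homogeneous, conservative and torsion free horizontal endomorphism on $(E,\mathcal{F})$.

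For the direction $(\Rightarrow)$, assume $(E,\mathcal{F},\nabla)$ is a Berwald Lie algebroid, so $\nabla$ is torsion free and (being a generalized Berwald Lie algebroid) $h_\nabla$ is conservative. Since $T_\nabla=0$, the weak torsion coefficients $t^\gamma_{\alpha\beta}=(T_\nabla(e_\alpha,e_\beta))^{h_\nabla}$ vanish, so $h_\nabla$ is torsion free; it is also homogeneous. Hence $h_\nabla$ is homogeneous, conservative and torsion free, and by the uniqueness theorem $h_\nabla=h_\circ$. Substituting into the defining relation of $h_\nabla$ gives $(\nabla_XY)^V=[X^{h_\nabla},Y^V]_\pounds=[X^{h_\circ},Y^V]_\pounds$, so $\nabla$ is the desired connection.

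For the direction $(\Leftarrow)$, suppose $\nabla$ satisfies $(\nabla_XY)^V=[X^{h_\circ},Y^V]_\pounds$. Combining with $(\nabla_XY)^V=[X^{h_\nabla},Y^V]_\pounds$ yields $[X^{h_\nabla},Y^V]_\pounds=[X^{h_\circ},Y^V]_\pounds$ for all $X,Y\in\Gamma(E)$. Using the coordinate formula $[X^h,Y^V]_\pounds=\big((X^\alpha\rho^i_\alpha\frac{\partial Y^\beta}{\partial x^i})\circ\pi-((X^\alpha Y^\gamma)\circ\pi)\frac{\partial\mathcal{B}^\beta_\alpha}{\partial\textbf{y}^\gamma}\big)\mathcal{V}_\beta$ for each of the two endomorphisms, with coefficients $(\mathcal{B}_\nabla)^\beta_\alpha$ and $(\mathcal{B}_\circ)^\beta_\alpha$, and taking $X=e_\alpha$, $Y=e_\gamma$, I read off $\partial(\mathcal{B}_\nabla)^\beta_\alpha/\partial\textbf{y}^\gamma=\partial(\mathcal{B}_\circ)^\beta_\alpha/\partial\textbf{y}^\gamma$. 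Now $(\mathcal{B}_\nabla)^\beta_\alpha=-\textbf{y}^\lambda(\Gamma^\beta_{\alpha\lambda}\circ\pi)$ is linear in the fibre coordinates, so its first derivative is $\textbf{y}$-independent, and since $h_\circ$ is homogeneous I may apply Euler's relation $(\mathcal{B}_\circ)^\beta_\alpha=\textbf{y}^\gamma\,\partial(\mathcal{B}_\circ)^\beta_\alpha/\partial\textbf{y}^\gamma=\textbf{y}^\gamma\,\partial(\mathcal{B}_\nabla)^\beta_\alpha/\partial\textbf{y}^\gamma=(\mathcal{B}_\nabla)^\beta_\alpha$. Hence $h_\circ=h_\nabla$. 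Because $h_\circ$ is conservative, $h_\nabla$ is conservative and $(E,\mathcal{F},\nabla)$ is a generalized Berwald Lie algebroid; because $h_\circ$ is torsion free, its weak torsion vanishes, forcing $\Gamma^\gamma_{\alpha\beta}-\Gamma^\gamma_{\beta\alpha}-L^\gamma_{\alpha\beta}=0$, i.e. $\nabla$ is torsion free. Thus $(E,\mathcal{F},\nabla)$ is a Berwald Lie algebroid.

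The main obstacle is the passage in the backward direction from the bracket identity, which a priori only controls the fibre derivatives $\partial\mathcal{B}^\beta_\alpha/\partial\textbf{y}^\gamma$, to the full equality $h_\nabla=h_\circ$. I expect this to be resolved exactly as sketched: the homogeneity of $h_\circ$ upgrades the equality of first $\textbf{y}$-derivatives to the equality of the coefficients themselves, using that $\mathcal{B}_\nabla$ is linear in $\textbf{y}$. Once $h_\nabla=h_\circ$ is in hand, both the conservativeness and the torsion-freeness of $\nabla$ are inherited directly from the established properties of the Barthel endomorphism, so no further computation is required.
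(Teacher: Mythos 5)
Your proof is correct and follows essentially the same route as the paper: both directions reduce to identifying $h_\nabla$ with the Barthel endomorphism $h_\circ$, using the uniqueness of the homogeneous, conservative, torsion-free horizontal endomorphism for the forward direction and the defining identity $(\nabla_XY)^V=[X^{h_\nabla},Y^V]_\pounds$ for the converse. The only difference is that where the paper passes from $[X^{h_\nabla},Y^V]_\pounds=[X^{h_\circ},Y^V]_\pounds$ directly to $h_\nabla=h_\circ$ (implicitly invoking its earlier proposition that two homogeneous horizontal endomorphisms with equal brackets against vertical lifts coincide), you re-derive that step in coordinates via Euler's relation, which is the same argument written out.
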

\begin{proof}
Let $(E, \mathcal{F})$ be a Finsler Lie algebroid. Then there is a torsion free linear connection $\nabla$ on $E$ such that $h_\nabla$ is conservative. From torsion freeness of $\nabla$ we conclude that $t_\nabla$ is zero and consequently $h_\nabla$ is homogenous. Thus $h_\nabla$ is the Barthel horizontal endomorphism and consequently $h_\nabla=h_\circ$, because the Barthel connection is unique. Therefore we have $(\nabla_XY)^V=[X^{h_\nabla}, Y^V]_\pounds=[X^{h_\circ}, Y^V]_\pounds$. Conversely, let there is a linear connection on $E$ such that $(\nabla_XY)^V=[X^{h_\circ}, Y^V]_\pounds$, for all $X, Y\in\Gamma(E)$. Since $(\nabla_XY)^V=[X^{h_\nabla}, Y^V]_\pounds$, then we deduce $[X^{h_\circ}, Y^V]_\pounds=[X^{h_\nabla}, Y^V]_\pounds$ and consequently $h_\nabla=h_\circ$. Thus $h_\nabla$ is conservative and $\nabla$ is torsion free, because the Barthel connection is conservative and torsion free. Therefore $(E, \mathcal{F})$ is a Berwald Lie algebroid.
\end{proof}
\begin{theorem}
A Finsler Lie algebroid is a Berwald Lie algebroid if and only if the Hashiguchi connection of it, is a Ichijy\={o} connection.
\end{theorem}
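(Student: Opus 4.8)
The plan is to read off and match the local coefficient blocks of the two d-connections, pivoting on the uniqueness of the Barthel endomorphism. Throughout, let $h_\circ$ denote the Barthel endomorphism of the Finsler Lie algebroid $(E,\mathcal{F})$; it is homogeneous, conservative and torsion free, and it is the horizontal part of the Hashiguchi connection $\stackrel{\text{\begin{tiny}H\end{tiny}}}{D}$ defined by (\ref{H})--(\ref{H3}). The first observation I would record is that the \emph{vertical} blocks of the Hashiguchi and Ichij\={o} connections are literally the same formula: comparing (\ref{H}), (\ref{H1}) with (\ref{I}), (\ref{I1}), both give $\stackrel{\text{\begin{tiny}H\end{tiny}}}{D}_{\mathcal{V}_\alpha}\mathcal{V}_\beta=\mathcal{C}^\mu_{\alpha\beta}\mathcal{V}_\mu$ and $\stackrel{\text{\begin{tiny}H\end{tiny}}}{D}_{\mathcal{V}_\alpha}\delta_\beta=\mathcal{C}^\mu_{\alpha\beta}\delta_\mu$, so that any discrepancy between the two connections can reside only in the horizontal blocks and in the choice of horizontal endomorphism.

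For the implication ``Berwald $\Rightarrow$ Hashiguchi is Ichij\={o}'', I would invoke the preceding proposition: $(E,\mathcal{F})$ being a Berwald Lie algebroid means there is a torsion free linear connection $\nabla$ on $E$ with $(\nabla_XY)^V=[X^{h_\circ},Y^V]_\pounds=[X^{h_\nabla},Y^V]_\pounds$, which forces $h_\circ=h_\nabla$; equivalently, the horizontal endomorphism $h_\nabla$ generated by $\nabla$ has $\mathcal{B}^\mu_\alpha=-\textbf{y}^\gamma(\Gamma^\mu_{\alpha\gamma}\circ\pi)$ with the $\Gamma^\mu_{\alpha\gamma}$ basic (functions on $M$). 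Substituting this into the horizontal blocks (\ref{H2}), (\ref{H3}) and using $-\partial\mathcal{B}^\mu_\alpha/\partial\textbf{y}^\beta=(\Gamma^\mu_{\alpha\beta}\circ\pi)$, I obtain $\stackrel{\text{\begin{tiny}H\end{tiny}}}{D}_{\delta_\alpha}\delta_\beta=(\Gamma^\mu_{\alpha\beta}\circ\pi)\delta_\mu$ and $\stackrel{\text{\begin{tiny}H\end{tiny}}}{D}_{\delta_\alpha}\mathcal{V}_\beta=(\Gamma^\mu_{\alpha\beta}\circ\pi)\mathcal{V}_\mu$, which are exactly (\ref{I2}), (\ref{I3}). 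Together with the vertical blocks already matched, this yields $\stackrel{\text{\begin{tiny}H\end{tiny}}}{D}=\stackrel{{\begin{tiny}\nabla\end{tiny}}}{D}$ and $h_\circ=h_\nabla$, i.e.\ the Hashiguchi connection is precisely the Ichij\={o} connection induced by $\nabla$.

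For the converse, suppose the Hashiguchi connection $(\stackrel{\text{\begin{tiny}H\end{tiny}}}{D},h_\circ)$ coincides with some Ichij\={o} connection $(\stackrel{{\begin{tiny}\nabla\end{tiny}}}{D},h_\nabla)$. Since a d-connection determines its horizontal endomorphism, the equality forces $h_\circ=h_\nabla$. Conservativity of the Barthel endomorphism then transfers to $h_\nabla$, so $(E,\mathcal{F},\nabla)$ is a generalized Berwald Lie algebroid. To upgrade this to Berwald it remains to check that $\nabla$ is torsion free: using the relation $t^\gamma_{\alpha\beta}=(T_\nabla(e_\alpha,e_\beta))^{h_\nabla}$ for the weak torsion $t_\nabla$ of $h_\nabla$ (recorded in (\ref{Wag1})), the torsion-freeness of $h_\circ=h_\nabla$ gives $t_\nabla=0$, whence $T_\nabla=0$ and $\nabla$ is torsion free. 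Therefore $(E,\mathcal{F},\nabla)$ is a Berwald Lie algebroid.

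The step I expect to be the genuine pivot — and the only nonroutine one — is the identification $h_\circ=h_\nabla$. In the forward direction it rests on the uniqueness theorem (there is a unique homogeneous, conservative, torsion free horizontal endomorphism on $(E,\mathcal{F})$): $h_\nabla$ is automatically homogeneous and everywhere smooth, torsion-freeness of $\nabla$ yields $t_\nabla=0$, and conservativity is exactly the Berwald hypothesis, so uniqueness identifies $h_\nabla$ with $h_\circ$. In the converse direction the same identification is forced simply by reading the horizontal endomorphism off the d-connection. Once $h_\circ=h_\nabla$ is secured, the remainder is the bookkeeping of matching the four coefficient blocks (\ref{H})--(\ref{H3}) against (\ref{I})--(\ref{I3}), which is routine.
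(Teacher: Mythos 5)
Your proof is correct and follows essentially the same route as the paper: both directions pivot on the identification $h_\nabla=h_\circ$ (via the preceding proposition, equivalently the uniqueness of the homogeneous, conservative, torsion-free horizontal endomorphism), after which the forward direction is the same substitution of $\mathcal{B}^\mu_\alpha=-\textbf{y}^\gamma(\Gamma^\mu_{\alpha\gamma}\circ\pi)$ into the Hashiguchi blocks to recover the Ichijy\={o} blocks. Your converse merely unfolds the proposition's argument (conservativity and torsion-freeness of $h_\circ$ transfer to $h_\nabla$ and hence to $\nabla$) where the paper cites it, which is the same content.
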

\begin{proof}
Let $(E, \mathcal{F})$ be a Berwald Lie algebroid. Then from the above proposition, $h_\nabla=h_\circ$, where $h_\nabla$ is a horizontal endomorphism generated by $\nabla$ and $h_\circ$ is the Barthel endomorphism. Thus we have $\mathcal{B}^\mu_\alpha=-\textbf{y}^\gamma(\Gamma^\mu_{\alpha\gamma}\circ\pi)$. Setting this equation in (\ref{H2}) and (\ref{H3}) we obtain
\begin{align*}
\stackrel{\text{\begin{tiny}H\end{tiny}}}{D}_{\delta_\alpha}\mathcal{V}_\beta&=(\Gamma^\mu_{\alpha\beta}\circ\pi)\mathcal{V}_\mu
=\stackrel{{\begin{tiny}\nabla\end{tiny}}}{D}_{\delta_\alpha}\mathcal{V}_\beta,\\
\stackrel{\text{\begin{tiny}H\end{tiny}}}{D}_{\delta_\alpha}\delta_\beta&=(\Gamma^\mu_{\alpha\beta}\circ\pi)\delta_\mu
=\stackrel{{\begin{tiny}\nabla\end{tiny}}}{D}_{\delta_\alpha}\delta_\beta.
\end{align*}
Also, from (\ref{H}), (\ref{H1}), (\ref{I}) and (\ref{I1}) we have
\[
\stackrel{\text{\begin{tiny}H\end{tiny}}}{D}_{\mathcal{V}_\alpha}\mathcal{V}_\beta
=\stackrel{{\begin{tiny}\nabla\end{tiny}}}{D}_{\mathcal{V}_\alpha}\mathcal{V}_\beta,\ \ \ \stackrel{\text{\begin{tiny}H\end{tiny}}}{D}_{\mathcal{V}_\alpha}\delta_\beta
=\stackrel{{\begin{tiny}\nabla\end{tiny}}}{D}_{\mathcal{V}_\alpha}\delta_\beta.
\]
Thus $\stackrel{\text{\begin{tiny}H\end{tiny}}}{D}=\stackrel{{\begin{tiny}\nabla\end{tiny}}}{D}$. Conversely, if the Hashiguchi connection of a Finsler algebroid $(E, \mathcal{F})$ is a Ichijy\={o} connection, then it is easy to see that $h_\nabla=h_\circ$. Thus according to the above proposition we conclude that $(E, \mathcal{F})$ is a Berwald Lie algebroid.
\end{proof}
Let $(E, \mathcal{F}, \nabla)$ be a Berwald Lie algebroid. If $\nabla$ is a flat connection then we call $(E, \mathcal{F}, \nabla)$, \textit{the locally Minkowski Lie algebroid}.
\begin{theorem}
A Finsler Lie algebroid $(E, \mathcal{F})$ is a locally Minkowski Lie algebroid if and only if there is a torsion free and flat linear connection on $E$ such that Ichijy\={o} connection $(\stackrel{{\begin{tiny}\nabla\end{tiny}}}{D}, h_\nabla)$ is $h_\nabla$-metrical.
\end{theorem}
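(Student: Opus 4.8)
The plan is to prove both implications by unwinding the chain of definitions (locally Minkowski $\Rightarrow$ Berwald $\Rightarrow$ generalized Berwald) and then invoking the equivalence between the generalized Berwald property and $h_\nabla$-metricity of the Ichijy\={o} connection established in Proposition \ref{khob}. The essential observation is that every ingredient of the notion ``locally Minkowski'' is a hypothesis about one and the same linear connection $\nabla$ on $E$, so the two existence statements in the theorem differ only by replacing the clause ``$h_\nabla$ is conservative'' with the clause ``$(\stackrel{{\begin{tiny}\nabla\end{tiny}}}{D}, h_\nabla)$ is $h_\nabla$-metrical'', and Proposition \ref{khob} asserts precisely that these two clauses are equivalent for a fixed $\nabla$.

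First I would record the definitional chain explicitly. A locally Minkowski Lie algebroid is a Berwald Lie algebroid $(E, \mathcal{F}, \nabla)$ with $\nabla$ flat; a Berwald Lie algebroid is a generalized Berwald Lie algebroid with $\nabla$ torsion free; and a generalized Berwald Lie algebroid is a Finsler algebroid equipped with a linear connection $\nabla$ whose generated horizontal endomorphism $h_\nabla$ is conservative. Hence $(E, \mathcal{F})$ is locally Minkowski if and only if there exists a linear connection $\nabla$ on $E$ that is simultaneously torsion free, flat, and such that $h_\nabla$ is conservative.

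For the forward implication I would assume $(E, \mathcal{F})$ is locally Minkowski and take the witnessing connection $\nabla$. Since $h_\nabla$ is conservative, $(E, \mathcal{F}, \nabla)$ is a generalized Berwald Lie algebroid, so by the equivalence $(i)\Leftrightarrow(iii)$ of Proposition \ref{khob} the Ichijy\={o} connection $(\stackrel{{\begin{tiny}\nabla\end{tiny}}}{D}, h_\nabla)$ is $h_\nabla$-metrical; together with the torsion freeness and flatness of $\nabla$ granted by hypothesis, this furnishes the required connection. For the converse I would start from a torsion free, flat connection $\nabla$ with $(\stackrel{{\begin{tiny}\nabla\end{tiny}}}{D}, h_\nabla)$ $h_\nabla$-metrical; applying $(iii)\Rightarrow(i)$ of Proposition \ref{khob} yields that $h_\nabla$ is conservative, so $(E, \mathcal{F}, \nabla)$ is a generalized Berwald Lie algebroid. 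Adjoining torsion freeness makes it a Berwald Lie algebroid, and adjoining flatness makes it locally Minkowski.

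The argument is essentially bookkeeping of definitions once Proposition \ref{khob} is available; the only point demanding care is the handling of the existential quantifier over $\nabla$. I would stress that Proposition \ref{khob} gives the equivalence ``$h_\nabla$ conservative $\Leftrightarrow$ Ichijy\={o} connection $h_\nabla$-metrical'' for \emph{each fixed} $\nabla$, so that the same connection transports the torsion-freeness and flatness clauses across both formulations without introducing a spurious change of witness. Consequently no genuine analytic obstacle arises, and no computation beyond those already carried out for Proposition \ref{khob} is needed.
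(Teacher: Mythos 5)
Your proposal is correct and follows essentially the same route as the paper: both proofs reduce the theorem to the equivalence $(i)\Leftrightarrow(iii)$ of Proposition \ref{khob} applied to a single witnessing connection $\nabla$, with torsion freeness and flatness carried along unchanged through the definitional chain (generalized Berwald $\rightarrow$ Berwald $\rightarrow$ locally Minkowski). Your explicit remark about keeping the same $\nabla$ as the witness in both existential statements is exactly the (implicit) content of the paper's terse argument, just spelled out more carefully.
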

\begin{proof}
Let $(E, \mathcal{F})$ be a locally Minkowski Lie algebroid. Then there exist torsion free and flat linear connection $\nabla$ on $E$ such that $(E, \mathcal{F}, \nabla)$ is a generalized Berwald Lie algebroid. Therefore, from proposition \ref{khob}, we deduce that Ichijy\={o} connection $(\stackrel{{\begin{tiny}\nabla\end{tiny}}}{D}, h_\nabla)$ is $h_\nabla$-metrical. Using proposition \ref{khob}, the proof of the converse of the theorem is obvious.
\end{proof}
\begin{proposition}\label{14.31}
Let $(E, \mathcal{F}, \nabla)$ be a generalized Berwald Lie algebroid. Then we have
\begin{align}
S_\nabla&=S_\circ+(d^\pounds_{i_{S_\nabla}t_\nabla}\mathcal{F})^\sharp,\label{very good}\\
h_\nabla&=h_\circ+\frac{1}{2}i_{S_\nabla}t_\nabla+\frac{1}{2}[J, (d^\pounds_{i_{S_\nabla}t_\nabla}\mathcal{F})^\sharp]_\pounds\label{very good20}.
\end{align}
\end{proposition}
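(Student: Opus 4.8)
The plan is to recognize that the statement is the specialization of two earlier structural results, Proposition~\ref{mainpor} and Proposition~\ref{12.16}, to the horizontal endomorphism $h_\nabla$ generated by $\nabla$. The genuine content is already carried by those propositions, so the only thing to do here is to check that $h_\nabla$ meets their hypotheses and then read off the two identities by substitution.

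First I would record the two properties of $h_\nabla$ that are needed. By the very definition of a generalized Berwald Lie algebroid, the horizontal endomorphism $h_\nabla$ is conservative, i.e.\ $d^\pounds_{h_\nabla}\mathcal{F}=0$; this is precisely the hypothesis common to Proposition~\ref{mainpor} and Proposition~\ref{12.16}. Moreover, $h_\nabla$ was observed to be homogeneous immediately after its construction in~(\ref{delta}), which supplies the extra homogeneity hypothesis required by Proposition~\ref{12.16}. I would also fix notation: $S_\nabla$ denotes the semispray associated to $h_\nabla$ and $t_\nabla$ its weak torsion, so that when the cited propositions are applied with $h=h_\nabla$ the objects $S$ and $t$ appearing there are exactly $S_\nabla$ and $t_\nabla$.

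With these identifications in place, the first identity~(\ref{very good}) comes directly from Proposition~\ref{mainpor}: since $h_\nabla$ is conservative with associated semispray $S_\nabla$ and weak torsion $t_\nabla$, that proposition gives
\[
S_\nabla-S_\circ=(d^\pounds_{i_{S_\nabla}t_\nabla}\mathcal{F})^\sharp .
\]
The second identity~(\ref{very good20}) follows in the same way from Proposition~\ref{12.16}: since $h_\nabla$ is conservative and homogeneous and $h_\circ$ is the Barthel endomorphism, that proposition yields
\[
h_\nabla=h_\circ+\frac{1}{2}i_{S_\nabla}t_\nabla+\frac{1}{2}[J,(d^\pounds_{i_{S_\nabla}t_\nabla}\mathcal{F})^\sharp]_\pounds .
\]

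The hard part, such as it is, lives entirely inside the reused propositions: Proposition~\ref{mainpor} already carries the computation that expresses $S_\nabla-S_\circ$ through the weak torsion and the fundamental form, using conservativeness via~(\ref{cons}), (\ref{cons2}), (\ref{cons00}) together with the homogeneity identity~(\ref{only God}), while Proposition~\ref{12.16} packages the comparison with the Barthel endomorphism through Theorem~\ref{mainth}. Thus the main obstacle is not a calculation but a verification: confirming that the generalized Berwald hypothesis genuinely delivers conservativeness of $h_\nabla$ and that $h_\nabla$ is homogeneous on all of $\pounds^\pi E$, so that both propositions apply with no domain restriction. Once this is checked, the two displayed formulas are immediate.
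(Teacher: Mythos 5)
Your proposal is correct and matches the paper's own proof, which likewise just invokes the conservativeness of $h_\nabla$ (from the definition of a generalized Berwald Lie algebroid) and then cites Propositions~\ref{mainpor} and~\ref{12.16}. In fact you are slightly more careful than the paper: you explicitly verify the homogeneity of $h_\nabla$ required by Proposition~\ref{12.16}, which the paper uses tacitly (it is recorded only in the remark following the construction of $h_\nabla$).
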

\begin{proof}
Since $(E, \mathcal{F}, \nabla)$ be a generalized Berwald Lie algebroid, then $h_\nabla$ is conservative. Thus from propositions \ref{mainpor} and \ref{12.16} the proof is obvious.
\end{proof}
\begin{theorem}
Let $(E, \mathcal{F}, \nabla_1)$ and $(E, \mathcal{F}, \nabla_2)$ be generalized Berwald Lie algebroids. Then $\nabla_1$ is equal to $\nabla_2$ if and only if the torsion tensor fields of these are equal.
\end{theorem}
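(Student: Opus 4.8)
The plan is to dispatch the easy implication at once and to reduce the hard one to the uniqueness results already proved for conservative horizontal endomorphisms. If $\nabla_1=\nabla_2$ their torsion tensors obviously coincide, so all the content lies in the converse. Throughout I shall use that a generalized Berwald structure is completely encoded by its horizontal endomorphism: by the very definition of $h_\nabla$ one has $(\nabla_XY)^V=[X^{h_\nabla},Y^V]_\pounds$ for all $X,Y\in\Gamma(E)$, and since the vertical lift ${}^V$ is injective, $h_{\nabla_1}=h_{\nabla_2}$ already forces $\nabla_1=\nabla_2$. Hence the whole task is to show that equality of the torsion tensors $T_{\nabla_1}=T_{\nabla_2}$ implies $h_{\nabla_1}=h_{\nabla_2}$.

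First I would translate the hypothesis into a statement about the weak torsions of the two horizontal endomorphisms. By (\ref{Wag1}) the weak torsion of $h_\nabla$ has coefficients $t^\gamma_{\alpha\beta}=(\Gamma^\gamma_{\alpha\beta}-\Gamma^\gamma_{\beta\alpha}-L^\gamma_{\alpha\beta})\circ\pi=(T_\nabla(e_\alpha,e_\beta))^\gamma\circ\pi$; that is, $t_\nabla$ is nothing but the $\pi$-pullback of the torsion tensor of $\nabla$. Consequently $T_{\nabla_1}=T_{\nabla_2}$ is equivalent to $t_{\nabla_1}=t_{\nabla_2}$, and I write $t$ for this common weak torsion.

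The key observation I would then exploit is that the contraction $i_St$ does not depend on the choice of semispray $S$. Indeed $t$ is semibasic (Lemma \ref{torsion lemma}) and every semispray shares the same horizontal component $\textbf{y}^\alpha\mathcal{X}_\alpha$ by (\ref{semispray}), so (\ref{4.5}) gives $i_St=\frac{1}{2}t^\gamma_{\alpha\beta}(\textbf{y}^\alpha\mathcal{X}^\beta-\textbf{y}^\beta\mathcal{X}^\alpha)\otimes\mathcal{V}_\gamma$, a quantity built solely from $t$. Now I invoke Proposition \ref{14.31}: since both algebroids carry the same fundamental function $\mathcal{F}$, they share the same canonical spray $S_\circ$, Barthel endomorphism $h_\circ$, and fundamental form $\omega$ (hence the same raising operator $\sharp$). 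Reading the two identities of Proposition \ref{14.31},
\[
S_{\nabla_i}=S_\circ+(d^\pounds_{i_{S_{\nabla_i}}t_{\nabla_i}}\mathcal{F})^\sharp,
\qquad
h_{\nabla_i}=h_\circ+\tfrac{1}{2}\,i_{S_{\nabla_i}}t_{\nabla_i}+\tfrac{1}{2}[J,(d^\pounds_{i_{S_{\nabla_i}}t_{\nabla_i}}\mathcal{F})^\sharp]_\pounds,
\]
every term on the right is determined by the intrinsic data $(S_\circ,h_\circ,\omega,\mathcal{F})$ together with $i_{S_{\nabla_i}}t_{\nabla_i}$, which by the previous remark equals $i_St$ for the common $t$ and is therefore the same for $i=1,2$. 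Hence $S_{\nabla_1}=S_{\nabla_2}$ and $h_{\nabla_1}=h_{\nabla_2}$, which yields $\nabla_1=\nabla_2$.

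An equivalent route, which I would record as a cross-check, runs through Theorem \ref{12.18}: both $h_{\nabla_i}$ are conservative by the definition of a generalized Berwald algebroid and homogeneous, so their tensions vanish and their strong torsions reduce to $i_{S_{\nabla_i}}t_{\nabla_i}$; equality of the $t_{\nabla_i}$ then produces equal strong torsions, and Theorem \ref{12.18} closes the argument. In both approaches the main obstacle is the same single point: verifying that the $\nabla$-dependent correction in Proposition \ref{14.31} (equivalently, the strong torsion of $h_\nabla$) is governed only by the weak torsion $t_\nabla$ and not by the full connection coefficients $\Gamma^\gamma_{\alpha\beta}$. This hinges on the semispray-independence of $i_St$ combined with the conservativeness of $h_\nabla$; once that is secured, the conclusion is a direct appeal to the uniqueness theorems.
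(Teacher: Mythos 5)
Your proposal is correct, and it in fact contains the paper's own proof verbatim as your ``cross-check'': the paper argues exactly that $T_{\nabla_1}=T_{\nabla_2}$ gives $h_{\nabla_1}$ and $h_{\nabla_2}$ the same weak torsion, that homogeneity of the $h_{\nabla_i}$ (so vanishing tension, plus semibasicity making $i_St$ semispray-independent) upgrades this to equal strong torsions, and that Theorem \ref{12.18} then yields $h_{\nabla_1}=h_{\nabla_2}$, hence $\nabla_1=\nabla_2$.

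Your primary route is a mild but genuine repackaging of the same machinery: instead of citing the uniqueness Theorem \ref{12.18}, you substitute the reconstruction formula of Proposition \ref{14.31},
\begin{equation*}
h_\nabla=h_\circ+\tfrac{1}{2}\,i_{S_\nabla}t_\nabla+\tfrac{1}{2}\bigl[J,(d^\pounds_{i_{S_\nabla}t_\nabla}\mathcal{F})^\sharp\bigr]_\pounds,
\end{equation*}
and observe that every ingredient on the right ($h_\circ$, $\omega$ and its raising $\sharp$, $\mathcal{F}$) is intrinsic to the Finsler algebroid, except $i_{S_\nabla}t_\nabla$, which by (\ref{semispray}) and (\ref{4.5}) depends only on $t_\nabla$, hence only on $T_\nabla$ via (\ref{Wag1}). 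This is legitimate: $h_\nabla$ is automatically homogeneous and, by the generalized Berwald hypothesis, conservative, so Proposition \ref{14.31} applies. What your route buys is a slightly sharper, constructive statement --- the horizontal endomorphism of a generalized Berwald structure, and therefore $\nabla$ itself, is completely determined by $(E,\mathcal{F})$ together with the torsion tensor $T_\nabla$ --- whereas the paper's route is shorter because Theorem \ref{12.18} already packages the uniqueness (its proof, in turn, rests on the same Proposition \ref{mainpor} together with Theorem \ref{mainth00}, so the two arguments are ultimately the same mechanism unwound to different depths). Both versions correctly isolate the one point where the argument could fail, and which you flag explicitly: the $\nabla$-dependent correction terms are governed by the weak torsion alone, because $t_\nabla$ is semibasic and all semisprays share the horizontal component $\textbf{y}^\alpha\mathcal{X}_\alpha$.
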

\begin{proof}
If $\nabla_1=\nabla_2$, then $T_{\nabla_1}=T_{\nabla_2}$. Conversely, if $T_{\nabla_1}=T_{\nabla_2}$ then the horizontal endomorphisms $h_{\nabla_1}$ and $h_{\nabla_2}$ have the same weak torsion and since these horizontal endomorphisms are homogenous, then they have the same strong torsion. Therefore using theorem \ref{12.18} we deduce that $h_{\nabla_1}=h_{\nabla_2}$ and consequently $\nabla_1=\nabla_2$.
\end{proof}
\begin{proposition}
Let $(E, \mathcal{F}, \nabla)$ be generalized Berwald Lie algebroids. If spray $S_\nabla$ generated by $\nabla$ is the projective change of spray $S_\circ$, then $S_\nabla=S_\circ$ and consequently $(E, \mathcal{F})$ is a Berwald manifold.
\end{proposition}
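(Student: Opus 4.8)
The plan is to play the two competing expressions for the difference $S_\nabla-S_\circ$ against each other and then annihilate the discrepancy by contracting with the canonical spray. First I would invoke the lemma on projective changes: since $S_\nabla$ is a projective change of $S_\circ$, there is a function $\widetilde{f}$ on $E-\{0\}$, homogeneous of degree $1$, with $S_\nabla=S_\circ+\widetilde{f}\,C$. On the other hand, because $(E,\mathcal{F},\nabla)$ is a generalized Berwald Lie algebroid the endomorphism $h_\nabla$ is conservative, so Proposition \ref{14.31} (equation (\ref{very good})) applies and gives $S_\nabla=S_\circ+(d^\pounds_{i_{S_\nabla}t_\nabla}\mathcal{F})^\sharp$. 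Subtracting, I obtain $\widetilde{f}\,C=(d^\pounds_{i_{S_\nabla}t_\nabla}\mathcal{F})^\sharp$.

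Next I would apply $i_{(\cdot)}\omega$ to this identity. By the defining property $i_{\beta^\sharp}\omega=\beta$ the right-hand side becomes $d^\pounds_{i_{S_\nabla}t_\nabla}\mathcal{F}$, while the left-hand side becomes $\widetilde{f}\,i_C\omega=\widetilde{f}\,d^\pounds_J\mathcal{F}$ by (iii) of Proposition \ref{Best1}. Thus $\widetilde{f}\,d^\pounds_J\mathcal{F}=d^\pounds_{i_{S_\nabla}t_\nabla}\mathcal{F}$ as one-forms on $\pounds^\pi E$. The decisive step is to evaluate both sides on $S_\nabla$. On the left, using $JS_\nabla=C$ together with $\pounds^\pounds_C\mathcal{F}=2\mathcal{F}$, I get $(d^\pounds_J\mathcal{F})(S_\nabla)=d^\pounds\mathcal{F}(JS_\nabla)=\rho_\pounds(C)\mathcal{F}=2\mathcal{F}$; on the right, since $d^\pounds_{i_{S_\nabla}t_\nabla}\mathcal{F}=d^\pounds\mathcal{F}\circ(i_{S_\nabla}t_\nabla)$ and $(i_{S_\nabla}t_\nabla)(S_\nabla)=t_\nabla(S_\nabla,S_\nabla)=0$ by the skew-symmetry of the weak torsion, the right-hand side vanishes. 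Hence $2\widetilde{f}\,\mathcal{F}=0$, and since $\mathcal{F}>0$ on $\stackrel{\circ}{E}$ this forces $\widetilde{f}=0$, i.e. $S_\nabla=S_\circ$.

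Finally, for the Berwald conclusion I would note that $S_\nabla$, being the spray associated to $h_\nabla$ with $\mathcal{B}^\beta_\alpha=-\textbf{y}^\gamma(\Gamma^\beta_{\alpha\gamma}\circ\pi)$, has components $S^\alpha_\nabla=-\textbf{y}^\beta\textbf{y}^\gamma(\Gamma^\alpha_{\beta\gamma}\circ\pi)$ that are quadratic in the fibre coordinates with coefficients pulled back from $M$. The equality $S_\circ=S_\nabla$ therefore exhibits the canonical spray as the geodesic spray of a linear connection on $E$, which is exactly the Berwald property. To land precisely inside the stated definition I would pass to the symmetrization $\bar{\nabla}$ of $\nabla$ (the torsion-free connection with the same symmetric coefficients, hence the same spray $S_{\bar{\nabla}}=S_\circ$); being homogeneous and torsion free, $h_{\bar{\nabla}}$ coincides with the Berwald endomorphism generated by $S_\circ$, that is with the Barthel endomorphism $h_\circ$ (Theorem \ref{mainth} with vanishing weak torsion), so $h_{\bar{\nabla}}$ is conservative and $(E,\mathcal{F},\bar{\nabla})$ is a Berwald Lie algebroid. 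The main obstacle I anticipate is precisely this last point: $S_\nabla=S_\circ$ only pins down the symmetric part of $\nabla$, so the passage from ``$S_\circ$ is generated by a linear connection'' to a genuinely torsion-free base connection must be handled with care, whereas the core identity $S_\nabla=S_\circ$ itself reduces to the short contraction argument above.
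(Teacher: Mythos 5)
Your core argument for $S_\nabla=S_\circ$ is essentially the paper's own proof: both compare the projective-change expression $S_\nabla=S_\circ+\widetilde{f}C$ with the formula $S_\nabla=S_\circ+(d^\pounds_{i_{S_\nabla}t_\nabla}\mathcal{F})^\sharp$ of Proposition \ref{14.31}, contract with $\omega$ using $i_C\omega=d^\pounds_J\mathcal{F}$ from Proposition \ref{Best1}, and then evaluate the resulting one-form identity on the spray, where skew-symmetry (semibasicity) of $t_\nabla$ kills one side while homogeneity of $\mathcal{F}$ produces $2\widetilde{f}\mathcal{F}$ on the other, forcing $\widetilde{f}=0$.

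The one place you genuinely go beyond the paper is the final claim: the paper stops at $S_\nabla=S_\circ$ and never justifies ``consequently $(E,\mathcal{F})$ is a Berwald manifold.'' Your symmetrization argument supplies this correctly, with one caveat you already flag: on a Lie algebroid torsion-freeness means $\nabla_XY-\nabla_YX=[X,Y]_E$, so the correct replacement is $\bar{\nabla}_XY=\tfrac{1}{2}\left(\nabla_XY+\nabla_YX+[X,Y]_E\right)$ rather than a bare symmetrization of the coefficients; since the bracket term is antisymmetric it is annihilated by the contraction with $\textbf{y}^\alpha\textbf{y}^\gamma$, so $S_{\bar{\nabla}}=S_\nabla=S_\circ$. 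Then $h_{\bar{\nabla}}$ is homogeneous with vanishing weak torsion, hence by Theorem \ref{mainth} it coincides with the Berwald endomorphism generated by $S_\circ$, i.e.\ with the Barthel endomorphism, which is conservative; thus $(E,\mathcal{F},\bar{\nabla})$ is a Berwald Lie algebroid. This fills a real gap that the paper leaves implicit.
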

\begin{proof}
Since $S_\nabla$ is the projective change of $S_\circ$, then the exist a function $\widetilde{f}:E\rightarrow\mathbb{R}$ that is smooth on $E-\{0\}$ such that $S_\nabla=S_\circ+\widetilde{f}C$. Then using (\ref{very good}) we have $(d^\pounds_{i_{S_\nabla}t_\nabla}\mathcal{F})^\sharp=\widetilde{f}C$. Thus using (iii) of proposition \ref{Best1} we obtain
\[
i_{S_\nabla-S_\circ}\omega=i_{(d^\pounds_{i_{S_\nabla}t_\nabla}\mathcal{F})^\sharp}\omega=i_{\widetilde{f}C}\omega=\widetilde{f}i_C\omega
=\widetilde{f}d^\pounds_J\mathcal{F}.
\]
Also, we have
\[
i_{S_\nabla-S_\circ}\omega=d^\pounds_{i_{S_\nabla}t_\nabla}\mathcal{F}.
\]
Two above equation give us
\begin{equation}\label{very good1}
d^\pounds_{i_{S_\nabla}t_\nabla}\mathcal{F}=\widetilde{f}d^\pounds_J\mathcal{F}.
\end{equation}
Thus we have
\begin{align*}
d^\pounds_{i_{S_\nabla}t_\nabla}\mathcal{F}(S)&=d^\pounds\mathcal{F}(i_{S_\nabla}t_\nabla(S))
=d^\pounds\mathcal{F}(t_\nabla(S_\nabla, S))\\
&=d^\pounds\mathcal{F}(t_\nabla(S, S))=d^\pounds\mathcal{F}(0)=0.
\end{align*}
Also from (\ref{Finsler algebroid}) we have $d^\pounds_J\mathcal{F}(S)=\textbf{y}^\alpha\frac{\partial\mathcal{F}}{\partial \textbf{y}^\alpha}=2\mathcal{F}$. Setting this equation and the above equation in (\ref{very good1}) we deduce $\widetilde{f}\mathcal{F}=0$ and consequently $\widetilde{f}=0$. Therefore we have $S_\nabla=S$.
\end{proof}
%---------------------------------------------------------------------------------------
%---------------------------------------------------------------------------------------
\subsection{Wagner-Ichijy\={o} connection}
Let $\nabla$ be a linear connection on $E$ and $f$ be a smooth function on $M$. If $(\stackrel{{\begin{tiny}\nabla\end{tiny}}}{D}, h_\nabla)$ is a Ichijy\={o} connection such that the $h$-horizontal torsion of $\stackrel{{\begin{tiny}\nabla\end{tiny}}}{D}$ satisfies in
\begin{equation}\label{Wagner}
\stackrel{{\begin{tiny}\nabla\end{tiny}}}{A}=d^\pounds f^\vee\wedge h_\nabla=d^\pounds f^\vee\otimes h_\nabla-h_\nabla\otimes d^\pounds f^\vee,
\end{equation}
then we call $(\stackrel{{\begin{tiny}\nabla\end{tiny}}}{D}, h_\nabla, f)$ the \textit{Wagner-Ichijy\={o} connection generated by $\nabla$}.

From (\ref{Wagner}) we deduce that $\stackrel{{\begin{tiny}\nabla\end{tiny}}}{A}(\mathcal{V}_\alpha, \mathcal{V}_\beta)=\stackrel{{\begin{tiny}\nabla\end{tiny}}}{A}(\delta_\alpha, \mathcal{V}_\beta)=0$ and
\begin{align}\label{Wag}
\stackrel{{\begin{tiny}\nabla\end{tiny}}}{A}(\delta_\alpha, \delta_\beta)&=d^\pounds f^\vee(\delta_\alpha)h_\nabla(\delta_\beta)-h_\nabla(\delta_\alpha)d^\pounds f^\vee(\delta_\beta)\nonumber\\
&=\rho_\pounds(\delta_\alpha)(f\circ\pi)\delta_\beta-\rho_\pounds(\delta_\beta)(f\circ\pi)\delta_\alpha\nonumber\\
&=\Big((\rho^i_\alpha\circ\pi)\frac{\partial(f\circ\pi)}{\partial\textbf{x}^i}\delta^\gamma_\beta
-(\rho^i_\beta\circ\pi)\frac{\partial(f\circ\pi)}{\partial\textbf{x}^i}\delta^\gamma_\alpha\Big)\delta_\gamma.
\end{align}
\begin{lemma}\label{14.34}
Let $(\stackrel{{\begin{tiny}\nabla\end{tiny}}}{D}, h_\nabla, f)$ be a Wagner-Ichijy\={o} connection on Finsler algebroid $(E, \mathcal{F})$. Then we have
\begin{align*}
T_\nabla(X, Y)&=d^Ef(X)Y-d^Ef(Y)X,\ \ \ \forall X, Y\in\Gamma(E),\\
t_\nabla&=d^\pounds f^\vee\wedge J=d^\pounds f^\vee\otimes J-J\otimes d^\pounds f^\vee,\\
i_{S_\nabla}t_\nabla&=f^cJ-d^\pounds f^\vee\otimes C.
\end{align*}
\end{lemma}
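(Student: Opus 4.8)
The plan is to derive all three identities from one observation: for a Wagner--Ichijō connection the coefficients of the weak torsion $t_\nabla$ of $h_\nabla$ are completely pinned down by the defining relation (\ref{Wagner}). Comparing the coordinate form (\ref{Wag}) of $\stackrel{\nabla}{A}$ with the identity $\stackrel{\nabla}{A}(\delta_\alpha,\delta_\beta)=t^\gamma_{\alpha\beta}\delta_\gamma$ coming from (\ref{Wag1}) and equating coefficients in the basis $\{\delta_\gamma\}$, I obtain
\[
t^\gamma_{\alpha\beta}=f_\alpha\delta^\gamma_\beta-f_\beta\delta^\gamma_\alpha,\qquad f_\alpha:=(\rho^i_\alpha\circ\pi)\frac{\partial(f\circ\pi)}{\partial\textbf{x}^i}=d^\pounds f^\vee(\delta_\alpha),
\]
where $\delta^\gamma_\beta$ is the Kronecker symbol. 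Here $f_\alpha$ is precisely the coefficient appearing in $d^\pounds f^\vee=f_\alpha\mathcal{X}^\alpha$, and a direct comparison with (\ref{cf}) gives $\textbf{y}^\alpha f_\alpha=f^c$. These two elementary facts are what turn the coordinate answers into the index-free right-hand sides.

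For the first identity I would descend from $E$ to $M$. Since $t^\gamma_{\alpha\beta}=(\Gamma^\gamma_{\alpha\beta}-\Gamma^\gamma_{\beta\alpha}-L^\gamma_{\alpha\beta})\circ\pi$ and, by (\ref{1}), $f_\alpha=(d^Ef(e_\alpha))\circ\pi$ are both pullbacks of functions on $M$, injectivity of $\pi^\ast$ lets me strip the $\circ\pi$ to get $\Gamma^\gamma_{\alpha\beta}-\Gamma^\gamma_{\beta\alpha}-L^\gamma_{\alpha\beta}=d^Ef(e_\alpha)\delta^\gamma_\beta-d^Ef(e_\beta)\delta^\gamma_\alpha$, that is $T_\nabla(e_\alpha,e_\beta)=d^Ef(e_\alpha)e_\beta-d^Ef(e_\beta)e_\alpha$. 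Both $T_\nabla$ and $(X,Y)\mapsto d^Ef(X)Y-d^Ef(Y)X$ are $C^\infty(M)$-bilinear (the torsion of a linear connection on a Lie algebroid is tensorial, and $d^Ef\in\Gamma(E^\ast)$), so agreement on the basis $\{e_\alpha\}$ forces agreement for all $X,Y\in\Gamma(E)$.

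The second identity is a substitution into the local expression (\ref{wt}) of the weak torsion: inserting $t^\gamma_{\alpha\beta}=f_\alpha\delta^\gamma_\beta-f_\beta\delta^\gamma_\alpha$ and contracting the Kronecker symbols, the two resulting terms coincide after relabelling and sum to $f_\alpha\mathcal{X}^\alpha\wedge\mathcal{X}^\gamma\otimes\mathcal{V}_\gamma$; using $d^\pounds f^\vee=f_\alpha\mathcal{X}^\alpha$ and $J=\mathcal{V}_\gamma\otimes\mathcal{X}^\gamma$ from (\ref{vertical00}), this is exactly $d^\pounds f^\vee\wedge J$. For the third identity I would invoke (\ref{4.5}): because $t_\nabla$ is semibasic we have $i_{\mathcal{V}_\alpha}t_\nabla=0$, so only the $\textbf{y}^\alpha\mathcal{X}_\alpha$-part of the semispray $S_\nabla$ contributes and $i_{S_\nabla}t_\nabla=\tfrac12 t^\gamma_{\alpha\beta}(\textbf{y}^\alpha\mathcal{X}^\beta-\textbf{y}^\beta\mathcal{X}^\alpha)\otimes\mathcal{V}_\gamma$. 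Substituting the coefficient and using $\textbf{y}^\alpha f_\alpha=f^c$ collapses this to $(f^c\mathcal{X}^\gamma-\textbf{y}^\gamma f_\beta\mathcal{X}^\beta)\otimes\mathcal{V}_\gamma$; recognizing $\mathcal{X}^\gamma\otimes\mathcal{V}_\gamma=J$ and $\textbf{y}^\gamma\mathcal{V}_\gamma=C$ from (\ref{Liouville}) yields $f^cJ-d^\pounds f^\vee\otimes C$.

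I expect the only genuine friction to be bookkeeping rather than depth. First, the descent step in the first identity must be justified carefully (that $t^\gamma_{\alpha\beta}$ and $f_\alpha$ are pullbacks, so the relation may be read off on $M$). Second, one must match the paper's ordering of tensor factors so that $\mathcal{X}^\gamma\otimes\mathcal{V}_\gamma$, $J$, and $d^\pounds f^\vee\otimes C$ are interpreted as the same $(1,1)$-tensors/operators. Once the coefficient identity extracted from (\ref{Wagner}) is in hand, all three formulas follow by routine substitution.
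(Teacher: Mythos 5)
Your proof is correct and takes essentially the same route as the paper's: both arguments rest on comparing the Wagner condition in its coordinate form (\ref{Wag}) with the identity (\ref{Wag1}), and then reduce all three formulas to routine bookkeeping using (\ref{wt}), (\ref{4.5}), the semibasic character of $t_\nabla$, and $\textbf{y}^\alpha f_\alpha=f^c$. The only cosmetic difference is that you extract the scalar coefficients $t^\gamma_{\alpha\beta}=f_\alpha\delta^\gamma_\beta-f_\beta\delta^\gamma_\alpha$ and substitute them everywhere, whereas the paper stays at the level of sections, recovering $t_\nabla$ from $\stackrel{{\begin{tiny}\nabla\end{tiny}}}{A}=F_\nabla t_\nabla$ by applying $F_\nabla$ together with $F_\nabla\circ h_\nabla=-J$ and $F_\nabla\circ F_\nabla=-1$.
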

\begin{proof}
Using (\ref{Wag}) we obtain
\begin{align*}
\stackrel{{\begin{tiny}\nabla\end{tiny}}}{A}(\delta_\alpha, \delta_\beta)&
=\Big((\rho^i_\alpha\frac{\partial f}{\partial x^i}\delta^\gamma_\beta
-\rho^i_\beta\frac{\partial f}{\partial x^i}\delta^\gamma_\alpha)e_\gamma\Big)^h=\Big(\rho(e_\alpha)(f)e_\beta-\rho(e_\beta)(f)e_\alpha\Big)^h\\
&=\Big(d^Ef(e_\alpha)e_\beta-d^Ef(e_\beta)e_\alpha\Big)^h.
\end{align*}
Also, from (\ref{Wag1}) we have $\stackrel{{\begin{tiny}\nabla\end{tiny}}}{A}(\delta_\alpha, \delta_\beta)=(T_\nabla(e_\alpha, e_\beta))^h$. Therefore we obtain
\[
T_\nabla(e_\alpha, e_\beta)=d^Ef(e_\alpha)e_\beta-d^Ef(e_\beta)e_\alpha,
\]
that gives us the first equation of the lemma. Also, from (\ref{Wag1}) and (\ref{Wag}) we obtain
\[
F_\nabla t_\nabla(\delta_\alpha, \delta_\beta)=\stackrel{{\begin{tiny}\nabla\end{tiny}}}{A}(\delta_\alpha, \delta_\beta)=d^\pounds f^\vee(\delta_\alpha)h_\nabla(\delta_\beta)-h_\nabla(\delta_\beta)d^\pounds f^\vee(\delta_\alpha).
\]
Applying $F_\nabla$ to the above equation and using $F_\nabla h_\nabla=-J$ and $F_\nabla F_\nabla=-1$ give us
\[
t_\nabla(\delta_\alpha, \delta_\beta)=d^\pounds f^\vee(\delta_\alpha)J(\delta_\beta)-J(\delta_\alpha)d^\pounds f^\vee(\delta_\beta),
\]
which gives us the second equation of the lemma. Using the above equation and (\ref{cf}) we get
\begin{align*}
i_{S_\nabla}t_\nabla(\delta_\beta)&=t_\nabla(S_\nabla, \delta_\beta)=\textbf{y}^\alpha t_\nabla(\delta_\alpha, \delta_\beta)=\textbf{y}^\alpha d^\pounds f^\vee(\delta_\alpha)\mathcal{V}_\beta-\textbf{y}^\alpha\mathcal{V}_\alpha d^\pounds f^\vee(\delta_\beta)\\
&=\textbf{y}^\alpha \rho_\pounds(\delta_\alpha)(f^\vee)\mathcal{V}_\beta-Cd^\pounds f^\vee(\delta_\beta)\\
&=\textbf{y}^\alpha(\rho^i_\alpha\circ\pi)\frac{\partial(f\circ\pi)}{\partial\textbf{x}^i}J(\delta_\beta)-Cd^\pounds f^\vee(\delta_\beta)\\
&=f^cJ(\delta_\beta)-d^\pounds f^\vee(\delta_\beta)C,
\end{align*}
which gives us the third equation of the lemma.
\end{proof}
\begin{defn}
Let $(E, \mathcal{F}, \nabla)$ be a generalized Berwald Lie algebroid and $f$ be a smooth function on $E$. Then $(E, \mathcal{F}, \nabla, f)$ is called Wagner Lie algebroid if the torsion of linear connection $\nabla$ satisfies in the following relation
\begin{equation}\label{enteha}
T_\nabla(X, Y)=d^E f(X)Y-d^E f(Y)X,\ \ \ \forall X, Y\in \Gamma(E).
\end{equation}
\end{defn}
\begin{theorem}
Let $(E, \mathcal{F})$ be a Lie algebroid, $f$ be a smooth function on $M$ and $\nabla$ be a linear connection on $E$. Then the following items are equivalent:

(i)\ $(E, \mathcal{F}, \nabla, f)$ is a Wagner Lie algebroid.

(ii)\ Wagner-Ichijy\={o} connection $(\stackrel{{\begin{tiny}\nabla\end{tiny}}}{D}, h_\nabla, f)$  generated by $\nabla$, is $h$-metrical.

(iii)\ Horizontal endomorphism $h_\nabla$ satisfies in the following
\begin{equation}\label{akhar3}
h_\nabla=h_\circ+f^c J-\mathcal{F}[J, \text{grad}f^\vee]_\pounds-d^\pounds_J\mathcal{F}\otimes\text{grad}f^\vee.
\end{equation}
\end{theorem}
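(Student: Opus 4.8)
The plan is to prove the two equivalences $(i)\Leftrightarrow(ii)$ and $(i)\Leftrightarrow(iii)$ separately, resting everything on one bridge: by \ref{Wag1} together with \ref{Wag}, the torsion identity \ref{enteha} for $\nabla$ holds if and only if the $h$-horizontal torsion of the Ichijy\={o} connection satisfies $\stackrel{\nabla}{A}=d^\pounds f^\vee\wedge h_\nabla$, i.e.\ exactly \ref{Wagner}. Thus ``$(E,\mathcal{F},\nabla,f)$ is Wagner'' is the same as ``$(E,\mathcal{F},\nabla)$ is generalized Berwald and the Ichijy\={o} connection is the Wagner-Ichijy\={o} connection''. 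This reformulation is what lets me feed $(i)$ into Lemma \ref{14.34} and Proposition \ref{14.31}.

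First I would dispatch $(i)\Leftrightarrow(ii)$. A Wagner Lie algebroid is generalized Berwald by definition, and Proposition \ref{khob} states that this is equivalent to the Ichijy\={o} connection being $h_\nabla$-metrical. Since, by the bridge, the Wagner-Ichijy\={o} structure is exactly the torsion identity \ref{enteha}, the statement ``the Wagner-Ichijy\={o} connection is $h$-metrical'' combines the torsion identity with the generalized Berwald ($h$-metrical) property, which is precisely $(i)$; both implications are then immediate.

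The substantive step is $(i)\Rightarrow(iii)$. Because $(i)$ is generalized Berwald, Proposition \ref{14.31} gives $h_\nabla=h_\circ+\tfrac12 i_{S_\nabla}t_\nabla+\tfrac12[J,(d^\pounds_{i_{S_\nabla}t_\nabla}\mathcal{F})^\sharp]^{F-N}_\pounds$. I would insert $i_{S_\nabla}t_\nabla=f^cJ-d^\pounds f^\vee\otimes C$ from Lemma \ref{14.34}, contract $d^\pounds\mathcal{F}$ against it using $\rho_\pounds(C)\mathcal{F}=2\mathcal{F}$ to obtain $d^\pounds_{i_{S_\nabla}t_\nabla}\mathcal{F}=f^cd^\pounds_J\mathcal{F}-2\mathcal{F}\,d^\pounds f^\vee$, and then raise the index with $\sharp$; here $i_C\omega=d^\pounds_J\mathcal{F}$ from Proposition \ref{Best1} gives $(d^\pounds_J\mathcal{F})^\sharp=C$, while $(d^\pounds f^\vee)^\sharp=\mathrm{grad}\,f^\vee$ by definition of the gradient, so $(d^\pounds_{i_{S_\nabla}t_\nabla}\mathcal{F})^\sharp=f^cC-2\mathcal{F}\,\mathrm{grad}\,f^\vee$. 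Substituting and expanding the Fr\"olicher--Nijenhuis bracket (with its Leibniz rule for function coefficients), the simplification uses $JC=0$ and $[J,C]^{F-N}_\pounds=J$ from \ref{Liover}, the verticality $J\,\mathrm{grad}\,f^\vee=0$, and the coordinate identity $d^\pounds_Jf^c=d^\pounds f^\vee$ (both sides equal $(\rho^i_\alpha\tfrac{\partial f}{\partial x^i})\circ\pi$ on $\mathcal{X}_\alpha$ and vanish on $\mathcal{V}_\alpha$); the terms along $C$ cancel and the remainder is exactly \ref{akhar3}.

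Finally, for $(iii)\Rightarrow(i)$ I would first show that \ref{akhar3} forces $h_\nabla$ to be conservative, which unlocks Proposition \ref{14.31}. Contracting $d^\pounds\mathcal{F}$ against \ref{akhar3}, the $f^cJ$ contribution cancels the $d^\pounds_J\mathcal{F}\otimes\mathrm{grad}\,f^\vee$ contribution because $\rho_\pounds(\mathrm{grad}\,f^\vee)\mathcal{F}=f^c$, leaving $d^\pounds_{h_\nabla}\mathcal{F}=-\mathcal{F}\,(d^\pounds\mathcal{F}\circ[J,\mathrm{grad}\,f^\vee]^{F-N}_\pounds)$; the bracket factor vanishes identically, since \ref{F2} yields $d^\pounds\mathcal{F}\circ[J,\mathrm{grad}\,f^\vee]^{F-N}_\pounds=\pounds^\pounds_J(\rho_\pounds(\mathrm{grad}\,f^\vee)\mathcal{F})-\pounds^\pounds_{\mathrm{grad}\,f^\vee}(d^\pounds_J\mathcal{F})=d^\pounds f^\vee-i_{\mathrm{grad}\,f^\vee}\omega=0$, where I use $d^\pounds d^\pounds_J\mathcal{F}=\omega$ and $i_{\mathrm{grad}\,f^\vee}\omega=d^\pounds f^\vee$. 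With $h_\nabla$ conservative the algebroid is generalized Berwald, Proposition \ref{14.31} applies, and matching its right-hand side against \ref{akhar3} recovers $i_{S_\nabla}t_\nabla=f^cJ-d^\pounds f^\vee\otimes C$; since the components of $t_\nabla$ are independent of $\textbf{y}$, differentiating in $\textbf{y}^\alpha$ reconstructs $t_\nabla=d^\pounds f^\vee\wedge J$, and \ref{Wag1} then returns the torsion identity \ref{enteha}. I expect the genuine obstacle to be this final reconstruction: extracting $i_{S_\nabla}t_\nabla$ from the implicit Proposition \ref{14.31} relation requires inverting the Fr\"olicher--Nijenhuis term, so I would either verify that $K\mapsto\tfrac12 K+\tfrac12[J,(d^\pounds_K\mathcal{F})^\sharp]^{F-N}_\pounds$ is injective on semibasic tensors or, as a cleaner fallback, invoke the uniqueness Theorem \ref{12.18} for conservative endomorphisms sharing a strong torsion to identify $h_\nabla$ with the endomorphism on the right of \ref{akhar3}.
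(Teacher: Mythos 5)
Your handling of $(i)\Leftrightarrow(ii)$ and of $(i)\Rightarrow(iii)$ is correct and coincides with the paper's own proof: Proposition \ref{khob} plus the torsion identification of Lemma \ref{14.34}/(\ref{Wag1}) for the first equivalence, and Proposition \ref{14.31}, the computation $(d^\pounds_{i_{S_\nabla}t_\nabla}\mathcal{F})^\sharp=f^cC-2\mathcal{F}\,\text{grad}f^\vee$, the Fr\"olicher--Nijenhuis Leibniz expansions and $d^\pounds_Jf^c=d^\pounds f^\vee$ for the implication. Your conservativity argument inside $(iii)\Rightarrow(i)$ is also correct, and it is nicer than the paper's: the paper verifies $d^\pounds_{h_\nabla}\mathcal{F}=0$ by expanding the coefficients in (\ref{enteha1}) and using homogeneity, whereas you get it intrinsically from $\rho_\pounds(\text{grad}f^\vee)\mathcal{F}=f^c$, conservativity of the Barthel endomorphism, and $d^\pounds\mathcal{F}\circ[J,\text{grad}f^\vee]^{F-N}_\pounds=0$.

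The genuine gap is the last step of $(iii)\Rightarrow(i)$, and neither of your two proposed repairs works as stated. The map $K\mapsto\tfrac12 K+\tfrac12[J,(d^\pounds_K\mathcal{F})^\sharp]^{F-N}_\pounds$ is \emph{not} injective on semibasic tensors. Indeed, take any vertical section $Z=Z^{\bar{\alpha}}\mathcal{V}_\alpha$ such that $Z^{\bar{\alpha}}\frac{\partial\mathcal{F}}{\partial\textbf{y}^\alpha}$ (i.e.\ $\mathcal{G}(Z,C)$) is independent of $\textbf{y}$, for instance $Z$ pointwise $\mathcal{G}$-orthogonal to $C$. Since $[J,Z]^{F-N}_\pounds(\mathcal{X}_\beta)=\frac{\partial Z^{\bar{\alpha}}}{\partial\textbf{y}^\beta}\mathcal{V}_\alpha$ and $[J,Z]^{F-N}_\pounds(\mathcal{V}_\beta)=0$, the tensor $K:=-[J,Z]^{F-N}_\pounds$ is semibasic, and $d^\pounds_K\mathcal{F}(\mathcal{X}_\beta)=-\frac{\partial Z^{\bar{\alpha}}}{\partial\textbf{y}^\beta}\frac{\partial\mathcal{F}}{\partial\textbf{y}^\alpha}=Z^{\bar{\alpha}}\mathcal{G}_{\alpha\beta}-\frac{\partial}{\partial\textbf{y}^\beta}\bigl(\mathcal{G}(Z,C)\bigr)=Z^{\bar{\alpha}}\mathcal{G}_{\alpha\beta}$, so $(d^\pounds_K\mathcal{F})^\sharp=Z$ and $K$ lies in the kernel; choosing $Z$ with genuine $\textbf{y}$-dependence gives $K\neq0$. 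The fallback via Theorem \ref{12.18} does not apply either: hypothesis (iii) already asserts that $h_\nabla$ \emph{equals} the right-hand side of (\ref{akhar3}), so there is no second conservative endomorphism with known strong torsion to compare it with --- what is missing is precisely knowledge of the torsion of $h_\nabla$, which is the thing to be proved.

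The gap is closable, but only with an idea your proposal does not contain. Either follow the paper: from (\ref{akhar3}) write the coefficient identity (\ref{enteha1}), differentiate it in $\textbf{y}^\mu$, antisymmetrize, and use torsion-freeness of the Barthel endomorphism to obtain $t_\nabla(\delta_\mu,\delta_\alpha)=(\rho^i_\mu\circ\pi)\frac{\partial(f\circ\pi)}{\partial\textbf{x}^i}\mathcal{V}_\alpha-(\rho^i_\alpha\circ\pi)\frac{\partial(f\circ\pi)}{\partial\textbf{x}^i}\mathcal{V}_\mu$, after which (\ref{Wag1}) yields (\ref{enteha}) with no inversion at all. Or else prove injectivity \emph{restricted} to the class actually needed, namely tensors of the form $i_St$ with $t$ semibasic, skew, and with $\textbf{y}$-independent coefficients (which weak torsions of connection-generated endomorphisms are): there a kernel element satisfies $\textbf{y}^\gamma t^\alpha_{\gamma\beta}=-\frac{\partial Z^{\bar{\alpha}}}{\partial\textbf{y}^\beta}$, hence $t^\alpha_{\mu\beta}=-\frac{\partial^2Z^{\bar{\alpha}}}{\partial\textbf{y}^\mu\partial\textbf{y}^\beta}$ is symmetric in $(\mu,\beta)$, and skewness forces $t=0$. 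As written, however, the direction $(iii)\Rightarrow(i)$ is not proved.
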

\begin{proof}
From proposition \ref{khob} the equivalence of (i) and (ii) is obvious. Thus it is sufficient to prove that (i) is equivalent to (iii). Let (i) holds. Since  $(E, \mathcal{F}, \nabla, f)$ is a Wagner Lie algebroid, then $(E, \mathcal{F}, \nabla)$ is a generalized Berwald Lie algebroid and consequently from proposition \ref{14.31} we have the formula (\ref{very good20}) for $h_\nabla$. Using the third equation of lemma \ref{14.34} and (\ref{gradian}) we obtain
\begin{align}\label{akhar}
(d^\pounds_{i_{S_\nabla}t_\nabla}\mathcal{F})(\delta_\beta)&=(d^\pounds\mathcal{F}\circ i_{S_\nabla}t_\nabla)(\delta_\beta)=d^\pounds\mathcal{F}(t_\nabla(S_\nabla, \delta_\beta))\nonumber\\
&=d^\pounds\mathcal{F}(f^cJ(\delta_\beta)-d^\pounds f^\vee(\delta_\beta)C)\nonumber\\
&=f^cd^\pounds\mathcal{F}(J(\delta_\beta))-d^\pounds f^\vee(\delta_\beta)d^\pounds\mathcal{F}(C)\nonumber\\
&=f^cd^\pounds\mathcal{F}(J(\delta_\beta))-(i_{\text{grad}f^\vee}\omega)(\delta_\beta)d^\pounds\mathcal{F}(C).
\end{align}
Since $\mathcal{F}$ is homogenous of degree 2, then we deduce
\[
d^\pounds\mathcal{F}(C)=\rho_\pounds(C)(\mathcal{F})=\textbf{y}^\alpha\frac{\partial \mathcal{F}}{\partial \textbf{y}^\alpha}=2\mathcal{F}.
\]
Also, from (iii) of proposition \ref{Best1} we get
\[
d^\pounds\mathcal{F}(J(\delta_\beta))=(d^\pounds_J\mathcal{F})(\delta_\beta)=(i_C\omega)(\delta_\beta).
\]
Setting two above equations in (\ref{akhar}) we obtain $d^\pounds_{i_{S_\nabla}t_\nabla}\mathcal{F}=i_{f^c C-2\mathcal{F}\text{grad}f^v}\omega$, which gives us
\begin{equation}\label{mae}
(d^\pounds_{i_{S_\nabla}t_\nabla}\mathcal{F})^\sharp=f^c C-2\mathcal{F}\text{grad}f^\vee.
\end{equation}
Setting the third equation of lemma \ref{14.34} and the above equation in (\ref{very good20}) we get
\begin{equation}\label{akhar1}
h_\nabla=h_\circ+\frac{1}{2}(f^cJ-d^\pounds f^\vee\otimes C)+\frac{1}{2}[J, f^cC]_\pounds-[J, \mathcal{F}\text{grad}f^\vee]_\pounds.
\end{equation}
Direct calculation we can obtain the following equations
\begin{align*}
[J, f^c C]_\pounds&=f^c J+d^\pounds_J f^c\otimes C,\\
[J, \mathcal{F}\text{grad}f^\vee]_\pounds&=\mathcal{F}[J, \text{grad}f^\vee]_\pounds+d^\pounds_J\mathcal{F}\otimes\text{grad}f^\vee.
\end{align*}
Setting two above equations in (\ref{akhar1}) give us
 \begin{align}\label{akhar2}
h_\nabla&=h_\circ+\frac{1}{2}(f^cJ-d^\pounds f^\vee\otimes C)+\frac{1}{2}f^c J+\frac{1}{2}d^\pounds_J f^c\otimes C\nonumber\\
&\ \ \ -\mathcal{F}[J, \text{grad}f^\vee]_\pounds-d^\pounds_J\mathcal{F}\otimes\text{grad}f^\vee.
\end{align}
But we have
\[
(d_Jf^c)(\delta_\alpha)=df^c(\mathcal{V}_\alpha)=\frac{\partial f^c}{\partial \textbf{y}^\alpha}=(\rho^i_\alpha\circ\pi)\frac{\partial(f\circ\pi)}{\partial\textbf{x}^i}=(d^\pounds f^\vee)(\delta_\alpha),
\]
and $(d_Jf^c)(\mathcal{V}_\alpha)=0=(d^\pounds f^\vee)(\mathcal{V}_\alpha)$. Thus we have $d_Jf^c=d^\pounds f^\vee$. Setting this equation in (\ref{akhar2}) we obtain (\ref{akhar3}), i.e., (iii) holds. Now we let (iii) holds and we prove (i). Let
\[
h_\circ=(\mathcal{X}_\alpha+\mathcal{B}^\beta_\alpha\mathcal{V}_\beta)\otimes\mathcal{X}^\alpha,\ \ \ h_\nabla=(\mathcal{X}_\alpha+\widetilde{\mathcal{B}}^\beta_\alpha\mathcal{V}_\beta)\otimes\mathcal{X}^\alpha.
\]
Then using (\ref{gradian4}) and (\ref{akhar3}) we can obtain
\begin{align}\label{enteha1}
\widetilde{\mathcal{B}}^\beta_\alpha&=\mathcal{B}^\beta_\alpha+\textbf{y}^\gamma(\rho^i_\gamma\circ\pi)\frac{\partial(f\circ\pi)}{\partial\textbf{x}^i}\delta^\beta_\alpha
-\mathcal{F}\frac{\partial \mathcal{G}^{\beta\gamma}}{\partial \textbf{y}^\alpha}(\rho^i_\gamma\circ\pi)\frac{\partial(f\circ\pi)}{\partial\textbf{x}^i}\nonumber\\
&\ \ \ -\frac{\partial\mathcal{F}}{\partial \textbf{y}^\alpha}\mathcal{G}^{\beta\gamma}(\rho^i_\gamma\circ\pi)\frac{\partial(f\circ\pi)}{\partial\textbf{x}^i}.
\end{align}
Since $h_\circ$ is conservative, then using (\ref{cons}) we have $(\rho^i_\alpha\circ\pi)\frac{\partial\mathcal{F}}{\partial\textbf{x}^i}+\mathcal{B}^\beta_\alpha\frac{\partial\mathcal{F}}{\partial\textbf{y}^\beta}=0$. Thus using the above equation we get
\begin{align*}
(\rho^i_\alpha\circ\pi)\frac{\partial\mathcal{F}}{\partial\textbf{x}^i}+\widetilde{\mathcal{B}}^\beta_\alpha\frac{\partial\mathcal{F}}{\partial\textbf{y}^\beta}&
=\textbf{y}^\gamma(\rho^i_\gamma\circ\pi)\frac{\partial(f\circ\pi)}{\partial\textbf{x}^i}
\frac{\partial\mathcal{F}}{\partial\textbf{y}^\alpha}-\mathcal{F}\frac{\partial \mathcal{G}^{\beta\gamma}}{\partial \textbf{y}^\alpha}(\rho^i_\gamma\circ\pi)\frac{\partial(f\circ\pi)}{\partial\textbf{x}^i}\frac{\partial\mathcal{F}}{\partial\textbf{y}^\beta}\nonumber\\
&\ \ \ -\frac{\partial\mathcal{F}}{\partial \textbf{y}^\alpha}\mathcal{G}^{\beta\gamma}(\rho^i_\gamma\circ\pi)\frac{\partial(f\circ\pi)}{\partial\textbf{x}^i}
\frac{\partial\mathcal{F}}{\partial\textbf{y}^\beta}.
\end{align*}
Using (i) of (\ref{2eq}) in the above equation, the sum of the first and third sentences of the right side of the above equation vanishes. Thus the above equation reduce to
\begin{align*}
(\rho^i_\alpha\circ\pi)\frac{\partial\mathcal{F}}{\partial\textbf{x}^i}+\widetilde{B}^\beta_\alpha\frac{\partial\mathcal{F}}{\partial\textbf{y}^\beta}&
=-\mathcal{F}\frac{\partial \mathcal{G}^{\beta\gamma}}{\partial \textbf{y}^\alpha}(\rho^i_\gamma\circ\pi)\frac{\partial(f\circ\pi)}{\partial\textbf{x}^i}\frac{\partial\mathcal{F}}{\partial\textbf{y}^\beta}.
\end{align*}
But from (\ref{2eq}) we deduce
\[
\frac{\partial \mathcal{G}^{\beta\gamma}}{\partial \textbf{y}^\alpha}\frac{\partial\mathcal{F}}{\partial\textbf{y}^\beta}=\textbf{y}^\lambda\frac{\partial \mathcal{G}^{\beta\gamma}}{\partial \textbf{y}^\alpha}\mathcal{G}_{\lambda\beta}=-\textbf{y}^\lambda\frac{\partial \mathcal{G}_{\lambda\beta}}{\partial \textbf{y}^\alpha}\mathcal{G}^{\beta\gamma}=0.
\]
Two above equations give us $(\rho^i_\alpha\circ\pi)\frac{\partial\mathcal{F}}{\partial\textbf{x}^i}+\widetilde{B}^\beta_\alpha\frac{\partial\mathcal{F}}{\partial\textbf{y}^\beta}=0$. Thus $h_\nabla$ is conservative and consequently $E, \mathcal{F}, \nabla$ is a generalized Berwald Lie algebroid.
Now we show that the torsion of $\nabla$ satisfies in (\ref{enteha}).
Differentiating of (\ref{enteha1}) with respect to $\textbf{y}^\mu$ we obtain
\begin{align*}
\frac{\partial\widetilde{\mathcal{B}}^\beta_\alpha}{\partial\textbf{y}^\mu}&=\frac{\partial \mathcal{B}^\beta_\alpha}{\partial\textbf{y}^\mu}+(\rho^i_\mu\circ\pi)\frac{\partial(f\circ\pi)}{\partial\textbf{x}^i}\delta^\beta_\alpha
-\frac{\partial\mathcal{F}}{\partial\textbf{y}^\mu}\frac{\partial \mathcal{G}^{\beta\gamma}}{\partial \textbf{y}^\alpha}(\rho^i_\gamma\circ\pi)\frac{\partial(f\circ\pi)}{\partial\textbf{x}^i}\nonumber\\
&\ \ \ -\mathcal{F}\frac{\partial^2 \mathcal{G}^{\beta\gamma}}{\partial\textbf{y}^\mu\partial \textbf{y}^\alpha}(\rho^i_\gamma\circ\pi)\frac{\partial(f\circ\pi)}{\partial\textbf{x}^i}-\frac{\partial^2\mathcal{F}}{\partial\textbf{y}^\mu\partial \textbf{y}^\alpha}\mathcal{G}^{\beta\gamma}(\rho^i_\gamma\circ\pi)\frac{\partial(f\circ\pi)}{\partial\textbf{x}^i}\nonumber\\
&\ \ \ -\frac{\partial\mathcal{F}}{\partial \textbf{y}^\alpha}\frac{\partial\mathcal{G}^{\beta\gamma}}{\partial\textbf{y}^\mu}(\rho^i_\gamma\circ\pi)\frac{\partial(f\circ\pi)}{\partial\textbf{x}^i}.
\end{align*}
Rechanging $\alpha$ and $\mu$ in the above equation we can obtain $\frac{\partial\widetilde{\mathcal{B}}^\beta_\mu}{\partial\textbf{y}^\alpha}$. Therefore we can obtain
\begin{align*}
\widetilde{t}^\beta_{\mu\alpha}&=\frac{\partial\widetilde{\mathcal{B}}^\beta_\alpha}{\partial\textbf{y}^\mu}-\frac{\partial\widetilde{\mathcal{B}}^\beta_\mu}{\partial\textbf{y}^\alpha}-(L^\beta_{\mu\alpha}\circ\pi)=\frac{\partial\mathcal{B}^\beta_\alpha}{\partial\textbf{y}^\mu}-\frac{\partial\mathcal{B}^\beta_\mu}{\partial\textbf{y}^\alpha}-(L^\beta_{\mu\alpha}\circ\pi)+(\rho^i_\mu\circ\pi)\frac{\partial(f\circ\pi)}{\partial\textbf{x}^i}\delta^\beta_\alpha\nonumber\\
&\ \ \ -(\rho^i_\alpha\circ\pi)\frac{\partial(f\circ\pi)}{\partial\textbf{x}^i}\delta^\beta_\mu=t^\beta_{\mu\alpha}+(\rho^i_\mu\circ\pi)\frac{\partial(f\circ\pi)}{\partial\textbf{x}^i}\delta^\beta_\alpha-(\rho^i_\alpha\circ\pi)\frac{\partial(f\circ\pi)}{\partial\textbf{x}^i}\delta^\beta_\mu,
\end{align*}
where $\widetilde{t}^\beta_{\mu\alpha}$ are the coefficients of weak torsion $t_\nabla$ of $h_\nabla$ and $t^\beta_{\mu\alpha}$ are the coefficients of weak torsion $t_\circ$ of Barthel endomorphism $h_\circ$ given by (\ref{wt1}). But the Barthel endomorphism is torsion free. So $t^\beta_{\mu\alpha}=0$. Therefore from the above equation we obtain
\begin{equation}
t_\nabla(\delta_\mu, \delta_\alpha)=\widetilde{t}^\beta_{\mu\alpha}\mathcal{V}_\beta=(\rho^i_\mu\circ\pi)\frac{\partial(f\circ\pi)}{\partial\textbf{x}^i}\mathcal{V}_\alpha-(\rho^i_\alpha\circ\pi)\frac{\partial(f\circ\pi)}{\partial\textbf{x}^i}\mathcal{V}_\mu.
\end{equation}
But from (\ref{Wag1}) and the above equation we deduce
\begin{align*}
(T_\nabla(e_\mu, e_\alpha))^{h_\nabla}&=F_\nabla t_\nabla(\delta_\mu, \delta_\alpha)=(\rho^i_\mu\circ\pi)\frac{\partial(f\circ\pi)}{\partial\textbf{x}^i}\delta_\alpha-(\rho^i_\alpha\circ\pi)\frac{\partial(f\circ\pi)}{\partial\textbf{x}^i}
\delta_\mu\\
&=\Big(\rho(e_\mu)(f)e_\alpha-\rho(e_\alpha)(f)e_\mu\Big)^{h_\nabla}=\Big(d^Ef(e_\mu)e_\alpha-d^Ef(e_\alpha)e_\mu\Big)^{h_\nabla},
\end{align*}
which gives us $T_\nabla(e_\mu, e_\alpha)=d^Ef(e_\mu)e_\alpha-d^Ef(e_\alpha)e_\mu$. Therefore (\ref{enteha}) holds and consequently $(E, \mathcal{F}, \nabla, f)$ is a Wagner Lie algebroid.
\end{proof}
\begin{cor}
If $(E, \mathcal{F}, \nabla, f)$ is a Wagner Lie algebroid, then spray $S_\nabla$ generated by $h_\nabla$ satisfies in the following relation
\[
S_\nabla=S_\circ+f^cC-2\mathcal{F}\text{grad}f^\vee.
\]
\end{cor}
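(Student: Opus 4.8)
The plan is to read the corollary off directly from Proposition \ref{14.31}, combined with the evaluation of $(d^\pounds_{i_{S_\nabla}t_\nabla}\mathcal{F})^\sharp$ that was already carried out inside the proof of the Wagner theorem. First I would observe that a Wagner Lie algebroid is in particular a generalized Berwald Lie algebroid, so Proposition \ref{14.31} applies and delivers
\[
S_\nabla=S_\circ+(d^\pounds_{i_{S_\nabla}t_\nabla}\mathcal{F})^\sharp.
\]
Thus everything reduces to identifying the sharp of the one-form $d^\pounds_{i_{S_\nabla}t_\nabla}\mathcal{F}$, and the remaining work is purely the computation of that sharp.

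Next I would compute this one-form using the third identity of Lemma \ref{14.34}, namely $i_{S_\nabla}t_\nabla=f^cJ-d^\pounds f^\vee\otimes C$. Evaluating $d^\pounds_{i_{S_\nabla}t_\nabla}\mathcal{F}=d^\pounds\mathcal{F}\circ(i_{S_\nabla}t_\nabla)$ on an arbitrary test section $\widetilde{X}\in\Gamma(\pounds^\pi E)$ gives $f^c\,d^\pounds\mathcal{F}(J\widetilde{X})-d^\pounds f^\vee(\widetilde{X})\,d^\pounds\mathcal{F}(C)$. Here I would invoke (iii) of Proposition \ref{Best1}, $i_C\omega=d^\pounds_J\mathcal{F}$, to rewrite $d^\pounds\mathcal{F}(J\widetilde{X})=(i_C\omega)(\widetilde{X})$, and use the degree-$2$ homogeneity of $\mathcal{F}$ to get $d^\pounds\mathcal{F}(C)=\rho_\pounds(C)\mathcal{F}=2\mathcal{F}$. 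Finally, the defining relation $i_{\text{grad}f^\vee}\omega=d^\pounds f^\vee$ of the gradient section (see (\ref{gradian})) identifies $d^\pounds f^\vee(\widetilde{X})=(i_{\text{grad}f^\vee}\omega)(\widetilde{X})$. Collecting terms yields the one-form identity $d^\pounds_{i_{S_\nabla}t_\nabla}\mathcal{F}=i_{f^cC-2\mathcal{F}\text{grad}f^\vee}\omega$, whence, by nondegeneracy of $\omega$, $(d^\pounds_{i_{S_\nabla}t_\nabla}\mathcal{F})^\sharp=f^cC-2\mathcal{F}\text{grad}f^\vee$; this is exactly equation (\ref{mae}).

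Substituting this expression into the formula from Proposition \ref{14.31} gives $S_\nabla=S_\circ+f^cC-2\mathcal{F}\text{grad}f^\vee$, which is the assertion. Since every ingredient is already proved earlier, there is no genuine obstacle here; the only points requiring care are the bookkeeping of the two homogeneity facts ($d^\pounds\mathcal{F}(C)=2\mathcal{F}$ and the homogeneity degrees of $f^c$ and $\mathcal{F}$) and the insistence on evaluating on a \emph{general} section $\widetilde{X}$, so that the nondegeneracy of $\omega$ can legitimately be used to pass from the one-form identity to the equality of the corresponding sharp sections.
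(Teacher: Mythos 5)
Your proposal is correct and follows essentially the same route as the paper: the paper's own proof simply substitutes equation (\ref{mae}) into the formula $S_\nabla=S_\circ+(d^\pounds_{i_{S_\nabla}t_\nabla}\mathcal{F})^\sharp$ of Proposition \ref{14.31}, and your re-derivation of (\ref{mae}) uses exactly the same ingredients (Lemma \ref{14.34}, Proposition \ref{Best1}(iii), homogeneity of $\mathcal{F}$, and the defining relation of the gradient) that the paper uses inside the proof of the preceding theorem. The only cosmetic difference is that you evaluate on a general section $\widetilde{X}$ whereas the paper works on the basis sections $\delta_\beta$.
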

\begin{proof}
Since $(E, \mathcal{F}, \nabla, f)$ is a Wagner Lie algebroid, then we have (\ref{mae}). Setting (\ref{mae}) in (\ref{very good}) the proof completes.
\end{proof}
%---------------------------------------------------------------------------------------
%----------------------------------------------------------------------------------------

\bigskip

\noindent
Esmaeil Peyghan\\
Department of Mathematics, Faculty  of Science\\
Arak University\\
Arak 38156-8-8349,  Iran\\
Email: e-peyghan@araku.ac.ir

\begin{thebibliography}{MaHo}
%-------------------------------------------------------------------------------------------------------------
\bibitem{A} M. Anastasiei, {\it Geometry of Lagrangians and semispray on Lie algebroids}, BSG
Proceedings 13, Geometry Balkan Press 2006, 10-17.
%-------------------------------------------------------------------------------------------------------------
\bibitem{mi} M. Anastasiei, {\it Banach Lie Algebroids}, An. \c{S}t. Univ. "Al. I. Cuza" Ia\c{s}i S.N.
Matematica, T. LVII, {\bf 2}(2011), 409-416.
%------------------------------------------------------------------------------------------------------------
\bibitem{A1} M. Anastasiei, {\it Banach Lie Algebroids and Dirac structure}, Balkan J. Geom. Appl., {\bf 18}(2013), 1-11.
%------------------------------------------------------------------------------------------------------------
\bibitem{ar} C. M. Arcu\c{s}, {\it Lagrangian mechanics on generalized Lie algebroids}, arXive:1108.2844v2, (2011).
%-------------------------------------------------------------------------------------------------------------
\bibitem{Ar} C. M. Arcu\c{s}, {\it Hamiltonian mechanics on duals of generalized Lie algebroids}, arXiv:1108.5050v1, (2011).
%-------------------------------------------------------------------------------------------------------------
\bibitem{Ar1} C. M. Arcu\c{s}, {\it Mechanical systems in the generalized Lie algebroids framework}, arXiv:1310.2131v1, (2013).
%-------------------------------------------------------------------------------------------------------------
\bibitem{bcs} D. Bao, S. S. Chern and Z. Shen, An Introduction to Riemann-Finsler geometry, Springer-Verlag, New York, 2000.
%-----------------------------------------------------------------------------------------------------------
\bibitem{B} A. Bejancu, Finsler Geometry and Applications. Ellis Horwood, Chichester, 1990.
%-----------------------------------------------------------------------------------------------------------
\bibitem{bl} A. D. Blaom, {\it Geometric Structures
as Deformed Infinitesimal Symmetries}, Trans. Amer. Math. Soc. \textbf{358}, (2006), 3651-3671.
%-----------------------------------------------------------------------------------------------------------
\bibitem{BP} B. Balcerzak and A. Pierzchalski, {\it Generalized gradients on Lie algebroids}, Ann. Glob. Anal. Geom., (2013), 1-19.
%-----------------------------------------------------------------------------------------------------------
\bibitem{bou} M. Boucetta, {\it Riemannian geometry of Lie algebroids}, arXiv: 0806.3522v2, (2008), 1-36.
%------------------------------------------------------------------------------------------------------------
\bibitem{CLMDM} J. Cortez, M. de Leon, J. Marrero, M. de Diego, E. Martinez, {A survey
of Lagrangian mechanics and control on Lie algebroids and groupoids},
Int. J. Geom. Methods Mod. Phys. 03, 509 (2006).
%--------------------------------------------------------------------------------------------------------
\bibitem{CLMM} J. Cort\'{e}s, M. d. Le\'{o}n, J. C. Marrero and E. Mart\'{i}nez, {\it Nonholonomic Lagrangian systems on Lie algebroids}, arXiv: 0512003v3, (2008).
%--------------------------------------------------------------------------------------------------------
\bibitem{f} R. L. Fernandes, {\it Lie Algebroids, Holonomy and Characteristic Classes}, Advances in Mathematics \textbf{170}, 119-179 (2002).
\bibitem{GU} J. Grabowski and P. Urba\'{n}ski, {\it Tangent and cotangent lift and graded Lie
algebra associated with Lie algebroids}, Ann. Global Anal. Geom., {\bf 15}(1997), 447-486.
%--------------------------------------------------------------------------------------------------------------
\bibitem{GU1} J. Grabowski and P. Urba\'{n}ski, {\it Lie algebroids and Poisson-Nijenhuis structures}, Rep. Math.
Phys., {\bf 40}(1997), 195-208.
%--------------------------------------------------------------------------------------------------------------
\bibitem{LMM} M. d. Le\'{o}n, J. C. Marrero and E. Mart\'{i}nez, {\it Lagrangian submanifolds and dynamics on Lie algebroids}, J. Phys. A: Math. Gen. 38 (2005), 241-308.
%-------------------------------------------------------------------------------------------------------------
\bibitem{M} E. Mart\'{i}nez, {\it Lagrangian mechanics on Lie algebroids}, Acta Appl. Math. {\bf 67}(2001), 295-320.
%--------------------------------------------------------------------------------------------------------------
\bibitem{M1} E. Mart\'{i}nez, {\it Classical field theory on LIe algebroids: Multisymplectic formalism}, arXiv: 0411352v1, (2004).
%--------------------------------------------------------------------------------------------------------------
\bibitem{mestdag} T. Mestdag, {\it A Lie algebroid approach to Lagrangian systems with symmetry}, Differential Geometry and Its Applicatioons, (2005), 523-535.
%--------------------------------------------------------------------------------------------------------------
\bibitem{P} L. Popescu, {\it The geometry of Lie algebroids and applications to optimal
control}, Annals. Univ. Al. I. Cuza, Iasi, series I, Math., LI (2005), 155-170.
%--------------------------------------------------------------------------------------------------------------
\bibitem{P1} L. Popescu, {\it Aspects of Lie algebroids geometry and Hamiltonian formalism},
Annals Univ. Al. I. Cuza, Iasi, series I, Math, LIII, supl. (2007), 297-308.
%--------------------------------------------------------------------------------------------------------------
\bibitem{P2} L. Popescu, {\it Geometrical structures on Lie algebroids}, Publ. Math.
Debrecen 72, 1-2 (2008), 95-109.
%--------------------------------------------------------------------------------------------------------------
\bibitem{popescu0} L. Popescu, {\it A note on Poisson-Lie algebroids}, J. Geom. Symmetry Phys. 12 (2008) 63-73.
%--------------------------------------------------------------------------------------------------------------
\bibitem{popescu} L. Popescu, {\it The geometry of Lie algebroids and its applications to optimal control}, arXiv: 1302.5212v2, (2013).
%---------------------------------------------------------------------------------------------------------
\bibitem{pradines} J. Pradines, {\it Th\'{e}orie de Lie pour les groupo\"{i}des diff\'{e}rentiables. Calcul diff\'{e}rentiel dans la cat\'{e}gorie des groupo\"{i}des infinit\'{e}simaux}, C.R. Acad. Sci. Paris 264 A (1967), 245-248.
%----------------------------------------------------------------------------------------------------------
\bibitem{SS} Sz. Szak\'{a}l and J. Szilasi, {\it A new approach to generalized Berwald manifolds I}, SUT J. Math. {\bf 37}(2001), 19-41.
%--------------------------------------------------------------------------------------------------------------
\bibitem{SS1} Sz. Szak\'{a}l and J. Szilasi, {\it A new approach to generalized Berwald manifolds I}, Publ. Math. Debrecen {\bf 60}(2002), 429-453.
%--------------------------------------------------------------------------------------------------------------
\bibitem{S} J. Szilasi, {\it A Setting for Spray and Finsler Geometry}, in: Handbook of
Finsler Geometry, Kluwer Academic Publishers, Dordrecht 2003, 1183-1426.
%--------------------------------------------------------------------------------------------------------------
\bibitem{SG} J. Szilasi and Gy\H{o}ry, {\it A generalization of Weyl's theorem on projectively related affine connections}, Rep. Math. Phys., {\bf 54}(2004), 261-273.
%--------------------------------------------------------------------------------------------------------------
\bibitem{ST} J. Szilasi and A. T\'{o}th, {\it Conformal vector fields on Finsler manifolds}, Communications in Mathematics, {\bf 19}(2011), 149-168.
%-------------------------------------------------------------------------------------------------------------
\bibitem{V2} S. Vacaru, {\it Clifford-Finsler algebroids and nonholonomic Einstein-Dirac structures}, J. Math. Phys.,  {\bf 47}(2006), 1-20.
%-------------------------------------------------------------------------------------------------------------
\bibitem{V4} S. Vacaru,  Finsler and Lagrange geometries in Einstein and string gravity,  Int. J. Geom. Meth. Mod. Phys., {\bf 5}(2008), 473-511; arXiv: 0801.4958.
%-------------------------------------------------------------------------------------------------------------
\bibitem{V3} S. Vacaru, {\it Lagrange-Ricci Flows and Evolution of Geometric Mechanics and Analogous Gravity on Lie Algebroids}, arXiv: 1108.4333 v2, (2011).
%-------------------------------------------------------------------------------------------------------------
\bibitem{V} S. Vacaru, {\it Nonholonomic algebroids, Finsler geometry, and Lagrange-Hamilton spaces}, Mathematical Sciences, {\bf 6}: 18(2012), DOI:10.1186/2251-7456-6-18,  arXiv: 0705.0032.
%-------------------------------------------------------------------------------------------------------------
\bibitem{V1} S. Vacaru, {\it Almost K\"{a}hler Ricci Flows and Einstein and Lagrange–Finsler Structures on Lie Algebroids}, arXiv:0724182, (2013).
%-------------------------------------------------------------------------------------------------------------
\bibitem{W} A. Weinstein, {\it Lagrangian Mechanics and Grupoids}, Fields Institute Communications, {\bf 7}(1996), 207-231.




\end{thebibliography}
\end{document}